\newcommand{\uproman}[1]{\uppercase\expandafter{\romannumeral#1}}
\newtheorem{definition}{Definition}[section]
\newtheorem{proposition}{Satz}[section]
\newtheorem{theorem}{Theorem}[section]
\newtheorem{lemma}{Lemma}[section]
\newtheorem{remark}{Bemerkung}[section]
\newtheorem{example}{Beispiel}[section]
\newtheorem{corollary}{Folgerung}[section]
\begin{document}

\usetikzlibrary{angles,quotes, intersections}

\pagestyle{empty}

\begin{titlepage}
\title{Stationäre Kurven auf endlichdimensionalen Mannigfaltigkeiten}
\date{2024}
\author{Tobias Starke}
\maketitle
\end{titlepage}

\thispagestyle{empty}

\newpage 
\thispagestyle{empty}
\quad 
\newpage

\begin{center}
    {Danksagung}
\end{center}
An dieser Stelle möchte ich mich kurz bei all denjenigen bedanken, die mich während der Anfertigung dieser Arbeit unterstützten. 

Als Erstes möchte ich mich bei meinem Betreuer Herrn Professor Michael Breuß bedanken, welcher mir während der Verfassung dieser Arbeit stets mit vielen hilfreichen Anregungen und konstruktiver Kritik zur Seite stand. 

Weiter möchte ich mich ganz herzlich bei Herrn Professor Friedrich Sauvigny bedanken, welcher mit seiner an der BTU vom Wintersemester 2021 bis Sommersemester 2023 gehaltenen Vorlesungsreihe über Differentialgeometrie überhaupt erst mein Interesse an differentialgeometrischen Fragestellungen weckte. 

Abschließend möchte ich mich auch bei meiner Mutter und meinem Großvater für die Unterstützung während des gesamten Studiums bedanken.

\newpage

\newpage 
\thispagestyle{empty}
\quad 
\newpage

\pagestyle{plain}
\thispagestyle{empty}

\tableofcontents

\newpage

\newpage 
\thispagestyle{empty}
\quad 
\newpage

\setcounter{page}{1}


\section{Einleitung}

In der hier vorliegenden Arbeit wollen wir uns mit sogenannten stationären Kurven des Längenfunktionals, sogenannten schwachen Geodätischen, auf Mannigfaltigkeiten beschäftigen. Ausgangspunkt dieser Arbeit sind die zahlreichen Anwendungen derartiger Kurven im Bereich der Physik, aber auch im Bereich der Bildverarbeitung. 

So wird beispielsweise in der \textit{Allgemeinen Relativitätstheorie} die Raumzeit als vierdimensionale metrikkompatible Lorentzmannigfaltigkeit modelliert, sodass die beobachteten gravitativen Phänomene über die innere Krümmung beziehungsweise die inneren Geometrie der Mannigfaltigkeit erklärt werden \cite{kriele2003spacetime}. 

Das erreicht man dadurch, dass man vorgibt, dass sich Teilchen, die sich lediglich unter dem Einfluss der Gravitation befinden, sich auf den (schwachen) Geodätischen der Raumzeitmannigfaltigkeit bewegen, wobei die (schwachen) Geodätischen im Allgemeinen von der inneren Geometrie der Raumzeitmannigfaltigkeit abhängig sind. Die innere Geometrie der Raumzeitmannigfaltigkeit wird hingegen über die Einsteinschen Feldgleichungen durch die Energiedichteverteilung im Universum determiniert.  

Im Bereich der Bildverarbeitung und dort im speziellen im sogenannten Bild-Matching nutzt man hingegen (schwache) Geodätische auf die folgende Weise: Wir beginnen zuerst damit, zwei Bilder, die wir mit $P_1$ und $P_2$ bezeichnen wollen, mit Funktionen $\mathcal{I}_1, \mathcal{I}_2 : \Omega \longrightarrow \mathbb{R}$ zu assoziieren, wobei $\Omega \subseteq \mathbb{R}^d$ als die (beschränkte) Menge der Bildpixel verstanden wird. Das können wir darüber realisieren, indem wir beispielsweise jedem Pixel seinen Färbungsgrad, dargestellt durch eine reelle Zahl, zuordnen. Dabei nehmen wir an, dass beide Bilder die selbe \textit{Sache} zeigen sollen. Die Bilder sind dabei im Allgemeinen optisch aber nicht identisch, da diese durch verschiedene Verfahren erzeugt sein können. 

Als ganz konkretes Beispiel können wir die bildgebenden Verfahren zur Visualisierung des menschlichen Gehirns anführen. Um ein Bild vom Gehirn zu erhalten, können wir heute auf verschiedene Verfahren zurückgreifen, wie die Magnetresonanztomographie (MRT), die Positronen-Emissions-Tomographie (PET) oder die Computertomographie (CT). Während beispielsweise bei ersterem Verfahren auf starke Magnetfelder zurückgegriffen und ausgenutzt wird, dass sich der Spin der Atomkerne im Gehirn nach diesem Magnetfeld ausrichten werden, so wird beim PET-Verfahren hingegen eine schwach radioaktive Substanz verwendet, um über diese eine visuelle Darstellung des Gehirns zu erhalten. 

Aufgrund der Verschiedenheit der Verfahren sind die Informationen über das Gehirn, die auf den durch die verschiedenen Verfahren erzeugten Bilder codiert sind, im Allgemeinen verschieden. So liefert das MRT-Verfahren eine gute bildliche Darstellung der Struktur des Gehirns, während beispielsweise das PET-Verfahren eine gute Darstellung der Aktivität des Gehirns liefert. Betrachten wir zwei konkrete bildgebende Verfahren für das menschliche Gehirn, so können die durch die Verfahren erzeugte Bilder $P_1$ und $P_2$ sich nicht nur hinsichtlich der auf ihnen gespeicherten Informationen unterschieden, sondern auch relativ zueinander verzerrt wirken. Diese beiden möglich auftretenden Effekte führen dazu, dass die beiden Bilder sich optisch oft stark voneinander unterscheiden und nicht leicht miteinander identifiziert werden können. Letzteres bedeutet dabei, dass es im Allgemeinen nicht sofort offensichtlich ist, welche Region im Bild $P_1$ genau welcher Region im Bild $P_2$ entspricht.  

Es kann aber durchaus vorkommen, dass man beide Bilder im obigen Sinne miteinander identifizieren können möchte, um die Informationen, die in den jeweiligen Bildern gespeichert sind, auf einem Bild sinnvoll zusammenzuführen. Gesucht ist damit also eine sogenannte Matching-Funktion zwischen den mit den Bildern $P_1$ und $P_2$ assoziierten Funktionen $\mathcal{I}_1$ und $\mathcal{I}_2$, welche wir mit $\varphi$ bezeichnen wollen. Diese Matching-Funktion gibt an, welche Punkte auf dem einem Bild genau welchen Punkten in dem anderen Bild entsprechen. Beim Matching handelt es sich also um eine mindestens bijektive Funktion der Form $\Omega \longrightarrow \Omega$, die 
\begin{align}
    \mathcal{I}_1 = \mathcal{I}_2 \circ \varphi
\end{align}
erfüllt, wobei es sich bei $\circ$ um die gewöhnliche Funktionenkomposition handelt. Die Frage, die sich damit aber unmittelbar stellt ist, wie wir ein derartiges Matching (möglichst effizient) finden können.

Ein möglicher differentialgeometrischer Ansatz ist es, die Matchingfunktion $\varphi$ als glatten Diffeomorphismus auf $\Omega$ aufzufassen und im Raum aller glatter Diffeomorphismen auf $\Omega$, bezeichnet als $\textit{Diff}(\Omega)$, nach dem Matching $\varphi$ zu suchen \cite{miller2006geodesic}. Es zeigt sich, dass die Menge $\textit{Diff}(\Omega)$ eine sogenannte unendlichdimensionale Fréchet-Mannigfaltigkeit ist \cite{golubitsky2012stable}, auf welcher wir einen Längenbegriff für Kurven einführen können, was es uns wiederrum erlaubt, den Begriff einer (schwachen) Geodätischen auf $\textit{Diff}(\Omega)$ zu definieren. Der grundlegende Gedanke ist nun, das wir uns ausgehend von der Identitätsabbildung $\textit{id}_\Omega \in \textit{Diff}(\Omega)$, definiert durch
\begin{align}
    \textit{id}_\Omega (x) = x \:\:\:\:\:\:\:\: \forall x \in \Omega,
\end{align}
auf einer (schwachen) Geodätischen hin zum Matching $\varphi$ bewegen wollen. Dieser Grundgedanke liefert die Grundlage des sogenannten \textit{Geodesic shooting}-Algorithmus, mit derem Hilfe man die gesuchte Matchingfunktion $\varphi$ approximieren kann.

Damit wir verstehen, was wir uns konkret unter dem Begriff einer (schwachen) Geodätischen vorzustellen haben, welcher grundlegend für beide oben erwähnte Anwendungen ist, wollen wir uns zuerst mit einigen grundlegenden Konzepte aus der Topologie und der endlichdimensionalen Differentialgeometrie auseinandersetzen, und dabei unter anderem klären, was genau eine Topologie und was eine endlichdimensionale Mannigfaltigkeit ist. Darüberhinaus werden wir Kurven auf endlichdimensionalen Mannigfaltigkeiten betrachten und auch erklären, wie man diesen eine sinnvolle Länge zuordnen kann.

Das primäre Ziel dieser Arbeit wird es dabei sein, einige der grundlegenden Begriffe, die nötig sind, um die obigen Anwendungen rigoros verstehen zu können, zu entwickeln. Allen voran wollen wir damit letzlich auch rigoros klären, was eine (schwache) Geodätische genau ist. Wir werden in dieser Arbeit dabei vornehmlich sogenannte endlichdimensionale Mannigfaltigkeiten betrachten. Das hat konkret zwei Gründe:

Zum einen wollen wir ein gutes geometrisches und anschauliches Verständnis vieler verschiedener Konzepte aus der Differentialgeometrie erhalten. Am besten lassen sich dabei diese zu diskutierenden Konzepte und Ideen im Kontext der endlichdimensionalen Differentialgeometrie motivieren. Haben wir erst einmal die endlichdimensionale Theorie verstanden, so können wir, ausgehend von dem Verständis, welches wir durch die endlichdimensionale Differentialgeometrie erworben haben, auch die wesentlich abstrakteren Begriffe der zugehörigen unendlichdimensionalen Theorie leicht motivieren.
 
Zum anderen tauchen viele Begrifflichkeiten aus der endlichdimensionalen Theorie auch wieder im unendlichdimensionalen auf. Das heißt, um gewisse Konstruktionen in der unendlichdimensionalen Differentialgeometrie verstehen zu können, müssen wir uns so oder so mit dem endlichdimensionalen Fall auseinander setzen.

Auch wenn sich diese Arbeit lediglich mit der endlichdimensionalen Theorie beschäftigt, werden wir uns am Ende dieser Arbeit, im Kapitel $9$, noch etwas genauer mit den Konzepten der unendlichdimensionalen Theorie auseinandersetzen. Wir werden dabei grob angeben, wie sich die Konzepte der endlichdimensionalen Differentialgeometrie in das Unendlichdimensionale verallgemeinern lassen. So werden wir im Kapitel $9$ grob skizzieren, was wir uns genau unter einer unendlichdimensionalen Mannigfaltigkeit vorzustellen haben und wie sich der Längenbegriff für Kurven sinnvoll in das Unendlichdimensionale überführen lässt.   

Auch wollen wir uns am Ende dieser Arbeit noch etwas ausführlicher mit der Anwendung des Begriffes der (schwachen) Geodätischen im Bereich des Bild-Matchings beschäftigen, wobei wir uns dabei primär auf \cite{miller2006geodesic} beziehen werden. 

\newpage
\newpage 

\quad 
\newpage


\section{Topologie}

Die in diesem Kapitel angegebenen Definitionen orientieren sich im wesentlichen an \cite{joshi1983introduction}.

Da, wie in der Einleitung bereits angemerkt, die stationären Kurven eines sogenannten Längenfunktionals (was auch immer das zu diesem Zeitpunkt sein mag) für uns von großem Interesse sind, müssen wir erst einmal den Kurvenbegriff an sich einführen. Es lohnt sich nun unsere Intution zu bemühen, um grob zu erklären, was wir genau unter einer Kurve in einer abstrakten Menge $X$ verstehen wollen. 

Sei dazu $X$ also eine beliebige Menge und $x_1,x_2 \in X$ zwei Punkte in dieser Menge. Anschaulich dürfte eine Kurve $\mathcal{F}$ von $x_1$ nach $x_2$ in $X$ nun eine eindimensionale Teilmenge von $X$ sein, die die Punkte $x_1$ und $x_2$ miteinander verbindet. Mathematisch codieren wir das über die Funktion oder Abbildung 
\begin{align}
     \gamma : [a,b] \longrightarrow X,
\end{align}
wobei $a,b \in \mathbb{R}$ sind mit $- \infty < a < b < + \infty$. Per Definiton einer Funktion ordnet also $\gamma$ jedem $c \in [a,b]$ genau ein $x \in X$ zu. Dabei gelte
\begin{align}
     \mathcal{F} = \{x \in X \: | \: \exists \:  c \in [a,b] : \gamma(c) = x \}, \label{2}
\end{align}
dass heißt $\gamma(c) = x$ ist ein Punkt auf der Kurve $\mathcal{F}$, und 
\begin{align}
     \gamma(a) = x_1, \: \: \gamma(b) = x_2. \label{3}
\end{align}
Warum stellt $\gamma$ nun eine sinnvolle Modellierung der Kurve $\mathcal{F}$ dar? Zum einen wird die oben nur grob erwähnte Eindimensionalität der Kurve durch die Parametrisierung von $\mathcal{F}$ über das Intervall $[a,b]$ eingefangen, und zum anderen wird durch \eqref{3} die Tatsache codiert, das die Endpunkte der Kurve $\mathcal{F}$, die in der gewählten Modellierung das Bild von $\gamma$ ist, in $x_1$ beginnt, und in $x_2$ endet. 

Eine weitere Bedingung, die wir für den Kurvenbegriff noch dringend benötigen, die wir aber bisher nicht erwähnt haben, ist die Stetigkeit! Wir erwarten natürlich, dass $\mathcal{F}$ in irgend einem Sinne ein zusammenhängendes Gebilde ist und das wir mit dem durchlaufen des Intervalls $[a,b]$ auch $\mathcal{F}$ auf ähnliche Weise durchlaufen. Anschaulich bedeutet das, dass wenn $c,d \in [a,b]$ \textit{nah} beieinander liegen, auch $\gamma(c)$ und $\gamma(d)$ \textit{nah} beieinander liegen. Hier stoßen wir nun auf ein erstes Problem: Was bedeutet Nähe auf dem Intervall $[a,b]$ und auf $X$? 

Bisher haben wir, bezogen auf $[a,b]$ und $X$, nur eine \textit{nackte} Menge vor uns, auf der noch kein Begriff der Nähe existiert. Auch wenn wir beispielsweise bezüglich dem Intervall $[a,b]$ eine intuitive Vorstellung von Nähe haben, so rührt diese Vorstellung immer von einer auf der Menge zusätzlich definierten Struktur her, die den Nähebegriff auf der Menge erst erklärt. 

Der intuitive Nähebegriff auf $[a,b]$ kommt dabei von der Betragsfunktion $f(\cdot) = |\cdot|$, die über die Funktion $d(x,y) := f(x-y) = |x-y|$ mit $x,y \in [a,b]$ einen Abstandsbegriff und damit einen Nähebegriff einführt. Das sich zwei Zahlen $x,y \in [a,b]$ nun nahe sein sollen, bedeutet, dass $d(x,y) < \epsilon$ ist, wobei $\epsilon > 0$ eine vorgegebene (kleine) Zahl ist. Wir sagen auch, dass $y$ näher an $x$ liegt, als alle Punkte in $[a,b]$, die einen Abstand größer gleich $\epsilon$ zu $x$ haben. Um also überhaupt den intuitiven Nähebegriff auf $[a,b]$ etablieren zu können, mussten wir erst einmal die eben erwähnte Abbildung $d : [a,b] \times [a,b] \longrightarrow [0, \infty )$ auf $[a,b]$ definieren.

In Bezug auf unsere Menge $[a,b]$ haben wir nun, wie wir noch sehen werden, eigentlich schon mehr getan, als wir eigentlich hätten tun müssen, denn um den intuitiven Nähebegriff auf $[a,b]$ einzuführen, haben wir einen Abstandsbegriff definiert. D.h. wir haben eine Funktion $d$ eingeführt, mit deren Hilfe wir den \textit{Abstand} zwischen zwei beliebigen Punkten $x,y \in [a,b]$ vermessen können. 

Damit der durch $d$ erklärte Abstandsbegriff wirklich ein sinnvoller ist, muss natürlich die Funktion $d$, die wir später auch als \textit{Metrik} bezeichnen wollen, gewisse Eigenschaften erfüllen, denn nicht jede beliebige Funktion $\Tilde{d}$ auf $[a,b] \times [a,b]$ erklärt einen sinnvollen Abstandsbegriff. So muss beispielsweise für eine Abstandsfunktion $\Tilde{d} : [a,b] \times [a,b] \longrightarrow [0, \infty)$ gelten, dass 

\begin{itemize}
    \item[\textit{i)}]   $\Tilde{d}(x,y) \geq 0$ und $\Tilde{d}(x,y) = 0 \: \iff \: x=y \:\:\:\: \forall x,y \in [a,b]$
    \item[\textit{ii)}]  $\Tilde{d}(x,y) = \Tilde{d}(y,x) \:\:\:\: \forall x,y\in [a,b]$
    \item[\textit{iii)}] $\Tilde{d}(x,z) \leq \Tilde{d}(x,y) + \Tilde{d}(y,z) \:\:\:\: \forall x,y,z \in [a,b]$
\end{itemize}

Die Bedingung \textit{i)} bedeutet dabei, dass der Abstand zweier Punkte in $[a,b]$ immer größer als Null ist, wenn $x$ und $y$ verschieden sind und der Abstand zwischen zwei Punkten genau dann Null ist, wenn die Punkte identisch sind. Das macht auch intuitiv sind, da der Abstand von einem Punkt zu sich selbst natürlich Null betragen sollte. Bedingung \textit{ii)} meint dagegen, dass es egal ist, ob man den Abstand von $x$ nach $y$ oder den Abstand von $y$ nach $x$ misst. In beiden Fällen sollte das selbe heraus kommen. Und Bedingung \textit{iii)} bedeutet schließlich, dass der \textit{direkte Weg} zwischen zwei Punkten $x$ und $y$ immer kürzer ist, als ein Umweg über einen Punkt $y$.  

Die oben definierte Funktion $d(x,y) = |x-y|$ erfüllt natürlich alle diese Eigenschaften, weshalb diese auch einen sinnvollen Abstandsbegriff erklärt. Betrachten wir aber nun die abstrakte Menge $X$, so stellt sich die Frage, wie sich dort eine derartige Abstandsfunktion erklären lässt. Unter Umständen kann sich das, abhängig von der Menge $X$, als große Herausforderung herausstellen, besonders wenn man gewisse zusätzliche Forderungen an den aus der Abstandsfunktion resultierenden Nähebegriff stellt.

Weiterhin müssen wir uns der Tatsache vergegenwärtigen, dass wir \textit{nur} einen Nähebegriff auf $X$ (und $[a,b]$) suchen, und keinen Abstandsbegriff. Ein Abstandsbegriff führt zwar auf einen Nähebegriff, aber die Umkehrung muss nicht unbedingt gelten. 

Um also so allgemein wie möglich zu bleiben, suchen wir stattdessen nach einer alternativen Struktur, die sich auf $[a,b]$ und $X$ einführen lässt und dann dort einen Nähebegriff einführt. Diese allgemeinere Struktur nennen wir \textit{Topologie}. Wir werden nun im Folgenden zuerst die abstrakte Definition einer Topologie betrachten und uns dann davon überzeugen, dass mittels einer Topologie tatsächlich ein (sinnvoller) Nähebegriff induziert wird. 

\begin{definition}
Sei $X$ eine beliebige, nichtleere Menge und sei $\mathcal{P}(X)$ die zugehörige Potenzmenge von $X$, das heißt die Menge aller Teilmengen von $X$. Dann ist eine Topologie $\tau$ eine Teilmenge von $\mathcal{P}(X)$, das heißt $\tau \subseteq \mathcal{P}(X)$, sodass die folgenden Eigenschaften gelten:
\begin{itemize}
    \item [\textit{i)}] $\emptyset, X \in \tau$
    \item [\textit{ii)}] Sei $\mathcal{I}$ eine beliebe Indexmenge und $\mathcal{O}_i \in \tau \:\: \forall i \in \mathcal{I}$. So folgt 
    \begin{align}
        \bigcup_{i \in \mathcal{I}} \mathcal{O}_i \in \tau.
    \end{align}

    \item [\textit{iii)}] Sei $\mathcal{I}$ eine endliche Indexmenge und $\mathcal{O}_i \in \tau \:\: \forall i \in \mathcal{I}$. So folgt 
    \begin{align}
        \bigcap_{i \in \mathcal{I}} \mathcal{O}_i \in \tau.
    \end{align}

\end{itemize}
Die Elemente $\mathcal{O} \in \tau$ nennen wir dabei im Folgenden offene Mengen. Das Tupel $(X, \tau)$ bezeichnen wir als topologischen Raum.
\end{definition}

Wie erklärt nun eine Topologie $\tau$ auf $X$ einen Nähebegriff auf selbiger? Eine erste Verplausibiliserung liefert die Tatsache, dass sich, wie wir gleich sehen werden, mit einer Topologie ein Umgebungs- und ein Konvergenzbegriff erklären lässt. Man fragt sich nun vielleicht, inwiefern die Tatsache, dass sich mit einer Topologie ein Umgebungs- und Konvergenzbegriff definieren lässt, ein Konzept der Nähe auf der Menge $X$ eingeführt wird. Betrachten wir dazu beispielsweise den Konvergenzbegriff und erinnern wir uns einmal daran, was es eigentlich bedeutet, dass eine Folge $(x_n)_{n \in \mathbb{N}} \subseteq X$ gegen ein $x \in X$ konvergiert. 

Anschaulich heißt das ja gerade, dass sich die Folgenglieder $x_n$ für immer größer werdendes $n \in \mathbb{N}$ im wesentlichen immer mehr dem Element $x \in X$ \textit{annähern}. Damit ist in einem Konvergenzbegriff immer auch ein Nähebegriff codiert, da ansonsten das Konzept der Konvergenz keinen Sinn ergibt. Wie also lässt sich ein Umgebungs- und Konvergenzbegriff mit einer Topologie $\tau$ auf $X$ erklären?  

Die grundlegende Idee ist die Folgende: Sei $(X, \tau)$ ein topologischer Raum. Sei weiter ein $x \in X$ gegeben. Zuerst wollen wir intuitiv erklären, was eine sogenannte offene Umgebung des Punktes $x$ ist und wie dieser Umgebungsbegriff einen Nähebegriff induziert.

\begin{definition}
    Jedes $\mathcal{O} \in \tau$ mit der Eigenschaft $x \in \mathcal{O}$ heißt (offene) Umgebung von $x \in X$. Eine (offene) Umgebung $\mathcal{O}$ von $x$ mit der Eigenschaft $\mathcal{O} \in \tau \setminus \{X\}$ nennen wir nichttriviale Umgebung von x. Ein Punkt $y \in X$ liegt damit in einer \textit{nichttrivialen Umgebung} von $x$, falls sich ein $\mathcal{O} \in \tau \setminus \{X\}$ mit $x \in \mathcal{O}$ finden lässt, sodass $y \in \mathcal{O}$ ist.
\end{definition}

Mit dieser Umgebungsdefinition auf Basis der Topologie $\tau$ lässt sich bereits ein durch die Topologie induzierter Nähebegriff erahnen, denn wir könnten nun natürlich sagen, dass $y_1 \in X$ \textit{näher} als $y_2 \in X$ an $x$ liegt, wenn es zwei nichttriviale Umgebungen $\mathcal{O}_1, \mathcal{O}_2 \in \tau$ von $x$ gibt, sodass $y_1 \in \mathcal{O}_1, y_2 \in \mathcal{O}_2$ und $\mathcal{O}_1 \subset \mathcal{O}_2$, sowie $y_2 \notin \mathcal{O}_1$ gilt. Eine Topologie als Teilmenge von $\mathcal{P}(X)$ kann damit salopp als Vorschrift verstanden werden, die einem sagt, wie und vorallem wie stark die Punkte aus $X$ miteinander verklebt werden sollen. So sagt man beispielsweise auch, dass die Punkte $y_1$ und $y_2$ nichtrivial nah zueinander sind, wenn es eine Menge $\mathcal{O} \in \tau$ mit $\mathcal{O} \subset X$ gibt, sodass $y_1, y_2 \in \mathcal{O}$ gilt. 

Am besten lässt sich aber das Konzept der Nähe auf Basis einer Topologie mittels der Einführung eines Konvergenzbegriffes verplausibilisieren, da, wie oben bereits erwähnt, eine Folge $(x_n)_{n \in \mathbb{N}} \subseteq X$ anschaulich ja gerade dann konvergiert, wenn sich die Folgenglieder $x_n$ im Schnitt für immer größer werdenden Index $n \in \mathbb{N}$ immer stärker einem $x \in X$ \textit{annähern}. 

\begin{definition}
    Sei $(x_n)_{n \in \mathbb{N}} \subseteq X$ eine Folge in $X$ und $x \in X$. Wir sagen, dass $(x_n)_{n \in \mathbb{N}}$ gegen $x \in X$ konvergiert, in Zeichen $x_n \xrightarrow{n \rightarrow \infty} x$, wenn gilt:
    \begin{align}
        \forall \mathcal{O} \in \tau \:\: \textit{mit} \:\: x \in \mathcal{O} : \exists N \in \mathbb{N} : x_n \in \mathcal{O} \:\: \forall n \geq N
    \end{align}
    In diesem Fall sagen wir, dass $x$ ein Grenzwert von $(x_n)_{n \in \mathbb{N}}$ ist. 
\end{definition}

Anschaulich bedeutet diese Definition, dass für jede beliebige Umgebung $\mathcal{O} \in \tau$ von $x$ sich immer eine natürliche Zahl $N \in \mathbb{N}$ finden lässt, sodass alle Folgenglieder der Folge $(x_n)_{n \in \mathbb{N}}$ mit $n \geq N$ in $\mathcal{O}$ liegen. Umgekehrt bedeutet das, dass nur endlich viele Folgenglieder für eine fest gewählte Umgebung $\mathcal{O}$ von $x$ außerhalb von $\mathcal{O}$ liegen können. 

Mit der obigen Bemerkung zum vom Umgebungsbegriff abgeleiteten Nähebegriff erhalten wir damit das anschauliche Bild der Konvergenz im Rahmen topologischer Räume.

Wir haben nun also den Begriff der Topologie kennen gelernt und verstehen, wie dieser einen Nähebegriff auf der Menge $X$ induziert. Es stellen sich nun aber mindestens zwei Fragen: 

\begin{itemize}
    \item [\textit{i)}] Ist der Grenzwert einer Folge mit dem obigem Begriff der Konvergenz eindeutig?
    \item [\textit{ii)}] Induziert nicht jede beliebige Teilmenge $\rho \subseteq \mathcal{P}(X)$ im obigem Sinne einen Nähebegriff? Wozu braucht es die Forderungen $\textit{i)}$, $\textit{ii)}$ und $\textit{iii)}$ aus der obigen Definition der Topologie?
\end{itemize}

Die Beantwortung der ersten Frage ist wichtig, wenn es darum geht, mit einer auf einer Menge $X$ gegebenen Topologie zu arbeiten. Insbesondere wird die Beantwortung dieser Frage von großer Bedeutung sein, wenn wir uns später mit sogenannten normierten Räumen und der Analysis auf diesen beschäftigen wollen.

Die Antwort auf die zweite Frage ist hingegen wichtig, um zu verstehen, warum wir eine Topologie überhaupt so erklärt haben, wie wir sie erklärt haben. Wie sich nämlich leicht nachprüfen lässt, ist es tatsächlich so, dass jede Teilmenge $\rho$ der Potenzmenge von $X$ einen Nähebegriff im obigem Sinne erklärt. 

Das lässt sich darüber einsehen, dass wir in unseren bisherigen Definitionen nicht die Eigenschaften einer Topologie ausgenutzt haben. Wie wir im Kapitel über $\textit{metrische Räume}$ noch sehen werden, streben wir nämlich tatsächlich einen speziellen Nähebegriff an, der zum einen stark genug ist, um ordentlich mit diesen arbeiten zu können, und zum anderen noch möglichst nah am herkömmlichen Nähebegriff liegt, sodass dieser beispielsweise die Konzepte der gewöhnlichen Konvergenz und Stetigkeit aus der reellen Analysis verallgemeinert.

Zuerst wollen wir uns aber mit der Beantwortung der ersten Frage auseinandersetzen. Dazu betrachten wir das folgende Beispiel:

\begin{example}
    Betrachten wir eine Menge $X$, die wir mit der Topologie $\tau = \{ \emptyset , X\}$ austatten. Es lässt sich dabei leicht einsehen, dass es sich bei $\tau$ tatsächlich um eine Topologie handelt. Sei nun weiter $(x_n)_{n \in \mathbb{N}} \subseteq X$ eine Folge in $X$ und $x \in X$ ein beliebiges Element in $X$. Dann gilt, dass $(x_n)_{n \in \mathbb{N}}$ gegen $x$ konvergiert, denn die einzige (offene) Umgebung von $x$ ist $X \in \tau$, und für diese gilt trivialerweise, dass $x_n \in X \:\: \forall n \in \mathbb{N}$. 

    Das heißt, dass in $(X, \tau)$ jede einzelne Folge konvergiert, unzwar gegen jedes einzelne Element aus $X$. Aus diesem Grund wird die hier gewählte Topologie $\tau$ auch oft als \textit{triviale} oder \textit{chaotische} Topologie bezeichnet. 
    
    Damit folgt, dass im Allgemeinen die Grenzwerte einer Folge bezüglich einer beliebigen Topologie nicht eindeutig sein müssen.
\end{example}

Aus dem obigem Beispiel haben wir also gelernt, dass im Allgemeinen der Grenzwert einer Folge in einem topologischen Raum nicht eindeutig sein muss. In Vorbereitung auf die noch kommenden Kapitel wollen wir noch eine spezielle Klasse von topologischen Räumen kennen lernen: die sogenannten \textit{Hausdorffräume}.

\begin{definition}
    Sei $(X, \tau)$ ein topologischer Raum. Wir nennen $(X, \tau)$ \textit{hausdorffsch}, wenn für alle $x, y \in X$ mit $x \neq y$ gilt:
    \begin{align}
        \forall x,y \in X \:\: \textit{mit} \:\: x \neq y \:\: \exists \mathcal{O}_x, \mathcal{O}_y \in \tau \:\: \textit{mit} \:\: x \in \mathcal{O}_x, y \in \mathcal{O}_y \:\: \textit{und} \:\: \mathcal{O}_x \cap \mathcal{O}_y = \emptyset
    \end{align}
\end{definition}

Anschaulich lassen sich also unterschiedliche Punkte aus $X$ mit Umgebungen aus $\tau$ trennen. Eine wichtige Eigenschaft von Hausdorffräumen ist, dass in diesen die Grenzwerte konvergenter Folgen eindeutig sind: 

\begin{proposition}
    Sei ($X$, $\tau$) ein Hausdorffraum und $(x_n)_{n \in \mathbb{N}} \subseteq X$ eine konvergente Folge. Dann ist der Grenzwert der Folge $(x_n)_{n \in \mathbb{N}}$ eindeutig bestimmt. Weiter wollen wir diesen eindeutigen Grenzwert unter anderem mit $\lim_{n \longrightarrow \infty} x_n$ bezeichnen. 
\end{proposition}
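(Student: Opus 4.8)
The plan is to prove uniqueness by contradiction, exploiting the Hausdorff separation property directly against the definition of convergence.

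First I would suppose, for contradiction, that the sequence $(x_n)_{n \in \mathbb{N}}$ has two distinct limits $x, y \in X$ with $x \neq y$. Since $(X, \tau)$ is hausdorffsch, I can apply the Hausdorff property to this pair $x \neq y$ to obtain two open sets $\mathcal{O}_x, \mathcal{O}_y \in \tau$ with $x \in \mathcal{O}_x$, $y \in \mathcal{O}_y$, and crucially $\mathcal{O}_x \cap \mathcal{O}_y = \emptyset$. These disjoint neighbourhoods are exactly the tool that will force the contradiction.

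Next I would unpack the convergence definition for each limit separately. Since $x_n \xrightarrow{n \to \infty} x$ and $\mathcal{O}_x$ is an open set containing $x$, there exists $N_x \in \mathbb{N}$ such that $x_n \in \mathcal{O}_x$ for all $n \geq N_x$. Likewise, since $x_n \xrightarrow{n \to \infty} y$ and $\mathcal{O}_y$ is an open set containing $y$, there exists $N_y \in \mathbb{N}$ such that $x_n \in \mathcal{O}_y$ for all $n \geq N_y$. Setting $N := \max\{N_x, N_y\}$, every index $n \geq N$ satisfies both $x_n \in \mathcal{O}_x$ and $x_n \in \mathcal{O}_y$, hence $x_n \in \mathcal{O}_x \cap \mathcal{O}_y$. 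But this intersection is empty, so such an $x_n$ cannot exist — contradicting the fact that $\mathbb{N}$ contains indices $n \geq N$. This contradiction shows $x = y$, establishing uniqueness, and the notation $\lim_{n \to \infty} x_n$ is then well-defined.

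I do not expect any serious obstacle here; the argument is a direct translation of the Hausdorff axiom into the language of the convergence definition. The only point requiring a little care is the bookkeeping of the two thresholds $N_x$ and $N_y$: one must take their maximum to guarantee that \emph{the same} tail of the sequence lies in both neighbourhoods simultaneously. It is precisely this simultaneous membership, combined with disjointness, that produces the impossibility. If anything is subtle, it is recognising that the trivial topology example above fails exactly because it is \emph{not} hausdorffsch — there the only neighbourhood of any point is $X$ itself, so disjoint separating sets never exist, which is why limits there were non-unique.
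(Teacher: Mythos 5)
Your proposal is correct and follows essentially the same argument as the paper: contradiction via the Hausdorff separation of the two supposed limits, extraction of the two convergence thresholds, passage to their maximum, and the resulting impossible membership in the empty intersection. If anything, your bookkeeping of the indices $N_x, N_y$ is cleaner than the paper's (which contains a small notational slip, writing $\max\{N_V, M_U\}$ for thresholds previously named $N_U$ and $M_V$).
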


\begin{proof}
    Sei $(x_n)_{n \in \mathbb{N}}$ eine Folge in $X$. Angenommen es gilt $x_n \xrightarrow{n \rightarrow \infty} x$ und $x_n \xrightarrow{n \rightarrow \infty} y$ mit $x \neq y$, d.h. 
    \begin{align}
        \forall \mathcal{U} \in \tau \:\: \textit{mit} \:\: x \in \mathcal{U} : \exists N \in \mathbb{N} : x_n \in \mathcal{U} \:\: \forall n \geq N
    \end{align}
    und 
    \begin{align}
        \forall \mathcal{V} \in \tau \:\: \textit{mit} \:\: y \in \mathcal{V} : \exists M \in \mathbb{N} : x_n \in \mathcal{V} \:\: \forall n \geq M.
    \end{align}
    Aufgrund der Hausdorff-Eigenschaft existieren offene Mengen $U,V \in \tau$ mit $x \in U$, $y \in V$ und $U \cap V = \emptyset$. Es gilt nach den beiden obigen Ausdrücken, dass ein $N_U \in \mathbb{N}$ und ein $M_V \in \mathbb{N}$ existiert, sodass 
    \begin{align}
        x_n \in U \:\: \forall n \geq N_U
    \end{align}
    und 
    \begin{align}
        x_n \in V \:\: \forall n \geq M_V
    \end{align}
    Wir setzen $K := \textit{max}\{ N_V, M_U \}$. Es folgt damit 
    \begin{align}
        x_n \in U \cap V \:\: \forall n \geq K,
    \end{align}
    im Widerspruch zu $U \cap V = \emptyset$. Damit folgt, dass der Grenzwert von $(x_n)_{n \in \mathbb{N}}$ eindeutig sein muss.
\end{proof}

Diese Eigenschaft von Hausforffräumen wird sich noch als sehr nützlich erweisen, wenn wir später die Theorie der normierten Vektorräume diskutieren werden.

Zum Abschluss dieses Kapitels wollen wir noch eine nützliche Methode kennenlernen, mit welcher man eine Topologien, die auf einer Menge $Y$ gegeben ist, intuitiv gesprochen auf eine andere Menge $X$ mittels einer Funktion $f : X \longrightarrow Y$ vererben kann. Später in dieser Arbeit werden wir noch auf diese Methode zurückkommen.

\begin{lemma}
    Sei $X$ eine beliebige Menge, $(Y, \tau)$ ein topologischer Raum und $f : X \longrightarrow Y$ eine beliebige Abbildung. Dann definiert das Mengensystem 
    \begin{align}
        \tau_f := \{ f^{-1}(\mathcal{U}) \:\: | \:\: \mathcal{U} \in \tau \} \subseteq \mathcal{P}(X)
    \end{align}
    mit $f^{-1}(\mathcal{U}) := \{ x \in X \:\: | \:\: f(x) \in \mathcal{U} \}$, dem sogenannten Urbild von $\mathcal{U}$ unter $f$, eine Topologie auf $X$. $\tau_f$ wird als die Pullback-Topologie unter der Abbildung $f$ bezeichnet.
\end{lemma}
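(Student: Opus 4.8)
Der Plan ist, für das Mengensystem $\tau_f$ die drei definierenden Eigenschaften einer Topologie aus Definition 2.1 nachzuweisen. Das entscheidende Hilfsmittel sind dabei die elementaren Vertauschungsregeln des Urbildoperators mit mengentheoretischen Operationen, nämlich $f^{-1}(\emptyset) = \emptyset$, $f^{-1}(Y) = X$ sowie die Identitäten
\begin{align}
    f^{-1}\Big( \bigcup_{i \in \mathcal{I}} \mathcal{U}_i \Big) = \bigcup_{i \in \mathcal{I}} f^{-1}(\mathcal{U}_i), \qquad f^{-1}\Big( \bigcap_{i \in \mathcal{I}} \mathcal{U}_i \Big) = \bigcap_{i \in \mathcal{I}} f^{-1}(\mathcal{U}_i),
\end{align}
die für jede (auch unendliche) Indexmenge $\mathcal{I}$ gelten. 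Diese Regeln würde ich entweder als bekannt voraussetzen oder kurz über die Äquivalenzkette $x \in f^{-1}(\bigcup_i \mathcal{U}_i) \iff f(x) \in \bigcup_i \mathcal{U}_i \iff \exists i : f(x) \in \mathcal{U}_i \iff \exists i : x \in f^{-1}(\mathcal{U}_i)$ (und analog für den Durchschnitt) verifizieren.

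Zuerst würde ich Eigenschaft \textit{i)} behandeln. Da $(Y, \tau)$ ein topologischer Raum ist, gilt $\emptyset, Y \in \tau$. Wegen $f^{-1}(\emptyset) = \emptyset$ und $f^{-1}(Y) = X$ liegen damit sowohl $\emptyset$ als auch $X$ in $\tau_f$.

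Für Eigenschaft \textit{ii)} betrachte ich eine beliebige Familie $(\mathcal{O}_i)_{i \in \mathcal{I}}$ mit $\mathcal{O}_i \in \tau_f$. Nach Definition von $\tau_f$ lässt sich zu jedem $i$ ein $\mathcal{U}_i \in \tau$ wählen mit $\mathcal{O}_i = f^{-1}(\mathcal{U}_i)$. Mit der obigen Vereinigungsregel folgt $\bigcup_{i} \mathcal{O}_i = f^{-1}(\bigcup_i \mathcal{U}_i)$, und da $\tau$ eine Topologie ist, gilt $\bigcup_i \mathcal{U}_i \in \tau$; also liegt die Vereinigung wieder in $\tau_f$. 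Eigenschaft \textit{iii)} verläuft völlig analog: Für eine endliche Indexmenge $\mathcal{I}$ erhält man $\bigcap_i \mathcal{O}_i = f^{-1}(\bigcap_i \mathcal{U}_i)$, und hier geht die Endlichkeit von $\mathcal{I}$ genau an der Stelle ein, an der man $\bigcap_i \mathcal{U}_i \in \tau$ aus der Topologieeigenschaft von $\tau$ schließt.

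Eine echte Schwierigkeit gibt es bei diesem Beweis nicht; die gesamte Arbeit steckt in den Urbild-Identitäten, die den strukturellen Kern der Aussage bilden. Der einzige Punkt, auf den ich achten würde, ist die Wahl der Repräsentanten $\mathcal{U}_i$: Da $f$ im Allgemeinen nicht injektiv ist, muss zu einer Menge $\mathcal{O}_i \in \tau_f$ das erzeugende $\mathcal{U}_i \in \tau$ nicht eindeutig bestimmt sein, doch für den Beweis genügt es, zu jedem $\mathcal{O}_i$ irgendein passendes $\mathcal{U}_i$ auszuwählen, was nach Definition von $\tau_f$ stets möglich ist.
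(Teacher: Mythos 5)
Dein Beweis ist korrekt und verfährt genau wie der Beweis in der Arbeit: Nachweis der drei Topologieaxiome mittels der Urbild-Identitäten $f^{-1}(\emptyset)=\emptyset$, $f^{-1}(Y)=X$ sowie der Vertauschbarkeit von $f^{-1}$ mit Vereinigungen und (endlichen) Durchschnitten. Deine zusätzliche Bemerkung zur nicht eindeutigen Wahl der Repräsentanten $\mathcal{U}_i$ ist eine saubere Präzisierung dessen, was die Arbeit stillschweigend ebenso macht.
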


\begin{proof}
    Um zu zeigen, dass es sich bei $\tau_f$ tatsächlich um eine Topologie handelt, überprüfen wir die Topologieaxiome aus der Definition $2.1.$.
    \begin{itemize}
    
        \item[\textit{i)}] Da $\tau$ eine Topologie auf $Y$ ist folgt, dass $\emptyset$ und $Y$ in $\tau$ liegen. Weiter gilt, dass $f^{-1}(\emptyset) = \emptyset$ und $f^{-1}(Y) = X$ ist, woraus folgt, dass $\emptyset$ und $X$ in $\tau_f$ liegen.

        \item[\textit{ii)}] Sei $\mathcal{I}$ eine beliebige Indexmenge und $\mathcal{U}_i \in \tau_f$ für alle $i \in \mathcal{I}$ beliebig gegeben. Für jedes $i \in \mathcal{I}$ gibt es per Definition von $\tau_f$ ein $\mathcal{V}_i \in \tau$ mit $\mathcal{U}_i = f^{-1}(\mathcal{V}_i)$. Dann gilt
        \begin{align}
            \bigcup_{i \in \mathcal{I}} \mathcal{U}_i =& \: \bigcup_{i \in \mathcal{I}} f^{-1}(\mathcal{V}_i) \nonumber \\ =& \: \bigcup_{i \in \mathcal{I}} \{ x \in X \:\: | \:\: f(x) \in \mathcal{V}_i \} \nonumber \\ =& \: \bigg\{ x \in X \:\: \bigg| \:\: f(x) = \bigcup_{i \in \mathcal{I}} \mathcal{V}_i \bigg\} \nonumber \\ =& \: f^{-1} \bigg( \bigcup_{i \in \mathcal{I}} \mathcal{V}_i \bigg). \label{_______}
        \end{align}
        Da $\tau$ eine Topologie auf $Y$ ist und für alle $i \in \mathcal{I}$ gilt, dass $\mathcal{V}_i \in \tau$ ist, folgt, dass $\bigcup_{i \in \mathcal{I}} \mathcal{V}_i \in \tau$ ist. Daraus folgt wiederum, dass per Definition von $\tau_f$ die rechte Seite von \eqref{_______} in $\tau_f$ liegt, woraus wiederum folgt, dass $\bigcup_{i \in \mathcal{I}} \mathcal{U}_i$ in $\tau_f$ ist.

        \item[\textit{iii)}] Sei $\mathcal{I} := \{1,...,n\}$ mit $n \in \mathbb{N}$. Seien weiter $\mathcal{U}_i \in \tau_f$ für alle $i \in \mathcal{I}$ beliebig gegeben. Für jedes $i \in \mathcal{I}$ gibt es per Definition von $\tau_f$ ein $\mathcal{V}_i \in \tau$ mit $\mathcal{U}_i = f^{-1}(\mathcal{V}_i)$. Dann gilt
        \begin{align}
            \bigcap_{i \in \mathcal{I}} \mathcal{U}_i =& \: \bigcap_{i \in \mathcal{I}} f^{-1}(\mathcal{V}_i) \nonumber \\ =& \: \bigcap_{i \in \mathcal{I}} \{ x \in X \:\: | \:\: f(x) \in \mathcal{V}_i \} \nonumber \\ =& \: \bigg\{ x \in X \:\: | \:\: f(x) \in \bigcap_{i \in \mathcal{I}} \mathcal{V}_i \bigg\} \nonumber \\ =& \: f^{-1} \bigg( \bigcap_{i \in \mathcal{I}} \mathcal{V}_i \bigg). \label{'''''''}
        \end{align}
        Da $\tau$ eine Topologie auf $Y$ ist, $\mathcal{I}$ eine endliche Indexmenge darstellt und für alle $i \in \mathcal{I}$ gilt, dass $\mathcal{V}_i \in \tau$ ist, folgt, dass $\bigcap_{i \in \mathcal{I}} \mathcal{V}_i \in \tau$ ist. Per Definition von $\tau_f$ folgt damit, dass die rechte Seite von \eqref{'''''''} in $\tau_f$ liegt, woraus wiederum folgt, dass $\bigcap_{i \in \mathcal{I}} \mathcal{U}_i$ in $\tau_f$ ist.
        
    \end{itemize}
    Damit ist gezeigt, dass es sich bei der Pullback-Topologie $\tau_f$ um eine Topologie auf $X$ handelt.
\end{proof}


\newpage

\section{Metrische Räume}

In diesem Kapitel wollen wir uns nun mit der Frage beschäftigen, warum wir eine Topologie so definiert haben, wie wir sie definiert haben. Die hier angegebenen Definitionen orientieren sich dabei im wesentlichen an \cite{o2006metric}.

Wie wir im letzten Kapitel bereits angemerkt haben, ist der Grund der recht speziell anmutenden Definition einer Topologie damit zu begründen, dass wir unter anderem einen Nähebegriff erklären wollen, der die Begriffe der gewöhnlichen Konvergenz in $\mathbb{R}$ und der Stetigkeit von Funktion $f : \mathbb{R} \longrightarrow \mathbb{R}$ sinnvoll auf allgemeinere Räume, nämlich gerade den topologischen Räumen, verallgemeinert.

Beginnen wir damit, die anschaulichen und gewöhnlichen Definitionen der Konvergenz in $\mathbb{R}$ und der Stetigkeit einer auf $\mathbb{R}$ definierten reellwertigen Funktion zu wiederholen, um diese dann in die Sprache der metrischen Räume zu übersetzen. Wir starten mit der Definition des gewöhnlichen Konvergenzbegriffes in $\mathbb{R}$:

\begin{definition}
    Sei $(x_n)_{n \in \mathbb{N}} \subseteq \mathbb{R}$. Wir sagen, dass $(x_n)_{n \in \mathbb{N}}$ gegen ein $x \in \mathbb{R}$ konvergiert, falls gilt
    \begin{align}
        \forall \epsilon > 0 \: \exists N \in \mathbb{N} : |x_n - x| < \epsilon \:\: \forall n \geq N \label{11}
    \end{align}
\end{definition}

Wir fahren fort mit der Definition der gewöhnlichen Stetigkeit für reellwertige Funktionen der Form $f : \mathbb{R} \longrightarrow \mathbb{R}$:

\begin{definition}
    Sei $f : \mathbb{R} \longrightarrow \mathbb{R}$. Wir nennen $f$ stetig auf $\mathbb{R}$ genau dann, wenn für alle $x \in \mathbb{R}$ gilt, dass
    \begin{align}
        \forall \epsilon > 0 \:\: \exists \delta > 0 : \forall y \in \mathbb{R} \:\: \textit{mit} \:\: |x - y| < \delta \implies |f(x) - f(y)| < \epsilon \label{12}
    \end{align}
\end{definition}

Diese beiden Definitionen, welche beide augfrund der in den Definitionen auftauchenden Betragsfunktion einen anschaulichen Begriff der Nähe codieren, wollen wir nun auf sogenannte metrische Räume verallgemeinern. Dazu müssen wir zuerst den Begriff des metrischen Raumes definieren: 

\begin{definition}
    Sei $X$ eine Menge und $d : X \times X \longrightarrow [0, \infty)$ eine Funktion. Wir nennen $d$ eine Metrik auf $X$, falls 
    \begin{itemize}
    \item[\textit{i)}]   $d(x,y) \geq 0$ und $d(x,y) = 0 \: \iff \: x=y \:\:\:\: \forall x,y \in X$
    \item[\textit{ii)}]  $d(x,y) = d(y,x) \:\:\:\: \forall x,y\in X$
    \item[\textit{iii)}] $d(x,z) \leq d(x,y) + d(y,z) \:\:\:\: \forall x,y,z \in X$
\end{itemize}
Wir bezeichnen dabei das Tupel $(X,d)$ als metrischen Raum.
\end{definition}

Aus dem obigem Kapitel über die Topologie wissen wir bereits, dass jede Funktion $d : X \times X \longrightarrow [0, \infty)$, die den obigen Metrikaxiomen \textit{i)}, \textit{ii)} und \textit{iii)} aus der Definition einer Metrik genügt, als abstrakte Abstandsfunktion aufgefasst werden kann. 

In einem metrischen Raum $(X,d)$ sind wir also in der Lage, mittels $d$ einen Abstand $d(x,y)$ zwischen zwei Punkten $x,y \in X$ zu erklären. Die Metrikaxiome stellen dabei sicher, dass es sich bei der Zahl $d(x,y)$ auch tatsächlich um einen sinnvollen Abstand zwischen den Punkten $x$ und $y$ handelt. 

Mittels der abstrakten Metrik $d$ in einem metrischen Raum, lassen sich nun beide obigen Definitionen der gewöhnlichen Konvergenz in $\mathbb{R}$ und der gewöhnlichen Stetigkeit reellwertiger Funktionen auf $\mathbb{R}$ auf allgemeine metrische Räume $(X,d)$ verallgemeinern:

\begin{definition}
    Sei $(X,d)$ ein metrischer Raum und $(x_n)_{n \in \mathbb{N}}$ eine Folge in $X$. Wir sagen, dass $(x_n)_{n \in \mathbb{N}}$ gegen ein $x \in X$ konvergiert, falls 
    \begin{align}
        \forall \delta > 0 \:\: \exists N \in \mathbb{N} : d(x_n, x) < \delta \:\: \forall n \geq N. \label{13}
    \end{align}
    In diesem Fall sagen wir, dass $x \in X$ der Grenzwert der Folge $(x_n)_{n \in \mathbb{N}}$ ist.
\end{definition}

Wählen wir $X = \mathbb{R}$ und $d = | \cdot |$, so liefert obige Definition gerade die Definition der gewöhnlichen Konvergenz in $\mathbb{R}$. 

Die Verallgemeinerung der Definition der Stetigkeit für eine Funktion $f : X \longrightarrow Y$ zwischen zwei metrischen Räumen $(X, d_X)$ und $(Y, d_Y)$ ist gegeben durch:

\begin{definition}
    Seien $(X, d_X)$ und $(Y, d_Y)$ zwei metrische Räume und $f : X \longrightarrow Y$ eine Funktion. Wir nennen $f$ stetig auf $X$, falls für alle $x \in X$ gilt, dass
    \begin{align}
        \forall \epsilon > 0 \:\: \exists \delta > 0 : \forall y \in X \:\: \textit{mit} \:\: d_X (x,y) < \delta \implies d_Y (f(x), f(y)) < \epsilon \label{14}
    \end{align}
\end{definition}

Auch hier lässt sich wieder leicht einsehen, dass es sich bei dieser Definition um eine direkte Verallgemeinerung der Definition der gewöhnlichen Stetigkeit für Funktionen der Form $f : \mathbb{R} \longrightarrow \mathbb{R}$ handelt. Man setze dazu einfach $X = Y = \mathbb{R}$ und $d = | \cdot |$.

Als Nächstes wollen wir uns mit dem Begriff der von einer Metrik induzierten Topologie auseinandersetzen: Wir betrachten einen metrischen Raum $(X,d)$. Weiter betrachten wir einen beliebigen Punkt $x \in X$ und definieren die sogenannte $\delta$-Umgebung $\mathcal{U}_{\delta}(x)$ von $x$ durch 
\begin{align}
    \mathcal{U}_{\delta}(x) := \{ y \in X \:\: | \:\: d(x,y) < \delta\}.
\end{align}
Wir definieren nun ein Mengensystem $\tau_d \subseteq \mathcal{P}(X)$ durch

\begin{align}
    M \in \tau_d \: \iff \: \forall x \in M \:\: \exists \delta > 0 : \mathcal{U}_{\delta}(x) \subseteq M. \label{metrische Topologie...}
\end{align}
Wir zeigen nun, dass es sich bei diesem so definierten Mengensystem um eine Topologie handelt:

\begin{itemize}
    \item[\textit{i)}] Es gilt $\emptyset \in \tau_d$, da kein $x \in \emptyset$ existiert, was die Bedingung verletzen könnte. Desweiteren gilt $X \in \tau_d$, da für alle $x \in X$ gilt, dass per Definition $\mathcal{U}_{\delta}(x) \subseteq X$ ist.
    \item[\textit{ii)}] Sei $\mathcal{I}$ eine beliebige Indexmenge und $M_i \in \tau_d$ für alle $i \in \mathcal{I}$ beliebig gegeben. Sei $x \in \bigcup_{i \in \mathcal{I}} M_i$. Daraus folgt, dass es ein $j \in \mathcal{I}$ geben muss, sodass $x \in M_j$ ist. Da $M_j \in \tau_d$ ist, folgt, dass es ein $\delta_j > 0$ geben muss, sodass $\mathcal{U}_{\delta_j} (x) \subseteq M_j$. Daraus folgt aber, dass $\mathcal{U}_{\delta_j} (x) \subseteq \bigcup_{i \in \mathcal{I}} M_i$ ist. Da $x$ beliebig aus $\bigcup_{i \in \mathcal{I}} M_i$ gewählt war, folgt $\bigcup_{i \in \mathcal{I}} M_i \in \tau_d$.
    \item[\textit{iii)}] Sei $\mathcal{I} := \{1,...,n\}$ mit $n \in \mathbb{N}$ eine endliche Indexmenge. Seien weiter $M_i \in \tau_d$ für alle $i \in \mathcal{I}$ beliebig gegeben. Sei $x \in \bigcap_{i \in \mathcal{I}} M_i$ beliebig gewählt. Es folgt, dass $x \in M_i$ für alle $i \in \mathcal{I}$ ist. Daraus folgt nun wiederum, dass es für alle $i \in \mathcal{I}$ Zahlen $\delta_i > 0$ gibt, sodass $\mathcal{U}_{\delta_i}(x) \subseteq M_i$. Wir definieren $\delta := \{ \delta_1, ..., \delta_n\}$. Dann gilt $\mathcal{U}_\delta (x) \subseteq \bigcap_{i \in \mathcal{I}} M_i$, denn es gilt $\mathcal{U}_\delta (x) \subseteq \mathcal{U}_{\delta_i} (x) \subseteq M_i$ für alle $i \in \mathcal{I}$. Daraus folgt, dass $\bigcap_{i \in \mathcal{I}} M_i \in \tau_d$ ist.
\end{itemize}

\begin{remark}
    Eine wichtige Eigenschaft von $\tau_d$ ist es nun, dass für alle $x \in X$ und beliebige $\delta > 0$ gilt, dass $\mathcal{U}_\delta (x) \in \tau_d$ ist. Das lässt sich folgendermaßen einsehen: Sei $y \in \mathcal{U}_\delta (x)$ beliebig. Wir setzen $\delta_1 := \delta - d(y,x)$. Sei weiter $z \in \mathcal{U}_{\delta_1} (y)$ beliebig. Dann gilt $d(x,z) \leq d(z,y) + d(y,x) < (\delta - d(y,x)) + d(y,x) = \delta$. Daraus folgt, dass $z \in \mathcal{U}_\delta (x)$. Da $z \in \mathcal{U}_{\delta_1} (y)$ beliebig gewählt war, folgt, dass $\mathcal{U}_{\delta_1} (y) \subseteq \mathcal{U}_\delta (x)$. Da $y \in \mathcal{U}_\delta (x)$ beliebig gewählt war, folgt, dass $\mathcal{U}_\delta (x) \in \tau_d$ ist.
\end{remark}

Es sei anzumerken. dass, wenn wir uns im Spezialfall $X = \mathbb{R}$ mit $d(x,y) = |x - y|$, $\forall x,y \in \mathbb{R}$, befinden, wir die von $d$ induzierte Topologie $\tau_d$ als Standardtopologie (von $\mathbb{R}$) bezeichnen wollen. 

Eine weitere wichtige Eigenschaft metrischer Räume ist, dass, wenn wir diese mit der durch die Metrik $d$ induzierten Topologie $\tau_d$ ausstatten, der topologische Raum $(X, \tau_d)$ automatisch ein Hausdorffraum ist:

\begin{proposition}
    Sei $(X, d)$ ein metrischer Raum und $\tau_d$ die von der Metrik $d$ induzierte Topologie, so folgt, dass der topologische Raum $(X, \tau_d)$ ein Hausdorffraum ist.
\end{proposition}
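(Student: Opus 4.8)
Der Plan ist es, die Hausdorff-Eigenschaft direkt über die Dreiecksungleichung zu verifizieren, wobei ich die bereits etablierten Eigenschaften der $\delta$-Umgebungen ausnutzen werde. Zuerst würde ich zwei beliebige Punkte $x, y \in X$ mit $x \neq y$ wählen. Aus dem Metrikaxiom \textit{i)} folgt dann unmittelbar, dass $d(x,y) > 0$ gilt, da $x$ und $y$ verschieden sind. Dies liefert mir eine echt positive Zahl, die ich als Grundlage für die Konstruktion der trennenden Umgebungen verwenden kann.

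Als nächsten Schritt würde ich $\delta := \frac{1}{2} d(x,y) > 0$ setzen und die beiden $\delta$-Umgebungen $\mathcal{U}_\delta(x)$ und $\mathcal{U}_\delta(y)$ betrachten. Entscheidend ist hier die bereits bewiesene Bemerkung, wonach jede $\delta$-Umgebung eines Punktes in $\tau_d$ liegt; damit sind $\mathcal{U}_\delta(x)$ und $\mathcal{U}_\delta(y)$ tatsächlich offene Umgebungen von $x$ beziehungsweise $y$ im Sinne der induzierten Topologie. Auf diese Tatsache würde ich mich explizit berufen, statt die Offenheit erneut herzuleiten. Es bleibt damit nur noch zu zeigen, dass diese beiden Umgebungen disjunkt sind.

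Für den Disjunktheitsnachweis würde ich einen Widerspruchsbeweis führen: Angenommen, es gäbe ein $z \in \mathcal{U}_\delta(x) \cap \mathcal{U}_\delta(y)$. Dann gälte definitionsgemäß $d(x,z) < \delta$ und $d(z,y) < \delta$. Mit der Symmetrie (Axiom \textit{ii)}) und der Dreiecksungleichung (Axiom \textit{iii)}) würde dann folgen, dass $d(x,y) \leq d(x,z) + d(z,y) < 2\delta = d(x,y)$ ist, was einen offensichtlichen Widerspruch darstellt. Also muss $\mathcal{U}_\delta(x) \cap \mathcal{U}_\delta(y) = \emptyset$ gelten, und damit wären alle Bedingungen der Hausdorff-Definition erfüllt.

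Ich erwarte, dass dieser Beweis keine wesentliche Hürde birgt, da die gesamte Argumentation aus einer geschickten Wahl von $\delta$ in Abhängigkeit von $d(x,y)$ und einer einmaligen Anwendung der Dreiecksungleichung besteht. Der einzige Punkt, auf den ich besonders achten würde, ist die saubere Begründung der Positivität von $\delta$ aus dem ersten Metrikaxiom sowie die korrekte Berufung auf die vorangehende Bemerkung, um die Offenheit der konstruierten Umgebungen sicherzustellen.
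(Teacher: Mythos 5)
Dein Beweisvorschlag ist korrekt und stimmt im Wesentlichen mit dem Beweis der Arbeit überein: Beide wählen $\delta$-Umgebungen vom Radius $\frac{1}{2}d(x,y)$, berufen sich auf die vorangehende Bemerkung für deren Offenheit und führen die Disjunktheit per Widerspruch über die Dreiecksungleichung auf $d(x,y) < d(x,y)$ zurück. Deine Variante ist sogar einen Hauch sauberer, da du korrekt von $x \neq y$ auf $d(x,y) > 0$ schließt, während die Arbeit umgekehrt mit $d(x,y) > 0$ startet.
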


\begin{proof}
    Wir zeigen, dass $(X, \tau_d)$ der Hausdorffeigenschaft genügt. Seien dazu zwei beliebige Elemente $x, y \in X$ mit der Eigenschaft $d(x,y) := d > 0$ gegeben. Aus $d(x,y) > 0$ folgt mittels der Metrikaxiome, dass $x \neq y$ ist. Wir betrachten nun die Mengen $\mathcal{U}_{\frac{d}{2}} (x)$ und $\mathcal{U}_{\frac{d}{2}} (y)$. Nach obiger Bemerkung gilt $\mathcal{U}_{\frac{d}{2}} (x), \mathcal{U}_{\frac{d}{2}} (y) \in \tau_d$. Wir zeigen, dass $\mathcal{U}_{\frac{d}{2}} (x) \cap \mathcal{U}_{\frac{d}{2}} (y) = \emptyset$: Angenommen das wäre nicht so, d.h. es gäbe ein $z \in \mathcal{U}_{\frac{d}{2}} (x) \cap \mathcal{U}_{\frac{d}{2}} (y)$. Dann gölte per Definition der $\frac{d}{2}$-Umgebungen von $x$ bzw. $y$, dass $d(x,z) < \frac{d}{2}$ und $d(y,z) < \frac{d}{2}$. Dann gilt aber 
    \begin{align}
        d = d(x,y) \leq d(x, z) + d(y,z) < \frac{d}{2} + \frac{d}{2} = d,
    \end{align}
    und damit $d < d$, was offensichtlich ein Widerspruch ist.
\end{proof}

Wir wollen nun zeigen, dass wir die Definitionen $3.4.$ und $3.5.$ auch äquivalent über die induzierten Topologien der beteiligten Metriken formulieren können. Wir beginnen mit dem Konvergenzbegriff von Folgen in metrischen Räumen:

\begin{proposition}
    Sei $(X,d)$ ein metrischer Raum und $\tau_d$ die zugehörige durch die Metrik $d$ induzierte Topologie. Sei weiter $(x_n)_{n \in \mathbb{N}} \subseteq X$ eine Folge in $X$ und $x \in X$. Dann sind folgende Aussagen äquivalent:

    \begin{itemize}
        \item[\textit{i)}] $(x_n)_{n \in \mathbb{N}}$ konvergiert bezüglich der Metrik $d$ gegen $x$, d.h. 
        \begin{align}
            \forall \delta > 0 \:\: \exists N > 0 : d(x_n,x) < \delta \:\: \forall n \geq N.
        \end{align}
        
        \item[\textit{ii)}] $(x_n)_{n \in \mathbb{N}}$ konvergiert bezüglich der Topologie $\tau_d$ gegen $x$, d.h. 
        \begin{align}
            \forall \mathcal{O} \in \tau_d \:\: \textit{mit} \:\: x \in \mathcal{O} : \exists N \in \mathbb{N} : x_n \in \mathcal{O} \:\: \forall n \geq N.
        \end{align}
           
    \end{itemize}
\end{proposition}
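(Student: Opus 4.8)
Der Beweis zerfällt naturgemäß in zwei Implikationen, da es sich um eine Äquivalenzaussage handelt. Der grundlegende Gedanke ist, dass die beiden Konvergenzbegriffe über die $\delta$-Umgebungen $\mathcal{U}_{\delta}(x)$ miteinander verbunden sind, welche gerade die Brücke zwischen der Metrik $d$ und der induzierten Topologie $\tau_d$ bilden. Konkret werde ich für die eine Richtung ausnutzen, dass nach der Definition \eqref{metrische Topologie...} jede offene Menge $\mathcal{O} \in \tau_d$, die $x$ enthält, eine ganze $\delta$-Umgebung von $x$ umfasst, und für die andere Richtung, dass nach der obigen Bemerkung die $\delta$-Umgebungen selbst offen sind, also in $\tau_d$ liegen.

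Zuerst würde ich die Implikation \textit{i)} $\Rightarrow$ \textit{ii)} zeigen. Sei also die Folge $(x_n)_{n \in \mathbb{N}}$ bezüglich der Metrik $d$ konvergent gegen $x$ und sei $\mathcal{O} \in \tau_d$ eine beliebige offene Umgebung von $x$, d.h. $x \in \mathcal{O}$. Nach der Definition \eqref{metrische Topologie...} des Mengensystems $\tau_d$ existiert dann ein $\delta > 0$ mit $\mathcal{U}_{\delta}(x) \subseteq \mathcal{O}$. Auf dieses $\delta$ wende ich die metrische Konvergenz an und erhalte ein $N \in \mathbb{N}$ mit $d(x_n, x) < \delta$ für alle $n \geq N$. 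Das bedeutet aber gerade $x_n \in \mathcal{U}_{\delta}(x) \subseteq \mathcal{O}$ für alle $n \geq N$, womit die topologische Konvergenz gezeigt ist.

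Für die umgekehrte Implikation \textit{ii)} $\Rightarrow$ \textit{i)} gehe ich von der topologischen Konvergenz aus und gebe mir ein beliebiges $\delta > 0$ vor. Die zugehörige $\delta$-Umgebung $\mathcal{U}_{\delta}(x)$ enthält wegen $d(x,x) = 0 < \delta$ den Punkt $x$ und liegt nach der obigen Bemerkung in $\tau_d$, ist also eine offene Umgebung von $x$. Damit liefert die topologische Konvergenz ein $N \in \mathbb{N}$ mit $x_n \in \mathcal{U}_{\delta}(x)$ für alle $n \geq N$, was per Definition der $\delta$-Umgebung genau $d(x_n, x) < \delta$ für alle $n \geq N$ bedeutet.

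Eine echte Schwierigkeit erwarte ich bei diesem Beweis nicht; die beiden Implikationen sind jeweils eine unmittelbare Übersetzung der Definitionen. Der einzige Punkt, auf den zu achten ist, ist die korrekte Verwendung der beiden bereits etablierten Hilfsresultate: die definierende Eigenschaft offener Mengen in \eqref{metrische Topologie...} für die Richtung \textit{i)} $\Rightarrow$ \textit{ii)} und die in der Bemerkung gezeigte Offenheit der $\delta$-Umgebungen für die Richtung \textit{ii)} $\Rightarrow$ \textit{i)}. Ohne letztere ließe sich die metrische Konvergenz nicht aus der topologischen gewinnen, da man sonst keine passende offene Menge zur Verfügung hätte, auf die man die Voraussetzung \textit{ii)} anwenden könnte.
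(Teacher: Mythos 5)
Ihr Beweis ist korrekt und folgt im Wesentlichen demselben Weg wie der Beweis im Text: Die Richtung \textit{i)} $\Rightarrow$ \textit{ii)} nutzt die definierende Eigenschaft \eqref{metrische Topologie...} der offenen Mengen, die Richtung \textit{ii)} $\Rightarrow$ \textit{i)} die in der Bemerkung gezeigte Offenheit der $\delta$-Umgebungen $\mathcal{U}_\delta(x)$. Auch die Begründung, warum genau diese beiden Hilfsresultate an den jeweiligen Stellen benötigt werden, deckt sich mit der Argumentation des Textes.
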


\begin{proof}
    \textit{i)} $\implies$ \textit{ii)}: Wir bemerken zuerst, dass wegen der Definition  von $\mathcal{U}_\delta (X)$ der Ausdruck in \textit{i)} äquivalent zu 
    \begin{align}
        \forall \delta > 0 \:\: \exists N > 0 : x_n \in \mathcal{U}_\delta (x) \:\: \forall n \geq N
    \end{align}
    ist. Sei nun $\mathcal{U} \in \tau_d$ mit $x \in \mathcal{U}$ beliebig gewählt. Dann existiert ein $\rho > 0$, sodass $\mathcal{U}_\rho (x) \subseteq \mathcal{U}$ ist. Aus dem äquivalenten Ausdruck zu \textit{i)} folgt nun aber, dass für dieses $\rho$ gerade gilt, dass ein $N \in \mathbb{N}$ existiert, sodass $x_n \in \mathcal{U}_\delta (x) \subseteq \mathcal{U}$ für alle $n \geq N$ gilt, was diese erste Richtung zeigt.

    \textit{ii)} $\implies$ \textit{i)}: Angenommen $\textit{ii)}$ gilt. Wir wählen in $\textit{ii)}$ für die Menge $\mathcal{O}$ die Menge $\mathcal{U}_\delta (x)$ mit beliebigem $\delta > 0$. Das dürfen wir tun, da $x \in \mathcal{U}_\delta (x)$ und nach obiger Bemerkung auch gilt, dass $\mathcal{U}_\delta (x) \in \tau_d$ ist. Da $\delta > 0$ beliebig gewählt wurde, folgt aus der Definition von $\mathcal{U}_\delta (x)$ die Aussage \textit{i)}.
\end{proof}

Mit diesem Satz haben wir nun unter anderem auch gezeigt, dass in einem metrischen Raum der Grenzwert einer konvergenten Folge, definiert über die Metrik $d$, eindeutig bestimmt ist, da wir im Satz $3.1.$ gezeigt haben, dass $(X, \tau_d)$ ein Hausdorffraum ist und wir aus Satz $2.1.$ wissen, dass konvergente Folgen in Hausdorffräume eindeutig bestimmte Grenzwerte haben.

Als Nächstes zeigen wir, dass der metrische Stetigkeitsbegriff einer Funktion $f : X \longrightarrow Y$ zwischen metrischen Räumen äquivalent über die zugehörigen induzierten Topologien erklärt werden kann:

\begin{proposition}
    Seien $(X, d_X)$ und $(Y, d_Y)$ zwei metrische Räume und sei $\tau_{d_X}$ die zur Metrik $d_X$ und $\tau_{d_Y}$ die zur Metrik $d_Y$ zugehörige induzierte Topologie. Sei weiter $f : X \longrightarrow Y$ eine Funktion zwischen $X$ und $Y$. Dann sind folgende Aussagen äquivalent:

    \begin{itemize}
        \item[\textit{i)}] $f$ ist eine stetige Funktion von $X$ nach $Y$ bezüglich der Metriken $d_X$ und $d_Y$, d.h. für alle $x \in X$ gilt 
        \begin{align}
            \forall \epsilon > 0 \:\: \exists \delta > 0 : \forall y \in X \:\: \textit{mit} \:\: d_X (x,y) < \delta \implies d_Y (f(x), f(y)) < \epsilon.
        \end{align}
        
        \item[\textit{ii)}] $f$ ist eine stetige Funktion von $X$ nach $Y$ bezüglich der Topologien $\tau_{d_X}$ und $\tau_{d_Y}$, d.h. 
        \begin{align}
            \forall \mathcal{V} \in \tau_{d_Y} : f^{-1}(\mathcal{Y}) \in \tau_{d_X}.
        \end{align}
        Dabei bezeichnet $f^{-1}(\mathcal{V}) := \{ x \in X \:\: | \:\: f(x) \in \mathcal{V} \}$ wieder das Urbild der Menge $\mathcal{V}$ unter $f$.
    \end{itemize}
\end{proposition}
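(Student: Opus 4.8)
The plan is to prove the equivalence of the $\epsilon$-$\delta$ definition of continuity and the topological (preimage-of-open-is-open) definition by establishing both implications separately, working throughout with the $\delta$-neighborhoods $\mathcal{U}_\delta$ and the characterization of $\tau_d$ from \eqref{metrische Topologie...}. The key technical fact I would lean on repeatedly is the Bemerkung proved above, namely that every $\delta$-neighborhood $\mathcal{U}_\delta(x)$ is itself an open set in $\tau_d$.

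First I would show \textit{i)} $\implies$ \textit{ii)}. Let $\mathcal{V} \in \tau_{d_Y}$ be arbitrary; I must show $f^{-1}(\mathcal{V}) \in \tau_{d_X}$, which by \eqref{metrische Topologie...} means exhibiting for each $x \in f^{-1}(\mathcal{V})$ a radius $\delta > 0$ with $\mathcal{U}_\delta(x) \subseteq f^{-1}(\mathcal{V})$. So fix such an $x$; then $f(x) \in \mathcal{V}$, and since $\mathcal{V}$ is open in $\tau_{d_Y}$ there is $\epsilon > 0$ with $\mathcal{U}_\epsilon(f(x)) \subseteq \mathcal{V}$. Applying the metric continuity \textit{i)} at this point $x$ and this $\epsilon$ yields a $\delta > 0$ such that $d_X(x,y) < \delta$ implies $d_Y(f(x),f(y)) < \epsilon$. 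Translating into neighborhoods, this says precisely that $f(\mathcal{U}_\delta(x)) \subseteq \mathcal{U}_\epsilon(f(x)) \subseteq \mathcal{V}$, hence $\mathcal{U}_\delta(x) \subseteq f^{-1}(\mathcal{V})$, which is what \eqref{metrische Topologie...} demands.

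For the converse \textit{ii)} $\implies$ \textit{i)}, I would fix an arbitrary $x \in X$ and $\epsilon > 0$ and seek the required $\delta$. The natural target open set is $\mathcal{V} := \mathcal{U}_\epsilon(f(x))$, which lies in $\tau_{d_Y}$ by the Bemerkung. By hypothesis \textit{ii)} its preimage $f^{-1}(\mathcal{V})$ is open in $\tau_{d_X}$, and clearly $x \in f^{-1}(\mathcal{V})$ since $f(x) \in \mathcal{U}_\epsilon(f(x))$. Openness of $f^{-1}(\mathcal{V})$ then gives, via \eqref{metrische Topologie...}, a $\delta > 0$ with $\mathcal{U}_\delta(x) \subseteq f^{-1}(\mathcal{V})$. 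Unwinding this containment recovers exactly the statement that $d_X(x,y) < \delta$ forces $f(y) \in \mathcal{U}_\epsilon(f(x))$, i.e. $d_Y(f(x),f(y)) < \epsilon$, completing the proof.

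I do not expect a genuine obstacle here, as both directions are a routine dictionary translation between the $\epsilon$-$\delta$ language and the open-set language; the only point requiring care is the correct quantifier bookkeeping, particularly remembering that metric continuity in \textit{i)} is a pointwise statement that must hold for \emph{every} $x$, so in the direction \textit{i)} $\implies$ \textit{ii)} one must invoke it at the specific point $x$ encountered inside the preimage rather than globally. The repeated use of the Bemerkung to guarantee that the chosen $\epsilon$- and $\delta$-balls are actually elements of the respective topologies is what makes the translation go through cleanly.
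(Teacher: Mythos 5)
Your proposal is correct and follows essentially the same route as the paper's own proof: both directions rest on the characterization \eqref{metrische Topologie...} of open sets in a metric topology together with the Bemerkung that every $\delta$-Umgebung is itself open, and the quantifier bookkeeping (invoking \textit{i)} pointwise at the specific $x \in f^{-1}(\mathcal{V})$) is handled exactly as in the paper. The only difference is cosmetic: in the direction \textit{i)} $\implies$ \textit{ii)} the paper additionally writes $\mathcal{V}$ as the union $\bigcup_{z \in \mathcal{V}} \mathcal{V}^{Y}(\epsilon_z, z)$ before arguing, a step your streamlined version correctly omits as unnecessary, since only the existence of one $\epsilon$-ball around $f(x)$ inside $\mathcal{V}$ is ever used.
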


\begin{proof}
    \textit{i)} $\implies$ \textit{ii)}: Sei $\mathcal{V} \in \tau_{d_Y}$ beliebig gewählt. Aus der Definition von $\tau_{d_Y}$ folgt, dass zu jedem $z \in \mathcal{V}$ ein $\epsilon_z > 0$ existieren muss, sodass für alle $z \in \mathcal{V}$ gilt, dass 
    \begin{align}
        \mathcal{V}^{Y} (\epsilon_z, z) := \{ w \in Y \:\: | \:\: d_Y(z, w) < \epsilon_z \} \subseteq \mathcal{V}
    \end{align}
    ist. Weiter gilt, dass 
    \begin{align}
        \mathcal{V} = \bigcup_{z \in \mathcal{V}} \mathcal{V}^{Y}(\epsilon_z, z),
    \end{align}
    denn sei $w \in \mathcal{V}$, so existiert zum einen ein $\epsilon_w > 0$, sodass $\mathcal{V}^{Y}(\epsilon_w, w) \subseteq \mathcal{V} \subseteq \bigcup_{z \in \mathcal{V}} \mathcal{V}^{Y} (\epsilon_z, z)$ gilt, während zum anderen $\bigcup_{z \in \mathcal{V}} \mathcal{V}^{-1}(\epsilon_z, z) \subseteq \mathcal{V}$ aus der Tatsache folgt, dass für alle $z \in \mathcal{V}$ die Menge $\mathcal{V}^{Y}(\epsilon_z, z) \subseteq \mathcal{V}$ ist. Wir müssen nun zeigen, dass $f^{-1}(\mathcal{V}) \in \tau_{d_X}$ ist: Angenommen $f^{-1}(\mathcal{V}) \neq \emptyset$ (andernfalls wäre nichts zu zeigen, da $\emptyset \in \tau_{d_X}$), so existiert ein $x \in f^{-1}(\mathcal{V})$, sodass $f(x) \in \mathcal{V}$ gilt. Wegen $\mathcal{V} = \bigcup_{z \in \mathcal{V}} \mathcal{V}^{Y}(\epsilon_z, z)$ folgt, dass $f(x) \in \mathcal{V}^{Y} (\epsilon_{f(x)}, f(x)) \subseteq \mathcal{V}$ ist. Aus \textit{i)} folgt, dass ein $\delta > 0$ existiert, sodass gilt: 
    \begin{align}
        \forall y \in X \:\:\textit{mit} \:\: d_X (x,y) < \delta \implies d_Y(f(x), f(y)) < \epsilon_z
    \end{align}
    oder anders ausgedrückt 
    \begin{align}
        \forall y \in X \:\:\textit{mit} \:\: y \in \mathcal{U}^{X}_\delta (x) \implies f(y) \in \mathcal{V}^{Y} (\epsilon_{f(x)}, f(x)) \subseteq \mathcal{V},
    \end{align}
    wobei $\mathcal{U}^{X}_\delta (x) := \{ u \in X \:\: | \:\: d_X(x,u) < \delta \}$ ist. Daraus folgt nun aber sofort $\mathcal{U}^{X}_\delta (x) \subseteq f^{-1}(\mathcal{V})$. Da $x \in f^{-1}(\mathcal{V})$ beliebig gewählt war folgt, dass $f^{-1}(\mathcal{V}) \in \tau_{d_X}$ ist, was die Aussage \textit{ii)} zeigt.

    \textit{ii)} $\implies$ \textit{i)}: Sei $x \in X$ und ein $\epsilon > 0$ beliebig gewählt. Aus obiger Bemerkung wissen wir, dass die Menge 
    \begin{align}
        \mathcal{V}^{Y}_\epsilon (f(x)) := \{ z \in Y \:\: | \:\: d_Y(f(x),z) < \epsilon \}
    \end{align}
    in der Topologie $\tau_{d_Y}$ liegt, d.h. $\mathcal{V}^{Y}_\epsilon (f(x)) \in \tau_{d_Y}$. Da \textit{ii)} gilt, folgt, dass 
    \begin{align}
        f^{-1}(\mathcal{V}^{Y}_\epsilon (f(x))) \in \tau_{d_X}
    \end{align}
    ist. Da $x \in f^{-1}(\mathcal{V}^{Y}_\epsilon (f(x)))$ ist, folgt mit der Definition der Topologie $\tau_{d_X}$, dass ein $\delta > 0$ existiert, sodass 
    \begin{align}
        \mathcal{U}^{X}_\delta (x) := \{ y \in X \:\: | \:\: d_X (x,y) < \delta\} \subseteq f^{-1}(\mathcal{V}^{Y}_\epsilon (f(x))).
    \end{align}
    Aus diesem Ausdruck folgt aber sofort, dass für alle $y \in X$ mit $d_X(x,y) < \delta$ gilt, dass $d_Y(f(x),f(y)) < \epsilon$ ist. Da $x$ und $\epsilon$ beliebig gewählt waren, folgt die Aussage.
\end{proof}

Damit haben wir gesehen, dass die beiden anschaulichen Begrifflichkeiten der Konvergenz einer Folge und der Stetigkeit einer Funktion in metrischen Räumen rein über (induzierte) Topologien definierbar und dabei äquivalent zu den zugehörigen \textit{metrischen} Definitionen sind. Da jede Metrik eine Topologie erzeugt, handelt es sich bei den topologischen Räumen um die allgemeinere Struktur. 

Wollen wir nun also die anschaulichen Begriffe der Konvergenz und der Stetigkeit einer Funktion auf sinnvolle Weise auf allgemeine topologische Räume verallgemeinern, so zeigen die letzten beiden Sätze, wie man das machen kann. Der Vollständigkeitshalber wollen wir noch kurz die Definition des Stetigkeitsbegriffes in allgemeinen topologischen Räumen angeben:

\begin{definition}
    Seien $(X, \tau_X)$ und $(Y, \tau_Y)$ zwei topologische Räume und $f : X \longrightarrow Y$ eine Funktion. Wir nennen $f$ $(\tau_X, \tau_Y)$-stetig oder auch einfach nur stetig (wenn sich die zugehörigen Topologien aus dem Kontext ergeben), falls gilt
    \begin{align}
        \forall \mathcal{V} \in \tau_Y : f^{-1}(\mathcal{V}) \in \tau_X
    \end{align}
\end{definition}

Betrachten wir nun noch kurz eine nützliche Eigenschaft stetiger Funktionen.

\begin{lemma}
    Seien $(X, \tau_X)$, $(Y, \tau_Y)$ und $(Z, \tau_Z)$ jeweils topologische Räume und $f : X \longrightarrow Y$ und $g : Z \longrightarrow X$ seien jeweils stetige Abbildungen. Dann gilt, dass die Funktion $h : Z \longrightarrow Y$, definiert durch 
    \begin{align}
        h(z) := f(g(z)) \:\:\:\: \forall z \in Z,
    \end{align}
    eine stetige Abbildung ist.
\end{lemma}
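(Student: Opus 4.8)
Der Plan ist, die Stetigkeit von $h$ direkt über die topologische Definition der Stetigkeit (Definition $3.6.$) nachzuweisen. Das heißt, ich würde zeigen, dass für jede offene Menge $\mathcal{V} \in \tau_Y$ das Urbild $h^{-1}(\mathcal{V})$ in $\tau_Z$ liegt. Der eigentliche Dreh- und Angelpunkt des Beweises ist dabei die mengentheoretische Urbildidentität $h^{-1}(\mathcal{V}) = g^{-1}(f^{-1}(\mathcal{V}))$, die unmittelbar aus der Definition $h = f \circ g$ folgt und die beiden Stetigkeitsvoraussetzungen miteinander verzahnt.

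Konkret würde ich wie folgt vorgehen: Zuerst fixiere ich eine beliebige offene Menge $\mathcal{V} \in \tau_Y$. Da $f : X \longrightarrow Y$ nach Voraussetzung stetig ist, liefert die Definition der Stetigkeit sofort $f^{-1}(\mathcal{V}) \in \tau_X$, das heißt $f^{-1}(\mathcal{V})$ ist eine offene Menge in $X$. Im nächsten Schritt nutze ich die Stetigkeit von $g : Z \longrightarrow X$ aus: Da $f^{-1}(\mathcal{V})$ offen in $X$ ist, folgt mit derselben Definition wiederum $g^{-1}(f^{-1}(\mathcal{V})) \in \tau_Z$. Es ist also wesentlich, die beiden Voraussetzungen in der richtigen Reihenfolge anzuwenden, nämlich erst die Stetigkeit von $f$ und darauf aufbauend die von $g$.

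Der abschließende Schritt ist die Verifikation der oben erwähnten Urbildidentität, welche ich über eine elementare Äquivalenzkette führen würde: Es gilt $z \in h^{-1}(\mathcal{V})$ genau dann, wenn $f(g(z)) \in \mathcal{V}$, was äquivalent zu $g(z) \in f^{-1}(\mathcal{V})$ ist, und dies ist wiederum äquivalent zu $z \in g^{-1}(f^{-1}(\mathcal{V}))$. Da die rechte Seite nach den ersten beiden Schritten in $\tau_Z$ liegt, folgt $h^{-1}(\mathcal{V}) \in \tau_Z$, und weil $\mathcal{V} \in \tau_Y$ beliebig gewählt war, ist $h$ stetig. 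Ein echtes Hindernis erwarte ich hier nicht; die gesamte inhaltliche Arbeit steckt allein in der korrekten Buchführung über die Urbilder und im iterierten Anwenden der topologischen Stetigkeitsdefinition.
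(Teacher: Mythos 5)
Dein Beweisvorschlag ist korrekt und stimmt in allen wesentlichen Schritten mit dem Beweis der Arbeit überein: Auch dort wird eine beliebige offene Menge $\mathcal{V} \in \tau_Y$ fixiert, nacheinander die Stetigkeit von $f$ und $g$ angewendet und anschließend die Urbildidentität $h^{-1}(\mathcal{V}) = g^{-1}(f^{-1}(\mathcal{V}))$ über eine elementweise Mengengleichheit verifiziert.
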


\begin{proof}
    Sei $\mathcal{V} \in \tau_Y$ beliebig. Da $f : X \longrightarrow Y$ stetig ist, folgt, dass $f^{-1}(\mathcal{V}) \in \tau_X$ ist. Da $g : Z \longrightarrow X$ stetig ist, folgt, dass $g^{-1}(f^{-1}(\mathcal{V})) \in \tau_Z$ ist. Weiter gilt
    \begin{align}
        g^{-1}(f^{-1}(\mathcal{V})) =& \: \{ z \in Z \:\: | \:\: g(z) \in f^{-1}(\mathcal{V}) \} \nonumber \\ =& \: \{ z \in Z \:\: | \:\: f(g(z)) \in \mathcal{V}\} \nonumber \\ =& \: \{ z \in Z \:\: | \:\: h(z) \in \mathcal{V} \} \nonumber \\ =& \: h^{-1}(\mathcal{V}),
    \end{align}
    woraus die Behauptung folgt.
\end{proof}

Fassen wir nun also kurz zusammen, warum wir eine Topologie so definiert haben, wie wir sie definiert haben: In einem uns vertrauten metrischen Raum, nämlich $(\mathbb{R}, d)$ mit $d(x,y) = |x-y|$ $\forall x,y \in \mathbb{R}$, lassen sich die intuitiven Begriffe der Konvegrenz von Folgen und der Stetigkeit reeller Funktionen mittels der Metrik $d$ erklären, welche einen natürlichen und intuitiven Nähebegriff einführt. Die Metrik $d$ induziert dabei ein spezielles Mengensystem, $\tau_d$, welches den Axiomen einer Topologie genügt. 

Mittels diesem Mengensystem lassen sich die intuitiven Definitionen der Konvergenz von Folgen und der Stetigkeit reeller Funktionen umformulieren, sodass die Metrik $d$ nicht mehr explizit auftaucht. Das heißt, dass wir diese Begriffe ohne expliziten Rückgriff auf einen Längenbegriff formulieren können und das nur dieses spezielle Mengensystem $\tau_d$, welches wir induzierte Topologie nennen, von nöten ist, um diese Begriffe zu definieren. 

Wollen wir nun zu allgemeineren Räumen $X$ übergehen, in welchen vorab keine Metrik definiert sein muss, in denen wir aber trotzdem über Begriffe wie die der Konvergenz sprechen wollen, so führen wir diese Begriffe über eine Teilmenge der Potenzmenge von $X$ ein. 

Wir nutzen dabei aber nicht beliebige Teilmengen der Potenzmenge, sondern beschränken uns auf sogenannte Topologien. Das heißt, dass, wenn wir einen Nähebegriff auf einer Menge einführen wollen, wir uns auf spezielle Teilmengen der Potenzmenge konzentrieren, die gewisse Eigenschaften des metrikinduzierten Nähebegriffes nachahmen. 

Der Vorteil davon ist, dass damit unser Nähebegriff auf einem topologischen Raum noch genügend Struktur besitzt, sodass wichtige und interessante Sätze für einen topologischen Raum gezeigt werden können, denn je mehr Struktur ein spezielles mathematisches Objekt hat, desto schönere Eigenschaften gehen damit einher. 

Eine gute Veranschaulichung davon bieten Hausdorffräume: Wollen wir die Eindeutigkeit des Grenzwertes einer Folge, so müssen wir die Menge der Topologien, welche wir betrachten wollen, einschränken. So konzentrieren wir uns in diesem Fall beispielsweise nur auf diejenigen Topologien, welche der Hausdorff-Eigenschaft genügen. Das heißt, wir geben der Topologie mehr Struktur, weil wir damit schönere Eigenschaften wie die Eindeutigkeit des Grenzwertes einer Folge zeigen können. 

Aus genau diesem Grund schränkem wir uns auch bei der Einführung eines Nähebegriffes auf einer Menge $X$ auf spezielle Teilmengen von $\mathcal{P}(X)$ ein. Da das Mengensystem $\tau_d$ im Falle metrischer Räume von großer Bedeutung war, liegt es nun natürlich Nahe, Eigenschaften von diesem Mengensystem als Forderung an unsere Teilmenge $\tau \subseteq \mathcal{P}(X)$ herzunehmen. 

Man kann sich abschließend nun noch fragen, warum wir ausgerechnet diese drei ganz bestimmten Eigenschaften vom Mengensystem $\tau_d$ als Axiome einer allgemeinen Topologie $\tau \subseteq \mathcal{P}(X)$ hernehmen, und nicht irgendwelche anderen Eigenschaften von $\tau_d$. Der Grund ist der Folgende: Da wir einen Nähebegriff über eine (spezielle) Teilmenge der Potenzmenge von $X$ einführen möchten, arbeiten wir in einem sehr mengentheoretischen Rahmen. Die fundamentalen Operartionen der Mengentheorie sind dabei die Mengenvereiningung $\cup$, der Mengenschnitt $\cap$ und die Komplementbildung. 

Es macht also zuallererst einmal Sinn, die Eigenschaften von $\tau_d$ bezüglich $\cup$ und $\cap$ als Axiome für $\tau$ zu nutzen. Beachte dabei, dass wir für eine induzierte Topologie $\tau_d$ nur beweisen konnten, dass endliche Mengenschnitte von Elementen aus $\tau_d$ wieder in $\tau_d$ liegen. Das im allgemeinen keine beliebige Mengenschnitte von Elementen aus $\tau_d$ wieder in $\tau_d$ liegen müssen, sieht man anhand des folgenden Beispiels: Sei der topologische Raum $(\mathbb{R}, \tau_d)$ mit $d(x,y) = |x-y|$ für alle $x,y \in \mathbb{R}$ gegeben. Wir betrachten die Menge $\{ (-\frac{1}{n}, \frac{1}{n}) \in \tau_d \:\: | \:\: n \in \mathbb{N} \}$. Dann gilt
\begin{align}
    \bigcap_{n \in \mathbb{N}} \bigg(-\frac{1}{n}, \frac{1}{n}\bigg) = \{ 0 \}
\end{align}
und $\{ 0 \}$ ist bzgl. $\tau_d$ nicht offen. Daraus folgt sofort, dass für eine allgemeine metrikinduzierte Topologie der sogenannte Abschluss bzgl. des Mengenschnittes nur für endlich viele Mengen aus $\tau_d$ besteht.

Weiter sollen alle Punkte in $X$ irgendwie zueinander in Beziehung stehen, d.h. irgendwie nah zueinander sein, weshalb es auch Sinn macht zu fordern, dass $X$ selbst in $\tau$ liegt, so wie $\mathbb{R} \in \tau_d$ gilt. 

Bezüglich der Komplementbildung wissen wir bezüglich $\tau_d$ ersteinmal nur, dass das Komplement von $\mathbb{R}$, also $\emptyset$, sicher in $\tau_d$ liegt. Ansonsten ist $\tau_d$ aber im allgemeinen nicht abgeschlossen über die Komplementbildung seiner Elemente. Das kann man wie folgt sehen: Wir betrachten abermals den topologischen Raum $(\mathbb{R}, \tau_d)$ mit $d(x,y) = |x-y|$ für alle $x,y \in \mathbb{R}$. Dann ist das Komplement der offenen Menge $(a,b) \in \tau_d$ nicht in $\tau_d$, denn das Komplement von $(a,b)$ ist die Menge $(-\infty, a] \cup [b, \infty) =: \mathcal{W}$. Da sich für die Punkte $a$ und $b$ kein $\epsilon > 0$ finden lässt, sodass die zugehörige $\epsilon$-Umgebung noch in der Menge $\mathcal{W}$ liegt, folgt, dass $\mathcal{W}$ nicht offen bzgl. $(\mathbb{R}, \tau_d)$ ist. Daher fordern wir für eine abstrakte Topologie $\tau$ auch nur, dass $\emptyset$ in $\tau$ liegt.

Damit haben wir nun also geklärt, warum Topologien so definiert sind, wie sie definiert sind, und wir haben damit auch gesehen, dass unser Nähebegriff auf einem topologischen Raum $(X, \tau)$ noch nah genug am Nähebegriff eines metrischen Raumes dran ist, sodass wir genügend Struktur zur Verfügung haben, um gut mit diesen arbeiten zu können.\\

Kehren wir nun also nach unseren kurzen Ausflug in die Theorie metrischer Räume zu unserer ursprünglichen Frage zurück, wie wir Kurven in einer Menge $X$ mathematisch modellieren. Im Kapitel $2$ hatten wir bereits begründet, warum eine Kurve $\mathcal{F} \subseteq X$, welche anschaulich in $x_1 \in X$ beginnt und in $x_2 \in X$ endet, über eine Funktion $\gamma : [a,b] \longrightarrow \mathcal{F}$ mit $\gamma (a) = x_1$ und $\gamma (b) = x_2$ modelliert werden sollte. 

Zusätzlich hatten wir im Kapitel $2$ noch angemerkt, dass wir bezüglich $\gamma$ auch eine Form der Stetigkeit fordern sollten. Wir wissen nun, wie wir diesen Begriff einführen: Zuerst statten wir $X$ und $[a,b]$ mit einer Topologie aus, sodass wir dann fordern können, dass die Funktion $\gamma$ bezüglich dieser Topologien stetig sein soll. 

Während wir dabei im Raum $X$ beliebige Topologien zu lassen wollen, möchten wir im folgenden eine spezielle Topologie auf $[a,b] \subseteq \mathbb{R}$ einführen, nämlich gerade die Standardtopologie von $\mathbb{R}$. Der Grund dafür ist, dass $[a,b]$ einer vertrauten Menge entspricht, auf der wir deshalb auch einen vertrauten Nähebegriff etablieren wollen. 

Es gibt nur ein Problem, welches uns daran hindert, diesen intuitiven Nähebegriff auf $[a,b]$ einzuführen: Die Standardtopologie von $\mathbb{R}$ ist auf ganz $\mathbb{R}$ definiert, während $[a,b]$ ein endliches Intervall in $\mathbb{R}$ ist. Wie also vererben wir eine Topologie, die auf $\mathbb{R}$ erklärt ist, auf eine Teilmenge $[a,b] \subseteq \mathbb{R}$? Die Beantwortung dieser Frage führt auf die folgende Definition:

\begin{definition}
    Sei $(X, \tau)$ ein topologischer Raum und $\mathcal{W} \subseteq X$. Wir erklären die sogenannte Teilraumtopologie von $\mathcal{W}$ als das Mengensystem 
    \begin{align}
        \tau_\mathcal{W} := \{ \mathcal{U} \cap \mathcal{W} \:\: | \:\: \mathcal{U} \in \tau  \}.
    \end{align}
\end{definition}

Bei der sogenannten Teilraumtopologie handelt es sich tatsächlich um eine Topologie:

\begin{lemma}
    Sei $(X, \tau)$ ein topologischer Raum und $\mathcal{W} \subseteq X$. Dann handelt es sich bei der Teilraumtopologie $\tau_\mathcal{W}$ um eine Topologie.
\end{lemma}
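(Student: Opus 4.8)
Der Plan ist es, ganz analog zum bereits geführten Beweis des Lemmas über die Pullback-Topologie die drei Topologieaxiome aus Definition $2.1.$ für das Mengensystem $\tau_\mathcal{W}$ zu verifizieren. Das entscheidende Hilfsmittel sind dabei die Distributivgesetze, die das Verhalten des Mengenschnittes mit $\mathcal{W}$ gegenüber beliebigen Vereinigungen und endlichen Schnitten beschreiben. Die grundlegende Beobachtung ist, dass sich jedes Element von $\tau_\mathcal{W}$ per Definition in der Form $\mathcal{U} \cap \mathcal{W}$ mit $\mathcal{U} \in \tau$ schreiben lässt, sodass sich die Axiome für $\tau_\mathcal{W}$ stets auf die bereits bekannten Axiome für $\tau$ zurückführen lassen.

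Zuerst würde ich Axiom \textit{i)} abhandeln: Da $\tau$ eine Topologie ist, gilt $\emptyset, X \in \tau$, und wegen $\emptyset = \emptyset \cap \mathcal{W}$ sowie $\mathcal{W} = X \cap \mathcal{W}$ folgt sofort $\emptyset, \mathcal{W} \in \tau_\mathcal{W}$. Für Axiom \textit{ii)} würde ich eine beliebige Indexmenge $\mathcal{I}$ und Mengen $\mathcal{A}_i \in \tau_\mathcal{W}$ wählen und zu jedem $i$ ein $\mathcal{U}_i \in \tau$ mit $\mathcal{A}_i = \mathcal{U}_i \cap \mathcal{W}$ fixieren. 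Das Distributivgesetz liefert dann
\begin{align}
    \bigcup_{i \in \mathcal{I}} \mathcal{A}_i = \bigcup_{i \in \mathcal{I}} ( \mathcal{U}_i \cap \mathcal{W} ) = \bigg( \bigcup_{i \in \mathcal{I}} \mathcal{U}_i \bigg) \cap \mathcal{W},
\end{align}
und da $\tau$ unter beliebigen Vereinigungen abgeschlossen ist, liegt $\bigcup_{i \in \mathcal{I}} \mathcal{U}_i$ in $\tau$, womit die rechte Seite per Definition zu $\tau_\mathcal{W}$ gehört. Axiom \textit{iii)} verläuft wörtlich genauso mit endlichem $\mathcal{I} = \{1, \dots, n\}$ und der Identität
\begin{align}
    \bigcap_{i \in \mathcal{I}} \mathcal{A}_i = \bigg( \bigcap_{i \in \mathcal{I}} \mathcal{U}_i \bigg) \cap \mathcal{W},
\end{align}
wobei nun die Endlichkeit von $\mathcal{I}$ sicherstellt, dass $\bigcap_{i \in \mathcal{I}} \mathcal{U}_i \in \tau$ ist.

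Eine echte Hürde erwarte ich bei diesem Lemma nicht; es handelt sich um eine Routineverifikation nach dem Muster der vorangegangenen Topologie-Nachweise. Der einzige Punkt, auf den ich achten würde, ist die saubere Begründung der beiden Distributivgesetze, insbesondere dass beim Schnittaxiom die Endlichkeit der Indexmenge tatsächlich nur für die Abgeschlossenheit von $\tau$ benötigt wird, während die mengentheoretische Umformung selbst für beliebige Indexmengen gilt.
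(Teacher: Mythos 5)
Dein Vorschlag ist korrekt und folgt genau demselben Weg wie der Beweis der Arbeit: Nachweis der drei Topologieaxiome, wobei $\emptyset = \emptyset \cap \mathcal{W}$, $\mathcal{W} = X \cap \mathcal{W}$ sowie die beiden Distributivgesetze für Vereinigung und endlichen Schnitt mit $\mathcal{W}$ die Rückführung auf die Axiome von $\tau$ leisten. Der einzige Unterschied ist, dass die Arbeit die von dir nur angekündigten Distributivgesetze elementweise ausführlich nachrechnet, was aber keine inhaltliche Abweichung darstellt.
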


\begin{proof}
    Wir prüfen die Topologieaxiome:

    \begin{itemize}
        \item[\textit{i)}] Da $\emptyset \in \tau$ und $\emptyset \cap \mathcal{W} = \emptyset$ gilt, folgt, dass $\emptyset \in \tau_\mathcal{W}$ ist. Weiter gilt $X \in \tau$ und $X \cap \mathcal{W} = \mathcal{W}$, was impliziert, dass $\mathcal{W} \in \tau_\mathcal{W}$.
        
        \item[\textit{ii)}] Sei $\mathcal{I}$ eine beliebige Indexmenge und $M_i \in  \tau_\mathcal{W}$ für alle $i \in \mathcal{I}$ beliebig gegeben. Wegen $M_i \in \tau_\mathcal{W}$ für alle $i \in \mathcal{I}$ folgt, dass für alle $i \in \mathcal{I}$ ein $\mathcal{U}_i \in \tau$ existiert, sodass $M_i = \mathcal{U}_i \cap \mathcal{W}$ ist. Es gilt damit 
        \begin{align}
            \bigcup_{i \in \mathcal{I}} M_i = \bigcup_{i \in \mathcal{I}} (\mathcal{U}_i \cap \mathcal{W}) = \big(\bigcup_{i \in \mathcal{I}} \mathcal{U}_i\big) \cap \mathcal{W}, \label{a}
        \end{align}
        woraus per Definition sofort folgt, das $\bigcup_{i \in \mathcal{I}} M_i \in \tau_\mathcal{W}$ gilt, da $\bigcup_{i \in \mathcal{I}} \mathcal{U}_i \in \tau$ ist. Die letzte Gleichheit in \eqref{a} folgt dabei folgendermaßen: Sei $x \in \bigcap_{i \in \mathcal{I}} (\mathcal{U}_i \cap \mathcal{W})$, so existiert ein $j \in \mathcal{I}$, sodass $x \in \mathcal{U}_j \cap \mathcal{W} \subseteq \big(\bigcup_{i \in \mathcal{I}} \mathcal{U}_i \big) \cap \mathcal{W}$, da $\mathcal{U}_j \subseteq \big(\bigcup_{i \in \mathcal{I}} \mathcal{U}_i \big)$. Umgedreht gilt für $x \in \big(\bigcup_{i \in \mathcal{I}} \mathcal{U}_i \big) \cap \mathcal{W}$, dass $x \in \bigcup_{i \in \mathcal{I}} \mathcal{U}_i$ und $x \in \mathcal{W}$ ist. Daraus folgt, dass ein $j \in \mathcal{I}$ existiert, sodass $x \in \mathcal{U}_j$ und $x \in \mathcal{W}$. Das heißt aber gerade, dass $x \in \mathcal{U}_j \cap \mathcal{W} \subseteq \bigcup_{i \in \mathcal{I}} (\mathcal{U}_i \cap \mathcal{W})$ gilt.
        
        \item[\textit{iii)}] Sei $\mathcal{I} := \{1,...,n\}$ mit $n \in \mathbb{N}$. Seien weiter $M_i \in \tau_\mathcal{W}$ für alle $i \in \mathcal{I}$ beliebig gegeben. Wegen $M_i \in \tau_\mathcal{W}$ für alle $i \in \mathcal{I}$ folgt, dass für alle $i \in \mathcal{I}$ ein $\mathcal{U}_i \in \tau$ existiert, sodass $M_i = \mathcal{U}_i \cap \mathcal{W}$. Es gilt damit 
        \begin{align}
            \bigcap_{i \in \mathcal{I}} M_i = \bigcap_{i \in \mathcal{I}} (\mathcal{U}_i \cap \mathcal{W}) = \big( \bigcap_{i \in \mathcal{I}} \mathcal{U}_i \big) \cap \mathcal{W}, 
        \end{align}
        woraus sofort folgt, dass $\bigcap_{i \in \mathcal{I}} M_i \in \tau_\mathcal{W}$ gilt, da $\bigcap_{i \in \mathcal{I}} \mathcal{U}_i \in \tau$ ist. Die letzte Gleichheit folgt dabei folgendermaßen: Sei $x \in  \bigcap_{i \in \mathcal{I}} (\mathcal{U}_i \cap \mathcal{W})$, dass heißt $x \in \mathcal{U}_i \cap \mathcal{W}$ für alle $i \in \mathcal{I}$. Daraus folgt aber, dass $x \in \mathcal{U}_i$ für alle $i \in \mathcal{I}$ und $x \in \mathcal{W}$, woraus $x \in \big( \bigcap_{i \in \mathcal{I}} \mathcal{U}_i \big) \cap \mathcal{W}$ folgt. Umgedreht folgt aus $x \in \big( \bigcap_{i \in \mathcal{I}} \mathcal{U}_i \big) \cap \mathcal{W}$, dass $x \in \bigcap_{i \in \mathcal{I}} \mathcal{U}_i$ und $x \in \mathcal{W}$ gilt, woraus folgt, dass $x \in \mathcal{U}_i \cap \mathcal{W}$ für alle $i \in \mathcal{I}$, was schlussendlich die Gleichheit zeigt.
    \end{itemize}
\end{proof}

Betrachten wir die Definition der Teilraumtopologie mit dem Hintergrundwissen, dass es sich bei dieser auch tatsächlich um eine Topologie auf $\mathcal{W} \subseteq X$ handelt, so sehen wir auch, dass diese tatsächlich eine Vererbung der topologische Struktur auf $X$ auf die Teilmenge $\mathcal{W}$ darstellt, denn immerhin fügen wir der Topologie $\tau_\mathcal{W}$ keine vollkommen neuen Elemente hinzu. So liegen in $\tau_\mathcal{W}$ nur diejenigen offenen Mengen aus $\tau$. die onehin schon in $\mathcal{W}$ enthalten waren, und die Schnitte aller weiteren offenen Mengen mit $\mathcal{W}$. 

Mittels der Teilraumtopologie sind wir nun auch in der Lage, eine weitere wichtige Eigenschaft stetiger Funktionen zu formulieren und zu beweisen, die wir im späteren Verlauf dieser Arbeit noch brauchen, wenn wir uns mit Einschränkungen stetiger Kurven auf Teilintervalle beschäftigen werden.

\begin{lemma}
    Seien $(X, \tau_X)$ und $(Y, \tau_Y)$ zwei topolgische Räume, $A \subseteq X$ eine beliebige Teilmenge von $X$ und $f : X \longrightarrow Y$ eine stetige Funktion. Weiter definieren wir die sogenannte Einschränkung der Funktion $f$ auf die Menge $A$ als die Funktion $f |_A : A \longrightarrow Y$, definiert durch $f |_A (x) = f(x)$ für alle $x \in A$. Statten wir $A$ mit der durch $\tau_X$ induzierten Teilraumtopologie $\tau_{X,A}$ aus, so ist die Funktion $f |_A$ $(\tau_{X, A}, \tau_Y)$-stetig.
\end{lemma}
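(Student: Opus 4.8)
Der Plan ist, direkt die topologische Definition der Stetigkeit aus Definition 3.8 nachzuweisen, das heißt zu zeigen, dass für jede offene Menge $\mathcal{V} \in \tau_Y$ das Urbild $(f |_A)^{-1}(\mathcal{V})$ unter der eingeschränkten Funktion in der Teilraumtopologie $\tau_{X,A}$ liegt. Zuerst würde ich eine beliebige Menge $\mathcal{V} \in \tau_Y$ fixieren und das Urbild von $\mathcal{V}$ unter $f |_A$ explizit berechnen, um es in Beziehung zum Urbild unter der ursprünglichen Funktion $f$ zu setzen.

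Der entscheidende Schritt ist dabei die mengentheoretische Identität
\begin{align}
    (f |_A)^{-1}(\mathcal{V}) = \{ x \in A \:\: | \:\: f |_A (x) \in \mathcal{V} \} = \{ x \in A \:\: | \:\: f(x) \in \mathcal{V} \} = f^{-1}(\mathcal{V}) \cap A,
\end{align}
welche unmittelbar aus der definierenden Eigenschaft $f |_A (x) = f(x)$ für alle $x \in A$ folgt. Die eingeschränkte Funktion unterscheidet sich also im Urbildverhalten von $f$ nur dadurch, dass wir zusätzlich mit der Menge $A$ schneiden.

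Anschließend nutze ich die Stetigkeit von $f$ aus: Da $f$ nach Voraussetzung $(\tau_X, \tau_Y)$-stetig ist, gilt $f^{-1}(\mathcal{V}) \in \tau_X$. Nach der Definition der Teilraumtopologie, $\tau_{X,A} = \{ \mathcal{U} \cap A \:\: | \:\: \mathcal{U} \in \tau_X \}$, ist damit die Menge $f^{-1}(\mathcal{V}) \cap A$ ein Element von $\tau_{X,A}$, denn sie ist gerade der Schnitt der offenen Menge $\mathcal{U} := f^{-1}(\mathcal{V}) \in \tau_X$ mit $A$. Nach der obigen Identität ist dies aber genau das gesuchte Urbild $(f |_A)^{-1}(\mathcal{V})$, weshalb dieses in $\tau_{X,A}$ liegt.

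Da $\mathcal{V} \in \tau_Y$ beliebig gewählt war, folgt die Behauptung. Ein echtes Hindernis erwarte ich bei diesem Beweis nicht; der einzige wirklich zu begründende Punkt ist die obige Urbild-Identität, die sich jedoch direkt aus der Definition der Einschränkung $f |_A$ ablesen lässt. Der Beweis ist somit im Kern eine Kombination aus dieser elementaren mengentheoretischen Umformung und dem unmittelbaren Rückgriff auf die Definitionen der Stetigkeit und der Teilraumtopologie.
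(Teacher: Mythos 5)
Dein Beweis ist korrekt und verfährt genau wie der Beweis im Text: Beide zeigen über die Urbild-Identität $(f|_A)^{-1}(\mathcal{V}) = A \cap f^{-1}(\mathcal{V})$, dass das Urbild jeder offenen Menge $\mathcal{V} \in \tau_Y$ per Definition der Teilraumtopologie in $\tau_{X,A}$ liegt, wobei die Stetigkeit von $f$ die Offenheit von $f^{-1}(\mathcal{V})$ in $\tau_X$ liefert. Es gibt keine inhaltlichen Abweichungen oder Lücken.
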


\begin{proof}
    Sei $\mathcal{V} \in \tau_Y$ beliebig gewählt. Wir müssen zeigen, dass die Menge $(f |_A)^{-1} (\mathcal{V})$ in der Topologie $\tau_{X, A}$ enthalten ist. Es gilt
    \begin{align}
        (f |_A)^{-1} (\mathcal{V}) =& \: \{ x\in A \:\: | \:\: f |_A (x) \in \mathcal{V} \} \nonumber \\ =& \: \{  x \in X \:\: | \:\: x \in A, \: f |_A (x) \in \mathcal{V} \} \nonumber \\ =& \: \{ x \in X \:\: | \:\: x \in A, \: f(x) \in \mathcal{V} \} \nonumber \\ =& \: A \cap \{ x \in X \:\: | \:\: f(x) \in \mathcal{V} \} \nonumber \\ =& \: A \cap f^{-1} (\mathcal{V}). 
    \end{align}
    Da $f$ stetig ist, folgt, dass $f^{-1}(\mathcal{V}) \in \tau_X$ ist. Daraus folgt aber per Definition der Teilraumtopologie $\tau_{X, A}$, dass $A \cap f^{-1}(\mathcal{V})$ in $\tau_{X, A}$ liegt. Daraus folgt, dass $f |_A$ $(\tau_{X,A}, \tau_Y)$-stetig ist.
\end{proof}

Insgesamt wissen wir mittels des Begriffes der Teilraumtopologie nun auch, wie wir eine Topologie, welche auf $\mathbb{R}$ erklärt ist, auf ein endliches Intervall $[a,b]$ vererben können, sodass $[a,b]$ selbst ein topologischer Raum wird. 

Fassen wir daher unsere bisherige Erkenntnisse in Bezug auf eine Kurve in einem topologischen Raum $X$ in einer Definition zusammen.

\begin{definition}
    Sei $(X, \tau)$ ein topologischer Raum, $x_1$ und $x_2$ aus $X$ mit $x_1 \neq x_2$ vorgegeben, sowie $[a,b] \subseteq \mathbb{R}$, mit $-\infty < a < b < + \infty$, ein Intervall, ausgestattet mit der Teilraumtopologie $\sigma_{[a,b]}$, wobei wir mit $\sigma$ die Standardtopologie auf $\mathbb{R}$ bezeichnen. Dann nennen wir die stetige Funktion
    \begin{align}
        \gamma : [a,b] \longrightarrow X,
    \end{align}
    mit $\gamma (a) = x_1$ und $\gamma (b) = x_2$ eine Kurve in $X$, die $x_1$ mit $x_2$ verbindet.
\end{definition}

Zu beachten ist, dass gemäß der obigen Definition die stetige Funktion $\gamma$ mit den Eigenschaften \eqref{2} und \eqref{3} als \textit{die} Kurve bezeichnen wird. Die Menge $\mathcal{F} \subseteq X$ bezeichnen wir stattdessen als die sogenannte Spur der Kurve $\gamma$. Der Grund dafür ist, dass wenn wir Kurven studieren wollen, wir im wesentlichen mit den Funktionen arbeiten, welche  die Menge $\mathcal{F}$ parametrisieren, statt mit der bloßen Menge $\mathcal{F}$. Im weiteren Verlauf dieser Arbeit werden wir auch sehen, warum das so ist. 

Damit wissen wir nun also, wie wir im allgemeinen den Begriff der Kurve zu definieren haben. Da wir uns später mit speziellen Kurven, sogenannten \textit{(schwachen) Geodätischen}, auf sogenannten \textit{Mannigfaltigkeiten} (speziellen topologischen Räumen) beschäftigen wollen, wird der Begriff der Kurve in dieser Arbeit noch eine sehr wichtige Rolle spielen. 

Bevor wir aber über (schwache) Geodätische sprechen können, müssen wir zuerst den Begriff der Mannigfaltigkeit einführen. Dazu wiederholen wir im Folgenden einige wichtige Begriffe aus der linearen Algebra, welche notwendig für die Definition des Mannigfaltigkeitenbegriffes sind.


\newpage

\section{Wichtige Begriffe aus der (linearen) Algebra}

In diesem Kapitel wollen wir uns unter anderem mit dem wichtigen Begriff des Prähilbertraumes und des topologischen Vektorraumes auseinandersetzen. Den ersteren Begriff werden wir dabei später benötigen, um den Begriff einer endlichdimensionalen riemannschen Mannigfaltigkeit definieren zu können, auf welchen wir später das nun schon oft erwähnte Längenfunktional erklären möchten. 

Den Begriff eines topologischen Vektorraumes werden wir hingegen später in Kapitel $9$ benötigen, in welchem wir unter anderem grob sehen werden, wie sich die Begrifflichkeiten aus dem endlichdimensionalen in das Unendlichdimensionale verallgemeinern lassen.

Viele der hier besprochenen Definitionen, Sätze und Beweise beziehen sich dabei auf \cite{fischer2003lineare}.


\subsection{Gruppentheorie}

Ein erster wichtiger algebraischer Begriff ist der einer mathematischen Gruppe, welcher die Basis vieler Konstruktionen sein wird, die wir noch in den kommenden Abschnitten diskutieren werden. 

\begin{definition}
    Sei $G$ eine Menge, zusammen mit einer binären Operation
    \begin{align}
        \circ : G \times G \longrightarrow& \: G, \nonumber\\
         (g,h) \longmapsto& \: \: g \circ h =: gh \in G
    \end{align}
    Wir nennen das Tupel $(G, \circ)$ eine Gruppe, falls die folgenden Bedingungen erfüllt sind:

    \begin{itemize}
        \item[\textit{i)}] $g \circ (h \circ k) = (g \circ h) \circ k \:\:\:\:\:\:\:\:\:\:\:\:\:\:\:\:\: \forall g, h, k \in G$
        
        \item[\textit{ii)}] $\exists e \in G : e \circ g = g \circ e = g \:\:\:\:\:\:\:\:\:\:\:\:\: \forall g \in G$ 
        
        \item[\textit{iii)}] $\forall g \in G \:\: \exists g^{-1} \in G: g \circ g^{-1} = g^{-1} \circ g = e$
    \end{itemize} 
    Die Bedingung \textit{i)} wird auch kurz als Assoziativität (der binären Operation) bezeichnet. Gilt außerdem noch, dass 
    \begin{align}
        g \circ h = h \circ g \:\:\:\:\:\:\:\: \forall g,h \in G,
    \end{align}
    so heißt die Operation $\circ$ kommutativ und die Gruppe $(G, \circ)$ abelsch.
\end{definition}

Die Bedingung \textit{ii)} in obiger Definition impliziert dabei die Existenz eines sogenannten \textit{neutralen Elements} $e \in G$, welches die Eigenschaft hat, dass, wenn man es mit einem beliebigem Element $g \in G$ der Gruppe verknüpft, als Ergebnis nur wieder $g$ herauskommt. Die Bedingung \textit{iii)} in der Definition der mathematischen Gruppen sagt stattdessen aus, dass zu jedem Element $g \in G$ ein sogenanntes \textit{inverses Element} $g^{-1} \in G$ existiert, sodass die Verknüpfung von $g$ mit seinem Inversen bezüglich $\circ$ das neutrale Element $e$ der Gruppe ergibt.

Auch wenn wir den Begriff einer Gruppe vornehmlich dafür einführen, um später den für uns wichtigen Begriff des Vektorraums einführen zu können, wollen wir uns kurz Zeit nehmen und den Begriff der Gruppe motivieren, um ein besseres Gefühl dafür zu bekommen, warum Gruppen im Allgemeinen wichtig sind und warum sich das Studium dieser lohnt. 

Eine erste Motivation obiger diese Definition erhält man, wie eben schon angerissen, darüber, dass (wie wir gleich noch sehen werden) viele bekannte, sehr wichtige mathematische Objekte auf natürliche Weise eine Gruppenstruktur besitzen, sodass wir die obige Definition als reine Abstraktion der Struktur ebendieser mathematischen Objekte verstehen können. 

Das Studium abstrakter Gruppen ließe sich dann unter anderem darüber rechtfertigen, dass eben so manches wichtiges mathematisches Objekt eine Gruppe darstellt und das Studium allgemeiner Gruppen uns auch beim Studium dieser wichtigen mathematischen Objekte weiterhilft. 

Eine andere, direktere Möglichkeit der Motivation des Gruppenbegriffes, ist die Betrachtung der Symmetrien eines geometrischen Objektes: Betrachten wir beispielsweise ein Quadrat in der Ebene, so können wir auf dieses verschiedenste Operationen anwenden. 

So können wir das Quadrat beispielsweise in der Ebene, in welcher es liegt, um einen gewissen Winkel drehen oder es entlang einer bestimmten Achse spiegeln. Wir können uns nun auf diejenigen Transformationen beschränken, die den Mittelpunkt des Quadrates fixieren (D.h., dass beispielsweise nichttriviale Translationen des Quadrates aus unserer Betrachtung herausfallen.) und uns dann fragen, welche Operationen die Erscheinung des Quadrates unverändert lassen. 

Beispiele für derartige Transformationen sind z.B. $\frac{\pi}{2}$-Rotationen um den Mittelpunkt des Quadrates. Man kann sich nun leicht überlegen, dass die Menge derartiger Operationen, die wir \textit{Symmetrietransformationen} des Quadrates nennen wollen, mit der Hintereinanderausführung von Operationen eine mathematische Gruppe bildet. 

So ist beispielsweise die Hintereinanderausführung zweier Symmetrietransformationen wieder eine Symmetrietransformation (Zwei $\frac{\pi}{2}$-Rotationen ergeben eine Rotation um den Winkel $\pi$, welche die Erscheinung des Quadrates abermals unverändert lässt). 

Weiter ist die Operation bei der wir nichts mit dem Quadrat machen ebenfalls eine Symmetrietransformation, welche das neutrale Element der Gruppe darstellt. Desweiteren können wir natürlich auch jede Symmetrietransformation umkehren, wobei die Inversen natürlich selbst Symmetrietransformationen darstellen (Das Inverse zur $\frac{\pi}{2}$-Rotation ist dann beispielsweise die Rotation um den Winkel $- \frac{\pi}{2}$, welche ebenfalls eine Symmetrietransformation des Quadrates darstellt.). 

Abschließend ist auch die Hintereinanderausführung von Operationen eine assoziative Verknüpfung, womit die Menge aller Symmetrietransformationen des Quadrates eine natürliche Gruppe bildet. 

Um also ein geometrisches Objekt richtig studieren zu können, muss man auch die zum Objekt zugehörige Gruppe an Symmetrien, die sogenannte $\textit{Symmetriegruppe}$, studieren. Um nun wiederum diese Symmetriegruppe ordentlich studieren zu können, lohnt es sich, die allgemeine Gruppentheorie zu betrachten.

Abschließend sei noch erwähnt, dass, wie oben bereits angedeutet, wir den Gruppenbegriff in dieser Arbeit primär deshalb einführen, weil wir ihn als Hilfsmittel zur Konstruktion bestimmter für uns wichtiger mathematischer Objekte benötigen. Nach den obigen Motivationen dürfte es nun aber weit weniger überraschen, warum Gruppen in der Mathematik recht oft als nützliches Hilfsmittel auftauchen.

Eine wichtige Eigenschaft von Gruppen, die wir im folgenden (beispielsweise in der noch kommenden Definition sogenannter Normfunktionen) immer wieder indirekt benutzen werden, ist die Folgende:

\begin{proposition}
    Sei $(G, \circ)$ eine Gruppe. Dann ist das neutale Element $e \in G$ der Gruppe eindeutig bestimmt.
\end{proposition}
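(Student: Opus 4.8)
Der Plan ist, die Eindeutigkeit durch einen direkten Vergleich zweier hypothetischer neutraler Elemente nachzuweisen. Zunächst würde ich annehmen, es gäbe zwei Elemente $e, e' \in G$, die beide die Eigenschaft \textit{ii)} aus Definition $4.1.$ besitzen, das heißt $e \circ g = g \circ e = g$ und $e' \circ g = g \circ e' = g$ für alle $g \in G$. Zu zeigen ist dann, dass notwendigerweise $e = e'$ gilt.

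Der Schlüsselschritt besteht darin, das Element $e \circ e'$ auf zwei verschiedene Arten auszuwerten. Einerseits liefert die Neutralität von $e'$, angewandt auf $g = e$, die Gleichung $e \circ e' = e$; andererseits liefert die Neutralität von $e$, angewandt auf $g = e'$, die Gleichung $e \circ e' = e'$. Durch Gleichsetzen der beiden so gewonnenen Ausdrücke für ein und dasselbe Element $e \circ e'$ folgt unmittelbar $e = e \circ e' = e'$, also die behauptete Gleichheit.

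Ein echtes Hindernis erwarte ich bei diesem Beweis nicht; die einzige Subtilität liegt darin, sorgfältig zu beachten, welche Hälfte der Neutralitätseigenschaft (Links- oder Rechtsneutralität) an welcher Stelle verwendet wird, damit beide Auswertungen von $e \circ e'$ wirklich auf dasselbe Produkt führen. Bemerkenswert ist, dass für den Beweis weder die Assoziativität \textit{i)} noch die Existenz von Inversen \textit{iii)} herangezogen werden muss --- es genügt allein die definierende Eigenschaft des neutralen Elements, was den Beweis sehr kurz und rein rechnerisch elementar macht.
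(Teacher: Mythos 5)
Dein Beweis ist korrekt, verfolgt aber einen anderen Weg als die Arbeit. Du wertest das Produkt $e \circ e'$ auf zwei Arten aus (Neutralität von $e'$ liefert $e \circ e' = e$, Neutralität von $e$ liefert $e \circ e' = e'$) und schließt direkt $e = e'$. Die Arbeit argumentiert stattdessen über Kürzen: Aus $e \circ g = g = e' \circ g$ wird von rechts mit $g^{-1}$ verknüpft, wodurch $e \circ e = e = e' \circ e$ und damit $e = e'$ folgt; dabei werden die Existenz inverser Elemente (Axiom \textit{iii)}) und implizit die Assoziativität benutzt. Dein Argument ist strikt sparsamer: Es kommt, wie du selbst bemerkst, allein mit der definierenden Eigenschaft des neutralen Elements aus und gilt daher schon in Monoiden, also in Strukturen ohne Inverse. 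Der Beweis der Arbeit funktioniert ebenfalls, verbraucht aber unnötig starke Voraussetzungen; dafür illustriert er die in Gruppen typische Kürzungstechnik, die im anschließenden Satz zur Eindeutigkeit der Inversen ohnehin gebraucht wird. Inhaltlich ist an deinem Vorschlag nichts auszusetzen.
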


\begin{proof}
    Angenommen das wäre nicht so, d.h. es gäbe noch ein weiteres Element $e' \in G$, sodass 
    \begin{align}
        e' \circ g = g \circ e' = g
    \end{align}
    für alle $g \in G$ gilt. Dann folgt, dass 
    \begin{align}
        e \circ g = g = e' \circ g.
    \end{align}
    Verknüpfen wir diese Gleichung mit $g^{-1}$ von rechts, so folgt 
    \begin{align}
        e \circ g \circ g^{-1} = g \circ g^{-1} = e' \circ g \circ g^{-1}
    \end{align}
    und damit 
    \begin{align}
        e \circ e = e = e' \circ e.
    \end{align}
    Daraus folgt aber, dass $e = e'$ und damit ist das neutrale Element einer Gruppe eindeutig bestimmt.
\end{proof}

Eine weitere wichtige Eigenschaft von Gruppen ist die Eindeutigkeit der Inversen:

\begin{proposition}
    Sei $(G, \circ)$ eine Gruppe mit neutralem Element $e \in G$ und $g \in G$ ein Element der Gruppe. Sei weiter $g^{-1}$ ein inverses Element von $g$. Dann ist $g^{-1}$ eindeutig bestimmt.
\end{proposition}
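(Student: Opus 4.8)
Der Ansatz ist ein klassischer Eindeutigkeitsbeweis nach demselben Muster wie im Beweis zur Eindeutigkeit des neutralen Elements (Satz $4.1.$): Ich würde annehmen, dass $g$ neben dem gegebenen $g^{-1}$ noch ein weiteres inverses Element besitzt, und dann mit Hilfe der Gruppenaxiome zeigen, dass beide notwendigerweise übereinstimmen. Sei dazu also $h \in G$ ein beliebiges weiteres inverses Element von $g$, das heißt es gelte $g \circ h = h \circ g = e$. Nach der Voraussetzung, dass $g^{-1}$ ebenfalls ein Inverses von $g$ ist, gilt zudem $g \circ g^{-1} = g^{-1} \circ g = e$.

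Der entscheidende Schritt besteht nun darin, das neutrale Element an passender Stelle einzuschieben und die Assoziativität der Gruppenoperation auszunutzen. Konkret würde ich die folgende Kette von Gleichheiten aufstellen:
\begin{align}
    g^{-1} = g^{-1} \circ e = g^{-1} \circ (g \circ h) = (g^{-1} \circ g) \circ h = e \circ h = h.
\end{align}
Dabei wird in der zweiten Gleichheit die definierende Eigenschaft des neutralen Elements (Axiom \textit{ii)}) verwendet, in der dritten die Tatsache, dass $h$ ein Inverses von $g$ ist (Axiom \textit{iii)}), in der vierten die Assoziativität (Axiom \textit{i)}), in der fünften erneut die Inverseneigenschaft von $g^{-1}$ und schließlich wieder Axiom \textit{ii)}. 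Da $h$ als beliebiges weiteres Inverses gewählt war, folgt damit, dass jedes inverse Element von $g$ mit $g^{-1}$ übereinstimmt, was die Eindeutigkeit zeigt.

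Eine echte Schwierigkeit birgt dieser Beweis nicht; der einzige nicht völlig offensichtliche Kniff ist das Einschieben von $e = g \circ h$ an der richtigen Stelle, sodass sich nach Umklammerung mittels der Assoziativität die Teilprodukte $g^{-1} \circ g$ und $g \circ h$ jeweils zu $e$ vereinfachen lassen. Zu beachten ist lediglich, dass ich hier die \emph{zweiseitige} Inverseneigenschaft benötige (also sowohl $g^{-1} \circ g = e$ als auch $g \circ h = e$), die die Definition aus Satz $4.1.$ bzw.\ der Gruppendefinition aber gerade bereitstellt.
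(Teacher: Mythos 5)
Dein Beweis ist korrekt und verwendet im Wesentlichen dasselbe Argument wie die Arbeit: Linksverknüpfung mit $g^{-1}$, Assoziativität und die beidseitige Inverseneigenschaft, um jedes weitere Inverse $h$ mit $g^{-1}$ zu identifizieren. Der einzige Unterschied ist kosmetischer Natur; die Arbeit verpackt dieselbe Rechnung in einen Widerspruchsbeweis (Annahme $h \neq g^{-1}$), während du eine direkte Gleichungskette angibst, was eher noch sauberer ist.
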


\begin{proof}
    Angenommen $g^{-1}$ wäre nicht eindeutig bestimmt, d.h. es gibt ein weiteres Element $h \in G$ mit $h \neq g^{-1}$ und $g \circ h = h \circ g = e$. Es gilt wegen $g \circ g^{-1} = g^{-1} \circ g = e$, dass $g \circ g^{-1} = g \circ h$ ist. Verknüpfen wir diesen Ausdruck von links mit $g^{-1}$ und nutzen wir die Assoziativität und die Eigenschaften des neutralen Elementes aus, so erhalten wir $g^{-1} = h$, im Widerspruch zur Annahme. Daraus folgt, dass das Inverse von $g \in G$ eindeutig bestimmt ist.
\end{proof}

Nach dem wir nun einige Worte zur Wichtigkeit des Gruppenbegriffes verloren und auch einige wichtige Eigenschaften von Gruppen gesehen haben, wollen wir als nächstes einige wichtige Beispiele von Gruppen betrachten.

\begin{example}
    Wichtige Beispiele von Gruppen sind:

    \begin{itemize}
        \item[\textit{i)}] Die reellen Zahlen bezüglich der gewöhnlichen Addition, d.h. $(\mathbb{R}, +)$, und die Menge $\mathbb{R} \setminus \{0\}$ zusammen mit der gewöhnlichen Multiplikation, d.h. $(\mathbb{R} \setminus \{0\}, \cdot)$, bilden jeweils eine Gruppe. In $(\mathbb{R}, +)$ ist dabei das neutrale Element gerade $0$, während in $(\mathbb{R} \setminus \{0\}, \cdot)$ das neutrale Element $1$ ist.
        \item[\textit{ii)}] Sei $\mathbb{C} := \{ re^{i \phi} \:\: | \:\: r \in [0, \infty), \:\: \phi \in [0, 2\pi), \:\: i^2 = -1 \}$ die Menge der komplexen Zahlen und $\mathbb{S}^1 := \{e^{i \phi}\ \:\: | \:\: \phi \in [0, 2 \pi)\} \subseteq \mathbb{C}$ die komplexe Einheitssphäre. Dann ist $(\mathbb{S}^1, \cdot)$ eine Gruppe, wobei $e^{i \phi} \cdot e^{i \eta} = e^{i(\phi + \eta)}$ ist.
        \item[\textit{iii)}] Sei $\Omega$ eine Menge und $G$ die Menge aller bijektiver Funktionen $f : \Omega \longrightarrow \Omega$. Dann bildet $G$ zusammen mit der Funktionenkomposition $\circ$, erklärt durch $(f \circ g)(x) := f(g(x)), \forall x \in \Omega$, eine Gruppe. Das neutrale Element der Gruppe ist dabei die Funktion $id_{\Omega}$, erklärt durch $id_\Omega (x) = x$ für alle $x \in \Omega$.
        \item[\textit{iv)}] Wir betrachten die Menge $SL(2, \mathbb{C}) := \{ M \in \textit{Mat}_{2} (\mathbb{C}) \:\: | \:\: det (M) = 1\}$, wobei $\textit{Mat}_{2} (\mathbb{C})$ die Menge aller $2 \times 2$-Matrizen mit komplexen Einträgen ist und die Funktion $det : \textit{Mat}_{2} (\mathbb{C}) \longrightarrow \mathbb{C}$ die sogenannte Determinantenfunktion darstellt. Die Menge $SL(2, \mathbb{C})$ wird auch spezielle lineare Gruppe vom Grad 2 über $\mathbb{C}$ genannt und die Elemente dieser Menge werden unimodulare Matrizen genannt. Statten wir diese Menge mit der gewöhnlichen Matrizenmultiplikation $\circ$ aus, so ist $(SL(2, \mathbb{C}), \circ)$ ebenfalls eine Gruppe.  
    \end{itemize}
\end{example}

Bevor wir nun den für uns wichtigen Begriff des Vektorraums erklären können, müssen wir noch kurz den Begriff des \textit{Körpers} einführen:

\begin{definition}
    Sei $\mathbb{K}$ eine Menge, ausgestattet mit den zwei folgenden binären Verknüfungen 
    \begin{align}
        + : \mathbb{K} \times \mathbb{K} \longrightarrow \mathbb{K}
    \end{align}
    und 
    \begin{align}
        \cdot : \mathbb{K} \times \mathbb{K} \longrightarrow \mathbb{K}.
    \end{align}
    Das Tripel $(\mathbb{K}, +, \cdot)$ heißt nun Körper, falls gilt:
    \begin{itemize}
        \item[\textit{i)}] $(\mathbb{K}, +)$ ist eine abelsche Gruppe, in welcher wir das neutrale Element mit $0$ bezeichnen.
        \item[\textit{ii)}] $(\mathbb{K} \setminus \{0\}, \cdot)$ ist eine abelsche Gruppe.
        \item[\textit{iii)}] $(a + b) \cdot c = a \cdot b + b \cdot c \:\:\:\:\:\:\:\: \forall a,b,c \in \mathbb{K}$
    \end{itemize}
\end{definition}

Die Eigenschaft \textit{iii)} in der obigen Definition eines Körpers nennt man auch \textit{Distributivität} und das neutrale Element der Gruppe $(\mathbb{K} \setminus \{0\}, \cdot)$ wollen wir im folgenden mit $1$ bezeichnen.

Beachte, dass aus der Definition eines Körpers trivialerweise folgt, dass $0 \neq 1$ ist. Ohne die Forderung, dass lediglich $(\mathbb{K} \setminus \{0\}, \cdot)$ und nicht $(\mathbb{K}, \cdot)$ eine Gruppe bildet, wird hingegen eine Situation eintreten, die wir für gewöhnlich zu vermeiden versuchen: Zum einen könnte ohne diese Forderung der Fall $1 = 0$ eintreten, waraus mittels der Distributivität folgen würde, dass 
\begin{align}
    \alpha = 1 \cdot \alpha = 0 \cdot \alpha = (0+0) \cdot \alpha = (1+1) \cdot \alpha = 1 \cdot \alpha + 1 \cdot \alpha = \alpha + \alpha
\end{align}
und damit $\alpha = 0$ für alle $\alpha \in \mathbb{K}$ ist. Andererseits würde das neutrale Element $0$ ohne obige Forderung notwendigerweise ein bezüglich $\cdot$ inverses Element $0^{-1}$ besitzen. Daraus würde aber folgen, dass 
\begin{align}
    1 = 0^{-1} \cdot 0 = 0^{-1} \cdot (0 + 0) = 0^{-1} \cdot 0 + 0^{-1} \cdot 0 = 0 + 0 = 0
\end{align}
und damit wieder $1 = 0$ ist, was wiederum implizieren würde, dass $\mathbb{K}$ trivial wäre. Um daher derartige triviale Situationen auszuschließen, fordern wir gleich per Definition, dass $0 \neq 1$ ist, woraus wiederum folgt, dass $0$ in $\mathbb{K}$ kein bezüglich $\cdot$ inverses Element besitzen kann.

Im weiteren Verlauf wollen wir das zu $a \in \mathbb{K}$ additiv inverse Element mit $-a$ bezeichnen und das zu $a \in \mathbb{K}$ multiplikativ inverse Element mit $\frac{1}{a}$. Desweiteren wollen wir für $a,b \in \mathbb{K}$ statt $a + (-b)$ im folgenden kurz $a-b$ schreiben.  

Wichtige Beispiele für mathematische Körper sind dabei die reellen Zahlen $\mathbb{R}$ und die komplexen Zahlen $\mathbb{C}$, beide jeweils ausgestattet mit der gewöhnlichen Addition und Multiplikation. Damit kann ein mathematischer Körper als Abstraktion unserer gewöhnlichen Zahlensysteme verstanden werden, da man in einem Körper wie gewohnt rechnen kann. 


\subsection{Vektorräume}

Nun sind wir in der Lage den für uns sehr wichtigen Begriff des Vektorraums zu erklären:

\begin{definition}
    Sei $\mathbb{K}$ ein Körper mit dem Einselement $1$ und $(V, \oplus)$ eine abelsche Gruppe. Weiter sei eine binäre Verknüpfung 
    \begin{align}
        \odot : \mathbb{K} \times V \longrightarrow V
    \end{align}
    mit den Eigenschaften

    \begin{itemize}
        \item[\textit{i)}] $\alpha \odot (v \oplus w) = \alpha \odot v \oplus \alpha \odot w \:\:\:\:\:\:\:\:\:\: \forall \alpha \in \mathbb{K} \:\: v, w \in V$
        \item[\textit{ii)}] $(\alpha + \beta) \odot v = \alpha \odot v \oplus \beta \odot v \:\:\:\:\:\:\:\:\:\:\:\: \forall \alpha, \beta \in \mathbb{K}, \:\: v \in V$
        \item[\textit{iii)}] $(\alpha \cdot \beta) \odot v = \alpha \cdot (\beta \odot v) \:\:\:\:\:\:\:\:\:\:\:\:\:\:\:\:\:\:\:\:\: \forall \alpha, \beta \in \mathbb{K}, \:\: v \in V $ 
        \item[\textit{iv)}] $1 \odot v = v \:\:\:\:\:\:\:\:\:\:\:\:\:\:\:\:\:\:\:\:\:\:\:\:\:\:\:\:\:\:\:\:\:\:\:\:\:\:\:\:\:\:\:\:\:\:\:\ \forall v \in V$
    \end{itemize}
    gegeben. Dann nennen wir $\odot$ Skalarmultiplikation und das Tripel $(V, \oplus, \odot )$ Vektorraum über $\mathbb{K}$ oder kurz $\mathbb{K}$-Vektorraum. Die Elemente in $V$ wollen wir Vektoren und den Körper $\mathbb{K}$ in Bezug auf $V$ den Skalarkörper oder Grundkörper von $V$ nennen.
\end{definition}

Bei einem $\mathbb{K}$-Vektorraum $V$ handelt es sich also anschaulich um eine abelsche Gruppe, deren Elemente mittels der Skalarmultiplikation beliebig um Elemente aus dem Skalarkörper $\mathbb{K}$ skaliert werden können, sodass dabei die üblichen Rechenregeln gelten und das Ergebnis dieser Skalierung am Ende wieder in $V$ liegt.

Betrachten wir einige Beispiele von Vektorräumen

\begin{example}
    Wichtige Beispiele von Vektorräumen sind:
    \begin{itemize}
        \item[\textit{i)}] Ein für uns sehr wichtiges Beispiel eines $\mathbb{K}$-Vektorraums mit $\mathbb{K} = \mathbb{R}$ und $d \in \mathbb{N}$ ist $(\mathbb{R}^d, \oplus, \odot)$, wobei 
        \begin{align}
            \mathbb{R}^d := \{ (x^1, x^2, ..., x^d) \:\: | \:\: x^{i} \in \mathbb{R} \:\: \forall i \in \{ 1, 2, ..., d \} \}
        \end{align} und die Vektoraddition $\oplus$ und die Skalarmultiplikation $\odot$ folgendermaßen erklärt sind:
        \begin{itemize}
            \item Seien $x = (x^1,...,x^d), \:\: y = (y^1,...,y^d) \in \mathbb{R}^d$. Dann ist $x \oplus y$ erklärt als 
            \begin{align}
                x \oplus y := (x^1 + y^1, ..., x^d + y^d) \in \mathbb{R}^d.
            \end{align}
            Dabei ist $+$ die gewöhnliche reelle Addtion.
             Sei $x = (x^1, ..., x^d) \in \mathbb{R}^d$ und $\alpha \in \mathbb{R}$. Dann ist $\alpha \odot x$ erklärt durch 
            \begin{align}
                \alpha \odot x := (\alpha \cdot x^1, ..., \alpha \cdot x^d) \in \mathbb{R}^d.
            \end{align}
            Dabei ist $\cdot$ die gewöhnliche reelle Multiplikation.
        \end{itemize}
        Der Spezialfall $d = 3$ ist dabei insbesondere von großer Bedeutung für die Physik, da sich oft der dreidimensionale physikalische Raum mittels des $\mathbb{R}^3$ modellieren lässt. Wie wir gleich noch sehen werden, kann die Menge $\mathbb{R}^3$, ausgestattet mit einer zusätzlichen Struktur, auch als Modell der gewöhnlichen Geometrie genutzt werden.
        \item[\textit{ii)}] Ein weiteres wichtiges Beispiel eines $\mathbb{R}$-Vektorraumes, welcher oft von praktischer Relevanz ist, ist der $\mathbb{R}$-Vektorraum $(\mathcal{L}^2(\Omega, \mu), \oplus, \odot)$. Dabei ist $\Omega \subseteq \mathbb{R}^d$, $\mu$ ist das sogenannte Lebesgue-Maß auf $\mathbb{R}^d$ \cite{cohn2013measure} und 
        \begin{align}
            \mathcal{L}^2(\Omega, \mu) := \{ f : \Omega \longrightarrow \mathbb{R} \:\: \textit{messbar} \:\: | \:\: \int_{\mathbb{R}^d} |f(x)|^2 \,d \mu (x) < \infty \}.
        \end{align}
        Das Integral in der Definition von $\mathcal{L}^2(\Omega, \mu)$ verstehen wir also als Lebesgue-Integral \cite{cohn2013measure}. Weiter verstehen wir die Messbarkeit von $f \in \mathcal{L}^2(\Omega, \mu)$ in Bezug auf eine auf $\Omega$ beliebig gewählten $\sigma$-Algebra $\mathbb{A}$ und der Borel-$\sigma$-Algebra auf $\mathbb{R}$. Die Vektoraddition $\oplus$ und die Skalarmultiplikation $\odot$ sind dabei folgendermaßen erklärt:
        \begin{itemize}
            \item Sei $f, g \in \mathcal{L}^2(\Omega, \mu)$. Dann ist $f \oplus g$ erklärt durch 
            \begin{align}
                (f \oplus g) (x) := f(x) + g(x) \in \mathbb{R}.
            \end{align}
            Dabei ist $+$ wieder die gewöhnliche reelle Addition. Man bezeichnet eine so erklärte Vektoraddition auch als punktweise Vektoraddition.
            \item Sei $\alpha \in \mathbb{R}$ und $f \in \mathcal{L}^2(\Omega, \mu)$. Dann ist $\alpha \odot f$ erklärt durch 
            \begin{align}
                (\alpha \odot f) (x) := \alpha \cdot f(x) \in \mathbb{R}.
            \end{align}
            Dabei ist $\cdot$ wieder die gewöhnliche reelle Multiplikation. Man bezeichnet eine so erklärte Skalarmultiplikation auch als punktweise Skalarmultiplikation. 
        \end{itemize}
    \end{itemize}
\end{example}

Der Einfachkeitshalber wollen wir nun in Zukunft statt $(V, \oplus, \odot)$ oft einfach nur $V$ schreiben, falls klar sein sollte, wie $\oplus$ und $\odot$ erklärt sind. Darüberhinaus wollen wir nun oft statt $\oplus$ einfach nur $+$ und statt $\odot$ einfach nur $\cdot$ schreiben. Aus dem Kontext sollte sich dann immer ergeben, ob es sich beispielsweise bei $+$ um die sogenannte Vektoraddition $\oplus$ in $V$ oder um die Addition $+$ im Körper $\mathbb{K}$ handelt. Selbiges gilt für die Skalarmultiplikation $\odot$ und der Multiplikation $\cdot$ im Körper $\mathbb{K}$. Desweiteren wollen wir das zu $v \in V$ additiv inverse Element $v^{-1}$ einfach mit $\ominus v$ oder $-v$ bezeichnen. Diese Setzung ergibt sich aus den Eigenschaften der Skalarmultiplikation, denn für $v \in V$ folgt, dass
\begin{align}
    \mathbf{0} = (1 - 1) \odot v = 1 \odot v \oplus (-1 \odot v),
\end{align}
d.h. wegen der Eindeutigkeit der Inversen in einer Gruppe folgt 
\begin{align}
    v^{-1} = (-1 \odot v) =: \ominus v =: -v
\end{align}

Ein Begriff, welcher für uns später im Kapitel über Mannigfaltigkeiten von großer Bedeutung sein wird, ist der Begriff der Dimension eines $\mathbb{K}$-Vektorraumes $V$. Im Folgenden werden wir diesen Begriff entwickeln. Wir beginnen mit dem Begriff der linearen Hülle einer Menge $E \subseteq V$.

\begin{definition}
    Sei $V$ ein $\mathbb{K}$-Vektorraum und $E \subseteq V$ eine Teilmenge von $V$. Wir erklären die lineare Hülle von $E$ in $V$ als die Menge 
    \begin{align}
        \{ v \in V \:\: | \:\: \exists N \in \mathbb{N} \:\; \alpha_1, ..., \alpha_N \in \mathbb{R}, \:\: v_1, ..., v_N \in E : v = \sum_{i = 1}^{N} \alpha_i \cdot v_i\}.
    \end{align}
    Wir bezeichnen diese Menge kurz als $\textit{span}_{\mathbb{K}}(E)$.
\end{definition}

Die lineare Hülle von $E$, $\textit{span}_{\mathbb{K}}(E)$, ist also die Menge aller sogenannter endlicher Linearkombinationen, die aus den Elementen in $E$ gebildet werden können. 

Als nächstes definieren wir den Begriff des sogenannten Erzeugendensystems eines $\mathbb{K}$-Vektorraumes $V$. 

\begin{definition}
    Sei $V$ ein $\mathbb{K}$-Vektorraum und $E \subseteq V$ eine Teilmenge von $V$. $E$ heißt Erzeugendensystem von $V$, falls gilt, dass 
    \begin{align}
        \textit{span}_{\mathbb{K}}(E) = V 
    \end{align}
    ist.
\end{definition}

Mittels dieser Begrifflichkeiten können wir nun den Begriff eines minimalen Erzeugendensystems einführen.

\begin{definition}
    Sei $V$ ein $\mathbb{K}$-Vektorraum und $E$ ein Erzeugendensystem. Dann heißt $E$ minimales Erzeugendensystem, falls für beliebiges $v \in E$ gilt, dass $E \setminus \{v\}$ bereits kein Erzeugendensystem mehr von $V$ ist. Wir nennen im Folgenden ein minimales Erzeugendensystem von $V$ eine Basis von $V$. Wir bezeichnen $V$ als endlichdimensionalen $\mathbb{K}$-Vektorraum, falls eine Basis $E$ in $V$ existiert, sodass $|E| < \infty$ ist. Andernfalls nennen wir $V$ unendlichdimensional.
\end{definition}

Man kann sich nun die Frage stellen, ob eine Basis $E$ von $V$ eindeutig bestimmt ist. Die Antwort auf diese Frage ist Nein! Anschaulich sehen wir das bereits im Falle $V = \mathbb{R}^2$, ausgestattet mit der Vektoraddition und Skalarmultiplikation aus Beispiel $4.2.$. Ein minimales Erzeugendensystem von $\mathbb{R}^2$ ist bespielsweise 
\begin{align}
    E_1 = \{ (1,0), (0,1) \},
\end{align}
denn jeder Vektor in $\mathbb{R}^2$ kann geschrieben als 
\begin{align}
    (\alpha, \beta) = \alpha \cdot (1,0) + \beta \cdot (0,1).
\end{align}
Andererseits gilt das aber auch für 
\begin{align}
    E_2 = \{ (-1,0), (0,-1) \}.
\end{align}
Bei $E_1$ und $E_2$ handelt es sich also um Basen, denn beide erfüllen die Minimalitätseigenschaft und beide erzeugen ganz $\mathbb{R}^2$.

Wir wollen nun einen schnellen Weg finden, um zu überprüfen, ob eine gegebene Menge $E \subseteq V$ ein minimales Erzeugendensystem ist oder nicht. Dazu betrachten wir die folgende Definition:

\begin{definition}
    Sei $V$ ein $\mathbb{K}$-Vektorraum und $\{v_1, ..., v_m\} \subseteq V$, mit $d \in \mathbb{N}$, eine Teilmenge von $V$. Dann heißen die Vektoren $v_1, ..., v_m \in V$ linear unabhängig, falls aus 
    \begin{align}
        \sum_{i=1}^m \alpha_i \cdot v_i = \mathbf{0}, \label{f}
    \end{align}
    mit $\alpha_i \in \mathbb{K}$ für alle $i \in \{1,...,m\}$, bereits $\alpha_1 = ... = \alpha_m = 0$ folgt. Dabei bezeichnet $\mathbf{0} \in V$ das neutrale Element von $(V, +)$ und $0 \in \mathbb{K}$ das neutrale Element von $(\mathbb{K}, +)$. Gibt es hingegen für $i \in \{1,...,m\}$ Zahlen $\alpha_i \in \mathbb{K}$ mit $\alpha_i \neq 0$, sodass \eqref{f} gilt, so nennen wir die Menge $\{v_1, ..., v_m\}$ linear abhängig. 
\end{definition}

Mittels dieser Definition können wir nun den folgenden Satz beweisen, der es uns in Zukunft erleichtern wird zu überprüfen, ob eine gegebene Teilmenge $E \subseteq V$ eines $\mathbb{K}$-Vektorraumes $V$ eine Basis ist oder nicht.

\begin{proposition}
    Sei $V$ ein $\mathbb{K}$-Vektorraum und $E \subseteq V$ eine Teilmenge von $V$ mit $|E| = d < \infty$. Dann sind folgende Aussagen äquivalent:
    \begin{itemize}
        \item[\textit{i)}] $E$ ist eine Basis von $V,$ d.h. ein minimal erzeugendes System von $V$.
        \item[\textit{ii)}] $E$ ist ein maximal linear unabhängiges System in $V$, d.h. alle Vektoren in $E$ sind zueinander linear unabhängig und $\textit{span}_{\mathbb{K}}(E) = V$. 
    \end{itemize}
\end{proposition}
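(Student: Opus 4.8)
Der Plan ist, die Äquivalenz durch den Nachweis der beiden Implikationen $\textit{i)} \Rightarrow \textit{ii)}$ und $\textit{ii)} \Rightarrow \textit{i)}$ zu zeigen, wobei ich in beiden Richtungen einen Widerspruchsbeweis führen würde. Wegen $|E| = d < \infty$ schreibe ich zunächst $E = \{ v_1, ..., v_d \}$, sodass die Definition der linearen Unabhängigkeit, die ja für endliche Mengen formuliert ist, unmittelbar anwendbar wird. Da beide Aussagen das Erzeugen von $V$ bereits enthalten, reduziert sich der eigentliche Gehalt darauf, Minimalität und lineare Unabhängigkeit ineinander zu übersetzen.

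Für $\textit{i)} \Rightarrow \textit{ii)}$ sei $E$ ein minimales Erzeugendensystem. Da $E$ insbesondere ein Erzeugendensystem ist, gilt bereits $\textit{span}_{\mathbb{K}}(E) = V$, sodass nur noch die lineare Unabhängigkeit der $v_1, ..., v_d$ zu zeigen bleibt. Angenommen, diese wären linear abhängig, so gäbe es eine Relation $\sum_{i=1}^d \alpha_i \cdot v_i = \mathbf{0}$ mit einem Index $j$, für den $\alpha_j \neq 0$ ist. Da $\mathbb{K}$ ein Körper ist, existiert $\frac{1}{\alpha_j}$, und ich könnte nach $v_j$ auflösen, um $v_j = - \frac{1}{\alpha_j} \sum_{i \neq j} \alpha_i \cdot v_i \in \textit{span}_{\mathbb{K}}(E \setminus \{v_j\})$ zu erhalten.

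Der entscheidende und meiner Einschätzung nach aufwendigste Schritt ist nun zu zeigen, dass aus dieser Beziehung folgt, dass auch $E \setminus \{ v_j \}$ noch ein Erzeugendensystem von $V$ ist. Dazu würde ich ein beliebiges $w \in V$ nehmen, es wegen $\textit{span}_{\mathbb{K}}(E) = V$ als $w = \sum_{i=1}^d \beta_i \cdot v_i$ schreiben und den obigen Ausdruck für $v_j$ einsetzen; nach Umsortieren der Summe mittels der Vektorraumaxiome erhielte man $w$ als Linearkombination der Vektoren aus $E \setminus \{v_j\}$. Damit wäre $E \setminus \{v_j\}$ ein Erzeugendensystem, im Widerspruch zur Minimalität von $E$; folglich ist $E$ linear unabhängig und $\textit{ii)}$ gilt.

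Für die Rückrichtung $\textit{ii)} \Rightarrow \textit{i)}$ sei $E$ linear unabhängig mit $\textit{span}_{\mathbb{K}}(E) = V$, also insbesondere ein Erzeugendensystem, und zu zeigen ist dessen Minimalität. Angenommen, $E$ wäre nicht minimal, so gäbe es ein $v \in E$, sodass $E \setminus \{v\}$ weiterhin ein Erzeugendensystem ist; insbesondere läge dann $v \in \textit{span}_{\mathbb{K}}(E \setminus \{v\})$, also $v = \sum_{w \in E \setminus \{v\}} \gamma_w \cdot w$. Umgestellt ergäbe das die Relation $1 \cdot v - \sum_{w \in E \setminus \{v\}} \gamma_w \cdot w = \mathbf{0}$, die wegen des Koeffizienten $1 \neq 0$ vor $v$ nichttrivial ist, im Widerspruch zur linearen Unabhängigkeit von $E$. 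Also ist $E$ minimal und damit eine Basis. Diese Richtung ist deutlich direkter als die erste; die eigentliche Hauptarbeit steckt im Substitutions- beziehungsweise Umsortierargument der Implikation $\textit{i)} \Rightarrow \textit{ii)}$.
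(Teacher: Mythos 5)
Dein Beweis ist korrekt und verfolgt im Wesentlichen denselben Weg wie die Arbeit: beide Implikationen werden per Widerspruch gezeigt, und die Richtung $\textit{i)} \Rightarrow \textit{ii)}$ benutzt exakt dasselbe Auflösungs- und Substitutionsargument (nach $v_j$ auflösen, in eine beliebige Darstellung von $w$ einsetzen, Widerspruch zur Minimalität). In der Rückrichtung ist deine Variante sogar einen Hauch sauberer als die der Arbeit, da du die nichttriviale Relation $1 \cdot v - \sum_{w \in E \setminus \{v\}} \gamma_w \cdot w = \mathbf{0}$ direkt über den Koeffizienten $1 \neq 0$ erhältst, während die Arbeit etwas umständlicher darüber argumentiert, dass nicht alle Koeffizienten verschwinden können.
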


\begin{proof}
    \textit{i)} $\implies$ \textit{ii)}: Sei $E$ eine Basis von $V$. Angenommen die Menge $E$ sei nicht linear unabhängig, d.h. es existieren Zahlen $\alpha_i \in \mathbb{K}$ mit $i \in \{1, ..., d\}$, welche nicht alle gleich $0$, sodass für $\{v_1, ..., v_d\} = E$ gilt, dass 
    \begin{align}
        \sum_{i = 1}^d \alpha_i \cdot v_i = \mathbf{0} \iff \sum_{i = 2}^d \alpha_i \cdot v_i = - \alpha_1 \cdot v_1.
    \end{align}
    Ohne Beschränkung der Allgemeinheit nehmen wir an, dass $\alpha_1 \neq 0$ ist. Dann folgt 
    \begin{align}
        \sum_{i = 2}^d \lambda_i \cdot v_i = v_1
    \end{align}
    mit $\lambda_i = -\frac{\alpha_i}{\alpha_1}$ für alle $i \in \{1,...,d\}$. Sei nun $w \in V$ beliebig. Da $E$ ein Erzeugendensystem ist gilt, dass Zahlen $\beta_i \in \mathbb{K}$, $i \in \{ 1, ..., d \}$, existieren, sodass 
    \begin{align}
        w = \sum_{i = 1}^d \beta_i \cdot v_i
    \end{align}
    gilt. Dann gilt 
    \begin{align}
        w = \sum_{i = 2}^d \beta_i \cdot v_i + \beta_1 \cdot v_1 = \sum_{i = 2}^d \beta_i \cdot v_i + \beta_1 \cdot \big(\ \sum_{i = 2}^d \lambda_i \cdot v_i \big)
    \end{align}
    und damit 
    \begin{align}
        w = \sum_{i = 2}^d (\beta_i + \beta_1 \cdot \lambda_i) \cdot v_i \in \textit{span}_{\mathbb{K}}(E \setminus \{v_1\}).
    \end{align}
    Da $w \in V$ beliebig war, folgt, dass $\textit{span}_{\mathbb{K}}(E \setminus \{v_1\}) = V$ ist, im Widerspruch zur Minimalität von $E$.

    \textit{ii)} $\implies$ \textit{i)}: Angenommen $E$ sei eine Menge linear unabhängiger Vektoren mit $\textit{span}_{\mathbb{K}}(E) = V$, aber $E$ sei nicht minimal. Ohne Beschränkung der Allgemeinheit wäre dann die Menge $E \setminus \{v_1\}$, mit $v_1 \in E$ passend gewählt, noch immer ein Erzeugendensystem. Daraus folgt aber die Existenz von Zahlen $\alpha_i$, $i \in \{ 1, ..., d \}$, sodass 
    \begin{align}
        v_1 = \sum_{i = 2}^d \alpha_i \cdot v_i.
    \end{align}
    Da $v_i \neq \mathbf{0}$ folgt, dass nicht alle $\alpha_i$ gleich Null sein können. Daraus folgt nun aber, das die Menge $E$ kein linear unabhängiges System sein kann. Folglich muss $E$ ein minimales Erzeugendensystem sein.
\end{proof}

Es lässt sich darüberhinaus zeigen, dass alle Basen in einem $\mathbb{K}$-Vektorraum $V$ die selbe Länge bzw. Kardinalität besitzen \cite{fischer2003lineare}. Das führt auf die folgende Definition:

\begin{definition}
    Sei $V$ ein  endlichdimensionaler $\mathbb{K}$-Vektorraum und $E \subseteq V$ eine Basis von $V$. Dann nennen wir die Kardinalität $|E| = d$ von $E$ die Dimension des $\mathbb{K}$-Vektorraumes $V$. Wir bezeichnen $V$ in diesem Fall als $d$-dimensionalen $\mathbb{K}$-Vektorraum.
\end{definition}

Der Begriff der Dimension zählt also anschaulich die Zahl unabhängiger Vektoren, die nötig sind, um aus diesen den ganzen Vektorraum $V$ zu erzeugen. Wird jeder Vektor in dem minimalen Erzeugensystem $E$, welches wir kurz als Basis bezeichnen wollen, mit einem Freiheitsgrad assoziert, so ergibt sich das anschauliche Bild der Dimension, da die Dimension eines Vektorraumes $V$ nun einfach die Anzahl der Freiheitsgrade von $V$ meint.


\subsection{Normierte Räume und Prähilberträume}

Nachdem wir nun also den Begriff der Dimension eines $\mathbb{K}$-Vektorraumes $V$ eingeführt haben, wollen wir nun als Nächstes zeigen, dass sich auf einem $\mathbb{K}$-Vektorraum $V$ natürliche und nützliche geometrische Begriffe erklären lassen. Unser Vorgehen wird dabei sein, dass wir zuerst abstrakt die Begriffe einer sogenannten Norm und eines sogenannten Skalarproduktes auf $V$ einführen werden, um dann zu zeigen, dass im Falle $V = \mathbb{R}^d$ viele der uns bekannten und wichtigen geometrischen Begriffe sich über bestimmte Normen und Skalarprodukte erklären lassen. 

Wir wollen dabei im Folgenden, mit Blick auf die noch kommenden Kapitel, $\mathbb{K}$ auf $\mathbb{R}$ setzen. Der Grund dafür ist, dass z.B. im Falle $\mathbb{K} = \mathbb{C}$ sich die Definition eines Skalarprodukt auf einem $\mathbb{C}$-Vektorraum von der Definition eines Skalarproduktes auf einem $\mathbb{R}$-Vektorraum unterscheidet. Da wir in den folgenden Kapitel nur reelle Vektorräume betrachten werden, ist die Definition komplexer Skalarprodukte folglich für uns nicht relevant. 

In diesem und den folgenden Kapiteln wird außerdem $| \cdot |$ immer für die Betragsfunktion auf $\mathbb{R}$ stehen.

Wir beginnen zuerst mit dem Begriff der sogenannten Norm. Anschaulich wollen wir mit der Norm im folgenden in der Lage sein, den Vektoren im $\mathbb{R}$-Vektorraum $V$ eine (abstrakte) \textit{Größe} oder \textit{Länge} zuzuordnen. In Hinblick auf die noch kommenden Abschnitte über die sogenannten geodätischen Kurven einer Mannigfaltigkeit, wird die Möglichkeit, Vektoren mittels der Norm eine Größe zuzuordnen, noch von großer Bedeutung sein. 

\begin{definition}
    Sei $V$ ein $\mathbb{R}$-Vektorraum und $\| \cdot \| : V \longrightarrow [0, \infty )$ eine Funktion auf $V$. Wir nennen die Funktion $\| \cdot \|$ eine Norm, falls die folgenden Eigenschaften gelten.
    \begin{itemize}
        \item[\textit{i)}] Positive Definitheit:  $\| v \| \geq 0$ für alle $v \in V$ und $\| x \| = 0$ genau dann wenn $x = \mathbf{0}$ ist.
        \item[\textit{ii)}] Positive Homogenität: $\| \alpha \cdot v \| = |\alpha| \cdot \| v \|$ für alle $v \in V$ und für alle $\alpha \in \mathbb{R}$.
        \item[\textit{iii)}] Dreiecksungleichung: $\| v + w \| \leq \| v \| + \| w \|$ für alle $v, w \in V$. 
    \end{itemize}
    Ist $\| \cdot \|$ eine Norm, so nennen wir das Tupel $(V, \| \cdot \|)$ einen normierten Raum.
\end{definition}

Aufgrund der Eigenschaften einer Normfunktion $\| \cdot \|$ kann man tatsächlich davon sprechen, dass $\| v \|$ die Länge des Vektors $v \in V$ ist. Genauer gesagt können wir $\| v \|$ als den Abstand von $v$ zum Nullvektor $\mathbf{0} \in V$ interpretieren, da $\| \cdot \|$ nämlich immer auch eine Metrik $d_{\| \cdot \|}$ über $d_{\| \cdot \|}(v,w) := \| v - w \|$ induziert. Wir überprüfen kurz, dass es sich bei $d_{\| \cdot \|}$ tatsächlich um eine Metrik handelt:

\begin{proposition}
    Sei $(V, \| \cdot \|)$ ein reeller normierter Vektorraum und 
    \begin{align}
        d_{\| \cdot \|} : V \times V \longrightarrow [0, \infty )
    \end{align}
    eine Funktion, definiert durch 
    \begin{align}
        d_{\| \cdot \|}(v,w) := \| v - w \|.
    \end{align}
    Dann ist $d_{\| \cdot \|}$ eine Metrik. Wir nennen $d_{\| \cdot \|}$ die durch $\| \cdot \|$ induzierte Metrik.
\end{proposition}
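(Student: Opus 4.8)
Der Plan ist, die drei Metrikaxiome aus der Definition $3.3$ direkt nachzuprüfen, indem ich jede einzelne geforderte Eigenschaft von $d_{\|\cdot\|}$ auf das entsprechende Normaxiom zurückführe. Da $d_{\|\cdot\|}(v,w) = \|v-w\|$ gesetzt ist, übersetzt sich dabei jede Aussage über die Metrik unmittelbar in eine Aussage über die Norm des Differenzvektors $v-w$.

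Zuerst würde ich die positive Definitheit (Axiom \textit{i)}) behandeln. Aus der positiven Definitheit der Norm folgt sofort $\|v-w\| \geq 0$ für alle $v,w \in V$, also $d_{\|\cdot\|}(v,w) \geq 0$. Weiter gilt $d_{\|\cdot\|}(v,w) = 0$ genau dann, wenn $\|v-w\| = 0$ ist, was nach dem ersten Normaxiom äquivalent zu $v-w = \mathbf{0}$, also zu $v = w$ ist.

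Als Nächstes betrachte ich die Symmetrie (Axiom \textit{ii)}). Hier nutze ich die positive Homogenität der Norm mit dem Skalar $-1$: Wegen $v-w = -(w-v)$ folgt $\|v-w\| = |-1| \cdot \|w-v\| = \|w-v\|$ und damit $d_{\|\cdot\|}(v,w) = d_{\|\cdot\|}(w,v)$.

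Schließlich zur Dreiecksungleichung (Axiom \textit{iii)}): Für beliebige $v,w,z \in V$ schreibe ich $v-z = (v-w) + (w-z)$ und wende die Dreiecksungleichung der Norm auf diese Summe an. Das liefert $\|v-z\| \leq \|v-w\| + \|w-z\|$, also genau $d_{\|\cdot\|}(v,z) \leq d_{\|\cdot\|}(v,w) + d_{\|\cdot\|}(w,z)$. Eine echte Schwierigkeit gibt es bei diesem Beweis nicht; der einzig heikle Punkt ist die korrekte Handhabung des Skalars $-1$ im Symmetrieschritt, während alles Übrige eine direkte Übersetzung der Normaxiome in die Metrikaxiome darstellt.
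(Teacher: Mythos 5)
Dein Beweisvorschlag ist korrekt und entspricht im Wesentlichen genau dem Vorgehen der Arbeit: Auch dort werden die drei Metrikaxiome direkt nachgeprüft, die Symmetrie über $\|v-w\| = |-1| \cdot \|w-v\|$ aus der positiven Homogenität gewonnen und die Dreiecksungleichung durch Einschieben von $w$ in der Form $v-r = (v-w)+(w-r)$ auf die Norm-Dreiecksungleichung zurückgeführt. Es gibt keine inhaltlichen Abweichungen oder Lücken.
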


\begin{proof}
    Wir überprüfen die Metrikaxiome:
    \begin{itemize}
        \item[\textit{i)}] Zuerst bemerken wir, dass wegen $\| \cdot \| : V \longrightarrow [0, \infty)$ per Definition auch $d_{\| \cdot \|}$ nur nach $[0, \infty)$ abbilden kann. Wir wissen weiter wegen der Normaxiome, dass $\|v\|$ genau dann Null ist, wenn $v = \mathbf{0}$ ist. Das heißt, dass $d_{\| \cdot \|}(v,w) = \|v-w\| = 0$ genau dann, wenn $v-w = \mathbf{0}$ ist oder äquivalent $v = w$. Das verfifiziert das erste Metrikaxiom.
        \item[\textit{ii)}] Für alle $v,w \in V$ gilt 
        \begin{align}
            d_{\| \cdot \|}(v,w) =& \: \| v - w \| \nonumber \\ =& \: \|(-1)\cdot (w - v)\| \nonumber \\ =& \: |-1| \cdot \|w-v\| = d_{\| \cdot \|}(w,v).
        \end{align}
        Das verfifiziert das zweite Metrikaxiom.
        
        \item[\textit{iii)}] Für $v,w,r \in V$ gilt 
        \begin{align}
            d_{\| \cdot \|}(v,r) =& \: \|(v - w) + (w - r)\| \nonumber \\ \leq& \:  \|v - w\| +  \|w - r\| \nonumber \\ =& \: d_{\| \cdot \|}(v,w) + d_{\| \cdot \|}(w,r).
        \end{align}
        Das verfiziert das dritte Metrikaxiom.
    \end{itemize}
    Damit handelt es sich bei $d_{\| \cdot \|}$ tatsächlich um eine Metrik.
\end{proof}

Damit ist also jeder normierte Vektorraum auch immer gleich ein metrischer Raum und nach dem vorangegangen Kapitel $3$ dann auch immer gleich ein topologischer Raum. 

Da die Norm $\| \cdot \|$ über ihre Eigenschaften immer eine Metrik $d_{\| \cdot \|}$ induziert, lässt sich mittels der Interpretation, dass $\| v \| = d_{\| \cdot \|}(\mathbf{0}, v)$ der Abstand von $v \in V$ zum Nullvektor $\mathbf{0} \in V$ ist, leicht nachvollziehen, warum wir $\| v \|$ als die Größe von $v$ (gemessen vom Nullvektor) verstehen können. 

So sollte unter obiger Interpretation für eine Funktion, die jedem Vektor seine Größe zuordnet, natürlich gelten, dass nur die Länge des Nullvektors $\mathbf{0}$ Null und die Länge aller übrigen Vektoren größer als Null ist. Weiter sollte für eine derartige Funktion die Länge eines um $\alpha \in \mathbb{R}$ skalierten Vektors gerade der Länge des unskalierten Vektors multipliziert mit dem Skalierungsfaktor $|\alpha|$ sein. 

Schließlich lässt sich unter der obigen Interpretation die Dreiecksungleichung bezüglich $\| \cdot \|$ so interpretieren, dass, wenn wir $\|v + w\|$ als den Abstand von $v+w \in V$ vom Nullvektor $\mathbf{0}$ verstehen, der Abstand von $v + w$ zum Nullvektor immer kleiner gleich dem Abstand von $v$ zum Nullvektor und dem Abstand von $w$ zum Nullvektor zusammengenommen ist, denn 
\begin{align}
    \| v + w\| = d_{\| \cdot \|} (\mathbf{0},v+w) \leq d_{\| \cdot \|} (\mathbf{0},v) + d_{\| \cdot \|} (v,v+w) = \| v \| + \| w \|.
\end{align}
Betrachten wir noch kurz ein wichtiges Beispiel einer Norm:

\begin{example}
    Sei $V = \mathbb{R}^d$. Wir definieren die Funktion 
    \begin{align}
        \| \cdot \|_{\mathbb{R}^d} : \mathbb{R}^d \longrightarrow \mathbb{R}
    \end{align} 
    durch
    \begin{align}
        \| v \|_{\mathbb{R}^d} := \sqrt{ \sum_{i = 1}^d (v^i)^2 } 
    \end{align}
    mit $v = (v^1, ..., v^d) \in \mathbb{R}^d$. Es ist leicht einzusehen, dass es sich bei $\| \cdot \|_{\mathbb{R}^d}$ tatsächlich um eine Norm handelt. 
    
    Betrachten wir den Spezialfall $d = 2$, so erkennen wir, dass für $v = (v^1,v^2) \in \mathbb{R}^2$ der Ausdruck $\|v\|$ nach dem \textit{Satz des Pythagoras} gerade der Hypothenuse eines rechtwinkeligen Dreiecks entspricht, deren Katheten die Längen $|v^1|$ bzw. $|v^2|$ haben. 
    
    Wir können also den Vektorraum $\mathbb{R}^2$, basierend auf diesem Längenbegriff, als zweidimensionales kartesisches Koordinatensystem visualisieren, sodass der Ursprung des Koordinatensystems durch den Nullvektor repräsentiert wird und der Vektor $v$ in diesem Koordinatensystem als Pfeil dargestellt werden kann, welcher auf das Koordinatenpaar $[v^1, v^2]$ zeigt. 
    
    In dieser anschaulichen Visualisierung des $\mathbb{R}^2$ hätte der Vektor $v$ nämlich auf natürliche Weise die Länge $\|v\|_{\mathbb{R}^2}$, d.h. die Funktion $\| \cdot \|_{\mathbb{R}^d}$ wäre keine abstrakte Längenfunktion, sondern würde mit der auf der Visualisierung basierenden geometrischen Länge von $v$ übereinstimmen. 
    
    Ausgehend von dieser intuitiven geometrischen Visualisierung des $\mathbb{R}^2$ entspricht $\| \cdot \|_{\mathbb{R}^2}$ also der anschaulichen geometrischen Länge von $v$, was abermals zeigt, dass wir $\| \cdot \|_{\mathbb{R}^d}$ als Längenfunktion auf $\mathbb{R}^2$ verstehen können, wenn wir die kartesische Visualisierung des $\mathbb{R}^2$ zugrunde legen.  

    In der eben angesprochenen Visualisierung des $\mathbb{R}^2$ haben wir bereits das Konzept des Winkels verwendet, denn immerhin stehen die Koordinatenachsen in einem kartesischen Koordinatensystem rechtwinkelig aufeinander. Im folgenden werden wir sehen, dass wir mittels eines Skalarproduktes auf $\mathbb{R}^2$ den auf obiger Visualisierung basierenden geometrischen Winkel zwischen zwei Vektoren erklären können. 
\end{example}

\begin{remark}
    Eine nützliche Eigenschaft der Normfunktion $\| \cdot \|_{\mathbb{R}^d}$, die wir unter anderem noch im Kapitel über die Analytischen Grundlagen benötigen werden, ist die Folgende: Sei der $\mathbb{R}^d$ ausgestattet mit der von der Norm $\| \cdot \|_{\mathbb{R}^d}$ induzierten Topologie. Sei weiter $(x_n)_{n \in \mathbb{N}} \subseteq \mathbb{R}^d$ eine Folge mit $x_n = (x^1_n, ..., x^d_n)$ und $x_n \xrightarrow{n \rightarrow \infty} x = (x^1, ..., x^d) \in \mathbb{R}^d$. Das heißt für alle $\delta > 0$ existiert ein $N \in \mathbb{N}$, sodass
    \begin{align}
        \|x_n - x\|_{\mathbb{R}^d} = \sqrt{( x^1_n - x^1 )^2 + ... + ( x^d_n - x^d )^2} < \delta \:\:\:\: \forall n \geq N
    \end{align}
    gilt. Damit folgt aber für die Folge $(x^{i}_n)_{n \in \mathbb{N}} \subseteq \mathbb{R}$ mit $i \in \mathbb{N}$, dass für das gleiche beliebige $\delta > 0$ gilt, dass
    \begin{align}
        |x^{i}_n - x_i| =& \: \sqrt{(x^{i}_n - x_i)^2} \nonumber \\ \leq& \: \sqrt{( x^1_n - x^1 )^2 + ... + (x^{i}_n - x_i)^2 + ... + ( x^d_n - x^d )^2} \nonumber \\ =& \: \|x_n - x\|_{\mathbb{R}^d} < \delta \:\:\:\: \forall n \geq N.
    \end{align}
    D.h. aus der Konvergenz der der Folge $(x_n)_{n \in \mathbb{N}}$ folgt $x^{i}_n \xrightarrow{n \rightarrow \infty} x^{i} \in \mathbb{R}$ für alle $i \in \{1,...,d\}$. Umgedreht folgt natürlich auch aus der Konvergenz $x^{i}_n \xrightarrow{n \rightarrow \infty} x^{i} \in \mathbb{R}$ für alle $i \in \{1,...,d\}$ die Konvergenz $x_n \xrightarrow{n \rightarrow \infty} x = (x^1, ..., x^d) \in \mathbb{R}^d$. 
\end{remark}

Wir sind nun also mit der Normfunktion in der Lage den für die Geometrie wichtigen Begriff der Länge zu erklären. Als Nächstes definieren wir den Begriff des reellen Skalarproduktes und des reellen Prähilbertraumes. Wie wir sehen werden, lässt sich mittels eines Skalarproduktes auf $V$ ein Winkelbegriff einführen, was unter anderem zeigt, warum Skalarprodukte wichtige mathematische Größen sind. 

\begin{definition}
    Sei $V$ ein endlichdimensionaler $\mathbb{R}$-Vektorraum. Sei weiter 
    \begin{align}
        \langle \cdot , \cdot \rangle : V \times V \longrightarrow \mathbb{R}
    \end{align}
    eine bilineare Abbildung, d.h.
    \begin{align}
        \langle \alpha_1 \cdot v_1 + \alpha_2 \cdot v_2 , w \rangle = \alpha_1 \cdot \langle v_1, w \rangle + \alpha_2 \cdot \langle v_2, w \rangle
    \end{align}
    und
    \begin{align}
        \langle v , \beta_1 \cdot w_1 + \beta_2 \cdot w_2 \rangle = \beta_1 \cdot \langle v, w_1 \rangle + \beta_2 \cdot \langle v, w_2 \rangle
    \end{align}
    für alle $\alpha_1, \alpha_2, \beta_1, \beta_2 \in \mathbb{R}$ und alle $v, v_1, v_2, w, w_1, w_2 \in V$. Wir wollen $\langle \cdot , \cdot \rangle$ als Skalarprodukt auf $V$ bezeichnen, falls $\langle \cdot, \cdot \rangle$ den folgenden Eigenschaften genügt:
    \begin{itemize}
        \item[\textit{i)}] Positive Definitheit: $\langle v , v \rangle \geq 0$ für alle $v \in V$ und $\langle v, v \rangle = 0$ genau dann wenn $v = \mathbf{0}$ ist.
        \item[\textit{ii)}] Symmetrie: $\langle v , w \rangle = \langle w, v \rangle \:\:\:\:\:\: \forall v,w \in V$
    \end{itemize}
    Ist $\langle \cdot , \cdot \rangle$ ein Skalarprodukt, so nennen wir das Tupel $(V, \langle \cdot , \cdot \rangle)$ Prähilbertraum.
\end{definition}

Betrachten wir ein wichtiges Beispiel eines Skalarproduktes:

\begin{example}
    Sei $V = \mathbb{R}^d$. Weiter erklären wir die Funktion 
    \begin{align}
        \langle \cdot, \cdot \rangle_{\mathbb{R}^d} : \mathbb{R}^d \times \mathbb{R}^d \longrightarrow \mathbb{R}
    \end{align}
    durch
    \begin{align}
        \langle v, w \rangle_{\mathbb{R}^d} := \sum_{i,j = 1}^d v^{i} \cdot w^j
    \end{align}
    für $v = (v^1, ..., v^d), \: w = (w^1, ..., w^d) \in \mathbb{R}^d.$ 
    
    Es ist leicht einzusehen, dass es sich bei $\langle \cdot, \cdot \rangle_{\mathbb{R}^d}$ tatsächlich um eine symmetrische, positiv definite Bilinearform handelt. Wir nennen $\langle \cdot, \cdot \rangle_{\mathbb{R}^d}$ im folgenden Standardskalarprodukt des $\mathbb{R}^d$. Eine erste wichtige Beobachtung ist, dass wegen der Definition von $\| \cdot \|_{\mathbb{R}^d}$ aus Beispiel $4.3.$ und der Definition von $\langle \cdot, \cdot \rangle_{\mathbb{R}^d}$ folgt, dass $\| \cdot \|_{\mathbb{R}^d} = \sqrt{\langle \cdot, \cdot \rangle_{\mathbb{R}^d}}$, d.h. $\langle \cdot, \cdot \rangle_{\mathbb{R}^d}$ induziert die Norm $\| \cdot \|_{\mathbb{R}^d}$. Weiter unten werden wir sehen, dass jedes reelle Skalarprodukt auf diese Weise eine zugehörige Norm induziert.

    Betrachten wir abermals den Spezialfall $d=2$ und die aus Beispiel $4.3.$ bekannte Standardvisualisierung des $\mathbb{R}^2$ mittels eines zweidimensionalen kartesischen Koordinatensystems, in welcher wir Punkte des $\mathbb{R}^2$ mit den Koordinatenpaaren des kartesischen Koordinatensystems identifizieren, so lässt sich nun leicht zeigen, dass der gewöhnliche Winkelbegriff zwischen zwei Vektoren in dieser Standardvisualisierung mittels des obigen Standardskalarproduktes ausgedrückt werden kann:

    Wir betrachten das folgende Dreieck, welches von den Vektoren $v = (v^1,v^2)$ und $u = (a,0)$ erzeugt wird:
    
    \begin{center}
        \begin{tikzpicture}
            \draw [ultra thick, black] (-4,0)--(2,4)--(4,0)--(-4,0);
            \draw [thin, black] (2,4)--(2,0);
            \node at (-1,2) [above left]{$a$};
            \node at (2,1.5) [above left]{$|v^2|$};
            \node at (3.5,2) [above left]{$c$};
            \node at (0,0) [below left]{$b$};
            \node at (-4,0) [below left]{$(0,0)$};
            \node at (2,4) [above right]{$(v^1,v^2)$};
            \node at (4,0) [below right]{$(a,0)$};
            \node at (-3.3,0.6) [below right]{$\theta$};
        \end{tikzpicture}
    \end{center}
    Aus der Grafik lässt sich ablesen, dass $a = \|v\|_{\mathbb{R}^2}$, $b = \|u\|_{\mathbb{R}^2}$ und $c = \|v - u\|_{\mathbb{R}^2}.$ Wie wir weiter der Grafik entnehmen, schließen in dieser Standardvisualisierung des $\mathbb{R}^2$ die Vektoren $v$ und $u$ einen Winkel $\theta$ ein. 

    Gemäß der euklidischen Geometrie gilt $(v^1,v^2) = (a \cdot \textit{cos}(\theta), a \cdot \textit{sin}(\theta))$ und mittels des Satzes des Pythagoras gilt 
    \begin{align}
        c^2 =& \: (b - a \cdot \textit{cos}(\theta))^2 + (a \cdot \textit{sin}(\theta))^2 \nonumber \\ =& \: a^2 + b^2 - 2ab \cdot \textit{cos}(\theta)
    \end{align}
    Im letzten Schritt haben wir dabei ausgenutzt, dass $\textit{sin}^2(\theta) + \textit{cos}^2(\theta) = 1$ ist. Wir haben damit gezeigt, dass in einem allgemeinen Dreiecken obiger Bauart gilt, dass 
    \begin{align}
        c^2 = a^2 + b^2 + 2ab \cdot \textit{cos}(\theta)
    \end{align}
    ist. Dieses Resultat nennt man auch den Kosinussatz.

    Betrachten wir nun zwei allgemeine Vektoren $w,h \in \mathbb{R}^2$ und das zugehörige Dreieck, welches diese in der Standardvisualisierung aufspannen. Wir nehmen dabei an, dass in dieser Visualsierung die Vektoren $w$ und $h$ den Winkel $\theta$ einschließen.

    \begin{center}
        \begin{tikzpicture}
            \draw [ultra thick, black] (-4,0)--(2,4)--(4,0)--(-4,0);
            \node at (-1,2) [above left]{$\|h\|_{\mathbb{R}^2}$};
            \node at (5.2,2) [above left]{$\|h-w\|_{\mathbb{R}^2}$};
            \node at (0,0) [below left]{$\|w\|_{\mathbb{R}^2}$};
            \node at (-3.3,0.6) [below right]{$\theta$};
        \end{tikzpicture}
    \end{center}
    Mittels des obigen Kosinussatzes gilt  in diesem Dreieck 
    \begin{align}
        \|h-w\|_{\mathbb{R}^2}^2 =& \: \|h\|_{\mathbb{R}^2}^2 + \|w\|_{\mathbb{R}^2}^2 + 2 \: \|h\|_{\mathbb{R}^2} \|w\|_{\mathbb{R}^2} \cdot \textit{cos}(\theta) \label{hh}
    \end{align}
    Andererseits folgt aus der Definition von $\| \cdot \|_{\mathbb{R}^2}$, dass
    \begin{align}
        \|h-w\|_{\mathbb{R}^2}^2 =& \: \langle h-w , h-w \rangle_{\mathbb{R}^2} \nonumber \\ =& \: \|h\|_{\mathbb{R}^2}^2 + \|w\|_{\mathbb{R}^2}^2 + 2 \: \langle h , w \rangle_{\mathbb{R}^2} \label{hhh}
    \end{align}
    Mit \eqref{hh} und \eqref{hhh} folgt damit 
    \begin{align}
        \langle h ,w \rangle_{\mathbb{R}^2} = \|h\|_{\mathbb{R}^2}  \|w\|_{\mathbb{R}^2} \textit{cos}(\theta).
    \end{align}
    Es ist anzumerken, dass sich dieses Resultat leicht in den $\mathbb{R}^d$ verallgemeinern lässt.
\end{example}

Eine wichtige Eigenschaft reeller Skalarprodukte ist die Korrektheit der sogenannten schwarzschen Ungleichung. Mit dieser werden wir gleich in der Lage sein, zu zeigen, dass jedes Skalarprodukt auf dem $\mathbb{R}$-Vektorraum $V$ immer auch gleich eine Norm auf $V$ induziert, sodass jeder Prähilbertraum auch immer gleich ein normierter, und damit auch ein metrischer und topologischer Raum ist.

\begin{proposition}
    Sei $(V, \langle \cdot , \cdot \rangle)$ ein reeller, endlichdimensionaler Prähilbertraum. Dann gilt die Cauchy-Schwarzsche Ungleichung 
    \begin{align}
        |\langle v , w \rangle |^2 \leq \langle v,v \rangle \cdot \langle w,w \rangle
    \end{align}
    für alle $v,w \in V$.
\end{proposition}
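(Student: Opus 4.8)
The plan is to exploit the positive definiteness of the scalar product by applying it to a cleverly chosen vector, namely a linear combination of $v$ and $w$ whose ``Länge im Quadrat'' is automatically nonnegative, and then to optimize over the free parameter in that combination.

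First I would dispose of the degenerate case $w = \mathbf{0}$. Here $\langle v, w \rangle = 0$ follows from bilinearity and $\langle w, w \rangle = 0$ follows from positive definiteness, so both sides of the claimed inequality vanish and the statement holds trivially. Hence from now on I may assume $w \neq \mathbf{0}$, which by positive definiteness guarantees $\langle w, w \rangle > 0$. This is the only point where care is needed, since the optimal choice below divides by $\langle w, w \rangle$.

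Next, for an arbitrary real parameter $t \in \mathbb{R}$ I would consider the quantity $\langle v - t \cdot w, v - t \cdot w \rangle$, which is nonnegative by positive definiteness. Expanding it using the bilinearity and the symmetry of $\langle \cdot, \cdot \rangle$ yields
\begin{align}
    0 \leq \langle v - t \cdot w, v - t \cdot w \rangle = \langle v, v \rangle - 2t \cdot \langle v, w \rangle + t^2 \cdot \langle w, w \rangle,
\end{align}
a quadratic polynomial in $t$ with positive leading coefficient $\langle w, w \rangle$ that stays nonnegative for every real $t$. The key step is now to choose $t$ so as to make the estimate as sharp as possible. I would set
\begin{align}
    t := \frac{\langle v, w \rangle}{\langle w, w \rangle},
\end{align}
which is well-defined by $\langle w, w \rangle > 0$. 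Substituting this value and simplifying gives
\begin{align}
    0 \leq \langle v, v \rangle - \frac{\langle v, w \rangle^2}{\langle w, w \rangle},
\end{align}
and multiplying through by the positive number $\langle w, w \rangle$ produces $\langle v, w \rangle^2 \leq \langle v, v \rangle \cdot \langle w, w \rangle$. Since $|\langle v, w \rangle|^2 = \langle v, w \rangle^2$ for the real-valued scalar product, this is precisely the asserted Cauchy-Schwarzsche Ungleichung.

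I do not expect a genuine obstacle here; the argument is elementary once positive definiteness, bilinearity and symmetry are in hand, and the only subtlety is the separate treatment of $w = \mathbf{0}$ so that the division is legitimate. As an alternative to plugging in the optimal $t$, one could instead invoke the discriminant criterion: since the quadratic above is nonnegative for all $t \in \mathbb{R}$, its discriminant $4 \langle v, w \rangle^2 - 4 \langle v, v \rangle \cdot \langle w, w \rangle$ must be $\leq 0$, which delivers the same conclusion without explicitly naming $t$.
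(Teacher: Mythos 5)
Your proposal is correct and follows essentially the same route as the paper: apply positive definiteness to $\langle v - \alpha \cdot w, v - \alpha \cdot w \rangle$, expand via bilinearity and symmetry, and substitute the optimal value $\alpha = \frac{\langle v,w \rangle}{\langle w,w \rangle}$, with the degenerate case handled separately so the division is legitimate. The only cosmetic differences are that the paper excludes both $v = \mathbf{0}$ and $w = \mathbf{0}$ (where only $w \neq \mathbf{0}$ is actually needed, as you correctly observe) and that your closing remark about the discriminant criterion is an extra alternative not present in the paper.
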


\begin{proof}
    Sei $v,w \in V$ beliebig gewählt und ohne Beschränkung der Allgemeinheit nehmen wir an, dass $v \neq \mathbf{0}$ und $w \neq \mathbf{0}$. Andernfalls wäre die Ungleichung tivial erfüllt, da im Falle $v = \mathbf{0}$ oder $w = \mathbf{0}$ oder $v = w = \mathbf{0}$ aus der Bilinearität des Skalarproduktes sofort folgen würde, dass beide Seiten der Ungleichung gleich Null wären. Aus der positiven Definitheit, der Symmetrie und der Bilinearität des Skalarproduktes folgt nun für beliebiges $0 \neq \alpha \in \mathbb{R}$ 
    \begin{align}
        0 \leq \langle v - \alpha \cdot w , v - \alpha \cdot w \rangle = \langle v,v \rangle - 2 \alpha \cdot \langle v,w \rangle + \alpha^2 \cdot \langle w,w \rangle.
    \end{align}
    Wir wählen nun $\alpha := \frac{\langle v,w \rangle}{\langle w,w \rangle}$. Diese Wahl von $\alpha$ ist dabei wohldefiniert, da aus $w \neq \mathbf{0}$ folgt, dass $\langle w,w \rangle > 0$ ist. Es folgt nun mit dieser Setzung, dass 
    \begin{align}
        0 \leq \langle v,v \rangle - 2 \cdot \frac{\langle v,w \rangle^2}{\langle w,w \rangle} + \frac{\langle v,w \rangle^2}{\langle w,w \rangle} = \langle v,v \rangle - \frac{\langle v,w \rangle^2}{\langle w,w \rangle}
    \end{align}
    ist. Umstellen liefert die Cauchy-Schwarzsche Ungleichung 
    \begin{align}
        |\langle v , w \rangle |^2 \leq \langle v,v \rangle \cdot \langle w,w \rangle.
    \end{align}
\end{proof}

Für einen reellen Prähilbertraum $V$ folgt nun aus der Schwarzschen Ungleichung für beliebige $v,w \in V$ mit $v \neq \mathbf{0}$ und $w \neq \mathbf{0}$, dass 
\begin{align}
    \frac{|\langle v, w \rangle|}{\sqrt{\langle v,v \rangle} \cdot \sqrt{\langle w,w \rangle}} \leq 1
\end{align}
oder äquivalent 
\begin{align}
    -1 \leq \frac{\langle v, w \rangle}{\sqrt{\langle v,v \rangle} \cdot \sqrt{\langle w,w \rangle}} \leq 1.
\end{align}
Wie im Beispiel $4.4.$ können wir daher ein eindeutiges $\theta \in [0, \pi)$ finden, sodass
\begin{align}
    \textit{cos}(\theta) = \frac{\langle v, w \rangle}{\sqrt{\langle v,v \rangle} \cdot \sqrt{\langle w,w \rangle}} \label{33.}
\end{align}
bzw.
\begin{align}
    \langle v,w \rangle = \sqrt{\langle v,v \rangle} \cdot \sqrt{\langle w,w \rangle} \cdot \textit{cos}(\theta) \label{34.}
\end{align}
ist. Im Falle des $\mathbb{R}^d$, ausgestattet mit dem Standardskalarprodukt $\langle \cdot, \cdot \rangle_{\mathbb{R}^d}$, haben wir im Beispiel $4.4.$ gesehen, dass die Zahl $\theta$ in \eqref{34.} mit dem geometrischen Winkel der kartesischen Visualisierung des $\mathbb{R}^d$ übereinstimmt. In einem beliebigen endlichdimensionalen Prähilbertraum $V$ ist \eqref{33.} hingegen die abstrakte Definition des Winkels. 

Wir haben nun also gesehen, dass wir mittels eines Skalarproduktes auf einem Vektorraum $V$ einen Winkelbegriff einführen können und dieser in Spezialfällen mit dem gewöhnlichen Winkelbegriff zusammenfällt. Mittels dieses Winkelbegriffs können wir nun noch eine nützliche Definition angeben:

\begin{definition}
    Sei $(V, \langle \cdot, \cdot \rangle)$ ein endlichdimensionaler Prähilbertraum. Dann sagen wir, dass zwei Vektoren $v,w \in V$ senkrecht oder orthogonal aufeinander stehen, wenn gilt, dass 
    \begin{align}
        \langle v , w \rangle = 0
    \end{align}
    ist.
\end{definition}

Wegen \eqref{34.} ist dabei sofort klar, warum wir im Falle $\langle v, w \rangle = 0$ davon sprechen, dass die Vektoren orthogonal zueinander sind, da in diesem Fall $\textit{cos}(\theta) = 0$ und damit $\theta = \frac{\pi}{2}$ ist.

Als Nächstes wollen wir nun noch zeigen, dass jedes Skalarprodukt $\langle \cdot, \cdot \rangle$ eine Norm $\| \cdot \|_{\langle \cdot, \cdot \rangle}$ über $\| v \|_{\langle \cdot, \cdot \rangle} = \sqrt{\langle v, v \rangle}$, mit $v \in V$, induziert.

\begin{proposition}
    Sei $(V, \langle \cdot, \cdot \rangle)$ ein Prähilbertraum. Dann ist die Funktion 
    \begin{align}
        \| \cdot \|_{\langle \cdot, \cdot \rangle} : V \longrightarrow [0,\infty),
    \end{align}
    definiert durch
    \begin{align}
        \| v \|_{\langle \cdot, \cdot \rangle} := \sqrt{\langle v, v \rangle},
    \end{align}
    mit $v \in V$, eine Norm. Wir nennen $\| \cdot \|_{\langle \cdot, \cdot \rangle}$ die durch $\langle \cdot , \cdot \rangle$ induzierte Norm. 
\end{proposition}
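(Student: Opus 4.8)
Der Plan ist, die drei Normaxiome aus Definition 4.14 der Reihe nach zu verifizieren, wobei ich durchgängig die Bilinearität, die Symmetrie und die positive Definitheit des Skalarproduktes $\langle \cdot, \cdot \rangle$ sowie -- für die Dreiecksungleichung -- die unmittelbar zuvor bewiesene Cauchy-Schwarzsche Ungleichung heranziehe. Da $\langle \cdot, \cdot \rangle$ positiv definit ist, gilt zunächst $\langle v, v \rangle \geq 0$ für alle $v \in V$, sodass die Wurzel $\sqrt{\langle v, v \rangle}$ überhaupt wohldefiniert ist und $\| \cdot \|_{\langle \cdot, \cdot \rangle}$ tatsächlich nach $[0, \infty)$ abbildet.

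Die positive Definitheit der Norm folgt direkt aus der positiven Definitheit des Skalarproduktes: Es gilt $\| v \|_{\langle \cdot, \cdot \rangle} = \sqrt{\langle v, v \rangle} \geq 0$, und wegen $\langle v, v \rangle = 0 \iff v = \mathbf{0}$ ergibt sich sofort $\| v \|_{\langle \cdot, \cdot \rangle} = 0 \iff v = \mathbf{0}$. Die positive Homogenität rechne ich anschließend mittels der Bilinearität nach, indem ich für beliebiges $\alpha \in \mathbb{R}$ und $v \in V$
\begin{align}
    \| \alpha \cdot v \|_{\langle \cdot, \cdot \rangle} = \sqrt{\langle \alpha \cdot v, \alpha \cdot v \rangle} = \sqrt{\alpha^2 \cdot \langle v, v \rangle} = |\alpha| \cdot \sqrt{\langle v, v \rangle} = |\alpha| \cdot \| v \|_{\langle \cdot, \cdot \rangle}
\end{align}
erhalte, wobei ich $\sqrt{\alpha^2} = |\alpha|$ benutze.

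Der eigentliche Hauptschritt und damit die zentrale Herausforderung ist die Dreiecksungleichung. Hierzu multipliziere ich zunächst das Quadrat $\| v + w \|_{\langle \cdot, \cdot \rangle}^2$ mittels Bilinearität und Symmetrie aus zu
\begin{align}
    \| v + w \|_{\langle \cdot, \cdot \rangle}^2 = \langle v + w, v + w \rangle = \langle v, v \rangle + 2 \langle v, w \rangle + \langle w, w \rangle.
\end{align}
Auf den gemischten Term wende ich dann die Cauchy-Schwarzsche Ungleichung an, welche $\langle v, w \rangle \leq |\langle v, w \rangle| \leq \sqrt{\langle v, v \rangle} \cdot \sqrt{\langle w, w \rangle} = \| v \|_{\langle \cdot, \cdot \rangle} \cdot \| w \|_{\langle \cdot, \cdot \rangle}$ liefert. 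Einsetzen ergibt
\begin{align}
    \| v + w \|_{\langle \cdot, \cdot \rangle}^2 \leq \| v \|_{\langle \cdot, \cdot \rangle}^2 + 2 \| v \|_{\langle \cdot, \cdot \rangle} \| w \|_{\langle \cdot, \cdot \rangle} + \| w \|_{\langle \cdot, \cdot \rangle}^2 = \big( \| v \|_{\langle \cdot, \cdot \rangle} + \| w \|_{\langle \cdot, \cdot \rangle} \big)^2.
\end{align}

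Da beide Seiten nichtnegativ sind, liefert das Ziehen der Wurzel schließlich $\| v + w \|_{\langle \cdot, \cdot \rangle} \leq \| v \|_{\langle \cdot, \cdot \rangle} + \| w \|_{\langle \cdot, \cdot \rangle}$, womit alle drei Normaxiome erfüllt sind. Der einzige nichttriviale Baustein ist somit die Cauchy-Schwarzsche Ungleichung; alles Übrige ist reines Ausnutzen der algebraischen Eigenschaften des Skalarproduktes. Es lohnt sich, im Beweis explizit zu vermerken, dass die Monotonie der Wurzelfunktion auf $[0, \infty)$ den abschließenden Übergang von der quadrierten zur unquadrierten Ungleichung rechtfertigt.
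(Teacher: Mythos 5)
Dein Beweis ist korrekt und verfährt im Wesentlichen genauso wie der Beweis in der Arbeit: positive Definitheit und Homogenität direkt aus den Skalarprodukteigenschaften, Dreiecksungleichung über die Cauchy-Schwarzsche Ungleichung angewandt auf den gemischten Term. Der einzige Unterschied ist rein darstellerisch — du quadrierst erst und ziehst am Ende die Wurzel, während die Arbeit die Ungleichungskette durchgehend unter der Wurzel führt.
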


\begin{proof}
    Wir überprüfen die Normaxiome:
    \begin{itemize}
    
        \item[\textit{i)}] Zuerst bemerken wir, dass aufgrund der positiven Definitheit des Skalarproduktes die Funktion $\| \cdot \|_{\langle \cdot, \cdot \rangle}$ nur auf $[0, \infty)$ abbilden kann und $\| v \|_{\langle \cdot, \cdot \rangle}$ genau dann Null ist, wenn $v = \mathbf{0}$ ist. Das zeigt das erste Normaxiom.
        
        \item[\textit{ii)}] Es gilt für alle $\alpha \in \mathbb{R}$ und $v \in V$, dass 
        \begin{align}
            \| \alpha \cdot v \|_{\langle \cdot, \cdot \rangle} \sqrt{\langle \alpha \cdot v, \alpha \cdot v \rangle} = \sqrt{\alpha^2 \langle v , v \rangle} = |\alpha| \sqrt{\langle v,v \rangle} = |\alpha| \| v \|_{\langle \cdot, \cdot \rangle}
        \end{align}
        ist. Dies zeigt das zweite Normaxiom.
        \item[\textit{iii)}] Seien $v,w \in V$. Es gilt mittels der schwarzschen Ungleichung 
        \begin{align}
            \| v + w \|_{\langle \cdot, \cdot \rangle} =& \: \sqrt{\langle v+w,v+w\rangle} = \sqrt{\langle v,v \rangle + 2 \cdot \langle v,w \rangle + \langle w,w \rangle} \nonumber \\ \leq& \: \sqrt{\langle v,v \rangle + 2 \cdot |\langle v, w \rangle| + \langle w,w \rangle} \nonumber \\ \leq& \: \sqrt{\langle v,v \rangle + 2 \cdot \sqrt{\langle v,v \rangle} \cdot \sqrt{\langle w,w \rangle} + \langle w,w \rangle} \nonumber \\ =& \: \sqrt{\big( \sqrt{\langle v,v \rangle} + \sqrt{\langle w,w \rangle} \big)^2} \nonumber \\ =& \: \sqrt{\langle v,v \rangle} + \sqrt{\langle w,w \rangle} \nonumber \\ =& \: \| v \|_{\langle \cdot, \cdot \rangle} + \| w \|_{\langle \cdot, \cdot \rangle}.
        \end{align}
        Dies zeigt das dritte Normaxiom.
    \end{itemize}
    Damit ist gezeigt, dass es sich bei $\| \cdot \|_{\langle \cdot, \cdot \rangle}$ tatsächlich um eine Norm handelt.
\end{proof}

Wir haben damit also gezeigt, dass jeder endlichdimensionale Prähilbertraum $(V, \langle \cdot, \cdot \rangle)$ automatisch auch ein topologischer Hausdorffraum ist und alle Begrifflichkeiten, die wir in den Kapiteln über topologische und metrischen Räumen diskutiert haben, auf diese Räume mittels der durch das Skalarprodukt induzierten Topologie übertragbar sind. Die durch das Skalarprodukt induzierte Topologie ist dabei die von der Metrik $d_{\|\cdot\|_{\langle \cdot , \cdot \rangle}}$ induzierte Topologie. 

Die Übertragbarkeit der Begriffe der Konvergenz und der Stetigkeit, sowie die Eindeutigkeit der Grenzwerte konvergenter Folgen in Prähilberträume und die geometrischen Informationen, die im Skalarprodukt codiert sind, werden später in den Kapiteln über Mannigfaltigkeiten noch von großer Bedeutung sein, um mit diesen mathematischen Objekte richtig arbeiten zu können, sowie um sich ein hilfreiches geometrisches Verständnis dieser zu erarbeiten. 

Zusammengefasst haben wir in diesem Abschnitt also gelernt, was Prähilberträume sind, das diese auf natürliche Weise Hausdorff-Topologien tragen, was es uns ermöglicht die Resultate und Definitionen aus dem Kapitel über topologische Räume auf diese anzuwenden, und wir haben gelernt, dass der $\mathbb{R}^d$, ausgestattet mit dem Standardskalarprodukt $\langle \cdot , \cdot \rangle_{\mathbb{R}^d}$, die gewöhnliche euklidische Geometrie trägt. 


\subsection{Produkttopologien und topologische Vektorräume}

In diesem Abschnitt wollen wir uns noch mit dem wichtigen Begriff der Produkttopologie beschäftigen und darüberhinaus ein wichtiges algebraisches Anwendungsbeispiel für diesen Begriff vorstellen. Dieses Anwendungsbeispiel soll dabei einerseits zeigen, für was man eine Produkttopologie üblicherweise benötigt, und andererseits auch als Vorbereitung auf das Kapitel $9$ dienen, in welchem wir den Begriff einer unendlichdimensionalen Mannigfaltigkeit umreissen wollen.

Die primäre Aufgabe dieses Abschnittes ist es also, die bisher entwickelten Begriffe aus dem Bereich der Topologie und der (linearen) Algebra sinnvoll zusammenzubringen, um die Resultate aus diesem Abschnitt später in dieser Arbeit nutzen zu können. 

Als Vorbereitung auf die Definition einer Produkttopologie betrachten wir zuvor noch den folgenden Satz, der sich später noch als sehr hilfreich erweisen wird, wenn es darum geht die Definition einer Produkttopologie kompakt hinzuschreiben.

\begin{proposition}
    Sei $X$ eine Menge, $\mathcal{I}$ eine beliebige Indexmenge und $\tau_i$ für alle $i \in \mathcal{I}$ eine beliebige Topologie auf $X$. Dann gilt, dass $\tau := \bigcap_{i \in \mathcal{I}} \tau_i$ wieder eine Topologie auf $X$ ist.
\end{proposition}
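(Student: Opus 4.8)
Der Plan ist, schlicht die drei Topologieaxiome aus Definition $2.1$ für das Mengensystem $\tau := \bigcap_{i \in \mathcal{I}} \tau_i$ nachzuprüfen. Die entscheidende und in allen drei Schritten wiederkehrende Beobachtung ist dabei, dass ein Element genau dann in einem Durchschnitt von Mengen liegt, wenn es in jeder einzelnen dieser Mengen liegt. Da jedes $\tau_i$ bereits für sich genommen eine Topologie, also unter den relevanten Mengenoperationen abgeschlossen ist, überträgt sich diese Abgeschlossenheit auf den Durchschnitt.

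Zuerst würde ich Axiom \textit{i)} behandeln: Da jedes $\tau_i$ eine Topologie ist, gilt $\emptyset \in \tau_i$ und $X \in \tau_i$ für alle $i \in \mathcal{I}$. Nach der obigen Durchschnittsbeobachtung folgt sofort $\emptyset, X \in \bigcap_{i \in \mathcal{I}} \tau_i = \tau$.

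Danach würde ich die Abgeschlossenheit unter beliebigen Vereinigungen (Axiom \textit{ii)}) zeigen. Sei dazu $\mathcal{J}$ eine beliebige Indexmenge und seien $\mathcal{O}_j \in \tau$ für alle $j \in \mathcal{J}$ gegeben. Dann gilt für jedes feste $i \in \mathcal{I}$, dass $\mathcal{O}_j \in \tau_i$ für alle $j \in \mathcal{J}$ ist, denn $\mathcal{O}_j$ liegt im Durchschnitt aller $\tau_i$. Da $\tau_i$ eine Topologie ist, folgt $\bigcup_{j \in \mathcal{J}} \mathcal{O}_j \in \tau_i$. Weil dies für jedes $i \in \mathcal{I}$ gilt, erhalten wir $\bigcup_{j \in \mathcal{J}} \mathcal{O}_j \in \bigcap_{i \in \mathcal{I}} \tau_i = \tau$. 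Der Schritt für endliche Durchschnitte (Axiom \textit{iii)}) verläuft wörtlich analog, wobei man lediglich $\mathcal{J}$ als endliche Indexmenge wählt und die Abgeschlossenheit jedes einzelnen $\tau_i$ unter endlichen Durchschnitten ausnutzt.

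Echte Schwierigkeiten erwarte ich bei diesem Beweis nicht; die einzige Stelle, an der man sorgfältig sein muss, ist die saubere Trennung der beiden Indexmengen, nämlich der Indexmenge $\mathcal{I}$, welche die Familie der Topologien durchläuft, und der Indexmenge $\mathcal{J}$, welche die Mengen innerhalb einer Vereinigung beziehungsweise eines Durchschnitts durchläuft. Hält man diese beiden Ebenen konsequent auseinander und vertauscht nirgends versehentlich die Quantoren $\forall i \in \mathcal{I}$ und $\forall j \in \mathcal{J}$, so reduziert sich der gesamte Beweis auf die dreimalige Anwendung derselben elementaren Durchschnittsbeobachtung.
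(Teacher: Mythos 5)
Dein Beweis ist korrekt und stimmt im Wesentlichen mit dem Beweis der Arbeit überein: Dort werden ebenfalls die drei Topologieaxiome für $\tau = \bigcap_{i \in \mathcal{I}} \tau_i$ nachgeprüft, indem man für jedes feste $i \in \mathcal{I}$ die Abgeschlossenheit von $\tau_i$ ausnutzt und dann über alle $i$ quantifiziert. Auch die saubere Trennung der beiden Indexmengen $\mathcal{I}$ und $\mathcal{J}$ findet sich dort genauso wieder.
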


\begin{proof}
    Wir weisen die Topologie-Axiome nach:
    \begin{itemize}
        \item [\textit{i)}] Da $\emptyset \in \tau_i$ für alle $i \in \mathcal{I}$ gilt folgt, dass $\emptyset \in \tau$ ist. Analog folgt auch, dass $X \in \tau$ ist.
        \item[\textit{ii)}] Sei $\mathcal{J}$ eine beliebige Indexmenge und $\mathcal{U}_j \in \tau$ für alle $j \in \mathcal{J}$. Wir betrachten die Menge $\mathcal{U} := \bigcup_{j \in \mathcal{J}} \mathcal{U}_j$. Da per Definition von $\tau$ gilt, dass $\mathcal{U}_j \in \tau_i$ für alle $j \in \mathcal{J}$ und alle $i \in \mathcal{I}$, folgt, dass $\mathcal{U} \in \tau_i$ für alle $i \in \mathcal{I}$ ist. Damit folgt $\mathcal{U} \in \tau$.
        \item[\textit{iii)}] Sei $\mathcal{J} := \{ 1, ..., m\}$ mit $m \in \mathbb{N}$. Seien weiter beliebige $\mathcal{U}_j \in \tau$ für alle $j \in \mathcal{J}$ gegeben. Wir definieren die Menge $\mathcal{U} = \bigcap_{j \in \mathcal{J}} \mathcal{U}_j$. Da per Definition von $\tau$ gilt, dass $\mathcal{U}_j \in \tau_i$ für alle $j \in \mathcal{J}$ und alle $i \in \mathcal{I}$, folgt, dass $\mathcal{U} \in \tau_i$ für alle $i \in \mathcal{I}$ ist. Damit folgt $\mathcal{U} \in \tau$.
    \end{itemize}
\end{proof}

Im folgenden betrachten wir nun eine abzählbare Familie von topologischen Räumen, d.h. $(X_j, \tau_j)$ seien für alle $j \in \mathbb{N}$ topologische Räume. Wir betrachten nun den Produktraum 
\begin{align}
    \prod_{j \in \mathbb{N}} X_j := X_1 \times X_2 \times ... :=  \{ (x_1, x_2, ...) \: \: | \: \; x_1 \in X_1, x_2 \in X_2, ...  \} .
\end{align}
Weiter wollen wir nun auf $X := \prod_{j \in \mathbb{N}}X_j$ eine Topologie $\tau \subseteq \mathcal{P}(X)$ erklären, die aus den gegebenen Topologien $\tau_j$, mit $j \in \mathbb{N}$, gebaut sein soll. Wie konstruieren wir eine derartige Topologie und wie stellen wir sicher. dass es sich dabei um eine intuitive Konstruktion handelt? 

Die Grundidee ist recht einfach: Da $\tau$ aus allen $\tau_j$ konstruiert sein soll, müssen die $\tau_j$ anschaulich Bestandteile oder Komponenten von $\tau$ sein. D.h. wir fassen $\tau$ ersteinmal anschaulich als ein zusammengesetztes Objekt auf, dessen Bestandteile die $\tau_j$ sind. Unter dieser Auffassung können wir natürlich zurecht behaupten, dass $\tau$ aus den $\tau_j$ \textit{gemacht} ist. Wie codieren wir diese Forderung mathematisch? Wir definieren uns dazu zuerst die sogenannten Projektionsabbildungen 
\begin{align}
    \pi_i : X = \prod_{j \in \mathbb{N}} X_j \longrightarrow X_i
\end{align}
durch 
\begin{align}
    \pi_i (x_1, x_2, ..., x_i, ...) = x_i \in X_i. \label{ggg}
\end{align}
D.h. die Abbildung $\pi_i$ projeziert die $i$-te Komponente aus den Elementen von $X$ heraus. Wenn $\tau$ aus all den $\tau_j$ gemacht sein soll, dann sollte heuristisch gelten, dass
\begin{align}
    \pi_i (\tau) = \tau_i \:\:\:\: \forall i \in \mathbb{N} \label{r}
\end{align}
oder etwas rigoroser: 
\begin{align}
    \forall \mathcal{V} \in \tau \:\: \textit{gilt, dass} \:\: \pi_i(\mathcal{V}) \in \tau_i \:\:\:\: \forall i \in \mathbb{N} \label{rr}
\end{align}
und 
\begin{align}
    \forall \mathcal{V}_i \in \tau_i \:\: \exists \mathcal{V} \in \tau : \pi_i(\mathcal{V}) = \mathcal{V}_i \label{rrr} 
\end{align}
Dabei verstehen wir $\pi_i(\mathcal{V})$ in \eqref{rr} und \eqref{rrr} als die Menge 
\begin{align}
    \{ x_i \in X_i \:\: | \:\: \exists x \in \mathcal{V} \:\: \textit{mit} \:\: \pi_i(x) = x_i \}.
\end{align}
Mit der Gleichung \eqref{r} wird dabei ausgedrückt, dass die Bausteine oder Komponenten, aus denen $\tau$ anschaulich zusammengesetzt ist, gerade die Topologien $\tau_i$ sind, da die Projektionen $\pi_i$ angewendet auf $\tau$ gerade den $i$-ten Betstandteil von $\tau$ liefert, so wie in Gleichung \eqref{ggg} die Projektion $\pi_i$ die $i$-te Komponente des Elementes $(x_1, x_2, ..., x_i, ...) \in X$ liefert, wobei wir uns $(x_1, x_2, ..., x_i, ...)$ als aus den einzelnen $x_1 \in X_1, x_2 \in X_2, ... , x_i \in X_i ,...$ zusammengesetzt vorstellen.

Anschaulich fordern wir also, dass es gemäß \eqref{r} unter anderem zu jedem $\mathcal{V}_i \in \tau_i$ ein $\mathcal{V} \in \tau$ gibt, sodass $\pi(\mathcal{V}) = \mathcal{V}_i$ gilt, da nur so \eqref{r} erfüllt sein kann. Eine natürliche und auch naheliegende Wahl für dieses $\mathcal{V}$ ist das Urbild von $\mathcal{V}_i$ unter $\pi_i$, d.h. 
\begin{align}
    \mathcal{V} = \pi^{-1}_i(\mathcal{V}_i) =& \: \nonumber \{ x \in X \:\: | \:\: \pi_i(x) \in \mathcal{V}_i \} \\ =& \: X_1 \times X_2 \times ... \times X_{i-1} \times \mathcal{V}_i \times X_{i+1} \times ... \label{gggg}
\end{align}
Damit fordern wir nun also, dass die Urbilder aller Elemente aus $\tau_i$ unter $\pi_i$ für alle $i \in \mathbb{N}$ in $\tau$ liegen. 

Man kann sich nun fragen, ob eine Menge $\tau$, die nur solche Mengen enthält, bereits eine Topologie ist. Die Antwort ist \textit{Nein}, denn in einer solchen Menge haben alle Elemente die Gestalt \eqref{gggg}. 

Nach den Topologie-Axiomen gilt aber, dass eine Topologie beispielsweise abgeschlossen unter endlichen Mengenschnitten sein muss. Seien also für $i < j$ und $\mathcal{V}_i \in \tau_i$ und $\mathcal{V}_j \in \tau_j$ die Mengen $X_1 \times ... \times X_{i-1} \times \mathcal{V}_i \times X_{i+1} \times ...$ und $X_1 \times ... \times X_{j-1} \times \mathcal{V}_j \times X_{j+1} \times ...$ in $\tau$, dann müsste folgen, dass auch $X_1 \times ... \times \mathcal{V}_i \times ... \times \mathcal{V}_j \times ...$ in $\tau$ liegt, was aber nicht sein kann, wenn $\tau$ nur aus den Urbildern der offenen Mengen aus $\tau_i$ unter den $\pi_i$ für alle $i \in \mathbb{N}$ bestünde. 

Ein Ansatz, der dieses Problem behebt, wäre folglich die Hinzunahme aller möglichen endlichen Mengenschnitte der obigen Urbilder zur Menge $\tau$. Aus analogen Gründen müssen wir dann natürlich auch alle möglichen Vereinigungen der beliebigen endlichen Mengenschnitte der Urbilder offener Mengen aus den $\tau_i$ unter den $\pi_i$ der Menge $\tau$ hinzufügen. 

Auf diese Weise wird $\tau$ auf natürliche Weise eine Topologie, die trivialerweise den Topologie-Axiomen genügt. Es ist dabei wichtig anzumerken, dass wir zu den der Menge $\tau$ bereits hinzugefügten Mengen nun keine weiteren Mengen mehr hinzufügen sollten. Der Grund ist, dass solche weiteren Mengen, die keine beliebige Vereinigung endlicher Mengenschnitte der $\pi_i$-Urbildern offener Mengen aus den $\tau_i$ darstellen, nicht aus den $\tau_i$ \textit{gemacht} sind, und dementsprechend dem Mengensystem $\tau$ mehr Struktur geben würden, als es gemäß den $\tau_i$ haben sollte, wenn $\tau$ anschaulich aus den $\tau_i$ bestünde. 

Man sagt daher auch, da der Schnitt beliebiger Topologien wieder eine Topologie darstellt, dass die so konstruierte Topologie $\tau$ die kleinste Topologie ist, unter der die Projektionen $\pi_i$ per Definition alle stetig sind. Das liefert die folgende Definition:

\begin{definition}
    Seien $(X_i, \tau_i)$ für alle $i \in \mathbb{N}$ topologische Räume. Wir definieren die Menge $X := \prod_{j \in \mathbb{N}} X_j$ und die $i$-ten Projektionsabbildungen $\pi_i : X \longrightarrow X_i$ für $i \in \mathbb{N}$. Dann ist die sogenannte Produkttopologie $\tau$ auf $X$ erklärt als die kleinste Topologie, unter der alle Projektionsaabildungen $\pi_i$ stetig sind.
\end{definition}

Diese Produkttopologie $\tau$ soll nun also anschaulich eine Topologie auf dem Produktraum $X = \prod_{j \in \mathbb{N}} X_j$ darstellen, die aus den einzelnen $\tau_j$ gebaut ist. Der Ausgangspunkt oder die Motivation dieser Konstruktion war dabei \eqref{r} bzw. \eqref{rr} und \eqref{rrr}. Wir können uns nun abschließend noch fragen, ob diese Produkttopologie am Ende wirklich das macht, was sie tun soll. 

D.h. wir fragen uns, ob die nun so konstruierte Topologie $\tau$ tatsächlich \eqref{r} erfüllt, denn immerhin diente uns \eqref{r} nur als anschauliche Motivation und wir haben bisher keinen konkreten Grund zu glauben, dass das so konstruierte $\tau$ nach der Hinzunahme aller beliebiger Mengenvereinigungen endlicher Mengenschnitte der $\pi_i$-Urbilder offener Mengen aus den $\tau_i$ tatsächlich noch \eqref{r} genügt. Es ist daher wichtig zu überprüfen, ob das noch stimmt, denn nur wenn \eqref{r} für die Produkttopologie $\tau$ gilt, war unsere Konstruktion am Ende wirklich sinnvoll oder intuitiv.

Um \eqref{r} für die Produkttopologie $\tau$ nachzuweisen, bemerken wir zuerst, dass $\eqref{r}$ äquivalent zu \eqref{rr} und \eqref{rrr} ist. Per Konstruktion genügt die Produkttopologie definitiv \eqref{rrr}, da die Urbilder aller offener Mengen $\mathcal{V}_i \in \tau_i$ unter den $\pi_i$ in $\tau$ enthalten sind. Damit müssen wir für die Produkttopologie $\tau$ nur \eqref{rr} nachweisen. Wir führen dazu den folgenden nützlichen Begriff ein:

\begin{definition}
    Seien $(X, \tau)$ und $(Y, \sigma)$ zwei topologische Räume und sei $f : X \longrightarrow Y$ eine Abbildung. Wir nennen die Abbildung $f$ offen, falls gilt, dass 
    \begin{align}
        \forall \mathcal{U} \in \tau : f(\mathcal{U}) \in \sigma
    \end{align}
    ist. Dabei ist $f(\mathcal{U})$ wieder erklärt als 
    \begin{align}
        f(\mathcal{U}) := \{ y \in Y \:\: | \:\: \exists x \in \mathcal{U} \:\: \textit{mit} \:\: f(x) = y \}.
    \end{align}
\end{definition}

Mittels dieser Definition sind wir nun in der Lage \eqref{rr} kompakter zu formulieren, da \eqref{rr} äquivalent zur Aussage ist, dass alle Projektionen $\pi_i$ offene Abbildungen sind. Wir erhalten den folgenden Satz:

\begin{proposition}
    Seien $(X_i, \tau_i)$ für alle $i \in \mathbb{N}$ topologische Räume und sei $X = \prod_{j \in \mathbb{N}} X_j$ ausgestattet mit der bezüglich der $\tau_i$ konstruierten Produkttopologie $\tau$. Dann gilt, dass für alle $i \in \mathbb{N}$ die Projektionen $\pi_i : X \longrightarrow X_i$ offen sind.
\end{proposition}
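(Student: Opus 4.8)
Der Plan ist, die im obigen Text beschriebene explizite Gestalt der offenen Mengen der Produkttopologie $\tau$ auszunutzen. Nach Konstruktion ist jede Menge $\mathcal{U} \in \tau$ eine beliebige Vereinigung von Basismengen, wobei eine Basismenge ein endlicher Mengenschnitt $B = \bigcap_{l=1}^{n} \pi_{i_l}^{-1}(\mathcal{V}_{i_l})$ von Urbildern offener Mengen $\mathcal{V}_{i_l} \in \tau_{i_l}$ unter den Projektionen ist. Zuerst würde ich bemerken, dass Bilder mit beliebigen Vereinigungen verträglich sind, d.h. für $\mathcal{U} = \bigcup_{\alpha} B_\alpha$ gilt $\pi_k(\mathcal{U}) = \bigcup_\alpha \pi_k(B_\alpha)$, wobei ich mit $k$ den festen, aber beliebigen Index bezeichne, für den die Offenheit von $\pi_k$ zu zeigen ist. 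Da nach Axiom \textit{ii)} einer Topologie beliebige Vereinigungen offener Mengen wieder offen sind, genügt es damit, die Offenheit von $\pi_k(B)$ für eine einzelne Basismenge $B$ nachzuweisen.

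Der entscheidende Schritt ist dann, die Basismenge $B$ als Produktbox zu schreiben: Es gilt $B = \prod_{j \in \mathbb{N}} W_j$ mit $W_j = \mathcal{V}_j$ falls $j \in \{i_1, \ldots, i_n\}$ und $W_j = X_j$ sonst. Hierauf würde ich die $k$-te Projektion anwenden. Ist eine der beteiligten Mengen leer (insbesondere ein $\mathcal{V}_{i_l} = \emptyset$ oder ein $X_j = \emptyset$), so ist $B = \emptyset$ und damit $\pi_k(B) = \emptyset \in \tau_k$. Andernfalls sind alle Faktoren $W_j$ nichtleer, und dann gilt $\pi_k(\prod_j W_j) = W_k$; die Inklusion $\subseteq$ ist hierbei trivial, während $\supseteq$ daraus folgt, dass man zu vorgegebenem $w \in W_k$ aufgrund der Nichtleerheit der übrigen Faktoren ein Urbild in $B$ konstruieren kann. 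Dieses $W_k$ ist aber entweder gleich $\mathcal{V}_k \in \tau_k$ (falls $k$ einer der Indizes $i_l$ ist) oder gleich $X_k \in \tau_k$ (andernfalls), liegt also in jedem Fall in $\tau_k$.

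Die Hauptschwierigkeit erwarte ich an genau der Stelle, an der man versucht sein könnte, das Bild des Mengenschnittes elementweise zu zerlegen: Bilder sind im Allgemeinen \emph{nicht} mit Durchschnitten verträglich, sodass $\pi_k(\bigcap_l \pi_{i_l}^{-1}(\mathcal{V}_{i_l})) = \bigcap_l \pi_k(\pi_{i_l}^{-1}(\mathcal{V}_{i_l}))$ gerade nicht gilt. Deshalb ist es wesentlich, $B$ direkt als Box $\prod_j W_j$ zu beschreiben und die Projektion auf diese Produktdarstellung anzuwenden, statt den Schnitt zu vertauschen. Der zweite, leicht zu übersehende Punkt ist die saubere Behandlung des Leerheitsfalls, da die Gleichung $\pi_k(\prod_j W_j) = W_k$ nur dann gilt, wenn sämtliche Faktoren nichtleer sind. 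Sind diese beiden Punkte geklärt, so folgt die Behauptung unmittelbar, und insbesondere ist damit auch \eqref{rr} und somit \eqref{r} für die Produkttopologie nachgewiesen.
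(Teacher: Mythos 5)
Dein Vorschlag verfolgt im Wesentlichen denselben Weg wie der Beweis der Arbeit: Auch dort wird jedes $\mathcal{U} \in \tau$ als beliebige Vereinigung endlicher Schnitte von Urbildern $\pi_{i}^{-1}(\mathcal{V}_{i})$ geschrieben, die Vereinigung aus dem Bild herausgezogen, der endliche Schnitt als Produktbox $\prod_{j} W_j$ dargestellt und deren Projektion als $k$-ter Faktor identifiziert, welcher entweder eine offene Menge aus $\tau_k$ oder ganz $X_k$ ist. Deine explizite Behandlung des Leerheitsfalls ist dabei sogar sorgfältiger als der Beweis der Arbeit selbst, der die Gleichung $\pi_l\big(\prod_{i} A_i\big) = A_l$ ohne Nichtleerheitsvoraussetzung verwendet; an der Schlussfolgerung ändert dieser Randfall nichts, da das Bild dann leer und damit ebenfalls offen ist.
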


\begin{proof}
    Zuerst bemerken wir, dass per Konstruktion der Produkttopologie jedes Element $\mathcal{U} \in \tau$ geschrieben werden kann als 
    \begin{align}
        \mathcal{U} = \bigcup_{j \in \mathcal{J}} \bigcap_{k = 1}^{n_j} \pi^{-1}_{i_{k,j}} (\mathcal{U}_{k,j}).
    \end{align}
    Dabei ist $\mathcal{J}$ eine nicht notwendigerweise endliche Indexmenge, $n_j \in \mathbb{N}$, $i_{k,j} \in \mathbb{N}$ und $\mathcal{U}_{k,j} \in \tau_{i_{k,j}}$. Wir definieren für alle $i \in \mathbb{N}$ die Menge $\mathcal{V}_{i,k,j}$ durch
    \begin{align}
        \mathcal{V}_{i,k,j} \left\{\begin{array}{ll} \mathcal{U}_{k,j}, & i = i_{k,j} \\
         X_i, & i \neq i_{k,j} \end{array}\right. .
    \end{align}
    Per Definition gilt damit, dass $\mathcal{V}_{i,k,j} \in \tau_i$ und 
    \begin{align}
        \pi^{-1}_{i_{k,j}} (\mathcal{U}_{k,j}) = \prod_{i \in \mathbb{N}} \mathcal{V}_{i,k,j}
    \end{align}
    ist. Damit folgt für eine beliebige Projektionsabbildung $\pi_l$ mit $l \in \mathbb{N}$ und einem beliebigen Element $\mathcal{U} \in \tau$, dass
    \begin{align}
        \pi_l (\mathcal{U}) =& \: \pi_l \big( \bigcup_{j \in \mathcal{J}} \bigcap_{k = 1}^{n_j} \pi^{-1}_{i_{k,j}} (\mathcal{U}_{k,j}) \big) \nonumber \\ =& \: \bigcup_{j \in \mathcal{J}} \pi_l \big( \bigcap_{k = 1}^{n_j} \pi^{-1}_{i_{k,j}} (\mathcal{U}_{k,j}) \big) \nonumber \\ =& \: \bigcup_{j \in \mathcal{J}} \pi_l \big( \bigcap_{k = 1}^{n_j} \prod_{i \in \mathbb{N}} \mathcal{V}_{i,k,j} \big) \nonumber \\ =& \: \bigcup_{j \in \mathcal{J}} \pi_l \big( \prod_{i \in \mathbb{N}} \bigcap_{k = 1}^{n_j} \mathcal{V}_{i,k,j} \big) \nonumber \\ =& \: \bigcup_{j \in \mathcal{J}} \bigcap_{k = 1}^{n_j} \mathcal{V}_{l,k,j}. \label{jjj}
    \end{align}
    Da $\mathcal{V}_{l,k,j}$ für alle $j \in \mathcal{J}$ und alle $k \in \{ 1, ..., n_j \}$ eine offene Menge aus $\tau_l$ ist, und damit die rechte Seite von \eqref{jjj} eine beliebige Vereinigung endlicher Mengenschnitte von Elementen aus $\tau_l$ darstellt, folgt, dass $\pi_l (\mathcal{U}) \in \tau_l$ ist. Damit ist $\pi_l$ eine offene Abbildung. Da $l \in \mathbb{N}$ beliebig war folgt, dass alle Projektionen offen sind.
\end{proof}

Wir haben damit also eine Möglichkeit gefunden, eine Topologie $\tau$ auf $X = \prod_{j \in \mathbb{N}} X_j$ einzuführen, die anschaulich aus den $\tau_j$ konstruiert ist. Anders ausgedrückt wissen wir damit, wie wir topologische Räume miteinander verkleben können, ohne die topologische Struktur der Komponenten des resultierenden Produktraumes zu zerstören. In diesem Sinne handelt es sich bei der Produkttopologie um eine natürliche Topologie auf dem Produktraum $X$. 

Mittels der Produkttopologie können wir nun den Begriff der topologischen Gruppe einführen:

\begin{definition}
    Sei $(G, \circ)$ eine Gruppe und $\tau \subseteq \mathcal{P}(G)$ eine Topologie auf $G$. Wir nennen das Tripel $(G, \tau, \circ)$ eine topologische Gruppe, falls die Abbildungen 
    \begin{align}
        \circ : G \times G \longrightarrow& \: \: G \nonumber\\
        (g,h) \longmapsto& \: \: g \circ h
    \end{align}
    und 
    \begin{align}
        \cdot^{-1} : G \longrightarrow& \: \: G \nonumber\\
        g \longmapsto& \: \: g^{-1}
    \end{align}
    stetige Abbildungen sind. Dabei wurde auf $G \times G$ diejenige Produkttopologie gewählt, die wir erhalten, wenn wir den topologischen Raum $(G, \tau)$ mit sich selbst verkleben.
\end{definition}

Damit ist eine topologische Gruppe also eine mathematische Gruppe die gleichzeitig ein topologischer Raum ist, sodass die Gruppenverknüpfung und die Invertierungsabbildung, die jedem Element der Gruppe ihr Inverses zuordnet, stetig ist. Zusätzlich merken wir an, dass die Invertierungsabbildung in der Definition der topologischen Gruppe wohldefiniert ist, da gemäß Satz $4.1.$ das zu einem Element $g \in G$ inverse Element eindeutig bestimmt ist. 

Der Begriff einer topologischen Gruppe besitzt in der Mathematik zahlreiche Anwendungen. Insbesondere ist dieser Begriff entscheidend für die Definition sogenannter Lie-Gruppen, die wiederum ihre Anwendungen in der Physik, aber auch in der Theorie des \textit{Geodesic shooting}-Algorithmus und damit in der Bildverarbeitung finden. 

Darüberhinaus lässt sich mittels des Begriffes einer topologischen Gruppe nun auch der Begriff eines topologische Vektorraumes definieren, welchen wir insbesondere im Kapitel $9$ nutzten werden, um zu skizzieren, wie man den Begriff der endlichdimensionalen Mannigfaltigkeit, mit denen wir uns im Kapitel $6$ dieser Arbeit auseinandersetzen wollen, in das Unendlichdimensionale verallgemeinert. 

\begin{definition}
    Sei $(V, \oplus, \odot)$ ein $\mathbb{R}$-Vektorraum und $\tau \subseteq \mathcal{P}(V)$ eine Topologie auf $V$. Wir nennen das Quadrupel $(V, \tau, \oplus, \odot)$ einen topologischen $\mathbb{R}$-Vektorraum, falls $(V, \tau, \oplus)$ eine topologische Gruppe bildet und die Skalarmultiplikation $\odot : \mathbb{R} \times V \longrightarrow V$ stetig ist. Dabei statten wir $\mathbb{R}$ mit der Standardtopologie $\tau_d$ aus, welche von der Betragsfunktion-Metrik $d$ induziert wird, und $\mathbb{R} \times V$ statten wir mit derjenigen Produkttopologie aus, die entsteht, wenn wir $(\mathbb{R}, \tau_d)$ und $(V, \tau)$ als topologische Räume miteinander verkleben.
\end{definition}

Wichtige Beispiele von topologischen Räumen sind normierte Räume und Prähilberträume, da deren Vektoraddition und Skalarmultiplikation bezüglich der von der Norm oder dem Skalarprodukt induzierten Topologie stetig sind. Um das einzusehen, müssen wir lediglich zeigen, dass normierte Räume topologische Vektorräume sind, da jeder Prähilbertraum, wie wir oben gesehen haben, auf natürliche Weise ein normierter Raum ist. Wir erhalten den folgenden Satz:

\begin{proposition}
    Sei $(V, \| \cdot \|)$ ein normierter $\mathbb{R}$-Vektorraum, den wir mit der Normtopologie $\tau_V$ ausstatten, d.h. eben jener Topologie, die von der Metrik $d$, erklärt durch $d(v,w) = \|v - w\|$, induziert wird. Weiter sei $\mathbb{R}$ ausgesattet mit der durch die Betragsfunktion induzierten Topologie $\tau_{\mathbb{R}}$. Statten wir nun $V \times V$ und $\mathbb{R} \times V$ mit den Produkttopologien aus, so folgt, dass die Vektoraddition
    \begin{align}
        A := \oplus : V \times V \longrightarrow& \: \: V \nonumber\\
        (v,w) \longmapsto& \: \: A((v,w)) =: v \oplus w
    \end{align}
    und die Skalarmultiplikation 
    \begin{align}
        S := \odot : \mathbb{R} \times V \longrightarrow& \: \: V \nonumber\\
        (\lambda,v) \longmapsto& \: \: S((\lambda,v)) =: \lambda \odot v
    \end{align}
    stetig sind.
\end{proposition}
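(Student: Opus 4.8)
The plan is to verify continuity in the purely topological sense of Definition $3.6$, i.e.\ to show that the preimage of every open set of the codomain $(V,\tau_V)$ is open in the respective product topology. The first simplification is to reduce the codomain to balls: since $\tau_V$ is the metric topology of $d_{\|\cdot\|}$, every $\mathcal{O}\in\tau_V$ is a union of open balls $\mathcal{U}_\epsilon(z)$, and because preimages commute with unions while arbitrary unions of open sets are again open (Topologieaxiom \textit{ii)}), it suffices to prove that $A^{-1}(\mathcal{U}_\epsilon(z))$ and $S^{-1}(\mathcal{U}_\epsilon(z))$ are open for all $z\in V$ and all $\epsilon>0$. On the domain side the key remark is that a ``box'' $\mathcal{U}_\delta(a)\times\mathcal{U}_\delta(b)$ is open in the product topology, because it equals $\pi_1^{-1}(\mathcal{U}_\delta(a))\cap\pi_2^{-1}(\mathcal{U}_\delta(b))$, each factor being the preimage of an open ball (open in $V$ by the Bemerkung in Kapitel $3$) under a projection, which is continuous by the very construction of the product topology; finite intersections of open sets are open. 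Thus it is enough to exhibit, around each point of the preimage, such a box lying inside the preimage.

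For the vector addition $A$ I would fix $(v_0,w_0)$ with $\|v_0+w_0-z\|<\epsilon$, set $\eta:=\epsilon-\|v_0+w_0-z\|>0$ and $\delta:=\eta/2$, and estimate for $(v,w)\in\mathcal{U}_\delta(v_0)\times\mathcal{U}_\delta(w_0)$ using the triangle inequality (Normaxiom \textit{iii)}):
\[
\|(v+w)-z\|\le\|v-v_0\|+\|w-w_0\|+\|v_0+w_0-z\|<\tfrac{\eta}{2}+\tfrac{\eta}{2}+(\epsilon-\eta)=\epsilon .
\]
Hence $\mathcal{U}_\delta(v_0)\times\mathcal{U}_\delta(w_0)\subseteq A^{-1}(\mathcal{U}_\epsilon(z))$, so this preimage is a neighbourhood of each of its points and therefore open.

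For the scalar multiplication $S$ the same scheme applies, but the estimate is the genuinely delicate step, since the expression $\lambda v$ is bilinear in $(\lambda,v)$ and a single radius $\delta$ no longer suffices. Given $(\lambda_0,v_0)$ with $\|\lambda_0 v_0-z\|<\epsilon$, I set $\eta:=\epsilon-\|\lambda_0 v_0-z\|$ and use homogeneity (Normaxiom \textit{ii)}) to split
\[
\|\lambda v-\lambda_0 v_0\|\le|\lambda|\,\|v-v_0\|+|\lambda-\lambda_0|\,\|v_0\| .
\]
The main obstacle is controlling the factor $|\lambda|$: one must first constrain $\lambda$ so that $|\lambda|<|\lambda_0|+1$ before the product can be linearised. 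I would therefore choose $\delta_1:=\min\{1,\tfrac{\eta}{2(\|v_0\|+1)}\}$ for the $\lambda$-coordinate and $\delta_2:=\tfrac{\eta}{2(|\lambda_0|+1)}$ for the $v$-coordinate; then $|\lambda-\lambda_0|<\delta_1$ forces $|\lambda|<|\lambda_0|+1$, so the right-hand side stays below $\tfrac{\eta}{2}+\tfrac{\eta}{2}=\eta$ and consequently $\|\lambda v-z\|<\eta+(\epsilon-\eta)=\epsilon$. With $\delta:=\min\{\delta_1,\delta_2\}$ the box $\mathcal{U}_\delta(\lambda_0)\times\mathcal{U}_\delta(v_0)$ (where $\mathcal{U}_\delta(\lambda_0)=(\lambda_0-\delta,\lambda_0+\delta)$ is the standard ball in $\mathbb{R}$) lies inside $S^{-1}(\mathcal{U}_\epsilon(z))$, which proves openness. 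The only remaining bookkeeping is the identification of the abstractly defined product topology with these concrete boxes, which is exactly what the reduction in the first paragraph takes care of.
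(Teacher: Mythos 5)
Your proposal is correct and follows essentially the same route as the paper's proof: both verify topological continuity by exhibiting, around each point of the preimage of an open set, an open box $\mathcal{U}_{\delta}(\cdot)\times\mathcal{U}_{\delta}(\cdot)$ contained in that preimage, using the triangle inequality for $A$ and an add-and-subtract splitting with control of the unbounded factor ($|\lambda|<|\lambda_0|+1$ in your version, the $\delta'^2$-term in the paper's) for $S$. Your preliminary reduction to balls in the codomain and your explicit choice of two radii $\delta_1,\delta_2$ are only organizational variants of what the paper does implicitly.
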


\begin{proof}
    Im Folgenden sei $\mathcal{U}^{\tau_V}_\delta (v) = \{ w \in V \:\: | \:\: \| v - w \| < \delta \}$ und $\mathcal{U}^{\tau_{\mathbb{R}}}_\delta (\lambda) = \{ \alpha \in \mathbb{R} \:\: | \:\: | \lambda - \alpha | < \delta \}$. An dieser Stelle sei daran erinnert, dass $\mathcal{U} \subseteq V$ offen ist, d.h. $\mathcal{U} \in \tau_V$, falls für alle $v \in \mathcal{U}$ ein $\delta > 0$ existiert, sodass $\mathcal{U}^{\tau_V}_\delta (v) \subseteq \mathcal{U}$ gilt. Analoges gilt für die offenen Mengen in $\tau_{\mathbb{R}}$. Wir überprüfen nun die Stetigkeit der Abbildungen $A$ und $S$:
    \begin{itemize}
        \item[\textit{i)}] Wir beginnen mit der Abbildung $A$. Sei $\mathcal{U}$ eine beliebige offene Menge. Wir wollen zeigen, dass $A^{-1}(\mathcal{U}) \subseteq V \times V$ offen bezüglich der Produkttopologie auf $V \times V$ ist. Sei dazu $(v,w) \in A^{-1}(\mathcal{U})$, d.h. $v \oplus w \in \mathcal{U}$. Da $\mathcal{U}$ offen ist, existiert ein $\delta > 0$, sodass $\mathcal{U}^{\tau_V}_\delta (v \oplus w) \subseteq \mathcal{U}$ ist. Sei weiter $\delta' > 0$ gegeben durch $\delta' = \frac{\delta}{2}$. Wir betrachten die Menge $\mathcal{U}^{\tau_V}_{\delta'} (v) \times \mathcal{U}^{\tau_V}_{\delta'} (w) \subseteq V \times V$. Da für beliebiges $z \in V$ und $\epsilon > 0$ die Menge $\mathcal{U}^{\tau_V}_\epsilon (z)$ offen ist, folgt aus der Konstruktion der Produkttopologie, dass die Menge $\mathcal{U}^{\tau_V}_{\delta'} (v) \times \mathcal{U}^{\tau_V}_{\delta'} (w)$ offen bezüglich der Produkttopologie ist. Sei nun $x \in \mathcal{U}^{\tau_V}_{\delta'} (v)$ und $y \in \mathcal{U}^{\tau_V}_{\delta'} (w)$ beliebig gewählt. Es gilt nun
        \begin{align}
            \| (v \oplus w) \ominus (x \oplus y) \| =& \: \| (v \ominus x) \oplus (w \ominus y) \| \nonumber \\ \leq& \: \| v \ominus x \| + \| w \ominus y \| \nonumber \\ <& \: \delta' + \delta' = \delta,
        \end{align}
        d.h. $(x \oplus y) \in \mathcal{U}^{\tau_V}_\delta (v \oplus w)$. Daraus folgt nun aber, dass 
        \begin{align}
            A(\mathcal{U}^{\tau_V}_{\delta'} (v) \times \mathcal{U}^{\tau_V}_{\delta'} (w)) \subseteq \mathcal{U}^{\tau_V}_\delta (v \oplus w) \subseteq \mathcal{U}
        \end{align}
        ist. Daraus folgt nun weiter, dass $A^{-1}(\mathcal{U})$ bezüglich der Produkttopologie offen ist, denn für beliebiges $(v,w) \in A^{-1}(\mathcal{U})$ haben wir eine bezüglich der Produkttopologie offene Menge $\mathcal{U}^{\tau_V}_{\delta'} (v) \times \mathcal{U}^{\tau_V}_{\delta'} (w)$ finden können, für die gilt, dass $(v,w) \in \mathcal{U}^{\tau_V}_{\delta'} (v) \times \mathcal{U}^{\tau_V}_{\delta'} (w) \subseteq A^{-1}(\mathcal{U})$ ist. Es ist leicht einzusehen, dass mit $\delta' = \delta'_{(v,w)}$
        \begin{align}
            A^{-1}(\mathcal{U}) = \bigcup_{(v,w) \in A^{-1}(\mathcal{U})} \mathcal{U}^{\tau_V}_{\delta'_{(v,w)}} (v) \times \mathcal{U}^{\tau_V}_{\delta'_{(v,w)}} (w) \label{yyy}
        \end{align}
        gilt. Da die rechte Seite von \eqref{yyy} eine Vereinigung offener Menge aus der Produkttopologie ist, ist gemäß den Topologie-Axiomen $A^{-1}(\mathcal{U})$ ein Element der Produkttopologie. Da $\mathcal{U} \in \tau_V$ beliebig war, folgt die Stetigkeit von $A$.   
        
        \item[\textit{ii)}] Wir zeigen die Stetigkeit der Abbildung $S$. Sei dazu wieder $\mathcal{U} \subseteq V$ eine beliebige offene Menge. Wir wollen zeigen, dass $S^{-1}(\mathcal{U}) \subseteq \mathbb{R} \times V$ offen bezüglich der Produkttopologie ist. Sei dazu $(\lambda, v) \in S^{-1}(\mathcal{U})$, d.h. $\lambda \odot v \in \mathcal{U}$. Da $\mathcal{U}$ offen ist, existiert ein $\delta > 0$, sodass $\mathcal{U}^{\tau_V}_\delta (\lambda \odot v) \subseteq \mathcal{U}$. Sei weiter $\delta' > 0$ hinreichend klein gewählt, sodass $\delta'(\| v \| + |\lambda|) + \delta'^2 < \delta$ ist. Wir betrachten die Menge $\mathcal{U}^{\tau_V}_{\delta'} (\lambda) \times \mathcal{U}^{\tau_V}_{\delta'} (v) \subseteq \mathbb{R} \times V$. Diese Menge ist bezüglich der Produkttopologie auf $\mathbb{R} \times V$ offen. Seien nun $\alpha \in \mathcal{U}^{\tau_{\mathbb{R}}}_{\delta'} (\lambda)$ und $w \in \mathcal{U}^{\tau_V}_{\delta'} (v)$ beliebig gewählt. Dann gilt 
        \begin{align}
            \| (\lambda \odot v) \ominus (\alpha \odot w) \| =& \: \| (\lambda \odot v) \ominus (\alpha \odot v) \oplus (\alpha \odot v) \ominus (\alpha \odot w) \| \nonumber \\ \leq& \: \| (\lambda \odot v) \ominus (\alpha \odot v) \| + \| (\alpha \odot v) \ominus (\alpha \odot w) \| \nonumber \\ =& \: |\lambda - \alpha| \|  v \| + |\alpha| \| v \ominus w \| \nonumber \\ <& \: \delta' \| v \| + |\alpha| \delta' \nonumber \\ =& \: \delta' (\| v \| + |\alpha - \lambda + \lambda|) \nonumber \\ \leq& \: \delta'(\| v \| + |\lambda|) + \delta'^2 < \delta,
        \end{align}
        d.h. es gilt $(\alpha \odot w) \in \mathcal{U}^{\tau_V}_\delta (\lambda \odot v)$. Daraus folgt nun aber, dass 
        \begin{align}
            S(\mathcal{U}^{\tau_V}_{\delta'} (\lambda) \times \mathcal{U}^{\tau_V}_{\delta'} (v)) \subseteq \mathcal{U}^{\tau_V}_\delta (\lambda \odot v) \subseteq \mathcal{U}
        \end{align}
        ist. Aus zu \textit{i)} analogen Argumenten folgt damit, dass $S^{-1}(\mathcal{U})$ offen ist. Da $\mathcal{U}$ eine beliebige offene Menge aus $V$ war ist $S$ damit stetig.
    \end{itemize}
\end{proof}


\subsection{Tensoren}

In Vorbereitung auf das Kapitel über riemannsche Mannigfaltigkeiten, wollen wir uns nun noch kurz mit dem Tensorbegriff auseinandersetzen. 

Ziel dieses kurzen Abschnittes soll es sein, aufzuzeigen, dass die oben besprochenen Skalarprodukte spezielle Tensoren darstellen, um so später die aus der Literatur bekannte Definition eines metrischen Tensorfeldes anschaulicher zu gestalten.

Zuerst definieren wir den Begriff einer linearen Abbildungen:

\begin{definition}
    Seien $V$ und $W$ jeweils $\mathbb{R}$-Vektorräume. Wir nennen eine Abbildung $f : V \longrightarrow W$ linear, falls 
    \begin{align}
        f ( \alpha \cdot v + \beta \cdot w  ) = \alpha \cdot f(v) + \beta \cdot f(w) \:\:\:\:\:\:\:\: \forall \alpha, \beta \in \mathbb{R}, \: v,w \in V   
    \end{align}
\end{definition}

Die Eigenschaft der Linearität einer Abbildung ist uns bereits im Abschnitt über Prähilberträume begegnet. So haben wir Skalarprodukte unter anderem als bilineare Abbildungen definiert, was bedeutet, das Skalarprodukte linear in ihren beiden Argumenten sind. 

\begin{remark}
    Eine wichtige und nützliche Eigenschaft linearer Abbildungen ist, dass die sogenannte Verknüpfung linearer Abbildungen stets wieder linear ist. Das bedeutet, dass für die linearen Abbildungen $f : V \longrightarrow W$ und $g : Z \longrightarrow X$, wobei $V,W,Z$ jeweils $\mathbb{R}$-Vektorräume sind, gilt, dass die Abbildung $h : Z \longrightarrow W$, definiert durch 
    \begin{align}
        h(z) := f(g(z)) \:\:\:\: \forall z \in Z,
    \end{align}
    wieder eine lineare Abbildung ist. Das lässt sich wie folgt einsehen: Seien $\alpha, \beta \in \mathbb{R}$ und $x,y \in V$ beliebig. Dann gilt 
    \begin{align}
        h(\alpha \cdot x + \beta \cdot y) =& \: f(g(\alpha \cdot x + \beta \cdot y)) \nonumber \\ =& \: f(\alpha \cdot g(x) + \beta \cdot g(y)) \nonumber \\ =& \: \alpha \cdot f(g(x)) + \beta \cdot f(g(y)) \nonumber \\ =& \: \alpha \cdot h(x) + \beta \cdot h(y),
    \end{align}
    was die Linearität von $h$ zeigt.
\end{remark}

Mittels der Beobachtung, dass $\mathbb{R}$ bezüglich der gewöhnlichen Addition selbst ein $\mathbb{R}$-Vektorraum ist, wobei die Skalarmultiplikation in diesem Fall durch die gewöhnliche reelle Multiplikation gegeben ist, können wir den Begriff des sogenannten Dualraumes erklären:

\begin{definition}
    Sei $V$ ein $\mathbb{R}$-Vektorraum. Dann erklären wir den (algebraischen) Dualraum $V^*$ von $V$ als 
    \begin{align}
        V^* := \{ \eta : V \longrightarrow \mathbb{R} \:\: | \:\: \textit{$\eta$ ist linear} \}.
    \end{align}
\end{definition}

Es ist dabei leicht einzusehen, dass $V^*$, ausgestattet mit der punktweisen Addition und Skalarmultiplikation, auch wieder ein $\mathbb{R}$-Vektorraum ist. Ist $V$ sogar ein endlichdimensionaler $\mathbb{R}$-Vektorraum der Dimension $n$, so folgt, dass der Dualraum $V^*$ ebenfalls ein $n$-dimensionaler $\mathbb{R}$-Vektorraum ist:

\begin{proposition}
    Ist $V$ ein endlichdimensionaler $\mathbb{R}$-Vektorraum der Dimension $n$, dann folgt, dass der zu $V$ gehörige Dualraum $V^*$ ebenfalls ein endlichdimensionaler $\mathbb{R}$-Vektorraum der Dimension $n$ ist.
\end{proposition}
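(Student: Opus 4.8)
Der Plan ist es, ausgehend von einer Basis von $V$ explizit eine Basis von $V^*$ aus genau $n$ Elementen zu konstruieren (die sogenannte duale Basis) und daraus mit Definition $4.8.$ die Dimensionsaussage abzulesen. Da $V$ nach Voraussetzung die Dimension $n$ besitzt, existiert eine Basis $E = \{v_1, \ldots, v_n\} \subseteq V$. Zunächst würde ich festhalten, dass sich jeder Vektor $v \in V$ eindeutig als $v = \sum_{j=1}^n \alpha_j \cdot v_j$ mit Koeffizienten $\alpha_j \in \mathbb{R}$ darstellen lässt: Die Existenz einer solchen Darstellung folgt aus $\textit{span}_{\mathbb{R}}(E) = V$, die Eindeutigkeit aus der linearen Unabhängigkeit der $v_j$, da andernfalls die Differenz zweier Darstellungen eine nichttriviale Linearkombination der $v_j$ wäre, die den Nullvektor $\mathbf{0}$ ergibt.

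Anschließend würde ich für jedes $i \in \{1, \ldots, n\}$ die Abbildung $v^*_i : V \longrightarrow \mathbb{R}$ durch $v^*_i(v) := \alpha_i$ erklären, wobei $\alpha_i$ der $i$-te Koeffizient in der obigen eindeutigen Darstellung von $v$ ist. Wegen der Eindeutigkeit der Koeffizienten ist $v^*_i$ wohldefiniert und erfüllt $v^*_i(v_j) = \delta_{ij}$, wobei $\delta_{ij} = 1$ für $i = j$ und $\delta_{ij} = 0$ für $i \neq j$ gesetzt sei. Dass jedes $v^*_i$ linear, also ein Element von $V^*$ ist, folgt unmittelbar daraus, dass die Koeffizienten einer Vektorsumme bzw. eines skalierten Vektors gerade die Summe bzw. das entsprechende Vielfache der Koeffizienten der Ausgangsvektoren sind.

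Der Kern des Beweises ist dann der Nachweis, dass $B := \{v^*_1, \ldots, v^*_n\}$ eine Basis von $V^*$ bildet; hierfür würde ich gemäß Satz $4.3.$ die lineare Unabhängigkeit von $B$ sowie $\textit{span}_{\mathbb{R}}(B) = V^*$ zeigen. Die lineare Unabhängigkeit ergibt sich, indem man annimmt, dass $\sum_{i=1}^n \beta_i \cdot v^*_i$ die Nullabbildung in $V^*$ ist, und diese Linearkombination auf jedem Basisvektor $v_j$ auswertet; wegen $v^*_i(v_j) = \delta_{ij}$ folgt sofort $\beta_j = 0$ für alle $j$. Für die Erzeugendeneigenschaft gebe ich ein beliebiges $\eta \in V^*$ vor und behaupte die Darstellung $\eta = \sum_{i=1}^n \eta(v_i) \cdot v^*_i$.

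Der entscheidende und zugleich heikelste Schritt ist die Begründung dieser letzten Gleichheit, denn sie beruht auf der Tatsache, dass eine lineare Abbildung bereits durch ihre Werte auf einer Basis eindeutig bestimmt ist. Ich würde daher beide Seiten auf einem beliebigen Basisvektor $v_j$ auswerten: Die rechte Seite liefert $\sum_{i=1}^n \eta(v_i) \cdot v^*_i(v_j) = \eta(v_j)$, beide Abbildungen stimmen also auf allen $v_j$ überein. Da sie linear sind und $E$ den Raum $V$ aufspannt, folgt mit der eindeutigen Koeffizientendarstellung eines jeden $v \in V$ die Gleichheit der beiden Abbildungen auf ganz $V$. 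Damit besitzt $V^*$ die Basis $B$ aus $n$ Elementen und ist nach Definition $4.8.$ ein endlichdimensionaler $\mathbb{R}$-Vektorraum der Dimension $n$.
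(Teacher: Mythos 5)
Dein Beweis ist korrekt und folgt im Wesentlichen demselben Weg wie der Beweis der Arbeit: Beide konstruieren die duale Basis $\{v^*_1, \ldots, v^*_n\}$ mit $v^*_i(v_j) = \delta_{ij}$, zeigen die Erzeugendeneigenschaft durch Auswertung eines beliebigen $\eta \in V^*$ auf den Basisvektoren und die lineare Unabhängigkeit durch Einsetzen der $v_j$ in eine verschwindende Linearkombination. Der einzige (unwesentliche) Unterschied ist, dass du die Funktionale explizit als Koordinatenabbildungen über die eindeutige Basisdarstellung definierst und damit die Wohldefiniertheit der linearen Fortsetzung ausführlicher begründest, während die Arbeit die $\eta^j$ auf der Basis vorgibt und schlicht linear fortsetzt.
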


\begin{proof}
    Sei $\mathcal{A} := \{ e_1, ..., e_n \} \subseteq V$ eine Basis von $V$, d.h. $\mathcal{A}$ ist eine Menge linear unabhängiger Vektoren, die ganz $V$ aufspannt. Wir definieren für $j \in \{ 1, ..., n \}$ die Abbildungen $\eta^j : V \longrightarrow \mathbb{R}$ durch 
    \begin{align}
         \eta^j (e_i) = \left\{\begin{array}{ll} 1, & i = j \\
         0, & i \neq j \end{array}\right. =: \delta^j_i. \label{dual}
    \end{align}
    Setzen wir die $\eta^j$ linear auf $V$ fort, so folgt, dass die $\eta^j$ in $V^*$ liegen. Wir zeigen zuerst, dass $\textit{span}_\mathbb{R} ( \{ \eta^1, ..., \eta^n \} ) = V^*$ ist: Sei dazu $\phi \in V^*$ beliebig. Da $\phi$ linear ist, ist $\phi$ vollständig über seine Wirkung auf die Basisvektoren $e_i$ bestimmt, denn sei $v = \sum_{k = 1}^n v^{i} e_i$, dann gilt $\phi (v) = \sum_{k = 1}^n v^{i} \phi(e_i)$. Wir setzen $\phi(e_i) =: \theta_i$. Weiter definieren wir die Abbildung $\sigma = \sum_{k = 1}^n \theta_k \eta^k$. Da $\sigma \in V^*$ ist folgt, dass $\sigma$ vollständig über seine Wirkung auf die Basis $\mathcal{A}$ bestimmt ist. Es gilt für $i \in \{ 1, ..., n \}$
    \begin{align}
        \sigma (e_i) =& \: \big( \sum_{k = 1}^n  \theta_k \eta^k \big) (e_i) \nonumber \\ =& \: \sum_{k = 1}^n \theta_k \eta^k(e_i) \nonumber \\ =& \: \sum_{k = 1}^n \theta_k \delta^k_i = \theta_i,
    \end{align}
    d.h. $\sigma$ und $\phi$ stimmen auf $\mathcal{A}$ überein. Da $\sigma$ und $\phi$ per Definition beide linear sind, folgt, dass $\sigma$ und $\phi$ auf ganz $V$ übereinstimmen, d.h. $\sigma = \phi$. Damit lässt sich aber $\phi$ wegen der Definition von $\sigma$ als Linearkombination der $\eta^j$ schreiben. Da $\phi$ beliebig war folgt, dass die Menge $\{ \eta^1, ..., \eta^n \}$ $V^*$ aufspannt. Es bleibt zu zeigen, dass die Menge $\{ \eta^1, ..., \eta^n \}$ linear unabhängig ist. Seien dazu für $k \in \{ 1, ..., n \}$ $\lambda_k \in \mathbb{R}$ so gegeben, dass 
    \begin{align}
        \sum_{k = 1}^n \lambda_k \eta^k = \mathbf{0} \label{iii}
    \end{align}
    ist. Dabei bezeichnet $\mathbf{0} \in V^*$ das additive Nullelement von $V^*$, welches jedes Element von $V$ auf $0 \in \mathbb{R}$ abbildet. Wir wollen zeigen, dass notwendigerweise folgt, dass alle $\lambda_k$ gleich $0$ sind. Da $\sum_{k = 1}^n \lambda_k \eta^k$ ein Element aus $V^*$ definiert, können wir mit diesem auf Elemente $v \in V$ wirken. Für $v = e_i$ gilt, dass 
    \begin{align}
        \big ( \sum_{k = 1}^n \lambda_k \eta^k \big) (e_i) = \lambda_i.
    \end{align}
    Mit \eqref{iii} folgt damit $\lambda_i = 0$ für alle $i \in \{  1, ..., n \}$. Das zeigt die Behauptung.
\end{proof}

Aus dem vorherigen Beweis erhalten wir noch die folgende nützliche Definition:

\begin{definition}
    Sei $V$ ein $n$-dimensionaler $\mathbb{R}$-Vektorraum mit Basis $\mathcal{A} := \{ e_1, ..., e_n \}$. Dann ist die zu $\mathcal{A}$ zugehörige Dualbasis in $V^*$ die Menge $\mathcal{B} := \{ \eta^1 , ..., \eta^n \}$, die erklärt ist durch \eqref{dual}. Nach dem Beweis von Satz $4.10.$ ist die so definierte Dualbasis von $V^*$ tatsächlich eine Basis von $V^*$.  
\end{definition}

Aufgrund der linearen Struktur auf $V^*$ lassen sich lineare Abbildungen auf $V^*$ definieren. Es folgt die Definition eines allgemeinen $(r,s)$-Tensors:

\begin{definition}
    Sei $V$ wieder ein endlichdimensionaler $\mathbb{R}$-Vektorraum der Dimension $n$. Dann bezeichnen wir jede multilineare Abbildung $T$ der Form
    \begin{align}
        T: \underbrace{V \times ... \times V}_{s} \times \underbrace{V^* \times ... \times V^*}_{r} \longrightarrow \mathbb{R}
    \end{align}
    als $(r,s)$-Tensor. Multilinear meint dabei, dass $T$ in jedem seiner $(r+s)$ vielen Argumenten linear ist. Weiter wollen wir die Menge aller derartiger $(r,s)$-Tensoren als $T^r_s(V)$ bezeichnen.
\end{definition}

\begin{example}
    Wir betrachten zwei einfache Beispiele für Tensoren.
    \begin{itemize}
        \item[\textit{i)}] Ein einfaches Beispiel eines $(r,s)$-Tensors mit $r = 0$ und $s=2$ ist das oben diskutierte Skalarprodukt $\langle \cdot , \cdot \rangle$. Per Definition handelt es sich dabei nämlich um eine bilineare Abbildung der Form 
        \begin{align}
            \langle \cdot , \cdot \rangle : V \times V \longrightarrow \mathbb{R}.
        \end{align}
        \item[\textit{ii)}] Ein weiteres wichtiges Beispiel für Tensoren, welches insbesondere später von Beudeutung sein wird, wenn wir den Begriff des Tensorbündels motivieren wollen, sind die Vektoren eines Vektorraums. So können wir den Vektor $v \in V$ als einen $(1,0)$-Tensor auffassen. Um das zu sehen, bemerken wir zuerst, dass gemäß Satz $4.10.$ der Dualraum des Dualraums von $V$, der sogenannte Bidualraum $V^{**}$, ebenso von der Dimension $n$ ist. Weiter wissen wir, dass alle endlichdimensionalen Vektorräume der selben Dimension isomorph zueinander sind \cite{fischer2003lineare}.
        
        Das bedeutet, dass zwischen Vektorräumen derselben Dimension eine bijektive und lineare Abbildung existiert, was anschaulich bedeutet, dass es sich bei solchen Vektorräumen um die selben mathematischen Gebilde handelt, in denen die Elemente nur anders bezeichnet werden. Strukturell besteht zwischen diesen aber kein Unterschied. 

        Es gilt also, dass $V$ und $V^{**}$ isomorph zueinander sind, was wir in Zeichen durch $V \cong V^{**}$ ausdrücken. D.h. wir können u.a. jedes Element $v \in V$ eineindeutig mit einem Element $\sigma \in V^{**}$ identifizieren. Sei nun
        \begin{align}
            \mathbb{i} : V \longrightarrow& \: \: V^{**} \nonumber\\
        v \longmapsto& \: \: \sigma = \mathbb{i}(v)
        \end{align}
        eine Abbildung, definiert durch 
        \begin{align}
            \sigma (\eta) = (\mathbb{i}(v))(\eta) := \eta (v) \:\:\:\: \forall \eta \in V^*.
        \end{align}
        Diese Abbildung ist, wir wir gleich sehen werden, linear und bijektiv, was bedeutet, dass es sich bei $\mathbb{i}$ um eine sogenannten Isomorphismus handelt, der die Isomorphie $V \cong V^{**}$ realisiert. Bijektiv heißt dabei, dass es sich um eine injektive, wie auch um eine surjektive Abbildung handelt. 
        
        Injektiv heißt eine Funktion dabei genau dann, wenn keine zwei Elemente auf das selbe Element abgebildet werden, während eine Funktion als surjektiv bezeichnet wird, wenn anschaulich jedes Element im Wertebereich der Funktion von der Funktion getroffen wird, d.h. sei $f : V \longrightarrow W$ eine Funktion zwischen zwei Mengen $V$ und $W$, dann heißt $f$ surjektiv, falls
        \begin{align}
            \forall w \in W \:\: \exists v \in V \:\: \textit{mit} \:\: f(v) = w.
        \end{align}
        Anschaulich ist eine bijektive Abbildung also eine $1$ zu $1$ Abbildung, die jedem Element aus der einen Mengen genau einem Element aus der anderen Menge und umgedreht zuordnet.
        
        Wie sehen wir nun ein, dass es sich bei der Abbildung $\mathbb{i}$ tatsächlich um einen Isomorphismus handelt? 

        Wir betrachten dazu im Zusammmenhang mit linearen Abbildungen zuerst die folgenden zwei wichtigen Mengen. Seien dazu $V, W$ zwei $\mathbb{K}$-Vektorräume und $f : V \longrightarrow W$ eine lineare Abbildung. Zuerst definieren wir den sogenannten Kern der Abbildung $f$ als die Menge
        \begin{align}
            \textit{ker}(V) := \{ v \in V \:\: | \:\: f(v) = \mathbf{0}_W \} \subseteq V.
        \end{align}
        Dabei bezeichnet $\mathbf{0}_W$ das neutrale Element bzw. den Nullvektor von $W$. Weiter definieren wir das sogenannte Bild der Abbildung $f$ als die Menge 
        \begin{align}
            \textit{im}(f) := \{ w \in W \:\: | \:\: \exists v \in V \:\: \textit{mit} \:\: f(v) = w \} \subseteq W.
        \end{align}
        Beide Mengen können jeweils als $\mathbb{K}$-Untervektorräume von $V$ bzw. von $W$ verstanden werden \cite{fischer2003lineare}, was bedeutet, dass wir diese mit der Vektorraumstruktur von $V$ bzw. von $W$ ausstatten können, sodass diese selber zu Vektorräumen werden. 

        Mittels dieser beiden $\mathbb{K}$-Vektorräume können wir nun zwei wichtige Eigenschaften von linearen Abbildungen betrachten: 

        \begin{lemma}
            Seien $V, W$ zwei $\mathbb{K}$-Vektorräume und sei $f : V \longrightarrow W$ eine lineare Abbildung. Dann sind die folgenden Aussagen äquivalent.
            \begin{itemize}
                \item[\textit{(1)}] $f$ ist injektiv, d.h. für $v,w \in V$ gilt
                \begin{align}
                    f(v) = f(w) \:\: \implies \:\: v = w.
                \end{align}
                \item[\textit{(2)}] Der Kern der Abbildung $f$ ist trivial, d.h. 
                \begin{align}
                    \textit{ker}(f) = \{ \mathbf{0}_V \},
                \end{align}
                wobei $\mathbf{0}_V$ das neutrale Element bzw. der Nullvektor von $V$ ist.
            \end{itemize}
        \end{lemma}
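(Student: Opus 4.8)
Der Plan ist es, die behauptete Äquivalenz durch den Nachweis beider Implikationen zu zeigen, wobei ich zunächst die grundlegende Beobachtung vorausschicke, dass jede lineare Abbildung den Nullvektor auf den Nullvektor abbildet. Konkret gilt wegen der Linearität von $f$, dass $f(\mathbf{0}_V) = f(0 \cdot \mathbf{0}_V) = 0 \cdot f(\mathbf{0}_V) = \mathbf{0}_W$ ist. Diese Hilfsaussage wird in beiden Beweisrichtungen benötigt und ist im Wesentlichen der einzige Punkt, an dem überhaupt eine kleine Rechnung anfällt.

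Für die Richtung \textit{(1)} $\implies$ \textit{(2)} würde ich annehmen, dass $f$ injektiv ist, und dann zeigen, dass der Kern nur aus dem Nullvektor besteht. Die Inklusion $\{\mathbf{0}_V\} \subseteq \textit{ker}(f)$ folgt sofort aus der Vorbemerkung, da $f(\mathbf{0}_V) = \mathbf{0}_W$ gilt. Für die umgekehrte Inklusion $\textit{ker}(f) \subseteq \{\mathbf{0}_V\}$ betrachte ich ein beliebiges $v \in \textit{ker}(f)$, sodass $f(v) = \mathbf{0}_W = f(\mathbf{0}_V)$ ist; aus der Injektivität von $f$ folgt dann unmittelbar $v = \mathbf{0}_V$, womit beide Inklusionen und damit die Gleichheit gezeigt sind.

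Für die Richtung \textit{(2)} $\implies$ \textit{(1)} würde ich annehmen, dass $\textit{ker}(f) = \{\mathbf{0}_V\}$ ist, und zwei beliebige Elemente $v, w \in V$ mit $f(v) = f(w)$ betrachten. Mithilfe der Linearität von $f$ erhalte ich $f(v - w) = f(v) - f(w) = \mathbf{0}_W$, also $v - w \in \textit{ker}(f) = \{\mathbf{0}_V\}$ und damit $v - w = \mathbf{0}_V$, d.h. $v = w$. Daraus folgt die Injektivität von $f$.

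Die eigentliche Schwierigkeit ist hier gering; der kritischste Schritt ist die saubere Ausnutzung der Linearität, um aus $f(v) = f(w)$ auf $f(v - w) = \mathbf{0}_W$ zu schließen, sowie die bereits erwähnte Vorbemerkung $f(\mathbf{0}_V) = \mathbf{0}_W$, die die Brücke zwischen der Gleichheit von Bildern und der Kernbedingung schlägt. Alle übrigen Schritte sind reine Anwendungen der Definitionen von Injektivität und Kern.
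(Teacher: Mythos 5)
Dein Beweisvorschlag ist korrekt und verfolgt im Wesentlichen denselben Weg wie der Beweis in der Arbeit: Beide Richtungen stützen sich auf die Vorbemerkung $f(\mathbf{0}_V) = \mathbf{0}_W$ bzw. auf die Umformung $f(v) = f(w) \implies f(v-w) = \mathbf{0}_W$ mittels Linearität und der trivialen Kernbedingung. Deine explizite Aufspaltung der Kerngleichheit in beide Inklusionen ist lediglich eine etwas sorgfältigere Formulierung desselben Arguments.
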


        \begin{proof}
            \textit{(1)} $\implies$ \textit{(2)}: Angenommen $f$ ist injektiv. Sei $v \in V$ ein Vektor aus $V$ mit der Eigenschaft, dass $f(v) = \mathbf{0}_W$ ist. Wir wissen, dass für $f(\mathbf{0}_V) \in V$ gilt, dass 
            \begin{align}
                f(\mathbf{0}_V) = f(0 \cdot \mathbf{0}_V) = 0 \cdot f(\mathbf{0}_V) = \mathbf{0}_W.
            \end{align}
            Letztere Gleichheit folgt dabei aus $0 \cdot w = (0 + 0) \cdot w = 0 \cdot w + 0 \cdot w$. Daraus folgt nun aber $f(v) = f(\mathbf{0}_V)$ und nach der Injektivität damit $v = \mathbf{0}_V$. Dies zeigt die erste Richtung.

            \textit{(2)} $\implies$ \textit{(1)}: Angenommen der Kern von $f$ ist trivial und es gelte für $v,w \in V$, dass $f(v) = f(w)$ ist. Mittels der Linearität von $f$ folgt
            \begin{align}
                f(v) = f(w) \iff& \: f(v) - f(w) = \mathbf{0}_W \nonumber \\ \iff& \: f(v - w) = \mathbf{0}_W \nonumber \\ \iff& \: v - w \in \textit{ker}(f) = \{ \mathbf{0}_V \} \nonumber \\ \iff& \: v = w.
            \end{align}
            Dies zeigt die andere Richtung.
        \end{proof}

        \begin{lemma}
            Seien $V, W$ zwei endlichdimensionale $\mathbb{K}$-Vektorräume und $f : V \longrightarrow W$ eine lineare Abbildung. Dann gilt die folgende Formel:
            \begin{align}
                \textit{dim}(V) = \textit{dim}(\textit{ker}(f)) + \textit{dim}(\textit{im}(f)).
            \end{align}
            Dabei ordnet $\textit{dim}(\cdot)$ den jeweiligen Vektorräumen ihre Dimension zu und es gilt die Konvention, dass die Dimension eines Nullvektorraums, d.h. eines Vektorraumes der nur seinen Nullvektor enthält, gerade $0$ ist.
        \end{lemma}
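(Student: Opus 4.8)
Der Plan ist, die Behauptung über eine geschickte Wahl von Basen zu beweisen und dabei den Dimensionsbegriff aus Definition~4.10 (Kardinalität einer Basis) direkt auszunutzen. Zuerst würde ich eine Basis $\{ u_1, ..., u_k \}$ des Kerns $\textit{ker}(f)$ wählen, sodass $\textit{dim}(\textit{ker}(f)) = k$ gilt; da $\textit{ker}(f)$ ein Untervektorraum des endlichdimensionalen Raumes $V$ ist, existiert eine solche endliche Basis. Anschließend würde ich diese Kernbasis mittels des Basisergänzungssatzes \cite{fischer2003lineare} zu einer Basis $\{ u_1, ..., u_k, w_1, ..., w_m \}$ von ganz $V$ fortsetzen, sodass $\textit{dim}(V) = k + m$ ist. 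Die zentrale Teilbehauptung ist dann, dass die Menge der Bilder $\{ f(w_1), ..., f(w_m) \}$ eine Basis von $\textit{im}(f)$ bildet, woraus $\textit{dim}(\textit{im}(f)) = m$ und damit unmittelbar $\textit{dim}(V) = k + m = \textit{dim}(\textit{ker}(f)) + \textit{dim}(\textit{im}(f))$ folgt.

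Um zu zeigen, dass $\{ f(w_1), ..., f(w_m) \}$ den Bildraum aufspannt, würde ich ein beliebiges $y \in \textit{im}(f)$ hernehmen, also $y = f(v)$ für ein $v \in V$, und $v$ in der Gesamtbasis als $v = \sum_{i=1}^k a_i \cdot u_i + \sum_{j=1}^m b_j \cdot w_j$ entwickeln. Wegen der Linearität von $f$ und $f(u_i) = \mathbf{0}_W$ fällt der erste Summand weg, und es bleibt $y = \sum_{j=1}^m b_j \cdot f(w_j)$. Für die lineare Unabhängigkeit würde ich von einer Relation $\sum_{j=1}^m c_j \cdot f(w_j) = \mathbf{0}_W$ ausgehen; wegen der Linearität folgt $f\big( \sum_{j=1}^m c_j \cdot w_j \big) = \mathbf{0}_W$, also $\sum_{j=1}^m c_j \cdot w_j \in \textit{ker}(f)$. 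Dieses Element lässt sich somit als $\sum_{i=1}^k d_i \cdot u_i$ schreiben, und die resultierende Relation $\sum_{i=1}^k d_i \cdot u_i - \sum_{j=1}^m c_j \cdot w_j = \mathbf{0}_V$ erzwingt wegen der linearen Unabhängigkeit der Gesamtbasis, dass alle Koeffizienten, insbesondere alle $c_j$, verschwinden.

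Die wesentliche Schwierigkeit liegt dabei im Basisergänzungssatz, also in der Tatsache, dass sich eine Basis eines Untervektorraumes zu einer Basis des Gesamtraumes erweitern lässt; dieser wurde im bisherigen Text nicht bewiesen, lässt sich aber wie die übrigen Grundtatsachen der linearen Algebra aus \cite{fischer2003lineare} übernehmen. Abschließend würde ich die Randfälle über die im Satz vereinbarte Konvention abfangen: Ist $\textit{ker}(f) = \{ \mathbf{0}_V \}$, so ist die Kernbasis leer ($k = 0$) und die Formel reduziert sich auf $\textit{dim}(V) = \textit{dim}(\textit{im}(f))$; ist hingegen $f$ die Nullabbildung, so ist $m = 0$ und $\textit{im}(f) = \{ \mathbf{0}_W \}$ hat Dimension $0$. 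In beiden Fällen bleiben die obigen Argumente mit leeren Indexmengen gültig, sodass der Beweis vollständig ist.
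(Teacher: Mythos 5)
Ihr Beweis ist korrekt und vollständig: Sie geben das Standardargument für die Dimensionsformel (Basis des Kerns wählen, per Basisergänzungssatz zu einer Basis von $V$ fortsetzen, und zeigen, dass die Bilder der Ergänzungsvektoren eine Basis von $\textit{im}(f)$ bilden); sowohl das Aufspann- als auch das Unabhängigkeitsargument sind fehlerfrei, und die Randfälle werden über die im Lemma vereinbarte Konvention sauber abgefangen. Damit weicht Ihr Vorgehen allerdings grundlegend von dem der Arbeit ab, denn diese beweist das Lemma überhaupt nicht, sondern verweist lediglich auf \cite{fischer2003lineare}. Ihr Vorschlag liefert also einen im Rahmen der Arbeit weitgehend selbständigen Beweis, der nur noch an zwei Stellen externe Grundtatsachen benötigt: die Existenz einer endlichen Basis des Untervektorraums $\textit{ker}(f)$ und den Basisergänzungssatz, den Sie zu Recht als die eigentliche Schwierigkeit benennen und der im bisherigen Text weder formuliert noch bewiesen wird. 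Der Gewinn Ihres Zugangs ist, dass die Abhängigkeit von der Literatur auf ein elementareres Hilfsresultat reduziert wird, statt den gesamten Satz zu importieren, und dass der Leser sieht, wie die Formel aus dem in Abschnitt 4.2 entwickelten Basisbegriff folgt; der Preis ist ein deutlich längerer Text für ein Resultat, das in der Arbeit nur als Hilfsmittel dient (nämlich für die Surjektivität der Abbildung $\mathbb{i}$ in Beispiel 4.5), was vermutlich erklärt, warum die Arbeit sich für den bloßen Verweis entschieden hat.
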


        \begin{proof}
            Siehe \cite{fischer2003lineare}.
        \end{proof}

        Mittels dieser beiden Lemmata können wir nun zeigen, dass es sich bei $\mathbb{i}$ tatsächlich um einen Isomorphismus handelt. Zuerst überprüfen wir die Linearität von $\mathbb{i}$: Seien dazu $\alpha, \beta \in \mathbb{R}$ und $v,w \in V$ gegeben. Dann gilt für alle $f \in V^{*}$
        \begin{align}
            (\mathbb{i}(\alpha \cdot v + \beta \cdot w)) (f) =& \: f (\alpha \cdot v + \beta \cdot w) \nonumber \\ =& \: \alpha \cdot f(v) + \beta \cdot f(w) \nonumber \\ =& \: \alpha (\mathbb{i}(v)) (f) + \beta \cdot (\mathbb{i}(w)) (f) \nonumber \\ =& \: (\alpha \cdot \mathbb{i}(v) + \beta \cdot \mathbb{i}(w)) (f).
        \end{align}
        Da das für alle $f \in V^*$ gilt, folgt die Linearität von $\mathbb{i}$. 

        Wir stellen darüberhinaus fest, dass $\mathbb{i}(v) \in V^{**}$ ist, denn per Definition gilt für $\eta \in V^{*}$, dass
        \begin{align}
            (\mathbb{i}(v))(\eta) = \eta(v).
        \end{align}
        Daraus folgt, dass für $\beta_1, \beta_2 \in \mathbb{R}$ und $\eta^1, \eta^2 \in V^*$ 
        \begin{align}
            (\mathbb{i}(v))(\beta_1 \cdot \eta^1 + \beta_2 \cdot \eta^2) =& \: (\beta_1 \cdot \eta^1 + \beta_2 \cdot \eta^2)(v) \nonumber \\ =& \: \beta_1 \cdot \eta^1(v) + \beta_2 \cdot \eta^2(v) \nonumber \\ =& \: \beta_1 \cdot (\mathbb{i}(v))(\eta^1) + \beta_2 \cdot (\mathbb{i}(v))(\eta^2)
        \end{align}
        gilt, was zeigt, dass es sich bei $\mathbb{i}(v)$ um eine lineare Abbildung der Form $V^* \longrightarrow \mathbb{R}$ handelt.

        Als Nächstes zeigen wir die Injektivität von $\mathbb{i}$. Nach Lemma $4.1.$ müssen wir dazu nur zeigen, dass der Kern von $\mathbb{i}$ trivial ist. Sei also $v \in V$ so gewählt, dass $\mathbb{i}(v) = \mathbf{0}_{V^{**}}$ ist. Das impliziert aber 
        \begin{align}
            (\mathbb{i}(v)) (f) = f(v) = 0 \:\:\:\: \forall f \in V^*,
        \end{align}
        per Definition von $\mathbf{0}_{V^{**}} \in V^{**}$. Angenommen $v \neq \mathbf{0}_V$. Dann können wir im $\textit{dim}(V)$-dimensionalen $\mathbb{R}$-Vektorraum $V$ eine Basis der Form $\{ v, v_2, ..., v_{\textit{dim}(V)} \} \subseteq V$ finden. Mittels dieser Basis können wir die folgende lineare Abbildung $g \in V^*$  durch
        \begin{align}
            g(v) = 1, \:\: g(v_2) = g(v_3) = ... = g(v_{\textit{dim}(V)}) = 0
        \end{align}
        definieren (Beachte, dass eine lineare Abbildung eindeutig über ihre Wirkung auf eine Basis definiert ist!). Damit folgt
        \begin{align}
            (\mathbb{i}(v)) (g) = g(v) = 1 \neq 0,
        \end{align}
        woraus folgt, dass $v = \mathbf{0}_V$ sein muss. Da $v \in V$ ein beliebiger Vektor mit $\mathbb{i}(v) = \mathbf{0}_{V^{**}}$ ist, folgt das der Kern von $\mathbb{i}$ trivial ist und damit, dass $\mathbb{i}$ injektiv. 
        
        Da der Kern von $\mathbb{i}$ trivial ist folgt darüber hinaus mit Lemma $4.2.$, dass $\mathbb{i}$ surjektiv ist, da $\textit{dim}(\textit{ker}(\mathbb{i})) = 0$ ist und daher folglich $\textit{dim}(\textit{im}(\mathbb{i})) = \textit{dim}(V) = \textit{dim}(V^{**})$ sein muss. Daraus folgt die Bijektivität von $\mathbb{i}$, denn angenommen $\textit{im}(\mathbb{i}) =: \mathbb{i}(V) \neq V^{**}$, d.h. $\mathbb{i}(V) \subset V^{**}$. Wegen $\textit{dim}(\textit{im}(\mathbb{i})) = \textit{dim}(V^{**})$ folgt, dass es eine Basis $\{ \epsilon_1, ..., \epsilon_{\textit{dim}(V^{**})} \} \subset V^{**}$ der Länge $\textit{dim}(V^{**})$ in $\mathbb{i}(V)$ gibt. Da wir $\mathbb{i}(V) \subset V^{**}$ vorraussetzen folgt, dass es ein $\delta \in V^{**}$ geben muss, sodass
        \begin{align}
            \delta \neq \sum_{i = 1}^{\textit{dim}(V^{**})} \alpha^{i} \cdot \epsilon_i
        \end{align}
        für alle $\alpha_1, ..., \alpha_{\textit{dim}(V^{**})} \in \mathbb{R}$. Daraus folgt aber sofort die lineare Unabhängigkeit der Menge $\{ \delta, \epsilon_1, ..., \epsilon_{\textit{dim}(V^{**})} \} \subset V^{**}$, womit wir in $V^{**}$ eine Basis der Länge $\textit{dim}(V^{**}) + 1$ gefunden hätten, was einen offensichtlichen Widerspruch darstellt. Damit folgt die Surjektivität von $\mathbb{i}$.
        
        Mittels der Beobachtung, dass $\mathbb{i}$ einen Isomorphismus darstellt, können wir nun $v \in V$ folgendermaßen als Abbildung der Form $V^* \longrightarrow \mathbb{R}$ und damit als Element von $V^{**} = T^1_0(V)$ verstehen:
        \begin{align}
            \eta(v) = (\mathbb{i}(v))(\eta) =: v(\eta) 
        \end{align}
         Auf diese Weise lassen sich die Vektoren aus $V$ selbst als Tensoren auffassen.
    \end{itemize}
\end{example}

Wir wollen diesen Abschnitt nun mit einer letzten nützlichen Definition abschließen:

\begin{definition}
    Sei $V$ ein $n$-dimensionaler $\mathbb{R}$-Vektorraum und $\mathcal{A} := \{ e_1, ..., e_n \}$ eine Basis von $V$. Weiter sei $\mathcal{B} := \{ \eta^1, ..., \eta^n \}$ die zu $\mathcal{A}$ gehörige Dualbasis in $V^*$. Wir betrachten den $(r,s)$-Tensor $T$. Wir erklären die von $\mathcal{A}$ und $\mathcal{B}$ abhängigen Tensorkomponenten von T durch 
    \begin{align}
        T_{i_1, ..., i_s}^{j_1, ..., j_r} := T(e_{i_1}, ..., e_{i_s}, \eta^{j_1}, ..., \eta^{j_r}),
    \end{align}
    wobei $i_1, ..., i_s, j_1, ..., j_r \in \{ 1, ..., n \}$ sind.
\end{definition}


\newpage

\quad 
\newpage

\section{Analytische Grundlagen}

Wie in der Einleitung erwähnt, soll eines der in dieser Arbeit angestrebten Ziele sein, die Länge einer Kurven auf einer endlichdimensionalen Mannigfaltigkeit einzuführen und grob zu zeigen, wie sich der Längenbegriff auf einer unendlichdimensionalen Mannigfaltigkeit erklären lässt. 

Der Grund dafür ist, dass wir am Ende dieser Arbeit, wie schon oft erwähnt, über sogenannte stationäre Kurven des sogenannten Längenfunktionals sprechen wollen, wobei das Längenfunktional, wie der Name es schon andeutet, salopp gesprochen jeder Kurve eine Länge zuordnet.

Um den Begriff der Länge einer Kurve rigoros einzuführen, benötigen wir aber, wie wir noch sehen werden, analytische Methoden. Wir wollen in diesem Kapitel die nötigen analytischen Werkzeuge benennen und einige nützliche Eigenschaften dieser Werkzeuge zeigen. 

Wir werden dabei ab jetzt auf die wohlbekannten Rechenregeln bezüglich der Folgenkonvergenz in normierten Räumen zurückgreifen. Auch wenn viele dieser Rechenregeln unter Rückgriff der im normierten Raum erklärten Norm elementar aus der Definition der Folgenkonvergenz beweisbar sind, verweisen wir den Leser an dieser Stelle an \cite{trench2013introduction}. 

Wir merken weiter noch an, dass die analytischen Werkzeuge, die wir in diesem Kapitel diskutieren möchten, vornehmlich im Sinne der Funktionalanalysis definiert werden. Das hat zum einen den Vorteil, dass wir die zu besprechenden Begrifflichkeiten sehr allgemein und rigoros erklären können, und zum anderen hat dieser abstraktere Zugang den Vorteil, dass er sich in Teilen im Kapitel $9$ nutzen lässt, in welchem wir uns unter anderem grob mit unendlichdimensionalen Vektorräumen und Mannigfaltigkeiten konfrontiert sehen werden.

In diesem Kapitel sei der $\mathbb{R}^d$ für $d \in \mathbb{N}$ durchweg mit der durch das Standardskalarprodukt induzierten Topologie, der Standardtopologie des $\mathbb{R}^d$, versehen (Beachte: Die durch das Standardskalarprodukt induzierte Norm auf $\mathbb{R}$ ist gerade der Betrag.). 

Wir wollen nun unter anderem klären, was wir unter der Differenzierbarkeit einer Funktion verstehen, welche beispielsweise auf dem $\mathbb{R}^d$ erklärt ist. Um den Differenzierbarkeitsbegriff einer Funktion zu motivieren, betrachten wir zuerst den anschaulicheren Begriff der Richtungsableitung einer Funktion $f : X \longrightarrow Y$, welche auf einem normierten Raum $(X, \| \cdot \|_X)$ definiert ist und Werte in einem normierten Raum $(Y, \| \cdot \|_Y)$ annimmt. Der Definitions- und Wertebereich der Funktion $f$ muss dabei notwendigerweise ein Vektorraum sein, da wir ohne die Vektoraddition in $X$ und $Y$ den anschaulichen Differenzenquotienten der Richtungsableitung nicht bilden können.

Im Folgenden sind alle normierten Räume reell und enthalten mehr als nur den Nullvektor $\mathbf{0}$. Die zweite Forderung ergibt sich daraus, dass zum einen dieser triviale Fall für uns vollkommen uninteressant ist, und zum anderen sind wir mit dieser Forderung in der Lage eine besonders einfache und handhabbare Definition der später noch diskutierten Operatornorm zu nutzen. 

\begin{definition}
    Seien $(X, \| \cdot \|_X)$ und $(Y, \| \cdot \|_Y)$ zwei normierte Räume und sei $f : X \longrightarrow Y$ eine Funktion. $f$ heißt richtungsdifferenzierbar am Punkt $x \in X$ in Richtung $h \in X$, falls der Grenzwert 
    \begin{align}
        f'(x; h) := \lim_{t \longrightarrow 0} \frac{f(x + t \cdot h) - f(x)}{t} \label{iiii}
    \end{align}
    in $Y$ existiert. In diesem Fall nennen wir $f'(x; h)$ Richtungsableitung von $f$ am Punkt $x$ in Richtung $h$. $f$ heißt richtungsableitbar im Punkt $x$, falls $f'(x;h)$ für alle $h \in X$ existiert. Schließlich nennen wir $f$ richtungsableitbar auf der offenen Menge $\mathcal{U} \subseteq X$, falls $f$ für alle $x \in \mathcal{U}$ richtungsableitbar ist.
\end{definition}

In obiger Definition bedeutet dabei $t \longrightarrow 0$, dass in der rechten Seite von \eqref{iiii} statt dem $t$ ein Folgenglied $t_n$ einer beliebigen Folge $(t_n)_{n \in \mathbb{N}} \subseteq \mathbb{R}$ mit $t_n \xrightarrow{n \rightarrow \infty} 0$ und $t_n \neq 0$ für alle $n \in \mathbb{N}$ eingesetzt werden soll, und der Grenzübergang dann für $n \longrightarrow \infty$ gemacht wird. 

Man beachte weiter, dass der obige Grenzwert, falls er existiert, eindeutig bestimmt ist, da die obige implizite Folge bezüglich der von der Norm induzierten Topologie konvergiert und der normierte Raum $(Y, \| \cdot \|_Y)$ bezüglich dieser Topologie ein, wie wir aus den letzten Abschnitten wissen, topologischer Hausdorffraum ist.

Die Richtungsableitung von $f$ im Punkt $x \in X$ in Richtung $h \in X$ gibt also an, wie sehr sich die Funktion am Punkt $x$ in Richtung $h$ verändert. Für $X = Y = \mathbb{R}$ und $h = 1 \in \mathbb{R}$ erhalten wir den gewöhnlichen Differenzierbarkeitsbegriff für Funktionen der Form $f : \mathbb{R} \longrightarrow \mathbb{R}$. 

Nachdem wir nun geklärt haben, was wir unter der Richtungsableitung einer Funktion der Form $f : X \longrightarrow Y$ verstehen, können wir uns noch die Frage stellen, was wir unter der Richtungsableitung einer Funktion verstehen, die nur auf einer offenen Menge $\mathcal{W} \subseteq X$ definiert ist. Beachte, dass unsere derzeitige Definition der Richtungsableitung die Definition einer Funktion auf dem ganzen normierten Raum $X$ erfordert.

Sei also $f : \mathcal{W} \longrightarrow Y$ eine Funktion, die lediglich auf einer offenen Menge $\mathcal{W} \subseteq X$ definiert ist. Diese wollen wir genau dann als richtungsdifferenzierbar im Punkt $x \in \mathcal{W}$ bezeichnen, falls die Funktion 
\begin{align}
    \hat{f}(x) = \left\{\begin{array}{ll} f(x), & x \in \mathcal{W} \\
         0, & x \notin \mathcal{W} \end{array}\right.\label{diffbar}
\end{align}
in $x \in \mathcal{W}$ richtungsdifferenzierbar ist. D.h. wir setzen die Funktion $f : \mathcal{W} \longrightarrow Y$ einfach auf den ganzen normierten Raum $X$ fort, indem wir diese außerhalb von $\mathcal{W}$ auf Null setzen. Beachte, dass die Definition der Richtungsdifferenzierbarkeit tatsächlich so naiv auf Funktionen der Form $\mathcal{W} \longrightarrow Y$, $\mathcal{W} \subseteq X$ offen, erweiterbar ist, da $\mathcal{W}$ zum einen offen ist, und zum anderen der Grenzwert $t \longrightarrow 0$ in der Definition der Richtungsdifferenzierbarkeit verwendet wird.

Analog erklären wir die Richtungsdifferenzierbarkeit einer Funktion $f : \mathcal{W} \longrightarrow Y$ auf einer offenen Menge $\mathcal{U} \subseteq \mathcal{W}$: Sei $\mathcal{W} \subseteq X$ offen und eine Funktion $f : \mathcal{W} \longrightarrow Y$ gegeben. Dann nennen wir $f$ auf der offenen Menge $\mathcal{U} \subset \mathcal{W}$ richtungsdifferenzierbar, falls die Funktion 
\begin{align}
    \hat{f}(x) = \left\{\begin{array}{ll} f(x), & x \in \mathcal{W} \\
         0, & x \notin \mathcal{W} \end{array}\right.\label{dif}
\end{align}
auf $\mathcal{U}$ richtungsdifferenzierbar ist.

Als Nächstes wollen wir die Menge aller stetigen linearen Operatoren definieren. Wir merken dabei an, dass wir unter einer stetigen Funktionen $f: X \longrightarrow Y$, welche auf dem normierten Raum $(X, \| \cdot \|_X)$ erklärt ist und in den normierten Raum $(Y, \| \cdot \|_Y)$ abbildet, eine Funktion verstehen, die bezüglich der von den Normen $\| \cdot \|_X$ und $\| \cdot \|_Y$ induzierten Topologien stetig ist.

\begin{definition}
    Seien $(X, \| \cdot \|_X)$ und $(Y, \| \cdot \|_Y)$ zwei normierte Räume. Wir bezeichnen mit $\mathcal{L}(X, Y)$ die Menge aller stetiger linearen Abbildungen der Form 
    \begin{align}
        A : X \longrightarrow Y.
    \end{align}
    Es lässt sich dabei leicht nachprüfen, dass mittels der punktweisen Addition und Skalarmultiplikation die Menge $\mathcal{L}(X,Y)$ einen $\mathbb{R}$-Vektorraum bildet.
\end{definition}

Mittels $\mathcal{L}(X, Y)$ können wir nun für die Funktion $f : X \longrightarrow Y$ den Begriff der Gâteaux-differenzierbarkeit einführen:

\begin{definition}
    Seien $(X, \| \cdot \|_X)$ und $(Y, \| \cdot \|_Y)$ zwei normierte Räume und sei $f : X \longrightarrow Y$ eine Funktion, welche im Punkt $x \in X$ richtungsableitbar ist. Existiert ein $A \in \mathcal{L}(X, Y)$, sodass
    \begin{align}
        f'(x;h) = A(h) \:\:\:\:\:\:\:\:\: \forall h \in X
    \end{align}
    gilt, so nennen wir $f$ Gâteaux-differenzierbar im Punkt $x$ und wir bezeichnen $A =: f'(x)$ als die Gâteaux-Ableitung oder als das Gâteaux-Differential von $f$ im Punkt $x$. Wir nennen $f$ Gâteaux-differenzierbar auf der offenen Menge $\mathcal{U} \subseteq X$, falls $f$ Gâteaux-differenzierbar für alle $x \in \mathcal{U}$ ist.
\end{definition}

Beachte, dass aus obiger Definition folgt, dass für eine auf $\mathcal{U}$ Gâteaux-differenzierbaren Funktion die Ableitungfunktion $f'$, die wir kurz als die Ableitung von $f$ bezeichnen, eine Abbildung der Form
\begin{align}
    f' : \mathcal{U} \longrightarrow \mathcal{L}(X, Y)
\end{align}
ist. Beachte weiter, dass wir hier immer fordern, dass die Menge $\mathcal{U} \subseteq X$ eine bezüglich der von $\| \cdot \|_X$ induzierten Topologie offene Menge ist. Diese Forderung gewährleistet, dass für jedes $x \in \mathcal{U}$ immer ein $\delta > 0$ existiert, sodass $\mathcal{U}_\delta (x) = \{ y \in X \:\: | \:\: \|x - y\|_X < \delta \} \subseteq \mathcal{U}$ ist. Diese Beobachtungen werden insbesondere dann wichtig werden, wenn wir höhere Ableitungen einer Funktion $f : X \longrightarrow Y$ bilden möchten.

Schließlich merken wir abermals an, dass für eine auf einer offenen Menge $\mathcal{W} \subseteq X$ definierten Funktion $f : \mathcal{W} \longrightarrow Y$ die Gâteaux-differenzierbarkeit in einem Punkt $x \in \mathcal{W}$, sowie die Gâteaux-differenzierbarkeit auf einer offener Menge $\mathcal{U} \subseteq \mathcal{W}$ wie in \eqref{diffbar} und \eqref{dif} über triviale Fortsetzungen erklärt sind.

\begin{example}
    Sei $f : \mathbb{R} \longrightarrow \mathbb{R}$ eine am Punkt $x \in \mathbb{R}$ Gâteaux-differenzierbare Funktion. Es gilt für $h \in \mathbb{R}$
    \begin{align}
        f'(x; h) =& \: \lim_{t \longrightarrow 0} \frac{f(x + t \cdot h) - f(x)}{t} \nonumber \\ =& \:  \lim_{t \longrightarrow 0, \: \lambda = t \cdot h} \frac{f(x + \lambda) - f(x)}{t} \nonumber \\ =& \:  \bigg( \lim_{t \longrightarrow 0, \: \lambda = t \cdot h} \frac{f(x + \lambda) - f(x)}{\lambda} \bigg) \cdot h \nonumber \\ =& \: \bigg(\lim_{\lambda \longrightarrow 0} \frac{f(x + \lambda) - f(x)}{\lambda} \bigg) \cdot h \nonumber \\ =& \: f^{(1)}(x) \cdot h. \label{fffffff}
    \end{align}
    Da \eqref{fffffff} linear und stetig in $h \in \mathbb{R}$ ist, gilt, dass das Gâteaux-Differential von $f$ an der Stelle $x$ existiert und gegeben ist durch 
    \begin{align}
        f'(x; h) = f'(x)(h) := f^{(1)}(x) \cdot h.
    \end{align}
    D.h. das Gâteaux-Differential der Funktion $f$ an der Stelle $x$ kann mit dem aus der reellen Analysis bekannten gewöhnlichen Ableitungbegriff $f^{(1)}(x) := \lim_{\lambda \longrightarrow 0} \frac{f(x + \lambda) - f(x)}{\lambda}$ identifiziert werden. Damit ist eine derartige Funktion $f$ insbesondere eine Funktion, die anschaulich, wie aus der reellen Analysis bekannt, keine Knicke oder gar Unstetigkeitsstellen in ihrem Funktionsgraphen $\{ (x, f(x)) \:\: | \:\: x \in \mathbb{R} \} \subseteq \mathbb{R}^2$ aufweist. 

    Beachte außerdem, dass gilt, dass
    \begin{align}
        f'(x;1) = f'(x)(1) = f^{(1)}(x)
    \end{align}
    ist.
\end{example}

Als Nächstes erklären wir den Begriff der Fréchet-differenzierbarkeit:

\begin{definition}
    Seien $(X, \| \cdot \|_X)$ und $(Y, \| \cdot \|_Y)$ zwei normierte Räume und sei $f : X \longrightarrow Y$ eine Funktion. Die Funktion $f$ heißt im Punkt $x \in X$  Fréchet-differenzierbar, falls ein $A \in \mathcal{L}(X, Y)$ und eine Abbildung $r_x : X \longrightarrow Y$ existiert, sodass für alle $h \in X$ gilt, dass
    \begin{align}
        f(x + h) = f(x) + A(h) + r_x(h)  \label{mmmm}
    \end{align}
    und 
    \begin{align}
        \frac{\| r_x(h) \|_Y}{\|h\|_X} \longrightarrow 0 \:\:\:\:\:\:\:\:\: \textit{für} \:\:\: \|h\|_X \longrightarrow 0
    \end{align}
    ist. Wir nennen $A$ in diesem Fall Fréchet-Ableitung oder totales Differential von $f$ im Punkt $x \in X$. Weiter nennen wir $f$ total differenzierbar auf $\mathcal{U} \subseteq X$, falls $f$ in allen $x \in \mathcal{U}$ Fréchet-differenzierbar ist.
\end{definition}

Wieder bezeichnen wir eine nur auf der offenen Menge $\mathcal{W} \subseteq X$ erklärte Funktion $f : \mathcal{W} \longrightarrow Y$ als Fréchet-differenzierbar im Punkt $x \in \mathcal{W}$ oder auf der offenen Menge $\mathcal{U} \subseteq \mathcal{W}$, falls die zugehörige trivile Fortsetzung \eqref{diffbar} von $f$ in $x$ oder auf $\mathcal{U}$ Fréchet-differenzierbar ist.

Es ist nun leicht einzusehen, dass jede Fréchet-differenzierbare Funktion auch Gâteaux-differenzierbar ist und die Gâteaux-Ableitung mit der Fréchet-Ableitung übereinstimmt:

\begin{proposition}
    Seien $(X, \| \cdot \|_X)$ und $(Y, \| \cdot \|_Y)$ zwei normierte Räume und sei $f : X \longrightarrow Y$ eine Fréchet-differenzierbare Funktion. Dann ist $f$ auch Gâteaux-differenzierbar und die Fréchet-Ableitung stimmt mit dem Gâteaux-Differential überein.
\end{proposition}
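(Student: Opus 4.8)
Der Plan ist, ausgehend von der Fréchet-Differenzierbarkeit von $f$ im Punkt $x$ direkt die Existenz der Richtungsableitung $f'(x;h)$ für jedes $h \in X$ nachzuweisen und dabei zu zeigen, dass diese gerade mit $A(h)$ übereinstimmt, wobei $A \in \mathcal{L}(X,Y)$ die Fréchet-Ableitung bezeichnet. Da $A$ per Definition bereits eine stetige lineare Abbildung ist, liefert die Identität $f'(x;h) = A(h)$ unmittelbar sowohl die Richtungsableitbarkeit von $f$ im Punkt $x$ als auch die Gâteaux-Differenzierbarkeit samt der Gleichheit der beiden Ableitungen.

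Zuerst würde ich den trivialen Fall $h = \mathbf{0}$ abhandeln: Hier gilt $f(x + t \cdot \mathbf{0}) = f(x)$, sodass der Differenzenquotient in \eqref{iiii} für alle $t \neq 0$ verschwindet und somit $f'(x; \mathbf{0}) = \mathbf{0} = A(\mathbf{0})$ ist, wobei die letzte Gleichheit aus der Linearität von $A$ folgt. Für $h \neq \mathbf{0}$ würde ich anschließend die Fréchet-Darstellung \eqref{mmmm} mit dem Zuwachs $t \cdot h$ (für $t \neq 0$) auswerten. Unter Ausnutzung der Linearität von $A$ erhält man $f(x + t \cdot h) = f(x) + t \cdot A(h) + r_x(t \cdot h)$ und damit
\[
\frac{f(x + t \cdot h) - f(x)}{t} = A(h) + \frac{r_x(t \cdot h)}{t}.
\]

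Der entscheidende Schritt besteht nun darin, das Restglied zu kontrollieren. Hierzu würde ich
\[
\left\| \frac{r_x(t \cdot h)}{t} \right\|_Y = \frac{\|r_x(t \cdot h)\|_Y}{\|t \cdot h\|_X} \cdot \|h\|_X
\]
schreiben und beobachten, dass für eine beliebige Folge $(t_n)_{n \in \mathbb{N}} \subseteq \mathbb{R}$ mit $t_n \xrightarrow{n \rightarrow \infty} 0$ und $t_n \neq 0$ wegen $h \neq \mathbf{0}$ auch $\|t_n \cdot h\|_X = |t_n| \cdot \|h\|_X \xrightarrow{n \rightarrow \infty} 0$ gilt. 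Nach der Restgliedbedingung aus der Definition der Fréchet-Differenzierbarkeit konvergiert daher der Quotient $\frac{\|r_x(t_n \cdot h)\|_Y}{\|t_n \cdot h\|_X}$ gegen Null, und Multiplikation mit der Konstanten $\|h\|_X$ liefert $\frac{r_x(t_n \cdot h)}{t_n} \xrightarrow{n \rightarrow \infty} \mathbf{0}$ in $Y$. Damit existiert der Grenzwert des Differenzenquotienten und es folgt $f'(x;h) = A(h)$ für alle $h \in X$.

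Die Hauptschwierigkeit liegt dabei nicht in einer tiefliegenden Idee, sondern in der sorgfältigen Übersetzung der Restgliedbedingung, die über den Grenzübergang $\|h\|_X \longrightarrow 0$ formuliert ist, in eine Aussage über den sequentiellen Grenzwert $t \longrightarrow 0$ des Differenzenquotienten. Insbesondere ist darauf zu achten, dass der Vorfaktor $\|h\|_X$ korrekt aus dem Normausdruck herausgezogen wird und der Fall $h = \mathbf{0}$ gesondert zu behandeln ist, da andernfalls der Nenner $\|t \cdot h\|_X$ verschwinden würde und die obige Umformung nicht zulässig wäre.
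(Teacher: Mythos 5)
Dein Vorschlag ist korrekt und verfolgt im Wesentlichen denselben Weg wie der Beweis der Arbeit: Einsetzen des Zuwachses $t \cdot h$ in die Fréchet-Darstellung \eqref{mmmm}, Ausnutzen der Linearität von $A$ und Nachweis, dass das reskalierte Restglied $\frac{r_x(t \cdot h)}{t}$ in $Y$ gegen $\mathbf{0}$ konvergiert. Der einzige Unterschied ist deine gesonderte Behandlung des Falls $h = \mathbf{0}$; das ist sogar etwas sorgfältiger als der Beweis der Arbeit, der stillschweigend durch $\|v\|_X$ dividiert, ohne $v = \mathbf{0}$ auszuschließen.
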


\begin{proof}
    Da $f : X \longrightarrow Y$ Fréchet-differenzierbar ist, folgt, dass es ein $A \in \mathcal{L}(X,Y)$ und ein $r_x : X \longrightarrow Y$ gibt, sodass für beliebiges $h \in X$
    \begin{align}
        f(x+h) - f(x) - A(h) = r_x (h) 
    \end{align}
    mit $\frac{\| r_x(h) \|_Y}{\|h\|_X} \longrightarrow 0$ für $\|h\|_X \longrightarrow 0$ ist. Wir setzen $h = t \cdot v$, wobei $t \in \mathbb{R}$ und $v \in X$ ist. Es folgt mit der Linearität von $A$, dass
    \begin{align}
        \frac{f(x+ t \cdot v) - f(x)}{t} - A(v) = \frac{r_x (t \cdot v)}{t}
    \end{align}
    gilt. Dabei gilt $\frac{\| r_x(t \cdot v) \|_Y}{|t| \cdot \|v\|_X} \longrightarrow 0$ für $\|t \cdot v\|_X = |t| \cdot \| v \|_X \longrightarrow 0$. Das impliziert 
    \begin{align}
        \frac{\| r_x(t \cdot v) \|_Y}{|t|} = \bigg\| \frac{r_x(t \cdot v)}{t} \bigg\|_Y \longrightarrow 0
    \end{align}
    für $t \longrightarrow 0$. Mittels der Definition des Grenzwertes folgt damit aber 
    \begin{align}
        \lim_{t \longrightarrow 0} \frac{f(x + t \cdot v) - f(x)}{t} = A(v).
    \end{align}
    Da $A \in \mathcal{L}(X,Y)$ und $h \in X$ beliebig war, womit auch $v \in X$ beliebig ist, folgt, dass $f$ in $x$ Gâteaux-differenzierbar mit dem Gâteaux-Differential $A$ ist.
\end{proof}

Wegen dieser Beobachtung schreiben wir für die Fréchet-Ableitung auch einfach nur $f'(x)$. Weiter haben wir damit auch eine anschauliche Interpretation der Fréchet-Ableitung erarbeitet, denn gemäß \eqref{mmmm} ist $f'(x)$ zum einen eine lineare Approximation von $f$, und zum anderen ist in $f'(x)$ nach der eben gemachten Beobachtung auch codiert, wie sehr sich $f$ im Punkt $x \in X$ ändert. 

Wir nennen im folgenden eine Funktion $f : X \longrightarrow Y$ auf $\mathcal{U} \subseteq X$ stetig differenzierbar, falls $f$ für alle Punkte $x \in \mathcal{U}$ Fréchet-differenzierbar und 
\begin{align}
    f' : \mathcal{U} \longrightarrow \mathcal{L}(X, Y)
\end{align}
stetig ist. Um dabei über die Stetigkeit der Funktion $f'$ sprechen zu können, wird $\mathcal{U}$ mit der Teilraumtopologie der durch $\| \cdot \|_X$ induzierten Topologie auf $X$ ausgestattet, während $\mathcal{L}(X, Y)$ mit der durch die von der sogenannten Operatornorm $\| \cdot \|_\mathcal{O}$ induzierten Topologie versehen wird. Die Operatornorm $\| \cdot \|_\mathcal{O}$ ist dabei erklärt durch
\begin{align}
    \| A \|_\mathcal{O} :=& \: \sup_{x \in X, \:\: x \neq \mathbf{0}} \frac{ \| A(x) \|_Y }{\| x \|_X} \nonumber \\ :=& \: \sup \bigg\{ \frac{\|A(x) \|_Y }{\| x \|_X} \in \mathbb{R} \:\: \bigg| \:\: x \in X \:\: \textit{mit} \:\: x \neq \mathbf{0} \bigg\},
\end{align}
und diese macht den Vektorraum $\mathcal{L}(X, Y)$ selbst zu einem normierten Raum. Dabei verstehen wir unter dem Mengenoperator $\sup : \mathcal{P}(\mathbb{R}) \longrightarrow \mathbb{R}$, den wir Supremumsoperator nennen, einen Operator, der einer Teilmenge $\mathcal{A} \subseteq \mathbb{R}$ seine kleinste obere Schranke zuordnet, d.h. das kleinste $a \in \mathbb{R}$, für das $b \leq a$ für alle $b \in \mathcal{A}$ gilt. 

Weiter nennen wir nun die Funktion $f$ zweifach stetig differenzierbar auf $\mathcal{U}$, falls $f$ und die Funktion $f'$, betrachtet als Funktion von $\mathcal{U}$ in den normierten Raum $\mathcal{L}(X,Y)$, stetig differenzierbar auf $\mathcal{U}$ sind. Die Ableitung von $f'$ bezeichnen wir dabei mit $f''$. Induktiv definiert man sich so für $k \in \mathbb{N}$ den Begriff einer auf $\mathcal{U}$ $k$-fach stetig differenzierbaren Funktion $f$. Die Menge aller auf $\mathcal{U}$ $k$-fach stetig differenzierbaren Funktionen $f : X \longrightarrow Y$ bezeichnen wir mit $\mathcal{C}^k (\mathcal{U}, Y)$. Schließlich wollen wir unter einer auf $\mathcal{U}$ unendlich oft differenzierbaren Funktion eine Funktion $f$ verstehen, für die gilt, dass 
\begin{align}
    f \in \bigcap_{k \in \mathbb{N}} \mathcal{C}^k (\mathcal{U}, Y) =: \mathcal{C}^{\infty} (\mathcal{U},Y)
\end{align}
ist. Wir werden im Folgenden die Elemente von $\mathcal{C}^{\infty} (\mathcal{U},Y)$ auch kurz als glatte Funktionen bezeichnen.

Der so definierte Begriff einer auf $\mathcal{U}$ $k$-fach stetig differenzierbaren Funktion mag anfangs recht abstrakt wirken. Tatsächlich stimmt dieser Begriff aber im Falle von Funktionen der Form $f: \mathbb{R} \longrightarrow \mathbb{R}$ in gewissem Sinne mit dem aus der reellen Analysis bekannten und anschaulichen Definition einer auf $\mathcal{U}$ $k$-fach stetig differenzierbaren Funktion überein. In der reellen Analysis ist nämlich eine auf der offenen Menge $\mathcal{U} \subseteq \mathbb{R}$ $k$-fach stetig differenzierbare Funktion $f$ induktiv erklärt als eine $(k-1)$-fach stetig differenzierbare Funktion, sodass für die $(k-1)$te Ableitung $f^{(k-1)}$ der Funktion $f$ gilt, dass der Ausdruck
\begin{align}
    \lim_{t \longrightarrow 0} \frac{f^{(k-1)}(x + t) - f^{(k-1)}(x)}{t} =: f^k(x)
\end{align}
für alle $x \in \mathcal{U}$ existiert und die Funktion $f^k : \mathcal{U} \longrightarrow \mathbb{R}$, die jedem $x \in \mathcal{U}$ den Funktionswert $f^k(x)$ zuordnet, stetig ist. In welchem Sinne die funktionalanalytische Definition und die aus der reellen Analysis bekannte Definition einer auf $\mathcal{U}$ $k$-fach stetig differenzierbaren Funktion $f : \mathbb{R} \longrightarrow \mathbb{R}$ im Falle $k = 1$ übereinstimmen beantwortet der folgende Satz:

\begin{proposition}
    Sei $f : \mathbb{R} \longrightarrow \mathbb{R}$ eine stetige Funktion. Dann sind die folgenden Aussagen äquivalent:
    \begin{itemize}
        \item [\textit{i}] $f$ ist auf $\mathcal{U}$ im Sinne der Funktionalanalysis stetig differenzierbar, d.h. $f'$ existiert und ist stetig.
        \item [\textit{ii)}] $f$ ist auf $\mathcal{U}$ im Sinne der reellen Analysis stetig differenzierbar, d.h. $f^{(1)}$ existiert und ist stetig. 
    \end{itemize}
    Weiter lässt sich die Funktion $f'$ mit der Funktion $f^{(1)}$ identifizieren. 
\end{proposition}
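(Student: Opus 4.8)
Der Beweis zerfällt natürlicherweise in eine Vorbemerkung über den Raum $\mathcal{L}(\mathbb{R},\mathbb{R})$, die beiden Implikationen und die abschließende Identifikation. Zuerst würde ich feststellen, dass jede lineare Abbildung $A \in \mathcal{L}(\mathbb{R},\mathbb{R})$ bereits durch den Wert $A(1)$ vollständig bestimmt ist, denn aus der Linearität folgt $A(h) = A(h \cdot 1) = h \cdot A(1)$ für alle $h \in \mathbb{R}$. Daraus ergibt sich sofort, dass
\begin{align}
    \|A\|_{\mathcal{O}} = \sup_{h \neq 0} \frac{|A(h)|}{|h|} = \sup_{h \neq 0} \frac{|A(1)| \, |h|}{|h|} = |A(1)|
\end{align}
ist, sodass die Abbildung $\Phi : \mathcal{L}(\mathbb{R},\mathbb{R}) \longrightarrow \mathbb{R}$, $A \longmapsto A(1)$, eine lineare, bijektive und isometrische Identifikation liefert. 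Insbesondere ist $\Phi$ in beide Richtungen stetig, was später die Übersetzung der Stetigkeit von $f'$ bezüglich der Operatornorm in die Stetigkeit von $f^{(1)}$ bezüglich des Betrags und umgekehrt erlaubt.

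Für die Richtung \textit{i)} $\implies$ \textit{ii)} würde ich an jedem Punkt $x \in \mathcal{U}$ die Fréchet-Differenzierbarkeit von $f$ voraussetzen. Nach Satz $5.1.$ ist $f$ dort auch Gâteaux-differenzierbar, und die beiden Ableitungen stimmen überein. Nach der Rechnung in Beispiel $5.1.$ existiert dann der gewöhnliche Grenzwert $f^{(1)}(x) = \lim_{\lambda \longrightarrow 0} \frac{f(x + \lambda) - f(x)}{\lambda}$, und es gilt $f'(x)(h) = f^{(1)}(x) \cdot h$, also $\Phi(f'(x)) = f'(x)(1) = f^{(1)}(x)$. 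Damit ist $f^{(1)} = \Phi \circ f'$ als Komposition der stetigen Abbildung $f'$ mit der stetigen Abbildung $\Phi$ stetig, was \textit{ii)} zeigt.

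Die umgekehrte Richtung \textit{ii)} $\implies$ \textit{i)} ist der eigentliche inhaltliche Kern, auch wenn sie im eindimensionalen Fall ohne großen Aufwand gelingt. Hier würde ich für jedes $x \in \mathcal{U}$ die wegen der Vorbemerkung in $\mathcal{L}(\mathbb{R},\mathbb{R})$ liegende lineare Abbildung $A_x(h) := f^{(1)}(x) \cdot h$ betrachten und den Rest $r_x(h) := f(x + h) - f(x) - A_x(h)$ definieren. Die Fréchet-Bedingung verifiziert man dann direkt über den Differenzenquotienten, denn für $h \neq 0$ gilt
\begin{align}
    \frac{|r_x(h)|}{|h|} = \left| \frac{f(x + h) - f(x)}{h} - f^{(1)}(x) \right| \xrightarrow{\, h \longrightarrow 0 \,} 0
\end{align}
genau nach der Definition von $f^{(1)}(x)$. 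Also ist $f$ in $x$ Fréchet-differenzierbar mit $f'(x) = A_x$ und $\Phi(f'(x)) = f^{(1)}(x)$, woraus $f' = \Phi^{-1} \circ f^{(1)}$ folgt; da $f^{(1)}$ nach Voraussetzung stetig und $\Phi^{-1}$ eine Isometrie ist, ist $f'$ stetig. Die abschließende Identifikation von $f'$ mit $f^{(1)}$ ist dann nichts anderes als die in beiden Richtungen etablierte Beziehung $f'(x)(1) = \Phi(f'(x)) = f^{(1)}(x)$.

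Die Hauptschwierigkeit liegt weniger in einem einzelnen analytischen Schritt als vielmehr darin, die beiden unterschiedlich formulierten Stetigkeitsbegriffe — Stetigkeit von $f'$ bezüglich der Operatornorm auf $\mathcal{L}(\mathbb{R},\mathbb{R})$ einerseits, Stetigkeit von $f^{(1)}$ bezüglich des Betrags andererseits — korrekt über die Isometrie $\Phi$ miteinander in Beziehung zu setzen. Ist diese Identifikation einmal sauber aufgestellt, so reduzieren sich beide Implikationen auf das Ausschreiben des Differenzenquotienten und auf die Anwendung der bereits vorhandenen Sätze $5.1.$ und Beispiel $5.1.$.
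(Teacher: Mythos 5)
Ihr Beweis ist korrekt und folgt im Wesentlichen demselben Weg wie der Beweis der Arbeit: dieselbe Identifikation $A \longmapsto A(1)$ von $\mathcal{L}(\mathbb{R},\mathbb{R})$ mit $\mathbb{R}$, dieselbe Berechnung der Operatornorm, die Verwendung von Satz $5.1.$ und Beispiel $5.1.$ für die Richtung \textit{i)} $\implies$ \textit{ii)} sowie die direkte Verifikation der Fréchet-Bedingung über den Differenzenquotienten für die Umkehrung. Der einzige Unterschied ist kosmetisch: Sie verpacken die Übertragung der Stetigkeit als Komposition mit der Isometrie $\Phi$ bzw. $\Phi^{-1}$, während die Arbeit dieselbe Tatsache $\|A\|_{\mathcal{O}} = |\mathcal{J}(A)|$ zusammen mit der Linearität von $\mathcal{J}$ in expliziten $\epsilon$-$\delta$-Ketten ausschreibt.
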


\begin{proof}
    Wir gehen zuerst davon aus, dass die Funktionen $f'$ und $f^{(1)}$ existieren und zeigen, dass wir in diesem Fall beide miteinander identifizieren können. Dazu behaupten wir zuerst, dass wir die Menge $\mathcal{L}(\mathbb{R}, \mathbb{R})$ mit der Menge $\mathbb{R}$ identifizieren können. Damit meinen wir, dass jedes Element in $\mathcal{L}(\mathbb{R}, \mathbb{R})$ eineindeutig einem Element in $\mathbb{R}$ entspricht und umgekehrt. Wir suchen also eine Abbildung
    \begin{align}
        \mathcal{J} : \mathcal{L}(\mathbb{R}, \mathbb{R}) \longrightarrow \mathbb{R}
    \end{align}
    mit den Eigenschaften
    \begin{itemize}
        \item[\textit{a)}] $\mathcal{J}(A) = \mathcal{J}(B) \implies A = B$ für alle $A,B \in \mathcal{L}(\mathbb{R}, \mathbb{R})$,
        \item[\textit{b)}] $ \forall a \in \mathbb{R} \:\: \exists A \in \mathcal{L}(\mathbb{R}, \mathbb{R})$ mit  $\mathcal{J}(A) = a $,
    \end{itemize}
    da über diese Eigenschaften gerade die Forderung codiert wird, dass die Mengen $\mathcal{L}(\mathbb{R}, \mathbb{R})$ und $\mathbb{R}$ in einer $1:1$-Beziehung stehen. Wie könnte eine derartige Abbildung aussehen? Eine natürlich Wahl für $\mathcal{J}$ liefert die folgende Beobachtung: Wir wissen, dass eine Abbildung $A \in \mathcal{L}(\mathbb{R}, \mathbb{R})$ aufgrund ihrer Linearität eindeutig über ihre Wirkung auf das Element $1 \in \mathbb{R}$ erklärt ist. Es gilt $A(1) = a \cdot 1 = a$, wobei $a \in \mathbb{R}$ ist. Weiter gilt für beliebiges $\lambda \in \mathbb{R}$
    \begin{align}
        A(\lambda) = A(\lambda \cdot 1) = \lambda \cdot A(1) = a \cdot \lambda,
    \end{align}
    d.h. wir können $A$ mit $a$ identifizieren. Damit ist eine naheliegende und natürliche Wahl der Abbildung $\mathcal{J}$ gegeben durch 
    \begin{align}
        \mathcal{J} : \mathcal{L}(\mathbb{R}, \mathbb{R}) \longrightarrow& \: \: \mathbb{R} \nonumber\\
        A \longmapsto& \: \: \mathcal{J}(A) = a \:\: \textit{mit} \:\: A(1) = a.
    \end{align}
    Um zu sehen, dass die so definierte Abbildung die beiden obigen Eigenschaften erfüllt bemerken wir zuerst, dass jedes $a \in \mathbb{R}$ über 
    \begin{align}
        \lambda \longmapsto a \cdot \lambda =: A(\lambda) \:\:\:\:\:\: \forall \lambda
    \end{align}
    eine lineare und stetige Abbildung definiert, für die per Konstruktion $A(1) = a$ gilt. Die Stetigket sieht man explizit darüber, dass gilt, dass wir für alle $\epsilon > 0$ ein $\delta > 0$ mit $\delta < \frac{\epsilon}{|a|}$ finden können, sodass für alle $\lambda, \rho \in \mathbb{R}$ gilt, dass
    \begin{align}
        |A(\lambda) - A(\rho)| = |a \cdot \lambda - a \cdot \rho| = |a| \cdot |\lambda - \rho| < |a| \cdot \delta < \epsilon
    \end{align}
    ist. Damit können wir also zu jedem $a \in \mathbb{R}$ einen Operator $\mathcal{L}(\mathbb{R},\mathbb{R})$ finden, sodass $\mathcal{J}(A) = a$ gilt, was die obige Eigenschaft \textit{b)} zeigt. Weiter folgt unmittelbar aus der Definition der Abbildung $\mathcal{J}$, dass die obige Eigenschaft $\textit{a)}$ erfüllt ist, woraus folgt, dass $\mathcal{J}$ tatsächlich eine $1:1$-Abbildung darstellt. 
    
    Damit können wir nun also auf natürliche Weise die Mengen $\mathcal{L}(\mathbb{R}, \mathbb{R})$ und $\mathbb{R}$ miteinander identifizieren. Mit dieser Beobachtung, dem Satz $5.1.$ und dem Beispiel $5.1.$ folgt nun, dass wir die auf $\mathcal{U}$ definierte Funktion $f'$ mit der auf $\mathcal{U}$ Funktion $f^{(1)}$ identifizieren können und umgedreht, denn sei $f'(x)$ die Fréchet-Ableitung von $f$ am Punkt $x \in \mathcal{U}$ und $f^{(1)}(x)$ bezeichne die $1.$ Ableitung der Funktion $f$ am Punkt $x \in \mathcal{U}$ im Sinne der Definition aus der reellen Analysis, so wissen wir nach Satz $5.1.$, dass $f'(x)$ das Gâteaux-Differential von $f$ ist. Weiter wissen wir nach Beispiel $5.1.$, dass das Gâteaux-Differential von $f$ die Eigenschaft
    \begin{align}
        f'(x,h) = f'(x)(h) = f^{(1)}(x) \cdot h \:\:\:\: \forall h \in \mathbb{R}
    \end{align}
    erfüllt, d.h. es gilt 
    \begin{align}
        \mathcal{J}(f'(x)) = f^{(1)}(x).
    \end{align}
    Damit können wir $f' : \mathcal{U} \longrightarrow \mathcal{L}(\mathbb{R}, \mathbb{R})$ über $\mathcal{J}$ mit der Funktion $f^{(1)}: \mathcal{U} \longrightarrow \mathbb{R}$ identifizieren, indem wir $f'(x)$ mit $f^{(1)}(x)$ für alle $x \in \mathcal{U}$ ersetzen.
    
    Als nächstes stellen wir fest, dass gemäß dem Beispiel $5.1.$ die Existenz von $f'$ die Existenz von $f^{(1)}$ impliziert. Die Umkehrung sieht man folgendermaßen: Sei $x \in \mathcal{U}$ gegeben. Zuerst definiert man sich mit $f^{(1)}(x) \in \mathbb{R}$ den linearen und stetigen Operator $f'(x) \in \mathcal{L}(\mathbb{R}, \mathbb{R})$ durch $f'(x)(\lambda) := f^{(1)}(x) \cdot \lambda$. Gemäß Beispiel $2.7$ ist dieser das Gâteaux-Differential von $f$. Weiter gilt für beliebiges $\lambda \in \mathbb{R}$, dass per Definition von $f^{(1)}(x)$ 
    \begin{align}
        \frac{|f(x+\lambda) - f(x) - f'(x)(\lambda)|}{|\lambda|} =& \: \bigg| \frac{f(x + \lambda) - f(x) - f^{(1)}(x) \cdot \lambda}{\lambda} \bigg| \nonumber \\ =& \bigg| \frac{f(x + \lambda) - f(x)}{\lambda} - f^{(1)}(x) \bigg| \longrightarrow 0 
    \end{align}
    für $|\lambda| \longrightarrow 0$ ist. Das zeigt die Fréchet-differenzierbarkeit von $f$ im Punkt $x \in \mathcal{U}$.
    
    Es bleibt also zu zeigen, dass $f'$ genau dann stetig ist, wenn $f^{(1)}$ stetig ist. Um das einzusehen betrachten wir die Operatornorm $\| \cdot \|_\mathcal{O}$ auf $\mathcal{L}(\mathbb{R}, \mathbb{R})$. Es gilt für beliebiges $A \in \mathcal{L}(\mathbb{R}, \mathbb{R})$ mit $A(1) = a$, dass
    \begin{align}
        \|A\|_\mathcal{O} =& \: \sup \bigg\{ \frac{|A(\lambda)|}{|\lambda|} \:\: \bigg| \:\: \lambda \in \mathbb{R}, \:\: \lambda \neq 0 \bigg\} \nonumber \\ =& \: \sup \bigg\{ \frac{|a| \cdot |\lambda|}{|\lambda|} \:\: \bigg| \:\: \lambda \in \mathbb{R}, \:\: \lambda \neq 0 \bigg\} \nonumber \\ =& \: |a| = |\mathcal{J}(A)|,
    \end{align}
    d.h. $|\mathcal{J}(A)| = \|A\|_\mathcal{O}$. Weiter ist leicht einzusehen, dass aus der Definition von $\mathcal{J}$ folgt, das $\mathcal{J}$ eine lineare Abbildung ist. Daraus folgt nun aber 
    \begin{align}
        &(\forall \epsilon > 0 \:\: \exists \delta >0 : |\lambda - \rho| < \delta \implies |f^{(1)}(\lambda) - f^{(1)}(\rho)| < \epsilon) \nonumber \\ \iff& \: (\forall \epsilon > 0 \:\: \exists \delta >0 : |\lambda - \rho| < \delta \implies |\mathcal{J}(f'(\lambda)) - \mathcal{J}(f'(\rho))| < \epsilon) \nonumber \\ \iff& \: (\forall \epsilon > 0 \:\: \exists \delta >0 : |\lambda - \rho| < \delta \implies |\mathcal{J}(f'(\lambda) - f'(\rho))| < \epsilon) \nonumber \\ \iff& (\forall \epsilon > 0 \:\: \exists \delta >0 : |\lambda - \rho| < \delta \implies \| f'(\lambda) - f'(\rho) \|_\mathcal{O} < \epsilon),
    \end{align}
    was schließlich die Äquivalenz der beider Definitionen im Falle $k = 1$ zeigt. Man beachte, dass man dieses Resultat auch dazu nutzen kann, um die Verwendung der abstrakt anmutenden Operatornorm in diesem Kontext zu begründen. Im folgenden werden wir oft auf diese Identifikation zurückgreifen.
\end{proof}

Nachdem wir nun geklärt haben, was wir unter der Differenzierbarkeit einer Funktion $f : X \longrightarrow Y$ zu verstehen haben, wollen wir noch kurz den für uns später sehr nützlichen Begriff der partiellen Ableitung einer Funktion $f$ der Form $f : \mathbb{R}^m \longrightarrow \mathbb{R}^n$ einführen. Im Folgenden werden wir außerdem alle Funktionen der Bauart $\mathbb{R}^m \longrightarrow \mathbb{R}^n$ kurz als euklidische Funktionen bezeichnen.

\begin{definition}
    Sei $f : \mathbb{R}^m \longrightarrow \mathbb{R}^n$ eine richtungsdifferenzierbare Funktion und $x = (x^1, ..., x^m) \in \mathbb{R}^m$. Wir schreiben $f(x) =: f(x^1, ..., x^m)$. Dann ist für $i \in \{1, ..., m\}$ die $i$-te partielle Ableitung von $f$ an der Stelle $x$ erklärt durch
    \begin{align}
        \partial_i f (x) := f'(x, e_i), 
    \end{align}
    wobei die $e_i \in \mathbb{R}^m$, definiert durch $e_i = (\delta^1_i, ..., \delta^m_i)$ mit 
    \begin{align}
        \delta^j_i = \left\{\begin{array}{ll} 1, & i = j \\
         0, & i \neq j \end{array}\right. 
    \end{align}
    die sogenannten kanonischen Einheitsvektoren sind. Die Menge $\{e_1, ..., e_m\}$ heißt Standardbasis des $\mathbb{R}^m$ und bildet eines Basis des $\mathbb{R}^m$.
\end{definition}

Sei $x = (x^1, ..., x^m) \in \mathbb{R}^m$. Für differenzierbare Funktionen der Form $f : \mathbb{R}^m \longrightarrow \mathbb{R}^n$ mit 
\begin{align}
    f(x) = f(x^1, ..., x^m) = (f^1(x^1, ..., x^m), ..., f^n(x^1, ..., x^m)) = (f^1(x), ..., f^n(x)) \label{komp}
\end{align}
können wir die Funktion $f'(x)$ mittels der eben definierten partiellen Ableitungen weiter studieren. Dazu berechnen wir zuerst die sogenannten (basisabhängigen) Komponenten von $f'(x)$. Sei dazu $\mathcal{A} = \{v_1, ..., v_m\} \subseteq \mathbb{R}^m$ eine Basis von $\mathbb{R}^m$ und $\mathcal{B} = \{w_1, ..., w_n\} \subseteq \mathbb{R}^n$ eine Basis im $\mathbb{R}^n$. Es gilt für $i \in \{1,...,m\}$
\begin{align}
    f'(x)(v_i) = \sum_{j = 1}^n a^j_i(x) \cdot w_j = f'(x, v_i).
\end{align}
Wählen wir für $\mathcal{A}$ und $\mathcal{B}$ die Standardbasis des $\mathbb{R}^m$ bzw. die Standardbasis des $\mathbb{R}^n$, so gilt
\begin{align}
    f'(x)(v_i) = f'(x, v_i) = \partial_i f(x) = (a^1_i(x), ..., a^n_i(x)).
\end{align}
Mittels der Definition von $f'(x, v_i)$, der Konvergenz im $\mathbb{R}^n$ bezüglich der Standardtopologie (Bemerkung $4.1.$), und der Gleichung \eqref{komp} folgt, dass dann in diesem Fall
\begin{align}
    a^j_i(x) = \partial_i f^j(x)
\end{align}
ist. Für ein allgemeines $y = \sum_{i = 1}^m y^{i} \cdot v_i = (y^1, ..., y^m)$ gilt dann
\begin{align}
    f'(x)(y) =& \: \sum_{i = 1}^m y^{i} \cdot f'(x) (v_i) \nonumber \\ =& \: \sum_{j = 1}^n \bigg( \sum_{i = 1}^m \partial_i f^j(x) \cdot y^{i} \bigg) \cdot w_j \nonumber \\ =& \: \bigg( \sum_{i = 1}^m \partial_i f^1(x) \cdot y^{i}, ..., \sum_{i = 1}^m \partial_i f^n(x) \cdot y^{i}  \bigg) \nonumber \\ =& \: y \cdot (J_f (x))^T. \label{Jacobi}
\end{align}
Dabei ist in der letzten Gleichung $J_f (x) \in \mathbb{R}^{n \times m}$ eine reelle $(n \times m)$-Matrix mit den Komponenten $J_f(x)_i^j = \partial_i f^j (x)$ für alle $i \in \{ 1,...,m \}$ und $j \in \{ 1,...,n \}$, die Operation $\cdot$ entspricht der gewöhnlichen Matrixmultiplikation und $\cdot^T$ meint die Transpositionsoperation, die bei einer Matrix die Zeilen mit den Spalten vertauscht. 

Die Matrix $J_f(x)$ wird oft auch als Jacobimatrix von $f$ im Punkt $x$ bezeichnet und mit $f'(x)$ identifiziert. Ist $f$ eine am Punkt $x \in \mathbb{R}^m$ differenzierbare euklidische Funktion, so sind wir mittels der Jacobimatrix $J_f(x)$ offensichtlichweise in der Lage, den Ausdruck $f'(x)(y)$ für alle $y \in \mathbb{R}^m$ einfach zu berechnen.

Abschließend wollen wir uns noch mit einer wichtigen Eigenschaft des Differenzierbarkeitsbegriffes bezüglich komponierter euklidischer Funktionen auseinandersetzen. Dazu definieren wir uns zuerst noch den Begriff der allgemeinen Funktionenkomposition.

\begin{definition}
    Seien $X,Y,Z$ beliebige Mengen und $f : X \longrightarrow Y$ sowie $g: Z \longrightarrow X$ zwei Funktionen. Dann verstehen wir unter der Funktionenkomposition $f \circ g$ eine Funktion der Form $Z \longrightarrow Y$, definiert durch 
    \begin{align}
        (f \circ g)(t) := (f(g(t)) \:\:\:\: \forall t \in Z. 
    \end{align}
\end{definition}

Das folgende Theorem liefert nun eine wichtige Rechenregel für das totale Differential bezüglich komponierter euklidischer Funktionen. Der auf das Theorem folgende Beweis orientiert sich dabei in Teilen an \cite{forster2017analysis}.

\begin{theorem}
    Seien $f = (f^1, ..., f^n) : \mathbb{R}^m \longrightarrow \mathbb{R}^n$ und $g = (x^1, ..., x^m) : \mathbb{R}^l \longrightarrow \mathbb{R}^m$ zwei stetige Funktionen. Wir nehmen an, dass $g$ am Punkt $y = (y^1, ..., y^l) \in \mathbb{R}^l$ und $f$ am Punkt $g(y) \in \mathbb{R}^m$ stetig differenzierbar ist. Wir betrachten die Funktionenkomposition 
    \begin{align}
        z := f \circ g : \mathbb{R}^l \longrightarrow& \: \: \mathbb{R}^n \nonumber\\
        y \longmapsto& \: \: (f \circ g)(y) = f(g(y)) = f(x^1(y), ..., x^n(y)).
    \end{align}
    Dann ist $z$ am Punkt $y \in \mathbb{R}^l$ differenzierbar und es gilt für die Jacobimatrix von $z$ im Punkt $y$
    \begin{align}
        J_z(y) = J_f(g(y)) \cdot J_g(y). \label{multi}
    \end{align}
    Dabei ist $\cdot$ in \eqref{multi} abermals die Matrixmultiplikation.   
\end{theorem}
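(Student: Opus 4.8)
Der Plan ist, direkt mit der Fréchet-Formulierung der Differenzierbarkeit zu arbeiten statt mit Differenzenquotienten, da sich die Komposition auf der Ebene der linearen Approximationen am saubersten behandeln lässt. Wir kürzen $a := g(y)$ ab und bezeichnen mit $B := g'(y) \in \mathcal{L}(\mathbb{R}^l, \mathbb{R}^m)$ und $A := f'(g(y)) \in \mathcal{L}(\mathbb{R}^m, \mathbb{R}^n)$ die Fréchet-Ableitungen, welche nach Voraussetzung existieren (die stetige Differenzierbarkeit in den jeweiligen Punkten impliziert insbesondere die Fréchet-Differenzierbarkeit dort). Per Definition gibt es dann Restabbildungen $r_1$ und $r_2$ mit
\begin{align}
    g(y + h) &= g(y) + B(h) + r_1(h), \nonumber \\
    f(a + k) &= f(a) + A(k) + r_2(k),
\end{align}
wobei $\| r_1(h) \| / \| h \| \longrightarrow 0$ für $\| h \| \longrightarrow 0$ und $\| r_2(k) \| / \| k \| \longrightarrow 0$ für $\| k \| \longrightarrow 0$ gilt.

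Im ersten Schritt würde ich einsetzen. Mit der Setzung $k := k(h) := B(h) + r_1(h)$ berechne ich
\begin{align}
    z(y + h) = f(g(y + h)) = f(a + k(h)) = f(a) + A(k(h)) + r_2(k(h))
\end{align}
und schreibe mittels der Linearität von $A$ den Term $A(k(h)) = (A \circ B)(h) + A(r_1(h))$ um. Daraus folgt
\begin{align}
    z(y + h) - z(y) - (A \circ B)(h) = A(r_1(h)) + r_2(k(h)) =: R(h),
\end{align}
sodass nur noch $\| R(h) \| / \| h \| \longrightarrow 0$ zu zeigen bleibt, was $A \circ B$ als Fréchet-Ableitung von $z$ im Punkt $y$ identifiziert. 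Für den ersten Summanden ist dies wegen der Stetigkeit von $A$ unmittelbar, denn $\| A(r_1(h)) \| \leq \| A \|_\mathcal{O} \cdot \| r_1(h) \|$, und nach Division durch $\| h \|$ verschwindet dieser Anteil für $\| h \| \longrightarrow 0$.

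Das Haupthindernis ist der Term $r_2(k(h))$, da ich zur Ausnutzung von $\| r_2(k) \| / \| k \| \longrightarrow 0$ durch $\| k(h) \|$ teilen möchte, dieser Nenner aber für gewisse $h \neq \mathbf{0}$ verschwinden kann. Ich würde dies auflösen, indem ich die Hilfsfunktion $\rho$ mit $\rho(k) := \| r_2(k) \| / \| k \|$ für $k \neq \mathbf{0}$ und $\rho(\mathbf{0}) := 0$ einführe; wegen $r_2(\mathbf{0}) = \mathbf{0}$ (was aus der definierenden Gleichung bei $k = \mathbf{0}$ folgt) gilt dann $\| r_2(k(h)) \| = \rho(k(h)) \cdot \| k(h) \|$ für alle $h$, auch für jene mit $k(h) = \mathbf{0}$, und es ist $\rho(k) \longrightarrow 0$ für $\| k \| \longrightarrow 0$. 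Wegen $\| k(h) \| \leq \| B \|_\mathcal{O} \cdot \| h \| + \| r_1(h) \|$ bleibt das Verhältnis $\| k(h) \| / \| h \|$ beschränkt (durch $\| B \|_\mathcal{O} + \| r_1(h) \| / \| h \|$), während $k(h) \longrightarrow \mathbf{0}$ für $h \longrightarrow \mathbf{0}$ gilt; zusammengenommen liefert dies
\begin{align}
    \frac{\| r_2(k(h)) \|}{\| h \|} \leq \rho(k(h)) \cdot \bigg( \| B \|_\mathcal{O} + \frac{\| r_1(h) \|}{\| h \|} \bigg) \longrightarrow 0.
\end{align}
Addition der beiden Abschätzungen ergibt $\| R(h) \| / \| h \| \longrightarrow 0$.

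Abschließend würde ich die Operatoridentität in die Matrixidentität übersetzen. Nachdem $z'(y) = A \circ B = f'(g(y)) \circ g'(y)$ gezeigt ist, ist die Jacobimatrix $J_z(y)$ nach \eqref{Jacobi} gerade die Matrixdarstellung der linearen Abbildung $z'(y)$ bezüglich der Standardbasen; da die Matrix, die eine Komposition linearer Abbildungen darstellt, das Produkt der darstellenden Matrizen ist, und $J_f(g(y))$ bzw. $J_g(y)$ die Abbildungen $A$ bzw. $B$ darstellen, folgt $J_z(y) = J_f(g(y)) \cdot J_g(y)$ wie behauptet.
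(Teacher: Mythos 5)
Dein Beweis ist korrekt und folgt im Kern demselben Weg wie der Beweis im Papier: dieselbe Zerlegung $z(y+h) - z(y) - (f'(g(y)) \circ g'(y))(h) = A(r_1(h)) + r_2(k(h))$, dieselbe Operatornorm-Abschätzung für den ersten Term, und dein $\rho$-Trick zur Behandlung des möglicherweise verschwindenden Nenners entspricht genau der im Papier verwendeten Faktorisierung $\|r_x(u)\| = \|u\| \cdot \|\psi(u)\|$ mit $\|\psi(u)\| \longrightarrow 0$. Der einzige (unwesentliche) Unterschied liegt im letzten Schritt: Du übersetzt $z'(y) = f'(g(y)) \circ g'(y)$ in die Matrixidentität $J_z(y) = J_f(g(y)) \cdot J_g(y)$ durch Berufung auf den allgemeinen Satz, dass die darstellende Matrix einer Komposition linearer Abbildungen das Produkt der darstellenden Matrizen ist, während das Papier dies durch explizite Entwicklung in den Standardbasen komponentenweise nachrechnet.
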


\begin{proof}
    Da $f : \mathbb{R}^m \longrightarrow \mathbb{R}^n$ an der Stelle $x =g(y) \in \mathbb{R}^m$ mit $y \in \mathbb{R}^l$ stetig differenzierbar ist, folgt, dass ein $f'(x) \in \mathcal{L}(\mathbb{R}^m,\mathbb{R}^n)$ und eine Abbildung $r_x : \mathbb{R}^m \longrightarrow \mathbb{R}^n$ existiert, sodass
    \begin{align}
        f(x+h) = f(x) + f'(x)(h) + r_x(h) \:\:\:\: \forall h \in \mathbb{R}^m
    \end{align}
    mit
    \begin{align}
        \frac{\|r_x(h)\|_{\mathbb{R}^n}}{\|h\|_{\mathbb{R}^m}} \longrightarrow 0 \:\:\:\:\:\:\:\: \textit{für} \:\: \|h\|_{\mathbb{R}^m} \longrightarrow 0
    \end{align}
    gilt. Analog folgt aus der stetigen Differenzierbarkeit von $g: \mathbb{R}^l \longrightarrow \mathbb{R}^m$ an $y \in \mathbb{R}^l$, dass ein $g'(y) \in \mathcal{L}(\mathbb{R}^l, \mathbb{R}^m)$ und ein $r_y : \mathbb{R}^l \longrightarrow \mathbb{R}^m$ existiert, sodass 
    \begin{align}
         g(y+k) = g(y) + g'(y)(k) + r_y(k) \:\:\:\: \forall k \in \mathbb{R}^l
    \end{align}
    mit 
    \begin{align}
        \frac{\|r_y(k)\|_{\mathbb{R}^m}}{\|k\|_{\mathbb{R}^l}} \longrightarrow 0 \:\:\:\:\:\:\:\: \textit{für} \:\: \|k\|_\mathbb{R}^l \longrightarrow 0
    \end{align}
    gilt. Wir zeigen die Differenzierbarkeit von $z = f \circ g$ am Punkt $y \in \mathbb{R}^l$. Dazu betrachten wir für beliebiges $k \in \mathbb{R}^l$
    \begin{align}
        z(y + k) =& \: f(g(y + k)) \nonumber \\ =& \: f(g(y) + u),   \:\:\:\:\:\:\:\: u := g(y+k) - g(y) \nonumber \\ =& \: f(x + u) \nonumber \\ =& \: f(x) + f'(x)(u) + r_x(u) \nonumber \\ =& \: (f \circ g)(y) + f'(g(y)) (g(y+k) - g(y)) + r_x(u) \nonumber \\ =& \: z(y) + f'(g(y)) (g(y+k)) - f'(g(y)) (g(y)) + r_x(u) \nonumber \\ =& \: z(y) + f'(g(y)) (g(y) + g'(y)(k) + r_y(k)) \nonumber \\ &- \: f'(g(y)) (g(y)) + r_x(u) \nonumber \\ =& \: z(y) + f'(g(x)) (g(y)) - f'(g(y)) (g(y)) + f'(g(y)) (g'(y)(k)) \nonumber \\ &+ \: f'(g(y)) (r_y(k)) + r_x(u) \nonumber \\ =& z(y) + (f'(g(y)) \circ g'(y)) (k) + \big( f'(g(y)) (r_y(k)) + r_x(u) \big).
    \end{align}
    Beachte, dass aus Lemma $3.1.$ und Bemerkung $4.2.$ folgt, dass $(f'(g(y)) \circ g'(y)) \in \mathcal{L}(\mathbb{R}^l, \mathbb{R}^n)$ ist. Es bleibt zu zeigen, dass für die Abbildung $\hat{r}_y : \mathbb{R}^l \longrightarrow \mathbb{R}^n$, erklärt durch
    \begin{align}
        \hat{r}_y(k) := f'(g(y))(r_y(k)) + r_{g(y)} (g(y+k) - g(y)) \:\:\:\: \forall k \in \mathbb{R}^l
    \end{align}
    gilt, dass 
    \begin{align}
        \frac{\| \hat{r}_y (k) \|_{\mathbb{R}^n}}{\|k\|_{\mathbb{R}^l}} \longrightarrow 0 \label{jjjjjjj}
    \end{align}
    für $\|k\|_{\mathbb{R}^l} \longrightarrow 0$ ist. Man beachte, dass aus der Definition der Operatornorm $\|\cdot\|_\mathcal{O}$ auf $\mathcal{L}(\mathbb{R}^m, \mathbb{R}^n)$ direkt folgt, dass für $A \in \mathcal{L}(\mathbb{R}^m, \mathbb{R}^n)$
    \begin{align}
        \|A(h)\|_{\mathbb{R}^n} \leq \|A\|_\mathcal{O} \cdot \| h \|_{\mathbb{R}^m} \:\:\:\: \forall h \in \mathbb{R}^m  
    \end{align}
    ist. Damit gilt für $k \neq \mathbf{0}$
    \begin{align}
        0 \leq \frac{\| \hat{r}(k) \|_{\mathbb{R}^n}}{\|k\|_{\mathbb{R}^l}} \leq \frac{\| f'(g(y)) \|_\mathcal{O} \cdot \| r_y(k) \|_{\mathbb{R}^m}}{\| k \|_{\mathbb{R}^l}} + \frac{\|r_x(u)\|_{\mathbb{R}^n}}{\|k\|_{\mathbb{R}^l}}. \label{ooooooo}
    \end{align}
    Der erste Term der rechten Seite konvergiert für $\|k\|_{\mathbb{R}^l} \longrightarrow 0$ gegen $0$. Können wir zeigen, dass der zweite Term für $\|k\|_{\mathbb{R}^l} \longrightarrow 0$ auch gegen $0$ konvergiert, so konvergiert die gesamte rechte Seite gegen $0$. Wir bemerken, dass $u = g(y+k) - g(y) = g'(y)(k) + r_y(k)$ ist. Weiter folgt aus 
    \begin{align}
        \frac{\|r_y(k)\|_{\mathbb{R}^m}}{\|k\|_{\mathbb{R}^l}} \longrightarrow 0 \:\:\:\: \textit{für} \:\: \|k\|_{\mathbb{R}^l} \longrightarrow 0,
    \end{align}
    dass ein $\delta > 0$ existiert, sodass 
    \begin{align}
        \|r_y(k)\|_{\mathbb{R}^m} \leq \|k\|_{\mathbb{R}^l} \:\:\:\: \forall k \in \mathbb{R}^l \:\:\:\: \textit{mit} \:\: \|k\|_{\mathbb{R}^l} < \delta
    \end{align}
    gilt. Aus der Definition von $r_x$ folgt nun, dass 
    \begin{align}
        \|r_x(u)\|_{\mathbb{R}^n} = \|u\|_{\mathbb{R}^m} \cdot \|\psi (u)\|_{\mathbb{R}^n}
    \end{align}
    mit $\psi : \mathbb{R}^m \longrightarrow \mathbb{R}^n$ und $\|\psi(u)\|_{\mathbb{R}^m} \longrightarrow 0$ für $\|u\|_{\mathbb{R}^m} \longrightarrow 0$. Daraus folgt nun aber für $\|k\|_{\mathbb{R}^l} < \delta$ 
    \begin{align}
        \|r_x(u)\|_{\mathbb{R}^n} =& \: \|r_x(g'(y)(k + r_y(k)))\|_{\mathbb{R}^n} \nonumber \\ =& \: \| g'(y)(k) + r_y(k) \|_{\mathbb{R}^m} \cdot \| \psi ( g'(y)(k) + r_y(k) ) \|_{\mathbb{R}^n} \nonumber \\ \leq& \: \bigg( \|g'(y) \|_\mathcal{O} \cdot \|k\|_{\mathcal{R}^l} + \|r_y(k)\|_{\mathbb{R}^m} \bigg) \cdot \|\psi (g(y + k) - g(y))\|_{\mathbb{R}^n} \nonumber \\ \leq& \: \|k\|_{\mathbb{R}^l} \bigg( \|g'(y)\|_\mathcal{O} + 1 \bigg) \cdot \| \psi (g(y+k) - g(y)) \|_{\mathbb{R}^n}.
    \end{align}
    Beachte, dass aus der Stetigkeit von $g$ und der Definition der Norm-Konvergenz sofort folgt, dass für $\|k\|_{\mathbb{R}^l} \longrightarrow 0$ auch $\|g(y+k) - g(y)\|_{\mathbb{R}^m} \longrightarrow 0$ ist. Damit ist also $\eta(k) := ( \|g'(y)\|_\mathcal{O} + 1 ) \cdot \| \psi (g(y+k) - g(y)) \|_{\mathbb{R}^n}$ eine positive Funktion und es gilt, dass $\eta(k) \longrightarrow 0$ für $\|k\|_{\mathbb{R}^l} \longrightarrow 0$ gilt. Somit folgt, dass auch der letzte Term in \eqref{ooooooo} für $\|k\|_{\mathbb{R}^l} \longrightarrow 0$ gegen $0$ konvergiert, was letztlich zeigt, dass $z$ in $y$ differenzierbar ist.
    
    Es bleibt zu zeigen, dass die bezüglich der kanonischen Standardbasen darstellende Matrix $J_z(y)$ von $z'(y)$ tatsächlich die gewünschte Form $J_f(g(y)) \cdot J_g(y)$ besitzt. Seien dazu jeweils $\{e_1, ..., e_l\} \subseteq \mathbb{R}^l$, $\{v_1, ..., v_m\} \subseteq \mathbb{R}^m$ und $\{w_1, ..., w_n\} \subseteq \mathbb{R}^n$ die kanonischen Standardbasen im $\mathbb{R}^l$, $\mathbb{R}^m$ und im $\mathbb{R}^n$. Die Aussage folgt aus der obigen Berechnung $z'(y) = f'(g(y)) \circ g'(y)$ und \eqref{Jacobi}, denn sei $h = \sum_{i = 1}^l h^{i} e_i \in \mathbb{R}^l$ beliebig, so gilt 
    
    \begin{align}
        z'(y)(h) =& \: \sum_{i = 1}^l h^{i} \cdot z'(y) (e_i) \nonumber \\ =& \: \sum_{i = 1}^l h^{i} \cdot f'(g(y))(g'(y) (e_i)) \nonumber \\ =& \: \sum_{i = 1}^l h^{i} \cdot f'(g(y))\bigg( \sum_{j = 1}^m \partial_i g^j(y) \cdot v_j \bigg) \nonumber \\ =& \: \sum_{k = 1}^m \sum_{i = 1}^l h^{i} \cdot \partial_i g^j(y) \cdot f'(g(y))(v_j) \nonumber \\ =& \sum_{k = 1}^m \sum_{i = 1}^l h^{i} \cdot \partial_i g^j(y) \cdot \bigg( \sum_{k = 1}^n \partial_j f^k(g(y)) \cdot w_k \bigg) \nonumber \\ =& \: \sum_{k=1}^n \bigg( \sum_{i = 1}^l \bigg( \sum_{j = 1}^m \partial_i g^j(y) \cdot \partial_j f^k(g(y)) \bigg) \cdot h^{i} \bigg) \cdot w_k \nonumber \\ =& \: \bigg(\sum_{i = 1}^l \bigg( \sum_{j = 1}^m \partial_i g^j(y) \cdot \partial_j f^1(g(y)) \bigg) \cdot h^{i}, ..., \sum_{i = 1}^l \bigg( \sum_{j = 1}^m \partial_i g^j(y) \cdot \partial_j f^n(g(y)) \bigg) \cdot h^{i} \bigg) \nonumber \\ =& \: h \cdot (J_z(y))^T. \label{multidimensional}
    \end{align}
    Daraus folgt nun aber, dass 
    \begin{align}
        J_z(y)_i^k =& \: \bigg( \sum_{j = 1}^m \partial_i g^j(y) \cdot \partial_j f^k(g(y)) \bigg) \nonumber \\ =& \: \bigg( \sum_{j = 1}^m J_g(y)^j_i \cdot J_f(g(y))_j^k \bigg),
    \end{align}
    woraus, per Definition der Matrixmultiplikation die Behauptung folgt.
\end{proof}

\begin{remark}
    Beachte, dass im ersten Teil des eben diskutierten Theorems die Räume $\mathbb{R}^m$, $\mathbb{R}^n$ und $\mathbb{R}^l$ auch durch allgemeinere normierte Räume $(X, \|\cdot\|_X)$, $(Y, \|\cdot \|_Y)$ und $(Z, \| \cdot \|_Z)$ ersetzt werden können. Der Beweis für diese allgemeinere Formulierung ist analog zum ersten Teil des Beweises von Theorem $5.1.$. 
\end{remark}

Das vorangegangene Theorem liefert den folgenden wichtigen Spezialfall für $l = n = 1$:

\begin{proposition}
    Seien $f : \mathbb{R}^m \longrightarrow \mathbb{R}$ und $g = (x^1, ..., x^m) : \mathbb{R} \longrightarrow \mathbb{R}^m$ zwei stetige Funktionen. Wir nehmen an, dass $g$ am Punkt $t \in \mathbb{R}$ und $f$ am Punkt $g(t) \in \mathbb{R}^m$ stetig differenzierbar sind. Wir betrachten die Funktionenkomposition
    \begin{align}
        z := f \circ g : \mathbb{R} \longrightarrow& \: \: \mathbb{R} \nonumber\\
        t \longmapsto& \: \: (f \circ g)(t) = f(g(t)) = f(x^1(t), ..., x^m(t)).
    \end{align} 
    Dann ist $z$ am Punkt $t \in \mathbb{R}$ differenzierbar und es gilt
    \begin{align}
        z'(t) = \sum_{i = 1}^m \partial_i f (x^1(t), ..., x^n(t)) \cdot (x^{i})' (t). \label{Spezialfall Kettenregel}
    \end{align}
    Beachte, dass für alle $i \in \{1, ..., m\}$ die Funktionen $x^{i}$ Funktionen der Form $\mathbb{R} \longrightarrow \mathbb{R}$ und gemäß Satz $5.2.$ $z'$ und $z^{(1)}$, sowie $(x^{(i)})'$ und $(x^{i})^{(1)}$ Synonyme für uns sind. Beachte weiter, dass die Differenzierbarkeit von $g$ gemäß Bemerkung $4.1.$ die Differenzierbarkeit der $x^{i}$ für alle $i \in \{1,...,m\}$ impliziert. 
\end{proposition}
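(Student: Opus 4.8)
The plan is to obtain this statement as the special case $l = n = 1$ of Theorem 5.1, so that essentially no new work beyond bookkeeping is required. First I would check that the hypotheses of Theorem 5.1 are met in this setting: with $l = 1$ and $n = 1$ the functions $f : \mathbb{R}^m \longrightarrow \mathbb{R}$ and $g = (x^1, \dots, x^m) : \mathbb{R} \longrightarrow \mathbb{R}^m$ are exactly of the form demanded there, $g$ is stetig differenzierbar at $t \in \mathbb{R}$ and $f$ at $g(t) \in \mathbb{R}^m$ by assumption, and both are continuous. Theorem 5.1 then immediately yields that $z = f \circ g$ is differentiable at $t$ and that its Jacobimatrix satisfies $J_z(t) = J_f(g(t)) \cdot J_g(t)$.

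The second step is to evaluate this matrix identity in the collapsed dimensions. Since $f$ maps into $\mathbb{R}^1 = \mathbb{R}$, the matrix $J_f(g(t)) \in \mathbb{R}^{1 \times m}$ is a single row whose entries are, by the convention $J_f(x)^j_i = \partial_i f^j(x)$ fixed after \eqref{Jacobi}, given by $J_f(g(t))^1_i = \partial_i f(g(t))$ for $i \in \{1, \dots, m\}$. Since $g$ is defined on $\mathbb{R}^1 = \mathbb{R}$, the matrix $J_g(t) \in \mathbb{R}^{m \times 1}$ is a single column with entries $J_g(t)^j_1 = \partial_1 g^j(t) = (x^j)'(t)$, where I use that each component $x^j$ is a function $\mathbb{R} \longrightarrow \mathbb{R}$ whose only partial derivative is its ordinary derivative (this is the differentiability of the $x^j$ guaranteed by Bemerkung 4.1). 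The product of a $1 \times m$ row with an $m \times 1$ column is the $1 \times 1$ matrix with the single entry $\sum_{i=1}^m \partial_i f(g(t)) \cdot (x^i)'(t)$.

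Finally I would invoke Satz 5.2 to pass from the $1 \times 1$ Jacobimatrix $J_z(t)$ to the scalar derivative. Because $z : \mathbb{R} \longrightarrow \mathbb{R}$, the identification $\mathcal{L}(\mathbb{R}, \mathbb{R}) \cong \mathbb{R}$ established in the proof of Satz 5.2 lets me read off $z'(t) = z^{(1)}(t) = J_z(t)$, and substituting $g(t) = (x^1(t), \dots, x^m(t))$ gives precisely the claimed formula \eqref{Spezialfall Kettenregel}.

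I do not expect a genuine obstacle here: all the analytic substance, namely the remainder estimate that actually establishes differentiability of the composite, has already been carried out inside Theorem 5.1, and what remains is purely formal. The one point demanding care is the index bookkeeping — keeping straight that $J_f$ is $n \times m$ and $J_g$ is $m \times l$ under the paper's convention, so that for $l = n = 1$ both degenerate to vectors whose matrix product is the sought scalar — together with the explicit use of Satz 5.2 to justify treating that scalar as $z'(t)$ rather than as an element of $\mathcal{L}(\mathbb{R}, \mathbb{R})$.
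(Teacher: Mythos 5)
Dein Vorschlag ist korrekt und entspricht im Wesentlichen dem Vorgehen der Arbeit: Auch dort wird Satz $5.3.$ unmittelbar als Spezialfall $l = n = 1$ von Theorem $5.1.$ gewonnen (unter Verweis auf die Rechnung \eqref{multidimensional}). Deine explizite Ausführung der Index-Buchhaltung und der Identifikation über Satz $5.2.$ macht lediglich ausführlich, was die Arbeit als \glqq folgt direkt\grqq{} zusammenfasst.
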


\begin{proof}
    Folgt direkt aus dem vorangegangenem Theorem (siehe unter anderem \eqref{multidimensional}).
\end{proof}

\begin{corollary}
    Zusammen mit der Bemerkung $5.1.$ lässt sich mittels der multidimensionalen Kettenregel aus Theorem $5.1.$ zeigen, dass für $k, l \in \mathbb{N} \cup \{ \infty \}$ die Verkettung zweier Funktionen vom Typ $\mathcal{C}^k$ und $\mathcal{C}^l$ wieder eine Funktion vom Typ $\mathcal{C}^{\textit{min}\{k,l\}}$ liefert, oder etwas formaler ausgedrückt: Seinen $X,Y,Z$ normierte Räume und seien $f : X \longrightarrow Y$ und $g : Z \longrightarrow X$ zwei Funktionen. Zusätzlich gelte $f \in \mathcal{C}^k(X,Y)$ und $g \in \mathcal{C}^l(Z,X)$. Dann folgt, dass $(f \circ g) \in \mathcal{C}^{\textit{min}\{k,l\}}(Z,Y)$ ist. 
\end{corollary}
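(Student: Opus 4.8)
Der Plan ist, die Aussage durch vollständige Induktion über $n := \textit{min}\{k,l\}$ zu beweisen und den Fall $n = \infty$ am Ende über die Identität $\mathcal{C}^{\infty} = \bigcap_{m \in \mathbb{N}} \mathcal{C}^m$ auf den endlichen Fall zurückzuführen. Im Induktionsanfang $n = 1$ gilt $f \in \mathcal{C}^1$ und $g \in \mathcal{C}^1$. Nach Theorem $5.1.$ zusammen mit Bemerkung $5.1.$, welche die Kettenregel auf allgemeine normierte Räume überträgt, ist $f \circ g$ in jedem Punkt $z$ differenzierbar mit
\begin{align}
    (f \circ g)'(z) = f'(g(z)) \circ g'(z).
\end{align}
Zu zeigen bleibt, dass $(f \circ g)' : Z \longrightarrow \mathcal{L}(Z,Y)$ stetig ist.

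Dazu würde ich zuerst die Operatorkomposition
\begin{align}
    \textit{comp} : \mathcal{L}(X,Y) \times \mathcal{L}(Z,X) \longrightarrow \mathcal{L}(Z,Y), \:\:\:\: (A,B) \longmapsto A \circ B
\end{align}
betrachten und mittels der Submultiplikativität $\| A \circ B \|_\mathcal{O} \leq \| A \|_\mathcal{O} \cdot \| B \|_\mathcal{O}$ der Operatornorm nachweisen, dass $\textit{comp}$ als beschränkte Bilinearform stetig ist. Da $z \longmapsto f'(g(z))$ nach Lemma $3.1.$ als Komposition der stetigen Abbildungen $g$ und $f'$ stetig ist und $z \longmapsto g'(z)$ nach Voraussetzung stetig ist, folgt die Stetigkeit von $(f\circ g)'(z) = \textit{comp}(f'(g(z)), g'(z))$ aus der Stetigkeit von $\textit{comp}$ und der komponentenweisen Charakterisierung stetiger Abbildungen in den Produktraum. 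Damit ist $f \circ g \in \mathcal{C}^1$.

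Im Induktionsschritt nehme ich an, die Aussage gelte für alle normierten Räume und Funktionen mit $\textit{min} \geq n-1$, und betrachte $f \in \mathcal{C}^k$, $g \in \mathcal{C}^l$ mit $n = \textit{min}\{k,l\} \geq 2$. Nach dem Induktionsanfang ist $f \circ g$ bereits $\mathcal{C}^1$ mit Ableitung $(f\circ g)' = \textit{comp} \circ \Phi$, wobei $\Phi(z) := (f'(g(z)), g'(z))$ gesetzt wird. Der erste Eintrag $f' \circ g$ ist nach Induktionsvoraussetzung von der Klasse $\mathcal{C}^{\textit{min}\{k-1, l\}}$, und wegen $k - 1 \geq n-1$ sowie $l \geq n-1$ also mindestens $\mathcal{C}^{n-1}$; der zweite Eintrag $g' \in \mathcal{C}^{l-1}$ ist wegen $l - 1 \geq n-1$ ebenfalls $\mathcal{C}^{n-1}$. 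Folglich ist $\Phi$ eine $\mathcal{C}^{n-1}$-Abbildung in den Produktraum, und da die beschränkte Bilinearform $\textit{comp}$ glatt, also $\mathcal{C}^{\infty}$, ist, liefert eine erneute Anwendung der Induktionsvoraussetzung $(f\circ g)' = \textit{comp} \circ \Phi \in \mathcal{C}^{n-1}$ und somit $f \circ g \in \mathcal{C}^n$.

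Die Hauptschwierigkeit sehe ich weniger in der Induktionsmechanik als in zwei Hilfsaussagen, die der Text bisher nicht bereitstellt: erstens, dass die Operatorkomposition $\textit{comp}$ eine glatte Abbildung ist, wofür man den Produktraum $\mathcal{L}(X,Y) \times \mathcal{L}(Z,X)$ normieren und zeigen muss, dass eine stetige Bilinearform unendlich oft differenzierbar ist, wobei ihre Ableitung wieder durch $\textit{comp}$ selbst ausgedrückt wird; und zweitens, dass Differenzierbarkeit einer Abbildung in einen endlichen Produktraum komponentenweise charakterisiert werden kann. Beide Punkte sind technischer Natur und verlangen ein sorgfältiges Arbeiten mit der Operatornorm; sind sie etabliert, fügen sich die übrigen Schritte über Lemma $3.1.$ und Bemerkung $4.2.$ problemlos zusammen. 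Abschließend folgt der Fall $n = \infty$, indem man bemerkt, dass dann $f, g \in \mathcal{C}^m$ für jedes $m \in \mathbb{N}$ gilt, sodass der bereits bewiesene endliche Fall $f \circ g \in \mathcal{C}^m$ für alle $m$ und damit $f \circ g \in \mathcal{C}^{\infty}$ liefert.
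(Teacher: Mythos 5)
Dein Beweis ist korrekt, schlägt aber einen genuin anderen Weg ein als die Arbeit: Dort besteht der gesamte Beweis lediglich aus dem Verweis auf Theorem $5.1.$, Bemerkung $5.1.$ und die Literaturstelle \cite{cartan1967calcul}, das eigentliche Argument wird also vollständig an die Literatur delegiert. Du führst dagegen das klassische Bootstrap-Argument selbst aus: Induktion über $n = \min\{k,l\}$ mit der Faktorisierung $(f\circ g)' = \mathrm{comp}\circ\Phi$, wobei $\Phi(z) = (f'(g(z)), g'(z))$ ist; der Induktionsanfang folgt aus der Kettenregel (Theorem $5.1.$ mit Bemerkung $5.1.$) zusammen mit der Stetigkeit der submultiplikativen Operatorkomposition, und der Fall $n=\infty$ wird über $\mathcal{C}^\infty = \bigcap_{m \in \mathbb{N}} \mathcal{C}^m$ auf den endlichen Fall zurückgeführt. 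Die beiden von dir ehrlich benannten Hilfsaussagen, nämlich dass die beschränkte bilineare Abbildung $\mathrm{comp}$ glatt ist (ihre Ableitung $D\,\mathrm{comp}(A,B)(H,K) = H\circ B + A\circ K$ hängt linear von $(A,B)$ ab, sodass die zweite Ableitung konstant ist und alle höheren verschwinden) und dass Differenzierbarkeit in endliche Produkträume komponentenweise geprüft werden kann, sind Standard und bilden genau den technischen Inhalt, den die zitierte Quelle bereitstellt; dein Ansatz erkauft also Selbstständigkeit mit zwei Zusatzlemmata, während der Verweis der Arbeit Kürze auf Kosten der Nachvollziehbarkeit erkauft. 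Zwei kleine Korrekturen wären noch anzubringen: Erstens solltest du die Induktionsvoraussetzung präziser fassen, etwa so, dass die Verkettung zweier Funktionen, die beide mindestens von der Klasse $\mathcal{C}^{n-1}$ sind, wieder mindestens $\mathcal{C}^{n-1}$ ist, denn wörtlich gelesen umfasst deine Bedingung $\min \geq n-1$ bereits die zu zeigende Aussage; zweitens benötigst du im endlichen Fall mit $k = \infty$ und $l$ endlich stillschweigend die Konvention $\infty - 1 = \infty$, damit der Ausdruck $f' \in \mathcal{C}^{k-1}$ im Induktionsschritt Sinn ergibt.
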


\begin{proof}
    Nutze Theorem $5.1,$, Bemerkung $5.1.$ und \cite{cartan1967calcul}.
\end{proof}


\newpage

\quad
\newpage

\section{Endlichdimensionale Mannigfaltigkeiten und Tensorbündel}

Nach dem wir in den oberen Kapiteln geklärt haben, was wir unter topologische Räume und Vektorräume zu verstehen haben, können wir uns nun endlich dem für uns zentralen Begriff der Mannigfaltigkeit nähern. 


\subsection{Topologische Mannigfaltigkeiten}

Um die Definition einer (topologischen) Mannigfaltigkeit möglichst kompakt zu halten, werden wir zunächst noch den Begriff des Homöomorphismus definieren.

\begin{definition}
    Seien $(X, \tau)$ und $(Y, \sigma)$ zwei topologische Räume. Sei weiter $f : X \longrightarrow Y$ eine Abbildung. Wir nennen $f$ einen Homöomorphismus, falls gilt: 
    \begin{itemize}
        \item[\textit{i)}] $f$ ist injektiv, d.h. für $x,y \in X$ mit $f(x) = f(y)$ folgt, dass $x = y$ ist.
        \item[\textit{ii)}] $f$ ist surjektiv, d.h. für alle $y \in Y$ existiert ein $x \in X$, sodass $f(x) = y$ ist.
        \item[\textit{iii)}] $f$ und die zu $f$ gehörige Umkehrabbildung $f^{-1}$ ist stetig bezüglich der Topologien $\tau$ und $\sigma$.
    \end{itemize}
    Existiert zwischen zwei topologischen Räumen ein Homöomorphismus, so heißen diese Räume homöomorph zueinander.
\end{definition}

Die ersten zwei Bedingungen an einen Homöomorphismus bedeuten wieder, dass $f$ eine $1:1$-Abbildung ist, d.h. jedes Element $x \in X$ wird über $f$ eineindeutig einem Element $y$ zugeordnet und umgedreht. Die Bedingung \textit{iii)} sichert dabei, dass diese Zuordnung stetig oder kontinuierlich verläuft. 

Wegen Satz $3.3.$ können wir dabei die Stetigkeitsforderung von $f$ und $f^{-1}$ als Forderung interpretieren, nach welcher Punkte in $X$, die \textit{nah} beieinander liegen, auch in $Y$ \textit{nah} beinander liegen müssen und umgedreht. Anzumerken ist dabei noch, dass die Bedingungen \textit{i)} und \textit{ii)} die Existenz der Umkehrabbildung $f^{-1}$ garantieren.

Ein Homöomorphismus $f : X \longrightarrow Y$ stellt also anschaulich gesprochen eine stetige Deformation des topologischen Raumes $X$ hin zum topologischen Raum $Y$ dar und zwei topologische Räume $X$ und $Y$ sind salopp gesprochen genau dann homöomorph, wenn man diese über stetige Defomationen ineinander überführen kann. 

Mittels des Begriffes des Homöomorphismus können wir nun den Begriff der Mannigfaltigkeit erklären.

\begin{definition}
    Sei $(\mathcal{M}, \tau)$ ein topologischer Hausdorffraum. Sei weiter der $\mathbb{R}$-Vektorraum $\mathbb{R}^d$ mit $d \in \mathbb{N}$ gegeben, ausgestattet mit der durch das Standardskalarprodukt $\langle \cdot , \cdot \rangle_{\mathbb{R}^d}$ induzierten Topologie $\tau_{\mathbb{R}^d}$. Wir bezeichnen $(\mathcal{M}, \tau)$ als $d$-dimensionale topologische Mannigfaltigkeit, falls es zu jedem $p \in \mathcal{M}$ ein $\mathcal{U} \in \tau$ mit $p \in \mathcal{U}$ gibt, sodass $\mathcal{U}$, ausgestattet mit der durch $\tau$ induzierten Teilraumtopologie, homöomorph zu einer offenen Menge $\mathcal{V} \subseteq \mathbb{R}^d$ ist, wobei $\mathcal{V}$ analog mit der durch $\tau_{\mathbb{R}^d}$ induzierten Teilraumtopologie versehen wurde. D.h. zu jedem $p \in \mathcal{M}$ existiert ein Homöomorphismus der Form
    \begin{align}
        \phi = (x^1, ..., x^d) : \mathcal{U} \longrightarrow \mathcal{V} \subseteq \mathbb{R}^d.
    \end{align}
    Man sagt auch, dass eine $d$-dimensionale Mannigfaltigkeit lokal homöomorph zum mit der Standardgeometrie versehenen $\mathbb{R}^d$ ist. Das Tupel $(\mathcal{U}, \phi)$ heißt Karte von $\mathcal{M}$, die den Punkt $p \in \mathcal{M}$ enthält. Wir bezeichnen weiter für $q \in \mathcal{U}$ und $i \in \{1,...,d\}$ die Zahl $x^{i}(q)$ als die $i$-te Koordinate von $q$ bezüglich der Karte $\mathcal{U}$.
\end{definition}

Eine $d$-dimensionale topologische Mannigfaltigkeit ist also ein spezieller topologischer Raum, der lokal so \textit{'aussieht'} wie der mit der Standardgeometrie versehene euklidische Raum $\mathbb{R}^d$. Die in der Definition geforderte Hausdorffeigenschaft ist dabei eine technische Forderung, die für Mannigfaltigkeiten oft gestellt wird, damit diese ordentlich klassifiziert werden können.

Weiter können wir gemäß der obigen Definition eine Karte $(\mathcal{U}, \phi)$ einer $d$-dimensionalen topologischen Mannigfaltigkeit in gewisser Weise als eine Art \textit{Landkarte} der offenen Menge $\mathcal{U} \subseteq \mathcal{M}$ im $\mathbb{R}^d$ auffassen. Unter dieser Interpretation führen wir den anschaulichen Begriff eines Atlanten einer topologischen Mannigfaltigkeit ein:

\begin{definition}
    Sei $(\mathcal{M}, \tau)$ eine $d$-dimensionale topologische Mannigfaltigkeit und sei $\mathcal{I}$ eine Indexmenge. Wir bezeichnen die Menge 
    \begin{align}
        \mathcal{A} := \{ (\mathcal{U}_i, \phi_i) \:\: | \:\: \forall i \in \mathcal{I} : \mathcal{U}_i \in \tau, \:\: \phi_i : \mathcal{U}_i \longrightarrow \phi(\mathcal{U}_i) \subseteq \mathbb{R}^d \: \textit{Homöomorphismus}\}
    \end{align}
    als einen Atlas der Mannigfaltigkeit $\mathcal{M}$, falls 
    \begin{align}
        \mathcal{M} = \bigcup_{i \in \mathcal{I}} \mathcal{U}_i
    \end{align}
    gilt.
\end{definition}

Unter einem Atlas einer $d$-dimensionalen topologischen Mannigfaltigkeit verstehen wir anschaulich also eine Sammlung an Karten, die zusammen die ganze Mannigfaltigkeit überdecken.

Betrachten wir ein einfaches Beispiel einer topologischen Mannigfaltigkeit:

\begin{example}
    Ein anschauliches und auch einfaches Beispiel einer $2$-dimensionalen Mannigfaltigkeit ist gegeben durch die Kugeloberfläche der Einheitskugel im $\mathbb{R}^3$. Diese ist modelliert durch den topologischen Raum $(\mathbb{S}^2, \tau_{\mathbb{R}^3, \mathbb{S}^2})$. Dabei ist die Menge $\mathbb{S}^2$ gegeben durch 
    \begin{align}
        \mathbb{S}^2 := \{ p = (p^1, p^2, p^3) \in \mathbb{R}^3 \:\: | \:\: (p^1)^2 + (p^2)^2 + (p^3)^2 = 1 \}
    \end{align}
    und die Topologie $\tau_{\mathbb{R}^3, \mathbb{S}^2}$ bezeichne die Teilraumtopologie bezüglich der Standardtopologie $\tau_{\mathbb{R}^3}$ im $\mathbb{R}^3$, d.h. 
    \begin{align}
        \tau_{\mathbb{R}^3, \mathbb{S}^2} := \{ \mathcal{U} \cap \mathbb{S}^2 \subseteq \mathbb{S}^2 \:\: | \:\: \mathcal{U} \in \tau_{\mathbb{R}^3} \}.
    \end{align}
    
    Wir zeigen nun zuerst, dass es sich beim topologischen Raum $(\mathbb{S}^2, \tau_{\mathbb{R}^3, \mathbb{S}^2})$ um einen Hausdorffraum handelt. Seien dazu $p$ und $q$ zwei Punkte aus $\mathbb{S}^2$ mit $p \neq q$. Fassen wir $\mathbb{S}^2$ als Teilmenge des $\mathbb{R}^3$ auf, so können wir die Punkte $p$ und $q$ jeweils als Elemente im $\mathbb{R}^3$ auffassen. 
    
    Wir wissen, dass der $\mathbb{R}^3$, ausgestattet mit der Standardtopologie $\tau_{\mathbb{R}^3}$, ein Hausdorffraum ist. Das wissen wir deshalb, da es sich bei der Standardtopologie $\tau_{\mathbb{R}^3}$ um diejenige Topologie handelt, die von der Metrik
    \begin{align}
        d(p,q) := \|p - q\|_{\mathbb{R}^3}
    \end{align}
    induziert wird. Gemäß dem Satz $3.1.$ folgt daraus unmittelbar, dass es sich beim Tupel $(\mathbb{R}^3, \tau_{\mathbb{R}^3})$ um einen Hausdorffraum handeln muss.

    Diese Beobachtung können wir uns nun zunutze machen: Da für $p$ und $q$, nun aufgefasst als Elemente im $\mathbb{R}^3$, gilt, dass $p \neq q$ ist, folgt, dass wir offene Menge $\mathcal{U}$ und $\mathcal{V}$ aus $\tau_{\mathbb{R}^3}$ finden können, sodass $p \in \mathcal{U}$, $q \in \mathcal{V}$ und $\mathcal{U} \cap \mathcal{V} = \emptyset$ ist. 
    
    Per Definition der Topologie $\tau_{\mathbb{R}^3, \mathbb{S}^2}$ gilt nun aber, dass $\mathcal{U} \cap \mathbb{S}^2$ und $\mathcal{V} \cap \mathbb{S}^2$ in $\tau_{\mathbb{R}^3, \mathbb{S}^2}$ liegen. Es gilt weiter nach der Wahl von $p$ und $q$, dass $p \in \mathcal{U} \cap \mathbb{S}^2$, $q \in \mathcal{V} \cap \mathbb{S}^2$ und $(\mathcal{U} \cap \mathbb{S}^2) \cap (\mathcal{V} \cap \mathbb{S}^2) = (\mathcal{U} \cap \mathcal{V}) \cap \mathbb{S}^2 = \emptyset$, da $\mathcal{U} \cap \mathcal{V} = \emptyset$ ist.

    Damit haben wir zwei offene Mengen in $\tau_{\mathbb{R}^3, \mathbb{S}^2}$ gefunden, mittels der sich die beiden Punkte $p$ und $q$ trennen lassen und es folgt, dass $(\mathbb{S}^2, \tau_{\mathbb{R}^3, \mathbb{S}^2})$ ein Hausdorffraum ist.

    Als Nächsten müssen wir einen Atlas $\mathcal{A}_{\mathbb{S}^2}$ für $(\mathbb{S}^2, \tau_{\mathbb{R}^3, \mathbb{S}^2})$ konstruieren: Dazu stellen wir zuerst die folgende Beobachtung an: 
    
    Da $(\mathbb{S}^2, \tau_{\mathbb{R}^3, \mathbb{S}^2})$ ein Hausdorffraum ist, folgt, dass $\mathbb{S}^2 \setminus \{ p \}$ mit $p \in \mathbb{S}^2$ eine offene Menge bzgl. $\tau_{\mathbb{R}^3, \mathbb{S}^2}$ ist, denn wegen der Hausdorffeigenschaft existiert für jedes $q \in \mathbb{S}^2$ mit $q \neq p$ eine Menge $\mathcal{W}_q \in \tau_{\mathbb{R}^3, \mathbb{S}^2}$ mit $q \in \mathcal{W}_q$ und $p \notin \mathcal{W}_q$. Es gilt
    \begin{align}
        \mathbb{S}^2 \setminus \{ p \} \subseteq \bigcup_{q \in \mathbb{S}^2 \setminus \{ p \}} \mathcal{W}_q
    \end{align}
    und wegen $\mathcal{W}_q \subseteq \mathbb{S}^2$ für alle $q \in \mathbb{S}^2 \setminus \{ p \}$ auch
    \begin{align}
        \bigcup_{q \in \mathbb{S}^2 \setminus \{ p \}} \mathcal{W}_q \subseteq \mathbb{S}^2 \setminus \{ p \}, 
    \end{align}
    womit insgesamt die Gleichheit beider Mengen folgt. Aus der Definition einer Topologie ergibt sich demnach, dass es sich bei der Menge $\mathbb{S}^2 \setminus \{ p \}$ um eine offene Menge bzgl. $\tau_{\mathbb{R}^3, \mathbb{S}^2}$ handeln muss.

    Mittels dieses Wissens können wir nun leicht zwei offene Mengen finden, welche zum einen die Menge $\mathcal{S}^2$ überdecken sollen und zum anderen auch gleichzeitig unsere Kartengebiete darstellen sollen. Sei also 
    \begin{align}
        \mathcal{W} := \mathbb{S}^2 \setminus \{ (0,0,1) \} \in \tau_{\mathbb{R}^3, \mathbb{S}^2}
    \end{align}
    und 
    \begin{align}
        \mathcal{Z} := \mathcal{S}^2 \setminus \{ (0,0,-1) \} \in \tau_{\mathbb{R}^3, \mathbb{S}^2}.
    \end{align}
    Trivialerweise überdecken $\mathcal{W}$ und $\mathcal{Z}$ zusammen ganz $\mathbb{S}^2$. Wir definieren nun auf $\mathcal{W}$ die Kartenabbildung
    \begin{align}
         \phi : \mathcal{W} \longrightarrow& \: \: \mathbb{R}^2 \nonumber\\
            (p^1,p^2,p^3) \longmapsto& \: \: \bigg( \frac{p^1}{1 - p^3}, \frac{p^2}{1 - p^3} \bigg)
    \end{align}
    und auf $\mathcal{Z}$ die Kartenabbildung
    \begin{align}
        \psi : \mathcal{Z} \longrightarrow& \: \: \mathbb{R}^2 \nonumber\\
            (p^1,p^2,p^3) \longmapsto& \: \: \bigg( \frac{p^1}{1 + p^3}, \frac{p^2}{1 + p^3} \bigg).
    \end{align}
    Bei den Abbildungen $\phi$ und $\psi$ handelt es sich jeweils um eine sogenannte stereografische Projektion der Einheitssphäre auf den $\mathbb{R}^2$. In der untenstehenden Grafik ist dabei visualisiert, was die stereografische Projektion geometrisch bedeutet.

    \begin{center}
    \begin{tikzpicture}
            \coordinate (A) at (3,-0.25);
            \coordinate (P) at (0,2);

            \draw (0:2cm)   arc[radius=2cm,start angle=0,end angle=180]
                  (210:2cm) arc[radius=2cm,start angle=210,end angle=330];
            \draw (180:2cm) arc[x radius=2cm, y radius=0.5cm, start angle=180,end angle=360];

            \draw [dashed] (210:2cm) 
                  arc[start angle=210,delta angle=-30,radius=2cm]
                  arc[start angle=180,delta angle=-180,x radius=2cm,y radius=0.5cm]
                  arc[start angle=0,delta angle=-30,radius=2cm];

            \draw [dashed] (150:2cm) coordinate(ul) -- (30:2cm) coordinate(ur);

            \draw (-4.5,-1) -- (3.5,-1) -- (4.5,1) node[anchor=south east] {\scriptsize$ p^3=0 $} -- (ur) (ul) -- (-3.5,1) -- (-4.5,-1);

            \draw (A) -- (P) coordinate[pos=0.47](B);
            \path (A) node[circle, fill, inner sep=1pt, label=below:{\scriptsize$ \phi(p) $}]{};
            \path (B) node[circle, fill, inner sep=1pt, label=left:{\scriptsize$ p = (p^1,p^2,p^3) $}]{};
            \path (P) node[circle, fill, inner sep=1pt, label=above:{\scriptsize$ (0,0,1) $}]{};
            
    \end{tikzpicture}
    \end{center}
    
    Es lässt sich nun zeigen, dass es sich bei den beiden stereografischen Projektionen $\phi$ und $\psi$ um Homöomorphismen handelt \cite{SP}. 
    
\end{example}

Der Grund für die Nützlichkeit des Mannigfaltigkeitenbegriffes liegt darin, dass wir, aufgrund der obigen Ähnlichkeitsbedingung zum $\mathbb{R}^d$, Konzepte aus der Analysis, die für gewöhnlich im $\mathbb{R}^d$ entwickelt werden, auf die Mannigfaltigkeiten, die einen allgemeineren Rahmen darstellen, übertragen können. Wie diese Übertragbarkeit im Detail aussieht, werden wir in den kommenden Unterabchnitt sehen. 

Die Möglichkeit dieser Übertragbarkeit, insbesondere der Übertragbarkeit der aus der Analysis bekannten Differenzierbarkeitsbegriffe auf spezielle Mannigfaltigkeiten, wird später noch eine wichtige Rolle spielen, wenn es darum geht den Begriff der Geodätischen einzuführen. Um nämlich auf einer Mannigfaltigkeit einen Längenbegriff einer Kurven einzuführen, benötigen wir, wie wir noch sehen werden, einen Differenzierbarkeitsbegriff.


\subsection{Differenzierbare Mannigfaltigkeiten}

Wir wollen nun die Differenzierbarkeitskonzepte aus dem $\mathbb{R}^d$ auf eine topologische Mannigfaltigkeit $(\mathcal{M}, \tau)$ übertragen. Dazu betrachten wir zuerst den Begriff der sogenannten Kartenübergangsabbildung. 

\begin{definition}
    Sei $(\mathcal{M}, \tau)$ eine $d$-dimensionale topologische Mannigfaltigkeit. Weiter seien $(\mathcal{U}, \phi)$ und $(\mathcal{V}, \psi)$ zwei Karten, d.h. $\mathcal{U}, \mathcal{V} \in \tau$ und $\phi : \mathcal{V} \longrightarrow \phi(\mathcal{U})$, $\psi : \mathcal{V} \longrightarrow \psi(\mathcal{V})$ sind Homöomorphismen. Wir nehmen weiter an, dass $\mathcal{U} \cap \mathcal{V} \neq \emptyset$ ist. Dann ist die Kartenübergangsabbidlung von der Karte $(\mathcal{U}, \phi)$ zur Karte $(\mathcal{V}, \psi)$ definiert durch die Abbildung
    \begin{align}
        \psi \circ \phi^{-1} : \phi(\mathcal{U} \cap \mathcal{V}) \longrightarrow \psi(\mathcal{U} \cap \mathcal{V}).
    \end{align}
    Man beachte, dass die Abbildung $\psi \circ \phi^{-1}$ eine stetige Abbildung (siehe Lemma $3.1$) von einer offenen Teilmenge des $\mathbb{R}^d$ in eine offene Teilmenge des $\mathbb{R}^d$ darstellt. Damit können wir die Kartenübergangsabbildung $\psi \circ \phi^{-1}$ insbesondere auf Differenzierbarkeit untersuchen. Sollte die Abbildung $\psi \circ \phi^{-1}$ für $k \in \mathbb{N} \cup \{ \infty \}$ eine auf $\phi(\mathcal{U}\cap\mathcal{V})$ $k$-mal differenzierbare Funktion im Sinne des vorangegangenen Kapitels sein, so nennen wir $\psi \circ \phi^{-1}$ einen differenzierbaren Kartenwechsel vom Grad $k$. Im Falle $\psi \circ \phi^{-1} \in \mathcal{C}^\infty (\phi(\mathcal{U} \cap \mathcal{V}), \psi(\mathcal{U} \cap \mathcal{V}))$ bezeichnen wir $\psi \circ \phi^{-1}$ auch kurz als glatten Kartenwechsel.
\end{definition}

Was ist nun eine Kartenwechselabbildung anschaulich. Um eine Intuition für diese Abbildung zu gewinnen betrachten wir die folgende Grafik, die das Zusammenspiel zweier sich überlappender Kartengebiete zeigt:

    \begin{tikzpicture}
    
    \path[->] (0.8, 0) edge [bend right] node[left, xshift=-2mm] {$\phi$} (-1, -2.9);
    \draw[white,fill=white] (0.06,-0.57) circle (.15cm);
    
    \path[->] (4.2, 0) edge [bend left] node[right, xshift=2mm] {$\psi$} (6.2, -2.8);
    \draw[white, fill=white] (4.54,-0.12) circle (.15cm);
    
    \draw[smooth cycle] plot coordinates{(2,2) (-0.5,0) (3,-2) (5,1)} node at (3,2.3) {$\mathcal{M}$};
    
    \begin{scope}
        \clip[smooth cycle] plot coordinates {(1,0) (1.5, 1.2) (2.5,1.3) (2.6, 0.4)};
        \fill[gray!50, smooth cycle] plot coordinates {(4, 0) (3.7, 0.8) (3.0, 1.2) (2.5, 1.2) (2.2, 0.8) (2.3, 0.5) (2.6, 0.3) (3.5, 0.0)};
    \end{scope}
    \draw[dashed, smooth cycle] plot coordinates {(1,0) (1.5, 1.2) (2.5,1.3) (2.6, 0.4)}
    node [label={[label distance=-0.3cm, xshift=-2cm, fill=white]:$\mathcal{U}$}] {};
    \draw[dashed, smooth cycle] plot coordinates {(4, 0) (3.7, 0.8) (3.0, 1.2) (2.5, 1.2) (2.2, 0.8) (2.3, 0.5) (2.6, 0.3) (3.5, 0.0)} node [label={[label distance=-0.8cm, xshift=.75cm, yshift=1cm, fill=white]:$\mathcal{V}$}] {};
        
    \draw[thick, ->] (-3,-5) -- (0, -5) node [label=above:$\phi(\mathcal{U})$] {};
    \draw[thick, ->] (-3,-5) -- (-3, -2) node [label=right:$\mathbb{R}^d$] {};
    
    \path[->] (0, -3.85) edge [bend left]  node[midway, above]{$\psi \circ \phi^{-1} $} (4.5, -3.85);
    
    \draw[thick, ->] (5, -5) -- (8, -5) node [label=above:$\psi(\mathcal{V})$] {};
    \draw[thick, ->] (5, -5) -- (5, -2) node [label=right:$\mathbb{R}^d$] {};
    
\begin{scope}
    \clip [smooth cycle] plot coordinates{(-2, -4.5) (-2, -3.2) (-0.8, -3.2) (-0.8, -4.5)};
    \draw[fill=gray!50, dashed] (-0.8, -3.2) circle (0.8);
\end{scope}
\draw [smooth cycle, dashed] plot coordinates{(-2, -4.5) (-2, -3.2) (-0.8, -3.2) (-0.8, -4.5)};
    
\begin{scope}
    \clip [smooth cycle] plot coordinates{(7, -4.5) (7, -3.2) (5.8, -3.2) (5.8, -4.5)};
    \draw[fill=gray!50, dashed] (5.8, -3.2) circle (0.8);
\end{scope}
\draw[smooth cycle,dashed] plot coordinates{(7, -4.5) (7, -3.2) (5.8, -3.2) (5.8, -4.5)};
\end{tikzpicture}

Interpretieren wir $\phi(\mathcal{U})$ und $\psi(\mathcal{V})$ als \textit{Landkarten} der sich überlappenden offenen Mengen $\mathcal{U} \subseteq \mathcal{M}$ und $\mathcal{V} \subseteq \mathcal{M}$, so können wir mit der Kartenwechselabbildung $\psi \circ \phi^{-1}$ die Landkarte bezüglich der offenen Mengen $\mathcal{U} \cap \mathcal{V}$ von $\phi(\mathcal{U} \cap \mathcal{V})$ zu $\psi(\mathcal{U} \cap \mathcal{V})$ wechseln. Durch diesen Kartenwechsel wechseln wir auch die Koordinaten, die wir benutzen, um z.B. einen Punkt $q \in \mathcal{U} \cap \mathcal{V}$ in einer Karte zu beschreiben. Ist beispielweise $\phi = (x^1, ..., x^d)$ und $\psi = (y^1, ..., y^d)$, so wechseln wir beim Kartenwechsel $\psi \circ \phi^{-1}$ von den Koordinaten $x^{i}$ zu den Koordinaten $y^j$. 

Der Begriff der differenzierbaren Kartenübergangsabbildung motiviert die folgende nützliche Definition:

\begin{definition}
    Sei $(\mathcal{M}, \tau)$ eine $d$-dimensionale topologische Mannigfaltigkeit und seien $(\mathcal{U}, \phi)$ und $(\mathcal{V}, \psi)$ zwei Karten auf $\mathcal{M}$. Für $k \in \mathbb{N} \cup \{ \infty \}$ nennen wir diese beiden Karten $\mathcal{C}^k$-kompatibel, falls ihre Kartenübergangsabbildungen 
    \begin{align}
        \psi \circ \phi^{-1} : \phi(\mathcal{U} \cap \mathcal{V}) \longrightarrow \psi(\mathcal{U} \cap \mathcal{U}) \subseteq \mathbb{R}^d
    \end{align}
    und 
    \begin{align}
        \phi \circ \psi^{-1} : \psi(\mathcal{U} \cap \mathcal{V}) \longrightarrow \phi (\mathcal{U \cap \mathcal{V}}) \subseteq \mathbb{R}^d
    \end{align}
    $k$-mal stetig differenzierbar sind, d.h. $(\psi \circ \phi^{-1}) \in \mathcal{C}^k(\phi(\mathcal{U \cap \mathcal{V}}), \psi(\mathcal{U} \cap \mathcal{V}))$ und $(\phi \circ \psi^{-1}) \in \mathcal{C}^k(\psi(\mathcal{U} \cap \mathcal{V}), \phi(\mathcal{U} \cap \mathcal{V}))$. Wir legen weiter fest, dass falls $\mathcal{U} \cap \mathcal{V} = \emptyset$ ist, wir sagen, dass $(\mathcal{U} , \phi)$ und $(\mathcal{V}, \psi)$ automatisch $\mathcal{C}^{\infty}$-kompatibel sind, da in diesem Fall keine Kartenübergangsabbildungen existieren, die die Differenzierbarkeitsbedingungen verletzen könnten.
\end{definition}

Mittels der $\mathcal{C}^k$-Kompatibilität lässt sich nun der Begriff des differenzierbaren Atlas vom Grad $k$ einführen:

\begin{definition}
    Sei $(\mathcal{M}, \tau)$ eine $d$-dimensionale topologische Mannigfaltigkeit mit einem Atlas $\mathcal{A}$. Sei weiter  $k \in \mathbb{N} \cup \{ \infty \}$ gegeben. Wir bezeichnen $\mathcal{A}$ als differenzierbaren Atlas vom Grad $k$ oder kurz als $k$-fach differenzierbaren Atlas, falls alle Karten in $\mathcal{A}$ paarweise $\mathcal{C}^k$-kompatibel sind. Weiter bezeichnen wir in diesem Fall das Tripel $(\mathcal{M}, \tau, \mathcal{A})$ als $d$-dimensionale $k$-fach differenzierbare Mannigfaltigkeit. Für den Fall $k = \infty$ bezeichnen wir $\mathcal{A}$ auch als glatten Atlas und $(\mathcal{M}, \tau, \mathcal{A})$ nennen wir dann kurz $d$-dimensionale glatte Mannigfaltigkeit.  
\end{definition}

Betrachten wir ein Beispiel einer differenzierbaren Mannigfaltigkeit:

\begin{example}
    Wir betrachten die aus Beispiel $6.1.$ bekannte topologische Mannigfaltigkeit $(\mathbb{S}^2, \tau_{\mathbb{R}^3, \mathbb{S}^2}, \mathcal{A}_{\mathbb{S}^2})$. Bei dieser handelt es sich bereits um eine glatte Mannigfaltigkeit. Um das einzusehen, betrachten wir die aus \cite{SP} bekannte Umkehrabbildungen der stereographischen Projektionen $\phi$ und $\psi$, definiert durch
    \begin{align}
        \phi^{-1}((x^1, x^2)) = \bigg( \frac{2 \cdot x^1}{(x^1)^2 + (x^2)^2 + 1} , \frac{2 \cdot x^2}{(x^1)^2 + (x^2)^2 + 1} , \frac{(x^1)^2 + (x^2)^2 - 1}{(x^1)^2 + (x^2)^2 + 1} \bigg)
    \end{align}
    und 
    \begin{align}
        \psi^{-1}((y^1, y^2)) = \bigg( \frac{2 \cdot y^1}{(y^1)^2 + (y^2)^2 + 1} , \frac{2 \cdot y^2}{(y^1)^2 + (y^2)^2 + 1} , \frac{1 - (y^1)^2 - (y^2)^2}{(y^1)^2 + (y^2)^2 + 1} \bigg).
    \end{align}
    Wir müssen nun zeigen, dass es sich bei den Kartenübergangsabbildungen
    \begin{align}
        \psi \circ \phi^{-1} : \phi(\mathcal{W} \cap \mathcal{Z}) \longrightarrow \psi(\mathcal{W} \cap \mathcal{Z})
    \end{align}
    und 
    \begin{align}
        \phi \circ \psi^{-1} : \psi(\mathcal{W} \cap \mathcal{Z}) \longrightarrow \phi(\mathcal{W} \cap \mathcal{Z})
    \end{align}
    um glatte Abbildungen handelt. Zuerst bemerken wir, dass aus der in Beispiel $6.1.$ angegebenen Definition von $\mathcal{W}$ und $\mathcal{Z}$ folgt, dass 
    \begin{align}
        \mathcal{W} \cap \mathcal{Z} = \mathbb{S}^2 \setminus \{ (0,0,1), (0,0,-1) \}
    \end{align}
    ist. Daraus folgt wegen der Definition der Kartenabbildungen $\phi$ und $\psi$, dass
    \begin{align}
        \phi (\mathcal{W} \cap \mathcal{Z}) = \psi (\mathcal{W} \cap \mathcal{Z}) = \mathbb{R}^2 \setminus \{ (0, 0) \}
    \end{align}
    ist. Wir berechnen nun die beiden obigen Kartenübergangsabbildungen. Für $(\psi \circ \phi^{-1})$ gilt
    \begin{align}
        (\psi \circ \phi^{-1})& ((x^1, x^2)) = \psi(\phi^{-1}((x^1,x^2))) \nonumber \\ =& \: \psi \bigg( \bigg( \frac{2 \cdot x^1}{(x^1)^2 + (x^2)^2 + 1} , \frac{2 \cdot x^2}{(x^1)^2 + (x^2)^2 + 1} , \frac{(x^1)^2 + (x^2)^2 - 1}{(x^1)^2 + (x^2)^2 + 1} \bigg) \bigg) \nonumber \\ =& \: \bigg( \frac{x^1}{(x^1)^2 + (x^2)^2} , \frac{x^2}{(x^1)^2 + (x^2)^2} \bigg)
    \end{align}
    und für $\phi \circ \psi^{-1}$ gilt
    \begin{align}
        (\phi \circ \psi^{-1})& ((y^1, y^2)) = \phi(\psi^{-1}((y^1, y^2))) \nonumber \\ =& \: \phi \bigg( \bigg( \frac{2 \cdot y^1}{(y^1)^2 + (y^2)^2 + 1} , \frac{2 \cdot y^2}{(y^1)^2 + (y^2)^2 + 1} , \frac{1 - (y^1)^2 - (y^2)^2}{(y^1)^2 + (y^2)^2 + 1} \bigg) \bigg) \nonumber \\ =& \: \bigg( \frac{y^1}{(y^1)^2 + (y^2)^2} , \frac{y^2}{(y^1)^2 + (y^2)^2} \bigg).
    \end{align}
    Mittels der aus der multivariaten Analysis bekannten Differentitationsregeln \cite{heuser1992lehrbuch} folgt, dass es sich auf dem Gebiet $\mathbb{R}^2 \setminus \{ (0, 0) \}$ bei den beiden Kartenübergangsabbildungen tatsächlich um glatte Abbildungen handelt. Damit ist gezeigt, dass \\ $(\mathbb{S}^2, \tau_{\mathbb{R}^3, \mathbb{S}^2}, \mathcal{A}_{\mathbb{S}^2})$ eine glatte Mannigfaltigkeit ist. 
\end{example}

Für die nun eben definierten differenzierbaren Mannigfaltigkeiten lassen sich nun Konzepte aus der Analysis bzw. der Funktionalanalysis einführen. 

Sei also $k \in \mathbb{N} \cup \{ \infty \}$ und $(\mathcal{M}, \tau, \mathcal{A})$ eine $d$-dimensionale $k$-fach differenzierbare Mannigfaltigkeit. Wir wollen nun zuerst erklären, wann eine stetige Funktion der Form 
\begin{align}
    f : \mathcal{M} \longrightarrow \mathbb{R}
\end{align}
als differenzierbar bezeichnet werden kann. Man beachte, dass es nicht vollkommen offensichtlich ist, wann wir eine stetige Funktion $f : \mathcal{M} \longrightarrow \mathbb{R}$ differenzierbar nennen können, da wir für die bisher betrachteten differenzierbaren Funktionen immer eine Vektorraumstruktur im Definitionsbereich, wie auch im Wertebereich benötigt hatten, damit wir überhaupt einen Differenzenquotienten hinschreiben konnten. 

Für die nun betrachtete Funktion $f$ können wir aber keinen naiven Differenzenquotienten erklären, da $\mathcal{M}$ im Allgemeinen keine Vektorraumstruktur trägt. D.h. Ausdrücke der Form $\frac{f(p + t \cdot q) - f(x)}{t}$ machen erst einmal keinen Sinn, da nicht klar ist, was $p + t \cdot q$ für $p, q \in \mathcal{M}$ und $t \in \mathbb{R}$ sein soll, da im Allgemeinen weder eine Addition $+$, noch eine Skalarmultiplikation $\cdot$ auf $\mathcal{M}$ erklärt ist. 

Wie können wir also die Differenzierbarkeit einer stetigen Funktion $f : \mathcal{M} \longrightarrow \mathbb{R}$ erklären? Die Idee ist, dass wir die Differenzierbarkeit einer solchen Funktion $f$ über die Karten aus dem $k$-fach differenzierbaren Atlas $\mathcal{A}$ erklären. Sei dazu $(\mathcal{U}, \phi) \in \mathcal{A}$ eine beliebige Karte aus dem $k$-fach differenzierbaren Atlas $\mathcal{A}$. Wir betrachten die Funktion 
\begin{align}
    f \circ \phi^{-1} : \phi(\mathcal{U}) \longrightarrow \mathbb{R}. 
\end{align}
Da $\phi(\mathcal{U}) \subset \mathbb{R}^d$ eine offene Menge aus dem $\mathbb{R}^d$ ist (beachte das $\phi$ eine Kartenabbildung ist), folgt, dass wir die Funktion $f \circ \phi^{-1}$ im Sinne von \eqref{diffbar} als euklidische Funktion auffassen können und wir diese damit ganz normal auf Differenzierbarkeit untersuchen können. 

Der Gedanke ist nun, dass wir $f : \mathcal{M} \longrightarrow \mathbb{R}$ als $l$-mal stetig differenzierbar mit $l \in \{1,...,k\}$ bezeichnen wollen, wenn $f \circ \phi^{-1} : \phi(\mathcal{U}) \longrightarrow \mathbb{R}$ $l$-mal stetig differenzierbar ist. Die Frage ist nun aber, ob dieser so eingeführte Differenzierbarkeitsbegriff für die Funktion $f : \mathcal{M} \longrightarrow \mathbb{R}$ überhaupt wohldefiniert ist. D.h. wir müssen uns die Frage stellen, ob dieser Differenzierbarkeitsbegriff kartenabhängig ist bzw. von der Wahl der Karte $(\mathcal{U}, \phi) \in \mathcal{A}$ abhängt.

Sei dazu $(\mathcal{V}, \psi) \in \mathcal{A}$ eine weitere Karte mit $\mathcal{U} \cap \mathcal{V} \neq \emptyset$. Bezüglich der Karte $(\mathcal{U}, \phi)$ ist $f$ per Annahme $l$-fach stetig differenzierbar auf der offenen Menge $\mathcal{U} \cap \mathcal{V}$. Wir müssen zeigen, dass auch bezüglich der Karte $(\mathcal{V}, \psi)$ die Funktion $f$ auf $\mathcal{U} \cap \mathcal{V}$ $l$-fach stetig differenzierbar ist. Wir betrachten das folgende Diagramm:

\begin{center}
    \setlength{\unitlength}{2pt}
    \begin{picture}(150,50)
        \put(10,0){$\mathcal{U} \cap \mathcal{V}$} 
        \put(85,0){$\mathbb{R}$}
        \put(2,32){$\phi(\mathcal{U} \cap \mathcal{V})$}
        \put(2,-31){$\psi(\mathcal{U} \cap \mathcal{V})$}
        \put(28,2){\vector(1,0){55}} \put(54,6){$f$}
        \put(28,33){\vector(2,-1){55}} \put(55,25){$f \circ \phi^{-1}$}
        \put(28,-29){\vector(2,1){55}} \put(55,-23){$f \circ \psi^{-1}$}
        \put(17,6){\vector(0,2){23}} \put(10,16){$\phi$}
        \put(17,-3){\vector(0,-2){22}} \put(10,-15){$\psi$}
    \end{picture}
\end{center}
.\\[2cm]
Gemäß dem oben stehenden Diagramm lässt sich $f \circ \psi^{-1}$ auch schreiben als 
\begin{align}
    f \circ \psi^{-1} = (f \circ \phi^{-1}) \circ (\phi \circ \psi^{-1}).
\end{align}
Wegen $(\mathcal{U}, \phi)$, $(\mathcal{V}, \psi) \in \mathcal{A}$ und $\mathcal{A}$ $k$-fach differenzierbarer Atlas, folgt, dass  $(\phi \circ \psi^{-1}) \in \mathcal{C}^k(\psi(\mathcal{U} \cap \mathcal{V}), \phi(\mathcal{U} \cap \mathcal{V}))$ ist. Da weiter $f \circ \phi^{-1}$ $l$-fach stetig differenzierbar mit $l \leq k$ ist, folgt, dass $f \circ \psi^{-1}$ ebenfalls $l$-fach stetig differenzierbar ist.

Man beachte, dass der $k$-fach stetig differenzierbare Atlas der Mannigfaltigkeit sicherstellt, dass der Begriff der $l$-fach stetig differenzierbaren Funktion für $l \leq k$ wohldefiniert ist. Wir fassen unsere Erkenntnisse in der fogenden Definition zusammen:

\begin{definition}
    Sei $(\mathcal{M}, \tau, \mathcal{A})$ eine $d$-dimensionale $k$-fach differenzierbare Mannigfaltigkeit mit $k \in \mathbb{N} \cup \{ \infty \}$ und $f : \mathcal{M} \longrightarrow \mathbb{R}$ eine stetige Funktion. Wir nennen $f$ $l$-fach stetig differenzierbar auf einer offenen Menge $\mathcal{W} \subseteq \mathcal{M}$ mit $l \leq k$, wenn die Kartendarstellungen von $f$ bezüglich $\mathcal{W}$ $l$-fach stetig differenzierbar sind, d.h. sei $(\mathcal{U}, \phi) \in \mathcal{A}$ eine beliebige Karte mit $\mathcal{U} \cap \mathcal{W} \neq \emptyset$, dann ist $f \circ \phi^{-1} : \phi(\mathcal{U} \cap \mathcal{W}) \longrightarrow \mathbb{R}$ eine $k$-fach stetig differenzierbare Abbildung. Wir bezeichnen die Menge der auf $\mathcal{W}$ $l$-fach stetig differenzierbaren Funktionen $f : \mathcal{W} \longrightarrow \mathbb{R}$ mit $\mathcal{C}^l(\mathcal{W})$. Gilt $l = k = \infty$, so nennen wir eine Funktion $f \in \mathcal{C}^\infty (\mathcal{W})$ auch kurz glatte (reellwertige) Funktion auf $\mathcal{W}$. Für $\mathcal{W} = \mathcal{M}$ nennen wir eine solche Funktion auch einfach nur als glatt.  
\end{definition}

Nahezu analog lässt sich nun dieser Begriff auf Funktion $f : \mathcal{M} \longrightarrow \mathcal{N}$ zwischen $k$-fach differenzierbarer Mannigfaltigkeiten $\mathcal{M}$ und $\mathcal{N}$ verallgemeinern. 

\begin{definition}
    Sei $(\mathcal{M}, \tau, \mathcal{A})$ eine $d$-dimensionale $k$-fach stetig differenzierbare Mannigfaltigkeit und sei $(\mathcal{N}, \sigma, \mathcal{B})$ eine $d'$-dimensionale $k$-fach stetig differenzierbare Mannigfaltigkeit. Sei weiter $f : \mathcal{M} \longrightarrow \mathcal{N}$ eine stetige Funktion. Dann heißt $f$ $l$-fach stetig differenzierbar auf $\mathcal{M}$ mit $l \leq k$, falls die Kartendarstellungen der Form
    \begin{align}
        \psi \circ f \circ \phi^{-1} : \phi(\mathcal{U} \cap f^{-1}(\mathcal{V})) \longrightarrow \psi(\mathcal{V})
    \end{align}
    $l$-fach stetig differenzierbar sind, wobei $(\mathcal{U}, \phi) \in \mathcal{A}$, $(\mathcal{V} , \psi) \in \mathcal{B}$ und $f^{-1}(\mathcal{V}) \cap \mathcal{U} \neq \emptyset$ ist. Beachte, dass wegen der Stetigkeit von $f$ die Menge $f^{-1}(\mathcal{V})$ offen ist und damit auch $\mathcal{U} \cap f^{-1}(\mathcal{V})$. Da $\phi$ ein Homöomorphismus ist folgt auch die Offenheit der Menge $\phi(\mathcal{U} \cap f^{-1}(\mathcal{V}))$. Die Menge der $l$-fach stetig differenzierbaren Funktionen $f : \mathcal{M} \longrightarrow \mathcal{N}$ bezeichnen wir als $\mathcal{C}^l(\mathcal{M}, \mathcal{N})$. Im Falle $l = k = \infty$ nennen wir $f \in \mathcal{C}^\infty (\mathcal{M}, \mathcal{N})$ auch glatte Funktion von $\mathcal{M}$ nach $\mathcal{N}$.
\end{definition}

Wie oben lässt sich mittels des folgenden Diagramms leicht einsehen, dass diese Definition der Differenzierbarkeit nicht von der Wahl der Karten in $\mathcal{M}$ und $\mathcal{N}$ abhängig ist:

\begin{center}
    \setlength{\unitlength}{2pt}
    \begin{picture}(150,50)
        \put(-4,0){$\mathcal{U} \cap \mathcal{U}' \cap \mathcal{W}$} 
        \put(85,0){$\mathcal{V} \cap \mathcal{V}'$}
        \put(-12,32){$\phi(\mathcal{U} \cap \mathcal{U}' \cap \mathcal{W})$}
        \put(-13,-31){$\phi'(\mathcal{U} \cap \mathcal{U}' \cap \mathcal{W})$}
        \put(28,2){\vector(1,0){55}} \put(54,6){$f$}
        \put(28,33){\vector(1,0){55}} \put(45,37){$\psi \circ f \circ \phi^{-1}$} \put(85,32){$\psi(\mathcal{V} \cap \mathcal{V}')$}
        \put(28,-29){\vector(1,0){55}} \put(45,-25){$\psi' \circ f \circ \phi'^{-1}$} \put(85,-31){$\psi'(\mathcal{V} \cap \mathcal{V}')$}
        \put(9,6){\vector(0,2){23}} \put(3,16){$\phi$}
        \put(9,-3){\vector(0,-2){22}} \put(2,-15){$\phi'$}
        \put(93,6){\vector(0,2){23}} \put(95,16){$\psi$}
        \put(93,-3){\vector(0,-2){22}} \put(95, -15){$\psi'$}
    \end{picture}
\end{center}
.\\[2cm]
In dem obigem Diagramm ist dabei $(\mathcal{U}, \phi), (\mathcal{U}', \phi') \in \mathcal{A}$, $(\mathcal{V}, \psi), (\mathcal{V}', \psi') \in \mathcal{B}$ mit $\mathcal{U} \cap \mathcal{U}' \cap \mathcal{W} \neq \emptyset$, $\mathcal{V} \cap \mathcal{V}' \neq \emptyset$, wobei $\mathcal{W} = f^{-1}(\mathcal{V} \cap \mathcal{V}')$ ist. Wie sich sofort erkennen lässt, lässt sich die Funktion $\psi' \circ f \circ \phi'^{-1}$ auch schreiben als 
\begin{align}
    \psi' \circ f \circ \phi'^{-1} = (\psi' \circ \psi^{-1}) \circ (\psi \circ f \circ \phi^{-1}) \circ (\phi \circ \phi'^{-1}).
\end{align}
Da sowohl $\psi' \circ \psi^{-1}$, als auch $\phi \circ \phi'^{-1}$ von der Klasse $\mathcal{C}^k$ sind und wir annehmen, dass $\psi \circ f \circ \phi^{-1}$ von der Klasse $\mathcal{C}^l$ ist, folgt nach Folgerung $5.1.$, dass auch $\psi' \circ f \circ \phi'^{-1}$ auf $\phi'(\mathcal{U} \cap \mathcal{U}' \cap \mathcal{W})$ von der Klasse $\mathcal{C}^l$ ist. Daraus folgt, dass auch der eben definierte Begriff der stetigen Differenzierbarkeit von Funktionen zwischen differenzierbaren Mannigfaltigkeiten kartenunabhängig ist.

\begin{remark}
    Wir betrachten noch eine spezielle Klasse stetig differenzierbare Funktionen, die in der Einleitung dieser Arbeit Erwähnung fand. 
    \begin{itemize}
   
        \item[\textit{i)}] Seien $(\mathcal{M}, \tau, \mathcal{A})$ und $(\mathcal{N}, \sigma, \mathcal{B})$ zwei $k$-fach stetig differenzierbare Mannigfaltigkeiten endlicher Dimension mit $k \in \mathbb{N} \cup {\infty}$ und $f : \mathcal{M} \longrightarrow \mathcal{N}$ eine $l$-fach stetig differenzierbare Funktion auf $\mathcal{M}$ mit $l \leq k$. Ist $f$ darüberhinaus auch ein Homöomorphismus, so bezeichnen wir $f$ als einen $\mathcal{C}^l$-Diffeomorphismus. Im Falle $l = k = \infty$ bezeichnen wir $f$ auch als glatten Diffeomorphismus. 

        \item[\textit{ii)}]  Wir können den $\mathbb{R}^d$, ausgestattet mit der Standardtopologie, als glatte Mannigfaltigkeit mit dem Atlas 
        \begin{align}
            \mathcal{B} := \{ (\mathcal{V}, \textit{id}_{\mathcal{V}}) \:\: | \:\: \mathcal{V} \:\:\:\: \textit{offen bzgl. der Standardtopologie} \}
        \end{align}
        verstehen. Dabei sind die Abbildungen $\textit{id}_{\mathcal{V}}$ für alle offenen $\mathcal{V} \subseteq \mathbb{R}^d$ erklärt durch 
            \begin{align}
                \textit{id}_{\mathcal{V}}(y) = y \:\:\:\:\:\:\:\: \forall y \in \mathcal{V}.
            \end{align}
        Dann sind bezüglich der obigen Definition $6.8$ alle Kartenabbildung $\phi$ einer Karte $(\mathcal{U}, \phi) \in \mathcal{A}$ glatten Abbildungen. Das lässt sich über das folgende Diagramm leicht einsehen:

        \begin{center}
            \setlength{\unitlength}{2pt}
            \begin{picture}(150,50)
                \put(15,0){$\mathcal{U}$} 
                \put(95,0){$\phi(\mathcal{U})$}
                \put(11,32){$\phi(\mathcal{U})$}
                \put(22,2){\vector(1,0){71}} \put(56,6){$\phi$}
                \put(25,33){\vector(1,0){58}} \put(37,37){$\textit{id}_\mathcal{\phi(\mathcal{U})} \circ \phi \circ \phi^{-1}$} \put(85,32){$\textit{id}_{\phi(\mathcal{U})}(\phi(\mathcal{U})) = \phi(\mathcal{U})$}
                \put(17,6){\vector(0,2){23}} \put(10,16){$\phi$}
                \put(100,6){\vector(0,2){23}} \put(102,16){$\textit{id}_{\phi(\mathcal{U})}$}
            \end{picture}
        \end{center}
        Da $\textit{id}_\mathcal{\phi(\mathcal{U})} \circ \phi \circ \phi^{-1} = \textit{id}_\mathcal{\phi(\mathcal{U})}$ offensichtlich eine glatte Abbildung ist, folgt auch die Glattheit der Abbildung $\phi$. Auf analoge Weise lässt sich zeigen, dass die Abbildung $\phi^{-1}$ ebenfalls eine glatte Abbildung ist. Da $\phi$ und $\phi^{-1}$ beide jeweils auch Homöomorphismen sind folgt, dass es sich bei den Kartenabbildungen einer glatten Mannigfaltigkeit um glatte Diffeomorphismen handelt. 
    \end{itemize}
\end{remark}

Als Nächstes wollen wir uns mit dem Begriff einer differenzierbaren Kurve $\gamma : (a,b) \longrightarrow \mathcal{M}$ beschäftigen, wobei $(a,b) \subseteq \mathbb{R}$ mit $a < b$ und $(\mathcal{M}, \tau, \mathcal{A})$ wieder eine $d$-dimensionale $k$-fach differenzierbare Mannigfaltigkeit ist.

\begin{definition}
    Sei $(\mathcal{M}, \tau, \mathcal{A})$ eine $d$-dimensionale $k$-fach differenzierbare Mannigfaltigkeit und sei $\gamma : (a,b) \longrightarrow \mathcal{M}$ eine stetige Kurve auf $\mathcal{M}$, wobei das offene Intervall $(a,b)$, mit $a,b \in \mathbb{R}$ und $a < b$, wieder mit der Teilraumtopologie versehen ist, welche von der Standardtopologie auf $\mathbb{R}$ induziert wird. Wir nennen die Kurve $\gamma$ $l$-fach stetig differenzierbar mit $l \leq k$, falls die Kartendarstellungen
    \begin{align}
        \phi \circ \gamma : \gamma^{-1}(\mathcal{U}) \longrightarrow \phi(\mathcal{U})
    \end{align}
    $l$-fach stetig differenzierbar ist. Dabei ist $(\mathcal{U}, \phi) \in \mathcal{A}$ eine Karte mit $\gamma((a,b)) \cap \mathcal{U} \neq \emptyset$. Beachte weiter, dass wegen der Stetigkeit von $\gamma$ die Menge $\gamma^{-1}(\mathcal{U})$ eine offene Menge bzgl. der auf $(a,b)$ erklärten Teilraumtopologie ist und per Definition der Teilraumtopologie $\gamma^{-1}(\mathcal{U})$ damit auch eine offene Menge auf dem normierten Raum $(\mathbb{R}, |\cdot|)$ ist. Im Falle $l = k = \infty$ nennen wir eine $l$-fach stetig differenzierbare Kurve auch glatte Kurve. Wir bezeichnen die Menge derartiger $l$-fach differenzierbarer Kurven mit $\mathcal{C}^l ((a,b), \mathcal{M})$.
\end{definition}

Beachte, dass der so definierte Differenzierbarkeitsbegriff für Kurven abermals kartenunabhängig ist, wie das folgende Diagramm zeigt:

\begin{center}
    \setlength{\unitlength}{2pt}
    \begin{picture}(150,50)
        \put(-4,0){$\gamma^{-1}(\mathcal{U} \cap \mathcal{V})$} 
        \put(85,0){$\mathcal{U} \cap \mathcal{V}$}
        \put(28,2){\vector(1,0){55}} \put(54,6){$\gamma$}
        \put(28,5){\vector(2,1){55}} \put(45,24){$\psi \circ \gamma$} \put(85,32){$\psi(\mathcal{U} \cap \mathcal{V})$}
        \put(28,-1){\vector(2,-1){55}} \put(45,-22){$\phi \circ \gamma$} \put(85,-31){$\phi(\mathcal{U} \cap \mathcal{V})$}
        \put(93,6){\vector(0,2){23}} \put(95,16){$\psi$}
        \put(93,-3){\vector(0,-2){22}} \put(95, -15){$\phi$}
    \end{picture}
\end{center}
.\\[2cm]
Im obigen Diagramm ist dabei $(\mathcal{U}, \phi), (\mathcal{V}, \psi) \in \mathcal{A}$ mit $\gamma((a,b)) \cap \mathcal{U} \cap \mathcal{V} \neq \emptyset$. Anhand des Diagramms lässt sich sehen, dass die Funktion $\psi \circ \gamma$ auch geschreiben werden kann als 
\begin{align}
    \psi \circ \gamma = (\psi \circ \phi^{-1}) \circ (\phi \circ \gamma).
\end{align}
Da $\psi \circ \phi^{-1}$ nach Vorraussetzung von der Klasse $\mathcal{C}^k$ ist und wir annehmen, dass $\phi \circ \gamma$ von der Klasse $\mathcal{C}^l$ ist, folgt mit Folgerung $5.1.$, dass die Funktion $\psi \circ \gamma$ auf $\gamma^{-1}(\mathcal{U} \cap \mathcal{V})$ von der Klasse $\mathcal{C}^l$ sein muss. Daraus folgt die Wohldefiniertheit des Differenzierbarkeitsbegriffes für Kurven auf einer hinreichend oft differenzierbaren Mannigfaltigkeit.


\subsection{Tangentialvektoren und der Tangentialbündel}

Nachdem wir nun gesehen haben, wie wir die aus der Analysis und Funktionalanalysis bekannten Differenzierbarkeitsbegriffe für Funktionen zwischen differenzierbaren Mannigfaltigkeiten und Kurven auf differenzierbaren Mannigfaltigkeiten übertragen können, wollen wir uns nun mit dem Begriff des Tangentialsvektors beschäftigen, der eine wichtige Rolle beim Vermessen der Länge einer Kurve spielen wird. 

Wir werden dabei, wie wir gleich sehen werden, die eben diskutierten Begrifflichkeiten benötigen. Im Folgenden wird angenommen, dass es sich bei der differenzierbaren Mannigfaltigkeit $(\mathcal{M}, \tau, \mathcal{A})$ um eine glatte Mannigfaltigkeit handelt. Darüberhinaus nehmen wir an, dass, falls nichts anderes behauptet wird, alle in diesem Abschnitt betrachteten Kurven entweder injektiv oder konstant sind. 

Um den Begriff des Tangentialsvektors einzuführen, betrachten wir zuerst eine glatte injektive Kurve $\gamma : (a,b) \longrightarrow \mathcal{M}$, mit $a,b \in \mathbb{R}$ mit $a < b$ und eine glatte Funktion $f : \mathcal{M} \longrightarrow \mathbb{R}$, sowie einen Punkt $p \in \gamma((a,b)) \subseteq \mathcal{M}$. Wir sind nun an der Frage interessiert, wie sich die Funktion $f$ am Punkt $p$ in Richtung $\gamma$ verändert, d.h. wir wollen anschaulich die Richtungsableitung der Funktion $f$ im Punkt $p$ in Richtung der Kurve $\gamma$ berechnen. Wie machen wir das? Die Idee ist, dass wir zuerst die stetige Funktion 
\begin{align}
    f \circ \gamma : (a,b) \longrightarrow \mathbb{R}
\end{align}
betrachten, bei welcher es sich um eine reellwertige Funktion auf dem offenen Intervall $(a,b)$ handelt. Anschaulich sagt die Funktion $f \circ \gamma$ gerade, dass wir die Funktion $f$ nur über der Kurve $\gamma((a,b))$ auswerten sollen. Sei $(\mathcal{U}, \phi) \in \mathcal{A}$ eine Karte mit $p \in \mathcal{U}$. Wegen 
\begin{align}
    f \circ \gamma = (f \circ \phi^{-1}) \circ (\phi \circ \gamma) 
\end{align}
ist $f \circ \gamma$ eine an $p$ differenzierbare Funktion. Sei $ \lambda_0 \in (a,b)$ diejenige reelle Zahl aus $(a,b)$ mit $\gamma(\lambda_0) = p$. Die Richtungsableitung der Funktion $f$ am Punkt $p$ in Richtung $\gamma$ ist dann erklärt als 
\begin{align}
    X_{\gamma, p} f := (f \circ \gamma)' (\lambda_0). \label{richtungsab.}
\end{align}
Wir bezeichnen dabei den Operator $X_{\gamma, p} : \mathcal{C}^\infty (\mathcal{M}) \longrightarrow \mathbb{R}$ als Tangentialvektor im Punkt $p$ in Richtung $\gamma$. Beachte dabei, dass wegen der Annahme, dass alle Kurven in diesem Abschnitt injektiv oder konstant seien, die Schreibweise $X_{\gamma,p}$ unproblematisch ist, da im Falle, wenn $\gamma$ nicht konstant ist, es nur ein $\lambda_0 \in (a,b)$ gibt, sodass $\gamma(\lambda_0) = p$ ist, womit wir den Ausdruck $X_{\gamma,p} f$ eindeutig in den Ausdruck $(f \circ \gamma)' (\lambda_0)$ übersetzen können. Für eine konstante Kurve hingegen wäre die Wahl von $\lambda \in (a,b)$ mit $\gamma(\lambda) = p$ irrelevant, da in diesem Falle $f \circ \gamma$ eine konstante Funktion wäre und damit $(f \circ \gamma)'(\lambda) = 0$ für alle $\lambda \in (a,b)$ gilt.

Prinzipell sind wir natürlich dennoch in der Lage, den obigen Begriff des Tangentialvektors auf nicht injektive Kurven zu übertragen. Sei beispielsweise $\eta : (a,b) \longrightarrow \mathcal{M}$ eine Kurve, sodass zwei verschiedene Zahlen $\lambda_1, \lambda_2 \in (a,b)$ existieren mit $\eta(\lambda_1) = \eta(\lambda_2) = p \in \mathcal{M}$. Wollten wir in diesem Fall den Tangentialvektor von $\eta$ am Punkt $p$ bilden, so müssten wir zusätzlich den Kurvenparameter in die Bezeichnung des Tangentialvektors aufnehmen, damit der Begriff des Tangentialvektors wohldefiniert bleibt. Sei nämlich $X_{\eta, \lambda_i, p} : \mathcal{C}^\infty (\mathcal{M}) \longrightarrow \mathbb{R}$ mit $i \in \{1,2\}$ definiert durch 
\begin{align}
    X_{\eta, \lambda_i, p} f = (f \circ \eta )' (\lambda_i),
\end{align}
so lässt sich leicht einsehen, dass im Allgemeinen $X_{\eta, \lambda_1, p} \neq X_{\eta, \lambda_2, p}$ gilt, was die zusätzliche Referenz auf den Kurvenparameter $\lambda$ in der Bezeichnung des Tangentialvektors rechtfertigt. Da wir im Folgenden aber vornehmlich injektive Kurven betrachten wollen, da dies für diese Arbeit völlig ausreicht, wollen wir den Kurvenparameter in der Bezeichnung eines Tangentialvektors einer Kurve weglassen. 

Betrachten wir nun abermals den Ausdruck \eqref{richtungsab.}, so dürfte anschaulich klar sein, warum wir die rechte Seite von \eqref{richtungsab.} als Richtungsableitung von $f$ im Punkt $p$ in Richtung $\gamma$ bezeichnen, da der Ausdruck $(f \circ \gamma)'$ (den wir wieder mit der Funktion $(f \circ \gamma)^{(1)}$ identifizieren) die Änderung der Funktionswerte von $f$ über $\gamma((a,b))$ beschreibt. Was aber eventuell nicht sofort einleuchtend wirkt, ist die Bezeichnung des Ausdruckes $X_{\gamma, p}$ als Tangentialvektor.

Interpretieren wir die Mannigfaltigkeit $\mathcal{M}$ für einen Moment als zweidimensionale glatte Fläche, eingebettet im $\mathbb{R}^3$ (Diese Interpretation ist nicht vollkommen abwegig, wie das Beispiel $6.1.$ zeigt.), so würden wir anschaulich natürlich erwarten, dass ein Tangentialvektor $\mathcal{X}_p$ am Punkt $p \in \mathcal{M}$ ein Pfeil am Punkt $p$ ist, der dort tangential zu $\mathcal{M}$ ist. D.h., dass unter anderem die Gesamtheit aller Tangentialvektoren am Punkt $p$ die Mannigfaltigkeit $\mathcal{M}$ im Punkt $p$ linear approximieren würde. Wie hängt dieses anschaulich geometrische Bild eines Tangentialvektors $\mathcal{X}_p$ mit dem eben eingeführten Begriff des Tangentialvektors $X_{\gamma, p}$ zusammen? Immerhin handelt es sich bei $X_{\gamma, p}$ um einen Differentialoperator, statt um einen geometrischen Pfeil.    

Zuerst bemerken wir, dass $X_{\gamma, p}$ ähnlich einem geometrischen Pfeil eine Orientierung oder Richtung besitzt. So ordnet $X_{\gamma, p}$ der Funktion $f \in \mathcal{C}^\infty(\mathcal{M})$ die Richtungsableitung in Richtung $\gamma$ am Punkt $p$ zu. Diese Richtung können wir mit der Richtung von $X_{\gamma, p}$ identifizieren. Darüberhinaus ist diese Richtung wegen $\gamma((a,b)) \subseteq \mathcal{M}$ am Punkt $p$ auch tangential zur Mannigfaltigkeit $\mathcal{M}$, weshalb wir damit auch den Differentialopertor $X_{\gamma, p}$ im obigem Sinne als Tangentialvektor verstehen können. 

Gleichzeitig können wir $X_{\gamma, p}$ aber auch als die Geschwindigkeit der Kurve $\gamma$ am Punkt $p$ verstehen: Wir interpretieren dazu das Argument der Kurve $\gamma$ als Zeit, d.h. $\gamma(\lambda)$ stellt den Punkt der Kurve $\gamma$ dar, an welchem wir uns zur Zeit $\lambda$ befinden. Weiter interpretieren wir die Funktion $f \in \mathcal{C}^\infty (\mathcal{M})$ als Landschaft auf $\mathcal{M}$, indem wir $f$ als Höhenkarte auf $\mathcal{M}$ verstehen. Dann ist $X_{\gamma, p} f = (f \circ \gamma)'(\lambda_0)$ die Änderung der Landschaft entlang der Kurve $\gamma$ am Punkt $p$, was anschaulich einer Geschwindigkeit entspricht, da wir über die Art der Änderung der umgebenden Landschaft feststellen können, wie wir uns auf der Kurve $\gamma$ bewegen. Die Rate der Änderung dieser Landschaft wird dabei durch die Kurve $\gamma$ vorgegeben. Das lässt sich wie folgt einsehen: Wir definieren dazu die Kurve 
\begin{align}
    \delta(\lambda) := \gamma(2 \cdot \lambda).
\end{align}
Anschaulich beschreibt $\delta$ die selbe geometrische Kurve wie $\gamma$, nur wird diese geometrische Kurve bzgl. $\delta$ doppelt so schnell durchlaufen wie $\gamma$, sofern wir den Bahnparameter $\lambda$ wieder als Zeit interpretieren. Dann gilt mittels der Kettenregel, indem wir $\lambda \longmapsto 2 \cdot \lambda$ als Funktion betrachten, dass
\begin{align}
    X_{\delta, p} f = (f \circ \delta)' \bigg(\frac{\lambda_0}{2}\bigg) = 2 \cdot (f \circ \gamma)'(\lambda_0) = 2 \cdot X_{\gamma, p} f
\end{align}
ist, d.h. bezüglich der Kurve $\delta$ durchlaufen wir den Punkt $p$ doppelt so schnell, was sich daran zeigt, dass die Änderungsrate der Landschaft $f$ verglichen mit $X_{\gamma, p} f$ doppelt so hoch ist. Aus dieser Rechnung folgt nun auch, dass die Durchlaufgeschwindigkeit in der Kurve codiert ist, wie oben behauptet. 

Damit können wir also $X_{\gamma, p}$ auch physikalisch als Geschwindikeit der Kurve $\gamma$ am Punkt $p$ auffassen. Da die Geschwindikeit eines Teilchens, welches sich auf der Kurve $\gamma$ bewegt, anschaulich immer tangential an $\mathcal{M}$ anliegt, da wegen $\gamma((a,b)) \subseteq \mathcal{M}$ die Bewegung des Teilchens auf $\mathcal{M}$ eingeschränkt ist, können wir $X_{\gamma, p}$ abermals als tangential auffassen, was zeigt, dass beide Interpretationen miteinander kompatibel sind. 

Mittels der beiden obigen Interpretationen sind wir damit in der Lage zum einen $X_{\gamma, p}$ als geometrischen Tangentialvektor zu interpretieren, und zum anderen sind wir in der Lage ihn mit einer konkrete phsikalische Interpretation zu versehen. Zusammengefasst haben wir damit die Bezeichnung von $X_{\gamma, p}$ gerechtfertigt.

Wir definieren nun den Begriff des Tangentialraumes, der die Gesamtheit aller derartigen Tangentialvektoren am Punkt $p \in \mathcal{M}$ darstellt und anschaulich, wie oben, die Mannigfaltigkeit $\mathcal{M}$ am Punkt $p$ linear approximiert.

\begin{definition}
    Sei $(\mathcal{M}, \tau, \mathcal{A})$ eine $d$-dimensionale glatte Mannigfaltigkeit und $p \in \mathcal{M}$ ein Punkt der Mannigfaltigkeit. Wir definieren den sogenannten Tangentialraum am Punkt $p$ durch
    \begin{align}
        T_p \mathcal{M} := \{ X_{\gamma, p} \:\: | \:\: &\exists a,b \in \mathbb{R} \:\: \textit{mit} \:\: a < b \:\: \textit{und} \:\: \gamma : (a,b) \longrightarrow \mathcal{M} \:\: \textit{glatt,} \nonumber \\ &\textit{sodass} \:\: \exists \lambda_0 \in (a,b) : \gamma(\lambda_0) = p  \},
    \end{align}
    wobei für $X_{\gamma, p} \in T_p \mathcal{M}$ gilt, dass
    \begin{align}
        X_{\gamma, p} f = (f \circ \gamma)'(\lambda_0) \:\:\:\:\:\:\:\: \forall f \in \mathcal{C}^\infty (\mathcal{M})
    \end{align}
    ist. Die in der Definition von $T_p \mathcal{M}$ auftauchenden glatten Kurven $\gamma : (a,b) \longrightarrow \mathcal{M}$, für die $p \in \gamma((a,b))$ gilt, sollen dabei nicht notwendigerweise injektiv sein. Um die Notation jedoch simpel zu gestalten treffen wir folgende Vereinbarung: Falls die Kurve $\gamma$ weder konstant, noch injektiv ist, so wird vorrausgesetzt, dass sich aus dem Kontext ergibt, welches $\lambda_0 \in (a,b)$ mit $\gamma(\lambda_0) = p$ genutzt werden soll, um den Ausdruck $X_{\gamma,p} f$ mit $f \in \mathcal{C}^\infty (\mathcal{M})$ zu berechnen.
\end{definition}

Die Interpretation von $T_p \mathcal{M}$ als lineare Approximation von $\mathcal{M}$ am Punkt $p \in \mathcal{M}$ deutet bereits an, dass sich $T_p \mathcal{M}$ mit einer linearer Struktur versehen lässt, bzw. $T_p \mathcal{M}$ sich zu einem $\mathbb{R}$-Vektorraum machen lässt. In der Tat ist das machbar, wie der folgende Satz zeigt:

\begin{proposition}
    Sei $(\mathcal{M}, \tau, \mathcal{A})$ eine $d$-dimensionale glatte Mannigfaltigkeit und $p \in \mathcal{M}$ ein Punkt in $\mathcal{M}$. Wir statten $T_p \mathcal{M}$ mit den folgenden Operationen aus, mit welchen $T_p \mathcal{M}$ zu einem $\mathbb{R}$-Vektorraum wird:
    \begin{itemize}
        \item[\textit{i)}] Skalarmultiplikation:
        \begin{align}
            \odot : \mathbb{R} \times T_p \mathcal{M} \longrightarrow& \: \: T_p \mathcal{M} \nonumber\\
            (\alpha, X_{\gamma, p}) \longmapsto& \: \: \alpha \odot X_{\gamma,p},
        \end{align}
        wobei $\alpha \odot X_{\gamma,p}$ definiert ist als 
        \begin{align}
            (\alpha \odot X_{\gamma,p}) f := \alpha \cdot (X_{\gamma, p} f) \:\:\:\:\:\:\:\: \forall f \in \mathcal{C}^\infty (\mathcal{M}).
        \end{align}
        \item[\textit{ii)}] Vektoraddition:
        \begin{align}
            \oplus : T_p \mathcal{M} \times T_p \mathcal{M} \longrightarrow& \: \: T_p \mathcal{M} \nonumber\\
            (X_{\gamma,p}, X_{\sigma,p}) \longmapsto& \: \: X_{\gamma,p} \oplus X_{\sigma,p},
        \end{align}
        wobei $X_{\gamma,p} \oplus X_{\gamma,p}$ definiert ist als
        \begin{align}
            (X_{\gamma,p} \oplus X_{\gamma,p}) f := X_{\gamma,p} f + X_{\sigma, p} f \:\:\:\:\:\:\:\: \forall f \in \mathcal{C}^\infty (\mathcal{M}).
        \end{align}
    \end{itemize} 
\end{proposition}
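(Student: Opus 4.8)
Der Plan ist, zunächst festzuhalten, dass die beiden in der Behauptung angegebenen Operationen $\odot$ und $\oplus$ ihre Resultate nur über deren Wirkung auf glatte Funktionen $f \in \mathcal{C}^\infty(\mathcal{M})$ festlegen. Der eigentliche Gehalt des Satzes ist daher nicht die Verifikation der Vektorraumaxiome — diese werden sich am Ende unmittelbar daraus ergeben, dass alle beteiligten Ausdrücke punktweise in $\mathbb{R}$ gebildet werden und $(\mathbb{R}, +, \cdot)$ ein Körper ist —, sondern die \emph{Wohldefiniertheit} bzw. Abgeschlossenheit der Operationen: Ich muss zeigen, dass $\alpha \odot X_{\gamma,p}$ und $X_{\gamma,p} \oplus X_{\sigma,p}$ tatsächlich wieder Elemente von $T_p \mathcal{M}$ sind, also von der Gestalt $X_{\delta, p}$ für eine geeignete glatte Kurve $\delta$ durch $p$. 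Genau hier liegt die Hauptschwierigkeit.

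Als Vorbereitung würde ich eine Karte $(\mathcal{U}, \phi) \in \mathcal{A}$ mit $p \in \mathcal{U}$ fixieren und jeden Tangentialvektor mit seinem Geschwindigkeitsvektor in der Karte identifizieren. Ist $\gamma(\lambda_0) = p$, so folgt aus $f \circ \gamma = (f \circ \phi^{-1}) \circ (\phi \circ \gamma)$ mit dem Spezialfall der Kettenregel (Satz 5.3), dass
\begin{align}
    X_{\gamma,p} f = (f \circ \gamma)'(\lambda_0) = \sum_{i=1}^d \partial_i (f \circ \phi^{-1})(\phi(p)) \cdot (c^i)'(\lambda_0), \qquad c := \phi \circ \gamma,
\end{align}
gilt. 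Damit ist $X_{\gamma,p}$ vollständig durch den Vektor $v_\gamma := (\phi \circ \gamma)'(\lambda_0) \in \mathbb{R}^d$ bestimmt, und umgekehrt lässt sich jeder Vektor $v \in \mathbb{R}^d$ als Kartengeschwindigkeit einer (etwa affin gewählten) Kurve durch $p$ realisieren. Die Aufgabe reduziert sich so darauf, zu $\gamma$ bzw. zu $\gamma$ und $\sigma$ jeweils eine glatte Kurve durch $p$ anzugeben, deren Kartengeschwindigkeit $\alpha \cdot v_\gamma$ bzw. $v_\gamma + v_\sigma$ ist.

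Für die Skalarmultiplikation würde ich die Umparametrisierung $\delta(\lambda) := \gamma(\lambda_0 + \alpha (\lambda - \lambda_0))$ auf einem passenden offenen Intervall verwenden; die Kettenregel liefert (wie bereits an der Kurve $\lambda \mapsto \gamma(2\lambda)$ im Text vorgerechnet) $v_\delta = \alpha \cdot v_\gamma$ und damit $X_{\delta,p} = \alpha \odot X_{\gamma,p}$. Für die Addition — dem eigentlichen Knackpunkt — parametrisiere ich zuerst beide Kurven so um, dass sie bei demselben Parameter, etwa $0$, durch $p$ laufen, und setze
\begin{align}
    \delta(\lambda) := \phi^{-1}\big( \phi(\gamma(\lambda)) + \phi(\sigma(\lambda)) - \phi(p) \big).
\end{align}
Da $\phi(\mathcal{U}) \subseteq \mathbb{R}^d$ offen ist und das Argument von $\phi^{-1}$ für $\lambda = 0$ gerade $\phi(p) \in \phi(\mathcal{U})$ ergibt, bleibt es nach Stetigkeit für $\lambda$ aus einer hinreichend kleinen Umgebung von $0$ in $\phi(\mathcal{U})$, sodass $\delta$ wohldefiniert und als Komposition glatter Abbildungen (Folgerung 5.1, Bemerkung 6.1) selbst glatt ist; Injektivität ist nicht nötig, da die Definition von $T_p \mathcal{M}$ nichtinjektive Kurven ausdrücklich zulässt. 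Die Summenregel der Ableitung gibt dann $(\phi \circ \delta)'(0) = v_\gamma + v_\sigma$, also $X_{\delta, p} = X_{\gamma, p} \oplus X_{\sigma,p}$.

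Nachdem die Abgeschlossenheit feststeht, würde ich zum Schluss die Vektorraumaxiome abhaken: Das neutrale Element ist der Tangentialvektor der konstanten Kurve $\gamma \equiv p$, der jede Funktion auf $0$ schickt; das additive Inverse zu $X_{\gamma,p}$ ist $(-1) \odot X_{\gamma,p}$. Kommutativität und Assoziativität von $\oplus$ sowie die Distributiv- und Verträglichkeitsgesetze von $\odot$ folgen, da sich beide Seiten jeder Gleichung nach Auswertung an beliebigem $f \in \mathcal{C}^\infty(\mathcal{M})$ auf eine wahre Identität in $\mathbb{R}$ reduzieren. Die einzige nichtroutinemäßige Stelle bleibt damit die Kurvenkonstruktion für $\oplus$; alles Weitere ist reine Buchführung.
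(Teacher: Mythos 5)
Dein Vorschlag ist korrekt und verfolgt im Kern denselben Weg wie der Beweis der Arbeit: Abgeschlossenheit von $\odot$ über die Umparametrisierung $\lambda \mapsto \gamma(\alpha\lambda)$ bzw. die konstante Kurve, Abgeschlossenheit von $\oplus$ über exakt dieselbe Kartenkonstruktion $\delta = \phi^{-1}(\phi\circ\gamma + \phi\circ\sigma - \phi(p))$ nach Angleichung der Parameter, und anschließende Reduktion der Vektorraumaxiome auf die Körpereigenschaften von $\mathbb{R}$ durch Auswertung an beliebigem $f \in \mathcal{C}^\infty(\mathcal{M})$. Dein Hinweis, dass das Argument von $\phi^{-1}$ nur für $\lambda$ nahe $0$ in $\phi(\mathcal{U})$ liegt und $\delta$ deshalb auf ein hinreichend kleines Intervall einzuschränken ist, ist sogar eine Präzisierung, die der Beweis der Arbeit stillschweigend übergeht.
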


\begin{proof}
    Zuerst müssen wir zeigen, dass die oben erklärte Skalarmultiplikation und Vektoraddition abgeschlossen sind, d.h. wir müssen zeigen, dass $\alpha \odot X_{\gamma,p}$ und $X_{\gamma, p} \oplus X_{\sigma,p}$ für alle $\alpha \in \mathbb{R}$ und alle $X_{\gamma,p}, X_{\sigma,p} \in T_p \mathcal{M}$ mit $\gamma : (a,b) \longrightarrow \mathcal{M}$ und $\sigma: (c,d) \longrightarrow \mathcal{M}$ tatsächlich wieder in $T_p \mathcal{M}$ liegen. Wir wollen dazu glatte Kurven $\delta : (e,f) \longrightarrow \mathcal{M}$ und $\eta : (g,h) \longrightarrow \mathcal{M}$ konstruieren, sodass 
    \begin{align}
        \alpha \odot X_{\gamma,p} = X_{\delta,p} \:\:\: \textit{und} \:\:\: X_{\gamma, p} \oplus X_{\sigma,p} = X_{\eta,p}
    \end{align}
    gilt. Sei dazu zuerst $\alpha \in \mathbb{R} \setminus \{ 0 \}$ und $\gamma(\lambda_0) = p$. Dann ist $\delta$ gegeben durch die glatte Kurve $\delta : (\frac{a}{\alpha}, \frac{b}{\alpha}) \longrightarrow \mathcal{M}$ mit $\delta(\lambda) = \gamma(\alpha \cdot \lambda)$. Dann gilt $\delta(\frac{\lambda_0}{\alpha}) = \gamma(\lambda_0) = p$ und mittels der multidimensionalen Kettenregel
    \begin{align}
        X_{\delta,p} f = (f \circ \delta)'\bigg(\frac{\lambda_0}{\alpha}\bigg) = \alpha \cdot (f \circ \gamma)' (\lambda_0) = \alpha \cdot X_{\gamma,p} f \:\:\:\: \forall f \in \mathcal{C}^\infty (\mathcal{M})
    \end{align}
    und damit $\alpha \odot X_{\gamma,p} = X_{\delta,p}$.

    Als Nächstes betrachten wir den Fall, dass $\alpha = 0$ ist. Dann ist $\delta$ gegeben durch die glatte Kurve $\delta : (a,b) \longrightarrow \mathcal{M}$ mit $\delta(\lambda) = \gamma(\lambda_0) = p$. Dann gilt, da $f \circ \delta$ eine konstante Funktion ist, dass 
    \begin{align}
        X_{\delta,p} f = (f \circ \delta)'(\lambda_0) = 0 = 0 \cdot X_{\gamma,p} f \:\:\:\: \forall f \in \mathcal{C}^\infty (\mathcal{M})
    \end{align}
    und damit $0 \odot X_{\gamma,p} = X_{\delta,p}$ ist.

    Betrachten wir nun die Vektoraddition. Wir konstruieren die Kurve $\eta$ folgendermaßen: Zuerst nehmen wir ohne Beschränkung der Allgemeinheit an, dass die Definitionsbereiche der Kurven $\gamma : (a,b) \longrightarrow \mathcal{M}$ und $\sigma : (c,d) \longrightarrow \mathcal{M}$ einen nichtleeren Schnitt besitzen, d.h. $(a,b) \cap (c,d) = (g,h) \neq \emptyset$. Insbesondere soll dabei gelten, dass es ein $\lambda_0 \in (g,h)$ gibt, sodass $\gamma(\lambda_0) = \sigma(\lambda_0) = p$ ist. Andernfalls verschieben wir eines der beiden Intervalle, z.B. das Intervall $(a,b)$, um eine Konstante $\alpha \in \mathbb{R}$ und definieren dann die Kurve $\Tilde{\gamma} : (a + \alpha, b + \alpha) \longrightarrow \mathcal{M}$ mit $\Tilde{\gamma}(\lambda) = \gamma(\lambda - \alpha)$, sodass $(a + \alpha, b + \alpha) \cap (c,d) \neq \emptyset$. 
    
    Beachte, dass in diesem Fall $\gamma((a,b)) = \Tilde{\gamma}((a+\alpha, b+\alpha))$ gilt und die Geschwindigkeit von $\Tilde{\gamma}$ am Punkt $p$ identisch ist mit der Geschwindigkeit von $\gamma$ am Punkt $p$. Gilt außerdem $\sigma(\lambda_0) = \gamma(\lambda_1) = p$ mit $\lambda_0 \in (c,d)$ und $\lambda_1 \in (a,b)$, so folgt mit der Wahl $\alpha = \lambda_0 - \lambda_1$, dass $\Tilde{\gamma}(\lambda_0) = \gamma(\lambda_0 - \lambda_0 + \lambda_1) = \gamma(\lambda_1) = p$, womit wir obiges Vorgehen gerechtfertigt haben.

    Sei nun weiter $(\mathcal{U}, \phi) \in \mathcal{A}$ eine Karte die $p \in \mathcal{M}$ enthält. Dann definieren wir die Kurve $\eta : (g,h) \longrightarrow \mathcal{M}$ durch 
    \begin{align}
        \eta(\lambda) := (\phi^{-1} (\phi (\gamma(\lambda)) + \phi(\sigma(\lambda)) - \phi(p)).
    \end{align}
    Beachte, dass wegen 
    \begin{align}
        \phi \circ \eta = \phi \circ \gamma + \phi \circ \sigma - \phi(p) \label{eta}
    \end{align}
    die Funktion $\eta$ selbst eine glatte Funktion mit $\eta(\lambda_0) = \phi^{-1}(\phi(p) + \phi(p) - \phi(p)) = p$ ist. Beachte dabei weiter, dass die Addition in \eqref{eta} punktweise definiert ist. Wir berechnen nun den Ausdruck $X_{\eta, p} f$ für ein beliebiges $f \in \mathcal{C}^\infty (\mathcal{M})$: Sei dazu $\phi = (x^1, ..., x^d)$, dann folgt mit der Identität \eqref{Spezialfall Kettenregel} aus Satz $5.3.$, dass
    \begin{align}
        X_{\eta,p} f =& \: X_{\eta, \lambda_0, p} f = (f \circ \eta)'(\lambda_0) \nonumber \\ =& \: \big( ( f \circ \phi^{-1} ) \circ ( \phi \circ \gamma + \phi \circ \sigma - \phi(p) ) \big)'(\lambda_0) \nonumber \\ =& \: \sum_{i = 1}^d \partial_i (f \circ \phi^{-1}) (\phi(p)) \cdot (x^{i} \circ \gamma + x^{i} \circ \sigma - x^{i}(p))' (\lambda_0) \nonumber \\ =& \: \sum_{i = 1}^d \partial_i (f \circ \phi^{-1}) (\phi(p)) \cdot (x^{i} \circ \gamma)'(\lambda_0) + \sum_{i = 1}^d \partial_i (f \circ \phi^{-1}) (\phi(p)) \cdot (x^{i} \circ \sigma)'(\lambda_0) \nonumber \\ =& \: ((f \circ \phi^{-1}) \circ (\phi \circ \gamma))'(\lambda_0) + ((f \circ \phi^{-1}) \circ (\phi \circ \sigma))'(\lambda_0) \nonumber \\ =& \: (f \circ \gamma)'(\lambda_0) + (f \circ \sigma)'(\lambda_0) \nonumber \\ =& \: X_{\gamma,p} f + X_{\sigma,p} f \nonumber \\ =& \: (X_{\gamma,p} \oplus X_{\sigma,p}) f.
    \end{align}
    Da $f \in \mathcal{C}^\infty (\mathcal{M})$ beliebig war folgt, dass $X_{\eta,p} = X_{\gamma,p} \oplus X_{\sigma,p}$ ist.

    Die anderen Vektorraum-Axiome folgen dabei direkt aus den Körpereigenschaften von $(\mathbb{R}, +, \cdot)$. So folgt beispielsweise die Kommutativität von $\oplus$ aus der Kommutativität der reellen Addition $+$, wie sich über 
    \begin{align}
        (X_{\gamma,p} \oplus X_{\sigma,p}) f =& \: X_{\gamma,p} f + X_{\sigma,p} f \nonumber \\ =& \: X_{\sigma,p} f + X_{\gamma,p} f \nonumber \\ =& \: (X_{\sigma,p} \oplus X_{\gamma,p}) f \:\:\:\: \forall f \in \mathcal{C}^\infty (\mathcal{M})
    \end{align}
    einsehen lässt. Auf diese Weise wird sofort ersichtlich, dass $(T_p \mathcal{M}, \oplus)$ eine abelsche Gruppe ist. Auf analoge Weise lassen sich die Eigenschaften der Skalarmultiplikation $\odot$ und deren Verträglichkeit mit der Vektoraddition $\oplus$ nachprüfen.
\end{proof}

Im Nachfolgenden werden wir wieder statt $\odot$ die Schreibweise $\cdot$ und statt $\oplus$ die Schreibweise $+$ nutzen.

Wir können nun darüberhinus noch zeigen, dass $T_p (\mathcal{M})$ als $\mathbb{R}$-Vektorraum die Dimension $d$ besitzt, falls $(\mathcal{M}, \tau, \mathcal{A})$ eine glatte $d$-dimensionale Mannigfaltigkeit sein sollte, was auch unserer Intuition entspricht, wenn $T_p \mathcal{M}$ anschaulich die Mannigfaltigkeit $\mathcal{M}$ am Punkt $p \in \mathcal{M}$ linear approximieren soll.

\begin{proposition}
    Sei $(\mathcal{M}, \tau, \mathcal{A})$ eine $d$-dimensionale glatte Mannigfaltigkeit und $p \in \mathcal{M}$ ein Punkt in $\mathcal{M}$. Dann besitzt der Vektorraum $T_p \mathcal{M}$ die Dimension $d$.
\end{proposition}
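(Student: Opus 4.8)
Der Plan ist, eine explizite Basis von $T_p\mathcal{M}$ aus $d$ sogenannten Koordinaten-Tangentialvektoren anzugeben und anschließend über Satz $4.3.$ und Definition $4.8.$ zu schließen, dass die Dimension gerade $d$ ist. Zuerst würde ich eine Karte $(\mathcal{U}, \phi) \in \mathcal{A}$ mit $p \in \mathcal{U}$ fixieren und $\phi = (x^1, \ldots, x^d)$ schreiben. Da $\phi(\mathcal{U}) \subseteq \mathbb{R}^d$ offen ist, gibt es ein $\epsilon > 0$ mit $\phi(p) + \lambda \cdot e_i \in \phi(\mathcal{U})$ für alle $|\lambda| < \epsilon$ und alle kanonischen Einheitsvektoren $e_i$. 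Damit sind die Koordinatenkurven $\gamma_i : (-\epsilon, \epsilon) \longrightarrow \mathcal{M}$, definiert durch $\gamma_i(\lambda) := \phi^{-1}(\phi(p) + \lambda \cdot e_i)$, wohldefinierte glatte Kurven (als Komposition von $\phi^{-1}$ mit einer affinen Abbildung) mit $\gamma_i(0) = p$, und ich setze $\partial_i|_p := X_{\gamma_i, p} \in T_p\mathcal{M}$.

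Das Erzeugen von $T_p\mathcal{M}$ folgt aus der Kettenregel (Satz $5.3.$). Für ein beliebiges $X_{\gamma, p} \in T_p\mathcal{M}$ mit $\gamma(\lambda_0) = p$ und $f \in \mathcal{C}^\infty(\mathcal{M})$ schreibe ich $f \circ \gamma = (f \circ \phi^{-1}) \circ (\phi \circ \gamma)$ und erhalte mit \eqref{Spezialfall Kettenregel}
\begin{align}
    X_{\gamma,p} f = (f \circ \gamma)'(\lambda_0) = \sum_{i=1}^d \partial_i (f \circ \phi^{-1})(\phi(p)) \cdot (x^i \circ \gamma)'(\lambda_0). \nonumber
\end{align}
Dieselbe Rechnung für $\gamma_i$ liefert $\partial_i|_p f = \partial_i(f \circ \phi^{-1})(\phi(p))$, sodass $X_{\gamma,p} = \sum_{i=1}^d v^i \cdot \partial_i|_p$ mit $v^i := (x^i \circ \gamma)'(\lambda_0)$ gilt. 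Da jedes Element von $T_p\mathcal{M}$ von dieser Form ist und $T_p\mathcal{M}$ nach Satz $6.1.$ ein $\mathbb{R}$-Vektorraum ist, spannen die $\partial_i|_p$ ganz $T_p\mathcal{M}$ auf.

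Für die lineare Unabhängigkeit nehme ich $\sum_{i=1}^d \alpha_i \cdot \partial_i|_p = \mathbf{0}$ an und werte diesen Operator auf der Koordinatenfunktion $x^j$ aus. Wegen $\partial_i|_p x^j = \partial_i(x^j \circ \phi^{-1})(\phi(p)) = \delta_i^j$ (da $x^j \circ \phi^{-1}$ die $j$-te Koordinatenprojektion auf $\phi(\mathcal{U})$ ist) folgt $\alpha_j = 0$ für alle $j$. Der Hauptaufwand steckt genau hier: Ein Tangentialvektor operiert gemäß Definition $6.10.$ auf global glatten Funktionen $f \in \mathcal{C}^\infty(\mathcal{M})$, während $x^j$ nur auf $\mathcal{U}$ erklärt ist. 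Ich würde deshalb zunächst die Lokalität der Tangentialvektoren festhalten: Stimmen $f, g \in \mathcal{C}^\infty(\mathcal{M})$ auf einer offenen Umgebung von $p$ überein, so stimmen wegen der Stetigkeit jeder Kurve $\gamma$ durch $p$ auch $f \circ \gamma$ und $g \circ \gamma$ nahe $\lambda_0$ überein, woraus $X_{\gamma,p} f = X_{\gamma,p} g$ folgt. Anschließend müsste $x^j$ durch eine global glatte Funktion ersetzt werden, die in einer Umgebung von $p$ mit $x^j$ übereinstimmt; deren Existenz erfordert eine glatte Abschneidefunktion mit Träger in $\mathcal{U}$, ein Hilfsmittel, das im bisherigen Text nicht bereitsteht. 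Dieser Fortsetzungsschritt ist somit die eigentliche Hürde und müsste separat bereitgestellt werden.

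Sind Erzeugung und Unabhängigkeit gezeigt, so ist $\{\partial_1|_p, \ldots, \partial_d|_p\}$ nach Satz $4.3.$ eine Basis von $T_p\mathcal{M}$, und ihre Kardinalität $d$ ist nach Definition $4.8.$ die Dimension; es folgt $\dim(T_p\mathcal{M}) = d$.
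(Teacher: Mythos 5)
Dein Vorschlag ist korrekt und folgt im Wesentlichen genau dem Weg der Arbeit: Die Koordinatenkurven $\gamma_i(\lambda) = \phi^{-1}(\phi(p) + \lambda \cdot e_i)$ liefern die Vektoren $\big(\frac{\partial}{\partial x^{i}}\big)_p$, das Erzeugen folgt aus der Kettenregel (Satz $5.3.$), und die lineare Unabhängigkeit folgt durch Auswertung auf den Koordinatenfunktionen $x^j$ wegen $x^j \circ \phi^{-1} = \textit{proj}^j$. Der einzige Unterschied betrifft die von dir als eigentliche Hürde markierte Stelle. Die Arbeit umgeht sie, indem sie die Operatoren $\big(\frac{\partial}{\partial x^{i}}\big)_p$ unmittelbar auf die nur auf $\mathcal{U}$ erklärten Funktionen $x^j$ anwendet: Die definierende Formel $\big(\frac{\partial}{\partial x^{i}}\big)_p f = \partial_i (f \circ \phi^{-1})(\phi(p))$ benötigt $f$ nur auf $\mathcal{U}$ (genauer: nahe $p$), sodass $\big(\frac{\partial}{\partial x^{i}}\big)_p x^j = \delta^j_i$ ohne jede Fortsetzung sinnvoll ist; gerechtfertigt wird das durch dieselbe Lokalitätsbeobachtung, die du selbst beweist. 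Besteht man dagegen -- wie du -- auf dem Buchstaben von Definition $6.10.$, wonach Tangentialvektoren Operatoren auf $\mathcal{C}^\infty(\mathcal{M})$ sind, so ist deine Diagnose zutreffend: Man benötigt dann eine global glatte Funktion, die nahe $p$ mit $x^j$ übereinstimmt, also eine Abschneidefunktion (deren Konstruktion im Kartenbild Standard ist und die Hausdorff-Eigenschaft von $\mathcal{M}$ benutzt, damit kompakte Mengen abgeschlossen sind); diese wird in der Arbeit nirgends bereitgestellt -- der dortige Beweis übergeht den Punkt stillschweigend und schreibt sogar $x^{i} \in \mathcal{C}^\infty(\mathcal{M})$, obwohl $x^{i}$ nur auf $\mathcal{U}$ definiert ist. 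Dein Beweis ist also vollständig, sobald man entweder die Operatorwirkung wie die Arbeit implizit lokal liest oder die Abschneidefunktion nachliefert; insofern bist du an dieser Stelle sogar sorgfältiger als die Vorlage.
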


\begin{proof}
    Wir betrachten eine beliebige Karte $(\mathcal{U}, \phi) \in \mathcal{A}$, die den Punkt $p \in \mathcal{M}$ enthält, wobei $\phi = (x^1, ..., x^d)$ ist. Weiter definieren wir den Differentialoperator $\big(\frac{\partial}{\partial x^{i}}\big)_p : \mathcal{C}^\infty (\mathcal{M}) \longrightarrow \mathbb{R}$ mit $i \in \{ 1, ..., d \}$ durch
     \begin{align}
        \bigg( \frac{\partial}{\partial x^{i}} \bigg)_p f = \partial_i (f \circ \phi^{-1}) (\phi(p)) \:\:\:\:\:\:\:\: \forall f \in \mathcal{C}^\infty (\mathcal{M}). \label{basisinduzierter Tangentialvektor}
    \end{align}
    Wir zeigen zuerst, dass für alle $i \in \{ 1, ..., d \}$ gilt, dass $\big(\frac{\partial}{\partial x^{i}}\big)_p \in T_p \mathcal{M}$ ist.  Sei dazu $\phi(p) = (y^1,...,y^d) \in \mathbb{R}^d$ und 
    \begin{align}
        \eta_i : (a_i, b_i) \longrightarrow \mathcal{M},
    \end{align}
    für $i \in \{ 1, ..., d \}$ eine Kurve, definiert durch
    \begin{align}
        (a_i, b_i) \ni \lambda \longmapsto \phi^{-1}(y^1, ..., y^{i-1}, \underbrace{\lambda}_{\textit{$i$-te Stelle}}, y^{i+1}, ..., y^d).
    \end{align}
    Dabei sind die Intervalle $(a_i, b_i) \subseteq \mathbb{R}$ jeweils so gewählt, dass
    \begin{align}
        \{ (y^1, ..., y^{i-1}. \lambda, y^{d+1}, ..., y^d) \:\: | \:\: \lambda \in (a_i, b_i) \} \subseteq \phi(\mathcal{U})
    \end{align}
    und $y^{i} \in (a_i, b_i)$ gilt. Beachte, dass damit $\eta_i((a_i,b_i)) \subseteq \mathcal{U}$ ist und mit 
    \begin{align}
        (\phi \circ \eta_i)(\lambda) =& \: (\phi \circ \phi^{-1}) (y^1, ..., y^{i-1}, \lambda, y^{i+1}, ..., y^d) \nonumber \\ =& \: (y^1, ..., y^{i-1}, \lambda, y^{i+1}, ..., y^d)
    \end{align}
    gemäß der Definition $6.9.$ folgt, dass die Kurve $\eta_i$ für alle $i \in \{ 1, ..., d \}$  eine glatte Kurve ist. Darüberhinaus gilt weiter für alle $i \in \{ 1, ..., d \}$, dass $\eta_i$ injektiv ist, da $\phi^{-1}$ injektiv ist, und $\eta_i(y^{i}) = p$ ist. Es gilt nun mit $\phi = (x^1, ..., x^d)$ und der Abbildung
    \begin{align}
        \textit{proj}^k : \mathbb{R}^d \longrightarrow \mathbb{R}, 
    \end{align}
    definiert durch $\mathbb{R}^d \ni v = (v^1, ..., v^{k-1}, v^k, v^{k+1}, ..., v^d) \longmapsto \textit{proj}^k(v) = v^k$, wobei $k \in \{ 1, ..., d \}$ ist, dass
    \begin{align}
        (x^k \circ \eta_i)(\lambda) =& \: x^k(\eta_i(\lambda)) \nonumber \\ =& \: x^k(\phi^{-1} (y^1, ..., \lambda, ..., y^d)) \nonumber \\ =& \: \textit{proj}^k(\phi(\phi^{-1}(y^1, ..., \lambda, ..., y^d))) \nonumber \\ =& \: \textit{proj}^k(y^1, ..., \lambda, ..., y^d) \nonumber \\ =& \: \left\{\begin{array}{ll} y^k, & k \neq i \\ \lambda, & k = i \end{array}\right. 
    \end{align}
    ist. Dann gilt weiter für ein beliebiges $f \in \mathcal{C}^\infty (\mathcal{M})$, dass
    \begin{align}
        X_{\eta_i, p} f =& \: (f \circ \eta_i)'(y^{i}) \nonumber \\ =& \: ((f \circ \phi^{-1}) \circ (\phi \circ \eta_i))'(y^{i}) \nonumber \\ =& \: \sum_{k=1}^d \partial_k (f \circ \phi^{-1})(\phi(p)) \cdot (x^k \circ \eta_i)'(y^{i}) \nonumber \\ =& \: \partial_i (f \circ \phi^{-1}) (\phi(p)) \nonumber \\ =& \bigg( \frac{\partial}{\partial x^{i}}  \bigg)_p f
    \end{align}
    und damit 
    \begin{align}
        X_{\eta_i, p} = \bigg( \frac{\partial}{\partial x^{i}} \bigg)_p
    \end{align}
    ist. Damit ist bewiesen, dass für alle $i \in \{ 1, ..., d \}$ gilt, dass $\big( \frac{\partial}{\partial x^{i}} \big)_p \in T_p \mathcal{M}$ ist. Sei nun $X_{\gamma,p} \in T_p \mathcal{M}$ ein beliebiger Tangentialvektor in $T_p \mathcal{M}$. Dann gilt mit der multidimensionalen Kettenregel und im speziellen mit der Identität \eqref{Spezialfall Kettenregel}, dass
    \begin{align}
        X_{\gamma,p} f =& \: (f \circ \gamma)'(\lambda_0) \nonumber \\ =& \: ((f \circ \phi^{-1}) \circ (\phi \circ \gamma))'(\lambda_0) \nonumber \\ =& \: \sum_{i = 1}^d \partial_i (f \circ \phi^{-1}) (\phi(p)) \cdot (x^{i} \circ \gamma)'(\lambda_0)  
    \end{align}
    ist. Wir beachten, dass für alle $i \in \{ 1, ..., d \}$ der Ausdruck $(x^{i} \circ \gamma)'(\lambda_0)$ eine feste reelle Zahl ist. Wir setzen
    \begin{align}
        X^{i} := (x^{i} \circ \gamma)'(\lambda_0) \in \mathbb{R} \:\:\:\: \forall i \in \{ 1, ..., d \}. \label{Vektorkomponente}
    \end{align}
    Mit dieser Setzung folgt
    \begin{align}
        X_{\gamma, p} f = \sum_{i = 1}^d X^{i} \cdot \partial_i (f \circ \phi^{-1}) (\phi(p)). \label{Basisentwicklung}
    \end{align}
    Mittels \eqref{basisinduzierter Tangentialvektor} folgt für die Gleichung \eqref{Basisentwicklung} die Identität
    \begin{align}
        X_{\gamma,p} f = \sum_{i = 1}^d X^{i} \cdot \bigg(  \frac{\partial}{\partial x^{i}} \bigg)_p f \:\:\:\:\:\:\:\: \forall f \in \mathcal{C}^\infty (\mathcal{M})
    \end{align}
    und damit 
    \begin{align}
        X_{\gamma, p} = \sum_{i = 1}^d X^{i} \cdot \bigg(  \frac{\partial}{\partial x^{i}} \bigg)_p.\label{Linearkombinat.}
    \end{align}
    Die rechte Seite von \eqref{Linearkombinat.} entspricht dabei einer Linearkombination in $T_p \mathcal{M}$ und zeigt, da $X_{\gamma,p} \in T_p \mathcal{M}$ beliebig war, dass sich jedes Element in $T_p \mathcal{M}$ über eine derartige Linearkombination darstellen lässt. Das impliziert, dass die Menge $\{ \big( \frac{\partial}{\partial x^1} \big)_p, ..., \big( \frac{\partial}{\partial x^d} \big)_p \} \subseteq T_p \mathcal{M}$ den ganzen Tangentialraum $T_p \mathcal{M}$ aufspannt, in Zeichen 
    \begin{align}
        \textit{span}_\mathbb{R}\bigg( \bigg\{  \bigg( \frac{\partial}{\partial x^1} \bigg)_p, ..., \bigg( \frac{\partial}{\partial x^d} \bigg)_p  \bigg\} \bigg) = T_p \mathcal{M}.
    \end{align}
    Können wir nun noch zeigen, dass die Menge  $\{ \big( \frac{\partial}{\partial x^1} \big)_p, ..., \big( \frac{\partial}{\partial x^d} \big)_p \}$ darüber hinaus linear unabhängig ist, so würde folgen, dass es sich bei dieser Menge um eine Basis vom $T_p \mathcal{M}$ handelt, was wiederum implizieren würde, dass der Tangentialraum $T_p \mathcal{M}$ die Dimension $d$ besäße.

     Wir zeigen also noch, dass die Menge $\{ \big( \frac{\partial}{\partial x^1} \big)_p, ..., \big( \frac{\partial}{\partial x^d} \big)_p \}$ linear unabhängig ist: Zuerst bemerken wir, dass, wenn wir wie in Bemerkung $6.1.$ den $\mathbb{R}^d$ wieder als glatte Mannigfaltigkeit verstehen, es sich bei den Abbildungen 
    \begin{align}
        x^{i} : \mathcal{U} \longrightarrow x^{i}(\mathcal{U}) \subseteq \mathbb{R},
    \end{align}
    mit $i \in \{ 1, ..., d \}$, um glatte Abbildungen handelt, was das folgende Diagramm zeigt:

    \begin{center}
        \setlength{\unitlength}{2pt}
        \begin{picture}(150,50)
            \put(15,0){$\mathcal{U}$} 
            \put(95,0){$x^{i}(\mathcal{U})$}
            \put(11,32){$\phi(\mathcal{U})$}
            \put(22,2){\vector(1,0){71}} \put(58,6){$x^{i}$}
            \put(25,33){\vector(1,0){68}} \put(39,37){$\textit{id}_{x^{i}(\mathcal{U})} \circ x^{i} \circ \phi^{-1}$} \put(95,32){$x^{i}(\mathcal{U})$}
            \put(17,6){\vector(0,2){23}} \put(10,16){$\phi$}
            \put(100,6){\vector(0,2){23}} \put(102,16){$\textit{id}_{x^{i}(\mathcal{U})}$}
        \end{picture}
    \end{center}
    Wegen 
    \begin{align}
        id_{x^{i}(\mathcal{U})} \circ x^{i} \circ \phi^{-1} =& \: x^{i} \circ \phi^{-1} \nonumber \\ =& \: \textit{proj}^{i} \circ \phi \circ \phi^{-1} \nonumber \\ =& \: \textit{proj}^{i}
    \end{align}
    folgt für alle $i \in \{ 1, ..., d \}$ gemäß der Definition $6.8.$ explizit die Glattheit der Abbildungen $x^{i}$. Damit können wir nun aber die Abbildung $x^{i} \in \mathcal{C}^\infty (\mathcal{M})$ in die Differentialoperatoren $\big(  \frac{\partial}{\partial x^j} \big)_p$ als Input geben. Sei nun 
    \begin{align}
        \mathbf{0}_{T_p \mathcal{M}} = \sum_{i = 1}^d \alpha^{i} \cdot \bigg( \frac{\partial}{\partial x^{i}} \bigg)_p
    \end{align}
    mit $\alpha^1, ..., \alpha^d \in \mathbb{R}$ gegeben, wobei $\mathbf{0}_{T_p \mathcal{M}}$ das neutrale Element bzw. der Nullvektor von $T_p \mathcal{M}$ ist. Dann gilt für $j \in \{ 1, ..., d\}$
    \begin{align}
        0 =& \: \sum_{i = 1}^d \alpha^{i} \cdot \bigg( \frac{\partial}{\partial x^{i}} \bigg)_p x^j \nonumber \\ =& \: \sum_{i = 1}^d \alpha^{i} \cdot \partial_i( \underbrace{x^j \circ \phi^{-1}}_{\textit{proj}^j})(\phi(p)) \nonumber \\ =& \: \sum_{i = 1}^d \alpha^{i} \cdot \delta^j_i = \alpha^j.
    \end{align}
    und damit $\alpha^j = 0$ für alle $j \in \{ 1, ..., d \}$. Daraus folgt die lineare Unabhängigkeit der Menge $\{ \big( \frac{\partial}{\partial x^1} \big)_p, ..., \big( \frac{\partial}{\partial x^d} \big)_p \}$, womit diese eine Basis von $T_p \mathcal{M}$ darstellt. Da die Länge oder Kardinalität dieser Menge $d$ beträgt, folgt, dass $T_p \mathcal{M}$ ein $d$-dimensionaler $\mathbb{R}$-Vektorraum ist.
\end{proof}

\begin{remark}
    Oft wird die Menge $\{ \big( \frac{\partial}{\partial x^1} \big)_p, ..., \big( \frac{\partial}{\partial x^d} \big)_p \} \subseteq T_p \mathcal{M}$ als die durch die Karte $(\mathcal{U}, \phi) \in \mathcal{A}$ induzierte Basis bezeichnet. Die Elemente der Basis bezeichnen wir als die von der Karte $(\mathcal{U}, \phi)$ induzierten Tangentialvektoren.
\end{remark}

Im Folgenden werden wir uns die Tangentialräume $T_p \mathcal{M}$ stets mit dieser Vektorraumstruktur versehen denken. Die Tatsache, dass wir die Tangentialräume mit einer derartigen linearen Struktur ausstatten können, hat dabei nicht nur Konsequenzen für unsere geometrische Interpretation, sondern auch technische Vorteile, da wir so beispeilsweise auch in der Lage sind, den Dualraum des Tangentialraumes $T_p \mathcal{M}$ zu erklären. Dieser wird auch Kotangentialraum am Punkt $p \in \mathcal{M}$ genannt und kurz mit $T_p^* \mathcal{M}$ bezeichnet. Diese Kotangentialräume werden später wiederrum eine entscheindende Rolle in der Definition der sogenannten Tensorfelder spielen, die wir unter anderem benötigen, um einen anschaulichen Längenbegriff für Kurve auf einer Mannigfaltigkeit zu erklären.

\begin{remark}
    Nach dem wir nun mit den beiden vorrangegangen Sätzen die Plausibilität unserer geometrischen Interpretation der Tangentialvektoren $X_{\gamma,p} \in T_p \mathcal{M}$ begründet hatten, können wir uns nun noch kurz die Frage stellen, warum wir unter einem Tangentialvektor $X_{\gamma,p}$ einen Differentialoperator verstehen, der auf $\mathcal{C}^\infty (\mathcal{M})$ erklärt ist. 
    
    Tatsächlich können wir $X_{\gamma,p}$ nämlich auch als einen Operator der Form $\mathcal{C}^1(\mathcal{M}) \longrightarrow \mathbb{R}$ interpretieren, da nach obiger Konstruktion und den beiden vorrangegangenen Beweisen keine höheren Regularitäten als $\mathcal{C}^1$ an die Funktionen $f$ gestellt werden brauchen. Selbiges gilt im Prinzip auch für die Kurve $\gamma$, die in der Definition von $X_{\gamma,p}$ verbaut ist. 
    
    Das wir dennoch Tangentialvektoren nur für glatte Kurven als Differentialoperatoren erklären, die auf glatte Funktionen $f \in \mathcal{C}^\infty (\mathcal{M})$ wirken, hat zwei Gründe: Zum einen wollen wir uns bei konkreten Berechnungen keine Gedanken um die Regularitäten der beteiligten Kurven und Funktionen machen. Zum anderen werden aus praktischen Gründen in der modernen Differentialgeometrie Tangentialvektoren am Punkt $p \in \mathcal{M}$ einer differenzierbaren Mannigfaltigkeit $(\mathcal{M}, \tau, \mathcal{A})$ oft auch als sogenannte am Punkt $p$ sitzende, lineare Derivationen
    \begin{align}
        \mathcal{D}_p : (\mathcal{C}^\infty (\mathcal{M}), \odot) \longrightarrow \mathbb{R}
    \end{align}
    definiert, wobei $\mathcal{C}^\infty (\mathcal{M})$ mit der punktweisen Multiplikation $\odot$ versehen wurde, bzgl. derer die Menge $\mathcal{C}^\infty (\mathcal{M})$ abgeschlossen ist \cite{lee2012smooth}. Eine Derivation am Punkt $p$ der Form $(\mathcal{C}^\infty (\mathcal{M}), \odot) \longrightarrow \mathbb{R}$ ist dabei eine Abbildung $\mathcal{D}_p : \mathcal{C}^\infty (\mathcal{M}) \longrightarrow \mathbb{R}$, die der folgenden Leibnitzregel genügt, die die aus der reellen Analysis bekannte Produktregel \cite{heuser2013lehrbuch} nachahmt:
    \begin{align}
        \mathcal{D}_p (f \odot g) = \mathcal{D}_p f \cdot g(p) + f(p) \cdot \mathcal{D}_p(g) \:\:\:\:\:\:\:\: \forall f,g \in \mathcal{C}^\infty (\mathcal{M}). 
    \end{align}
    Im Gegensatz zum Tangentialvektor $X_{\gamma,p}$ ist dabei die konkrete Gestalt einer allgemeinen, am Punkt $p$ sitzenden, linearen Derivation $\mathcal{D}_p$ nicht gegeben, da es sich dabei salopp gesprochen nur um eine abstrakte Abbildung auf der Menge der differenzierbaren Funktionen handelt, die der Leibnitzregel genügt. Es lässt sich darüberhinaus zeigen, dass die Menge aller linearen Derivationen $\mathcal{D}_p$ am Punkt $p$ im Allgemeinen nur dann einen endlichdimensionalen Vektorraum bilden, dessen Dimension mit der Dimension der Mannigfaltigkeit $(\mathcal{M}, \tau, \mathcal{A})$ zusammenfällt, falls die Derivationen an $p$ auf der Menge $\mathcal{C}^\infty (\mathcal{M})$ erklärt sind. 
    
    Da man, wie oben, die Menge aller linearen Derivationen an $p$ anschaulich als lineare Approximation der Mannigfaltigkeit $(\mathcal{M}, \tau, \mathcal{A})$ am Punkt $p$ auffassen möchte, da man die einzelnen linearen Derivationen am Punkt $p$ als am Punkt $p$ angeheftete Tangentialvektoren interpretiert, benötigen wir dazu die Menge $\mathcal{C}^\infty (\mathcal{M})$ als Definitionsbereich, um so die geometrische Interpretation der Menge aller an $p$ sitzenden linearen Derivationen aufrechtzuerhalten. 

    Wollen wir unseren obigen Zugang zum Tangentialraum $T_p \mathcal{M}$, in welchem die einzelnen Tangentialvektoren Richtungsableitungen am Punkt $p$ sind, mit der Menge aller an $p$ sitzenden linearen Derivationen auf intuitive Weise zusammenbringen, so müssen wir die Tangentialvektoren $X_{\gamma,p} \in T_p \mathcal{M}$ als lineare Derivation an $p$ auffassen, womit wir implizit vorraussetzen, dass $X_{\gamma,p}$ als linearer Differentialoperator auf $\mathcal{C}^\infty (\mathcal{M})$ definiert ist. 

    Das wir einen Tangentialvektor $X_{\gamma,p}$ überhaupt als lineare Derivation auffassen können, sieht man wie folgt: Zuerst bemerken wir, dass die punktweise Multiplikation zweier Funktionen $f,g \in \mathcal{C}^\infty (\mathcal{M})$ erklärt ist als 
    \begin{align}
        (f \odot g)(p) := f(p) \cdot g(p) \:\:\:\:\:\:\:\: \forall p \in \mathcal{M}.
    \end{align}
    Daraus folgt für eine Kurve $\gamma : [a,b] \longrightarrow \mathcal{M}$, dass 
    \begin{align}
        ((f \odot g) \circ \gamma)(p) =& \: (f \odot g)(\gamma(p)) \nonumber \\ =& \: f(\gamma(p)) \cdot g(\gamma(p)) \nonumber \\ =& \: (f \circ \gamma)(p) \cdot (g \circ \gamma)(p) \nonumber \\ =& \: ((f \circ \gamma) \odot (g \circ \gamma))(p) \:\:\:\:\:\:\:\: \forall p \in \mathcal{M},
    \end{align}
    womit 
    \begin{align}
        X_{\gamma,p} (f \odot g) =& \: ((f \odot g) \circ \gamma)'(0) \nonumber \\ =& \: ((f \circ \gamma) \odot (g \circ \gamma))'(0) \nonumber \\ =& \: (f \circ \gamma)'(0) \cdot g(\gamma(0)) + f(\gamma(0)) \cdot (g \circ \gamma)'(0) \nonumber \\ =& \: X_{\gamma,p} f \cdot g(p) + f(p) \cdot X_{\gamma,p} g
    \end{align}
    folgt, d.h. $X_{\gamma,p}$ kann als spezielle Derivation aufgefasst werden. Dabei haben wir im vorletzten Schritt die gewöhnliche, aus der reellen Analysis bekannte Produktregel genutzt \cite{heuser2013lehrbuch}.
\end{remark}

Nachdem wir nun also die Tangentialräume $T_p \mathcal{M}$ kennengelernt und etwas studiert haben, um unter anderem eine Intuition für diese zu entwickeln bzw. unsere Vorstellungen bezüglicher dieser zu rechtfertigen, können wir nun den sogenannten Tangentialbündel erklären. Etwas informal ausgedrückt ist der Tangentialbündel nichts weiter als eine Zusammenfassung aller Tangentialräume der glatten Mannigfaltigkeit $(\mathcal{M}, \tau, \mathcal{A})$, wobei die Tangentialräume über den Punkt $p \in \mathcal{M}$, an dem diese definiert sind, parametrisiert sind. Formal erhalten wir die folgende Definition.

\begin{definition}
    Sei $(\mathcal{M}, \tau, \mathcal{A})$ eine $d$-dimensionale glatte Mannigfaltigkeit. Dann ist der Tangentialbündel von $\mathcal{M}$ als Menge, bezeichnet mit $T \mathcal{M}$, definiert durch
    \begin{align}
        T \mathcal{M} := \bigsqcup_{p \in \mathcal{M}} T_p \mathcal{M} := \bigcup_{p \in \mathcal{M}} \{ p \} \times T_p \mathcal{M}.
    \end{align}
\end{definition}

Anschaulich heften wir also an jedem Punkt $p$ der Mannigfaltigkeit $\mathcal{M}$ den zugehörigen Tangentialraum $T_p$ an. Diese Anschauung wird im nächsten Abschnitt wichtig werden, wenn es darum geht, glatte Vektorfelder auf einer glatten Mannigfaltigkeit zu erklären. Die Nutzung der disjunkten Vereinigung $\sqcup$ in der Konstruktion von $T \mathcal{M}$ rechtfertigt sich dabei darüber, dass per Definition der Nullvektor in jedem einzelnen Tangentialraum gegeben ist durch die Abbildung 
\begin{align}
    \mathbf{0} :\mathcal{C}^\infty (\mathcal{M}) \longrightarrow& \: \: \mathbb{R} \nonumber\\
            f \longmapsto& \: \: 0.
\end{align}
D.h. formal sind die Tangentialräume nicht disjunkt. Um also das anschauliche Bild aufrechtzuerhalten, nach welchem wir an jedem Punkt $p$ der glatten Mannigfaltigkeit den zugehörigen Tangentialraum $T_p \mathcal{M}$ anheften, sodass jeder Tangentialvektor aus $T_p \mathcal{M}$ den eindeutigen Fußpunkt $p$ besitzt, müssen wir auf die obige Konstruktion mittels der disjunkten Vereinigung zurückgreifen. 

Zum Tangentialbündel $T \mathcal{M}$ lässt sich nun weiter eine sogenannte Bündelprojektion $\pi$ erklären.

\begin{definition}
    Sei $(\mathcal{M}, \tau, \mathcal{A})$ eine $d$-dimensionale glatte Mannigfaltigkeit und $T \mathcal{M}$ der zugehörige Tangentialbündel. Dann ist die Bündelprojektion des Tangentialbündels erklärt als die Abbildung
    \begin{align}
        \pi : T \mathcal{M} \longrightarrow& \: \: \mathcal{M} \nonumber\\
            (p, X_{\gamma,p}) \longmapsto& \: \: p.
    \end{align}
    Im Folgenden werden wir statt $(p, X_{\gamma.p}) \in T \mathcal{M}$ oft auch nur kurz $X_{\gamma, p}$ schreiben. Die Bedeutung der Abbildung $\pi$ ist dabei, dass diese dem Tangentialvektor $X_{\gamma,p}$ dem zugehörigen Fußpunkt $p \in \mathcal{M}$ zuordnet, an dem der Tangentialvektor $X_{\gamma,p}$ bzw. der zu $X_{\gamma, p}$ zugehörige Tangentialraum $T_p \mathcal{M}$ anschaulich angeheftet ist. Oft wird auch erst das Tripel $(T \mathcal{M}, \mathcal{M}, \pi)$, welches manchmal auch mit $T \mathcal{M} \xrightarrow{ \:\:\: \pi \:\:\:} \mathcal{M}$ bezeichnet wird, als Tangentialbündel bezeichnet.
\end{definition}

Der Tangentialbündel $T \mathcal{M}$ kann nun noch zusätzlich mit einer kanonischen und natürlichen Topologie ausgestattet werden, welche mehr oder weniger durch die Topologie $\tau$ der glatten Mannigfaltigkeit $(\mathcal{M}, \tau, \mathcal{A})$ induziert wird, sodass $T \mathcal{M}$ selbst zu einer topologischen Mannigfaltigkeit wird. Genauer können wir den Tangentialbündel $T \mathcal{M}$ mittels dieser Topologie als $2d$-dimensionale topologische Mannigfaltigkeit verstehen, wenn es sich bei $(\mathcal{M}, \tau, \mathcal{A})$ um eine $d$-dimensionale Mannigfaltigkeit handelt.

Darüberhinaus lässt sich der Tangentialbündel auch mit einer glatten Struktur ausstatten, sodass wir $T \mathcal{M}$ als $2d$-dimensionale glatte Mannigfaltigkeit auffassen können. An dieser Stelle sollte man sich nun die Frage stellen, warum wir überhaupt den Tangentialbündel als glatte Mannigfaltigkeit auffassen wollen. 

Wie bereits angesprochen, wollen wir uns im kommenden Abschnitt unter anderem mit glatten Vektorfeldern beschäftigen, um mittels dieser auf intuitive Weise den Begriff der sogenannten $(r,s)$-Tensorfelder einführen zu können, die später noch eine große Rolle spielen werden, wenn wir das sogenannte Längenfunktional definieren wollen. 

Wie der Name es schon andeutet, sollte es sich bei einem Vektorfeld dabei anschaulich gesprochen um eine Funktion handeln, die jedem Punkt der Mannigfaltigkeit $(\mathcal{M}, \tau, \mathcal{A})$ einen Vektor zuordnet. Da intuitiv gesprochen an jedem Punkt $p$ der Mannigfaltigkeit ein Tangentialraum $T_p \mathcal{M}$ angeheftet ist, der die Freiheitsgrade der Mannigfaltigkeit ausschöpft, macht es Sinn davon auszugehen, dass ein Vektorfeld auf $(\mathcal{M}, \tau, \mathcal{A})$ jedem Punkt $p \in \mathcal{M}$ einen Tangentialvektor $X \in T_p \mathcal{M}$ zuordnet. D.h., dass wir ein Vektorfeld als eine Funktion der Form $\mathcal{M} \longrightarrow T \mathcal{M}$ auffassen können. 

Nun sollten anschaulicherweise viele aus der Physik bekannten Phänomene, wie das Geschwindikeitsfeld einer Flüssigkeit, über ein Vektorfeld im obigen Sinne beschreibbar sein, da man bei Phänomenen dieser Art abermals jedem Raumpunkt einen (Geschwindigkeits-) Vektor zuordnen kann, wobei diese Zuordnung intuitiv von Punkt zu Punkt \textit{stetig} oder gar \textit{glatt} variieren sollte (was auch immer das zu diesem Zeitpunkt heißen mag). Wenn wir also differentialgeometrisch einen Vektorfeldbegriff entwickeln wollen, der darüberhinaus in der Lage ist, die eben angesprochenen physikalischen Phänomene sinnvoll zu modellieren, so sollten wir sicherstellen, dass wir auch in der Lage sind, über stetige oder gar glatte Vektorfelder zu sprechen.

Damit wir rein theoretisch eine Chance haben, sogenannte glatte Vektorfelder definieren zu können, müssen wir das Vektorfeld selbst als Funktion zwischen zwei glatten Mannigfaltigkeiten auffassen. Da wir oben bereits erläutert hatten, dass ein Vektorfeld als Funktion von der Form $\mathcal{M} \longrightarrow T \mathcal{M}$ sein sollte, folgt daraus unmittelbar, dass wir versuchen sollten, den Tangentialbündel $T \mathcal{M}$ als glatte Mannigfaltigkeit aufzufassen.

Im Folgenden umreissen wir daher kurz, wie sich $T \mathcal{M}$ mit einer Topologie und einer glatten Struktur ausstatten lässt. Dabei beziehen wir uns im wesentlichen auf \cite{warner1983foundations}.

Zuerst müssen wir uns der Tatsache vergegenwärtigen, dass wir einen natürlichen und auch nützlichen Nähebegriff auf $T \mathcal{M}$ erklären wollen, damit am Ende stetige oder gar glatte Vektorfelder genau das sind, was wir anschaulich auch meinen, wenn wir von stetigen oder glatten Vektorfeldern sprechen. Wie könnte also ein intuitiver Nähebegriff auf $T \mathcal{M}$ erklärt werden?

Wir betrachten dazu die als $d$-dimensional angenommene glatte Mannigfaltigkeit $(\mathcal{M}, \tau, \mathcal{A})$  und den zu dieser zugehörigen Tangentialbündel $T \mathcal{M}$. Weiter betrachten wir eine Karte $(\mathcal{U}, \phi) \in \mathcal{A}$ von $(\mathcal{M}, \tau, \mathcal{A})$ mit $\phi = (x^1, ..., x^d)$. Wir erinnern uns, dass die Topologie $\tau$ einen Nähebegriff auf der Menge $\mathcal{M}$ einführt und das in der Karte $(\mathcal{U}, \tau)$ dieser Nähebegriff indirekt codiert ist, da $\mathcal{U} \subseteq \tau$ und $\phi$ $(\tau_\mathcal{U}, \tau_{\mathbb{R}^d, \phi(\mathcal{U})})$-stetig ist. 

Da $\mathcal{M}$ im Tangentialbündel $T \mathcal{M}$ verbaut ist, und wir anschaulich die topologische Struktur von $\mathcal{M}$ in $T \mathcal{M}$ erhalten wollen, macht es Sinn, dass wir zuerst fordern, dass die Menge $T \mathcal{U} \subseteq T \mathcal{M}$, definiert durch
\begin{align}
    T \mathcal{U} := \bigsqcup_{p \in \mathcal{U}} T_p \mathcal{M} := \bigcup_{p \in \mathcal{U}} \{ p \} \times T_p \mathcal{M}.
\end{align} 
in $T \mathcal{M}$ eine offen Menge und gleichzeitig auch ein Kartengebiet sein soll. Wir stellen uns nun die Frage, wie die Kartenabbildung $\xi$ auf $T \mathcal{U}$ auszusehen hat. Zum einen wollen wir nämlich, dass die topologische Struktur von $\mathcal{M}$, die in der Kartenabbildung $\phi$ codiert ist, über die Karte $\xi$ auf $T \mathcal{M}$ übertragen wird. Zum anderen wollen wir, dass die einzelnen Tangentialräume stetig vom Fußpunkt $p$ abhängen. Wir wollen also auf der einen Seite topologische Eigenschaften von $\mathcal{M}$ auf $T \mathcal{M}$ übertragen, und zum anderen codieren, wie $T_p \mathcal{M}$ von $p \in \mathcal{M}$ abhängt. 

Wir betrachten daher ein $X \in T_p \mathcal{M}$ mit $p \in \mathcal{U}$. Bezüglich der Karte $(\mathcal{U}, \phi)$ können wir $X$ in der von der Karte induzierten Basis $\{ \big( \frac{\partial}{\partial x^1} \big)_p, ..., \big( \frac{\partial}{\partial x^d} \big)_p \}$ als 
\begin{align}
    X = \sum_{i=1}^d X^{i} \cdot \bigg( \frac{\partial}{\partial x^{i}} \bigg)_p.
\end{align}
entwickeln. Dabei sind die $X^{i}$ für alle $i \in \{ 1, ..., d \}$ induziert durch die Kartenabbildung $\phi$. Wir können nun also einerseits den Punkt $p \in \mathcal{U}$ über $\phi$ durch die Koordinaten $(x^1(p), ..., x^d(p))$ ausdrücken, und andererseits, ebenfalls durch $\phi$, den Tangentialvektor $X$ durch die Koordinaten $(X^1, ..., X^d)$ beschreiben. Mittels der Kartenabbildung $\phi$ können wir also beides, Fußpunkt und zugehörige Tangentialvektoren, einheitlich beschreiben. Anschaulich ist in dieser einheitlichen Beschreibung ein Zusammenhang zwischen $T_p \mathcal{M}$ und $p \in \mathcal{U}$ codiert. 

Das motiviert die folgende Kartenabbildung $\xi$ auf $T \mathcal{U}$, die wir wegen ihrer Abhängigkeit von der Kartenabbildung $\phi$ mit $\xi_\phi$ bezeichnen wollen: 
\begin{align}
    \xi_\phi : T \mathcal{U} \longrightarrow& \: \: \phi(\mathcal{U}) \times \mathbb{R}^d \nonumber\\
            (p, X) \longmapsto& \: \: ((x^1(p), ..., x^d(p)), (X^1, ..., X^d)). \label{Karte des Tangentialbündels}
\end{align}
Da es sich bei der Kartenabbildung $\phi$ um einen Diffeomorphismus handelt, womit diese insbesondere auch bijektiv ist, folgt, dass es sich auch bei der Abbildungum $\xi_\phi$ um eine Bijektion handelt, sodass wir $\xi_\phi$ auch tatsächlich als Kandidtat für eine Kartenabbildung auf $T \mathcal{U}$ in Betracht ziehen können.

Da es sich bei $T \mathcal{M}$ am Ende um eine Mannigfaltigkeit handeln soll, muss diese lokal homöomorph zu einem euklidischen Raum sein. Die Abbildung $\xi_\phi$ legt Nahe, da es sich bei diesem euklidischen Raum um den $\mathbb{R}^{2d}$ handeln dürfte. Um die lokale Ähnlichkeit zum $\mathbb{R}^{2d}$ zu gewährleisten, wollen wir anschaulich die Topologie des $\mathbb{R}^{2d}$ mittels $\xi_\phi$ auf $T \mathcal{U}$ übertragen.  

Dazu statten wir $\mathbb{R}^d \times \mathbb{R}^d =: \mathbb{R}^{2d}$ zuerst mit der Standardtopologie aus, sodass wir auf der Menge $\phi(\mathcal{U}) \times \mathbb{R}^d \subseteq \mathbb{R}^d \times \mathbb{R}^d$ die zugehörige Teilraumtopologie erklären können. Damit wird $\phi(\mathcal{U}) \times \mathbb{R}^d$ zu einem topologischen Raum. Mittels der Abbildung $\xi_\phi$ und dem Konzept der Pullback-Topologie aus Lemma $2.1.$ können wir nun eine Topologie auf $T \mathcal{U}$ induzieren, welche wir mit $\tau_{T \mathcal{U}}$ bezeichnen wollen. 

Machen wir das Obige für alle Karten $(\mathcal{U}, \phi)$ aus $\mathcal{A}$, so lässt sich auf dem ganzen Tangentialbündel $T \mathcal{M}$ eine Topologie erklären, indem wir die kleinste Topologie auf $T \mathcal{M}$ bilden, welche die $\tau_{T \mathcal{U}}$ für alle $(\mathcal{U}, \phi) \in \mathcal{A}$ enthält. Mit dieser Topologie wird dann $T \mathcal{M}$ zu einer $2d$-dimensionalen topologischen Mannigfaltigkeit, wobei die Karten von $T \mathcal{M}$ gerade durch die Tupel $(T \mathcal{U}, \xi_\phi)$ gegeben sind. Man kann darüberhinaus zeigen, dass bezüglich dieser Karten, da $(\mathcal{M}, \tau, \mathcal{A})$ eine glatte Mannigfaltigkeit ist, auch $T \mathcal{M}$ zu einer glatten Mannigfaltigkeit wird.

Im nächsten Abschnitt werden wir noch sehen, dass die glatten Vektorfelder auf $(\mathcal{M}, \tau. \mathcal{A})$, die bezüglich der eben erklärten Topologie auf $T \mathcal{M}$ definiert werden, die aus der Vektoranalysis bekannten anschaulichen Vektorfelder verallgemeinern, was eine weitere Rechtfertigung der Nutzung dieser Topologie liefert.   


\subsection{Glatte Vektorfelder und glatte Schnitte im Tensorbündel}

Durch die Tatsache, dass wir den Tangentialbündel $T \mathcal{M}$ selbst als Mannigfaltigkeit verstehen können, sind wir in der Lage, gemäß der Definition $6.8.$ auch glatte Funktionen der Form $s : \mathcal{M} \longrightarrow T \mathcal{M}$ zwischen $\mathcal{M}$ und $T \mathcal{M}$ zu betrachten. Eine wichtige Klasse derartiger glatter Funktionen sind die sogenannten glatten Schnitte, die man oft auch einfach als glatte Vektorfelder bezeichnet, und die besonders in der Physik Anwendung finden, um dort beispielsweise physikalische Felder zu modellieren.

\begin{definition}
    Sei $(\mathcal{M} , \tau, \mathcal{A})$ eine $d$-dimensionale glatte Mannigfaltigkeit und $T \mathcal{M}$ der zugehörige Tangentialbündel, gesehen als glatte Mannigfaltigkeit im obigem Sinne. Unter einem glatten Schnitt im Tangentialbündel verstehen wir eine glatte Funktion 
    \begin{align}
        s : \mathcal{M} \longrightarrow T \mathcal{M},
    \end{align}
    mit der zusätzlichen Eigenschaft, dass
    \begin{align}
        \pi \circ s = \textit{id}_\mathcal{M} \label{section}
    \end{align}
    mit $\textit{id}_{\mathcal{M}}(p) = p$ für alle $p \in \mathcal{M}$ ist. Die Gleichung \eqref{section} stellt dabei sicher, dass dem Punkt $p \in \mathcal{M}$ über die Abbildung $s$ nur ein Tangentialvektor zugeordnet werden kann, der auch im Tangentialraum $T_p \mathcal{M}$ liegt, welcher am Punkt $p$ definiert ist. Die Menge aller glatten Schnitte im Tangentialbündel bezeichnen wir dabei mit $\Gamma^\infty (\mathcal{M}, T \mathcal{M})$.
\end{definition}

Aus der Definition eines glatten Schnittes im Tangentialbündel wird sofort klar, warum dieser auch oft als glattes Vektorfeld auf $\mathcal{M}$ bezeichnet wird. Zum einen ist nämlich ein (glattes) Vektorfeld auf $\mathcal{M}$ eine Funktion, die jedem Punkt der Mannigfaltigkeit einen Vektor an diesem Punkt zuordnet, was durch \eqref{section} codiert ist. Zum anderen ist ein (glattes) Vektorfeld aber auch so etwas wie ein Geschwindigkeitsfeld einer Strömung auf $\mathcal{M}$, weshalb die Vektoren, die den einzelnen Punkten zugeordnet werden, tangential an der Mannigfaltigkeit an diesem Punkt anliegen sollten, da die Bewegung der Strömung auf die Mannigfaltigkeit eingeschränkt ist. Daher muss anschaulich der Geschwindikeitvektor der Strömung an jedem Punkt der Mannigfaltigkeit auch dort tangential an der Mannigfaltigkeit anliegen. 

Als nächstes wollen wir nun den Begriff des glatten Vektorfeldes auf $\mathcal{M}$ verallgemeinern. Dazu erinnern wir uns an das Beispiel $4.5.$, indem wir gesehen haben, dass wir die Elemente eines Vektorraumes als $(1,0)$-Tensoren verstehen können. Das heißt wir können den Vektorraum $T_p \mathcal{M}$ auch schreiben als $T^1_0 (T_p \mathcal{M})$ und den Tangentialbündel damit als 
\begin{align}
    T \mathcal{M} = \bigsqcup_{p \in \mathcal{M}} T_p \mathcal{M} = \bigsqcup_{p \in \mathcal{M}} T^1_0 (T_p \mathcal{M}) =: T^1_0 \mathcal{M}. \label{(1,0)-tensor bundle}
\end{align}
Aus \eqref{(1,0)-tensor bundle} wird klar, wie wir den anschaulichen Vektorfeldbegriff sinnvoll verallgemeinern können.

\begin{definition}
    Sei $(\mathcal{M}, \tau, \mathcal{A})$ eine $d$-dimensionale glatte Mannigfaltigkeit. Für die Zahlen $r,s \in \mathbb{N}_0$ definieren wir den sogenannten $(r,s)$-Tensorbündel $T^r_s \mathcal{M}$ als Menge durch
    \begin{align}
        T^r_s \mathcal{M} = \bigsqcup_{p \in \mathcal{M}} T^r_s (T_p \mathcal{M}) = \bigcup_{p \in \mathcal{M}} \{ p \} \times T^r_s (T_p \mathcal{M}).
    \end{align}
    Dabei ist $T^r_s (T_p \mathcal{M})$ wie in Abschnitt $4.5$ definiert als die Menge aller multilinearer Abbildungen der Form 
    \begin{align}
        T: \underbrace{T_p \mathcal{M} \times ... \times T_p \mathcal{M}}_{s} \times \underbrace{T_p^* \mathcal{M} \times ... \times T_p^* \mathcal{M}}_{r} \longrightarrow \mathbb{R}.
    \end{align}
\end{definition}

Auch $T^r_s \mathcal{M}$ lässt sich mit einer kanonischen und natürlichen Topologie ausstatten \cite{goldberg2011curvature}, sodass zum einen $T^r_s \mathcal{M}$ zu einer glatten Mannigfaltigkeit wird und zum anderen die Topologie von $T^r_s \mathcal{M}$ für $r = 1$ und $s = 0$ mit derjenigen Topologie übereinstimmt, die wir zuvor genutzt hatten, um $T \mathcal{M}$ zu einer glatten Mannigfaltigkeit zu machen. Diese kanonische Topologie auf $T^r_s \mathcal{M}$ wird dabei fast analog zur Topologie auf $T \mathcal{M}$ konstruiert.

Auch zum $(r,s)$-Tensorbündel $T^r_s \mathcal{M}$ lässt sich nun eine sogenannte Bündelprojektion $\pi^r_s$ definieren.

\begin{definition}
    Sei $(\mathcal{M}, \tau, \mathcal{A})$ eine $d$-dimensionale glatte Mannigfaltigkeit und $T^r_s \mathcal{M}$ das zugeförige $(r,s)$-Tensorbündel. Dann ist die sogenannte $(r,s)$-Bündelprojektion erklärt durch 
    \begin{align}
        \pi^r_s : T^r_s \mathcal{M} \longrightarrow& \: \: \mathcal{M} \nonumber\\
            (p, T) \longmapsto& \: \: p \:\:\:\: \textit{mit} \:\: T \in T^r_s(T_p \mathcal{M}).
    \end{align}
    Im Folgenden werden wir dabei wieder statt $(p, T) \in T^r_s \mathcal{M}$ kurz einfach nur $T_p$ schreiben.  
\end{definition}

Weiter definieren wir, analog zum glatten Schnitt im Tangentialbündel, den Begriff des glatten Schnittes im $(r,s)$-Tensorbündel:

\begin{definition}
    Sei $(\mathcal{M}, \tau, \mathcal{A})$ eine $d$-dimensionale glatte Mannigfaltigkeit und $T^r_s \mathcal{M}$ das zugehörige $(r,s)$-Tensorbündel, gesehen als glatte Mannigfaltigkeit im obigem Sinne. Dann ist ein glatter Schnitt im $(r,s)$-Tensorbündel eine glatte Funktion 
    \begin{align}
        \sigma : \mathcal{M} \longrightarrow T^r_s \mathcal{M}
    \end{align}
    mit der zusätzlichen Eigenschaft, dass
    \begin{align}
        \pi^r_s \circ \sigma = \textit{id}_\mathcal{M}
    \end{align}
    ist. Da $\sigma$ damit anschaulich jedem Punkt $p \in \mathcal{M}$ einen Tensor $T_p \in T^r_s (T_p \mathcal{M})$ zuordnet, nennt man $\sigma$ auch glattes $(r,s)$-Tensorfeld. Weiter bezeichnen wir die Menge aller glatten Schnitte im $(r,s)$-Tensorbündel mit $\Gamma^\infty (\mathcal{M}, T^r_s \mathcal{M})$.
\end{definition}

Einen weitere nützlicher Begriff im Zusammenhang mit glatten $(r,s)$-Tensorfeldern ist der Begriff der sogenannten Komponentenfunktionen eines glatten $(r,s)$-Tensorfeldes. Da wir im Folgenden nur einen Spezialfall dieses Begriffes benötigen, nämlich den Begriff der karteninduzierten Komponentenfunktionen eines glatten $(r,s)$-Tensorfeldes, geben wir hier nicht die allgemeine Definition an. 

Um den Begriff der karteninduzierten Komponentenfunktionen eines $(r,s)$-Tensorfeldes $\sigma$ einzuführen, betrachten wir auf unserer $d$-dimensionalen glatten Mannigfaltigkeit $(\mathcal{M}, \tau, \mathcal{A})$ zuerst eine Karte $(\mathcal{U}, \phi) \in \mathcal{A}$. 

An jedem Punkt $p \in \mathcal{U} \subseteq \mathcal{M}$ können wir den Tangentialraum $T_p \mathcal{M}$ bilden. Die Basis in $T_p \mathcal{M}$ sei dabei gegeben durch die von der Karte $(\mathcal{U}, \phi)$ induzierten Basis $\{ \big( \frac{\partial}{\partial x^1} \big)_p, ..., \big( \frac{\partial}{\partial x^d} \big)_p \} \subseteq T_p \mathcal{M}$. 

Zu jedem Tangentialraum $T_p \mathcal{M}$ mit $p \in \mathcal{U}$ können wir den zugehörigen Dualraum von $T_p \mathcal{M}$, den Kotangentialraum $T_p^* \mathcal{M}$, bilden. Gemäß dem Satz $4.10.$ aus dem Abschnitt $4.5$ können wir im Kotangentialraum $T_P^* \mathcal{M}$ immer eine zur Basis $\{ \big( \frac{\partial}{\partial x^1} \big)_p, ..., \big( \frac{\partial}{\partial x^d} \big)_p \}$ duale Basis finden, die wir mit $\{ d_p x^1, ..., d_p x^d \} \subseteq T_p^* \mathcal{M}$ bezeichnen. Es gilt also für alle $p \in \mathcal{U}$
\begin{align}
    d_p x^{i} \bigg( \bigg( \frac{\partial}{\partial x^j} \bigg)_p \bigg) = \delta^{i}_j \:\:\:\:\:\:\:\: \forall i,j \in \{1,...,d\}.
\end{align}
Wir betrachten nun das glatte $(r,s)$-Tensorfeld $\sigma$, für das gilt
\begin{align}
    \sigma : \mathcal{M} \longrightarrow& \: \: T^r_s \mathcal{M} \nonumber\\
            p \longmapsto& \: \: \sigma(p) := T_p \in T^r_s (T_p \mathcal{M}).
\end{align}
Dann können wir $T_p$ an der Basis $\{ \big( \frac{\partial}{\partial x^1} \big)_p, ..., \big( \frac{\partial}{\partial x^d} \big)_p \}$ und $\{ d_p x^1, ..., d_p x^d \}$ auswerten, was die karteninduzierten Komponentenfunktionen von $\sigma$ am Punkt $p \in \mathcal{U}$ definiert
\begin{align}
    (T^{i_1, ..., i_s}_{j_1, ..., j_r})(p) := T_p \bigg( \bigg( \frac{\partial}{\partial x^{i_1}} \bigg)_p, ..., \bigg( \frac{\partial}{\partial x^{i_s}} \bigg)_p, d_p x^{j_1}, ..., d_p x^{j_r} \bigg), \label{Tensorkomponenten bzgl einer Karte}
\end{align}
wobei die $i_1, ..., i_s, j_1, ..., j_r \in \{ 1, ..., d \}$ sind. Die karteninduzierten Komponentenfunktionen vom $(r,s)$-Tensorfeld $\sigma$ auf $\mathcal{U}$ sind dann für $$i_1, ..., i_s, j_1, ..., j_r \in \{ 1, ..., d \}$$ erklärt durch 
\begin{align}
    T^{i_1, ..., i_s}_{j_1, ..., j_r} : \mathcal{U} \longrightarrow& \: \: \mathbb{R} \nonumber\\
            p \longmapsto& \: \: (T^{i_1, ..., i_s}_{j_1, ..., j_r})(p).
\end{align}
Sobald wir später über sogenannte riemannsche Mannigfaltigkeiten sprechen werden, werden diese karteninduzierten Komponentenfunktionen von $(r,s)$-Tensorfeldern wieder von Relevanz sein.

Darüberhinaus lässt sich mit diesen karteninduzierten Komponentenfunktionen insbesondere eine weitere anschauliche Rechtfertigung der Verwendung der in \cite{goldberg2011curvature} angebenen Topologie auf dem $(r,s)$-Tensorbündel finden, die wir genutzt hatten, um glatte Schnitte im $(r,s)$-Tensorbündel zu erklären. So lässt sich nämlich zeigen, dass $\sigma \in \Gamma^\infty (\mathcal{M}, T^r_s \mathcal{M})$ genau dann gilt, falls für alle karteninduzierten Komponentenfunktionen gilt, dass 
\begin{align}
    T^{i_1, ..., i_s}_{j_1, ..., j_r} \in \mathcal{C}^\infty (\mathcal{M})
\end{align}
ist $\cite{tu2011manifolds}$. Warum ist das eine Rechtfertigung der Verwendung der in  \cite{goldberg2011curvature} angegebenen Topologie bzgl. $T^r_s \mathcal{M}$? 

Betrachten wir dazu den $\mathbb{R}^d$, den wir wieder mit der in Bemerkung $6.1.$ angegebenen glatten Struktur versehen, sodass wir den $\mathbb{R}^d$ als glatte Mannigfaltigkeit auffassen können. Betrachten wir die Karte $(\mathbb{R}^d, \textit{id}_{\mathbb{R}^d})$ mit $\textit{id}_{\mathbb{R}^d} = (x^1, ..., x^d)$ und die zur Karte zugehörige karteninduzierte Basis $\{ \big( \frac{\partial}{\partial x^1} \big)_p = e_{1,p}, ..., \big( \frac{\partial}{\partial x^d} \big)_p = e_{d,p} \} \in T_p \mathbb{R}^d$ mit $p \in \mathbb{R}^d$. Wir identifizieren nun für alle $p \in \mathbb{R}^d$ und alle $i \in \{ 1, ..., d \}$ den Tangentialvektor $\big( \frac{\partial}{\partial x^{i}} \big)_p = e_{i,p} \in T_p \mathbb{R}^d$ mit dem Vektor $e_i \in \mathbb{R}^d$, definiert durch
\begin{align}
    e_i := (0, ..., 0, \underbrace{1}_{\textit{$i$-te Stelle}}, 0, ..., 0) = \partial_i \textit{id}_{\mathbb{R}^d}^{-1} (p) \:\:\:\: \forall p \in \mathbb{R}^d. \label{oav}
\end{align}
Diese Identifikation dürfen wir vornehmen, da sowohl $T_p \mathbb{R}^d$ für alle $p \in \mathbb{R}^d$, als auch $\mathbb{R}^d$ die selbe Vektorraumdimension besitzen, womit diese zueinander isomorph sind \cite{fischer2003lineare}, d.h. $T_p \mathbb{R}^d \cong \mathbb{R}^d$. 

Beachte, dass dabei die geometrische Bedeutung von $e_{i,p}$ unter der Identifikation mit $e_i$ nicht verloren geht, da beide anschaulich geometrisch die selbe Orientierung besitzen. Da sich aus \eqref{oav} ergibt, dass sich sowohl $e_{i,p}$, als auch $e_i$ mittels der selben geometrischen Intuition aus der Karte $(\mathbb{R}^d, \textit{id}_{\mathbb{R}^d})$ konstruieren lassen, erscheint es plausibel, die obige Identifikation als natürlich zu bezeichnen, da scheinbar in beiden Ausdrücken die selben geometrischen Informationen codiert sind.

Die Natürlichkeit der obigen Identifikation wird sich später noch als wesentlich evidenter herausstellen, wenn wir den $\mathbb{R}^d$, betrachtet als glatte Mannigfaltigkeit, mit der noch zu diskutierenden riemannschen Metrik aus \eqref{spezielle riemannsche Metrik} versehen, wodurch $e_{i,p}$ die geometrische Länge von $1$ erhält, was der anschaulichen Länge von $e_i$ entspricht. 

Unter einem glatten Vektorfeld auf der glatten Mannigfaltigkeit $\mathbb{R}^d$ im Sinne der gewöhnlichen Vektoranalysis verstehen wir nun eine Funktion der Form
\begin{align}
    \mathbf{v} = (v^1,...,v^d) : \mathbb{R}^d \longrightarrow& \: \: \mathbb{R}^d \nonumber\\
            p \longmapsto& \: \: \mathbf{v}(p) = (v^1(p), ..., v^d(p)) = \sum_{i = 1}^d v^{i}(p) \cdot e_i,
\end{align}
wobei $v^1, ..., v^d \in \mathcal{C}^\infty (\mathbb{R}^d)$ ist. Andererseits verstehen wir in der Differentialgeometrie unter einem glatten Vektorfeld auf $\mathbb{R}^d$ einen glatten Schnitt $s$ im Tangentialbündel $T \mathbb{R}^d = T^1_0 \mathbb{R}^d$. D.h. unter anderem, dass die glatte Funktion $s$ jedem Punkt $p \in \mathbb{R}^d$ einen Tangentialvektor $X_{\gamma(p),p} \in T_p \mathbb{R}^d$ zuordnet, wobei wir den Tangentialvektor $X_{\gamma(p), p}$ auch in der Basis $\{e_{1,p}, ..., e_{d,p}\} \subseteq T_p \mathbb{R}^d$ entwickeln können, was die Darstellung
\begin{align}
    s(p) = X_{\gamma(p),p} = \sum_{i = 1}^d v_{\gamma(p)}^{i}(p) \cdot e_{i,p}
\end{align}
mit $v_{\gamma(p)}^{1}(p), ..., v_{\gamma(p)}^{d}(p) \in \mathbb{R}$ liefert. Dabei bedeutet $\gamma(p)$ lediglich, dass die Kurve $\gamma$ bzgl. derer der Tangentialvektor am Punkt $p \in \mathbb{R}^d$ konstruiert wird, vom Punkt $p$ selbst abhängig ist. Die bzgl. zur Karte $(\mathbb{R}^d, \textit{id}_{\mathbb{R}^d})$ zugehörigen Komponentenfunktionen von $s$ sind dann die $v_{\gamma}^{i}$, die nach obiger Bemerkung zur Klasse $\mathcal{C}^\infty (\mathbb{R}^d)$ gehören. Indem wir nun also für alle $i \in \{ 1, ..., d \}$ die geometrisch sinnvolle Idenfikation $e_{i,p} \equiv e_i$ für alle $p \in \mathbb{R}^d$ vornehmen, können wir das glatte Vektorfeld $s$ mit dem im Rahmen der Vektoranalysis definiertem glatten Vektorfeld 
\begin{align}
    \mathbf{v}_\gamma = (v_\gamma^1,...,v_\gamma^d) : \mathbb{R}^d \longrightarrow& \: \: \mathbb{R}^d \nonumber\\
            p \longmapsto& \: \: \mathbf{v}_{\gamma(p)}(p) = (v_{\gamma(p)}^1(p), ..., v_{\gamma(p)}^d(p)) = \sum_{i = 1}^d v_{\gamma(p)}^{i}(p) \cdot e_i,
\end{align}
identifizieren. Auf diese Weise überträgt sich der aus der Vektoranalysis bekannte Begriff eines glatten Vektorfeldes in die moderne Differentialgeometrie. Das diese Übertragbarkeit möglich ist, wird dabei unter anderem von der Wahl der Topologie auf $T \mathbb{R}^d$ oder allgemeiner auf $T^r_s \mathbb{R}^d$ garantiert, was eine Rechtfertigung für die Nutzung der in \cite{goldberg2011curvature} angegebenen Topologie für das $(r,s)$-Tensorbündel liefert.

In dem nun folgenden Kapitel werden wir eine besondere Klasse von Schnitten im $(0,2)$-Tensorbündel kennen lernen, mit deren Hilfe wir letztlich in der Lage sein werden, einen anschaulichen Längenbegriff auf einer glatten Mannigfaltigkeit $(\mathcal{M}, \tau, \mathcal{A})$ zu erklären.


\newpage

\quad
\newpage

\section{Riemannsche Mannigfaltigkeiten}

In diesem Kapitel wollen wir uns mit dem Begriff einer riemannschen Mannigfaltigkeit auseinandersetzen. Salopp gesprochen handelt es sich dabei um glatte Mannigfaltigkeiten, bei denen jeder einzelne Tangentialraum mit einem Skalarprodukt versehen wird. 

Aus dem Abschnitt $4.3$ wissen wir bereits, dass ein Prähilbertraum aufgrund seines Skalarproduktes eine vom Skalarprodukt induzierte Geometrie trägt. Die grobe Idee der sogenannten riemannschen Geometrie ist es nun, Geometrien auf jedem einzelnen Tangentialraum einer glatten Mannigfaltigkeit zu erklären, um so eine Geometrie auf der glatten Mannigfaltigkeit selbst zu induzieren. 

Mit einer derartigen Geometrie kann man sich dann bestimmten geometrischen Fragestellungen auf der glatten Mannigfaltigkeit widmen, wie beispielsweise der Frage, was die kürzeste Verbindungsstrecke zweier Punkte auf der glatten Mannigfaltigkeit ist. 

Die Beantwortung dieser Frage hängt dabei von der auf der Mannigfaltigkeit erklärten Geometrie ab. Eine anschauliche Erklärung dafür bietet abermals die Kugeloberfläche $\mathbb{S}^2$ (die wir uns wieder mit der gewöhnlichen Standardgeoemtrie versehen denken), auf der die kürzeste Verbindungsstrecke zweier Punkte anschaulich eine gekrümmte Kurve ist, statt einer einfachen Geraden, da die Oberfläche der Kugel selbst eine gekrümmte Fläche ist.   

Wir beginnen mit der Definition einer riemannschen Mannigfaltigkeit:

\begin{definition}
    Sei $(\mathcal{M}, \tau, \mathcal{A})$ eine $d$-dimensionale glatte Mannigfaltigkeit. Wir bezeichnen den glatten Schnitt $g \in \Gamma^\infty (\mathcal{M}, T^0_2 \mathcal{M})$ als riemannsches Tensorfeld oder als riemannsche Metrik, falls für alle $p \in \mathcal{M}$ gilt, dass $g_p := g(p) \in T^0_2 (T_p \mathcal{M})$ ein Skalarprodukt auf $T_p \mathcal{M}$ ist. Formal heißt das, dass für den glatten Schnitt $g$ die folgenden zusätzlichen Aussagen für alle Punkte $p \in \mathcal{M}$ gelten:
    \begin{itemize}
        \item[\textit{i)}] Die Abbildung
        \begin{align}
            g_p : T_p \mathcal{M} \times T_P \mathcal{M} \longrightarrow \mathbb{R}
        \end{align}
        ist eine bilineare Abbildung, d.h. linear in beiden Argumenten.
        \item[\textit{ii)}] Die Abbildung $g_p$ ist symmetrisch, d.h. 
        \begin{align}
            g_p (X_{\gamma,p}, X_{\sigma, p}) = g_p (X_{\sigma,p}, X_{\gamma,p}) \:\:\:\:\:\:\:\: \forall X_{\gamma,p}, X_{\sigma,p} \in T_p \mathcal{M}.
        \end{align} 
        \item[\textit{iii)}] Die Abbildung $g_p$ ist positiv definit, d.h. für alle $X_{\gamma, p} \in T_p \mathcal{M}$ gilt
        \begin{align}
            g_p (X_{\gamma,p}, X_{\gamma,p}) \geq 0 \:\:\:\: \textit{und} \:\:\:\: g_p(X_{\gamma,p}, X_{\gamma,p}) = 0 \iff X_{\gamma,p} = \mathbf{0}_{T_p \mathcal{M}},
        \end{align}
        wobei $\mathbf{0}_{T_p \mathcal{M}}$ wieder das neutrale Element oder der Nullvektor von $T_p \mathcal{M}$ ist.
    \end{itemize}
    Wir bezeichnen weiter das Quadrupel $(\mathcal{M}, \tau, \mathcal{A}, g)$ als riemannsche Mannigfaltigkeit. 
\end{definition}

Gemäß der Definition $7.1.$ fordern wir also, dass es sich bei einer riemannschen Metrik, die die Geometrie auf $(\mathcal{M}, \tau, \mathcal{A})$ induzieren soll, um einen speziellen glatten Schnitt $g$ im $(0,2)$-Tensorbündel handelt. Die Forderung nach der Glattheit liegt dabei anschaulich darin begründet, dass wir gemäß der auf $T^0_2 \mathcal{M}$ erklärten Topologie gewährleisten wollen, dass die einzelnen durch $g$ induzierten Geometrien in den Tangentialräumen $T_p \mathcal{M}$ mit $p \in \mathcal{M}$ auf glatte Weise miteinander zusammenhängen. Man könnte auch salopp sagen, das wir durch die Glattheitsforderung die einzelnen Geometrien in den Tangentialräumen auf glatte Weise miteinander verkleben.   

\begin{example}
    Zwei anschauliche Beispiele für riemannsche Mannigfaltigkeit sind gegeben durch:

    \begin{itemize}
    
        \item [\textit{i)}] Wir betrachten den $\mathbb{R}^3$, ausgestattet mit der aus Bemerkung $6.1.$ bekannten glatten Struktur. Weiter betrachten wir die Karte $(\mathbb{R}^3, \textit{id}_{\mathbb{R}^3}) \in \mathcal{B}$ mit $\textit{id}_{\mathbb{R}^3} = (x^1, ..., x^d)$. Darüberhinaus bezeichne $\langle \cdot , \cdot \rangle_{\mathbb{R}^3}$ das Standardskalarprodukt im $\mathbb{R}^3$ und 
        \begin{align}
            e_i := (0, ..., 0, \underbrace{1}_{\textit{$i$-te Stelle}}, 0, ..., 0) \in \mathbb{R}^3  \:\:\:\:\:\:\:\: \forall i \in \{ 1, 2, 3 \}.
        \end{align}
        Wir definieren auf dem $\mathbb{R}^3$ das glatte $(0,2)$-Tensorfeld 
        \begin{align}
            g : \mathbb{R}^3 \longrightarrow& \: \: T^0_2 \mathbb{R}^3 \nonumber\\
                p \longmapsto& \: \: g_p \:\:\:\: \textit{mit} \:\:\:\: g_p \bigg( \bigg( \frac{\partial}{\partial x^{i}} \bigg)_p, \bigg( \frac{\partial}{\partial x^{j}} \bigg)_p \bigg) := \langle  e_i , e_j \rangle_{\mathbb{R}^3}, \label{Standardmetrik im R3}
        \end{align}
        wobei $i, j \in \{ 1, 2, 3 \}$ ist. Bei dem $(0,2)$-Tensorfeld $g$ handelt es sich dabei tatsächlich um einen glatten Schnitt im $(0,2)$-Tensorbündel, da aus der Definition von $g$ folgt, dass die bezüglich der Karte $(\mathbb{R}^d, \textit{id}_{\mathbb{R}^d})$ karteninduzierten Komponentenfunktionen konstante Funktionen und damit $\mathcal{C}^\infty (\mathbb{R}^d)$-Funktionen sind. 

        Darüberhinaus handelt es sich bei $g$ wegen der Eigenschaften von $\langle \cdot , \cdot \rangle_{\mathbb{R}^3}$ auch um eine riemannsche Metrik, weshalb $(\mathbb{R}^3, \tau_{\mathbb{R}^3}, \mathcal{B}, g)$ eine riemannsche Mannigfaltigkeit darstellt. 

        \item [\textit{ii)}] Wir betrachten erneut die aus den Beispielen $6.1.$ und $6.2.$ bekannte glatte Mannigfaltigkeit $(\mathbb{S}^2, \tau_{\mathbb{R}^3, \mathbb{S}^2}, \mathcal{A}_{\mathbb{S}^2})$. Fassen wir diese als Teilmenge der aus \textit{i)} bekannten riemannschen Mannigfaltigkeit $(\mathbb{R}^3, \tau_{\mathbb{R}^3}, \mathcal{B}, g)$ auf, so können wir auf $(\mathbb{S}^2, \tau_{\mathbb{R}^3, \mathbb{S}^2}, \mathcal{A}_{\mathbb{S}^2})$ eine riemannsche Metrik $h$ induzieren, indem wir die riemannsche Metrik $g : \mathbb{R}^3 \longrightarrow T^0_2 \mathbb{R}^3$, definiert durch \eqref{Standardmetrik im R3}, auf $\mathbb{S}^2$ einschränken und festlegen, dass 
        \begin{align}
            h_p := g_p |_{T_p \mathbb{S}^2 \times T_p \mathbb{S}^2} \:\:\:\: \forall p \in \mathbb{S}^2
        \end{align}
        ist. Dabei ist der Ausdruck $g_p |_{T_p \mathbb{S}^2 \times T_p \mathbb{S}^2}$ die Einschränkung der Bilinearform $g_p : T_p \mathbb{R}^3 \times T_p \mathbb{R}^3 \longrightarrow \mathbb{R}$ auf $T_p \mathbb{S}^2 \times T_p \mathbb{S}^2$, wobei wir für alle $p \in \mathbb{S}^2$ die Menge $T_p \mathbb{S}^2$ als Teilmenge des $T_p \mathbb{R}^3$ verstehen können.

        Wie im Folgenden noch klar werden wird, handelt es sich per Konstruktion bei der riemannschen Mannigfaltigkeit $(\mathbb{S}^2, \tau_{\mathbb{R}^3, \mathbb{S}^2}, \mathcal{A}_{\mathbb{S}^2}, h)$ um ein Modell der Kugeloberfläche, welche mit der gewöhnlichen und anschaulichen Standardgeometrie versehen wurde.
        
    \end{itemize}
\end{example}

Wie induziert nun das glatte $(0,2)$-Tensorfeld $g$ eine Geometrie auf der glatten Mannigfaltigkeit? Wir fragen uns dazu zuerst, was es überhaupt bedeutet, eine Geometrie auf einer gegebenen Menge zu haben. Beispielsweise sollten wir intuitiv gesehen mittels einer Geometrie dazu in der Lage sein, über Kurven und insbesondere auch über so etwas wie Geraden sprechen zu können. 

Darüberhinaus sollten wir mittels einer Geometrie auch über einen Winkelbegriff verfügen, sodass wir die Winkel zweier sich schneidender Kurven angeben können. Da wir klassisch und auch anschaulich unter einer Verbindungsgeraden zweier Punkte die kürzeste Verbindungskurve dieser beiden Punkte verstehen, wollen wir auch auf einer glatten Mannigfaltigkeit eine Verbindungsgeraden zwischen zwei Punkten als die kürzeste Verbindungskurve dieser beiden Punkte definieren. 

Auf einer nackten glatten Mannigfaltigkeit $(\mathcal{M}, \tau, \mathcal{A})$ ist die Definition solcher Verbindungsgeraden nicht möglich, da wir noch kein Längenmaß auf der glatten Mannigfaltigkeit besitzen. D.h. das wir bisher noch nicht in der Lage sind, die Länge einer Kurve auf einer glatten Mannigfaltigkeit zu vermessen. 

Das Ziel dieses Kapitels wird es daher sein, zu zeigen, dass wir zum einen mit der riemannschen Metrik $g$ in der Lage sind, einen sinnvollen Winkelbegriff auf der glatten Mannigfaltigkeit $(\mathcal{M}, \tau, \mathcal{A})$ einzuführen, als auch zu zeigen, dass wir mittels der riemannschen Metrik $g$ auch in der Lage sind, ein sinnvolles Längenmaß zu definieren, über welches wir dann den Begriff einer Verbindungsgeraden ableiten können. 

Dabei werden wir ab jetzt statt von einer Verbindungsgeraden nur noch den Begriff einer (schwachen) Verbindungsgeodätischen bemühen, um den Begriff einer Verbindungsgeraden auf einer riemannschen Mannigfaltigkeit vom Begriff einer gewöhnlichen Verbindungsgeraden aus der klassischen Geometrie abzuheben. Damit wollen wir in diesem Kapitel also unter anderem eine Rechtfertigung für die Behauptung, dass $g$ eine Geometrie auf $(\mathcal{M}, \tau, \mathcal{A})$ induziert, finden.


\subsection{Der Winkelbegriff riemannscher Mannigfaltigkeiten}

Wir beginnen mit der Entwicklung des Winkelbegriffes auf Basis der riemannschen Metrik $g$. Dazu betrachten wir zwei injektive glatte Kurven $\gamma: (a,b) \longrightarrow \mathcal{M}$ und $\delta: (c,d) \longrightarrow \mathcal{M}$ mit $a,b,c,d \in \mathbb{R}$ und $a<b$ sowie $c<d$. Weiter fordern wir, dass sich die Kurven $\gamma$ und $\delta$ im Punkt $p \in \mathcal{M}$ schneiden, d.h. es existiert ein $\lambda_1 \in (a,b)$ und ein $\lambda_2 \in (c,d)$, sodass $\gamma(\lambda_1) = \delta(\lambda_2) = p$ ist.

Wie definieren wir nun einen Winkel zwischen den Kurven $\gamma$ und $\delta$ am Punkt $p$? Zuerst bemerken wir, dass wir demäß dem vorangegangenen Kapitel die Tangentialvektoren $X_{\gamma,p} \in T_p \mathcal{M}$ und $X_{\delta, p} \in T_p \mathcal{M}$ von $\gamma$ bzw. von $\delta$ am Punkt $p$ als am Punkt $p$ an $\gamma$ bzw. an $\delta$ tangential anliegend verstehen können. Wir nehmen nun darüberhinaus an, dass die beiden Kurven $\gamma$ und $\delta$ regulär am Punkt $p \in \mathcal{M}$ sind, d.h. $X_{\gamma,p}, X_{\delta, p} \in T_p \mathcal{M} \setminus \{ \mathbf{0}_{T_p \mathcal{M}} \}$.

Weiter wissen wir, dass uns das riemannsche Tensorfeld $g$ am Punkt $p$ ein Skalarprodukt $g_p$ auf $T_p \mathcal{M}$ liefert. Aus dem Abschnitt $4.3$ wissen wir, dass ein Skalarprodukt auf einem Vektorraum auf diesem Vektorraum eine Geometrie und damit auch einen Winkelbegriff erklärt. Wie in Gleichung \eqref{33.} erklären wir auf $T_p \mathcal{M}$ den Winkel $\theta$ zwischen $X_{\gamma,p} \in T_p \mathcal{M}$ und $X_{\delta,p} \in T_p \mathcal{M}$ als 
\begin{align}
    \textit{cos}(\theta) := \frac{g_p(X_{\gamma,p}, X_{\delta_p})}{\sqrt{ g_p (X_{\gamma,p}, X_{\gamma,p})} \cdot \sqrt{g_p(X_{\delta,p}, X_{\delta,p})} }. \label{angle...}
\end{align}
Im Abschnitt $4.3$ haben wir uns bereits mit der Plausibilität dieser Definition auseinandergesetzt und argumentiert, warum $\theta$ in \eqref{33.} bzw. in \eqref{angle...} als (abstrakter) Winkel bezeichnet werden kann. Damit liefert also \eqref{angle...} einen sinnvollen Winkelbegriff zwischen den Tangentialvektoren $X_{\gamma,p}$ und $X_{\delta,p}$. 

Da am Punkt $p$ diese beiden Tangentialvektoren als tangential an die Kurven $\gamma$ bzw. $\delta$ aufgefasst werden können, erscheint es naheliegend, den Winkel zwischen der Kurve $\gamma$ und $\delta$ am Punkt $p \in \mathcal{M}$ als den Winkel zwischen $X_{\gamma,p}$ und $X_{\delta,p}$ zu erklären. Das liefert die folgende Definition, die bereits eine partielle Rechtfertigung der Behauptung liefert, dass die riemannsche Metrik $g$ eine Geometrie auf der glatten Mannigfaltigkeit $(\mathcal{M}, \tau, \mathcal{A})$ induziert.

\begin{definition}
    Sei $(M, \tau, \mathcal{A}, g)$ eine riemannsche Mannigfaltigkeit. Weiter seien $\gamma : (a,b) \longrightarrow \mathcal{M}$ und $\delta : (c,d) \longrightarrow \mathcal{M}$ zwei injektive, glatte und reguläre Kurven mit $a,b,c,d \in \mathbb{R}$, $a<b$ und $c<d$, sodass ein $\lambda_1 \in (a,b)$ und $\lambda_2 \in (c,d)$ mit
    \begin{align}
        \gamma(\lambda_1) = \delta(\lambda_2) = p \in \mathcal{M}
    \end{align}
    existieren. Dann ist der Winkel $\theta \in [0, \pi)$ zwischen $\gamma$ und $\delta$ im Punkt $p$ erklärt durch
    \begin{align}
        \theta := \textit{cos}^{-1}\bigg(   \frac{g_p(X_{\gamma,p}, X_{\delta_p})}{\sqrt{ g_p (X_{\gamma,p}, X_{\gamma,p})} \cdot \sqrt{g_p(X_{\delta,p}, X_{\delta,p})} }  \bigg).
    \end{align}
\end{definition}


\subsection{Das Längenfunktional \texorpdfstring{$\mathbb{L}$}{} und (schwache) Geodätische}

Nachdem wir im vorangegangenen Abschnitt erfolgreich einen intuitiven Winkelbegriff mithilfe der riemannschen Mertrik $g$ erklären konnten, wollen wir nun mittels $g$ auch einen intuitiven Längenbegriff $\mathbb{L}$ definieren. Dazu wollen wir im Folgenden ein Längenfunktional $\mathbb{L}_{(a,b)}$ mit $a,b \in \mathbb{R}$ und $a<b$ definieren, dass salopp gesprochen glatte Kurven der Form $\gamma : (a,b) \longrightarrow \mathcal{M}$ entgegennimmt und diesen ihre zugehörige Länge zuordnet, die wir mit $\mathbb{L}_{[a,b]} (\gamma)$ bezeichnen wollen. Technisch gesehen ist das Längenfunktional definiert als Abbildung der Form 
\begin{align}
    \mathbb{L}_{[a,b]} : \mathcal{C}^\infty ([a,b], \mathcal{M}) \longrightarrow \mathbb{R},
\end{align}
wobei die Menge $\mathcal{C}^\infty ([a,b], \mathcal{M})$ definiert ist durch
\begin{align}
    \mathcal{C}^\infty ([a,b], \mathcal{M}) := := \{ \gamma \in \mathcal{C}^0 ([a,b], \mathcal{M}) \:\: | \:\: \gamma |_{(a,b)} \in \mathcal{C}^\infty ((a,b) \}.
\end{align}
Dabei bezeichnet die Menge $\mathcal{C}^0 ([a,b], \mathcal{M})$ die Menge aller stetigen Kurven $\gamma$ der Form $[a,b] \longrightarrow \mathcal{M}$ und $\gamma |_{(a,b)}$ meint, dass wir uns $\gamma$ auf das Intervall $(a,b)$ eingeschränkt denken. 

Man beachte, dass zum jetzigen Zeitpunkt das Längenfunktional $\mathbb{L}_{[a,b]}$ eine rein abstrakte Funktion ist, die im Prinzip beliebig definiert sein kann. Da wir aber einen intuitiven Längenbegriff anstreben, sollte am Ende die konkrete Form des Längenfunktionals einen Teil unserer intuitiven Vorstellung vom Begriff der Länge codieren, damit wir den Namen, dem wir dem Funktional $\mathbb{L}_{[a,b]}$ gegeben haben, rechtfertigen können. Ein weiterer Grund, warum $\mathbb{L}_{[a,b]}$ nicht beliebig gewählt sein sollte, ist, dass wir am Ende auch eine Theorie zum Längenfunktional ausarbeiten wollen. Damit wir das tun können, müssen wir aber dem Längenfunktional eine gewisse Struktur auferlegen, sprich eine allgemeine Form des Funktionals $\mathbb{L}_{[a,b]}$ fordern, da wir ohne diese zusätzliche Struktur nicht ordentlich arbeiten können.

Da diese zusätzliche Struktur bzw. die zu fordernde allgemeine Form des Längenfunktionals einen Teil unserer intuitiven Vorstellung vom Begriff der Länge codieren soll, wollen wir im Folgenden die allgemeine Form von $\mathbb{L}_{[a,b]}$ auf Basis eines uns vertrauten Rahmens herleiten bzw. motivieren.

Wir betrachten dazu den $\mathbb{R}^d$, versehen mit dem Standardskalarprodukt $\langle \cdot , \cdot \rangle_{\mathbb{R}^d}$, welches die Standardvisualisierung des $\mathbb{R}^d$ induziert. Sei nun eine stetige Kurve $\gamma : [a,b] \longrightarrow \mathbb{R}^d$ mit $\gamma = (\gamma^1, ..., \gamma^d)$ gegeben, sodass die Funktionen $\gamma^{i} : [a,b] \longrightarrow \mathbb{R}$ auf $(a,b)$ differenzierbar im Sinne der Funktionalanalysis sind, dann folgt, dass $\gamma$ auf $(a,b)$ Gâteaux-differenzierbar ist (Beachte, dass $(a,b) \subseteq \mathbb{R}$ eine offene Menge im normierten Vektorraum $(\mathbb{R}, |\cdot|)$ ist und es sich bei $(\mathbb{R}^d, \| \cdot \|_{\langle \cdot , \cdot \rangle_{\mathbb{R}^d}})$ ebenfalls um einen normierten Vektorraum handelt, weshalb wir die funktionalanalytische Definition der Differenzierbarkeit auf $\gamma$ anwenden können), denn es gilt für $\lambda_0 \in (a,b)$, $\alpha \in \mathbb{R}$ und der multidimensionalen Kettenregel
\begin{align}
    \gamma'(\lambda_0; \alpha) =& \: \lim_{t \longrightarrow 0} \frac{\gamma(\lambda_0 + t \cdot \alpha) - \gamma(\lambda_0)}{t} \nonumber \\ =& \: \lim_{t \longrightarrow 0} \frac{(\gamma^1(\lambda_0 + t \cdot \alpha), ..., \gamma^d(\lambda_0 + t \cdot \alpha)) - (\gamma^1(\lambda_0), ..., \gamma^d(\lambda_0))}{t} \nonumber \\ =& \: \lim_{t \longrightarrow 0} \bigg(  \frac{\gamma^1(\lambda_0 + t \cdot \alpha) - \gamma^1(\lambda_0)}{t}, ..., \frac{\gamma^d(\lambda_0 + t \cdot \alpha) - \gamma^d(\lambda_0)}{t} \bigg) \nonumber \\ =& \: \alpha \cdot \big(  (\gamma^1)^{(1)} (\lambda_0), ..., (\gamma^d)^{(1)} (\lambda_0) \big) \nonumber \\ =& \: \alpha \cdot \gamma' (\lambda_0),
\end{align}
wobei wir im letzten Schritt wieder für alle $i \in \{ 1, ..., d \}$ den Ausdruck $(\gamma^{i})'$ mit dem Ausdruck $(\gamma^{i})^{(1)}$ identifiziert haben. Damit gilt also  $\gamma' = ((\gamma^1)^{(1)}, ..., (\gamma^d)^{(1)})$ mit 
\begin{align}
    (\gamma^{i})^{(1)} (\lambda_0) = \lim_{t \longrightarrow 0} \frac{\gamma^{i}(\lambda_0 + t) - \gamma^{i}(\lambda_0)}{t} =: \frac{d \gamma^{i}}{d \lambda} (\lambda_0)
\end{align}
für alle $i \in \{ 1, ..., d \}$ und $\lambda_o \in (a,b)$. Beachte, dass wegen der Geometrie des $(\mathbb{R}^d, \langle \cdot , \cdot \rangle_{\mathbb{R}^d})$ der Vektor $\gamma' (\lambda_0) =: \frac{d \gamma}{d \lambda}(\lambda_0)$ tangential an der Kurve $\gamma$ am Punkt $\gamma(\lambda_0)$ anliegt. Wir bezeichnen weiter, wie oben, mit $\mathbb{L}_{[a,b]}(\gamma)$ die Länge der Kurve $\gamma$, wobei wir in diesem konkreten Fall unter $\mathbb{L}_{[a,b]}(\gamma)$ tatsächlich den intuitiven Längenbegriff einer Kurve aus der euklidischen Geometrie meinen. Sei nun $\mathcal{P} = \{ \lambda_0, ..., \lambda_{n_{\mathcal{P}}} \} \subseteq \mathbb{R}$ mit $n_\mathcal{P} \in \mathbb{N}$ eine äquidistante Zerlegung von $[a,b]$, d.h. 
\begin{align}
    a = \lambda_0 < \lambda_1 < ... < \lambda_{n_\mathcal{P}-1} < \lambda_{n_\mathcal{P}} = b
\end{align}
und 
\begin{align}
    \lambda_i - \lambda_{i-1} = \Delta \lambda \:\:\:\:\:\:\:\: \forall i \in \{ 1, ..., n_{\mathcal{P}} \}.
\end{align}
Mit $|\mathcal{P}| = \Delta \lambda$ bezeichnen wir die Feinheit der äquidistanten Zerlegung $\mathcal{P}$. Für ein $i \in \{ 1, ..., n_\mathcal{P} \}$ betrachten wir die Menge $\gamma([\lambda_{i-1}, \lambda_{i}]) \subseteq \mathbb{R}^d$, die für ein hinreichend klein gewähltes $\Delta \lambda$ ein hinreichend kleines Kurvenstück der Kurve $\gamma([a,b]) \subseteq \mathbb{R}^d$ darstellt, deren Länge wir mit $ds_{i, \mathcal{P}} := \| \gamma(\lambda_i) - \gamma(\lambda_{i-1}) \|_{\langle \cdot , \cdot \rangle_{\mathbb{R}^d}}$ approximieren können. Die intuitive Länge der Kurve $\gamma$ lässt sich dann approximativ ausdrücken durch den Ausdruck
\begin{align}
    \mathbb{L}_{[a,b], \mathcal{P}} (\gamma) := \sum_{i = 1}^{n_{\mathcal{P}}} ds_{i, \mathcal{P}}.
\end{align}
Intuitiv ist klar, dass für immer feinere äquidistante Zerlegungen $\mathcal{P}$ die Approximation $\mathbb{L}_{[a,b], \mathcal{P}} (\gamma)$ sich immer weiter der tatsächlichen intuitiven Länge $\mathbb{L}_{[a,b]}(\gamma)$ der Kurve $\gamma$ annähert, d.h.
\begin{align}
    \lim_{|\mathcal{P}| \longrightarrow 0} \mathbb{L}_{[a,b], \mathcal{P}} (\gamma) = \mathbb{L}_{[a,b]}(\gamma).
\end{align}
Wir merken an dieser Stelle an, dass wegen der Konstruktion der Approximationen $\mathbb{L}_{[a,b], \mathcal{P}} (\gamma)$ auch klar wird, warum wir die Forderung benötigen, dass die Kurve $\gamma$ auf dem Intervall $[a,b]$ und nicht nur auf dem offenen Intervall $(a,b)$ definiert ist. Weiter gilt, falls $\gamma |_{(a,b)}$ zusätzlich aus der Klasse $\mathcal{C}^1((a,b), \mathbb{R}^d)$ ist, dass
\begin{align}
    \lim_{|\mathcal{P}| \longrightarrow 0} \mathbb{L}_{[a,b], \mathcal{P}} (\gamma) =& \: \int_a^b \| \gamma'(\lambda) \|_{\langle \cdot, \cdot \rangle_{\mathbb{R}^d}} \,d \lambda \nonumber \\ =& \:  \int_a^b \sqrt{ \langle \gamma'(\lambda) , \gamma'(\lambda) \rangle_{\mathbb{R}^d} } \,d \lambda \nonumber \\ =& \: \int_a^b \sqrt{ \sum_{i = 1}^d ((\gamma^{i})^{(1)}(\lambda))^2 } \,d \lambda \nonumber \\ =& \: \int_a^b \sqrt{ \sum_{i = 1}^d \bigg(\frac{d\gamma^{i}}{d \lambda}(\lambda)\bigg)^2 } \,d \lambda
\end{align}
ist \cite{heuser1992lehrbuch}. Heuristisch kann man dieses Resultat bereits durch die Gleichung
\begin{align}
    \mathbb{L}_{[a,b], \mathcal{P}} (\gamma) =& \: \sum_{i = 1}^{n_{\mathcal{P}}} ds_{i, \mathcal{P}} \nonumber \\ =& \: \sum_{i = 1}^{n_{\mathcal{P}}} \| \gamma(\lambda_i) - \gamma(\lambda_{i-1}) \|_{\langle \cdot , \cdot \rangle_{\mathbb{R}^d}} \nonumber \\ =& \: \sum_{i = 1}^{n_{\mathcal{P}}} \bigg\| \frac{\gamma(\lambda_i) - \gamma(\lambda_{i-1})}{\Delta \lambda} \bigg\|_{\langle \cdot , \cdot \rangle_{\mathbb{R}^d}} \cdot \Delta \lambda 
\end{align}
und der Definition des gewöhnlichen Riemann-Integrals erahnen. Es folgt damit insgesamt
\begin{align}
    \mathbb{L}_{[a,b]}(\gamma) = \int_a^b \| \gamma'(\lambda) \|_{\langle \cdot, \cdot \rangle_{\mathbb{R}^d}} \,d \lambda = \int_a^b \sqrt{ \langle \gamma'(\lambda) , \gamma'(\lambda) \rangle_{\mathbb{R}^d} } \,d \lambda \label{euklidisches Längenfunktional}
\end{align}
Denken wir uns nun abermals den $\mathbb{R}^d$, wie in Bemerkung $6.1.$, als glatte Mannigfaltigkeit $(\mathbb{R}^d, \tau_{\mathbb{R}^d}, \mathcal{B})$, wobei $\tau_{\mathbb{R}^d}$ die Standardtopologie des $\mathbb{R}^d$ meint, so können wir diese glatte Mannigfaltigkeit wie in Beispiel $7.1.$ mit dem folgenden riemannschen $(0,2)$-Tensorfeld
\begin{align}
    g : \mathbb{R}^d \longrightarrow& \: \: T^0_2 \mathbb{R}^d \nonumber\\
            p \longmapsto& \: \: g_p \:\:\:\: \textit{mit} \:\:\:\: g_p \bigg( \bigg( \frac{\partial}{\partial x^{i}} \bigg)_p, \bigg( \frac{\partial}{\partial x^{j}} \bigg)_p \bigg) = \langle  e_i , e_j \rangle_{\mathbb{R}^d} \:\:\:\: i,j \in \{ 1, ..., d\} \label{spezielle riemannsche Metrik}
\end{align}
ausstatten, wobei wieder die Karte $(\mathbb{R}^d, \textit{id}_{\mathbb{R}^d} = (x^1, ..., x^d)) \in \mathcal{B}$ gewählt wurde und 
\begin{align}
    e_i := (0, ..., 0, \underbrace{1}_{\textit{$i$-te Stelle}}, 0, ..., 0) \in \mathbb{R}^d  \:\:\:\:\:\:\:\: \forall i \in \{ 1, ..., d \}
\end{align}
ist. Beachte abermals, dass es sich bei $g$ tatsächlich um einen glatten Schnitt im $(0,2)$-Tensorbündel handelt, da aus der Definition von $g$ folgt, dass die bzgl. der Karte $(\mathbb{R}^d, \textit{id}_{\mathbb{R}^d})$ karteninduzierten Komponentenfunktionen konstante Funktionen sind, womit diese zur Klasse $\mathcal{C}^\infty (\mathbb{R}^d)$ gehören. Beachte weiter, dass bzgl. des Ausdrucks \eqref{Vektorkomponente}, welcher erklärt, wie die Vektorkomponenten von $X_{\gamma, \gamma(\lambda)} \in T_p \mathbb{R}^d$ bzgl. der Karte $(\mathbb{R}^d, \textit{id}_{\mathbb{R}^d})$ zu berechnen sind, folgt, dass
\begin{align}
    X_{\gamma,\gamma(\lambda)} = \sum_{i=1}^d (\gamma^{i})^{(1)}(\lambda) \cdot \bigg(  \frac{\partial}{\partial x^{i}} \bigg)_{\gamma(\lambda)} \:\:\:\:\:\:\:\: \forall \lambda \in (a,b)
\end{align}
ist. Damit folgt sofort für alle $\lambda \in (a,b)$
\begin{align}
    g_{\gamma(\lambda)} ( X_{\gamma, \gamma(\lambda)}, X_{\gamma, \gamma(\lambda)} ) =& \: \sum_{i,j = 1}^d (\gamma^{i})^{(1)}(\lambda) \cdot (\gamma^{j})^{(1)}(\lambda) \cdot g_p \bigg( \bigg(  \frac{\partial}{\partial x^{i}} \bigg)_{\gamma(\lambda)}, \bigg(  \frac{\partial}{\partial x^{j}} \bigg)_{\gamma(\lambda)}  \bigg) \nonumber \\ =& \: \sum_{i,j = 1}^d (\gamma^{i})^{(1)}(\lambda) \cdot (\gamma^{j})^{(1)}(\lambda) \cdot \langle e_i, e_j \rangle_{\mathbb{R}^d} \nonumber \\ =& \langle \gamma'(\lambda), \gamma'(\lambda) \rangle_{\mathbb{R}^d}.
\end{align}
Damit können wir das euklidische Längenfunktional \eqref{euklidisches Längenfunktional} umschreiben als
\begin{align}
    \mathbb{L}_{(a,b)}(\gamma) = \int_a^b \sqrt{ g_{\gamma(\lambda)}( X_{\gamma, \gamma(\lambda)} , X_{\gamma, \gamma(\lambda)} ) } \,d \lambda.
\end{align} 
Dies motiviert die folgende Definition:

\begin{definition}
    Sei $(\mathcal{M}, \tau, \mathcal{A}, g)$ eine riemannsche Mannigfaltigkeit und sei weiter $\gamma \in \mathcal{C}^\infty ([a,b], \mathcal{M})$ mit $a,b \in \mathbb{R}$ und $a<b$ eine Kurve. Dann ist das riemannsche Längenfunktional $\mathbb{L}_{[a,b]} : \mathcal{C}^\infty ([a,b], \mathcal{M}) \longrightarrow \mathbb{R}$ definiert als 
    \begin{align}
        \mathbb{L}_{[a,b]}(\gamma) := \int_a^b \sqrt{ g_{\gamma(\lambda)}( X_{\gamma, \gamma(\lambda)} , X_{\gamma, \gamma(\lambda)} ) } \,d \lambda. \label{Längenfunktional}
    \end{align}
\end{definition}

Beachte, das auf einer allgemeinen Mannigfaltigkeit $(\mathcal{M}, \tau, \mathcal{A}, g)$ die Länge $\mathbb{L}_{[a,b]}(\gamma)$ der Kurve $\gamma \in \mathcal{C}^\infty ([a,b], \mathcal{M})$ oft eine abstrakte Länge meint, da zum einen die Menge $\mathcal{M}$ abstrakt gegeben sein kann und zum anderen selbst im Falle $\mathcal{M} \subseteq \mathbb{R}^d$ die riemannsche Metrik $g$ nicht mit dem riemannschen Tensorfeld aus \eqref{spezielle riemannsche Metrik} zusammenfallen muss, welches in den einzelnen Tangentialräumen $T_p \mathcal{M}$ die intuitive euklidische Geometrie induziert hatte. 

Nichtsdestotrotz ist ein Teil unserer intuitiven Vorstellung vom Längenbegriff im Funktional \eqref{Längenfunktional} codiert, da die strukturelle Form des Längenfunktionals \eqref{Längenfunktional} gewährleistet, dass in der vertrauten Umgebung $(\mathcal{M}, \tau, \mathcal{A}) = (\mathbb{R}^d, \tau_{\mathbb{R}^d}, \mathcal{B})$ und der Wahl \eqref{spezielle riemannsche Metrik} für die riemannsche Metrik, sich der Ausdruck \eqref{Längenfunktional} auf den gewöhnlichen und intuitiven Längenbegriff aus der euklidischen Geometrie reduziert. 

Bezüglich der obigen Definition $7.3.$ beachte man noch, dass es eigentlich formal richtiger wäre, wenn wir statt $X_{\gamma, \gamma(\lambda)}$, stattdessen $X_{\gamma, \lambda, \gamma(\lambda)}$ schreiben würden, da es sich bei $\gamma$ durchaus auch um eine nicht injektive Kurve handeln kann. Um aber die Notation simpel zu halten, verzichten wir darauf, da konzeptionell klar sein düfte, was gemeint ist.

\begin{remark}
    An dieser Stelle sei noch angemerkt, dass wir den eben definierte Begriff der Länge auf Kurven der Klasse $\mathcal{PC}^\infty ([a,b], \mathcal{M})$, der stückweise glatten Kurven, verallgemeinern können. Dabei ist die Menge $\mathcal{PC}^\infty ([a,b], \mathcal{M})$ definiert als die Menge aller Kurven $\gamma \in \mathcal{C}^0 ([a,b], \mathcal{M})$, für die eine Menge $\{ \lambda_1, \lambda_2, ..., \lambda_n, \lambda_{n+1}  \} \subseteq (a,b)$ mit $n \in \mathbb{N}$, $\lambda_1 = a$ und $\lambda_{n+1} = b$ existiert, sodass $\gamma |_{(\lambda_{i}, \lambda_{i+1})} \in \mathcal{C}^\infty ((\lambda_i, \lambda_{i+1}), \mathcal{M})$ für alle $i \in \{ 1, ..., n \}$ ist. Für eine derartige Kurve ist dann die Länge $\mathbb{L}_{[a,b]} (\gamma)$ definiert als 
    \begin{align}
        \mathbb{L}_{[a,b]} (\gamma) :=& \: \sum_{i=1}^n \mathbb{L}_{[\lambda_i, \lambda_{i+1}]} (\gamma |_{[\lambda_i, \lambda_{i+1}]}) \nonumber \\ =& \: \sum_{i=1}^n \int_{\lambda_i}^{\lambda_{i+1}} \sqrt{ g_{\gamma(\lambda)}( X_{\gamma, \gamma(\lambda)} , X_{\gamma, \gamma(\lambda)} ) } \,d \lambda. \label{Längenfunktional 2}
    \end{align}
    Beachte, dass wegen der Linearität des Integrals der obige Ausdruck wohldefiniert ist.
\end{remark}

Wir wollen nun auf der riemannschen Mannigfaltigkeit $(\mathcal{M}, \tau, \mathcal{A}, g)$ den für die Geometrie wichtigen, und für uns zentralen Begriff einer (schwachen) Geodätischen einführen, welcher den Begriff einer Geraden aus der euklidischen Geometrie verallgemeinern soll. 

Wir betrachten dazu zuerst zwei vorgegebene Punkte $p$ und $q$ aus $\mathcal{M}$ mit $p \neq q$. Weiter definieren wir für $a,b \in \mathbb{R}$ mit $a<b$ die Menge $\mathcal{C}^\infty_{p,q} ([a,b], \mathcal{M})$, welche alle Kurven $\gamma \in \mathcal{C}^\infty ([a,b], \mathcal{M})$ enthält, die im Punkt $p$ beginnen und im Punkt $q$ enden, d.h. für $\gamma \in \mathcal{C}^\infty_{p,q} ([a,b], \mathcal{M})$ gilt, dass $\gamma \in \mathcal{C}^\infty ([a,b], \mathcal{M})$ ist, sodass $\gamma(a) = p$ und $\gamma(b) = q$ gilt.

Als Nächstes definieren wir den Begriff der stetigen Deformation einer Kurve $\gamma_0 \in \mathcal{C}^\infty_{p,q} ([a,b], \mathcal{M})$:

\begin{definition}
    Sei $(\mathcal{M}, \tau, \mathcal{A})$ eine glatte Mannigfaltigkeit und $p$ und $q$ Punkte aus $\mathcal{M}$. Weiter sei $\gamma_0 \in \mathcal{C}^\infty_{p,q} ([a,b], \mathcal{M})$ mit $a,b \in \mathbb{R}$ und $a<b$ eine Kurve, die im Punkt $p$ beginnt und im Punkt $q$ endet. Eine stetige Deformation der Kurve $\gamma_0$ ist gegeben durch eine Familie $(\gamma_\alpha)_{\alpha \in (- \epsilon, \epsilon)} \subseteq \mathcal{C}^\infty_{p,q} ([a,b], \mathcal{M})$ mit $\epsilon > 0$ und $\gamma_{\alpha = 0} = \gamma_0$, sodass die Abbildung
    \begin{align}
            \gamma : (- \epsilon, \epsilon) \times [a, b] \longrightarrow& \: \: \mathcal{M} \nonumber\\
            (\alpha, \lambda) \longmapsto& \: \: \gamma_\alpha (\lambda)
    \end{align}
    stetig ist. Dabei sind auf $[a,b]$ und $(-\epsilon, \epsilon)$ jeweils die von der $\mathbb{R}$-Standardtopologie induzierten Teilraumtopologien erklärt, sodass $(- \epsilon, \epsilon) \times [a, b]$ mit der zugehörigen Produkttopologie ausgestattet ist. Auf analoge Weise lassen sich auch stetige Deformationen erklären, bei welcher die einzelnen Elemente der Familie aus der Menge $\mathcal{PC}^\infty_{p,q} ([a,b], \mathcal{M})$ kommen, wobei $\mathcal{PC}^\infty_{p,q} ([a,b], \mathcal{M})$ erklärt ist als die Menge aller Kurven aus $\mathcal{PC}^\infty ([a,b], \mathcal{M})$, mit der Eigenschaft, dass diese in $p$ beginnen und in $q$ enden. 
\end{definition}

\begin{center}
\begin{tikzpicture}

    \node[label=below:$p$]  (x1) at (6,0)  {$\bullet$};
    \node[label=above:$q$]  (x0) at (9,4)  {$\bullet$};  
    \node  at (9.1,1.2)  {$\gamma_{\frac{1}{2}}$}; 
    \node  at (6.8,3.2)  {$\gamma_{\frac{3}{4}}$}; 
    \node  at (7.9,2)  {$\gamma_{0}$}; 

    \draw (x1.center) to [out=5,in=-90]++(2.8,1.8) to[out=90,in=-95](x0.center);
    \draw (x1.center) to [out=10,in=-110]++(2.6,2) to[out=70,in=-103](x0.center); 
    \draw (x1.center) to [out=15,in=-105](x0.center);
    \draw (x1.center) to [out=30,in=-150](x0.center);
    \draw (x1.center) to [out=45,in=-170](x0.center); 
    \draw (x1.center) to [out=50,in=-105]++(1.2,3)to [out=75,in=-172](x0.center); 

\end{tikzpicture}
\end{center}

Anschaulich ist dabei klar, warum die Familie $(\gamma_\alpha)_{\alpha \in (- \epsilon, \epsilon)}$ aus der obigen Definition als stetige Deformation der Kurve $\gamma \in \mathcal{C}^\infty_{p,q} ([a,b], \mathcal{M})$ bezeichnet wird, da zum einen für $\alpha = 0$ gilt, dass $\gamma_\alpha = \gamma_0$ ist, d.h. die Kurve $\gamma_0$ ist in der Familie $(\gamma_\alpha)_{\alpha \in (- \epsilon, \epsilon)}$ enthalten, und zum anderen ergeben sich die restlichen Kurven aus der Familie $(\gamma_\alpha)_{\alpha \in (- \epsilon, \epsilon)}$, die ebenfalls alle im Punkt $p$ beginnen und im Punkt $q$ enden, stetig aus der Kurve $\gamma_0$ durch die stetige Änderung des Parameters $\alpha \in (- \epsilon, \epsilon)$.

Für die stetige Deformation $(\gamma_\alpha)_{\alpha \in (- \epsilon, \epsilon)}$ der Kurve $\gamma_0$ betrachten wir nun für jedes einzelne $\alpha \in (-\epsilon, \epsilon)$ den Ausdruck $\mathbb{L}_{[a,b]}(\gamma_\alpha)$, d.h. die Länge der Kurve $\gamma_\alpha$. Diesen Ausdruck können wir nun selbst als eine Funktion $F_\gamma$ bzgl. des Parameters $\alpha \in (-\epsilon, \epsilon)$ auffassen, d.h. 
\begin{align}
    F_\gamma : (- \epsilon, \epsilon) \longrightarrow& \: \: \mathbb{R} \nonumber\\
            \alpha \longmapsto& \: \: F_\gamma (\alpha) := \mathbb{L}_{[a,b]}(\gamma_\alpha). \label{Variationsfunktion}
\end{align}
Die Abbildung $F_\gamma$ kann nun also als reellwertige Funktion interpretiert werden, welche auf der offenen Menge $(-\epsilon, \epsilon)$ des normierten Raumes $(\mathbb{R}, | \cdot |)$ definiert ist. Wir nehmen nun darüber hinaus an, dass die stetige Deformation $(\gamma_\alpha)_{\alpha \in (- \epsilon, \epsilon)}$ so gewählt wurde, sodass die zugehörige Funktion $F_\gamma$ am Punkt $\epsilon = 0$ differenzierbar ist (Wir wählen dabei im Wertebereich als Norm abermals die Betragsfunktion $|\cdot|$.). Mittels dieser Annahmen können wir nun erklären, was wir unter einer stationären Kurve des Funktionals $\mathbb{L}_{[a,b]} |_{\mathcal{C}^\infty_{p,q} ((a,b), \mathcal{M})}$ verstehen, wobei $\mathbb{L}_{[a,b]} |_{\mathcal{C}^\infty_{p,q} ((a,b), \mathcal{M})}$ das Längenfunktion $\mathbb{L}_{[a,b]}$ eingeschränkt auf die Menge $\mathcal{C}^\infty_{p,q} ((a,b), \mathcal{M})$ meint. 

\begin{definition}
    Sei $(\mathcal{M}, \tau, \mathcal{A}, g)$ eine riemannsche Mannigfaltigkeit, $p$ und $q$ zwei Punkte aus $\mathcal{M}$ mit $p \neq q$ und $a$ und $b$ zwei reelle Zahlen mit $a < b$. Weiter sei $\mathbb{L}_{[a,b]} |_{\mathcal{C}^\infty_{p,q} ((a,b), \mathcal{M})}$ das auf die Menge $\mathcal{C}^\infty_{p,q} ((a,b), \mathcal{M})$ eingeschränkte Längenfunktional. Dann nennen wir die Kurve $\gamma_0 \in \mathcal{C}^\infty_{p,q} ((a,b), \mathcal{M})$ eine bezüglich $\mathbb{L}_{[a,b]} |_{\mathcal{C}^\infty_{p,q} ((a,b), \mathcal{M})}$ stationäre Kurve, falls für alle stetigen Deformationen $\gamma = (\gamma_\alpha)_{\alpha \in (- \epsilon, \epsilon)}$ der Kurve $\gamma_0$, für die die in \eqref{Variationsfunktion} definierte zugehörige Variationsfunktion $F_\gamma$ differenzierbar im Punkt $0$ ist, die Bedingung
    \begin{align}
        \frac{dF_\gamma}{d\alpha} \bigg|_{\alpha = 0} := F_\gamma^{(1)}(0) = 0 \label{Variation}
    \end{align}
    erfüllt ist. Die bezüglich $\mathbb{L}_{[a,b]} |_{\mathcal{C}^\infty_{p,q} ((a,b), \mathcal{M})}$ stationären Kurven werden auch als schwache geodätische Kurven von $p$ nach $q$ oder kurz als schwache Geodäten oder als schwache Geodätische bzgl. $p$ und $q$ bezeichnet. Auf analoge Weise lässt sich der Begriff einer schwachen Geodätischen auch bzgl. der Menge $\mathcal{PC}^\infty_{p,q} ([a,b], \mathcal{M})$ erklären. Im weiteren Verlauf dieser Arbeit werden wir das Längenfunktional $\mathbb{L}_{[a,b]}$ immer als Funktional auf der Menge $\mathcal{PC}^\infty_{p,q} ([a,b], \mathcal{M})$ verstehen.
\end{definition}

\begin{remark}
    Der Ausdruck $\frac{dF_\gamma}{d\alpha} \big|_{\alpha = 0}$, welcher in der Gleichung \eqref{Variation} auftaucht, wird oft auch als die erste Variation des Längenfunktionals $\mathbb{L}_{[a,b]}$ bezeichnet. In der sogenannten Variationsrechnung, in welcher oft $\mathcal{M}$ als $\mathbb{R}^d$ gewählt wird, ausgestattet mit der gewöhnlichen glatten Struktur aus Bemerkung $6.1.$, nutzt man statt der hier diskutierten allgemeinen stetigen Deformationen $(\gamma_\alpha)_{\alpha \in (-\epsilon, \epsilon)}$ der Kurve $\gamma_0$ die folgenden Variationen von $\gamma_0$:
    \begin{align}
        \hat{\gamma}_\alpha : [a,b] \longrightarrow& \: \: \mathbb{R}^d \nonumber\\
            \lambda \longmapsto& \: \: \hat{\gamma}_\alpha (\lambda) := \gamma_0 (\lambda) + \alpha \cdot \eta (\lambda),
    \end{align}
    mit $\alpha \in (- \epsilon, \epsilon)$ und $\eta \in \mathcal{C}^\infty_0 ([a,b], \mathbb{R}^d)$. Dabei ist die Menge $\mathcal{C}^\infty_0 ([a,b], \mathbb{R}^d)$ definiert als die Menge aller Kurven $\eta \in \mathcal{C}^\infty ([a,b], \mathbb{R}^d)$ mit der Eigenschaft, dass $\eta(a) = \eta(b) = \mathbf{0}$ ist, wobei wir mit $\mathbf{0}$ den Nullvektor des $\mathbb{R}^d$ meinen. Die Menge $\mathcal{C}^\infty_0 ([a,b], \mathbb{R}^d)$ wird auch als die Menge aller Kurven $\eta \in \mathcal{C}^\infty ([a,b], \mathbb{R}^d)$ mit kompakten Träger bezeichnet und stellt ein Beispiel eines sogenannten Testfunktionenraums dar. Für jedes festes $\alpha \in (- \epsilon, \epsilon)$ stellt $\hat{\gamma}_\alpha$ dabei als Summe stetiger Kurven eine stetige Kurve dar.

    Beachte, dass für die Familie $\hat{\gamma} := (\hat{\gamma}_\alpha)_{\alpha \in (- \epsilon, \epsilon)}$ gilt, dass 
    \begin{align}
        \hat{\gamma}_\alpha (a) = \hat{\gamma}_0 (a) \:\:\:\: \textit{und} \:\:\:\: \hat{\gamma}_\alpha (b) = \hat{\gamma}_0 (b)
    \end{align}
    für alle $\alpha \in (- \epsilon, \epsilon)$ ist, d.h. jede Kurve aus der Familie $(\hat{\gamma}_\alpha)_{\alpha \in (- \epsilon, \epsilon)}$ beginnt im Punkt $\hat{\gamma}_0 (a)$ und endet im Punkt $\hat{\gamma}_0 (b)$. Weiter gilt
    \begin{align}
        \hat{\gamma}_{\alpha = 0} = \gamma_0,
    \end{align}
    d.h. dass die Kurve $\gamma_0$ in der Familie $(\hat{\gamma}_\alpha)_{\alpha \in (- \epsilon, \epsilon)}$ enthalten ist. Wegen $\mathcal{M} = \mathbb{R}^d$ können wir auf die lineare Struktur des $\mathbb{R}^d$ zurückgreifen, was es uns überhaupt erst ermöglicht, die Kurven $\hat{\gamma}_\alpha$ als punktweise Summe der beiden Funktionen $\gamma_0$ und $\alpha \cdot \eta$ zu definieren. Wegen
    \begin{align}
        \| \gamma_0(p) - (\gamma_0(p) + \alpha \cdot \eta (p)) \|_{\langle \cdot , \cdot \rangle_{\mathbb{R}^d}} = |\alpha| \cdot \|\eta(p)\|_{\langle \cdot , \cdot \rangle_{\mathbb{R}^d}} \longrightarrow 0 \label{Funktionennähe} 
    \end{align}
    für $\alpha \longrightarrow 0$ folgt, dass für hinreichend kleines $\alpha \in (-\epsilon, \epsilon)$ die Kurve $\hat{\gamma}_\alpha$, bzw. deren Bild, hinreichend nah am Bild der Kurve $\gamma_0$ liegt, weshalb zusammengenommen auch hier die Familie $(\hat{\gamma}_\alpha)_{\alpha \in (- \alpha, \alpha)}$ als eine kontinuierliche Deformation der Kurve $\gamma_0$ interpretiert werden kann. In diesem Rahmen ist der Ausdruck
    \begin{align}
        \frac{dF_{\hat{\gamma}}}{d\alpha} \bigg|_{\alpha = 0} = \frac{d}{d\alpha} \mathbb{L}_{[a,b]}(\hat{\gamma}_\alpha) \bigg|_{\alpha = 0} = \frac{d}{d \alpha} \mathbb{L}_{[a,b]} (\gamma_0 + \alpha \cdot \eta) \bigg|_{\alpha = 0} \label{Richtungsableitung auf Funktionenraum}
    \end{align}
    eine Richtungsableitung am 'Punkt' $\gamma_0$. Beachte dabei, dass im Falle $\mathcal{M} = \mathbb{R}^d$ die Menge $\mathcal{C}^\infty ([a,b], \mathbb{R}^d)$ einen Vektorraum bildet und dieser mit einer beliebigen Norm $\|\cdot\|$ ausgestattet werden kann, da bzgl. jeder Norm auf $\mathcal{C}^\infty ([a,b], \mathbb{R}^d)$ der für die Variationrechnung korrekte Nähebegriff induziert wird, welcher durch \eqref{Funktionennähe} vorgegeben wird. So gilt bzgl. einer beliebigen Norm $\| \cdot \|$ auf $\mathcal{C}^\infty ([a,b], \mathbb{R}^d)$
    \begin{align}
        \|\gamma_0 - \hat{\gamma}_\alpha\| = |\alpha| \cdot \|\eta\| \longrightarrow 0
    \end{align}
    für $\alpha \longrightarrow 0$. Damit können wir \eqref{Richtungsableitung auf Funktionenraum} tatsächlich als Richtungsableitung im Sinne der Funktionalanalysis auffassen. Man nutzt dabei auch oft die in der Variationsrechnung übliche Bezeichnung
    \begin{align}
        \delta \mathbb{L}_{[a,b]} (\gamma_0, \eta) := \frac{d}{d \alpha} \mathbb{L}_{[a,b]} (\gamma_0 + \alpha \cdot \eta) \bigg|_{\alpha = 0}.
    \end{align}
\end{remark}

Was bedeutet nun der Begriff einer schwachen Geodätischen anschaulich? Betrachten wir die beiden Punkte $p$ und $q$ aus $\mathcal{M}$ mit $p \neq q$, sowie die Menge aller glatter Kurven, die diese beiden Punkte miteinander verbinden. Dann ist beispielsweise eine schwache Geodätische von $p$ nach $q$ gegeben durch diejenige Kurve aus $\mathcal{PC}^\infty_{p,q} ([a,b], \mathcal{M})$ mit minimaler oder maximaler Länge (Vorrausgesetzt natürlich, dass es derartige Kurven auf einer riemanschen Mannigfaltigkeit überhaupt gibt.). Woran kann man sehen, dass diese Interpretation korrekt ist? 

Wir nehmen dazu beispielsweise an, dass es sich bei $\gamma_0 \in \mathcal{PC}^\infty_{p,q} ([a,b], \mathcal{M})$ um die Kurve minimaler Länge zwischen $p$ und $q$ handelt, d.h. $\gamma_0$ repräsentiert den kürzesten Weg zwischen $p$ und $q$. Wird $\epsilon$ hinreichend klein gewählt, so gilt für jede stetige Deformation $(\gamma_\alpha)_{\alpha \in (- \epsilon, \epsilon)}$ der Kurve $\gamma_0$, dass jedes $\gamma_\alpha \in (\gamma_\alpha)_{\alpha \in (- \epsilon, \epsilon)}$ nur eine minimale Variation der Kurve $\gamma_0$ darstellt. Da $\gamma_0$ der kürzeste Weg von $p$ nach $q$ ist, gilt
\begin{align}
    \mathbb{L}_{[a,b]}(\gamma_0) \leq \mathbb{L}_{[a,b]}(\gamma_\alpha) \:\:\:\:\:\:\:\: \forall \alpha \in (- \epsilon, \epsilon).
\end{align}
Da wir zusätzlich vorraussetzen, dass die stetige Deformationen so gewählt ist, sodass $F_\gamma$ im Punkt $\alpha = 0$ differenzierbar ist, folgt, dass der Funktiongraph von $F_\gamma$ in etwa die folgende Gestalt in der Nähe der $0$ aufweist:\\[0.1cm]
\begin{center}
\begin{tikzpicture}
\centering
    \node (a) at (0.2,-0.3) {$0$};
    \draw[->] (-3, 0) -- (3, 0) node[right] {$\alpha$};
    \draw[->] (0, -1) -- (0, 4.2) node[above] {$F_\gamma (\alpha)$};
    \draw[scale=0.5, domain=-2:2, smooth, variable=\x, blue] plot ({\x}, {\x*\x + 1} );
\end{tikzpicture}
\end{center}
Anhand der Grafik und dem Wissen, dass der Ausdruck $F_\gamma^{(1)}(0)$ den Anstieg des Funktionsgraphen am Punkt $0$ meint, ist klar, dass in diesem Fall $F_\gamma^{(1)}(0) = 0$ gelten muss. Aus der obigen Grafik folgt auch umgedreht, dass es sich bei $\gamma_0$ um ein (lokales) Minimum von $\mathbb{L}_{[a,b]} |_{\mathcal{C}^\infty_{p,q} ((a,b), \mathcal{M})}$ handeln muss, wenn diese Grafik für alle beliebigen stetigen Deformationen, für die die zugehörigen Variationsfunktionen $F_\gamma$ differenzierbar an der Stelle $0$ sind, in etwa von obiger Gestalt sind. Analog argumentiert man auch für Wege maximaler Länge von $p$ nach $q$.

Bevor wir nun diesen Abschnitt beenden, wollen wir noch den Begriff der schwachen Geodätischen verallgemeinern. Dazu müssen wir aber zuerst noch klären, was wir unter einer sogenannten glatten Deformation $\gamma : (-\epsilon, \epsilon) \times [a,b] \longrightarrow \mathcal{M}$ einer Kurve $\gamma_0 \in \mathcal{PC}^\infty ([a,b], \mathcal{M})$ mit $\epsilon > 0$ und $a,b \in \mathbb{R}$ mit $a<b$ zu verstehen haben.

Salopp gesprochen soll es sich dabei um eine stetige Deformation der Kurve $\gamma_0$ handeln, welche gleichzeitig glatt sein soll. Das wirft aber die Frage auf, wie eine glatte Funktion der Form $\gamma : (-\epsilon, \epsilon) \times [a,b] \longrightarrow \mathcal{M}$ definiert ist, wobei $(\mathcal{M}, \tau, \mathcal{A})$ wieder eine $d$-dimensionale glatte Mannigfaltigkeit sei.

Zuerst beachte man, dass, wenn $\gamma$ eine stetige Deformation sein soll, $\gamma$ stetig bezüglich der Produkttopologie auf $(-\epsilon, \epsilon) \times [a,b]$ und der Topologie $\tau$ auf $\mathcal{M}$ sein muss. Dabei ist die Produkttopologie auf $(-\epsilon, \epsilon) \times [a,b]$ induziert durch die Teilraumtopologien auf $(-\epsilon, \epsilon)$ und $[a,b]$, die wiederum induziert werden durch die Standardtopologie der reellen Zahlen.

Bezeichnen wir die Produkttopologie auf $(-\epsilon, \epsilon) \times [a,b]$ kurz mit $\tau_\mathcal{P}$, so würde der erste Ansatz zur Definition einer glatten Funktion der Form $(-\epsilon, \epsilon) \times [a,b] \longrightarrow \mathcal{M}$ sich analog zur Definition $6.9.$ gestalten, in welcher wir unter anderem gefordert haben, dass eine Kurve $\gamma_0 : (a,b) \longrightarrow \mathcal{M}$ glatt ist, falls für alle Karten $(\mathcal{U}, \phi) \in \mathcal{A}$ mit $\gamma_0^{-1}(\mathcal{U}) \neq \emptyset$ die Abbildung 
\begin{align}
    \phi \circ \gamma_0 : \gamma_0^{-1} (\mathcal{U}) \longrightarrow \phi(\mathcal{U})
\end{align}
glatt ist. Die Glattheit von $\phi \circ \gamma_0 |_{\gamma_0^{-1}(\mathcal{U})}$ war dabei im herkömmlichen funktionalanalytischen Sinne zu verstehen. Das wir überhaupt im funktionalanalytischen Sinne über die mögliche Glattheit der Abbildung $\phi \circ \gamma_0 |_{\gamma_0^{-1}(\mathcal{U})}$ sprechen konnten lag an der Konstruktion der Definition, die gewährleistete, dass $\gamma_0^{-1}(\mathcal{U})$ als offene Menge in einem normierten Raum aufgefasst werden konnte, damit wir überhaupt auf die Definition der Differenzierbarkeit einer Abbildung zurückgreifen konnten.

Um also die Definition der Glattheit für Kurven nachzuahmen, um so den Begriff einer glatten Abbildung $\gamma$ der Form $(-\epsilon, \epsilon) \times [a,b] \longrightarrow \mathcal{M}$ zu etablieren, müssen wir gewährleisten, dass für alle Karten $(\mathcal{U}, \phi) \in \mathcal{A}$ die Menge $\gamma^{-1} (\mathcal{U})$ als offene Menge in einem normierten Raum verstanden werden kann.  

Wir machen zuerst zwei Beobachtungen: Zuerst stellen wir fest, dass wir die Menge $(-\epsilon, \epsilon) \times [a,b]$ als Teilmenge des $\mathbb{R}^2$ verstehen können. Den $\mathbb{R}^2$ können wir wiederum als normierten Raum verstehen, wobei die Norm gegeben ist durch die Standardnorm, welche wir kurz mit $\| \cdot \|_2$ bezeichnen wollen, und welche definiert ist durch
\begin{align}
    \|(x,y)\|_2 := \sqrt{x^2 + y^2}.
\end{align}
Als nächstes stellen wir fest, dass die Teilraumtopologie auf $(-\epsilon, \epsilon) \times (a,b)$, induziert durch die Topologie $\tau_\mathcal{P}$ auf $(-\epsilon, \epsilon) \times [a,b]$, per Definition der Produktttopologie gerade der Produkttopologie auf $(-\epsilon, \epsilon) \times (a,b)$ entspricht, wenn die beiden Intervalle $(-\epsilon, \epsilon)$ und $(a,b)$ jeweils mit den durch die auf $\mathbb{R}$ erklärten Standardtopologie induzierten Teilraumtopologien ausgestattet sind. Bezeichnen wir die Produkttopologie auf $(-\epsilon, \epsilon) \times (a,b)$ mit $\tau_p$ so gilt weiter, abermals per Definition der Produkttopologie (Betrachte dazu auch den noch kommenden Beweis von Lemma $7.1.$.), dass $\tau_p \subset \tau_{\mathcal{P}}$ gilt. 

Aufgrund der Konstruktion der Produkttopologie liegt es weiterhin Nahe, dass, wenn wir $\mathbb{R}$ mit der Standardtopologie ausstatten, die Produkttopologie auf $\mathbb{R} \times \mathbb{R} = \mathbb{R}^2$, die wir mit $\tau_1$ bezeichnen wollen, äquivalent ist zur Topologie $\tau_2$, bei welcher es sich um die durch die Norm $\| \cdot \|_2$ induzierte Topologie handelt.

Sollte das stimmen, so folgt, da abermals per Konstruktion der Produkttopologie $\tau_p \subset \tau_1$ gilt, dass $\tau_p \subset \tau_2$ ist und damit $\gamma |_{(-\epsilon, \epsilon) \times (a,b)}^{-1}(\mathcal{U}) \in \tau_2$. Auf diese Weise könnten wir also $\phi \circ \gamma |_{(-\epsilon, \epsilon) \times (a,b)}$ als Abbildung verstehen, die auf einer offenen Teilmenge des normierten Raumes $(\mathbb{R}^2, \| \cdot \|_2)$ erklärt ist und in den normierten Raum $(\mathbb{R}^d, \| \cdot \|_{\langle \cdot , \cdot \rangle_{\mathbb{R}^d}})$ abbildet, wobei $\| \cdot \|_{\langle \cdot , \cdot \rangle_{\mathbb{R}^d}}$ die vom $\mathbb{R}^d$-Standardskalarprodukt induzierte Norm meint.

In diesem Fall wäre die Defintion einer glatten Deformation einfach: Die Abbildung $\gamma : (-\epsilon, \epsilon) \times [a,b] \longrightarrow \mathcal{M}$ hieße glatte Deformation der Kurve $\gamma_0$, wenn $\gamma$ eine stetige Deformation der Kurve $\gamma_0$ ist, sodass für alle Karten $(\mathcal{U}, \phi) \in \mathcal{A}$ mit $\gamma |_{(-\epsilon, \epsilon) \times (a,b)}^{-1}(\mathcal{U}) \neq \emptyset$ folgt, dass die Abbildung
\begin{align}
    \phi \circ \gamma |_{(-\epsilon, \epsilon) \times (a,b)} : \gamma |_{(-\epsilon, \epsilon) \times (a,b)}^{-1} (\mathcal{U}) \longrightarrow \mathbb{R}^d
\end{align}
eine glatte Abbildung im herkömmlichen funktionalanalytischen Sinne ist. Die Wohldefiniertheit dieser Definition, sprich die Unabhängigkeit von der Wahl der Karte, würde dann aus der Definition $6.8.$ folgen, da wegen $\tau_p \subset \tau_2$ auch folgen würde, dass $(-\epsilon, \epsilon) \times (a,b) \in \tau_2$ wäre. Da sich nun jede offene Menge $\mathcal{O}$ von $(\mathbb{R}^2, \tau_2)$ zu einer $2$-dimensionalen glatten Mannigfaltigkeit machen lässt, indem man $\mathcal{O}$ mit der Teilraumtopologie 
\begin{align}
    \tau_\mathcal{O} =& \: \{ \mathcal{V} \cap \mathcal{O} \subseteq \mathbb{R}^2 \:\: | \:\: \mathcal{V} \in \tau_2 \} \nonumber \\ =& \: \{ \mathcal{V} \in \tau_2 \:\: | \:\: \mathcal{V} \subseteq \mathcal{O} \}
\end{align}
und der globalen Karten $(\mathcal{O}, \textit{id}_{\mathcal{O}})$ versieht, können wir $\gamma |_{(-\epsilon, \epsilon) \times (a,b)}$ auch als Abbildung zwischen zwei glatten Mannigfaltigkeiten verstehen.

Es bleibt nun noch zu zeigen, dass tatsächlich $\tau_1 = \tau_2$ gilt.

\begin{proposition}
    Sei die Menge $\mathbb{R}$ versehen mit der Standardtopologie. Wir bilden den Produktraum $\mathbb{R} \times \mathbb{R} = \mathbb{R}^2$, versehen mit der von der Standardtopologie induzierten Produkttopologie, welche wir mit $\tau_1$ bezeichnen. Weiter sei $\tau_2$ die vom $\mathbb{R}^2$-Standardskalarprodukt induzierte Standardtopologie des $\mathbb{R}^2$. Dann gilt $\tau_1 = \tau_2$.
\end{proposition}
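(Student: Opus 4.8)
Der Plan ist, die Gleichheit $\tau_1 = \tau_2$ durch den Nachweis der beiden Inklusionen $\tau_1 \subseteq \tau_2$ und $\tau_2 \subseteq \tau_1$ zu zeigen, wobei ich in beiden Fällen nicht mit beliebigen offenen Mengen, sondern mit den jeweiligen Basismengen arbeiten würde. Für $\tau_1$ liefert die Konstruktion der Produkttopologie zusammen mit der im Beweis des Satzes über die Offenheit der Projektionsabbildungen hergeleiteten Normalform, dass sich jedes Element von $\tau_1$ als Vereinigung endlicher Schnitte von Urbildern $\pi_i^{-1}(\mathcal{O})$ schreiben lässt; wegen $\pi_1^{-1}(\mathcal{O}) = \mathcal{O} \times \mathbb{R}$ und $\pi_2^{-1}(\mathcal{O}) = \mathbb{R} \times \mathcal{O}$ ist ein solcher endlicher Schnitt stets ein offenes Rechteck $\mathcal{O}_1 \times \mathcal{O}_2$ mit in $\mathbb{R}$ offenen Mengen $\mathcal{O}_1, \mathcal{O}_2$. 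Für $\tau_2$ nutze ich die Definition der metrikinduzierten Topologie sowie die Bemerkung aus dem Kapitel über metrische Räume, nach der jede $\delta$-Umgebung $\mathcal{U}_\delta(p)$ in der induzierten Topologie liegt und sich folglich jede Menge aus $\tau_2$ als Vereinigung solcher offener Bälle darstellen lässt. Da Topologien unter beliebigen Vereinigungen abgeschlossen sind, genügt es also zu zeigen, dass jedes offene Rechteck in $\tau_2$ und jeder offene Ball in $\tau_1$ liegt.

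Für $\tau_1 \subseteq \tau_2$ würde ich ein beliebiges Rechteck $\mathcal{O}_1 \times \mathcal{O}_2 \in \tau_1$ betrachten und zu jedem Punkt $(x,y) \in \mathcal{O}_1 \times \mathcal{O}_2$ einen kleinen Ball einbeschreiben. Da $\mathcal{O}_1$ und $\mathcal{O}_2$ offen in $\mathbb{R}$ sind, existieren $\delta_1, \delta_2 > 0$ mit $(x-\delta_1, x+\delta_1) \subseteq \mathcal{O}_1$ und $(y-\delta_2, y+\delta_2) \subseteq \mathcal{O}_2$. Mit $\delta := \min\{\delta_1, \delta_2\}$ und der elementaren Abschätzung $|u-x| \leq \|(u,v)-(x,y)\|_2$ sowie $|v-y| \leq \|(u,v)-(x,y)\|_2$ folgt sofort $\mathcal{U}_\delta((x,y)) \subseteq \mathcal{O}_1 \times \mathcal{O}_2$, womit das Rechteck die definierende Eigenschaft von $\tau_2$ erfüllt und als Element von $\tau_2$ erkannt ist.

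Für die umgekehrte Inklusion $\tau_2 \subseteq \tau_1$ würde ich einen beliebigen Ball $\mathcal{U}_\delta(p)$ mit $p=(x,y)$ nehmen und zu jedem Punkt $q=(u,v) \in \mathcal{U}_\delta(p)$ ein kleines offenes Quadrat einbeschreiben. Setzt man $r := \delta - \|q-p\|_2 > 0$ und betrachtet das Rechteck $(u-s,u+s)\times(v-s,v+s)$ mit $s := \frac{r}{2}$, so gilt für jedes $(a,b)$ darin $\|(a,b)-q\|_2 = \sqrt{(a-u)^2+(b-v)^2} < \sqrt{2}\, s \leq r$ und somit nach der Dreiecksungleichung $\|(a,b)-p\|_2 \leq \|(a,b)-q\|_2 + \|q-p\|_2 < r + \|q-p\|_2 = \delta$. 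Damit liegt dieses Rechteck ganz in $\mathcal{U}_\delta(p)$, und da es ein Basiselement von $\tau_1$ ist, lässt sich $\mathcal{U}_\delta(p)$ als Vereinigung solcher Rechtecke und folglich als Element von $\tau_1$ schreiben. Zusammen mit dem vorangehenden Absatz ergibt sich $\tau_1 = \tau_2$.

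Die eigentliche Arbeit sehe ich dabei weniger in den geometrischen Abschätzungen -- diese sind Standard und beruhen nur auf $|u-x| \leq \|(u,v)-(x,y)\|_2$ beziehungsweise $\|(a,b)\|_2 \leq \sqrt{2}\,\max\{|a|,|b|\}$ -- sondern in der sauberen Rechtfertigung der beiden Basisdarstellungen. Insbesondere müsste ich explizit begründen, dass die im Beweis der Offenheit der Projektionen hergeleitete Normalform auch für das Produkt aus nur zwei Faktoren gilt und dass die dort auftretenden endlichen Schnitte der Urbilder tatsächlich genau die Rechtecke $\mathcal{O}_1\times\mathcal{O}_2$ liefern. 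Der verbleibende Teil ist reine Buchhaltung mit den Topologieaxiomen, namentlich der Abgeschlossenheit unter beliebigen Vereinigungen.
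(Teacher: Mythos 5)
Dein Beweis ist korrekt und verfolgt im Wesentlichen denselben Ansatz wie die Arbeit: beide Inklusionen werden gezeigt, indem euklidische Bälle in offene Rechtecke und offene Quadrate in Bälle einbeschrieben werden, wobei die jeweiligen offenen Mengen anschließend als Vereinigungen dieser Grundmengen dargestellt werden. Der Unterschied ist rein organisatorisch — du reduzierst explizit auf Basiselemente (Rechtecke bzw. Bälle) und zentrierst die einbeschriebenen Quadrate mittels Dreiecksungleichung an beliebigen Punkten des Balls, während die Arbeit direkt mit beliebigen offenen Mengen aus $\tau_1$ bzw. $\tau_2$ argumentiert und die Quadrate an den Ballmittelpunkten zentriert; inhaltlich ändert das nichts.
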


\begin{proof}
    Um zu zeigen, dass beide Topologien identisch sind, müssen wir einerseits zeigen, dass $\tau_1 \subseteq \tau_2$ gilt, und andererseits, dass $\tau_2 \subseteq \tau_1$ gilt, da auf diese Weise gezeigt ist, dass jede offene Menge aus $\tau_1$ auch eine offene Menge aus $\tau_2$ und umgekehrt ist.
    \begin{itemize}
    
        \item[\textit{i)}] Beginnen wir mit $\tau_1 \subseteq \tau_2$: Sei also $\mathcal{W} \in \tau_1$. Aus der Definition der Produkttopologie folgt, dass für alle $(x,y) \in \mathcal{W}$ offene Intervalle $(a,b) \subseteq \mathbb{R}$ und $(c,d) \subseteq \mathbb{R}$ existieren, sodass 
        \begin{align}
            (x,y) \in (a,b) \times (c,d) \subseteq \mathcal{W}
        \end{align}
        gilt. Da es sich bei den Intervallen $(a,b)$ und $(c,d)$ um offene Intervalle handelt, können wir diese nun verkleinern zu $(\hat{a}, \hat{b}) \subseteq (a,b)$ und $(\hat{c}, \hat{d}) \subseteq (c,d)$, sodass
        \begin{align}
            (x,y) = \bigg( \frac{\hat{a} + \hat{b}}{2} , \frac{\hat{c} + \hat{d}}{2} \bigg)
        \end{align}
        gilt. Weiter definieren wir uns ein $\delta > 0$ durch $\delta = \textit{min} \big\{ \frac{|\hat{a} - \hat{b}|}{2} , \frac{|\hat{c} - \hat{d}|}{2} \big\}$. Dann gilt, dass
        \begin{align}
            \mathcal{U}_\delta ((x,y)) =& \: \{ (z,w) \in \mathbb{R}^2 \:\: | \:\: \| (x,y) - (z,w) \|_2 < \delta \} \nonumber \\ =& \: \{ (z,w) \in \mathbb{R}^2 \:\: | \:\: \sqrt{(x-z)^2 + (y-w)^2} < \delta \} \nonumber \\ \subseteq& \: (\hat{a}, \hat{b}) \times (\hat{c}, \hat{d}) \subseteq \mathcal{W}.
        \end{align}
        Damit können wir also um jedes $(x,y) \in \mathcal{W} \in \tau_1$ eine $\delta$-Umgebung legen, die selbst noch in $\mathcal{W}$ enthalten ist. Dies ist gemäß \eqref{metrische Topologie...} genau die definierende Eigenschaft für offene Menge in $\tau_2$, woraus folgt, dass $\mathcal{W} \in \tau_2$ ist. Da $\mathcal{W} \in \tau_1$ beliebig gewählt war, folgt, dass $\tau_1 \subseteq \tau_2$ ist.

        \item[\textit{ii)}] Betrachten wir nun die Richtung $\tau_2 \subseteq \tau_1$. Sei also $\mathcal{U} \in \tau_2$, d.h. für jeden Punkt $(x,y) \in \mathcal{U}$ existiert ein $\delta_{(x,y)} > 0$, sodass $(x,y) \in \mathcal{U}_{\delta_{(x,y)}} ((x,y)) \subseteq \mathcal{U}$ gilt. Wir betrachten die reellen Intervalle $\big( x - \frac{\delta_{(x,y)}}{\sqrt{2}}, x + \frac{\delta_{(x,y)}}{\sqrt{2}} \big)$ und $\big( y - \frac{\delta_{(x,y)}}{\sqrt{2}} , y + \frac{\delta_{(x,y)}}{\sqrt{2}} \big)$. Sei $(z,w)$ ein beliebiges Element in $\big( x - \frac{\delta_{(x,y)}}{\sqrt{2}}, x + \frac{\delta_{(x,y)}}{\sqrt{2}} \big) \times \big( y - \frac{\delta_{(x,y)}}{\sqrt{2}} , y + \frac{\delta_{(x,y)}}{\sqrt{2}} \big)$. Dann gilt
        \begin{align}
            \| (x,y) - (z,w) \|_2 =& \: \sqrt{(x-z)^2 + (y-w)^2} \nonumber \\ <& \: \sqrt{\bigg( x - x + \frac{\delta_{(x,y)}}{\sqrt{2}} \bigg)^2 + \bigg( y - y + \frac{\delta_{(x,y)}}{\sqrt{2}} \bigg)^2} \nonumber \\ =& \: \sqrt{\frac{\delta_{(x,y)}^2}{2} + \frac{\delta_{(x,y)}^2}{2}} = \delta_{(x,y)},
        \end{align}
        woraus 
        \begin{align}
            \bigg( x - \frac{\delta_{(x,y)}}{\sqrt{2}}, x + \frac{\delta_{(x,y)}}{\sqrt{2}} \bigg) \times \bigg( y - \frac{\delta_{(x,y)}}{\sqrt{2}} , y + \frac{\delta_{(x,y)}}{\sqrt{2}} \bigg) \subseteq& \: \mathcal{U}_{\delta_{(x,y)}} ((x,y)) \nonumber \\ \subseteq& \: \mathcal{U} \label{***********}
        \end{align}
        folgt. Da $\big( x - \frac{\delta_{(x,y)}}{\sqrt{2}}, x + \frac{\delta_{(x,y)}}{\sqrt{2}} \big) \times \big( y - \frac{\delta_{(x,y)}}{\sqrt{2}} , y + \frac{\delta_{(x,y)}}{\sqrt{2}} \big)$ per Definition der Produkttopologie $\tau_1$ eine offene Menge bezüglich $\tau_1$ ist, und wir für jedes $(x,y) \in \mathcal{U}$ ein $\delta_{(x,y)}$ finden können, sodass \eqref{***********} richtig ist, folgt, dass wir $\mathcal{U}$ auch schreiben können als 
        \begin{align}
            \mathcal{U} = \bigcup_{(x,y) \in \mathcal{U}} \bigg( x - \frac{\delta_{(x,y)}}{\sqrt{2}}, x + \frac{\delta_{(x,y)}}{\sqrt{2}} \bigg) \times \bigg( y - \frac{\delta_{(x,y)}}{\sqrt{2}} , y + \frac{\delta_{(x,y)}}{\sqrt{2}} \bigg). \label{555}
        \end{align}
        Aus der Definition einer Topologie folgt nun aber, dass die rechte Seite von \eqref{555} in $\tau_1$ enthalten sein muss, woraus unmittelbar folgt, dass auch $\mathcal{U}$ in $\tau_1$ enthalten sein muss. Da $\mathcal{U} \in \tau_2$ beliebig gewählt war, folgt, dass $\tau_2 \subseteq \tau_1$ gilt.
        
    \end{itemize}
    Damit ist gezeigt, dass $\tau_1 = \tau_2$ gilt.
\end{proof}

Mittels dem Obigem wissen wir damit, was wir unter einer glatten Deformation zu verstehen haben. Damit können wir nun den Begriff der schwachen Geodätischen verallgemeinern:

\begin{definition}
    Sei $(\mathcal{M}, \tau, \mathcal{A}, g)$ eine riemannsche Mannigfaltigkeit und sei $\gamma_0 \in \mathcal{PC}^\infty_{p,q} ([a,b], \mathcal{M})$ eine Kurve mit $a,b \in \mathbb{R}$ und $a<b$, sowie $p,q \in \mathcal{M}$. Dann bezeichnen wir $\gamma_0$ als Geodätische von $\mathbb{L}_{[a,b]} |_{\mathcal{PC}^\infty_{p,q} ([a,b], \mathcal{M})}$, falls für alle glatten Deformationen $\gamma$ von $\gamma_0$, für die die zugehörige Variationsfunktion $F_\gamma$ an der Stelle $\alpha = 0$ differenzierbar ist, gilt, dass
    \begin{align}
        \frac{dF_\gamma}{d\alpha} \bigg|_{\alpha = 0} = 0 \label{glatte Variation}
    \end{align}
    ist.
\end{definition}

Da jede glatte Deformation per Definition auch eine stetige Deformation ist, und für eine Geodätische im Gegensatz zu einer schwachen Geodätischen der Ausdruck \eqref{glatte Variation} nur für glatte Variationen zu gelten braucht, handelt es sich beim Begriff der Geodätischen tatsächlich um eine Verallgemeinerung des Begriffes der schwachen Geodätischen. So ist jede schwache Geodätische automatisch eine Geodätische, aber nicht jede Geodätische muss notwendigerweise eine schwache Geodätische sein.

Man kann sich nun natürlich die Frage stellen, woher die Bezeichnung schwache Geodätische kommt, wenn es sich bei diesem Begriff verglichen mit dem Begriff der Geodätischen, eigentlich um den stärkeren Begriff handelt. Der Grund für diese Bezeichnung liegt in den schwächeren Vorraussetzungen begründet, die man an die Deformationen stellt. Während für eine schwache Geodätische die Deformationen lediglich stetig zu sein brauchen, müssen die Deformationen für eine Geodätische zusätzlich noch glatt sein.  

Im nächsten Abschnitt werden wir sehen, dass wir mittels des Begriffes der glatten Deformation ein leicht zu überprüfendes notwendiges Kriterium für die schwachen Geodätischen herleiten können.


\subsection{Die Geodätengleichung}

Wir wollen nun die schwachen Geodätischen einer riemannschen Mannigfaltigkeit $(\mathcal{M}, \tau, \mathcal{A}, g)$ etwas genauer studieren und ein leicht zu überprüfendes notwendiges Kriterium ableiten, welches für die schwachen Geodätischen einer riemannschen Mannigfaltigkeit immer gilt. 

Mit diesem Kriterium werden wir also in der Lage sein leicht zu überprüfen, ob es sich bei einer gegebenen Kurve tatsächlich um einen möglichen Kandidaten einer schwachen Geodätischen handelt. Darüberhinaus werden wir in einem Spezialfall auch sehen, dass dieses Kriterium verwendet werden kann um die Geodätischen einer riemannschen Mannigfaltigkeit auszurechnen.

Um dieses Werkzeug abzuleiten, müssen wir noch kurz zeigen, dass falls wir eine schwache Geodätische $\gamma_0 : [a,b] \longrightarrow \mathcal{M}$ gegeben haben, die die Punkte $\gamma_0(a) = p$ und $\gamma_0 (b) = q$ miteinander verbindet, die Einschränkung von $\gamma_0$ auf ein beliebiges Intervall $[c,d] \subseteq [a,b]$ abermals eine schwache Geodätische liefert, die aber nun die Punkte $r := \gamma_0(c)$ und $s := \gamma_0 (d)$ miteinander verbindet. 

Anschaulich ist dabei klar, warum das stimmen muss. Nehmen wir nämlich beispielsweise an, dass $\gamma_0$ ein Minimum des Längenfunktionals $\mathbb{L}_{[a,b]}$ darstellt, dann muss die Einschränkung $\gamma_0 |_{[c,d]}$ ein Minimum des Längenfunktionals $\mathbb{L}_[c,d]$ sein, da wir ansonsten eine (stückweise) glatte Kurve $\hat{\gamma}_0 : [c,d] \longrightarrow \mathcal{M}$ mit $\hat{\gamma}_0 (d) = \gamma_0 (c)$ und $\hat{\gamma}_0 (d) = \gamma_0 (d)$ finden können, sodass 
\begin{align}
    \mathbb{L}_{[c,d]} (\hat{\gamma}_0) < \mathbb{L}_{[c,d]} (\gamma_0 |_{[c,d]})
\end{align}
gilt. Für $\mathbb{L}_{[a,b]}(\gamma_0)$ würde dann wegen der Linearität des Integrals folgen, dass
\begin{align}
    \mathbb{L}_{[a,b]}(\gamma_0) =& \: \mathbb{L}_{[a,c]} (\gamma_0 |_{[a,c]}) + \mathbb{L}_{[c,d]} (\gamma_0 |_{[c,d]}) + \mathbb{L}_{[d,b]} (\gamma_0 |_{[d,b]}) \nonumber \\ >& \: \mathbb{L}_{[a,c]} (\gamma_0 |_{[a,c]}) + \mathbb{L}_{[c,d]} (\hat{\gamma}_0 |_{[c,d]}) + \mathbb{L}_{[d,b]} (\gamma_0 |_{[d,b]}) \nonumber \\ =& \: \mathbb{L}_{[a,b]} (\Tilde{\gamma}_0)
\end{align}
mit $\Tilde{\gamma}_0 \in \mathcal{PC}^\infty ([a,b], \mathcal{M})$, definiert durch
\begin{align}
    \tilde{\gamma}_0 (\lambda) = \left\{\begin{array}{ll} \gamma_0 (\lambda), & \lambda \in [a,c] \cup [d,b]  \\
         \hat{\gamma}_0 (\lambda), & \lambda \in (c,d) \end{array}\right. .
\end{align}
In der Menge $\mathcal{PC}^\infty_{p,q} ([a,b], \mathcal{M})$ hätten wir demnach ein Kurve gefunden, welche eine echt kürzere Verbindungskurve zwischen $p$ und $q$ als $\gamma_0$ wäre, im Widerspruch dazu, dass es sich bei $\gamma_0$ bereits um die kürzeste Kurve aus $\mathcal{PC}^\infty_{p,q} ([a,b], \mathcal{M})$ handelt.

Wie zeigen wir diese Aussage nun im Allgemeinen für alle stationären Kurven des Längenfunktionals $\mathbb{L}_{[a,b]}$? Einen Begriff, welchen wir dazu benötigen, sind die sogenannten trivialen Fortsetzungen stetiger Deformationen.

\begin{definition}
    Sei $(\mathcal{M}, \tau, \mathcal{A})$ wieder eine glatte Mannigfaltigkeit und sei $\gamma_0 \in \mathcal{PC}^\infty_{p,q} ([a,b], \mathcal{M})$ mit $a,b \in \mathbb{R}$ und $a<b$ eine Kurve. Sei weiter $[c,d] \subseteq [a,b]$ gegeben und $\gamma_0 |_{[c,d]} := \hat{\gamma}_0$. Darüberhinaus sei $\hat{\gamma} = (\hat{\gamma}_\alpha)_{\alpha \in (-\epsilon, \epsilon)}$ mit $\epsilon > 0$ eine stetige Deformation von $\hat{\gamma}_0$. Dann ist die sogenannte triviale Fortsetzung der stetigen Deformation $\hat{\gamma}$ entlang der Kurve $\gamma_0$ erklärt als die durch den Parameter $\alpha \in (- \epsilon, \epsilon)$ parametrisierte Familie stetiger Kurven $\gamma_\alpha : [a,b] \longrightarrow \mathcal{M}$ mit
    \begin{align}
        \gamma_\alpha (\lambda) = \left\{\begin{array}{ll} \gamma_0 (\lambda), & \lambda \in [a,c) \cup (d,b] \\ \hat{\gamma}_\alpha (\lambda), & \lambda \in [c,d] \end{array} \:\:\:\:\:\:\:\: \forall \alpha \in (-\epsilon, \epsilon).\right.
    \end{align}
\end{definition}

Bei der trivialen Fortsetzung der stetigen Deformation $\hat{\gamma}$ entlang der Kurve $\gamma_0$ handelt es sich also anschaulich um eine Deformation der Kurve $\gamma_0$, bei der nur der Kurvenabschnitt $\gamma_0 ([c,d])$ variiert wird. Im Folgenden wollen wir explizit zeigen, dass es sich bei dieser Art von Deformation tatsächlich auch um eine stetige Deformation handelt.

Bevor wir dies jedoch zeigen können, benötigen wir noch das folgende Lemma:

\begin{lemma}
    Sei $(\mathbb{R}, \tau_{\mathbb{R}})$ die Menge der reellen Zahlen, versehen mit der Standardtopologie, und $(X, \tau)$ ein beliebiger topologischer Raum. Seien weiter $[a,c], [c,d] \subseteq \mathbb{R}$ zwei reelle Teilintervalle mit $a,c,d \in \mathbb{R}$ und $a < c < d$. Wir statten $[a,c]$ und $[c,d]$ jeweils mit der Teilraumtopologie $\tau_{\mathbb{R}, [a,c]}$ bzw. $\tau_{\mathbb{R}, [c,d]}$ aus und betrachten die stetigen Funktionen $f : [a,c] \longrightarrow X$ und $g : [c,d] \longrightarrow X$ mit
    \begin{align}
        f(c) = g(c).
    \end{align}
    Dann ist die Funktion $h : [a,d] \longrightarrow X$, definiert durch 
    \begin{align}
        h (\lambda) = \left\{\begin{array}{ll} f(\lambda) , & \lambda \in [a,c) \\ g (\lambda), & \lambda \in [c,d] \end{array} \:\:\:\:\:\:\:\: \forall \lambda \in [a,d]\right.,
    \end{align}
    eine $(\tau_{\mathbb{R}, [a,d]}, \tau)$-stetige Funktion.
\end{lemma}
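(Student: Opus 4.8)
Der Plan ist, direkt das Stetigkeitskriterium für topologische Räume (Definition~3.6) nachzuweisen, also zu zeigen, dass für jede bezüglich $\tau$ offene Menge $\mathcal{V} \subseteq X$ das Urbild $h^{-1}(\mathcal{V})$ offen bezüglich der Teilraumtopologie $\tau_{\mathbb{R}, [a,d]}$ ist. Zunächst würde ich festhalten, dass wegen $f(c) = g(c)$ und der Definition von $h$ die Einschränkungen $h|_{[a,c]} = f$ und $h|_{[c,d]} = g$ gelten, sodass $h$ stückweise mit den beiden stetigen Funktionen $f$ und $g$ übereinstimmt. Da alle beteiligten Intervalle mit der von der Standardtopologie auf $\mathbb{R}$ induzierten Teilraumtopologie versehen sind, lässt sich Offenheit punktweise über $\delta$-Umgebungen im Sinne der in Kapitel~3 eingeführten metrikinduzierten Topologie überprüfen: Es genügt zu zeigen, dass zu jedem $\lambda_0 \in h^{-1}(\mathcal{V})$ ein $\delta > 0$ existiert, sodass $\mathcal{U}_\delta(\lambda_0) \cap [a,d] \subseteq h^{-1}(\mathcal{V})$ gilt.

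Dazu würde ich eine Fallunterscheidung nach der Lage von $\lambda_0$ in $[a,d]$ vornehmen. Im Fall $\lambda_0 \in [a,c)$ ist $h(\lambda_0) = f(\lambda_0) \in \mathcal{V}$, und wegen der Stetigkeit von $f$ ist $f^{-1}(\mathcal{V})$ offen in $[a,c]$, enthält also eine $\delta_1$-Umgebung von $\lambda_0$. Wähle ich dann $\delta := \min\{\delta_1, c - \lambda_0\} > 0$, so liegt $\mathcal{U}_\delta(\lambda_0) \cap [a,d]$ vollständig in $[a,c)$ und zugleich in $f^{-1}(\mathcal{V})$, womit $h$ diese Umgebung nach $\mathcal{V}$ abbildet. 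Der Fall $\lambda_0 \in (c,d]$ verläuft völlig symmetrisch mit $g$ anstelle von $f$ und der Wahl $\delta := \min\{\delta_2, \lambda_0 - c\}$.

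Der entscheidende und zugleich heikelste Fall ist der Nahtpunkt $\lambda_0 = c$, an dem beide Kurvenstücke zusammenstoßen. Hier ist $h(c) = f(c) = g(c) \in \mathcal{V}$, und ich würde sowohl die Stetigkeit von $f$ als auch die von $g$ ausnutzen: Es existieren $\delta_1, \delta_2 > 0$ mit $\mathcal{U}_{\delta_1}(c) \cap [a,c] \subseteq f^{-1}(\mathcal{V})$ und $\mathcal{U}_{\delta_2}(c) \cap [c,d] \subseteq g^{-1}(\mathcal{V})$. Für $\delta := \min\{\delta_1, \delta_2\}$ bildet dann $h$ die linke Hälfte $\mathcal{U}_\delta(c) \cap [a,c]$ via $f$ und die rechte Hälfte $\mathcal{U}_\delta(c) \cap [c,d]$ via $g$ jeweils nach $\mathcal{V}$ ab, sodass insgesamt $\mathcal{U}_\delta(c) \cap [a,d] \subseteq h^{-1}(\mathcal{V})$ folgt. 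Genau an dieser Stelle geht die Verklebungsbedingung $f(c) = g(c)$ wesentlich ein, denn nur sie garantiert, dass der gemeinsame Punktwert $h(c)$ überhaupt in $\mathcal{V}$ liegt und beide Umgebungsbeiträge denselben Funktionswert stetig fortsetzen.

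Da $\lambda_0 \in h^{-1}(\mathcal{V})$ beliebig war, folgt die Offenheit von $h^{-1}(\mathcal{V})$ und damit die $(\tau_{\mathbb{R}, [a,d]}, \tau)$-Stetigkeit von $h$. Ich erwarte, dass die einzige echte Subtilität das saubere Abschneiden der $\delta$-Umgebungen an der Intervallgrenze $c$ ist; die übrigen Schritte sind reine Routine. Als technischen Hilfsbaustein würde ich dabei die leicht aus der Definition der Teilraumtopologie folgende Transitivität verwenden, nach der eine in $[a,c]$ bzw. in $[c,d]$ offene Menge, sofern sie ganz im jeweils in $[a,d]$ offenen Teilintervall $[a,c)$ bzw. $(c,d]$ liegt, auch in $[a,d]$ offen ist.
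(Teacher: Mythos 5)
Dein Beweis ist korrekt, verfolgt aber eine andere Strategie als die Arbeit. Du verifizierst die Offenheit von $h^{-1}(\mathcal{V})$ punktweise: Für jedes $\lambda_0 \in h^{-1}(\mathcal{V})$ konstruierst du ein passendes $\delta > 0$ mit $\mathcal{U}_\delta(\lambda_0) \cap [a,d] \subseteq h^{-1}(\mathcal{V})$, mit einer Fallunterscheidung nach der Lage von $\lambda_0$ (links von $c$, rechts von $c$, gleich $c$); insbesondere das Abschneiden mittels $\delta := \min\{\delta_1, c-\lambda_0\}$ bzw. $\delta := \min\{\delta_2, \lambda_0 - c\}$ und die Behandlung des Nahtpunktes über $\delta := \min\{\delta_1,\delta_2\}$ sind sauber und vollständig. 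Die Arbeit geht dagegen mengentheoretisch vor: Sie zerlegt $h^{-1}(\mathcal{W}) = f^{-1}(\mathcal{W}) \cup g^{-1}(\mathcal{W})$ und unterscheidet nur zwei Fälle, nämlich ob $c$ im Urbild liegt oder nicht. Im ersten Fall wird gezeigt, dass relativ offene Teilmengen von $[a,c]$ bzw. $[c,d]$, die $c$ nicht enthalten, auch relativ offen in $[a,d]$ sind; im zweiten Fall wird für zwei relativ offene Mengen, die beide $c$ enthalten, durch Anfügen einer $\delta$-Umgebung von $c$ an die sie bezeugenden offenen Mengen $\mathcal{O}$ und $\mathcal{K}$ konstruiert und die Identität $\mathcal{V} \cup \mathcal{U} = [a,d] \cap (\mathcal{O} \cup \mathcal{K})$ per doppelter Inklusion nachgerechnet. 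Beide Wege nutzen letztlich die metrische Struktur der Standardtopologie über $\delta$-Umgebungen; dein Zugang ist kürzer und elementarer, weil er die aufwendige Mengengleichheits-Verifikation der Arbeit durch lokale Argumente ersetzt, während der Zugang der Arbeit als Nebenprodukt wiederverwendbare Aussagen über das Verhalten von Teilraumtopologien beim Verkleben liefert (die im anschließenden Satz über triviale Fortsetzungen tatsächlich zitiert werden). Einzige kleine Anmerkung: Die von dir benutzte Äquivalenz, dass Offenheit in $\tau_{\mathbb{R},[a,d]}$ genau der punktweisen Bedingung $\mathcal{U}_\delta(\lambda_0) \cap [a,d] \subseteq M$ entspricht, verdient strenggenommen eine Zeile Begründung (Teilraumtopologie einer metrikinduzierten Topologie ist die Topologie der eingeschränkten Metrik), ist aber aus den Definitionen der Kapitel $2$ und $3$ unmittelbar herleitbar und stellt keine Lücke dar.
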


\begin{proof}
    Wir beginnen damit die Definitionen der Teilraumtopologien zu wiederholen:
    \begin{align}
        &\tau_{\mathbb{R},[a,c]} := \{ \mathcal{V} \subseteq [a,c] \:\: | \:\: \exists \mathcal{X} \in \tau_\mathbb{R}, \: \textit{sodass} \:\: \mathcal{X} \cap [a,c] = \mathcal{V} \} \\ 
        &\tau_{\mathbb{R},[c,d]} := \{ \mathcal{V}' \subseteq [c,d] \:\: | \:\: \exists \mathcal{X}' \in \tau_\mathbb{R}, \: \textit{sodass} \:\: \mathcal{X}' \cap [c,d] = \mathcal{V}' \} \\ 
        &\tau_{\mathbb{R},[a,d]} := \{ \mathcal{V}'' \subseteq [a,d] \:\: | \:\: \exists \mathcal{X}'' \in \tau_\mathbb{R}, \: \textit{sodass} \:\: \mathcal{X}'' \cap [a,d] = \mathcal{V}'' \}
    \end{align}
    Sei nun $\mathcal{W} \in \tau$ eine beliebige offene Menge aus $\tau$. Wir betrachten die Menge $h^{-1}(\mathcal{W})$:
    \begin{align}
        h^{-1}(\mathcal{W}) = f^{-1}(\mathcal{W}) \cup g^{-1}(\mathcal{W})
    \end{align}
    Da sowohl $f$, als auch $g$ stetig sind, folgt $f^{-1} (\mathcal{W}) \in \tau_{\mathbb{R}, [a,c]}$ und $g^{-1} (\mathcal{W}) \in \tau_{\mathbb{R}, [c,d]}$. Wir betrachten nun zwei Fälle: 
    \begin{itemize}
        \item [\textit{i)}] $c \notin h^{-1}(\mathcal{W})$
        
        \item [\textit{ii)}] $c \in h^{-1}(\mathcal{W})$
    \end{itemize}
    Beachte, dass falls $c \in h^{-1}(\mathcal{W})$ ist, wegen $f(c) = g(c)$ auch sofort folgt, dass $c \in f^{-1}(\mathcal{W})$ und $c \in g^{-1}(\mathcal{W})$ ist.
    
    Betrachten wir nun den ersten Fall $\textit{i)}$. Per Definition der Topologie $\tau_\mathbb{R}$ und der Definition der Teilraumtopologien folgt aus $\mathcal{V} \in \tau_{\mathbb{R}, [a,c]}$ mit $c \notin \mathcal{V}$, dass dann $\mathcal{V} \in \tau_{\mathbb{R}, [a,d]}$ ist und Analoges gilt dann aus Symmetriegründen natürlich auch für $\tau_{\mathbb{R}, [c,d]}$ und $\tau_{\mathbb{R}, [a,d]}$. Die Richtigkeit dieser Behauptung sieht man folgendermaßen ein: Sei $\mathcal{V} \in \tau_{\mathbb{R}, [a,c]}$ mit $c \notin \mathcal{V}$. Dann existiert eine offene Menge $\mathcal{X} \in \tau_\mathbb{R}$ mit $c \notin \mathcal{X}$ und $\mathcal{V} = \mathcal{X} \cap [a,c]$. Sei weiter $\mathcal{N} := \mathcal{X} \setminus (- \infty, c]$. Per Definition von $\tau_\mathbb{R}$ ist $\mathcal{N}$ offen bzgl. $\tau_\mathbb{R}$, da wir um jeden Punkt $\lambda \in \mathbb{N}$ nachwievor eine $\delta$-Umgebung legen können, die vollständig in $\mathcal{N}$ enthalten ist, denn angenommen dem wäre nicht so, d.h. es gebe einen Punkt $\lambda \in \mathcal{N}$, sodass kein $\delta > 0$ existiert, sodass $\mathcal{U}_\delta (\lambda) \subseteq \mathcal{N}$ gilt. Dann gibt es mindestens ein $\mu \in \mathcal{U}_\delta (\lambda)$, welches in $(-\infty, c]$ liegt. Aus der Definition von $\mathcal{U}_\delta (\lambda)$ folgt dann aber, dass $c \in \mathcal{U}_\delta (\lambda)$ ist. Da das für $\delta > 0$ gilt, und $\mathcal{X}$ als offen angenommen wurde, folgt, dass $c \in \mathcal{X}$ ist, im Widerspruch zur Annahme, dass $c \notin \mathcal{V}$ ist. Daraus folgt, dass $\mathcal{N}$ offen bezüglich $\tau_\mathbb{R}$ sein muss. Auf analoge Weise zeigt man, dass die Menge $\mathcal{G} := \mathcal{X} \setminus [c, \infty)$ offen bezüglich $\tau_\mathbb{R}$ ist. Insgesamt lässt sich damit also $\mathcal{X}$ schreiben als 
    \begin{align}
        \mathcal{X} = \mathcal{N} \cup \mathcal{G},
    \end{align}
    wobei $\mathcal{N}$ und $\mathcal{G}$ bezüglich $\tau_\mathbb{R}$ offen sind. Aus der Definition der Menge $\mathcal{N}$ und $\mathcal{G}$ folgt nun, dass 
    \begin{align}
        \mathcal{V} = \mathcal{X} \cap [a,c] = \mathcal{G} \cap [a,c] = \mathcal{G} \cap [a,d] \in \tau_{\mathbb{R}, [a,d]}
    \end{align}
    ist. Da aus $h^{-1}(\mathcal{W})$ folgt, dass $c \notin f^{-1}(\mathcal{W})$ und $c \notin g^{-1}(\mathcal{W})$ ist, und weiter aus der Stetigkeit von $f$ und $g$ folgt, dass $f^{-1}(\mathcal{W}) \in \tau_{\mathbb{R}, [a,c]}$ und $g^{-1}(\mathcal{W}) \in \tau_{\mathbb{R}, [c,d]}$ ist, folgt nach obigem, dass $f^{-1}(\mathcal{W}) \in \tau_{\mathbb{R}, [a,d]}$ und $g^{-1}(\mathcal{W}) \in \tau_{\mathbb{R}, [a,d]}$ ist. Aus der Definition einer Topologie folgt damit, dass $h^{-1}(\mathcal{W}) \in \tau_{\mathbb{R}, [a,d]}$ ist.

    Betrachten wir nun den zweiten Fall \textit{ii)}. Sei also $c \in h^{-1}(\mathcal{W})$. Wir betrachten nun zwei allgemeine Mengen $\mathcal{V} \in \tau_{\mathbb{R}, [a,c]}$ und $\mathcal{U} \in \tau_{\mathbb{R}, [c,d]}$ mit $c \in \mathcal{V}$ und $c \in \mathcal{U}$. Wegen $\mathcal{V} \in \tau_{\mathbb{R}, [a,c]}$ mit $c \in \mathcal{V}$ existiert ein $\mathcal{X} \in \tau_\mathbb{R}$ mit $c \in \mathcal{X}$ und
    \begin{align}
        \mathcal{V} = \mathcal{X} \cap [a,c].
    \end{align}
    Analog existiert wegen $\mathcal{U} \in \tau_{\mathbb{R}, [c,d]}$ und $c \in \mathcal{U}$ ein $\mathcal{X}' \in \tau_\mathbb{R}$ mit $c \in \mathcal{X}'$ und 
    \begin{align}
        \mathcal{U} = \mathcal{X}' \cap [c,d]. 
    \end{align}
    Wegen $\mathcal{X}, \mathcal{X}' \in \tau_\mathbb{R}$ existieren nach der Definition der Topologie $\tau_\mathbb{R}$ Zahlen $\delta_1, \delta_2 > 0$, sodass
    \begin{align}
        \mathcal{U}_{\delta_1}(c) \subseteq \mathcal{X} \:\:\:\: \textit{und} \:\:\:\: \mathcal{U}_{\delta_2}(c) \subseteq \mathcal{X}'.
    \end{align}
    Sei $\delta = \textit{min}\{\delta_1, \delta_2\}$, so gilt $\mathcal{U}_\delta (c) \subseteq \mathcal{X}$ und $\mathcal{U}_\delta(c) \subseteq \mathcal{X}'$. Wir definieren nun zwei neue Mengen durch
    \begin{align}
        &\mathcal{O} := (\mathcal{X} \cap [a,c]) \cup \mathcal{U}_\delta (c), \\
        &\mathcal{K} := (\mathcal{X}' \cap [c,d]) \cup \mathcal{U}_\delta (c).
    \end{align}
    Per Definition gilt $\mathcal{O} \subseteq \mathcal{X}$ und $\mathcal{K} \subseteq \mathcal{X}'$. Weiter sind die Mengen $\mathcal{O}$ und $\mathcal{K}$ offen bezüglich der Toplogie $\tau_\mathbb{R}$, denn betrachten wir beispielsweise die Menge $\mathcal{O}$, so gilt für alle $x \in \mathcal{X} \cap [a,c]$, dass ein $\hat{\delta} > 0$ mit $\hat{\delta} < \delta$ existiert, sodass $\mathcal{U}_{\hat{\delta}}(x) \subseteq \mathcal{O}$. Weiter gilt, dass für alle $x \in \mathcal{U}_\delta (c)$ folgt, da $\mathcal{U}_\delta (c) \in \tau_\mathbb{R}$ ist, dass ein $\Tilde{\delta}>0$ existiert, sodass $\mathcal{U}_{\Tilde{\delta}}(x) \subseteq \mathcal{U}_\delta (c) \subseteq \mathcal{O}$, womit gezeigt ist, dass $\mathcal{O}$ eine offene Menge bezüglich $\tau_\mathbb{R}$ ist. Analog zeigt man $\mathcal{K} \in \tau_\mathbb{R}$. Es gilt nun die folgende Mengengleichheit:
    \begin{align}
        \mathcal{V} \cup \mathcal{U} = \big( [a,c] \cap \mathcal{X} \big) \bigcup \big( [c,d] \cap \mathcal{X}' \big) = \big(\underbrace{ [a,c] \cup [c,d] }_{\textit{$[a,d]$}} \big) \bigcap \big(\underbrace{ \mathcal{O} \cup \mathcal{K} }_{\textit{$\in \tau_\mathbb{R}$}}\big)
    \end{align}
    Mittels dieser Mengengleichheit folgt offensichtlicherweise, dass $\mathcal{V} \cup \mathcal{U} \in \tau_{\mathbb{R}, [a,d]}$. Zeigen wir also noch die Korrektheit dieser Mengengleichung. Dazu zeigen wir zuerst, dass aus $x \in \big( [a,c] \cap \mathcal{X} \big) \bigcup \big( [c,d] \cap \mathcal{X}' \big)$ auch $x \in \big( [a,c] \cup [c,d] \big) \bigcap \big( \mathcal{O} \cup \mathcal{K} \big)$ folgt. Sei also $x \in \big( [a,c] \cap \mathcal{X} \big) \bigcup \big( [c,d] \cap \mathcal{X}' \big)$. Dann ist $x$ enthalten in $[a,c] \cap \mathcal{X}$ oder in $[c,d] \cap \mathcal{X}'$. Es gilt
    \begin{align}
        x \in [a,c] \cap \mathcal{X} \subseteq& \: [a,c] \bigcap \big( [a,c] \cap \mathcal{X} \big) \nonumber \\ \subseteq& \: [a,c] \bigcap \big(\underbrace{ ([a,c] \cap \mathcal{X}) \cup \mathcal{U}_\delta (c) }_{\textit{$\mathcal{O}$}}\big) \nonumber \\ \subseteq& \: \big( [a,c] \cup [c,d]  \big) \bigcap \big( \mathcal{O} \cup \mathcal{K} \big)
    \end{align}
    und 
    \begin{align}
        x \in [c,d] \cap \mathcal{X}' \subseteq& \: [c,d] \bigcap \big( [c,d] \cap \mathcal{X}' \big) \nonumber \\ \subseteq& \: [c,d] \bigcap \big(\underbrace{ ([c,d] \cap \mathcal{X}') \cup \mathcal{U}_\delta (c) }_{\textit{$\mathcal{K}$}}\big) \nonumber \\ \subseteq& \: \big( [a,c] \cup [c,d]  \big) \bigcap \big( \mathcal{O} \cup \mathcal{K} \big)
    \end{align}
    Damit ist die erste Inklusion gezeigt. Sei nun $x \in \big( [a,c] \cup [c,d] \big) \bigcap \big( \mathcal{O} \cup \mathcal{K} \big)$. Wir zeigen, dass daraus $ x \in \big( [a,c] \cap \mathcal{X} \big) \bigcup \big( [c,d] \cap \mathcal{X}' \big)$ folgt. Aus der Bedingung $x \in \big( [a,c] \cup [c,d] \big) \bigcap \big( \mathcal{O} \cup \mathcal{K} \big)$ folgt, dass $x \in [a,c] \cup [c,d]$ und in $\mathcal{O} \cup \mathcal{K}$ liegen muss. Es gelten die folgenden Inklusionen:
    \begin{align}
        x \in [a,c] \cap \mathcal{O} \subseteq& \: [a,c] \cap \mathcal{X} \nonumber \\ \subseteq& \: \big( [a,c] \cap \mathcal{X} \big) \bigcup \big( [c,d] \cap \mathcal{X}' \big), \\
        x \in [c,d] \cap \mathcal{K} \subseteq& \: [c,d] \cap \mathcal{X}' \nonumber \\ \subseteq& \: \big( [a,c] \cap \mathcal{X} \big) \bigcup \big( [c,d] \cap \mathcal{X}' \big), \\
        x \in [a,c] \cap \mathcal{K} \subseteq& \: [a,c] \cap \mathcal{U}_\delta(c) \nonumber \\ \subseteq& \: [a,c] \cap \mathcal{X} \nonumber \\ \subseteq& \:  \big( [a,c] \cap \mathcal{X} \big) \bigcup \big( [c,d] \cap \mathcal{X}' \big),
    \end{align}
    und 
    \begin{align}
        x \in [c,d] \cap \mathcal{O} \subseteq& \: [c,d] \cap \mathcal{U}_\delta(c) \nonumber \\ \subseteq& \: [c,d] \cap \mathcal{X}' \nonumber \\ \subseteq& \:  \big( [a,c] \cap \mathcal{X} \big) \bigcup \big( [c,d] \cap \mathcal{X}' \big).
    \end{align}
    Damit ist die Mengengleichheit gezeigt. Haben wir also zwei Mengen $\mathcal{V} \in \tau_{\mathbb{R}, [a,c]}$ und $\mathcal{U} \in \tau_{\mathbb{R}, [c,d]}$ mit $c \in \mathcal{V}$ und $c \in \mathcal{U}$, so gilt $\mathcal{V} \cup \mathcal{U} \in \tau_{\mathbb{R}, [a,d]}$. Gilt also $c \in h^{-1} (\mathcal{W})$, so folgt damit unmittelbar $h^{-1}(\mathcal{W}) \in \tau_{\mathbb{R}, [a,d]}$. Insgesamt folgt damit also die Stetigkeit von $h$.
\end{proof}

Nun können wir zeigen, dass es sich bei den obigen trivialen Fortsetzungen stetiger Deformationen tatsächlich um stetige Deformationen handelt.

\begin{proposition}
    Sei $(\mathcal{M}, \tau, \mathcal{A})$ eine glatte Mannigfaltigkeit und $\gamma_0 \in \mathcal{PC}^\infty_{p,q} ([a,b], \mathcal{M})$ mit $a,b \in \mathbb{R}$ und $a<b$ eine Kurve, die im Punkt $p \in \mathcal{M}$ beginnt und im Punkt $q \in \mathcal{M}$ endet. Sei weiter $[c,d] \subseteq [a,b]$ gegeben und $\gamma_0 |_{[c,d]} := \hat{\gamma}_0$. Weiter sei $\hat{\gamma} = (\hat{\gamma}_\alpha)_{\alpha \in (-\epsilon, \epsilon)}$ mit $\epsilon > 0$ eine stetige Deformation von $\hat{\gamma}_0$. Dann handelt es sich bei der trivialen Fortsetzung $\gamma = (\gamma_\alpha)_{\alpha \in (-\epsilon, \epsilon)}$ von $\hat{\gamma}$ entlang der Kurve $\gamma_0$ um eine stetige Deformation der Kurve $\gamma_0$.
\end{proposition}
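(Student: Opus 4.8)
Der Plan ist es, die drei definierenden Eigenschaften einer stetigen Deformation gemäß der Definition einer stetigen Deformation für die triviale Fortsetzung $\gamma = (\gamma_\alpha)_{\alpha \in (-\epsilon, \epsilon)}$ nachzuweisen: erstens, dass jedes $\gamma_\alpha$ in $\mathcal{PC}^\infty_{p,q}([a,b], \mathcal{M})$ liegt, zweitens, dass $\gamma_{\alpha = 0} = \gamma_0$ gilt, und drittens, dass die Abbildung $\Phi : (-\epsilon, \epsilon) \times [a,b] \longrightarrow \mathcal{M}$ mit $\Phi(\alpha, \lambda) = \gamma_\alpha(\lambda)$ bezüglich der Produkttopologie stetig ist. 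Die zweite Eigenschaft würde ich sofort erledigen: Da $\hat{\gamma}$ eine stetige Deformation von $\hat{\gamma}_0 = \gamma_0 |_{[c,d]}$ ist, gilt $\hat{\gamma}_{\alpha = 0} = \hat{\gamma}_0$, womit $\gamma_{\alpha=0}$ per Definition der trivialen Fortsetzung stückweise mit $\gamma_0$ übereinstimmt. Entscheidend für alles Weitere ist die Beobachtung, dass jede Kurve der Familie $\hat{\gamma}$ nach der Definition einer stetigen Deformation in $\mathcal{PC}^\infty_{\gamma_0(c), \gamma_0(d)}([c,d], \mathcal{M})$ liegt, also insbesondere $\hat{\gamma}_\alpha(c) = \gamma_0(c)$ und $\hat{\gamma}_\alpha(d) = \gamma_0(d)$ für alle $\alpha \in (-\epsilon, \epsilon)$ erfüllt.

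Für die erste Eigenschaft würde ich zunächst für festes $\alpha$ die Stetigkeit von $\gamma_\alpha$ als Kurve $[a,b] \longrightarrow \mathcal{M}$ zeigen. Dazu nutze ich Lemma $7.1.$ zweimal: Da $\gamma_0 |_{[a,c]}$ und $\hat{\gamma}_\alpha$ stetig sind und wegen $\hat{\gamma}_\alpha(c) = \gamma_0(c)$ am Punkt $c$ übereinstimmen, ist die Verklebung auf $[a,d]$ stetig; eine weitere Anwendung mit $\gamma_0 |_{[d,b]}$ und der Übereinstimmung $\hat{\gamma}_\alpha(d) = \gamma_0(d)$ liefert die Stetigkeit auf ganz $[a,b]$. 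Die stückweise Glattheit von $\gamma_\alpha$ folgt, indem man die Zerlegungspunkte von $\gamma_0$ auf $[a,c] \cup [d,b]$ mit denen von $\hat{\gamma}_\alpha$ auf $[c,d]$ sowie $c$ und $d$ selbst zu einer gemeinsamen Zerlegung vereinigt. Schließlich gilt $\gamma_\alpha(a) = \gamma_0(a) = p$ und $\gamma_\alpha(b) = \gamma_0(b) = q$, womit $\gamma_\alpha \in \mathcal{PC}^\infty_{p,q}([a,b], \mathcal{M})$ folgt.

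Der Kern des Beweises und die zu erwartende Hauptschwierigkeit ist die dritte Eigenschaft, die gemeinsame Stetigkeit von $\Phi$, denn Lemma $7.1.$ ist rein eindimensional und liefert nur die Stetigkeit der einzelnen $\gamma_\alpha$, nicht die Stetigkeit in beiden Variablen zugleich. Hier würde ich zwei stetige Bausteine bereitstellen: zum einen die Abbildung $\gamma_0 \circ \pi_2$, wobei $\pi_2 : (-\epsilon, \epsilon) \times [a,b] \longrightarrow [a,b]$ die per Konstruktion der Produkttopologie stetige Projektion ist, sodass $\gamma_0 \circ \pi_2$ nach Lemma $3.1.$ stetig ist; zum anderen die nach Voraussetzung stetige Deformationsabbildung $\hat{\Phi} : (-\epsilon, \epsilon) \times [c,d] \longrightarrow \mathcal{M}$ mit $\hat{\Phi}(\alpha, \lambda) = \hat{\gamma}_\alpha(\lambda)$. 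Die Stetigkeit von $\Phi$ würde ich dann punktweise an einer Stelle $(\alpha_0, \lambda_0)$ nachweisen und dabei nach der Lage von $\lambda_0$ unterscheiden. In den Fällen $\lambda_0 \in [a,c)$, $\lambda_0 \in (c,d)$ und $\lambda_0 \in (d,b]$ stimmt $\Phi$ auf einer ganzen Produktumgebung von $(\alpha_0, \lambda_0)$ mit einem der beiden stetigen Bausteine überein, sodass dort nichts weiter zu tun ist.

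Die eigentliche Arbeit steckt in den Klebestellen $\lambda_0 = c$ und $\lambda_0 = d$: Zu einer offenen Menge $\mathcal{W} \ni \gamma_0(c)$ würde ich mittels der Stetigkeit von $\gamma_0$ an $c$ ein $\delta_1 > 0$ mit $\gamma_0((c - \delta_1, c]) \subseteq \mathcal{W}$ und mittels der Stetigkeit von $\hat{\Phi}$ an $(\alpha_0, c)$ Zahlen $\eta, \delta_2 > 0$ mit $\hat{\gamma}_\alpha(\lambda) \in \mathcal{W}$ für $|\alpha - \alpha_0| < \eta$ und $\lambda \in [c, c + \delta_2)$ wählen. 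Mit $\delta = \textit{min}\{\delta_1, \delta_2\}$ bildet dann $\Phi$ die Produktumgebung $(\alpha_0 - \eta, \alpha_0 + \eta) \times ((c - \delta, c + \delta) \cap [a,b])$ nach $\mathcal{W}$ ab, da links von $c$ der Baustein $\gamma_0 \circ \pi_2$ und rechts von $c$ einschließlich $c$ der Baustein $\hat{\Phi}$ greift; der Fall $\lambda_0 = d$ verläuft symmetrisch. Die Herausforderung besteht somit weniger in einer tiefen Idee als im sorgfältigen Zusammenpassen der Umgebungen in der Produkttopologie an den Nahtstellen. Eine alternative, kompaktere Ausführung wäre ein Verklebungsargument über die in $(-\epsilon, \epsilon) \times [a,b]$ abgeschlossenen Mengen $(-\epsilon, \epsilon) \times [a,c]$, $(-\epsilon, \epsilon) \times [c,d]$ und $(-\epsilon, \epsilon) \times [d,b]$, auf denen $\Phi$ jeweils mit einem stetigen Baustein übereinstimmt.
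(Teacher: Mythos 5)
Ihr Beweis ist korrekt, verfolgt aber beim Kernpunkt -- der gemeinsamen Stetigkeit der Abbildung $\Phi : (-\epsilon,\epsilon) \times [a,b] \longrightarrow \mathcal{M}$ -- einen genuin anderen Weg als die Arbeit. Die Verifikation, dass jedes $\gamma_\alpha$ in $\mathcal{PC}^\infty_{p,q}([a,b],\mathcal{M})$ liegt, deckt sich inhaltlich mit der Arbeit: Beide Argumente beruhen auf Lemma $7.1.$ und auf der Beobachtung, dass die Deformationskurven $\hat{\gamma}_\alpha$ die Endpunkte $\gamma_0(c)$ und $\gamma_0(d)$ fixieren. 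Für die Stetigkeit argumentiert die Arbeit jedoch zielseitig über Urbilder: Sie nimmt eine beliebige offene Menge $\mathcal{U} \in \tau$, unterscheidet fünf Fälle danach, wo $\mathcal{U}$ das Bild der Deformation trifft (leerer Schnitt; nur der fixe Teil; nur der deformierte Teil; beide; sowie der Fall, dass $\{c\} \times (-\epsilon,\epsilon)$ oder $\{d\} \times (-\epsilon,\epsilon)$ im Urbild liegen), und stellt $\gamma^{-1}(\mathcal{U})$ jeweils als Vereinigung von Produkten offener Mengen bzw. von Urbildern unter $\gamma_0$ und $\hat{\gamma}$ dar. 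Sie hingegen argumentieren definitionsseitig und punktweise: Weg von den Nahtstellen stimmt $\Phi$ auf einer offenen Produktmenge mit einem der beiden stetigen Bausteine $\gamma_0 \circ \pi_2$ bzw. $\hat{\Phi}$ überein, und an den Nahtstellen $\lambda_0 \in \{c,d\}$ konstruieren Sie explizit eine Produktumgebung, indem Sie die Stetigkeit von $\gamma_0$ an der Nahtstelle mit der gemeinsamen Stetigkeit von $\hat{\Phi}$ kombinieren. Ihre Fallunterscheidung ist nach Konstruktion erschöpfend und jeder Fall in sich abgeschlossen; insbesondere umgehen Sie die heikle Stelle der zielseitigen Analyse, bei der man prüfen muss, dass die angegebenen Urbildformeln auch dann noch gelten, wenn der deformierte und der fixe Kurventeil dieselbe offene Menge treffen (etwa bei sich schneidenden Kurvenstücken). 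Der Preis ist, dass Sie die Äquivalenz von punktweiser und globaler (Urbild-)Stetigkeit benötigen, die in der Arbeit nicht als eigener Satz formuliert ist, dort aber implizit (etwa im Beweis von Satz $4.9.$ über das Aufschreiben von $A^{-1}(\mathcal{U})$ als Vereinigung von Produktumgebungen) genauso verwendet wird; Ihre abschließend skizzierte Alternative über ein Verklebungslemma für abgeschlossene Mengen stünde dagegen im Werkzeugkasten der Arbeit nicht direkt zur Verfügung und müsste erst bewiesen werden.
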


\begin{proof}
    Aus Lemma $7.1.$ folgt sofort, dass für alle $\alpha \in (- \epsilon, \epsilon)$ gilt, dass $\gamma_\alpha \in \mathcal{C}^0 ([a,b], \mathcal{M})$ ist. Weiter folgt, da $\gamma_0 \in \mathcal{PC}^\infty_{p,q} ([a,b], \mathcal{M})$ und $\hat{\gamma}_\alpha \in \mathcal{PC}^\infty_{r,s} ([c,d], \mathcal{M})$ für alle $\alpha \in (-\epsilon, \epsilon)$ mit $r := \gamma_0(c)$ und $s := \gamma_0(d)$, dass $\gamma_\alpha \in \mathcal{PC}^\infty_{p,q} ([a,b], \mathcal{M})$ für alle $\alpha \in (-\epsilon, \epsilon)$ ist. Es bleibt die Stetigkeit von $\gamma$ zu zeigen. Sei dazu $\mathcal{U} \in \tau$ eine beliebige offene Menge der Mannigfaltigkeit $(\mathcal{M}, \tau, \mathcal{A})$. Es können folgende Fälle eintreten:
    \begin{itemize}
    
        \item[\textit{i)}] Gilt $\mathcal{U} \cap \gamma([a,b], [-\epsilon, \epsilon]) = \emptyset$, so folgt, dass $\gamma^{-1}(\mathcal{U}) = \emptyset$ und $\emptyset$ ist per Definition einer Topologie offen bezüglich der Produkttopologie $\tau_{[a,b] \times (-\epsilon, \epsilon)}$.
        
        \item[\textit{ii)}] Gilt $\mathcal{U} \cap \gamma([a,b], (-\epsilon, \epsilon)) \subseteq \gamma([a,b] \setminus [c,d], (-\epsilon, \epsilon))$, so folgt 
        \begin{align}
            \gamma^{-1}(\mathcal{U}) = \gamma_0^{-1}(\mathcal{U}) \times (-\epsilon, \epsilon).
        \end{align} 
        Da $\gamma_0 : [a,b] \longrightarrow \mathcal{M}$ eine stetige Funktion ist, folgt auch in diesem Fall, dass $\gamma^{-1}(\mathcal{U})$ offen bezüglich der Produkttopologie $\tau_{[a,b] \times (-\epsilon, \epsilon)}$ ist.
        
        \item[\textit{iii)}] Es gelte $\mathcal{U} \cap \gamma([a,b], (-\epsilon, \epsilon)) \subseteq \gamma((c,d), (-\epsilon, \epsilon))$. Wegen dem Beweis von Lemma $7.1.$ und der Definition der Produkttopologie folgt
        \begin{align}
            \tau_{(c,d) \times (-\epsilon, \epsilon)} \subseteq \tau_{[c,d] \times (-\epsilon, \epsilon)} \:\: \textit{und} \:\: \tau_{(c,d) \times (-\epsilon, \epsilon)} \subseteq \tau_{[a,b] \times (-\epsilon, \epsilon)}.
        \end{align}
        Damit gilt 
        \begin{align}
            \gamma^{-1}(\mathcal{U}) = \hat{\gamma}^{-1}(\mathcal{U}) \in \tau_{(c,d) \times (-\epsilon, \epsilon)} \subseteq \tau_{[a,b] \times (-\epsilon, \epsilon)},
        \end{align}
        womit $\gamma^{-1}(\mathcal{U})$ bezüglich der Produkttopologie $\tau_{[a,b] \times (-\epsilon, \epsilon)}$ offen ist.
        
        \item[\textit{iv)}] Es gelte $\mathcal{U} \cap \gamma([a,b], (-\epsilon, \epsilon)) \subseteq \gamma((c,d), (-\epsilon, \epsilon)) \cup \gamma([a,b] \setminus [c,d], (-\epsilon, \epsilon))$. Dann gilt
        \begin{align}
            \gamma^{-1}(\mathcal{U}) = \big( \gamma_0 |_{[a,c) \cup (d,b]}^{-1} (\mathcal{U}) \times (-\epsilon, \epsilon) \big) \cup \hat{\gamma}^{-1}(\mathcal{U}).
        \end{align}
        Da $\gamma_0 |_{[a,c) \cup (d,b]}$ nach Lemma $3.3.$ eine $(\tau_{\mathbb{R}, [a,c)\cup (d,b]}, \tau)$-stetige Funktion ist und $\tau_{[a,c)\cup (d,b] \times (-\epsilon, \epsilon)}$ per Konstruktion eine Teilmenge der Produkttopologie $\tau_{[a,b] \times (-\epsilon, \epsilon)}$ ist, folgt, dass $\gamma^{-1}(\mathcal{U})$ offen bezüglich der Produkttopologie $\tau_{[a,b] \times (-\epsilon, \epsilon)}$ ist.
        
        \item[\textit{v)}] Sei $\mathcal{U}$ nun so gewählt, sodass die Mengen $\{c\} \times (-\epsilon, \epsilon)$ oder $\{d\} \times (-\epsilon, \epsilon)$ oder beide in $\gamma^{-1}(\mathcal{U})$ liegen. Dann folgt 
        \begin{align}
            \gamma^{-1}(\mathcal{U}) = \big( \gamma_0 |_{[a,c]}^{-1}(\mathcal{U}) \times (-\epsilon, \epsilon) \big) \cup \hat{\gamma}^{-1}(\mathcal{U}) \cup \big( \gamma_0 |_{[d,b]}^{-1} \times (-\epsilon, \epsilon) \big)
        \end{align}
        mit 
        \begin{align}
            \{c\} \times (-\epsilon, \epsilon) \subseteq  \gamma_0 |_{[a,c]}^{-1}(\mathcal{U}) \times (-\epsilon, \epsilon) \cap \hat{\gamma}^{-1}(\mathcal{U})
        \end{align}
        und/oder
        \begin{align}
            \{d\} \times (-\epsilon, \epsilon) \subseteq  \gamma_0 |_{[c,d]}^{-1}(\mathcal{U}) \times (-\epsilon, \epsilon) \cap \hat{\gamma}^{-1}(\mathcal{U}).
        \end{align}
        Da $\gamma_0 |_{[a,c]}$ und $\gamma_0 |_{[d,b]}$ stetig sind, folgt mittels der Argumente aus dem Beweis von Lemma $7.1.$ und der Definition der Produkttopologie, dass $\gamma^{-1}(\mathcal{U})$ offen bezüglich der Produkttopologie $\tau_{[a,b] \times (-\epsilon, \epsilon)}$ ist.
        
    \end{itemize}
    Insgesamt folgt damit die Stetigkeit der trivialen Fortsetzung $\gamma$ entlang der Kurve $\gamma_0$, womit gezeigt ist, dass es sich bei $\gamma$ um eine stetige Deformation handelt.
\end{proof}

Die Bedeutung des letzten Resultates ist nun die Folgende: Angenommen bei der Kurve $\gamma_0 \in \mathcal{PC}^\infty_{p,q} ([a,b], \mathcal{M})$ handelt es sich um eine Kurve auf der riemannschen Mannigfaltigkeit $(\mathcal{M}, \tau, \mathcal{A}, g)$, die die Punkte $p$ und $q$ aus $\mathcal{M}$ miteinander verbindet. Weiter sei $[c,d]$ mit $c,d \in \mathbb{R}$ und $c < d$ ein beleibiges Teilintervall von $[a,b]$. Dann lässt sich jede stetige Deformation $\hat{\gamma}$ von $\hat{\gamma}_0 := \gamma_0 |_{[c,d]}$ zu einer stetigen Deformation $\gamma$ von $\gamma_0$ fortsetzen, sodass die Fortsetung mit der triviale Fortsetzung von $\hat{\gamma}$ entlang der Kurve $\gamma_0$ übereinstimmt und wir können den folgenden Satz zeigen:

\begin{proposition}
    Sei $(\mathcal{M}, \tau, \mathcal{A}, g)$ eine riemannsche Mannigfaltigkeit und $a,b \in \mathbb{R}$ zwei Zahlen mit $a < b$. Sei weiter $\gamma_0 \in \mathcal{PC}^\infty_{p,q} ([a,b], \mathcal{M})$ eine schwache Geodätische, die den Punkt $p = \gamma(a) \in \mathcal{M}$ mit dem Punkt $q = \gamma(b) \in\mathcal{M}$ verbindet. Sei weiter $[c,d]$ mit $c,d \in \mathbb{R}$ und $c < d$ ein beliebiges Teilintervall von $[a,b]$. Dann ist die Kurve $\hat{\gamma}_0 := \gamma_0 |_{[c,d]} \in \mathcal{PC}^\infty_{r,s} ([c,d], \mathcal{M})$ mit $r = \gamma_0(c)$ und $s = \gamma_0(d)$ eine schwache Geodätische, d.h. eine stationäre Kurve bezüglich des Längenfunktionals $\mathbb{L}_{[c,d]} |_{\mathcal{PC}^\infty_{r,s} ([c,d], \mathcal{M})}$.
\end{proposition}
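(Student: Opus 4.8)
The plan is to reduce every admissible variation of the restricted curve $\hat{\gamma}_0 := \gamma_0|_{[c,d]}$ to an admissible variation of the full curve $\gamma_0$, and then to transfer the stationarity of $\gamma_0$ down to $\hat{\gamma}_0$ by exploiting the additivity of the length functional over the subdivision $[a,c] \cup [c,d] \cup [d,b]$. The one genuinely non-trivial input will be the trivial-extension mechanism from the preceding results; everything else is bookkeeping.

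First I would fix an arbitrary stetige Deformation $\hat{\gamma} = (\hat{\gamma}_\alpha)_{\alpha \in (-\epsilon,\epsilon)}$ of $\hat{\gamma}_0$ in $\mathcal{PC}^\infty_{r,s}([c,d], \mathcal{M})$ whose zugehörige Variationsfunktion $F_{\hat{\gamma}}(\alpha) = \mathbb{L}_{[c,d]}(\hat{\gamma}_\alpha)$ am Punkt $\alpha = 0$ differenzierbar ist; the goal is to show $F_{\hat{\gamma}}^{(1)}(0) = 0$. To this end I would form the triviale Fortsetzung $\gamma = (\gamma_\alpha)_{\alpha \in (-\epsilon,\epsilon)}$ of $\hat{\gamma}$ entlang $\gamma_0$ im Sinne der Definition $7.7$. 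This extension is well-defined and admissible precisely because $\hat{\gamma}_\alpha(c) = r = \gamma_0(c)$ and $\hat{\gamma}_\alpha(d) = s = \gamma_0(d)$ for every $\alpha$, so that the endpoints $p$ and $q$ of $\gamma_0$ are preserved; by Satz $7.2$ this $\gamma$ is indeed a stetige Deformation of $\gamma_0$.

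The key computation is the additive splitting of the Variationsfunktion of $\gamma$. Using the additivity of $\mathbb{L}$ over the subintervals (Bemerkung $7.1$, i.e.\ the linearity of the integral), together with the fact that on $[a,c]$ and $[d,b]$ the extension coincides with $\gamma_0$ independently of $\alpha$ while on $[c,d]$ it coincides with $\hat{\gamma}_\alpha$, I obtain
\begin{align}
    F_\gamma(\alpha) = \mathbb{L}_{[a,b]}(\gamma_\alpha) = \underbrace{\mathbb{L}_{[a,c]}(\gamma_0|_{[a,c]}) + \mathbb{L}_{[d,b]}(\gamma_0|_{[d,b]})}_{=:\, C} + \mathbb{L}_{[c,d]}(\hat{\gamma}_\alpha) = C + F_{\hat{\gamma}}(\alpha),
\end{align}
where $C$ is independent of $\alpha$. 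Hence $F_\gamma$ differs from $F_{\hat{\gamma}}$ only by a constant, so $F_\gamma$ is likewise differentiable at $0$ and satisfies $F_\gamma^{(1)}(0) = F_{\hat{\gamma}}^{(1)}(0)$.

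Finally, since $\gamma_0$ is a schwache Geodätische and $\gamma$ is a stetige Deformation of $\gamma_0$ whose Variationsfunktion is differentiable at $0$, Definition $7.5$ yields $F_\gamma^{(1)}(0) = 0$, and therefore $F_{\hat{\gamma}}^{(1)}(0) = 0$. As $\hat{\gamma}$ was an arbitrary admissible deformation, this is exactly the assertion that $\hat{\gamma}_0$ is stationär bezüglich $\mathbb{L}_{[c,d]}|_{\mathcal{PC}^\infty_{r,s}([c,d], \mathcal{M})}$. I expect the only point requiring explicit care to be the admissibility of the trivial extension (matching boundary values at $c$ and $d$, preservation of $p$ and $q$), which I would verify directly from $\hat{\gamma}_\alpha \in \mathcal{PC}^\infty_{r,s}([c,d], \mathcal{M})$ and from Satz $7.2$; once this is granted, the transfer of stationarity is immediate from the constant-shift identity above.
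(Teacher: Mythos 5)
Your proposal is correct and follows essentially the same route as the paper's own proof: reduce an admissible deformation of $\hat{\gamma}_0$ to its trivial extension along $\gamma_0$ (Satz $7.2$), split $\mathbb{L}_{[a,b]}$ additively over $[a,c] \cup [c,d] \cup [d,b]$, and transfer stationarity via $F_\gamma = C + F_{\hat{\gamma}}$. Your phrasing of the splitting as a constant shift of the Variationsfunktion (rather than differentiating the split integral term by term, as the paper does) is a slightly cleaner presentation of the identical argument and matches the paper's Bemerkung $7.2$.
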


\begin{proof}
    Ohne Beschränkung der Allgemeinheit nehmen wir an, dass alle Kurven der Klasse $\mathcal{C}^\infty$ zuzuordnen sind. Für Kurven aus der allgemeineren Klasse $\mathcal{PC}^\infty$ ist die Argumentation analog, nur dass man für das Längenfunktional statt des Ausdruckes \eqref{Längenfunktional} den komplizierteren Ausdruck \eqref{Längenfunktional 2} nutzen muss, bei dem die Integrationsgrenzen gegebenfalls vom Deformationsparameter $\alpha$ abhängen, falls man das Längenfunktional bezüglich einer stetigen Deformation $(\gamma_\alpha)_{\alpha \in (-\epsilon, \epsilon)}$ mit $\gamma_\alpha$ aus $\mathcal{PC}^\infty$ für alle $\alpha \in (-\epsilon, \epsilon)$ auswertet, bei welcher die Stellen, an denen die Kurve $\gamma_\alpha$ nicht differenzierbar ist, direkt vom Parameter $\alpha$ abhängig sind.
    
    Sei also nun $\gamma_0 \in \mathcal{C}^\infty_{p,q} ([a,b], \mathcal{M})$ eine schwache Geodätische, die in $p$ beginnt und in $q$ endet und sei $[c,d] \subseteq [a,b]$ beliebig gegeben. Wir betrachten die Kurve $\hat{\gamma}_0 := \gamma_0 |_{[c,d]}$. Da $\gamma_0$ eine schwache Geodätische ist, gilt für beliebige stetige Deformationen $\Tilde{\gamma}$ der Kurve $\gamma_0$, für die das Variationsfunktional $F_{\Tilde{\gamma}}$, definiert durch
    \begin{align}
        F_{\Tilde{\gamma}}(\alpha) = \mathbb{L}_{[a,b]}(\Tilde{\gamma}_\alpha),
    \end{align}
    differenzierbar an der Stelle $\alpha = 0$ ist, dass
    \begin{align}
        \frac{dF_{\Tilde{\gamma}}}{d\alpha} \bigg|_{\alpha = 0} = 0
    \end{align}
    ist. Sei $\hat{\gamma}$ eine beliebige stetige Deformation der Kurve $\hat{\gamma}_0$, sodass das Variationsfunktional $F_{\hat{\gamma}}$ mit $F_{\hat{\gamma}}(\alpha) = \mathbb{L}_{[c,d]}(\hat{\gamma}_\alpha)$ am Punkt $\alpha = 0$ differenzierbar ist. Wir wollen zeigen, dass für derartige stetige Deformationen von $\hat{\gamma}_0$ folgt, dass 
    \begin{align}
        \frac{dF_{\hat{\gamma}}}{d\alpha} \bigg|_{\alpha = 0} = 0
    \end{align}
    ist. Wir betrachten die triviale Fortsetzung der stetigen Deformation $\hat{\gamma}$ entlang der Kurve $\gamma_0$, welche wir mit $\gamma$ bezeichnen wollen. Nach Satz $7.2.$ handelt es sich bei $\gamma$ um eine stetige Deformation von $\gamma_0$, sodass das Variationsfunktional $F_\gamma$ am Punkt $\alpha = 0$ differenzierbar ist, denn es gilt
    \begin{align}
        \frac{dF_{\gamma}}{d\alpha} \bigg|_{\alpha = 0} =& \: \frac{d}{d\alpha} \bigg( \int_a^b \sqrt{g_{\gamma_\alpha (\lambda)} (X_{\gamma_\alpha, \gamma_\alpha (\lambda)}, X_{\gamma_\alpha, \gamma_\alpha (\lambda)}) } \bigg) \,d \lambda \bigg|_{\alpha = 0} \nonumber \\ =& \: \frac{d}{d\alpha} \bigg( \int_a^c \sqrt{g_{\gamma_\alpha (\lambda)} (X_{\gamma_\alpha, \gamma_\alpha (\lambda)}, X_{\gamma_\alpha, \gamma_\alpha (\lambda)}) } \bigg) \,d \lambda \bigg|_{\alpha = 0} \nonumber \\ &+ \: \frac{d}{d\alpha} \bigg( \int_c^d \sqrt{g_{\gamma_\alpha (\lambda)} (X_{\gamma_\alpha, \gamma_\alpha (\lambda)}, X_{\gamma_\alpha, \gamma_\alpha (\lambda)}) } \bigg) \,d \lambda \bigg|_{\alpha = 0} \nonumber \\ &+ \: \frac{d}{d\alpha} \bigg( \int_d^b \sqrt{g_{\gamma_\alpha (\lambda)} (X_{\gamma_\alpha, \gamma_\alpha (\lambda)}, X_{\gamma_\alpha, \gamma_\alpha (\lambda)}) } \bigg) \,d \lambda \bigg|_{\alpha = 0} \nonumber \\ =& \: \frac{d}{d\alpha} \bigg( \int_a^c \sqrt{g_{\gamma_0 (\lambda)} (X_{\gamma_0, \gamma_0 (\lambda)}, X_{\gamma_0, \gamma_0 (\lambda)}) } \bigg) \,d \lambda \bigg|_{\alpha = 0} \nonumber \\ &+ \: \frac{d}{d\alpha} \bigg( \int_c^d \sqrt{g_{\gamma_\alpha (\lambda)} (X_{\gamma_\alpha, \gamma_\alpha (\lambda)}, X_{\gamma_\alpha, \gamma_\alpha (\lambda)}) } \bigg) \,d \lambda \bigg|_{\alpha = 0} \nonumber \\ &+ \: \frac{d}{d\alpha} \bigg( \int_d^b \sqrt{g_{\gamma_0 (\lambda)} (X_{\gamma_0, \gamma_0 (\lambda)}, X_{\gamma_0, \gamma_0 (\lambda)}) } \bigg) \,d \lambda \bigg|_{\alpha = 0} \nonumber \\ =& \: 0 + \frac{d}{d\alpha} \bigg( \int_c^d \sqrt{g_{\gamma_\alpha (\lambda)} (X_{\gamma_\alpha, \gamma_\alpha (\lambda)}, X_{\gamma_\alpha, \gamma_\alpha (\lambda)}) } \bigg) \,d \lambda \bigg|_{\alpha = 0} + 0 \nonumber \\ =& \frac{d}{d\alpha} \bigg( \int_c^d \sqrt{g_{\hat{\gamma}_\alpha (\lambda)} (X_{\hat{\gamma}_\alpha, \hat{\gamma}_\alpha (\lambda)}, X_{\hat{\gamma}_\alpha, \hat{\gamma}_\alpha (\lambda)}) } \bigg) \,d \lambda \bigg|_{\alpha = 0} \nonumber \\ =& \: \frac{dF_{\hat{\gamma}}}{d\alpha} \bigg|_{\alpha = 0}. \label{777}
    \end{align}
    Insgesamt gilt also 
    \begin{align}
        \frac{dF_{\gamma}}{d\alpha} \bigg|_{\alpha = 0} = \frac{dF_{\hat{\gamma}}}{d\alpha} \bigg|_{\alpha = 0}. \label{999999999}
    \end{align}
    Da die rechte Seite nach Vorraussetzung differenzierbar am Punkt $\alpha = 0$ ist, folgt, dass damit auch die linke Seite am Punkt $\alpha = 0$ differenzierbar sein muss. Da $\gamma_0$ eine schwache Geodätische ist, die die Punkte $p$ und $q$ miteinander verbindet, folgt, dass 
    \begin{align}
        \frac{dF_{\gamma}}{d\alpha} \bigg|_{\alpha = 0} = 0
    \end{align}
    ist. Wegen \eqref{999999999} folgt damit auch
    \begin{align}
        \frac{dF_{\hat{\gamma}}}{d\alpha} \bigg|_{\alpha = 0} = 0.
    \end{align}
    Da $\hat{\gamma}$ eine beliebige stetige Deformation von $\hat{\gamma}_0$ ist, für die das Variationsfunktional $F_{\hat{\gamma}}$ an der Stelle $\alpha = 0$ differenzierbar ist, folgt, dass es sich bei $\hat{\gamma}_0$ um eine schwache Geodätische handelt, die im Punkt $r = \gamma(c)$ beginnt und im Punkt $s = \gamma(d)$ endet.  
\end{proof}

\begin{remark}
    Wie in der Einleitung des Beweises von Satz $7.9.$ angerissen, funktioniert die Argumentation des obigen Beweises auch für Kurven $\gamma_0$ und Deformationen $(\hat{\gamma}_\alpha)_{\alpha \in (-\epsilon, \epsilon)}$ aus der Klasse $\mathcal{PC}^\infty$, da wir auch im Falle, wenn das Längenfunktional durch \eqref{Längenfunktional 2} gegeben ist, den Ausdruck $\mathbb{L}_{[a,b]}(\gamma_\alpha)$ mit $\gamma_\alpha \in \mathcal{PC}^\infty ([a,b], \mathcal{M})$ wegen der Linearität des Integrals wie in \eqref{777} aufspalten können als 
    \begin{align}
        \mathbb{L}_{[a,b]}(\gamma_\alpha) =& \: \mathbb{L}_{[a,c]} (\gamma_\alpha |_{[a,c]}) + \mathbb{L}_{[c,d]} (\gamma_\alpha |_{[c,d]}) + \mathbb{L}_{[d,b]}(\gamma_\alpha |_{[d,b]}) \nonumber \\ =& \: \mathbb{L}_{[a,c]} (\gamma_0 |_{[a,c]}) + \mathbb{L}_{[c,d]} (\hat{\gamma}_\alpha |_{[c,d]}) + \mathbb{L}_{[d,b]}(\gamma_0 |_{[d,b]}) \nonumber \\ =& \: \mathbb{L}_{[a,c]} (\gamma_0 |_{[a,c]}) + F_{\hat{\gamma}}(\alpha) + \mathbb{L}_{[d,b]}(\gamma_0 |_{[d,b]}),
    \end{align}
    wobei lediglich der zweite Term auf der rechten Seite noch abhängig von $\alpha$ ist. Die Vereinfachung im Beweis von Satz $7.9.$ wurde nur gemacht, um eine simple Integraldarstellung des Längenfunktionals zu gewährleisten, um anhand dieser leicht und explizit zeigen zu können, warum wir überhaupt das Längenfunktional auf obige Weise aufspalten können.
\end{remark}

Mittels des Resultates aus Satz $7.9.$ sind wir nun auf dem besten Wege die sogenannte Geodätengleichung für (schwache) Geodätische herzuleiten, mit deren Hilfe man einerseits überprüfen kann, ob eine gegebene Kurve tatsächlich einen möglichen Kandidaten einer schwachen Geodätischen auf einer riemannschen Mannigfaltigkeit  $(\mathcal{M}, \tau, \mathcal{A}, g)$ darstellt, und mit deren Hilfe man andererseits, wie wir in einem Spezialfall auch noch sehen werden, die Geodätischen einer riemannschen Mannigfaltigkeit praktisch ausrechnen kann.

Unser nächstes Ziel wird es nun sein, die sogenannten Euler-Lagrange-Gleichungen für Funktionale $\mathcal{S}_{\mathcal{L}, [a,b]} : \mathcal{C}^\infty ([a,b], \mathcal{M}) \longrightarrow \mathcal{M}$ herzuleiten, die von der selben Bauart wie das Längenfunktional $\mathbb{L}_{[a,b]}$ sein sollen. Was damit konkret gemeint ist klären die nächsten Definitionen:

\begin{definition}
    Sei $(\mathcal{M}, \tau, \mathcal{A})$ eine glatte Mannigfaltigkeit und $T \mathcal{M}$ der zur Mannigfaltigkeit zugehörige Tangentialbündel, aufgefasst als glatte Mannigfaltigkeit. Wir nennen eine beliebige Abbildung der Form
    \begin{align}
        \mathcal{L} : T \mathcal{M} \longrightarrow \mathbb{R}
    \end{align}
    Lagrange Funktion.
\end{definition}

\begin{example}
    Wir betrachten die riemannsche Mannigfaltigkeit $(\mathcal{M}, \tau, \mathcal{A}, g)$ und die Funktion $\mathcal{L}$, definiert durch
    \begin{align}
         \mathcal{L} : T \mathcal{M} \longrightarrow& \: \: \mathbb{R} \nonumber\\
            (p, X) \longmapsto& \: \: \sqrt{g_p (X,X)}, \label{Lagrangefunktion für den Längenbegriff}
    \end{align}
    wobei für $(p, X)$ gilt, dass $X \in T_p \mathcal{M}$ ist.
\end{example}

Als Nächstes definieren wir die sogenannte Hebung einer glatte Kurve $\gamma_0 : [a,b] \longrightarrow \mathcal{M}$ auf das Tangentialbündel $T \mathcal{M}$.

\begin{definition}
    Sei $(\mathcal{M}, \tau, \mathcal{A})$ eine glatte Mannigfaltigkeit. Sei weiter $\gamma_0 \in \mathcal{C}^\infty ([a,b], \mathcal{M})$ mit $a,b \in \mathbb{R}$ und $a < b$ eine  glatte Kurve auf $\mathcal{M}$. Wir definieren die sogenannte Hebung von $\gamma_0$ auf das Tangentialbündel $T \mathcal{M}$ durch
    \begin{align}
        X_{\gamma_0} : [a,b] \longrightarrow& \: \: T \mathcal{M} \nonumber\\
            \lambda \longmapsto& \: \: (\gamma_0(\lambda),X_{\gamma_0, \gamma_0(\lambda)}) \equiv X_{\gamma_0, \gamma_0(\lambda)} =: X_\gamma (\lambda).
    \end{align}
\end{definition}

\begin{remark}
    Bezogen auf die Definition $7.9.$ mag man sich mit Blick auf die Definition der Tangentialvektoren fragen, wie genau die Werte $X_\gamma (a)$ und $X_\gamma (b)$ aussehen. Die Idee ist dabei, dass wir für die hier betrachteten Kurven immer annehmen, dass es Tangentialvektoren $X_{\delta,\gamma(a)} \in T_{\gamma(a)} \mathcal{M}$ und $X_{\eta, \gamma(b)} \in T_{\gamma(b)} \mathcal{M}$ gibt, sodass
    \begin{align}
        X_{\delta, \gamma(a)} f = \lim_{h \rightarrow 0+} \frac{(f \circ \gamma)(a + h) - (f \circ \gamma)(a)}{h} \label{rechts}
    \end{align}
    und 
    \begin{align}
        X_{\eta, \gamma(b)} f = \lim_{h \rightarrow 0-} \frac{(f \circ \gamma)(b + h) - (f \circ \gamma)(b)}{h} \label{links}
    \end{align}
    für alle $f \in \mathcal{C}^\infty (\mathcal{M})$ gilt. Dabei meint $\lim_{h \rightarrow 0+}$, dass im obigen Grenzübergang \eqref{rechts} nur Folgen $(h_n)_{n \in \mathbb{N}}$ mit $h_n \xrightarrow{n \rightarrow \infty} 0$ betrachten werden, für die gilt, dass $h_n > 0$ für alle $n \in \mathbb{N}$ ist. Analog bedeutet $\lim_{h \rightarrow 0-}$, dass beim obigen Grenzübergang \eqref{links} nur Folgen $(h_n)_{n \in \mathbb{N}}$ mit $h_n \xrightarrow{n \rightarrow \infty} 0$ betrachtet werden, für die gilt, dass $h_n < 0$ für alle $n \in \mathbb{N}$ ist.
\end{remark}

Mittels der eben definierten Begriffe wie der Lagrangefunktion und der Hebung einer glatten Kurve $\gamma_0 : [a,b] \longrightarrow \mathcal{M}$ sind wir nun in der Lage das sogenannte Wirkungsfunktional $\mathcal{S}_{\mathcal{L}, [a,b]} : \mathcal{C}^\infty ([a,b]. \mathcal{M}) \longrightarrow \mathbb{R}$ zu definieren.

\begin{definition}
    Sei $(\mathcal{M}, \tau, \mathcal{A})$ eine glatte Mannigfaltigkeit und $\gamma_0 : [a,b] \longrightarrow \mathcal{M}$ eine glatte Kurve mit $a,b \in \mathbb{R}$ und $a < b$. Sei weiter $\mathcal{L}$ eine beliebige Lagrangefunktion. Dann ist das Wirkungsfunktion $\mathcal{S}_{\mathcal{L}, [a,b]} : \mathcal{C}^\infty ([a,b], \mathcal{M}) \longrightarrow \mathbb{R}$ bezüglich $\mathcal{L}$ erklärt durch
    \begin{align}
        \mathcal{S}_{\mathcal{L}, [a,b]} (\gamma_0) = \int_a^b (\mathcal{L} \circ X_{\gamma_0}) (\lambda) \,d \lambda.
    \end{align}
\end{definition}

Das eben in Abhängigkeit einer Lagrangefunktion definierte Wirkungsfunktional kann als strukturelle Formalisierung des Längenfunktionals verstanden werden. Wählen wir $\mathcal{L}$ wie in \eqref{Lagrangefunktion für den Längenbegriff}, so folgt, dass $\mathcal{S}_{\mathcal{L}, [a,b]} \equiv \mathbb{L}_{[a,b]}$ ist. 

Sei nun im Folgenden $(\mathcal{U}, \phi)$ eine Karte der $d$-dimensionalen glatten Mannigfaltigkeit $(\mathcal{M}, \tau, \mathcal{A})$ mit $\gamma_0^{-1}(\mathcal{U}) \neq \emptyset$ und $\phi = (x^1, ..., x^d)$. Sei weiter $(T \mathcal{U}, \xi_\phi)$ die zu $(\mathcal{U}, \phi)$ zugehörige Karte auf dem Tangentialbündel. Da $\gamma_0$ eine stetige Kurve ist, ist $\gamma_0^{-1}(\mathcal{U})$ eine offene Menge bezüglich der auf $[a,b]$ erklärten Teilraumtopologie. Damit existiert ein Intervall $[c,d] \subseteq [a,b]$, sodass $\gamma_0([c,d]) \subseteq \mathcal{U}$ und damit $X_{\gamma_0}([c,d]) \subseteq T \mathcal{U}$ gilt. Wir berechnen den Ausdruck $\mathcal{S}_{\mathcal{L}, [c,d]}(\gamma_0 |_{[c,d]})$ bezüglich der Karte $(T \mathcal{U}, \xi_\phi)$:

\begin{align}
    \mathcal{S}_{\mathcal{L}, [c,d]}(\gamma_0 |_[c,d]) =& \: \int_c^d (\mathcal{L} \circ X_{\gamma_0 |_{(c,d)}}) (\lambda) \,d \lambda \nonumber \\ =& \: \int_c^d (\mathcal{L} \circ X_{\gamma_0 }) (\lambda) \,d \lambda \nonumber \\ =& \: \int_c^d ((\mathcal{L} \circ \xi^{-1}_\phi) \circ (\xi_\phi \circ X_{\gamma_0})) (\lambda) \,d \lambda \nonumber \\ =& \: \int_c^d \mathcal{L}_\phi (\xi_\phi(X_{\gamma_0, \gamma_0(\lambda)})) \,d \lambda  \label{++++++++}
\end{align}
In der obigen Gleichung ist dabei $\mathcal{L}_\phi$ gegeben durch $\mathcal{L} \circ \xi^{-1}_\phi$. Mittels der Definition der Karte $(T \mathcal{U}, \xi_\phi)$ aus \eqref{Karte des Tangentialbündels} und der aus \eqref{Vektorkomponente} und \eqref{Basisentwicklung} bekannten Basisentwicklung des Tangentialvektors $X_{\gamma_0, \gamma_0(\lambda)}$ bezüglich der Basis $\big\{ \big( \frac{\partial}{\partial x^1} \big)_{\gamma_0(\lambda)}, ..., \big( \frac{\partial}{\partial x^d} \big)_{\gamma_0(\lambda)} \big\}$, können wir die rechte Seite von \eqref{++++++++} umschreiben als 
\begin{align}
    \int_c^d \mathcal{L}_\phi (((x^1(\gamma_0(\lambda)), ..., x^d(\gamma_0(\lambda))), ((x^1 \circ \gamma_0)'(\lambda) , ...,(x^d \circ \gamma_0)'(\lambda) ))) \,d \lambda
\end{align}
Mit der Setzung $\gamma^{i}_{0, \phi} := x^{i} \circ \gamma_0$ können wir das noch etwas kompakter als 
\begin{align}
    \mathcal{S}_{\mathcal{L}, [c,d]} (\gamma_0 |_{[c,d]}) = \int_c^{d} \mathcal{L}_\phi (((\gamma^{1}_{0, \phi}(\lambda), ..., \gamma^{d}_{0, \phi}(\lambda)), (\gamma^{1\:'}_{0, \phi}(\lambda) , ...,\gamma^{d\:'}_{0, \phi}(\lambda) ))) \,d \lambda \label{Kartendarstellung des Wirkungsfunktionals...}
\end{align}
schreiben. Diese Kartendarstellung der Wirkung $\mathcal{S}_{\mathcal{L}, [c,d]}$ wird sich noch als sehr nützlich in der Herleitung der Euler-Lagrange-Gleichung erweisen.

Um nun die Euler-Lagrange-Gleichungen auch tatsächlich herleiten zu können, benötigen wir noch das folgende wichtige Resultat aus der multivariaten Analysis und der Variationsrechnung, welches oft auch als \textit{Fundamentallemma der Variationsrechnung} bezeichnet wird.

\begin{proposition}
    Sei $d \in \mathbb{N}$ eine natürliche Zahl, $c,d \in \mathbb{R}$ zwei vorgegebene reelle Zahlen mit $c < d$ und $\mathcal{L}_i : [c,d] \longrightarrow \mathbb{R}$ für alle $i \in \{1,...,d\}$ eine stetige Funktion. Gilt der Ausdruck
    \begin{align}
        \int_c^d \bigg( \sum_{i=1}^d \mathcal{L}_i (\lambda) \cdot \eta_i (\lambda)  \bigg) \,d \lambda = 0
    \end{align}
    für alle $d$-Tupel der Form $(\eta_1, ..., \eta_d)$ mit $\eta_i \in \mathcal{C}^\infty_0 ([c,d], \mathbb{R})$ für alle $i \in \{1,...,d\}$, so folgt, dass 
    \begin{align}
        \mathcal{L}_i \equiv 0 \:\:\:\:\:\:\:\: \forall i \in \{1,...,d\} 
    \end{align}
    ist. Dabei bezeichnet die Menge $\mathcal{C}^\infty_0 ([c,d], \mathbb{R})$ die Menge aller Funktionen aus $\mathcal{C}^\infty ([a,b], \mathbb{R})$ mit der Eigenschaft, dass diese auf dem Rand von $[a,b]$, d.h. den Stellen $a$ und $b$, verschwinden.
\end{proposition}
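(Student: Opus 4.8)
Der Plan ist es, die Aussage zunächst auf den skalaren Fall zu reduzieren und diesen dann durch einen Widerspruchsbeweis mittels einer glatten Abschneidefunktion (Bump-Funktion) zu erledigen. Für die Reduktion würde ich einen Index $j \in \{1,...,d\}$ festhalten und spezielle $d$-Tupel $(\eta_1, ..., \eta_d)$ betrachten, bei denen $\eta_i \equiv 0$ für alle $i \neq j$ gesetzt wird und $\eta_j \in \mathcal{C}^\infty_0([c,d], \mathbb{R})$ beliebig ist. Da die Voraussetzung für \emph{alle} derartigen Tupel gilt, kollabiert die Summe im Integranden auf den einzigen Term mit Index $j$, und man erhält
\[
\int_c^d \mathcal{L}_j(\lambda) \cdot \eta_j(\lambda) \, d\lambda = 0 \qquad \forall \eta_j \in \mathcal{C}^\infty_0([c,d], \mathbb{R}).
\]
Es genügt also zu zeigen, dass aus dieser skalaren Bedingung $\mathcal{L}_j \equiv 0$ folgt; da $j$ beliebig war, ergibt sich daraus die Behauptung.

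Für den skalaren Fall nehme ich an, dass $\mathcal{L}_j \not\equiv 0$ gilt, und führe dies zum Widerspruch. Dann existiert ein Punkt mit nichtverschwindendem Funktionswert, und wegen der Stetigkeit von $\mathcal{L}_j$ lässt sich dieser als innerer Punkt $\lambda_0 \in (c,d)$ wählen (liegt er zunächst am Rand, so ist $\mathcal{L}_j$ auf einer einseitigen Umgebung mit gleichem Vorzeichen von Null verschieden). Ohne Beschränkung der Allgemeinheit sei $\mathcal{L}_j(\lambda_0) > 0$ (andernfalls betrachte man $-\mathcal{L}_j$). Erneut wegen der Stetigkeit existiert ein $\delta > 0$ mit $[\lambda_0 - \delta, \lambda_0 + \delta] \subseteq (c,d)$, sodass $\mathcal{L}_j(\lambda) > \frac{\mathcal{L}_j(\lambda_0)}{2} > 0$ für alle $\lambda$ in diesem Intervall gilt. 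Der Kern des Beweises ist nun die Konstruktion einer geeigneten Testfunktion, für die ich die klassische Bump-Funktion
\[
\eta_j(\lambda) := \left\{\begin{array}{ll} \exp\left(-\frac{1}{\delta^2 - (\lambda - \lambda_0)^2}\right), & |\lambda - \lambda_0| < \delta \\ 0, & \textit{sonst} \end{array}\right.
\]
verwenden würde. Diese ist glatt, ihr Träger liegt kompakt in $(\lambda_0 - \delta, \lambda_0 + \delta) \subseteq (c,d)$, womit insbesondere $\eta_j(c) = \eta_j(d) = 0$ gilt und $\eta_j \in \mathcal{C}^\infty_0([c,d], \mathbb{R})$ ist, und sie ist strikt positiv auf dem offenen Intervall $(\lambda_0 - \delta, \lambda_0 + \delta)$. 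Mit dieser Wahl folgt
\[
\int_c^d \mathcal{L}_j(\lambda) \eta_j(\lambda) \, d\lambda = \int_{\lambda_0 - \delta}^{\lambda_0 + \delta} \mathcal{L}_j(\lambda) \eta_j(\lambda) \, d\lambda > 0,
\]
da der Integrand eine auf $(\lambda_0 - \delta, \lambda_0 + \delta)$ stetige und strikt positive Funktion ist. Dies widerspricht der skalaren Bedingung, womit $\mathcal{L}_j \equiv 0$ bewiesen ist.

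Die Hauptschwierigkeit besteht weniger im logischen Aufbau als im sorgfältigen Nachweis, dass die konstruierte Bump-Funktion tatsächlich zur Klasse $\mathcal{C}^\infty$ gehört; insbesondere die Glattheit an den Übergangsstellen $\lambda_0 \pm \delta$ erfordert das Standardargument, dass die durch Null fortgesetzte Funktion $x \mapsto e^{-1/x}$ (für $x \leq 0$) unendlich oft differenzierbar ist mit im Nullpunkt verschwindenden Ableitungen. Diese technische Konstruktion könnte man bei Bedarf auch an die Literatur auslagern. Der Rest — die Reduktion über koordinatenweise gewählte Testfunktionen und der abschließende Positivitätsschluss unter Ausnutzung der Stetigkeit von $\mathcal{L}_j$ — ist dann reine Routine.
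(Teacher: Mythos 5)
Dein Beweis ist korrekt und vollständig. Interessant ist hier der Vergleich mit der Arbeit selbst: Diese beweist den Satz gar nicht, sondern verweist lediglich mit \glqq Siehe \cite{sauvigny2013analysis}\grqq{} auf die Literatur. Dein Vorschlag liefert also genau das Argument, das die Arbeit auslagert, und zwar in der Standardform: Reduktion auf den skalaren Fall durch Tupel, deren Komponenten bis auf eine verschwinden (zulässig, da die Nullfunktion in $\mathcal{C}^\infty_0 ([c,d], \mathbb{R})$ liegt), anschließend Widerspruchsbeweis mit einer Bump-Funktion, deren Träger kompakt im Innern des Intervalls liegt. Alle heiklen Stellen sind sauber behandelt: die Verschiebung eines eventuellen Randpunktes ins Innere mittels Stetigkeit, die Wahl von $\delta$ so, dass $\mathcal{L}_j$ auf $[\lambda_0 - \delta, \lambda_0 + \delta]$ ein festes Vorzeichen hat, und der Schluss, dass das Integral einer stetigen, nichtnegativen und auf einer offenen Menge strikt positiven Funktion positiv ist. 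Auch dein Hinweis, dass die Glattheit der Bump-Funktion an den Nahtstellen $\lambda_0 \pm \delta$ auf dem Standardargument zur Funktion $x \longmapsto e^{-1/x}$ beruht, ist angemessen; diesen Baustein darf man zitieren oder ausführen, ohne dass der Beweis dadurch lückenhaft würde. Kurz: Wo die Arbeit nur zitiert, gibst du einen in sich geschlossenen, elementaren Beweis, der dem in der zitierten Quelle üblichen Vorgehen entspricht.
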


\begin{proof}
    Siehe \cite{sauvigny2013analysis}.
\end{proof}

Mittels des Fundamentallemmas der Variationsrechnung sind wir nun in der Lage für die obigen Funktionale $\mathcal{S}$ die Euler-Lagrange-Gleichungen als notwendiges Kriterium für stationäre Kurve herzuleiten.

\begin{theorem}
    Sei $(\mathcal{M}, \tau, \mathcal{A})$ eine glatte Mannigfaltigkeit, $\mathcal{L}$ eine Lagrangefunktion und $\mathcal{S}_{\mathcal{L}, [a,b]} : \mathcal{C}^\infty ([a,b], \mathcal{M}) \longrightarrow \mathbb{R}$ das zugehörige Wirkungsfunktional über dem Intervall $[a,b]$ mit $a,b \in \mathbb{R}$ und $a<b$. Sei weiter $\gamma_0 \in \mathcal{PC}^\infty_{p,q} ([a,b], \mathcal{M})$ eine stationäre Kurve von $\mathcal{S}_{\mathcal{L}, [a,b]}$, d.h. für jede beliebige stetige Deformation $\gamma = (\gamma_\alpha)_{\alpha \in (- \epsilon, \epsilon)}$ von $\gamma_0$ mit $\epsilon > 0$ gilt, dass 
    \begin{align}
        \frac{d}{d \alpha} \mathcal{S}_{\mathcal{L}, [a,b]} (\gamma_\alpha) \bigg|_{\alpha = 0} = 0
    \end{align}
    ist. Es gelte weiter, dass $(\mathcal{U}, \phi = (x^1, ..., x^d)) \in \mathcal{A}$ eine Karte mit $\gamma_0^{-1}(\mathcal{U}) \neq \emptyset$ ist. Dann existiert wegen der Stetigkeit von $\gamma_0$ ein Teilintervall $[c,d] \subseteq [a,b]$, sodass $\gamma_0 ([c,d]) \subseteq \mathcal{U}$ und $\hat{\gamma}_0 := \gamma_0 |_{[c,d]} \in \mathcal{C}^\infty ([c,d], \mathcal{M})$ ist. Weiter sei die Lagrangefunktion $\mathcal{L}$ noch so gewählt, sodass für alle glatten Deformationen $\hat{\gamma} = (\hat{\gamma}_\alpha)_{\alpha \in (-\epsilon, \epsilon)}$ von $\hat{\gamma}_0$ die folgenden Aussagen gelten:
    \begin{itemize}
    
        \item[\textit{i)}] Sei $\mathcal{L}_\phi = \mathcal{L} \circ \xi_\phi^{-1}$ und $\hat{\gamma}^{i}_{\alpha, \phi} = x^{i} \circ \hat{\gamma}_\alpha$ für alle $i \in \{ 1, ..., d \}$. Dann ist die Funktion 
        \begin{align}
            \mathcal{F}_\mathcal{L} : (-\epsilon, \epsilon) \times [c,d] \longrightarrow& \: \: \mathbb{R} \nonumber\\
            (\alpha, \lambda) \longmapsto& \: \: \mathcal{L}_\phi (((\hat{\gamma}^1_{\alpha, \phi}(\lambda), ..., \hat{\gamma}^d_{\alpha, \phi}(\lambda)), (\hat{\gamma}^{1\:'}_{\alpha, \phi}(\lambda) , ...,\hat{\gamma}^{d\:'}_{\alpha , \phi}(\lambda) )))
        \end{align}
        stetig und stetig partiell differenzierbar, d.h. die partiellen Ableitungen der Funktion $\mathcal{F}_\mathcal{L}$ sind ebenfalls auf $(-\epsilon, \epsilon) \times [c,d]$ stetig. 

        \item[\textit{ii)}] Für die Funktion $\mathcal{F}_\mathcal{L}$ aus \textit{i)} existieren Funktionen $A, B : [c,d] \longrightarrow \mathbb{R}$, sodass für alle $(\alpha, \lambda) \in (-\epsilon, \epsilon) \times [c,d]$ und alle $m \in \{ 1, ..., 2d \}$ die Abschätzungen
        \begin{align}
            |\mathcal{L}_\phi (((\hat{\gamma}^1_{\alpha, \phi}(\lambda), ..., \hat{\gamma}^d_{\alpha, \phi}(\lambda)), (\hat{\gamma}^{1\:'}_{\alpha, \phi}(\lambda) , ...,\hat{\gamma}^{d\:'}_{\alpha , \phi}(\lambda) ))) |\leq A(\lambda) 
        \end{align}
        und 
        \begin{align}
            |\partial_m \mathcal{L}_\phi (((\hat{\gamma}^1_{\alpha, \phi}(\lambda), ..., \hat{\gamma}^d_{\alpha, \phi}(\lambda)), (\hat{\gamma}^{1\:'}_{\alpha, \phi}(\lambda) , ...,\hat{\gamma}^{d\:'}_{\alpha , \phi}(\lambda) ))) |\leq B(\lambda) 
        \end{align}
        gelten. Darüberhinaus gilt noch
        \begin{align}
            \int_c^d A(\lambda) \,d \lambda < \infty \:\:\:\:\: \textit{und} \:\:\:\:\: \int_c^d B(\lambda) \,d \lambda < \infty.
        \end{align}
        
    \end{itemize}
    Unter diesen Vorraussetzungen folgen nun für $n \in \{1, ..., d\}$ die sogenannten Euler-Lagrange-Gleichungen für die Kurve $\hat{\gamma}_0$ auf dem Intervall $[c,d]$:
    \begin{align}
        &\partial_n \mathcal{L}_\phi (((a^1, ... , a^d), (b^1, ..., b^d))) \bigg|_{(((\gamma^1_{0, \phi}(\lambda), ..., \gamma^d_{0, \phi}(\lambda)), (\gamma^{1\:'}_{0, \phi}(\lambda) , ...,\gamma^{d\:'}_{0, \phi}(\lambda) )))} \nonumber \\ =& \: \frac{d}{d \lambda} \bigg( \partial_{d+n} \mathcal{L}_\phi (((a^1, ..., a^d), (b^1, ..., b^d))) \bigg|_{(((\gamma^1_{0, \phi}(\lambda), ..., \gamma^d_{0, \phi}(\lambda)), (\gamma^{1\:'}_{0, \phi}(\lambda) , ...,\gamma^{d\:'}_{0, \phi}(\lambda) )))} \bigg) 
    \end{align}
\end{theorem}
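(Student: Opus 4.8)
Der Plan ist, das Problem mittels der Karte $(\mathcal{U}, \phi)$ zu lokalisieren und anschließend die klassische Variationsrechnung in $\phi(\mathcal{U}) \subseteq \mathbb{R}^d$ durchzuführen. Zuerst würde ich die Stationarität von $\gamma_0$ auf $[a,b]$ auf die Stationarität der Einschränkung $\hat{\gamma}_0 = \gamma_0|_{[c,d]}$ bezüglich $\mathcal{S}_{\mathcal{L}, [c,d]}$ zurückführen. Ist nämlich $\hat{\gamma} = (\hat{\gamma}_\alpha)_{\alpha \in (-\epsilon, \epsilon)}$ eine beliebige Deformation von $\hat{\gamma}_0$, deren Variationsfunktion an der Stelle $0$ differenzierbar ist, so ist ihre triviale Fortsetzung $\gamma$ entlang $\gamma_0$ nach Satz $7.2$ eine stetige Deformation von $\gamma_0$. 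Vollkommen analog zum Beweis von Satz $7.9$ (es geht nur die Linearität des Integrals ein) liefert die Aufspaltung des Integrals dann
\begin{align}
    \frac{d}{d\alpha} \mathcal{S}_{\mathcal{L}, [c,d]}(\hat{\gamma}_\alpha) \bigg|_{\alpha = 0} = \frac{d}{d\alpha} \mathcal{S}_{\mathcal{L}, [a,b]}(\gamma_\alpha) \bigg|_{\alpha = 0} = 0.
\end{align}

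Als Nächstes würde ich die konkreten Variationen wählen. Da $\hat{\gamma}_0([c,d]) \subseteq \mathcal{U}$ gilt, arbeite ich in der Karte: Für ein beliebiges $d$-Tupel $(\eta_1, ..., \eta_d)$ mit $\eta_i \in \mathcal{C}^\infty_0 ([c,d], \mathbb{R})$ setze ich $\hat{\gamma}^{i}_{\alpha, \phi}(\lambda) := \gamma^{i}_{0, \phi}(\lambda) + \alpha \cdot \eta_i(\lambda)$ und hebe dies via $\hat{\gamma}_\alpha := \phi^{-1}(\hat{\gamma}^1_{\alpha, \phi}, ..., \hat{\gamma}^d_{\alpha, \phi})$ auf die Mannigfaltigkeit. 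Für hinreichend kleines $\epsilon > 0$ bleiben die Bilder in $\phi(\mathcal{U})$, und da die $\eta_i$ kompakten Träger in $(c,d)$ besitzen, stimmt $\hat{\gamma}_\alpha$ in der Nähe der Randpunkte $c$ und $d$ mit $\hat{\gamma}_0$ überein, sodass die triviale Fortsetzung tatsächlich eine stetige (sogar stückweise glatte) Deformation von $\gamma_0$ ist. Mit der Kartendarstellung \eqref{Kartendarstellung des Wirkungsfunktionals...} schreibe ich dann $\mathcal{S}_{\mathcal{L}, [c,d]}(\hat{\gamma}_\alpha) = \int_c^d \mathcal{F}_\mathcal{L}(\alpha, \lambda) \,d\lambda$.

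Im dritten Schritt differenziere ich unter dem Integral. Wegen der Kettenregel ist $\partial_\alpha \mathcal{F}_\mathcal{L}$ eine Linearkombination der $\partial_m \mathcal{L}_\phi(\dots)$ mit den auf $[c,d]$ beschränkten Koeffizienten $\eta_n$ bzw. $\eta_n'$, sodass die Majorante $B$ aus \textit{ii)} die für die Leibniz-Regel benötigte Dominanz liefert (gerechtfertigt durch ein Argument im Stil der majorisierten Konvergenz); die Stetigkeit von $\partial_\alpha \mathcal{F}_\mathcal{L}$ aus \textit{i)} sichert die Vertauschbarkeit. Mit der Kettenregel aus Theorem $5.1$ bzw. \eqref{Spezialfall Kettenregel} und wegen $\partial_\alpha \hat{\gamma}^{i}_{\alpha, \phi} = \eta_i$ sowie $\partial_\alpha \hat{\gamma}^{i \: '}_{\alpha, \phi} = \eta_i'$ folgt
\begin{align}
    0 = \int_c^d \sum_{n = 1}^d \bigg( \partial_n \mathcal{L}_\phi \big|_{(\gamma_{0,\phi}, \gamma'_{0,\phi})} \cdot \eta_n(\lambda) + \partial_{d+n} \mathcal{L}_\phi \big|_{(\gamma_{0,\phi}, \gamma'_{0,\phi})} \cdot \eta_n'(\lambda) \bigg) \,d\lambda.
\end{align}
Partielle Integration der $\eta_n'$-Terme lässt wegen $\eta_n \in \mathcal{C}^\infty_0([c,d], \mathbb{R})$ die Randterme verschwinden und liefert
\begin{align}
    0 = \int_c^d \sum_{n = 1}^d \bigg( \partial_n \mathcal{L}_\phi \big|_{(\gamma_{0,\phi}, \gamma'_{0,\phi})} - \frac{d}{d\lambda} \partial_{d+n} \mathcal{L}_\phi \big|_{(\gamma_{0,\phi}, \gamma'_{0,\phi})} \bigg) \cdot \eta_n(\lambda) \,d\lambda
\end{align}
für alle Tupel $(\eta_1, ..., \eta_d)$.

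Im letzten Schritt setze ich $\mathcal{L}_n := \partial_n \mathcal{L}_\phi|_{(\gamma_{0,\phi}, \gamma'_{0,\phi})} - \frac{d}{d\lambda} \partial_{d+n}\mathcal{L}_\phi|_{(\gamma_{0,\phi}, \gamma'_{0,\phi})}$, was nach \textit{i)} stetig ist, und wende das Fundamentallemma der Variationsrechnung an; daraus folgt $\mathcal{L}_n \equiv 0$ für alle $n \in \{1,...,d\}$ und damit die behaupteten Euler-Lagrange-Gleichungen. Die wesentliche Hürde wird dabei weniger die Kettenregel-Rechnung als vielmehr die analytische Rechtfertigung der beiden Vertauschungen sein: das Differenzieren unter dem Integral (das genau die Bedingungen \textit{i)} und \textit{ii)} motiviert) und, etwas subtiler, die Gültigkeit der partiellen Integration, die voraussetzt, dass $\lambda \mapsto \partial_{d+n}\mathcal{L}_\phi|_{(\gamma_{0,\phi}(\lambda), \gamma'_{0,\phi}(\lambda))}$ tatsächlich nach $\lambda$ differenzierbar ist; eben diese Regularität ist es, die die stetige partielle Differenzierbarkeit aus \textit{i)} mit der Glattheit von $\gamma_0$ verkoppelt.
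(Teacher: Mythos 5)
Dein Vorschlag ist korrekt und folgt im Kern derselben Strategie wie der Beweis der Arbeit: Lokalisierung in der Karte, Rückführung auf die Stationarität der Einschränkung $\hat{\gamma}_0$ bezüglich $\mathcal{S}_{\mathcal{L},[c,d]}$ mittels trivialer Fortsetzungen (Satz $7.2$ bzw. $7.3$), Differentiation unter dem Integral (gerechtfertigt durch \textit{i)} und \textit{ii)}), partielle Integration mit verschwindenden Randtermen und schließlich das Fundamentallemma. Der einzige substanzielle Unterschied liegt in der Wahl der Variationen: Die Arbeit arbeitet mit beliebigen glatten Deformationen $\hat{\gamma}_\alpha$, muss deshalb den Satz von Schwarz bemühen, um $\frac{d}{d\alpha}$ und $\frac{d}{d\lambda}$ zu vertauschen, und behandelt die Variationsfelder $\frac{d\hat{\gamma}^n_{\alpha,\phi}}{d\alpha}\big|_{\alpha=0}$ als Testfunktionen; du konstruierst stattdessen explizite affine Variationen $\gamma^{i}_{0,\phi} + \alpha \cdot \eta_i$ in Kartenkoordinaten und hebst sie mit $\phi^{-1}$ auf die Mannigfaltigkeit. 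Dein Weg macht den Satz von Schwarz überflüssig, da $\partial_\alpha \hat{\gamma}^{i\:'}_{\alpha,\phi} = \eta_i'$ unmittelbar gilt, und er schließt zudem einen Schritt, den die Arbeit nur implizit lässt: Um das Fundamentallemma anwenden zu können, muss man wissen, dass zu jedem Tupel $(\eta_1, \dots, \eta_d)$ von Testfunktionen tatsächlich eine zulässige glatte Deformation mit genau diesem Variationsfeld existiert --- das liefert gerade deine Konstruktion. Im Gegenzug musst du (wie du auch tust) das Kompaktheitsargument führen, dass die Bilder der variierten Kurven für hinreichend kleines $\epsilon$ in $\phi(\mathcal{U})$ verbleiben. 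Deine abschließende Beobachtung, dass die partielle Integration die Differenzierbarkeit von $\lambda \longmapsto \partial_{d+n} \mathcal{L}_\phi \big|_{(\gamma_{0,\phi}(\lambda), \gamma'_{0,\phi}(\lambda))}$ voraussetzt, welche aus \textit{i)} allein nicht folgt, trifft den Beweis der Arbeit in gleicher Weise; diese Regularität wird dort ebenfalls stillschweigend vorausgesetzt und ist bereits in der Formulierung der Euler-Lagrange-Gleichungen enthalten.
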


\begin{proof}
    Sei $\gamma_0 : [a,b] \longrightarrow \mathcal{M}$ eine stationäre Kurve des Wirkungsfunktionals $\mathcal{S}_{\mathcal{L}, [a,b]}$ und $(\mathcal{U}, \phi) \in \mathcal{A}$ eine Karte mit $\gamma_0^{-1}(\mathcal{U}) \neq \emptyset$ und $\phi = (x^1, ..., x^d)$. Dann existiert wegen der Stetigkeit von $\gamma_0$ ein abgeschlossenes Intervall $[c,d]$ mit $\gamma_0([c,d]) \subseteq \mathcal{U}$. Wegen der strukturellen Ähnlichkeit des Wirkungsfunktionals und des Längenfunktionals und der Tatsache, dass $\gamma_0$ als stationäre Kurve von $\mathcal{S}_{\mathcal{L}, [a,b]}$ angenommen wurde, ergibt sich für das Wirkungsfunktional wie im Satz $7.3.$, dass $\hat{\gamma}_0$ eine stationäre Kurve bezüglich des Wirkungsfunktionals $\mathcal{S}_{\mathcal{L}, [c,d]}$ ist. Damit gilt also für eine beliebige stetige Deformation $\hat{\gamma} = (\hat{\gamma}_\alpha)_{\alpha \in (-\epsilon, \epsilon)}$ von $\hat{\gamma}_0$, dass
    \begin{align}
        \frac{d}{d \alpha} \mathcal{S}_{\mathcal{L}, [c,d]} (\hat{\gamma}_\alpha) \bigg|_{\alpha = 0} = 0
    \end{align}
    ist. Da das für alle stetigen Deformationen $\hat{\gamma}$ von $\gamma_0$ gilt, gilt dieser Ausdruck also insbesondere auch für alle glatten Deformationen. Sei also ab jetzt $\hat{\gamma}$ eine glatte Deformation von $\gamma_0$. Es gilt mittels der multidimensionalen Kettenregel aus Satz $5.3.$, der Gleichung \eqref{Kartendarstellung des Wirkungsfunktionals...}, der Setzung $\gamma^{i}_{\alpha, \phi} = x^{i} \circ \gamma_\alpha$ für alle $i \in \{1, ..., d\}$ und der Linearität des Integrals, dass
    \begin{align}
        0 =& \: \frac{d}{d \alpha} \mathcal{S}_{\mathcal{L}, [c,d]} (\hat{\gamma}_\alpha) \bigg|_{\alpha = 0} \nonumber \\ =& \: \frac{d}{d \alpha} \bigg( \int_c^d \mathcal{L}_\phi (((\hat{\gamma}^1_{\alpha, \phi}(\lambda), ..., \hat{\gamma}^d_{\alpha, \phi}(\lambda)), (\hat{\gamma}^{1\:'}_{\alpha, \phi}(\lambda) , ...,\hat{\gamma}^{d\:'}_{\alpha , \phi}(\lambda) ))) \,d \lambda \bigg) \bigg|_{\alpha = 0} \nonumber \\ =& \: \int_c^d \frac{d}{d \alpha} \bigg( \mathcal{L}_\phi (((\hat{\gamma}^1_{\alpha , \phi}(\lambda), ..., \hat{\gamma}^d_{\alpha , \phi}(\lambda)), (\hat{\gamma}^{1\:'}_{\alpha , \phi}(\lambda) , ...,\hat{\gamma}^{d\:'}_{\alpha , \phi}(\lambda) ))) \bigg) \bigg|_{\alpha = 0} \;d \lambda \nonumber \\ =& \: \int_c^d \sum_{n = 1}^d \bigg( \partial_n \mathcal{L}_\phi (((\gamma^1_{0, \phi}(\lambda), ..., \gamma^d_{0, \phi}(\lambda)), (\gamma^{1\:'}_{0, \phi}(\lambda) , ...,\gamma^{d\:'}_{0 , \phi}(\lambda) ))) \bigg) \cdot \frac{\hat{\gamma}^n_{0, \phi} (\lambda)}{d \alpha} \,d \lambda \nonumber \\ &+ \: \int_c^d \sum_{n = 1}^d \bigg( \partial_{d + n} \mathcal{L}_\phi (((\gamma^1_{0, \phi}(\lambda), ..., \gamma^d_{0, \phi}(\lambda)), (\gamma^{1\:'}_{0, \phi}(\lambda) , ...,\gamma^{d\:'}_{0 , \phi}(\lambda) ))) \bigg) \cdot \frac{\hat{\gamma}^{n\:'}_{0, \phi} (\lambda)}{d \alpha} \,d \lambda \label{Variation in einer Karte}
    \end{align}
    ist. Wir konnten dabei in der eben durchgeführten Rechnung aufgrund unserer Annahmen an die Funktion $\mathcal{L}$ die Ableitung $\frac{d}{d \alpha}$ in das Integral ziehen \cite{conrad2019differentiating}. Da nun $\hat{\gamma}$ eine glatte Deformation ist, und $x^{i}$ für alle $i \in \{1, ..., d\}$ glatt ist, folgt, dass $x^{i} \circ \hat{\gamma} : (-\epsilon, \epsilon) \times [a,b] \longrightarrow \mathbb{R}$ für alle $i \in \{1, ..., d\}$ ebenfalls glatt ist. Wir können demnach auf den Ausdruck $\frac{\hat{\gamma}^{n\:'}_{\alpha, \phi} (\lambda)}{d \alpha}$ den aus der Analysis wohlbekannten Satz von Schwartz \cite{spivak2018calculus} anwenden, der es uns erlaubt, die Reihenfolge der Ableitungen zu vertauschen. Formal bedeutet das, dass 
    \begin{align}
        \frac{\hat{\gamma}^{n\:'}_{\alpha, \phi} (\lambda)}{d \alpha} = \frac{d}{d \alpha} \hat{\gamma}^{n\:'}_{\alpha, \phi}(\lambda) = \frac{d}{d \alpha} \bigg( \frac{d \hat{\gamma}^n_{\alpha, \phi} (\lambda)}{d \lambda} \bigg) = \frac{d}{d \lambda} \bigg( \frac{d \hat{\gamma}^n_{\alpha, \phi} (\lambda)}{d \alpha} \bigg) \label{mmmmmmmmmmmmmmm}
    \end{align}
    für alle $n \in \{ 1, ..., d \}$ gilt. Beachte, dass wir in \eqref{mmmmmmmmmmmmmmm} ausgenutzt haben, dass die aus der Analysis bekannten gewöhnlichen Differentialoperatoren $\frac{d}{d \lambda}$ und $\frac{d}{d \alpha}$ für $\hat{\gamma}^n_{\alpha, \phi}$ definitionsgemäß mit den partiellen Differentialoperatoren $\frac{\partial}{\partial \lambda}$ und $\frac{\partial}{\partial \alpha}$ zusammen fallen. Anwenden von \eqref{mmmmmmmmmmmmmmm} auf den zweiten Term der rechten Seite von \eqref{Variation in einer Karte} liefert nun
    \begin{align}
        &\int_c^d \sum_{n = 1}^d \bigg( \partial_{d + n} \mathcal{L}_\phi (((\gamma^1_{0, \phi}(\lambda), ..., \gamma^d_{0, \phi}(\lambda)), (\gamma^{1\:'}_{0, \phi}(\lambda) , ...,\gamma^{d\:'}_{0 , \phi}(\lambda) ))) \bigg) \cdot \frac{\hat{\gamma}^{n\:'}_{0, \phi} (\lambda)}{d \alpha} \,d \lambda \nonumber \\ &= \: \int_c^d \sum_{n = 1}^d \bigg( \partial_{d + n} \mathcal{L}_\phi (((\gamma^1_{0, \phi}(\lambda), ..., \gamma^d_{0, \phi}(\lambda)), (\gamma^{1\:'}_{0, \phi}(\lambda) , ...,\gamma^{d\:'}_{0 , \phi}(\lambda) ))) \bigg) \cdot \frac{d}{d \lambda} \bigg( \frac{d \hat{\gamma}^n_{0, \phi} (\lambda)}{d \alpha} \bigg) \,d \lambda \nonumber \\ &= \sum_{n = 1}^d \int_c^d \bigg( \partial_{d + n} \mathcal{L}_\phi (((\gamma^1_{0, \phi}(\lambda), ..., \gamma^d_{0, \phi}(\lambda)), (\gamma^{1\:'}_{0, \phi}(\lambda) , ...,\gamma^{d\:'}_{0 , \phi}(\lambda) ))) \bigg) \cdot \frac{d}{d \lambda} \bigg( \frac{d \hat{\gamma}^n_{0, \phi} (\lambda)}{d \alpha} \bigg) \,d \lambda. \label{Anwendung Schwartz}
    \end{align}
    Für alle $n \in \{ 1, ..., d \}$ können wir nun aufgrund unserer Annahmen an die Funktion $\mathcal{L}_\phi$ den Ausdruck 
    \begin{align}
        \int_c^d \bigg( \partial_{d + n} \mathcal{L}_\phi (((\gamma^1_{0, \phi}(\lambda), ..., \gamma^d_{0, \phi}(\lambda)), (\gamma^{1\:'}_{0, \phi}(\lambda) , ...,\gamma^{d\:'}_{0 , \phi}(\lambda) ))) \bigg) \cdot \frac{d}{d \lambda} \bigg( \frac{d \hat{\gamma}^n_{0, \phi} (\lambda) }{d \alpha} \bigg) \,d \lambda 
    \end{align}
    partiell integrieren \cite{forster6analysis}. Es folgt
    \begin{align}
        &\int_c^d \bigg( \partial_{d + n} \mathcal{L}_\phi (((\gamma^1_{0, \phi}(\lambda), ..., \gamma^d_{0, \phi}(\lambda)), (\gamma^{1\:'}_{0, \phi}(\lambda) , ...,\gamma^{d\:'}_{0 , \phi}(\lambda) ))) \bigg) \cdot \frac{d}{d \lambda} \bigg( \frac{d \hat{\gamma}^n_{0, \phi} (\lambda) }{d \alpha} \bigg) \,d \lambda \nonumber \\ &= \: - \int_c^d \frac{d}{d \lambda} \bigg( \partial_{d + n} \mathcal{L}_\phi (((\gamma^1_{0, \phi}(\lambda), ..., \gamma^d_{0, \phi}(\lambda)), (\gamma^{1\:'}_{0, \phi}(\lambda) , ...,\gamma^{d\:'}_{0 , \phi}(\lambda) ))) \bigg) \cdot \frac{d \hat{\gamma}^n_{0, \phi} (\lambda) }{d \alpha}  \,d \lambda \nonumber \\ & + \: \bigg[ \bigg( \partial_{d + n} \mathcal{L}_\phi (((\gamma^1_{0, \phi}(\lambda), ..., \gamma^d_{0, \phi}(\lambda)), (\gamma^{1\:'}_{0, \phi}(\lambda) , ...,\gamma^{d\:'}_{0 , \phi}(\lambda) ))) \bigg) \cdot  \frac{d \hat{\gamma}^n_{0, \phi} (\lambda) }{d \alpha} \bigg]^d_c. \label{partielle Integration}
    \end{align}
    Der zweite Term auf der rechten Seite von \eqref{partielle Integration} verschwindet nun, da wegen
    \begin{align}
        \hat{\gamma}_{\alpha, \phi}^n (c) = x^{n}(\hat{\gamma}_{\alpha}(c)) = x^{n}(\hat{\gamma}_0 (c))
    \end{align}
    und 
    \begin{align}
        \hat{\gamma}_{\alpha, \phi}^n (d) = x^{n}(\hat{\gamma}_{\alpha}(d)) = x^{n}(\hat{\gamma}_0 (d))
    \end{align}
    für alle $n \in \{1, ..., d\}$ folgt, dass die Ableitung dieser Ausdrücke nach $\alpha$ an den Stellen $\lambda = c$ und $\lambda = d$ verschwinden. Setzen wir nun \eqref{partielle Integration} mit der eben gemachten Beobachtung in die Gleichung \eqref{Anwendung Schwartz} ein, und diese wiederum in \eqref{Variation in einer Karte}, so erhalten wir 
    
    \begin{align}
         0 =& \: \frac{d}{d \alpha} \mathcal{S}_{\mathcal{L}, [c,d]} (\hat{\gamma}_\alpha) \bigg|_{\alpha = 0} \nonumber \\ =& \: \int_c^d \sum_{n = 1}^d \bigg( \bigg( \partial_n \mathcal{L}_\phi (((\gamma^1_{0, \phi}(\lambda), ..., \gamma^d_{0, \phi}(\lambda)), (\gamma^{1\:'}_{0, \phi}(\lambda) , ...,\gamma^{d\:'}_{0 , \phi}(\lambda) ))) \bigg) \cdot \frac{\hat{\gamma}^n_{0, \phi} (\lambda)}{d \alpha} \nonumber \\ &- \: \frac{d}{d \lambda} \bigg( \partial_{d + n} \mathcal{L}_\phi (((\gamma^1_{0, \phi}(\lambda), ..., \gamma^d_{0, \phi}(\lambda)), (\gamma^{1\:'}_{0, \phi}(\lambda) , ...,\gamma^{d\:'}_{0 , \phi}(\lambda) ))) \bigg) \cdot \frac{\hat{\gamma}^n_{0, \phi} (\lambda)}{d \alpha} \bigg) \,d \lambda \nonumber \\ =& \: \int_c^d \sum_{n = 1}^d \bigg( \bigg( \partial_n \mathcal{L}_\phi (((\gamma^1_{0, \phi}(\lambda), ..., \gamma^d_{0, \phi}(\lambda)), (\gamma^{1\:'}_{0, \phi}(\lambda) , ...,\gamma^{d\:'}_{0 , \phi}(\lambda) ))) \bigg) \nonumber \\ &- \: \frac{d}{d \lambda} \bigg( \partial_{d + n} \mathcal{L}_\phi (((\gamma^1_{0, \phi}(\lambda), ..., \gamma^d_{0, \phi}(\lambda)), (\gamma^{1\:'}_{0, \phi}(\lambda) , ...,\gamma^{d\:'}_{0 , \phi}(\lambda) ))) \bigg) \bigg) \cdot \frac{\hat{\gamma}^n_{0, \phi} (\lambda)}{d \alpha}  \,d \lambda. \label{jjjjjjjjjjjjj}
    \end{align}
    Da \eqref{jjjjjjjjjjjjj} für beliebige glatte Deformationen $\hat{\gamma}$ von $\hat{\gamma}_0$ richtig ist, folgt mittels des Fundamentallemmas der Variationsrechnung, dass für alle $n \in \{1, ..., d\}$ auf dem Intervall $[c,d]$ 
    \begin{align}
         &\partial_n \mathcal{L}_\phi (((\gamma^1_{0, \phi}(\lambda), ..., \gamma^d_{0, \phi}(\lambda)), (\gamma^{1\:'}_{0, \phi}(\lambda) , ...,\gamma^{d\:'}_{0 , \phi}(\lambda) ))) \nonumber \\ &- \: \frac{d}{d \lambda} \bigg( \partial_{d + n} \mathcal{L}_\phi (((\gamma^1_{0, \phi}(\lambda), ..., \gamma^d_{0, \phi}(\lambda)), (\gamma^{1\:'}_{0, \phi}(\lambda) , ...,\gamma^{d\:'}_{0 , \phi}(\lambda) ))) \bigg) = 0 
    \end{align}
    gilt. Dies sind gerade die Euler-Lagrange-Gleichungen der Kurve $\hat{\gamma}_0$ auf dem Intervall $[c,d]$.
\end{proof}

Mittels der Euler-Lagrange-Gleichungen können wir nun das folgende wichtige Theorem formulieren, welches ein notwendiges Kriterium für spezielle schwache Geodätische einer riemannschen Mannigfaltigkeit $(\mathcal{M}, \tau, \mathcal{A}, g)$ darstellt,

\begin{theorem}
    Sei $(\mathcal{M}, \tau, \mathcal{A}, g)$ eine riemannsche Mannigfaltigkeit, $a,b \in \mathbb{R}$ zwei reelle Zahlen mit $a<b$ und $\gamma_0 \in \mathcal{PC}^\infty_{p,q} ([a,b], \mathcal{M})$ eine schwache Geodätische, die die Punkte $p$ und $q$ aus $\mathcal{M}$ miteinander verbindet. Wir nehmen darüberhinaus an, dass die schwache Geodätische auf Einheitsgeschwindigkeit parametrisiert ist, d.h. es soll gelten, dass
    \begin{align}
        g_{\gamma_0(\lambda)} (X_{\gamma_0, \gamma_0(\lambda)}, X_{\gamma_0, \gamma_0(\lambda)}) = 1 \:\:\:\:\:\:\:\: \forall \lambda \in [a,b].
    \end{align}
    Sei weiter $(\mathcal{U}, \phi) \in \mathcal{A}$ eine Karte von $\mathcal{M}$ mit $\phi = (x^1, ..., x^d)$ und $\mathcal{U} \cap \gamma_0([a,b]) \neq \emptyset$. Da $\gamma_0$ eine stetige Kurve auf $\mathcal{M}$ ist und folglicherweise $\gamma_0^{-1}(\mathcal{U})$ eine offene Menge bezüglich der auf $[a,b]$ erklärten Teilraumtopologie ist, existiert ein abgeschlossenes Intervall $[c,d] \subseteq [a,b]$, sodass $\gamma_0([c,d]) \subset \mathcal{U}$ und $\gamma |_{[c,d]} \in \mathcal{C}^\infty ([c,d], \mathcal{M})$ gilt. Dann genügt die Einschränkung $\gamma_0 |_{[c,d]}$, welche nach Satz $7.3.$ selbst eine schwache Geodätische ist, den sogenannten Geodätengleichungen
    \begin{align}
        0 = \gamma^{k\:''}_{0, \phi}(\lambda) + \sum_{i,j = 1}^d \Gamma^k_{ij, \phi} (\gamma_{0,\phi}(\lambda)) \cdot \gamma^{i\:'}_{0, \phi} (\lambda) \cdot \gamma^{j\:'}_{0, \phi}(\lambda) \:\:\:\:\:\: \forall k \in \{ 1, ..., d \}
    \end{align}
    mit $\lambda \in [c,d]$. Dabei ist für $i,j,k \in \{ 1, ..., d \}$ und $\lambda \in [c,d]$ $\gamma^k_{0, \phi} := x^k \circ \gamma$, $\gamma_{0,\phi}(\lambda) = (\gamma^1_{0, \phi}(\lambda), ..., \gamma^d_{0, \phi}(\lambda))$ und 
    \begin{align}
        \Gamma^k_{ij, \phi} (\gamma_{0,\phi}(\lambda)) = \frac{1}{2} \sum_{n=1}^d g^{kn}_\phi (\gamma_{0,\phi}(\lambda)) \cdot \bigg(  2 \cdot \partial_j g_{ni, \phi} (\gamma_{0, \phi}(\lambda)) - \partial_n g_{ij, \phi} (\gamma_{0, \phi}(\lambda)) \bigg),
    \end{align}
    wobei $g_{ij, \phi}(\gamma_{0, \phi}(\lambda)) := g_{\gamma_0(\lambda)} \big( \big( \frac{\partial}{\partial x^{i}} \big)_{\gamma_0(\lambda)} , \big( \frac{\partial}{\partial x^j} \big)_{\gamma_0(\lambda)} \big)$ ist und die $g^{ij}_{\phi}(\gamma_{0, \phi}(\lambda))$ die zur Matrix $G_{\gamma_{0}(\lambda)} = (g_{ij, \phi}(\gamma_{0, \phi}(\lambda)))_{i,j \in \{ 1, ..., d \}}$ inverse Matrix bilden (siehe Beweis).
\end{theorem}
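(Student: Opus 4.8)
Der Plan ist, die Geodätengleichung durch Anwendung des vorangegangenen Theorems über die Euler-Lagrange-Gleichungen auf das Längenfunktional herzuleiten. Zunächst halte ich fest, dass nach Satz $7.3.$ die Einschränkung $\gamma_0 |_{[c,d]}$ selbst eine schwache Geodätische und wegen $\gamma_0([c,d]) \subseteq \mathcal{U}$ sogar glatt auf $[c,d]$ ist, mithin eine stationäre Kurve von $\mathbb{L}_{[c,d]}$. Da nach \eqref{Lagrangefunktion für den Längenbegriff} das Längenfunktional genau dem Wirkungsfunktional $\mathcal{S}_{\mathcal{L}, [c,d]}$ mit der Lagrangefunktion $\mathcal{L}(p,X) = \sqrt{g_p(X,X)}$ entspricht, liegt es nahe, das Euler-Lagrange-Theorem anzuwenden. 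Dafür müssen zuerst dessen Regularitätsvoraussetzungen \textit{i)} und \textit{ii)} für diese spezielle Lagrangefunktion verifiziert werden, was den ersten wesentlichen Schritt darstellt. Hier geht die Einheitsgeschwindigkeitsannahme entscheidend ein: Entlang $\gamma_0$ gilt $E(\lambda) := g_{\gamma_0(\lambda)}(X_{\gamma_0, \gamma_0(\lambda)}, X_{\gamma_0, \gamma_0(\lambda)}) = 1 > 0$, und da $g$ glatt und positiv definit ist, bleibt $E$ auf einer Umgebung der Kurve und damit für hinreichend kleine glatte Deformationen $\hat{\gamma}$ auf ganz $[c,d] \times (-\epsilon, \epsilon)$ strikt positiv. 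Auf dieser Menge ist die Wurzelfunktion glatt, sodass die in \textit{i)} geforderte stetige partielle Differenzierbarkeit von $\mathcal{F}_\mathcal{L}$ folgt; die integrierbaren Schranken $A, B$ aus \textit{ii)} ergeben sich aus der Stetigkeit der beteiligten Funktionen auf dem Kompaktum $[c,d]$.

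Im zweiten Schritt berechne ich die Kartendarstellung der Lagrangefunktion. Mittels der Basisentwicklung \eqref{Vektorkomponente} und \eqref{Basisentwicklung} des Tangentialvektors in der karteninduzierten Basis erhalte ich aus \eqref{Kartendarstellung des Wirkungsfunktionals...} die Gestalt $\mathcal{L}_\phi((a^1,\dots,a^d),(b^1,\dots,b^d)) = \sqrt{\sum_{i,j=1}^d g_{ij,\phi}(a)\, b^i b^j}$. Mit der Abkürzung $E := \sum_{i,j} g_{ij,\phi}(a)\, b^i b^j$ ergeben sich die partiellen Ableitungen zu $\partial_n \mathcal{L}_\phi = \frac{1}{2\sqrt{E}} \sum_{i,j} \partial_n g_{ij,\phi}(a)\, b^i b^j$ sowie, unter Ausnutzung der Symmetrie $g_{ij,\phi} = g_{ji,\phi}$, zu $\partial_{d+n} \mathcal{L}_\phi = \frac{1}{\sqrt{E}} \sum_j g_{nj,\phi}(a)\, b^j$.

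Im dritten Schritt setze ich diese Ableitungen in die Euler-Lagrange-Gleichung $\partial_n \mathcal{L}_\phi = \frac{d}{d\lambda}\big(\partial_{d+n}\mathcal{L}_\phi\big)$ ein und werte alles entlang $\gamma_0$ aus. Da $\sqrt{E} \equiv 1$ auf $[c,d]$ gilt, verschwindet die $\lambda$-Ableitung des Vorfaktors $1/\sqrt{E}$, und nach dem Ausdifferenzieren der rechten Seite mittels der Kettenregel aus \eqref{Spezialfall Kettenregel} vereinfacht sich die Gleichung zu
\begin{align}
    \frac{1}{2}\sum_{i,j=1}^d \partial_n g_{ij,\phi}(\gamma_{0,\phi})\, \gamma^{i\:'}_{0,\phi}\gamma^{j\:'}_{0,\phi} = \sum_{j=1}^d g_{nj,\phi}(\gamma_{0,\phi})\, \gamma^{j\:''}_{0,\phi} + \sum_{i,j=1}^d \partial_i g_{nj,\phi}(\gamma_{0,\phi})\, \gamma^{i\:'}_{0,\phi}\gamma^{j\:'}_{0,\phi}. \nonumber
\end{align}
Nach Umordnen und Ausnutzen der Symmetrie von $\gamma^{i\:'}_{0,\phi}\gamma^{j\:'}_{0,\phi}$ (die erlaubt, im gemischten Term $\partial_i g_{nj,\phi}$ durch $\partial_j g_{ni,\phi}$ zu ersetzen) erhalte ich $\sum_j g_{nj,\phi}\gamma^{j\:''}_{0,\phi} + \frac{1}{2}\sum_{i,j}\big(2\partial_j g_{ni,\phi} - \partial_n g_{ij,\phi}\big)\gamma^{i\:'}_{0,\phi}\gamma^{j\:'}_{0,\phi} = 0$.

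Der letzte Schritt multipliziert diese Gleichung mit der inversen Metrik $g^{kn}_\phi$ und summiert über $n$. Die Existenz dieser Inversen folgt aus der positiven Definitheit von $g_p$, welche $G_{\gamma_0(\lambda)} = (g_{ij,\phi}(\gamma_{0,\phi}(\lambda)))_{i,j}$ zu einer symmetrischen, positiv definiten und somit invertierbaren Matrix macht. Wegen $\sum_n g^{kn}_\phi g_{nj,\phi} = \delta^k_j$ liefert der erste Term gerade $\gamma^{k\:''}_{0,\phi}$, während der zweite Term die gewünschte Gestalt $\sum_{i,j} \Gamma^k_{ij,\phi}(\gamma_{0,\phi})\gamma^{i\:'}_{0,\phi}\gamma^{j\:'}_{0,\phi}$ mit den im Theorem angegebenen Christoffel-Symbolen annimmt, was die Behauptung zeigt. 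Das Haupthindernis erwarte ich in der konsequenten Nutzung der Einheitsgeschwindigkeitsbedingung, die zugleich die Auswertung $\sqrt{E} = 1$ und das Verschwinden der Ableitung des Vorfaktors rechtfertigt — ohne diese Bedingung träten zusätzliche, von $\frac{d}{d\lambda}(1/\sqrt{E})$ herrührende Terme auf, die die einfache Form der Geodätengleichung zerstörten — sowie in der sorgfältigen Rechtfertigung der Regularitätsvoraussetzungen des Euler-Lagrange-Theorems für die nur auf $\{E > 0\}$ glatte Wurzelfunktion.
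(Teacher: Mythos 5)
Dein Beweisvorschlag ist korrekt und verfolgt im Wesentlichen denselben Weg wie der Beweis der Arbeit: Anwendung des Euler-Lagrange-Theorems $7.1.$ auf die Kartendarstellung $\mathcal{L}_\phi = \sqrt{\sum_{i,j} g_{ij,\phi}\, b^i b^j}$, Auswertung der partiellen Ableitungen entlang der auf Einheitsgeschwindigkeit parametrisierten Kurve, Indexsymmetrisierung und abschließende Multiplikation mit der inversen Metrik $g^{kn}_\phi$. Deine explizite Verifikation der Regularitätsvoraussetzungen \textit{i)} und \textit{ii)} (Positivität von $E$ in einer Umgebung der Kurve, damit die Wurzel dort glatt ist) ist sogar sorgfältiger als die Arbeit, die diesen Punkt stillschweigend übergeht.
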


\begin{proof}
    Sei also $\gamma_0 : [a,b] \longrightarrow \mathcal{M}$ eine schwache Geodätische, welche auf Einheitsgeschwindigkeit parametrisiert wurde, und sei $[c,d] \subseteq [a,b]$ so gewählt, sodass $\gamma_0([c,d])$ vollständig im Kartengebiet $\mathcal{U}$ der Karte $(\mathcal{U}, \phi)$ mit $\phi = (x^1, ..., x^d)$ enthalten ist. Um nun die Aussage des Theorems zu zeigen, bemerken wir zuerst, dass mittels \eqref{Basisentwicklung}, \eqref{Vektorkomponente} und \eqref{Tensorkomponenten bzgl einer Karte} folgt, dass für alle $\lambda \in [c,d]$
    \begin{align}
        g_{\gamma_0(\lambda)}& ( X_{\gamma_0, \gamma_0(\lambda)} , X_{\gamma_0, \gamma_0(\lambda)} ) \nonumber \\ =& \: g_{\gamma_0(\lambda)} \bigg( \sum_{i=1}^d (x^{i} \circ \gamma_0)'(\lambda) \cdot \bigg( \frac{\partial}{\partial x^{i}} \bigg)_{\gamma_0(\lambda)} , \sum_{j=1}^d (x^{j} \circ \gamma_0)'(\lambda) \cdot \bigg( \frac{\partial}{\partial x^{j}} \bigg)_{\gamma_0(\lambda)} \bigg) \nonumber \\ =& \: \sum_{i,j = 1}^d (x^{i} \circ \gamma_0)'(\lambda) \cdot (x^j \circ \gamma_0)'(\lambda) \cdot g_{\gamma_0(\lambda)} \bigg( \bigg( \frac{\partial}{\partial x^{i}} \bigg)_{\gamma_0(\lambda)} , \bigg( \frac{\partial}{\partial x^{j}} \bigg)_{\gamma_0(\lambda)} \bigg) \nonumber \\ =& \: \sum_{i,j = 1}^d (x^{i} \circ \gamma_0)'(\lambda) \cdot (x^j \circ \gamma_0)'(\lambda) \cdot g_{ij} (\gamma_0(\lambda)) \nonumber \\ =& \: \sum_{i,j = 1}^d (x^{i} \circ \gamma_0)'(\lambda) \cdot (x^j \circ \gamma_0)'(\lambda) \cdot g_{ij} (\phi^{-1}(\phi(\gamma_0(\lambda)))) \nonumber \\ =& \: \sum_{i,j = 1}^d (x^{i} \circ \gamma_0)'(\lambda) \cdot (x^j \circ \gamma_0)'(\lambda) \cdot g_{ij, \phi} (\phi(\gamma_0(\lambda))) \nonumber \\ =& \: \sum_{i,j = 1}^d (x^{i} \circ \gamma_0)'(\lambda) \cdot (x^j \circ \gamma_0)'(\lambda) \cdot g_{ij, \phi} (( x^1(\gamma_0(\lambda)) , ..., x^d(\gamma_0(\lambda)) ))
    \end{align}
    gilt. Dabei ist gemäß \eqref{Tensorkomponenten bzgl einer Karte} $g_{ij} (\gamma_0(\lambda))$ definiert als $g_{\gamma_0(\lambda)} \big( \big( \frac{\partial}{\partial x^{i}} \big)_{\gamma_0(\lambda)}, \big( \frac{\partial}{\partial x^{i}} \big)_{\gamma_0(\lambda)} \big)$ und $g_{ij, \phi} = g_{ij} \circ \phi^{-1}$. Weiter definieren wir die Funktion 
    \begin{align}
        \mathcal{R}_\phi : \phi(\mathcal{U}) \times \mathbb{R}^d \longrightarrow& \: \: \mathbb{R} \nonumber\\
            ((a^1, ..., a^d), (b^1, ..., b^d)) \longmapsto& \: \: \sum_{i, j = 1}^d b^{i} \cdot b^j \cdot g_{ij, \phi} ((a^1, ..., a^d)). \label{431}
    \end{align}
    Dann ergibt sich, dass für $\lambda \in [c,d]$ 
    \begin{align}
        g_{\gamma_0(\lambda)}& ( X_{\gamma_0, \gamma_0(\lambda)} , X_{\gamma_0, \gamma_0(\lambda)} ) \nonumber \\ =& \: \mathcal{R}_\phi ((( x^1(\gamma_0(\lambda)) , ..., x^d(\gamma_0(\lambda)) ), ( (x^1 \circ \gamma_0)'(\lambda) , ..., (x^d \circ \gamma_o)'(\lambda) )))
    \end{align}
    gilt. Mittels der Kurzbezeichnungen $\gamma^{i}_{0, \phi}(\lambda) := (x^{i} \circ \gamma_0)(\lambda))$ für alle $i \in \{ 1, ..., d\}$ ergibt sich damit
    \begin{align}
        g_{\gamma_0(\lambda)}& ( X_{\gamma_0, \gamma_0(\lambda)} , X_{\gamma_0, \gamma_0(\lambda)} ) \nonumber \\ =& \: \mathcal{R}_\phi (((\gamma^1_{0, \phi}(\lambda), ..., \gamma^d_{0, \phi}(\lambda)), (\gamma^{1\:'}_{0, \phi} (\lambda), ..., \gamma^{d\:'}_{0, \phi} (\lambda))))
    \end{align}
    für alle $\lambda \in [c,d]$. Damit ergibt sich für den Integranden des Längenfunktionals 
    \begin{align}
        \sqrt{g_{\gamma_0(\lambda)} ( X_{\gamma_0, \gamma_0(\lambda)} , X_{\gamma_0, \gamma_0(\lambda)} )} =& \: \sqrt{\mathcal{R}_\phi (((\gamma^1_{0, \phi}(\lambda), ..., \gamma^d_{0, \phi}(\lambda)), (\gamma^{1\:'}_{0, \phi} (\lambda), ..., \gamma^{d\:'}_{0, \phi} (\lambda))))} \nonumber \\ =:& \: \mathcal{L}_\phi (((\gamma^1_{0, \phi}(\lambda), ..., \gamma^d_{0, \phi}(\lambda)), (\gamma^{1\:'}_{0, \phi} (\lambda), ..., \gamma^{d\:'}_{0, \phi} (\lambda)))).
    \end{align}
    Wir wenden nun gemäß Theorem $7.1.$ die Euler-Lagrange-Gleichungen auf $\mathcal{L}_\phi$ an. Wir berechnen zuerst den Ausdruck $\partial_n \mathcal{L}_\phi (((a^1, ..., a^d), (b^1, ..., b^d)))$ für $n \in \{ 1, ..., d \}$:
    \begin{align}
        \partial_n \mathcal{L}_\phi (((a^1, ..., a^d),& (b^1, ..., b^d))) \nonumber \\ =& \: \frac{\sum_{i,j=1}^d b^{i} \cdot b^{j} \cdot \partial_n g_{ij, \phi} ((a^1, ..., a^d))}{2 \cdot \sqrt{ \sum_{i,j =1}^d b^{i} \cdot b^{j} \cdot g_{ij, \phi} ((a^1, ..., a^d)) }} \label{qwertzu}
    \end{align}
    Für $(a^1, ..., a^d) = (\gamma^1_{0, \phi}(\lambda), ..., \gamma^d_{0, \phi}(\lambda))$ und $(b^1, ..., b^d) = (\gamma^{1\:'}_{0, \phi}(\lambda), ..., \gamma^{d\:'}_{0, \phi}(\lambda))$ mit $\lambda \in [c,d]$, und der Ausnutzung der Vorraussetzung, dass die Geodätische $\gamma_0$ auf Einheitsgeschwindigkeit parametrisiert ist, liefert \eqref{qwertzu}
    \begin{align}
        \partial_n \mathcal{L}_\phi& (((\gamma^1_{0, \phi}(\lambda), ..., \gamma^d_{0, \phi}(\lambda)), (\gamma^{1\:'}_{0, \phi}(\lambda), ..., \gamma^{d\:'}_{0, \phi}(\lambda)))) \nonumber \\ =& \: \frac{1}{2} \cdot \sum_{i,j = 1}^{d} \gamma^{i\:'}_{0, \phi}(\lambda) \cdot \gamma^{j\:'}_{0, \phi}(\lambda) \cdot \partial_n g_{ij, \phi} ((\gamma^1_{0, \phi}(\lambda), ..., \gamma^d_{0, \phi}(\lambda))). \label{Euler-Lagrange Teil 1 für das Längenfunktional}
    \end{align}
    Als Nächstes berechnen wir den Ausdruck $\partial_{d+n} \mathcal{L}_\phi (((a^1, ..., a^d),(b^1, ..., b^d)))$ für $n \in \{ 1, ..., d \}$:
    \begin{align}
        \partial_{n+d} \mathcal{L}_\phi (((a^1, ..., a^d),& (b^1, ..., b^d))) \nonumber \\ =& \: \frac{\sum_{i,j=1}^d g_{ij, \phi}((a^1, ..., a^d)) \cdot \partial_{d+n} (b^{i} \cdot b^{j}) }{2 \cdot \sqrt{ \sum_{i,j =1}^d b^{i} \cdot b^{j} \cdot g_{ij, \phi} ((a^1, ..., a^d)) }} \label{asdfgh}
    \end{align}
    Mittels der Produktregel der Differentialrechnung \cite{heuser1992lehrbuch} folgt für den Ausdruck $\partial_{d+n} (b^{i} \cdot b^{j})$ 
    \begin{align}
        \partial_{d+n} (b^{i} \cdot b^{j}) =& \: \partial_{d+n}(b^{i}) \cdot b^j + b^{i} \cdot \partial_{d+n} (b^j) \nonumber \\ =& \: \delta^{i}_{n} \cdot b^j + b^{i} \cdot \delta^j_{n},
    \end{align}
    wobei der Ausdruck $\delta^k_{n}$ mit $k,n \in \{ 1, ..., d \}$ das Kronecker-Delta, definiert durch
    \begin{align}
        \delta^k_{n} = \left\{\begin{array}{ll} 1, & k = n \\
         0, & k \neq n \end{array}\right.,
    \end{align}
    bezeichnet. Für den Zähler der rechten Seite von \eqref{asdfgh} folgt damit
    \begin{align}
        \sum_{i,j=1}^d g_{ij, \phi}& ((a^1, ..., a^d)) \cdot \partial_{d+n} (b^{i} \cdot b^{j}) \nonumber \\ =& \: \sum_{i,j=1}^d g_{ij, \phi}((a^1, ..., a^d)) \cdot (\delta^{i}_{n} \cdot b^j + b^{i} \cdot \delta^j_{n}) \nonumber \\ =& \: \sum_{i,j=1}^d g_{ij, \phi}((a^1, ..., a^d)) \cdot \delta^{i}_{n} \cdot b^j + \sum_{i,j=1}^d g_{ij, \phi}((a^1, ..., a^d)) \cdot b^{i} \cdot \delta^j_n \nonumber \\ =& \: \sum_{j = 1}^d g_{nj, \phi} ((a^1, ..., a^d)) \cdot b^j + \sum_{i = 1}^d g_{in, \phi} ((a^1, ..., a^d)) \cdot b^{i}. \label{yxcvb}
    \end{align}
    Da $g$ eine riemannsche Metrik ist und folglich $g_p$ für alle $p \in \mathcal{M}$ symmetrisch ist, folgt, dass
    \begin{align}
        g_{in, \phi} ((a^1, ..., a^d)) = g_{ni, \phi} ((a^1, ..., a^d))  
    \end{align}
    für alle $i,n \in \{1,...,d\}$ und alle $(a^1, ..., a^d) \in \phi(\mathcal{U})$ ist. Mittels dieser Beobachtung und einer Umbenennung des Laufindizes in der zweiten Summe von \eqref{yxcvb} folgt
    \begin{align}
        \sum_{i,j=1}^d g_{ij, \phi} ((a^1, ..., a^d)) \cdot \partial_{d+n} (b^{i} \cdot b^{j}) = 2 \cdot \sum_{j = 1}^d g_{nj, \phi} ((a^1, ..., a^d)) \cdot b^j. \label{äölkjh}
    \end{align}
    Setzen wir \eqref{äölkjh} in \eqref{asdfgh} ein, so erhalten wir 
    \begin{align}
        \partial_{n+d} \mathcal{L}_\phi (((a^1, ..., a^d),& (b^1, ..., b^d))) \nonumber \\ =& \: \frac{\sum_{j = 1}^d g_{nj, \phi} ((a^1, ..., a^d)) \cdot b^j }{\sqrt{ \sum_{i,j =1}^d b^{i} \cdot b^{j} \cdot g_{ij, \phi} ((a^1, ..., a^d)) }} \label{mnbvc}.
    \end{align}
    Für $(a^1, ..., a^d) = (\gamma^1_{0, \phi}(\lambda), ..., \gamma^d_{0, \phi}(\lambda))$ und $(b^1, ..., b^d) = (\gamma^{1\:'}_{0, \phi}(\lambda), ..., \gamma^{d\:'}_{0, \phi}(\lambda))$ mit $\lambda \in [c,d]$, und der abermaligen Ausnutzung der Vorraussetzung, dass die Geodätische $\gamma_0$ auf Einheitsgeschwindigkeit parametrisiert ist, liefert \eqref{mnbvc}
    \begin{align}
        \partial_{n+d} \mathcal{L}_\phi (((\gamma^1_{0, \phi}(\lambda),& ..., \gamma^d_{0, \phi}(\lambda)), (\gamma^{1\:'}_{0, \phi}(\lambda), ..., \gamma^{d\:'}_{0, \phi}(\lambda)))) \nonumber \\ =& \: \sum_{j = 1}^d g_{nj, \phi} ((\gamma^1_{0, \phi}(\lambda), ..., \gamma^d_{0, \phi}(\lambda))) \cdot \gamma^{j\:'}_{0, \phi}(\lambda) \label{üpoiuztr}
    \end{align}
    Wir differenzieren nun den Ausdruck \eqref{üpoiuztr} bezüglich des Kurvenparameters $\lambda$:
    \begin{align}
        \frac{d}{d \lambda} \bigg(\partial_{n+d} \mathcal{L}_\phi& (((\gamma^1_{0, \phi}(\lambda), ..., \gamma^d_{0, \phi}(\lambda)), (\gamma^{1\:'}_{0, \phi}(\lambda), ..., \gamma^{d\:'}_{0, \phi}(\lambda)))) \bigg) \nonumber \\ =& \: \frac{d}{d \lambda} \bigg( \sum_{j = 1}^d g_{nj, \phi} ((\gamma^1_{0, \phi}(\lambda), ..., \gamma^d_{0, \phi}(\lambda))) \cdot \gamma^{j\:'}_{0, \phi}(\lambda) \bigg) \label{qwertüpoiuz}.
    \end{align}
    Mittels der Produktregel der Differentialrechnung, sowie der multidimensionalen Kettenregel aus Satz $5.3.$, folgt für die rechte Seite von \eqref{qwertüpoiuz} 
    \begin{align}
        \frac{d}{d \lambda} \bigg(\sum_{j = 1}^d g_{nj, \phi}& ((\gamma^1_{0, \phi}(\lambda), ..., \gamma^d_{0, \phi}(\lambda))) \cdot \gamma^{j\:'}_{0, \phi}(\lambda) \bigg) \nonumber \\ =& \: \sum_{j,k=1}^d \partial_k g_{nj, \phi} ((\gamma^1_{0, \phi}(\lambda), ..., \gamma^d_{0, \phi}(\lambda))) \cdot \gamma^{k\:'}_{0, \phi} (\lambda) \cdot \gamma^{j\:'}_{0, \phi}(\lambda) \nonumber \\ &+ \: \sum_{j = 1}^d g_{nj, \phi} ((\gamma^1_{0, \phi}(\lambda), ..., \gamma^d_{0, \phi}(\lambda))) \cdot \gamma^{j\:''}_{0, \phi}(\lambda). \label{Euler-Lagrange Teil 2 für das Längenfunktional}
    \end{align}
    Kombinieren wir die Ausdrücke \eqref{Euler-Lagrange Teil 1 für das Längenfunktional} und \eqref{Euler-Lagrange Teil 2 für das Längenfunktional} gemäß den Euler-Lagrange-Gleichungen, so erhalten wir die Gleichung
    \begin{align}
        0 =& \:  \sum_{j = 1}^d g_{nj, \phi} ((\gamma^1_{0, \phi}(\lambda), ..., \gamma^d_{0, \phi}(\lambda))) \cdot \gamma^{j\:''}_{0, \phi}(\lambda) \nonumber \\ &+ \: \sum_{j,k=1}^d \partial_k g_{nj, \phi} ((\gamma^1_{0, \phi}(\lambda), ..., \gamma^d_{0, \phi}(\lambda))) \cdot \gamma^{k\:'}_{0, \phi} (\lambda) \cdot \gamma^{j\:'}_{0, \phi}(\lambda) \nonumber \\ &- \: \frac{1}{2} \sum_{i,j = 1}^d \gamma^{i\:'}_{0, \phi}(\lambda) \cdot \gamma^{j\:'}_{0, \phi}(\lambda) \cdot \partial_n g_{ij, \phi} ((\gamma^1_{0, \phi}(\lambda), ..., \gamma^d_{0, \phi}(\lambda))).
    \end{align}
    Wir schreiben diese Gleichung mittels einer Umbenennung der Laufindizes in der zweiten Summe um zu
    \begin{align}
        0 =& \:  \sum_{j = 1}^d g_{nj, \phi} ((\gamma^1_{0, \phi}(\lambda), ..., \gamma^d_{0, \phi}(\lambda))) \cdot \gamma^{j\:''}_{0, \phi}(\lambda) \nonumber \\ &+ \: \sum_{i,j=1}^d \partial_j g_{ni, \phi} ((\gamma^1_{0, \phi}(\lambda), ..., \gamma^d_{0, \phi}(\lambda))) \cdot \gamma^{j\:'}_{0, \phi} (\lambda) \cdot \gamma^{i\:'}_{0, \phi}(\lambda) \nonumber \\ &- \: \frac{1}{2} \sum_{i,j = 1}^d \gamma^{i\:'}_{0, \phi}(\lambda) \cdot \gamma^{j\:'}_{0, \phi}(\lambda) \cdot \partial_n g_{ij, \phi} ((\gamma^1_{0, \phi}(\lambda), ..., \gamma^d_{0, \phi}(\lambda))).
    \end{align}
    Vereinfachen liefert
    \begin{align}
        0 =& \:  \sum_{j = 1}^d g_{nj, \phi} ((\gamma^1_{0, \phi}(\lambda), ..., \gamma^d_{0, \phi}(\lambda))) \cdot \gamma^{j\:''}_{0, \phi}(\lambda) \nonumber \\ &+ \: \frac{1}{2} \sum_{i,j=1}^d \bigg( 2 \cdot \partial_j g_{ni, \phi} ((\gamma^1_{0, \phi}(\lambda), ..., \gamma^d_{0, \phi}(\lambda))) \nonumber \\ &- \: \partial_n g_{ij, \phi} ((\gamma^1_{0, \phi}(\lambda), ..., \gamma^d_{0, \phi}(\lambda))) \bigg) \cdot \gamma^{i\:'}_{0, \phi} (\lambda) \cdot \gamma^{j\:'}_{0, \phi}(\lambda). \label{Zusammenfügung von Teil 1 und Teil 2}
    \end{align}
    Um diese Gleichung nun noch weiter zu vereinfachen und auf die obige Form aus dem Theorem $7.2.$ zu bringen, bemerken wir, dass es sich bei der Bilinearform $g_{\gamma_0(\lambda)}$ mit $\lambda \in [c,d]$ um eine positiv definite Bilinearform handelt, d.h. es gilt 
    \begin{align}
        g_{\gamma_0(\lambda)} (X, X) \geq 0 \:\:\:\:\:\:\:\: \forall X \in T_{\gamma_0(\lambda)} \mathcal{M} 
    \end{align}
    und 
    \begin{align}
        g_{\gamma_0(\lambda)} (X, X) = 0 \iff X = \mathbf{0},
    \end{align}
    wobei $\mathbf{0}$ der Nullvektor von $T_{\gamma(\lambda)} \mathcal{M}$ ist. Sei $X \in T_p \mathcal{M}$ beliebig, dann lässt sich $X$ in der von $\phi$ induzierten Basis entwickeln zu
    \begin{align}
        X = \sum_{i=1}^d X^{i} \cdot \bigg( \frac{\partial}{\partial x^{i}} \bigg)_{\gamma_0(\lambda)}.
    \end{align}
    Dann gilt 
    \begin{align}
        g_{\gamma_0(\lambda)}(X,X) =& \: \sum_{i,j = 1}^d X^{i} \cdot X^{j} \cdot g_{ij} (\gamma_0(\lambda)) \nonumber \\ =& \: (X^1, ..., X^d) \circ G_{\gamma_0(\lambda)} \circ (X^1, ..., X^d)^T. \label{452}
    \end{align}
    Dabei ist in der rechten Seite von Gleichung \eqref{452} die Verknüpfung $\circ$ die gewöhnliche Matrixmultiplikation und $\cdot^T$ ist die Transpositionsoperation für Matrizen, welche die Zeilen und die Spalten einer Matrix miteinander vertauscht. Weiter ist die $d \times d$-Matrix $G_{\gamma_0(\lambda)}$ gegeben durch
    \begin{align}
        (G_{\gamma_0(\lambda)})_{ij} = g_{ij} (\gamma_0(\lambda))
    \end{align}
    mit $i,j \in \{ 1, ..., d \}$. Wir nutzen nun die Matrix $G_{\gamma_0(\lambda)}$, um die Abbildung 
    \begin{align}
        f_{G_{\gamma_0(\lambda)}} : \mathbb{R}^d \longrightarrow& \: \: \mathbb{R}^d \nonumber\\
            (X^1, ..., X^d) \longmapsto& \: \: (X^1, ..., X^d) \circ G_{\gamma_0(\lambda)} 
    \end{align}
    zu definieren. Statten wir den $\mathbb{R}^d$ wie in Beispiel $4.2.$ wieder mit der komponentenweise Addition und Skalarmultiplikation aus, so ist wegen der Definition der Matrixmultiplikation die Abbildung $f_{G_{\gamma_0(\lambda)}}$ linear. Darüberhinaus ist $f_{G_{\gamma_0(\lambda)}}$ injektiv, denn angenommen das wäre nicht so, so gäbe es nach Lemma $4.1.$ ein $(Y^1, ..., Y^d) \in \mathbb{R}^d$ mit $(Y^1, ..., Y^d) \neq (0, ..., 0)$, sodass 
    \begin{align}
        f_{G_{\gamma_0(\lambda)}} ((Y^1, ..., Y^d)) =& \: (Y^1, ..., Y^d) \circ G_{\gamma_0(\lambda)} \nonumber \\ =& \: (0, ..., 0) 
    \end{align}
    ist. Daraus folgt aber gemäß \eqref{452}, dass
    \begin{align}
        0 = (Y^1, ..., Y^d) \circ G_{\gamma_o(\lambda)} \circ (Y^1, ..., Y^d)^T = g_{\gamma_0(\lambda)}(Y,Y)
    \end{align}
    mit $Y = \sum_{i = 1}^d Y^{i} \big( \frac{\partial}{\partial x^{i}} \big)_{\gamma_0(\lambda)}$. Da per Konstruktion $Y \neq \mathbf{0}$ ist, steht das im Widerspruch zur Vorraussetzung, dass $g_{\gamma_0(\lambda)}$ eine positiv definite Bilinearform ist. Folglich ist $f_{G_{\gamma_0(\lambda)}}$ injektiv und da $f_{G_{\gamma_0(\lambda)}}$ linear ist folgt, mit Lemma $4.2.$ sogar die Bijektivität von $f_{G_{\gamma_0(\lambda)}}$. 

    Damit wissen wir, dass die Abbildung $f_{G_{\gamma_0(\lambda)}}$ eine Umkehrabbildung $h_{G_{\gamma_0(\lambda)}}$ mit
    \begin{align}
        f_{G_{\gamma_0(\lambda)}} \circ h_{G_{\gamma_0(\lambda)}} = \textit{id}_{\mathbb{R}^d} = h_{G_{\gamma_0(\lambda)}} \circ f_{G_{\gamma_0(\lambda)}}
    \end{align}
    besitzt. Da $f_{G_{\gamma_0(\lambda)}}$ linear ist, folgt auch die Linearität der Umkehrabbildung $h_{G_{\gamma_0(\lambda)}}$, denn seien $v, w, \Tilde{v}, \Tilde{w} \in \mathbb{R}^d$ und $\mu, \nu \in \mathbb{R}$ mit $f_{G_{\gamma_0(\lambda)}}(\Tilde{v}) = v$ und $f_{G_{\gamma_0(\lambda)}}(\Tilde{w}) = w$ gegeben, so gilt
    \begin{align}
        h_{G_{\gamma_0(\lambda)}} (\mu \cdot v + \nu \cdot w) =& \: h_{G_{\gamma_0(\lambda)}} (\mu \cdot f_{G_{\gamma_0(\lambda)}}(\Tilde{v}) + \nu \cdot f_{G_{\gamma_0(\lambda)}}(\Tilde{w})) \nonumber \\ =& \: h_{G_{\gamma_0(\lambda)}} (f_{G_{\gamma_0(\lambda)}}(\mu \cdot \Tilde{v} + \nu \cdot \Tilde{w})) \nonumber \\ =& \: \mu \cdot \Tilde{v} + \nu \cdot \Tilde{w} \nonumber \\ =& \: \mu \cdot h_{G_{\gamma_0(\lambda)}} (v) + \nu \cdot h_{G_{\gamma_0(\lambda)}} (w).
    \end{align}
    Sei nun $v = (V^1, ..., V^d) \in \mathbb{R}^d$ beliebig gegeben und sei $e_i \in \mathbb{R}^d$ wieder definiert durch 
    \begin{align}
        e_i := (0, ..., 0, \underbrace{1}_{\textit{$i$-te Stelle}}, 0, ..., 0).
    \end{align}
    Dann gilt wegen der Linearität von $h_{G_{\gamma_0(\lambda)}}$ und der Tatsache, dass $\{e_1, ..., e_d\}$ eine Basis vom $\mathbb{R}^d$ ist, dass 
    \begin{align}
        h_{G_{\gamma_0(\lambda)}} (v) =& \: \sum_{i=1}^d V^{i} \cdot h_{G_{\gamma_0(\lambda)}} (e_i) \nonumber \\ =& \: \sum_{i=1}^d V^{i} \cdot \bigg( \sum_{j = 1}^d g^{ij}(\gamma_0(\lambda)) \cdot e_j \bigg) \nonumber \\ =& \: \sum_{j = 1}^d \bigg( \sum_{i = 1}^d g^{ij} (\gamma_0(\lambda)) \cdot V^{i} \bigg) \cdot e_j \nonumber \\ =& \: (V^1, ..., V^d) \circ G^{-1}_{\gamma_0(\lambda)}
    \end{align}
    ist. Dabei handelt es sich bei dem Ausdruck $g^{ij}_{\gamma_0(\lambda)} \in \mathbb{R}$ erst einmal nur um eine abstrakte Bezeichnung und die Matrix $G^{-1}_{\gamma_0(\lambda)}$ ist dabei definiert durch 
    \begin{align}
        (G^{-1}_{\gamma_0(\lambda)})_{ij} = g^{ij}_{\gamma_0(\lambda)} \:\:\:\:\:\:\:\: \forall i,j \in \{ 1, ..., d \}.
    \end{align}
    Es gilt damit 
    \begin{align}
        f_{G_{\gamma_0(\lambda)}} ( h_{G_{\gamma_0(\lambda)}} (v)) =& \: f_{G_{\gamma_0(\lambda)}}((V^1, ..., V^d) \circ G^{-1}_{\gamma_0(\lambda)}) \nonumber \\ =& \: \big((V^1, ..., V^d) \circ G^{-1}_{\gamma_0(\lambda)} \big) \circ G_{\gamma_0(\lambda)} \nonumber \\ =& \: (V^1, ..., V^d) \circ \big( G^{-1}_{\gamma_0(\lambda)} \circ G_{\gamma_0(\lambda)} \big) \nonumber \\ =& \: (V^1, ..., V^d)
    \end{align}
    und 
    \begin{align}
        h_{G_{\gamma_0(\lambda)}} ( f_{G_{\gamma_0(\lambda)}} (v)) =& \: h_{G_{\gamma_0(\lambda)}}((V^1, ..., V^d) \circ G_{\gamma_0(\lambda)}) \nonumber \\ =& \: \big((V^1, ..., V^d) \circ G_{\gamma_0(\lambda)} \big) \circ G^{-1}_{\gamma_0(\lambda)} \nonumber \\ =& \: (V^1, ..., V^d) \circ \big( G_{\gamma_0(\lambda)} \circ G^{-1}_{\gamma_0(\lambda)} \big) \nonumber \\ =& \: (V^1, ..., V^d),
    \end{align}
    woraus 
    \begin{align}
        G^{-1}_{\gamma_0(\lambda)} \circ G_{\gamma_0(\lambda)} = G_{\gamma_0(\lambda)} \circ G^{-1}_{\gamma_0(\lambda)} = \mathbf{1}_{\mathbb{R}^d}
    \end{align}
    folgt, wobei $\mathbf{1}_{\mathbb{R}^d}$ als $d \times d$-Matrix über das Kronecker-Delta durch
    \begin{align}
        (\mathbf{1}_{\mathbb{R}^d})_{ij} = \delta_i^j \:\:\:\:\:\:\:\: \forall i,j \in \{ 1, ..., d \}
    \end{align}
    definiert ist. D.h., dass die Matrix $G^{-1}_{\gamma_0(\lambda)}$ die Inverse Matrix der sogenannten darstellenden Matrix der Bilinearform $g_{\gamma_0(\lambda)}$ bezüglich der Karte $(\mathcal{U}, \phi)$ ist. In Komponenten folgt damit 
    \begin{align}
        (G^{-1}_{\gamma_0(\lambda)} \circ G_{\gamma_0(\lambda)})_{ki} = \sum_{n = 1}^d g^{kn}(\gamma_0(\lambda)) \cdot g_{nj} (\gamma_0(\lambda)) = \delta_k^{i} \:\:\:\:\:\: \forall k,i \in \{ 1, ..., d \}.
    \end{align}
    Um nun unseren Gleichung \eqref{Zusammenfügung von Teil 1 und Teil 2} zu vereinfachen, multiplizieren wir diese mit $g^{kn}(\gamma_0(\lambda)) = g^{kn} (\phi^{-1}(\phi(\gamma_0(\lambda)))) =: g^{kn}_{\phi} ((\gamma^1_{0, \phi}(\lambda)), ..., \gamma^d_{0, \phi}(\lambda))$ und summieren über $n \in \{1, ..., d\}$. Das liefert mit der Kurzschreibweise $\gamma_{0, \phi}(\lambda) := (\gamma^1_{0, \phi}(\lambda), ..., \gamma^d_{0, \phi}(\lambda))$
    \begin{align}
        0 =& \:  \sum_{j = 1}^d \bigg( \sum_{n = 1}^d g^{kn}_{\phi} (\gamma_{0, \phi}(\lambda)) \cdot g_{nj, \phi} (\gamma_{0, \phi}(\lambda)) \bigg) \cdot \gamma^{j\:''}_{0, \phi}(\lambda) \nonumber \\ &+ \: \frac{1}{2} \sum_{i,j,n=1}^d g^{kn}_\phi (\gamma_{0, \phi}(\lambda)) \cdot \bigg( 2 \cdot \partial_j g_{ni, \phi} (\gamma_{0, \phi}(\lambda)) \nonumber \\ &- \: \partial_n g_{ij, \phi} (\gamma_{0, \phi}(\lambda)) \bigg) \cdot \gamma^{i\:'}_{0, \phi} (\lambda) \cdot \gamma^{j\:'}_{0, \phi}(\lambda) \nonumber \\ =& \: \sum_{j = 1}^d \delta^k_j \cdot \gamma^{j\:''}_{0, \phi}(\lambda) \nonumber + \frac{1}{2} \sum_{i,j,n=1}^d g^{kn}_\phi (\gamma_{0, \phi}(\lambda)) \cdot \bigg( 2 \cdot \partial_j g_{ni, \phi} (\gamma_{0, \phi}(\lambda)) \nonumber \\ &- \: \partial_n g_{ij, \phi} (\gamma_{0, \phi}(\lambda)) \bigg) \cdot \gamma^{i\:'}_{0, \phi} (\lambda) \cdot \gamma^{j\:'}_{0, \phi}(\lambda) \nonumber \\ =& \: \gamma^{k\:''}_{0, \phi}(\lambda) + \frac{1}{2} \sum_{i,j,n=1}^d g^{kn}_\phi (\gamma_{0, \phi}(\lambda)) \cdot \bigg( 2 \cdot \partial_j g_{ni, \phi} (\gamma_{0, \phi}(\lambda)) \nonumber \\ &- \: \partial_n g_{ij, \phi} (\gamma_{0, \phi}(\lambda)) \bigg) \cdot \gamma^{i\:'}_{0, \phi} (\lambda) \cdot \gamma^{j\:'}_{0, \phi}(\lambda) \:\:\:\:\:\:\:\: \forall k \in \{ 1, ..., d \}.
    \end{align}
    Mit der Setzung 
    \begin{align}
        \Gamma^k_{ij, \phi} (\gamma_{0,\phi}(\lambda)) = \frac{1}{2} \sum_{n=1}^d g^{kn}_\phi (\gamma_{0,\phi}(\lambda)) \cdot \bigg(  2 \cdot \partial_j g_{ni, \phi} (\gamma_{0, \phi}(\lambda)) - \partial_n g_{ij, \phi} (\gamma_{0, \phi}(\lambda)) \bigg) \label{Christoffelsymbole}
    \end{align}
    für $i,j,k \in \{1, ..., d \}$ folgt somit die Geodätengleichung
    \begin{align}
        0 = \gamma^{k\:''}_{0, \phi}(\lambda) + \sum_{i,j = 1}^d \Gamma^k_{ij, \phi} (\gamma_{0,\phi}(\lambda)) \cdot \gamma^{i\:'}_{0, \phi} (\lambda) \cdot \gamma^{j\:'}_{0, \phi}(\lambda) \:\:\:\:\:\: \forall k \in \{ 1, ..., d \}.
    \end{align}
    Das zeigt die Behauptung.
\end{proof}

Man beachte an dieser Stelle, dass wenn wir beispielsweise auf einer riemannschen Mannigfaltigkeit $(\mathcal{M}, \tau, \mathcal{A}, g)$ auf der Suche nach einer kürzesten Verbindungskurven $\gamma : [a,b] \longrightarrow \mathcal{M}$ zwischen den zwei Punkten $p = \gamma(a)$ und $q = \gamma(b)$ sind, es uns für gewöhnlich herzlich egal ist, wie diese Verbindungskurve $\gamma$ genau parametrisiert ist, da die eigentliche geometrische Kurve auf $\mathcal{M}$ gegeben ist durch $\gamma([a,b]) \subseteq \mathcal{M}$. Daher hat die obige Einschränkung im Theorem $7.2.$ auch erst einmal keine geometrische Bedeutung.

\begin{remark}
    Die für die Indizes $i,j,k \in \{ 1, ..., d\}$ erklärten Funktionen 
    \begin{align}
        \Gamma^k_{ij, \phi} : \phi (\mathcal{U}) \longrightarrow \mathbb{R}
    \end{align}
    aus Theorem $7.2.$ werden auch Christoffelsymbole genannt. Bei diesen Symbolen handelt es sich um die karteninduzierten Koordinatenfunktionen des sogenannten Levi-Civita-Zusammenhangs, der ein Beispiel eines sogenannten linearen Zusammenhanges $\nabla$ auf $\mathcal{M}$ darstellt, welche man benötigt, um glatte Vektorfelder entlang anderer glatter Vektorfelder abzuleiten \cite{lee2006riemannian}. 
\end{remark}

Am Ende von diesem Abschnitt wollen wir nun noch den Spezialfall betrachten, in welchem wir auf einer riemannschen Mannigfaltigkeit $(\mathcal{M}, \tau, \mathcal{A}, g)$ eine Geodätische $\gamma_0 : [a,b] \longrightarrow \mathcal{M}$ gegeben haben, sodass eine Karte $(\mathcal{U}, \phi)$ mit $\gamma_0([a,b]) \subseteq \mathcal{U}$ existiert. In diesem Fall sagt man auch, dass die Geodätische vollständig in der Karte $(\mathcal{U}, \phi)$ enthalten ist.

\begin{proposition}
    Sei $(\mathcal{M}, \tau, \mathcal{A}, g)$ eine riemannsche Mannigfaltigkeit, $a,b \in \mathbb{R}$ zwei reelle Zahlen mit $a < b$ und $\gamma_0 \in \mathcal{C}^\infty_{p,q}([a,b], \mathcal{M})$ eine auf Einheitsgeschwindigkeit parametrisierte Kurve, die im Punkt $p \in \mathcal{M}$ beginnt und im Punkt $q \in \mathcal{M}$ endet. Weiter existiere eine Karte $(\mathcal{U}, \phi) \in \mathcal{A}$, sodass $\gamma_0([a,b]) \subseteq \mathcal{U}$ gilt. Dann sind folgende Aussagen äquivalent:
    \begin{itemize}
    
        \item[\textit{i)}] $\gamma_0$ ist eine Geodätische.

        \item[\textit{ii)}] Bezüglich der Karte $(\mathcal{U}, \phi)$ gelte für $\gamma_0$ die Geodätengleichung. 
        
    \end{itemize}
\end{proposition}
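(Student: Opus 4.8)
Der Plan ist, beide Richtungen auf die bereits in den Beweisen von Theorem $7.1.$ (Euler-Lagrange) und Theorem $7.2.$ (Geodätengleichung) durchgeführten Rechnungen zurückzuführen, wobei entscheidend eingeht, dass $\gamma_0$ \emph{ganz} im Kartengebiet $\mathcal{U}$ liegt, sodass die dortige Einschränkung auf ein echtes Teilintervall entfällt und wir durchweg $[c,d] = [a,b]$ wählen dürfen. Zunächst erinnere ich daran, dass das Längenfunktional $\mathbb{L}_{[a,b]}$ nach Beispiel $7.2.$ gerade das Wirkungsfunktional $\mathcal{S}_{\mathcal{L}, [a,b]}$ zur Lagrangefunktion $\mathcal{L}(p,X) = \sqrt{g_p(X,X)}$ ist. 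Wegen der Einheitsgeschwindigkeit ist der Radikand $\mathcal{R}_\phi$ aus \eqref{431} entlang $\gamma_0$ identisch gleich $1$, also positiv, und bleibt aus Stetigkeitsgründen in einer Umgebung der Hebung von $\gamma_0$ positiv, sodass $\mathcal{L}_\phi = \sqrt{\mathcal{R}_\phi}$ dort glatt ist. Da $\gamma_0([a,b])$ als stetiges Bild des Kompaktums $[a,b]$ kompakt und $\mathcal{U}$ offen ist, liefert ein Kompaktheitsargument, dass zu jeder glatten Deformation $\gamma$ ein $\epsilon' > 0$ existiert, sodass $\gamma_\alpha([a,b]) \subseteq \mathcal{U}$ für alle $|\alpha| < \epsilon'$; damit sind für die Längen-Lagrangefunktion die Regularitätsvoraussetzungen von Theorem $7.1.$ bei hinreichend kleinem $\epsilon$ erfüllt.

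Der algebraische Kern ist die folgende, im Beweis von Theorem $7.2.$ bereits enthaltene Äquivalenz: Für die Längen-Lagrangefunktion sind die entlang $\gamma_0$ ausgewerteten Euler-Lagrange-Gleichungen
\[
\partial_n \mathcal{L}_\phi - \frac{d}{d\lambda}\,\partial_{d+n}\mathcal{L}_\phi = 0 \quad (n = 1, \ldots, d)
\]
gleichwertig zu den Geodätengleichungen. Die Implikation von den Euler-Lagrange-Gleichungen hin zur Geodätengleichung ist wörtlich die Rechnung aus dem Beweis von Theorem $7.2.$ (Multiplikation mit $g^{kn}_\phi$, Summation über $n$, Invertierbarkeit der positiv definiten Matrix $G_{\gamma_0(\lambda)}$). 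Die Umkehrung gewinnt man dual, indem man die Geodätengleichung mit $g_{mk,\phi}$ multipliziert, über $k$ summiert und $\sum_k g_{mk,\phi}\,g^{kn}_\phi = \delta_m^n$ benutzt. Zweitens benötige ich die Formel für die erste Variation: Für jede glatte Deformation $\gamma$ mit in $0$ differenzierbarem $F_\gamma$ und $\gamma_\alpha([a,b]) \subseteq \mathcal{U}$ liefert genau die Rechnung im Beweis von Theorem $7.1.$ (Differentiation unter dem Integral, Kettenregel, Satz von Schwarz, partielle Integration mit wegen $\gamma_\alpha(a) = p$, $\gamma_\alpha(b) = q$ verschwindenden Randtermen)
\[
\frac{dF_\gamma}{d\alpha}\bigg|_{\alpha=0} = \int_a^b \sum_{n=1}^d \left(\partial_n \mathcal{L}_\phi - \frac{d}{d\lambda}\,\partial_{d+n}\mathcal{L}_\phi\right) \cdot \frac{d\hat{\gamma}^n_{0,\phi}}{d\alpha}\,d\lambda.
\]
Wichtig ist, dass diese Identität eine reine Rechnung ist und die Stationarität von $\gamma_0$ \emph{nicht} voraussetzt.

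Mit diesen beiden Bausteinen folgen beide Richtungen. Für \textit{i)} $\Rightarrow$ \textit{ii)} nehme ich an, dass $\gamma_0$ eine Geodätische ist, die linke Seite der Variationsformel also für alle glatten Deformationen verschwindet. Zu beliebigen $\eta = (\eta_1, \ldots, \eta_d)$ mit $\eta_n \in \mathcal{C}^\infty_0([a,b], \mathbb{R})$ konstruiere ich, da $\gamma_0$ ganz in $\mathcal{U}$ liegt, die glatte Deformation $\gamma_\alpha(\lambda) := \phi^{-1}(\phi(\gamma_0(\lambda)) + \alpha\,\eta(\lambda))$, welche nach Bemerkung $6.1.$ glatt ist, für kleines $\alpha$ in $\mathcal{U}$ bleibt, die Endpunkte $p$ und $q$ festhält und $\frac{d\hat{\gamma}^n_{0,\phi}}{d\alpha}\big|_{\alpha=0} = \eta_n$ erfüllt. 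Einsetzen in die Variationsformel und Anwenden des Fundamentallemmas der Variationsrechnung (die Euler-Lagrange-Ausdrücke sind als glatte Funktionen von $\lambda$ stetig) liefert das Verschwinden der Euler-Lagrange-Ausdrücke, und die obige Äquivalenz liefert die Geodätengleichung auf ganz $[a,b]$. Für \textit{ii)} $\Rightarrow$ \textit{i)} gilt nach Voraussetzung die Geodätengleichung, also nach der Äquivalenz auch die Euler-Lagrange-Gleichungen; damit ist der Integrand der Variationsformel für jede glatte Deformation identisch null, sodass $\frac{dF_\gamma}{d\alpha}\big|_{\alpha=0} = 0$ für alle glatten Deformationen folgt und $\gamma_0$ eine Geodätische ist.

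Die Hauptschwierigkeit erwarte ich nicht in der Algebra — diese ist durch die Beweise von Theorem $7.1.$ und $7.2.$ abgedeckt —, sondern in den Wohldefiniertheits- und Regularitätsfragen: nämlich sicherzustellen, dass die deformierten Kurven für kleine $\alpha$ tatsächlich im einen Kartengebiet $\mathcal{U}$ verbleiben und dass dort der Radikand der Längen-Lagrangefunktion positiv bleibt, damit $\mathcal{L}_\phi$ glatt ist und sowohl die Vertauschung von Ableitung und Integral als auch die partielle Integration gerechtfertigt sind. Genau hier gehen die Kompaktheit von $[a,b]$, die Offenheit von $\mathcal{U}$ und die Einheitsgeschwindigkeit von $\gamma_0$ ein; insbesondere rechtfertigt erst die Tatsache, dass $\gamma_0$ vollständig in der Karte liegt, die direkte Konstruktion der koordinatenweisen Deformationen und damit die Anwendbarkeit des Fundamentallemmas.
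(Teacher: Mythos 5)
Dein Beweis ist korrekt und verfolgt im Kern dieselbe Strategie, die das Papier intendiert: Reduktion auf die Euler-Lagrange-/Geodätengleichungs-Rechnungen der Theoreme $7.1.$ und $7.2.$ unter Ausnutzung der Tatsache, dass $\gamma_0$ vollständig in einer Karte liegt, sodass $[c,d] = [a,b]$ gewählt werden kann. Bemerkenswert ist allerdings, dass der Beweis im Papier nur aus einem einzigen Satz besteht (\glqq Folgt direkt aus Theorem $7.2.$ \ldots\grqq), während Theorem $7.2.$ wörtlich genommen nur die Richtung \textit{i)} $\Rightarrow$ \textit{ii)} liefert -- und dies sogar unter der stärkeren Voraussetzung einer \emph{schwachen} Geodätischen. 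Dein Vorschlag füllt genau die Lücken, die das Papier stillschweigend übergeht: erstens die Beobachtung, dass die Herleitung der Euler-Lagrange-Gleichungen ohnehin nur glatte Deformationen benutzt und daher schon für Geodätische (nicht nur schwache Geodätische) greift; zweitens die Umkehrrichtung \textit{ii)} $\Rightarrow$ \textit{i)}, für die du die erste Variationsformel als stationaritätsunabhängige Identität isolierst, die Umkehrbarkeit der Algebra (Multiplikation mit $g_{mk,\phi}$ statt $g^{kn}_\phi$) festhältst und mit einem Tube-Lemma-Argument sicherst, dass beliebige glatte Deformationen für kleine $|\alpha|$ im Kartengebiet verbleiben; drittens die Glattheit von $\mathcal{L}_\phi = \sqrt{\mathcal{R}_\phi}$ nahe der Hebung von $\gamma_0$ dank Einheitsgeschwindigkeit, ohne die weder Differentiation unter dem Integral noch partielle Integration gerechtfertigt wären. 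Insofern ist dein Beweis kein anderer Weg, sondern die vollständige Ausführung dessen, was das Papier nur behauptet -- und er macht sichtbar, dass die Äquivalenzaussage mehr Arbeit erfordert, als das Zitat von Theorem $7.2.$ suggeriert.
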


\begin{proof}
    Folgt direkt aus Theorem $7.2.$ und der Tatsache, dass die Kurve vollständig in einem Kartengebiet enthalten ist.
\end{proof}

Der obige Satz liefert also eine erste Möglichkeit, die Geodätischen einer riemannschen Mannigfaltigkeit tatsächlich auszurechnen.


\subsection{Das Energiefunktional}

In dem vorangegangenen Abschnitt haben wir ein notwendiges Kriterium für die schwachen Geodätischen einer riemannschen Mannigfaltigkeit $(\mathcal{M}, \tau, \mathcal{A}, g)$ kennengelernt. In diesem letzten Abschnitt von Kapitel $7$ wollen wir uns nun noch kurz mit dem sogenannten Energiefunktional auseinandersetzen. 

Der Grund für diese Auseinandersetzung liegt darin begründet, dass, wie wir noch sehen werden, das Längenfunktional und das Energiefunktional in gewisser Weise die selbe notwendige Bedingung bezüglich ihrer stationären Kurven besitzen. Da das Energiefunktional strukturell einfacher gebaut ist als das Längenfunktional, nutzt man Ersteres eher als Ausgangspunkt für verschiedene Anwendungen oder Verallgemeinerungen des (schwachen) Geodätenbegriffes. Im Kapitel $9$ werden wir noch sehen, was wir damit genau meinen.

Beginnen wir nun damit, das sogenannte Energiefunktional zu definieren:

\begin{definition}
    Sei $(\mathcal{M}, \tau, \mathcal{A}, g)$ eine riemannsche Mannigfaltigkeit und $a,b \in \mathbb{R}$ zwei reelle Zahlen mit $a < b$. Wir definieren das sogenannte Energiefunktional $\mathcal{E}_{[a,b]}$ durch
    \begin{align}
        \mathcal{E}_{[a,b]} : \mathcal{C}^\infty ([a,b], \mathcal{M}) \longrightarrow& \: \: \mathbb{R} \nonumber\\
            \gamma \longmapsto& \: \: \mathcal{E}_{[a,b]}(\gamma) := \frac{1}{2} \int_a^b g_{\gamma(\lambda)} ( X_{\gamma, \gamma (\lambda)} , X_{\gamma, \gamma (\lambda)} ) \,d \lambda. \label{Energiefunktional}
    \end{align}
\end{definition}

\begin{remark}
    Bezogen auf Definition $7.11.$ machen wir die folgenden Beobachtungen:
    \begin{itemize}
    
        \item[\textit{i)}] Betrachtet man den Ausdruck \eqref{Energiefunktional}, so gilt abermals, dass es mit Blick auf nicht injektive Kurven $\gamma$ formal korrekter gewesen wäre, wenn wir statt $X_{\gamma, \gamma(\lambda)}$ stattdessen $X_{\gamma, \lambda, \gamma(\lambda)}$ geschrieben hätten. Der übersichtlichkeitshalber nutzen wir aber die obige verkürzte Schreibweise, die Standard in der modernen Differentialgeometrie ist.
        
        \item[\textit{ii)}] Analog zur Bemerkung $7.1.$ ist das Energiefunktional auch für Kurven aus der Menge $\mathcal{PC}^\infty ([a,b], \mathcal{M})$ definierbar. Sei beispielsweise $\gamma \in \mathcal{PC}^\infty ([a,b], \mathcal{M})$, so existiert eine Menge $\{ \lambda_1, \lambda_2, ..., \lambda_n, \lambda_{n+1} \} \subseteq (a,b)$ mit $n \in \mathbb{N}$, $\lambda_1 = a$ und $\lambda_{n+1} = b$, sodass $\gamma |_{(\lambda_i, \lambda_{i+1})} \in \mathcal{C}^\infty ([a,b], \mathcal{M})$ für alle $i \in \{1,...,n\}$ gilt. Dann wäre das Energiefunktional dieser Kurve erklärt als 
        \begin{align}
            \mathcal{E}_{[a,b]} (\gamma) := \frac{1}{2} \sum_{i=1}^n \int_{\lambda_i}^{\lambda_{i+1}} g_{\gamma(\lambda)} ( X_{\gamma, \gamma (\lambda)} , X_{\gamma, \gamma (\lambda)} ) \,d \lambda.
        \end{align}
        Wegen der Linearität des Integrals ist dieser Ausdruck wohldefiniert.
        
    \end{itemize} 
\end{remark}

Beschäftigen wir uns kurz damit, woher das Funktional $\mathcal{E}_{[a,b]}$ seinen Namen hat. Dazu betrachten wir ein physikalisches System, welches beschrieben wird über die klassische Mechanik. 

In diesem physikalischen System bewege sich ein Teilchen entlang der Kurve $\gamma : (-\infty, \infty) \longrightarrow \mathbb{R}^3$ mit $\gamma \in \mathcal{C}^\infty ((-\infty,\infty), \mathbb{R}^3)$, sodass $\gamma(\lambda) \in \mathbb{R}^3$ den Ort des Teilchens im physikalischen Raum zum Zeitpunkt $\lambda \in (-\infty, \infty)$ beschreibt. Der Geschwindigkeitsvektor des Teilchens zum Zeitpunkt $\lambda$ ist dann gegeben durch $X_{\gamma, \gamma(\lambda)}$. Fassen wir den $\mathbb{R}^3$ wie in Bemerkung $6.1.$ als glatte Mannigfaltigkeit auf, die wir mit dem $(0,2)$-Tensorfeld $g$ aus \eqref{spezielle riemannsche Metrik} ausstatten, so ergibt sich die Geschwindigkeit des Teilchens zum Zeitpunkt $\lambda$ zu
\begin{align}
    v(\lambda) = \sqrt{g_{\gamma(\lambda)} (X_{\gamma, \gamma(\lambda)}, X_{\gamma, \gamma(\lambda)})}.
\end{align}
Besitzt das Teilchen die Masse $m$, so ergibt sich gemäß der klassischen Mechanik \cite{goldstein2012klassische} die kinetische Energie des Teilchens zum Zeitpunkt $\lambda$ zu
\begin{align}
    E_{\textit{kin}}(\lambda) =& \: \frac{1}{2} m \cdot v(\lambda)^2 \nonumber \\ =& \: \frac{1}{2} m \cdot g_{\gamma(\lambda)} (X_{\gamma, \gamma(\lambda)}, X_{\gamma, \gamma(\lambda)})
\end{align}
Nehmen wir darüber hinaus an, dass die Masse des Teilchens normiert sei, d.h. es gelte $m = 1$, so ergibt sich die kinetische Energie des Teilchens zum Zeitpunkt $\lambda$ zu
\begin{align}
    E_{\textit{kin}}(\lambda) = \frac{1}{2} \cdot g_{\gamma(\lambda)} (X_{\gamma, \gamma(\lambda)}, X_{\gamma, \gamma(\lambda)}),
\end{align}
was gerade dem Integranten auf der rechten Seite von \eqref{Energiefunktional} entspricht. D.h. der Ausdruck $\mathcal{E}_{[a,b]}(\gamma |_{[a,b]})$ entspricht damit der Integration der kinetischen Energie eines Teilchens normierter Masse über dem Zeitabschnitt $[a,b]$, dessen Bewegung durch die Kurve $\gamma$ beschrieben wird. Wegen diese Beziehung zur (kinetischen) Energie nennt man das Funktional $\mathcal{E}_{[a,b]}$ auch Energiefunktional.

Analog zum Abschnitt $7.3$ können wir nun die stationären Kurven des Energiefunktionals betrachten und wie in Theorem $7.2.$ ein notwendiges Kriterium für diese ableiten. 

\begin{theorem}
    Sei $(\mathcal{M}, \tau, \mathcal{A}, g)$ eine riemannsche Mannigfaltigkeit, $a,b \in \mathbb{R}$ zwei reelle Zahlen mit $a<b$ und $\gamma_0 \in \mathcal{PC}^\infty_{p,q} ([a,b], \mathcal{M})$ eine stationäre Kurve des Energiefunktionals $\mathcal{E}_{[a,b]} |_{\mathcal{PC}^\infty_{p,q} ([a,b], \mathcal{M})}$, die die Punkte $p$ und $q$ miteinander verbindet. Sei weiter $(\mathcal{U}, \phi) \in \mathcal{A}$ eine Karte von $\mathcal{M}$ mit $\phi = (x^1, ..., x^d)$ und $\mathcal{U} \cap \gamma_0([a,b]) \neq \emptyset$. Sei weiter $[c,d] \subseteq [a,b]$ ein beliebiges Teilintervall von $[a,b]$ mit $\gamma_0([c,d]) \subseteq \mathcal{U}$ und $\gamma |_{[c,d]} \in \mathcal{C}^\infty ([c,d], \mathcal{M})$. Dann genügt die Einschränkung $\gamma_0 |_{[c,d]}$, welche nach Satz $7.3.$ selbst wieder eine schwache Geodätische ist, den sogenannten Geodätengleichungen
    \begin{align}
        0 = \gamma^{k\:''}_{0, \phi}(\lambda) + \sum_{i,j = 1}^d \Gamma^k_{ij, \phi} (\gamma_{0,\phi}(\lambda)) \cdot \gamma^{i\:'}_{0, \phi} (\lambda) \cdot \gamma^{j\:'}_{0, \phi}(\lambda) \:\:\:\:\:\: \forall k \in \{ 1, ..., d \} \label{477}
    \end{align}
    mit $\lambda \in [c,d]$. Die in \eqref{477} auftauchenden Größen sind dabei wie in Theorem $7.2.$ erklärt.
\end{theorem}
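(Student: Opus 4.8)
Der Plan ist, das Energiefunktional als ein Wirkungsfunktional $\mathcal{S}_{\mathcal{L}, [a,b]}$ im Sinne von Definition $7.10.$ zu identifizieren und sodann Theorem $7.1.$ anzuwenden. Konkret betrachte ich die Lagrangefunktion
\begin{align}
    \mathcal{L} : T \mathcal{M} \longrightarrow \mathbb{R}, \:\:\:\: (p, X) \longmapsto \frac{1}{2} g_p (X, X),
\end{align}
welche sich von der in \eqref{Lagrangefunktion für den Längenbegriff} gewählten Lagrangefunktion des Längenfunktionals lediglich dadurch unterscheidet, dass die Wurzel fehlt und ein Faktor $\frac{1}{2}$ hinzukommt. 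Mit dieser Wahl gilt offensichtlich $\mathcal{S}_{\mathcal{L}, [a,b]} \equiv \mathcal{E}_{[a,b]}$. Da die im Beweis von Theorem $7.1.$ benutzte Übertragung von Satz $7.3.$ auf beliebige Wirkungsfunktionale allein auf der Linearität des Integrals beruht, ist zunächst festzuhalten, dass die Einschränkung $\gamma_0 |_{[c,d]}$ selbst eine stationäre Kurve von $\mathcal{E}_{[c,d]}$ ist, auf der wir arbeiten können.

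Der zweite Schritt ist die Berechnung der Kartendarstellung $\mathcal{L}_\phi = \mathcal{L} \circ \xi_\phi^{-1}$. Mittels der aus \eqref{Vektorkomponente}, \eqref{Basisentwicklung} und \eqref{Tensorkomponenten bzgl einer Karte} bekannten Basisentwicklung ergibt sich unmittelbar
\begin{align}
    \mathcal{L}_\phi (((a^1, ..., a^d), (b^1, ..., b^d))) = \frac{1}{2} \sum_{i,j = 1}^d b^{i} \cdot b^j \cdot g_{ij, \phi} ((a^1, ..., a^d)),
\end{align}
also gerade $\frac{1}{2} \mathcal{R}_\phi$ mit $\mathcal{R}_\phi$ aus \eqref{431}. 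Der entscheidende Punkt ist nun, dass wegen des Fehlens der Wurzel die beiden partiellen Ableitungen $\partial_n \mathcal{L}_\phi$ und $\partial_{d+n} \mathcal{L}_\phi$ direkt berechnet werden können, ohne dass ein Nenner $\sqrt{g(X,X)}$ entsteht. Setzt man darin $(a^1, ..., a^d) = \gamma_{0, \phi}(\lambda)$ und $(b^1, ..., b^d) = (\gamma^{1\:'}_{0, \phi}(\lambda), ..., \gamma^{d\:'}_{0, \phi}(\lambda))$ ein, so erhält man (unter Ausnutzung der Symmetrie von $g_p$ wie in Theorem $7.2.$) für $\partial_n \mathcal{L}_\phi$ und nach $\lambda$-Differentiation von $\partial_{d+n} \mathcal{L}_\phi$ genau die Ausdrücke \eqref{Euler-Lagrange Teil 1 für das Längenfunktional} und \eqref{Euler-Lagrange Teil 2 für das Längenfunktional} — jedoch ganz ohne die dort benötigte Voraussetzung der Einheitsgeschwindigkeit, die im Längenfall einzig dazu diente, den Wurzelnenner zu $1$ zu vereinfachen.

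Der dritte Schritt verläuft damit wörtlich wie im Beweis von Theorem $7.2.$: Einsetzen der beiden Ableitungsterme in die Euler-Lagrange-Gleichungen aus Theorem $7.1.$ liefert Gleichung \eqref{Zusammenfügung von Teil 1 und Teil 2}, und die Multiplikation mit den Einträgen $g^{kn}_\phi$ der zu $G_{\gamma_0(\lambda)}$ inversen Matrix nebst Summation über $n$ führt über die Setzung \eqref{Christoffelsymbole} auf die Geodätengleichung. Die Invertierbarkeit von $G_{\gamma_0(\lambda)}$ folgt dabei wie in Theorem $7.2.$ aus der positiven Definitheit von $g_{\gamma_0(\lambda)}$ mittels Lemma $4.1.$ und Lemma $4.2.$. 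Als eigentliche Aufgabe verbleibt die Verifikation der Regularitäts- und Integrierbarkeitsbedingungen \textit{i)} und \textit{ii)} aus Theorem $7.1.$ für diese Lagrangefunktion; da $\mathcal{L}_\phi$ polynomial in den Geschwindigkeitsargumenten und glatt in den Ortsargumenten ist und über das kompakte Intervall $[c,d]$ integriert wird, sind diese Bedingungen jedoch unproblematisch. Die Pointe — und zugleich der Grund, warum hier kein ernsthaftes Hindernis auftritt — ist, dass das im Längenfall zentrale Hindernis, nämlich die durch die Wurzel erzwungene Einheitsgeschwindigkeit, für das Energiefunktional schlicht entfällt.
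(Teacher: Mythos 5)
Dein Beweis ist korrekt und entspricht im Wesentlichen dem Vorgehen der Arbeit: Auch dort wird die Kartendarstellung der Energie-Lagrangefunktion (bis auf den für die Euler-Lagrange-Gleichungen irrelevanten konstanten Faktor $\frac{1}{2}$) mit der Funktion $\mathcal{R}_\phi$ aus \eqref{431} identifiziert, die beiden partiellen Ableitungen werden ohne Wurzelnenner --- und damit ohne die Einheitsgeschwindigkeits-Voraussetzung --- direkt berechnet, und der Schluss verläuft wörtlich wie im Beweis von Theorem $7.2.$. Dass du zusätzlich die Übertragung von Satz $7.3.$ auf allgemeine Wirkungsfunktionale sowie die Regularitätsbedingungen \textit{i)} und \textit{ii)} aus Theorem $7.1.$ explizit ansprichst, ist sorgfältiger als die Arbeit selbst, ändert aber nichts daran, dass der Kern des Arguments derselbe ist.
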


\begin{proof}
    Der Beweis folgt direkt aus dem Beweis vom Theorem $7.2.$, indem man realisiert, dass für dieses Theorem die Funktion $\mathcal{L}_\phi$, auf die wir die Euler-Lagrange-Gleichungen anwenden wollen, gerade gegeben ist durch die in \eqref{431} erklärte Funktion $\mathcal{R}_{\phi}$. Es gilt für $m \in \{ 1, ..., d \}$ und $\lambda \in [c,d]$
    \begin{align}
        \partial_n \mathcal{L}_\phi (((\gamma^1_{0, \phi}(\lambda)&, ..., \gamma^d_{0, \phi}(\lambda)), (\gamma^{1\:'}_{0, \phi}(\lambda), ..., \gamma^{d\:'}_{0, \phi}(\lambda)))) \nonumber \\ =& \: \sum_{i,j = 1}^d \gamma^{i\:'}_{0, \phi}(\lambda) \cdot \gamma^{j\:'}_{0, \phi}(\lambda) \cdot \partial_n g_{ij, \phi} ((\gamma^1_{0, \phi}(\lambda), ..., \gamma^d_{0, \phi}(\lambda)))
    \end{align}
    und 
    \begin{align}
        \partial_{n+d} \mathcal{L}_\phi (((\gamma^1_{0, \phi}(\lambda),& ..., \gamma^d_{0, \phi}(\lambda)), (\gamma^{1\:'}_{0, \phi}(\lambda), ..., \gamma^{d\:'}_{0, \phi}(\lambda)))) \nonumber \\ =& \: 2 \cdot \sum_{j = 1}^d g_{nj, \phi} ((\gamma^1_{0, \phi}(\lambda), ..., \gamma^d_{0, \phi}(\lambda))) \cdot \gamma^{j\:'}_{0, \phi}(\lambda).
    \end{align}
    Wie im Beweis aus Theorem $7.3.$ ergibt sich daraus die Gültigkeit der Geodätengleichungen für $\gamma_0 |_{[c,d]}$.
\end{proof}


\newpage


\section{Zusammenfassung}

Fassen wir nun zusammen, was wir im Hauptteil dieser Arbeit alles diskutiert haben.

Ausgehend vom Konzept einer Kurve, welches wir mathematisch präzise fassen wollten, haben wir uns in den ersten beiden Kapiteln des Hauptteils, den Kapiteln $2$ und $3$, mit der Etablierung eines Nähebegriffes auf einer abstrakten Menge $X$ beschäftigt. 

Dabei haben wir gesehen, das wir einen Nähebegriff auf einer abstrakten Menge $X$ einführen können, wenn wir diese mit einer abstrakten Struktur namens Topologie versehen, bei der es sich grob gesprochen um eine spezielle Teilmenge der Potenzmenge handelt.

In Kapitel $2$ haben wir uns dabei davon überzeugt, dass mittels des abstrakten Topolgiebegriffes tatsächlich ein Konzept von Nähe auf der Menge $X$ etabliert wird. So waren wir beispielsweise mittels einer Topolgie $\tau \subseteq \mathcal{P}(X)$ in der Lage, auf $X$ einen Konvergenzbegriff für Folgen $(x_n)_{n \in \mathbb{N}}$ einzuführen, in welchem anschaulich ein Nähekonzept codiert ist.

Anschließend haben wir uns im Kapitel $3$ unter anderem mit der Rechtfertigung der genauen Definition einer Topologie auseinandergesetzt. Wir haben uns dazu mit metrischen Räume $(X, d)$ beschäftigt, auf denen wegen der Metrik $d$ ein intuitiver Nähebegriff existiert. In dem wir geklärt haben, in welchem Zusammenhang die metrischen Räume zu den topologischen Räumen stehen, konnten wir begründen, warum die Definition einer Topologie so aussieht, wie sie aussieht, und warum die Definition einer Topologie in gewisser Weise natürlich ist. 

Im Kapitel $4$ und $5$ haben wir uns dann jeweils mit wichtigen Begrifflichkeiten aus dem Bereich der (linearen) Algebra und der (multivariaten) Analysis bzw. der Funktionalanalysis beschäftigt. 

Eines der wichtigsten Ergebnisse im Kapitel $4$ war dabei die Definition der Prähilberträume $(V, \langle \cdot , \cdot \rangle)$, und wie diese mit der Geometrie im Zusammenhang stehen. So haben wir beispielsweise gesehen, dass man in einem Prähilbertraum nicht nur über einen sinnvollen Längenbegriff verfügt, sondern auch einen sinnvollen, wenn auch abstrakten Winkelbegriff besitzt.

So waren wir dann beispielsweise mittels der Prähilberträume in der Lage, ein Modell der klassischen euklidischen Geometrie bereitzustellen, was abermals eine Rechtfertigung dafür lieferte, dass die im Allgemeinen abstrakt gegebenen Längen- und Winkelbegriffe im Prähilbertraum tatsächlich geometrisch sinnvolle sind.

Im Kapitel $6$ haben wir uns dann mit dem Begriff einer endlichdimensionalen Mannigfaltigkeit $(\mathcal{M}, \tau)$ auseinandergesetzt, die für uns von besonderem Interesse sind, da es sich bei diesen Objekten um diejenigen mathematischen Entitäten handelt,auf denen wir den Begriff der (schwachen) Geodätischen definieren wollten.

Mittels der in Kapitel $4$ erarbeiteten Begrifflichkeiten konnten wir eine $d$-dimensionale Mannigfaltigkeit als einen speziellen topologischen Raum einführen, der lokal so \textit{aussieht} wie der mit der Standardgeometrie versehene euklidische Raum $\mathbb{R}^d$. 

Weiter konnten wir wegen dieser lokalen Ähnlichkeit die in Kapitel $5$ erarbeiteten Differenzierbarkeitsbegriffe für Funktionen zwischen normierten Räumen auf die endlichdimensionalen Mannigfalltigkeiten übertragen, was es uns erlaubte, die geometrischen Begriffe des Tangentialvektors und der Tangentialräume einzuführen.

Mittels dieser Begrifflichkeiten waren wir dann wiederum in der Lage, die Begriffe des Tangentialbündels und der glatten Vektorfelder auf $\mathcal{M}$ einzuführen. Ausgehend vom Begriff des glatten Vektorfeldes konnten wir dann die Definition der glatten Tensorfelder motivieren.

Im Kapitel $7$ haben wir uns dann schließlich mit sogenannten riemannschen Mannigfaltigkeiten beschäftigt, bei denen es sich um glatte endlichdimensionale Mannigfaltigfaltigkeiten handelte, die versehen waren mit einem speziellen glatten $(0,2)$-Tensorfeld $g$, welches wir als riemannsche Metrik bezeichneten.

Die riemannsche Metrik $g$ war dabei so definiert, dass diese jeden Tangentialraum der Mannigfaltigkeit zu einem Prähilbertraum machte. Mittels der in Kapitel $4$ erarbeiteten geometrischen Interpration des Prähilbertraumbegriffes konnten wir dann verstehen, wie die riemannsche Metrik $g$ eine Geometrie auf der Mannigfaltigkeit induziert.

Darüberhinaus waren wir nun mit der riemannschen Metrik $g$ dazu in der Lage, einen sinnvollen Längenbegriff für (stückweise) glatte Kurven $\gamma : [a,b] \longrightarrow \mathcal{M}$ mit $a,b \in \mathbb{R}$ und $a < b$ über das Längenfunktional $\mathbb{L}_{[a,b]}$ zu erklären. 

Ausgehend von diesem Längenfunktional konnten wir dann endlich den Begriff einer (schwachen) Geodätischen als die stationären Kurven des Längenfunktionals einführen, bei denen es sich geometrisch unter anderem um die kürzesten Verbindungskurven zweier gegebener Punkte handeln kann. 

Abschließend haben wir uns dann noch damit beschäftigt, wie man die (schwachen) Geodätischen einer riemannschen Mannigfaltigkeit praktisch ausrechnen kann. Dabei waren wir auf die sogenannte Geodätengleichung gestoßen, bei der er sich um ein System gewöhnlicher Differentialgleichungen handelt.


\newpage

\section{Ausblick: Geodätische auf unendlichdimensionalen Mannigfaltigkeiten und das Bildmatching-\\Problem}

Nachdem wir nun im Hauptteil dieser Arbeit den Begriff einer (schwachen) Geodätischen auf einer endlichdimensionalen Mannigfaltigkeit definieren und anschaulich verstehen konnten, kann man sich nun die Frage stellen, ob wir (schwache) Geodätische vom Konzept her auch auf allgemeineren mathematischen Entitäten erklären können. 

Was meinen wir damit genau? Nachdem wir im Hauptteil den Begriff einer (schwachen) Geodätische formal definiert hatten, haben wir uns anschließend darum gekümmert, eine geometrische und anschauliche Vorstellung von diesem Begriff zu erlangen. Dabei haben wir unter anderem gesehen, dass eine (schwache) Geodätische auf einer endlichdimensionalen riemannschen Mannigfaltigkeit $(\mathcal{M}, \tau, \mathcal{A}, g)$ beispielsweise die kürzte Verbindungskurve zweier Punkte $p$ und $q$ aus $\mathcal{M}$ sein kann. Rein konzeptionell handelt es sich also bei einer (schwachen) Geodätischen unter anderem um eine kürzeste Verbindungskurve.

Die Frage, die man sich nun stellen kann ist, ob man beispielsweise ausgehend vom Konzept der kürzesten Verbindungskurven auch einen analogen Geodätenbegriff für allgemeinere mathematischen Entitäten erklären kann. Wie müssten diese allgemeineren mathematischen Entitäten aussehen? Zum einen sollte es sich bei diesen Objekten wieder um topologische Räume $(\mathcal{H}, \rho)$ handeln, denn immerhin benötigen wir einen Kurvenbegriff, und der setzt, wie im Kapitel $2$ diskutiert, einen Stetigkeitsbegriff voraus. 

Darüberhinaus bräuchten wir auf diesen allgemeineren mathematischen Entitäten auch wieder soetwas wie ein Längenfunktional $\mathbb{J}_{[a,b]}$ mit $a,b \in \mathbb{R}$ und $a < b$, denn wenn wir über kürzeste Verbindungskurven sprechen wollen, so benötigen wir natürlich zuerst einmal einen Längenbegriff für eine allgemeine Kurve der Form $\gamma : [a,b] \longrightarrow \mathcal{H}$, damit wir überhaupt wissen, was wir später dann beispielsweise zu minimieren haben. 

Im besten Falle sollte dabei dieser Längenbegriff nicht von einem vollkommen abstrakten Längenfunktional induziert werden, sondern sich strukturell am Längenfunktional endlichdimensionaler riemannscher Mannigfaltigkeiten orientieren, damit wir zum einen überhaupt begründen können, warum wir den Ausdruck $\mathbb{J}_{[a,b]}(\gamma)$ als die Länge der Kurve $\gamma$ bezeichnen, und zum anderen würde dadurch automatisch gewährleistet werden, dass das Funktional $\mathbb{J}_{[a,b]}$ genügend Struktur aufweist, damit nützliche Aussagen über $\mathbb{J}_{[a,b]}$ bewiesen werden können.

Wenn sich aber das Längenfunktional $\mathbb{J}_{[a,b]}$ am Längenfunktional $\mathbb{L}_{[a,b]}$ einer endlichdimensionalen riemannschen Mannigfaltigkeit orientieren soll, so sollten wir hinsichtlich unserer Wahl der allgemeineren mathematischen Entität, auf welcher wir einen Geodätenbegriff entwickeln wollen, fordern, dass sich auch für diese die Begrifflichkeiten aus der multivariaten Analysis oder der Funktionalanalysis übertragen lassen. Immerhin war es schon im Falle des Längenfunktionals $\mathbb{L}_{[a,b]}$ äußerst wichtig gewesen, dass wir auf der Mannigfaltigkeit $(\mathcal{M}, \tau, \mathcal{A}, g)$ in der Lage waren, den Begriff eines Tangentialvektors zu erklären, welchen wir anschaulich mit den Methoden der Analysis definierten.   

Im Prinzip wollen wir also für unsere allgemeinere mathematische Entität im Großen und Ganzen die Struktur einer Mannigfaltigkeit behalten. Das führt nun natürlich unmittelbar auf die Frage, wie man den Mannigfaltigkeitenbegriff sinnvoll verallgemeinern könnte. Da wir bisher immer nur von endlichdimensionalen Mannigfaltigkeiten sprachen, kann man sich nun natürlich die Frage stellen, ob sich der Begriff einer endlichdimensionalen Mannigfaltigkeit sinnvoll in das Unendlichdimensionale übersetzen lässt. Wäre das möglich, so hätten wir zumindestens schon einmal eine mathematische Entität gefunden, die zum einen allgemeiner ist als der Begriff einer endlichdimensionalen Mannigfaltigkeit, und zum anderen zumindestens prinzipiell die Möglichkeit bietet, einen allgemeineren Geodätenbegriff bereitzustellen.

Tatsächlich ist es so, dass es nicht den einen Weg gibt, den Begriff einer endlichdimensionalen Mannigfaltigkeit in das Unendlichdimensionale zu übertragen. Prominente Beispiele unendlichdimensionaler Verallgemeinerungen des endlichdimensionalen Mannigfaltigkeitenbegriffes sind beispielsweise die sogenannten Fréchet-Mannigfaltigkeiten \cite{manoharan1990study}, die sogenannten Banach-Mannigfaltigkeiten \cite{upmeier2011symmetric} und die sogenannten Hilbert-Mannigfaltigkeiten \cite{klingenberg1995riemannian}.

Die grundlegende Idee bezüglich dieser drei verschiedenen Ansätze ist dabei aber mehr oder weniger die selbe. Salopp ist dabei die allgemeine Forderung in allen drei Fällen, dass der betrachtete topologische Raum $(\mathcal{H}, \sigma)$ lokal so \textit{aussieht} wie ein unendlichdimensionaler Vektorraum $E$. Etwas formaler bedeutet das, dass zu jedem Punkt $p \in \mathcal{H}$ eine offene Menge $\mathcal{U} \subseteq \mathcal{H}$ mit $p \in \mathcal{U}$ existiert, sodass $\mathcal{U}$ homöomorph zu einer offenen Menge des unendlichdimensionalen Vektorraums $E$ ist.    

Die lokale Homöomorphie zwischen $(\mathcal{H}, \sigma)$ und $E$ impliziert dabei, dass es sich beim unendlichdimensionalen Vektorraum $E$ nicht nur um einen nackten Vektorraum, sondern tatsächlich um einen mit einer Topologie $\rho$ ausgestatteten Vektorraum handeln muss, da nur in diesem Fall der anschauliche Begriff des Homöomorphismus einen Sinn ergibt. 

Im Falle endlichdimensionaler Mannigfaltigkeiten war dabei die Topologie auf dem Vektorraum $\mathbb{R}^d$, zu der die Mannigfaltigkeit lokal homöomorph war, immer gegeben durch die vom Standardskalarprodukt $\langle \cdot , \cdot \rangle_{\mathbb{R}^d}$ induzierte Topologie, wodurch $\mathbb{R}^d$, wie wir in Satz $4.9.$ gesehen haben, auf natürliche Weise zu einem topologischen Vektorraum wurde. 

Möchte man möglichst viel Struktur aus der endlichdimensionalen Differentialgeometrie in den unendlichdimensionalen Fall übernehmen, so macht es natürlich Sinn zu fordern, dass die auf $E$ gegebene Topologie $\rho$ nicht irgendeine beliebige Topologie ist, sondern so gewählt wird, sodass $(E, \rho)$ gemäß der Definition $4.15.$ ein topologischer Vektorraum ist.

Diese intuitive Forderung an die Topologie des Vektorraumes $E$ ist jeweils als Forderung in den Definitionen der Fréchet-, Banach-, und Hilbert-Mannigfaltigkeiten verbaut, die allesamt mögliche Verallgemeinerungen des Mannigfaltigkeitenbegriffes in das Unendlichdimensionale darstellen. 

So gilt beispielsweise für die Hilbert-Mannigfaltigkeiten salopp, dass es sich beim Vektorraum $(E, \rho)$ um einen vollständigen Prähilbertraum, einen sogenannten Hilbertraum \cite{edwards2012functional}, handeln soll, womit die Topologie $\rho$ auf $E$ durch ein auf $E$ definiertes Skalarprodukt $\langle \cdot , \cdot \rangle$ induziert wird. Für Banach-Mannigfaltigkeiten gilt hingegen grob, dass der Vektorraum $(E, \rho)$ durch einen vollständigen normierten Raum, einen sogenannten Banachraum \cite{edwards2012functional}, gegeben sei, womit die Topologie von einer auf $E$ definierten Norm $\|\cdot\|_E$ induziert wird. In beiden Fällen handelt es sich damit bei $(E, \rho)$ also auf natürliche Weise um einen topologischen Vektorraum.

Legt man aber stattdessen den Begriff einer Fréchet-Mannigfaltigkeit als unendlichdimensionalen Mannigfaltigkeit zu Grunde, so muss in diesem Fall die Topologie $\rho$ auf dem Vektorraum $E$ nicht notwendigerweise von einem Skalarprodukt oder einer Norm induziert sein. Stattdessen fordert man in diesem Fall, dass es sich beim Vektorraum $(E, \rho)$ um einen sogenannten Fréchet-Raum \cite{edwards2012functional} handeln soll, der abermals ein Beispiel eines topologischen Vektorraumes ist. Bei einem Fréchet-Raum handelt es sich dabei salopp um einen topologischen Vektorraum, dessen Topologie unter anderem hausdorff und induziert von einer abzählbaren Familie $(\|\cdot\|_k)_{k \in \mathbb{N}}$ von sogenannten Seminormen sein muss.    

Die Forderung nach der Hausdorff-Eigenschaft ist dabei wichtig, um einen sinnvollen und zur Definition $5.1.$ fast analogen Differenzierbarkeitsbegriff im Fréchet-Raum zu gewährleisten. Immerhin haben wir im Satz $2.1.$ gesehen, das eine angenehme Eigenschaft von Hausdorffräumen es ist, dass deren konvergente Folgen einen eindeutigen Grenzwert besitzen.

Das die Topologie $\rho$ im Fréchet-Raum dabei von einer abzählbaren Familie von Seminormen induziert sein soll, bedeutet, dass eine Teilmenge $\mathcal{D} \subseteq E$ genau dann in $\rho$ liegt, wenn für alle $v \in \mathcal{D}$ ein $K > 0$ und ein $\delta > 0$ existiert, sodass 
\begin{align}
    \{ w \in E \:\: | \:\: \|w - v\|_k < \delta , \:\: \forall k \leq K \} \subseteq \mathcal{D}
\end{align}
gilt. Bei einer auf $E$ erklärten Seminorm $\|\cdot\|$ handelt es sich dabei im wesentlichen um eine Norm, bei der die Forderung nach der positiven Definitheit zu 
\begin{align}
    \|v\| \geq 0 \:\:\:\: \forall v \in E
\end{align}
abgeschwächt wurde.

Besonders in Hinblick auf das in der Einleitung angesprochene Bild-Matching-Problem sind nun Fréchet-Räume und Fréchet-Mannigfaltigkeiten von besonderem Interesse, da man die Menge $\textit{Diff}(\Omega)$, mit $\Omega \subseteq \mathbb{R}^d$, wie in der Einleitung bereits angesprochen, als Fréchet-Mannigfaltigkeit auffassen kann \cite{golubitsky2012stable}. Das erlaubt es uns unter anderem, über glatte oder stückweise glatte Kurven der Form
\begin{align}
    \gamma : [a,b] \longrightarrow \textit{Diff}(\Omega)
\end{align}
zu sprechen. Mittels des Begriffes der glatten Kurve sind wir wiederum in der Lage, auch den Begriff eines Tangentialvektors $X_{\gamma, \gamma(\lambda)}$, mit $\lambda \in (a,b)$, zu erklären. Wie im endlichdimensionalen Fall sind wir nun natürlich auch in der Lage, die Tangentialräume $T_p \textit{Diff}(\Omega)$ für alle $p \in \textit{Diff}(\Omega)$ zu erklären. Ordnen wir nun noch jedem Tangentialraum $T_p \textit{Diff}(\Omega)$ ein Skalarprodukt $\langle \cdot , \cdot \rangle_p$ zu, sodass anschaulich die Funktion 
\begin{align}
            \textit{Diff}(\Omega) \ni p \longmapsto \langle \cdot , \cdot \rangle_p
\end{align}
stetig vom Punkt $p \in \textit{Diff}(\Omega)$ abhängt, so haben wir im Prinzip alle Zutaten beisammen, um ein sinnvolles Längenfunktional auf $\textit{Diff}(\Omega)$ zu erklären , mit welchem wir dann wiederum in der Lage sind, über (schwache) Geodätische auf der Mannigfaltigkeit $\textit{Diff}(\Omega)$ zu sprechen. 

Wie können wir uns das nun für unser in der Einleitung angesprochenes Bildmatching-Problem zunutze machen?

Wie in der Einleitung bereits erwähnt, ist das Ziel hinter dem Bild-Matching-Problem das finden einer Funktion $\phi : \Omega \longrightarrow \Omega$, welche die Bildern $P_1$ und $P_2$ sinnvoll in Beziehung zueinander setzen soll. 

Genauer bedeutet das, dass wenn wir die Menge $\Omega$ wieder als die Bildpixelmenge beider Bilder verstehen wollen, die Funktion $\phi$ uns sagen soll, welcher Pixel auf dem Bild $P_1$ genau welchem Pixel auf dem Bild $P_2$ entspricht. Assoziieren wir dabei, wie in der Einleitung, zum Bild $P_1$ die Funktion $\mathcal{I}_1: \Omega \longrightarrow \mathbb{R}$, und zum Bild $P_2$ die Funktion $\mathcal{I}_2 : \Omega \longrightarrow \mathbb{R}$, so bedeutet diese Forderung, die wir an das Matching $\phi$ stellen, dass 
\begin{align}
    \mathcal{I}_1 = \mathcal{I}_2 \circ \phi
\end{align}
gelten soll. Welche Eigenschaften sollte die Matching-Funktion $\phi$ sonst noch erfüllen? 

Damit $\phi$ überhaupt ein sinnvoller Kandidat für ein Matching zwischen $P_1$ und $P_2$ sein kann, müssen wir natürlich fordern, dass $\phi$ bijektiv, d.h. injektiv und surjektiv, ist. Anschaulich ist klar, warum das eine sinnvolle Forderung ist, denn immerhin soll jeder Bildpixel von $P_1$ genau einem Bildpixel von $P_2$ und umgedreht entsprechen. Darüberhinaus können wir weiter die Menge $\Omega$ mit der Teilraumtopologie $\tau_{\mathbb{R}^d, \Omega}$ der Standardtopologie vom $\mathbb{R}^d$ ausstatten.

Da wir $(\Omega, \tau_{\mathbb{R}^d, \Omega})$ nun als topologischen Raum verstehen können, sind wir in der Lage eine weitere intuitive und sinnvolle Forderung, nämlich die Stetigkeit von $\phi$ und $\phi^{-1}$, zu stellen. Bei dieser Homöomorphismus-Forderung an die Funktion $\phi$ handelt es sich deshalb um eine sinnvolle Forderung, da anschaulich die Bildpunkte nah beieinander liegende Bildpixel von $P_1$ über $\phi$ auch in $P_2$ nah beieinander liegen sollten und umgedreht.

Darüberhinaus können wir nun noch die vereinfachende, aber nicht vollkommen unrealisitische Modellannahme treffen, dass es sich bei der Matching-Funktion $\phi$ nicht nur um einen Homöomorphismus, sondern sogar um einen glatten Diffeomorphismus handeln soll.

Damit können wir also die gesuchte Matching-Funktion auf natürliche Weise als einen Punkt in der unendlichdimensionalen Fréchet-Mannigfaltigkeit $\textit{Diff}(\Omega)$ verstehen. 

Wie in der Einleitung erwähnt, spielen beim sogenannten \textit{Geodesic shooting}-Algorithmus \cite{miller2006geodesic}, welcher auf der hier diskutierten geometrischen Modellierung des Bild-Matching-Problems basiert, die (schwache) Geodätische eine entscheidende Rolle. 

Um also einen Algorithmus wie den \textit{Geodesic shooting}-Algorithmus zu verstehen oder gar herleiten zu können, muss man erst einmal damit beginnen, die (schwachen) Geodötischen auf $\textit{Diff}(\Omega)$ zu studieren und in Erfahrung zu bringen, welche Eigenschaften für diese im Detail gelten, da man sich so dieses Wissen später zu nutze machen kann, um entsprechender Algorithmen zu entwickeln, die auf dem Geodätenbegriff basieren.

Da wir in unserer konkreten geometrischen Modellierung des Bild-Matching-Problems technisch gesehen nur an den (schwachen) Geodätischen von $\textit{Diff}(\Omega)$ interessiert sind, benötigen wir eigentlich keinen allgemeinen Längenbegriff auf $\textit{Diff}(\Omega)$, sondern lediglich einen (möglicherweise einfacheren) Begriff, welcher uns die (schwachen) Geodätischen von $\textit{Diff}(\Omega)$ liefert. Im Abschnitt $7.4$ haben wir genau solch einen Begriff kennengelernt. 

So haben wir im Abschnitt $7.4$ gesehen, dass für endlichdimensionale Mannigfaltigkeiten unter gewissen Vorraussetzungen eine notwendige Stationaritätsbedingung bzgl. des Längenfunktional $\mathbb{L}_{[a,b]}$ und des Energiefunktional $\mathcal{E}_{[a,b]}$ die sogenannten Geodätengleichungen waren. Da es sich bei den Geodätengleichungen um ein System von gewöhnlichen Differentialgleichungen handelt, sind wir mittels dieser in der Lage, die (schwachen) Geodätischen von $\textit{Diff}(\Omega)$ auch tatsächlich zu berechnen.

Da das Energiefunktional das strukturell leichtere Funktional war, nutzen wir im Unendlichdimensionalen gleich von Anfang an das Energiefunktional, um den Begriff der (schwachen) Geodätischen über die stationären Kurven des Energiefunktionals einzuführen.

Bezeichne also $\mathcal{G}_{[a,b]}$ das Energiefunktional auf der unendlichdimensionalen Fréchet-Mannigfaltigkeit $\textit{Diff}(\Omega)$, gegeben durch
\begin{align}
    \mathcal{G}_{[a,b]} : \mathcal{C}^\infty ([a,b], \textit{Diff}(\Omega)) \longrightarrow& \: \: \mathbb{R} \nonumber\\
            \gamma \longmapsto& \: \: \frac{1}{2} \cdot \int_a^b \langle X_{\gamma, \gamma(\lambda)}, X_{\gamma, \gamma(\lambda)} \rangle_{\gamma(\lambda)} \,d \lambda. 
\end{align}
Um aus der Variation von $\mathcal{G}_{[a,b]}$ eine nützliche Form der Euler-Lagrange-Gleichungen im Unendlichdimensionalen herzuleiten, bedient man sich nun eines Trickes, den wir im Folgenden grob umreißen wollen und der sich im wesentlichen an \cite{miller2006geodesic} orientiert. 

Zuerst realisiert man, dass wenn wir die Fréchet-Mannigfaltigkeit $\textit{Diff}(\Omega)$ mit der Funktionenkomposition $\circ$ ausstatten, diese zu einer unendlichdimensionalen Lie-Gruppe wird. Dabei spricht man immer dann von einer Lie-Gruppe, wenn man es mit einer topologischen Gruppe zu tun hat, die gleichzeitig auch eine glatte Mannigfaltigkeit darstellt.

Im Abschnitt $6.4$ haben wir begründet, dass wenn wir den $\mathbb{R}^d$ mit der glatten Struktur aus Bemerkung $6.1.$ ausstatten, wir auf geometrisch sinnvolle Weise jeden Tangentialraum miteinander identifizieren und sinnvoll mit dem $\mathbb{R}^d$ selbst gleichsetzen können.

In unserem unendlichdimensionalen Fall ist nun Ähnliches möglich. So lassen sich in unserem konkreten Fall abermals alle Tangentialräume von $\textit{Diff}(\Omega)$ sinnvoll geometrisch miteinander identifizieren und mit einem unendlichdimensionalen Vektorraum $\mathbf{g}$ gleichsetzen. Der Vektorraum $\mathbf{g}$ ist dabei gegeben als die Menge aller glatten Vektorfelder auf $\Omega$. 

Im Kontext der Lie-Theorie \cite{abbaspour2007basic}, die sich unter anderem mit dem Studium von Lie-Gruppen $(\mathbf{G}, \ast)$ beschäftigt, bezeichnet man den Tangentialraum $T_e \mathbf{G}$ am neutralem Element $e$ von $(\mathbf{G}, \ast)$ oft auch als Lie-Algebra der Lie-Gruppe $(\mathbf{G}, \ast)$. In unserem konkreten Fall, in welchem $(\mathbf{G}, \ast)$ durch $(\textit{Diff}(\Omega), \circ)$ gegeben ist, bezeichnen wir dabei wegen obiger Identifikation den Vektorraum $\mathbf{g}$ als die Lie-Algebra von $(\textit{Diff}(\Omega), \circ)$.

Damit sind wir nun also in der Lage, jeden Tangentialvektor $X$ von $\textit{Diff}(\Omega)$ mit genau einem Element $\mathcal{X}$ aus $\mathbf{g}$ geometrisch sinnvoll zu identifizieren. Das erlaubt es uns wiederum, den in der Definition des Energiefunktionals auftauchenden Ausdruck $\langle \cdot , \cdot \rangle_p$ für alle $p \in \textit{Diff}(\Omega)$ durch ein einzelnes in $\mathbf{g}$ gegebenes Skalarprodukt $\langle \cdot , \cdot \rangle_\mathbf{g}$ zu ersetzen, wodurch sich das Energiefunktional vereinfacht zu
\begin{align}
    \mathcal{G}_{[a,b]}(\gamma) = \frac{1}{2} \cdot \int_a^b \langle \mathcal{X}(\gamma(\lambda)), \mathcal{X}(\gamma(\lambda)) \rangle_\mathbf{g} \,d \lambda.
\end{align}
Bezeichne $\mathbf{g}^*$ den topologischen Dualraum von $\mathbf{g}$, bestehend aus allen linearen und stetigen Funktionalen auf $\mathbf{g}$, so können wir eine Abbildung $L : g \longrightarrow g^*$ definieren, sodass
\begin{align}
    \langle \mathcal{X}, \mathcal{Y} \rangle_\mathbf{g} = L(\mathcal{X}) (\mathcal{Y}) =: (L(\mathcal{X}), \mathcal{Y}) \:\:\:\:\:\:\:\: \forall \mathcal{X}, \mathcal{Y} \in \mathbf{g}
\end{align}
gilt. Dabei bezeichnet man die bilineare Klammer $(\cdot, \cdot)$ oft auch als Dualitätspaarung. Darüberhinaus lassen sich weiter die aus der Lie-Theorie bekannten adjungierten Wirkungen auf $\mathbf{g}$ bzw. auf $\textit{Diff}(\Omega)$ durch
\begin{align}
    \textit{Ad} : \textit{Diff}(\Omega) \times \mathbf{g} \longrightarrow& \: \: \mathbf{g} \nonumber\\
            (\phi, \mathcal{X}) \longmapsto& \: \: \textit{Ad}(\phi, \mathcal{X}) := \textit{Ad}_\phi (\mathcal{X}) := (\phi'(\mathcal{X})) \circ \phi^{-1},  
\end{align}
mit $\textit{Ad}(\phi, \mathcal{X})(x) := \phi'(\phi^{-1}(x)) (\mathcal{X}(\phi^{-1}(x)))$ für alle $x \in \Omega$, und 
\begin{align}
    \textit{ad} : \mathbf{g} \times \mathbf{g} \longrightarrow& \: \: \mathbf{g} \nonumber\\
            (\mathcal{X}, \mathcal{Y}) \longmapsto& \: \: \textit{ad}(\mathcal{X}, \mathcal{Y}) := \textit{ad}_\mathcal{X}(\mathcal{Y}) := \mathcal{X}'(\mathcal{Y}) - \mathcal{Y}'(\mathcal{X}),
\end{align}
mit $(\mathcal{X}'(\mathcal{Y})) (x) := \mathcal{X}'(x) (\mathcal{Y}(x))$ für alle $x \in \Omega$, erklären. Mithilfe dieser Abbildungen ergeben sich die Euler-Lagrange-Gleichungen von $\mathcal{G}_{[a,b]}$ zu
\begin{align}
    \frac{d L(\mathcal{X}(\gamma(\lambda)))}{d \lambda} + \textit{ad}^*_{\mathcal{X}(\gamma(\lambda))} (L (\mathcal{X}(\gamma(\lambda)))) = 0, \label{Euler-Lagrange auf Diff}
\end{align}
wobei $X_{\gamma, \gamma(\lambda)}$ wieder mit $\mathcal{X}(\gamma(\lambda))$ identifiziert wurde und $\textit{ad}^*_{\mathcal{X}(\gamma(\lambda))}$ definiert ist durch
\begin{align}
    (\mathcal{Y}, \textit{ad}_{\mathcal{X}(\gamma(\lambda))}(\mathcal{Z})) = ( \textit{ad}^*_{\mathcal{X}(\gamma(\lambda))} (\mathcal{Y}), \mathcal{Z}) \:\:\:\:\:\:\:\: \forall \mathcal{Y}, \mathcal{Z} \in \mathbf{g}.
\end{align}
Ausgehend von der Gleichung \eqref{Euler-Lagrange auf Diff} kann man dann die verschiedenen Erhaltungsgrößen der (schwachen) Geodätischen von $\textit{Diff}(\Omega)$ studieren, um darauf aufbauend Algorithmen wie den \textit{Geodesic shooting}-Algorithmus zu entwickeln. In \cite{miller2006geodesic} werden einige dieser Erhaltungsgrößen beschrieben.

\newpage

\addcontentsline{toc}{section}{Literatur}

\newpage

\bibliographystyle{plain}
\bibliography{bibliography.bib}
\end{document}